\newcommand{\Gmean}{\GG_{\mathsf{mean}}}
\tikzset{
  invisible/.style={opacity=0},
  visible on/.style={alt={#1{}{invisible}}},
  alt/.code args={<#1>#2#3}{%
    \alt<#1>{\pgfkeysalso{#2}}{\pgfkeysalso{#3}}
  },
}
\newcommand{\tTV}{\widetilde{\mathsf{TV}}}
\newcommand{\tD}{\widetilde{D}}
\newcommand{\tW}{\widetilde{W}}
\newcommand{\iid}{i.i.d.\xspace}
\newtheorem{definition}{\textbf{Definition}}[section]
\newtheorem{corollary}{\textbf{Corollary}}[section]
\newtheorem{lemma}{\textbf{Lemma}}[section]
\newtheorem{theorem}{\textbf{Theorem}}[section]
\newtheorem*{insight*}{\textbf{Observation}}
\newtheorem{proposition}{\textbf{Proposition}}[section]
\newtheorem*{proposition*}{\textbf{Proposition}}
\newtheorem*{lemmai*}{\textbf{Lemma (informal)}}
\newtheorem{remark}{\textbf{Remark}}[section]
\newtheorem{example}{\textbf{Example}}[section]
\newcommand{\sH}{\mathcal{H}}
\newcommand{\bR}{\text{\boldmath{$R$}}}
\newcommand{\bbP}{\mathbb{P}}
\newcommand{\bbE}{\mathbb{E}}
\newcommand{\cM}{\mathcal{M}}
\newcommand{\cG}{\mathcal{G}}
\def\[#1\]{\begin{align}#1\end{align}}
\def\(#1\){\begin{align*}#1\end{align*}}
\def\argmax{\operatornamewithlimits{arg\,max}}
\def\argmin{\operatornamewithlimits{arg\,min}}
\newcommand{\bprf}{\begin{proof}}
\newcommand{\eprf}{\end{proof}}
\newcommand{\blem}{\begin{lemma}}
\newcommand{\elem}{\end{lemma}}
\newcommand{\eps}{\epsilon}
\newcommand{\eqdef}{\triangleq}
\newcommand{\bP}{\mathbb{P}}
\newcommand{\bE}{\mathbb{E}}
\newcommand{\sF}{\mathcal{F}}
\newcommand{\sU}{\mathcal{U}}
\newtheorem{assumption}[theorem]{Assumption}
\theoremstyle{definition}
\newcommand{\TV}{\mathsf{TV}}
\newcommand{\modu}{\mathfrak{m}}
\newcommand{\GG}{\mathcal{G}}
\newcommand{\MM}{\mathcal{M}}
\renewcommand{\cite}[1]{\citep{#1}}
\title{Generalized Resilience and Robust Statistics}
\author{Banghua Zhu, Jiantao Jiao, Jacob Steinhardt\thanks{Banghua Zhu is with the Department of Electrical Engineering and Computer Sciences, University of California, Berkeley. Jiantao Jiao is with the Department of Electrical Engineering and Computer Sciences and the Department of Statistics, University of California, Berkeley. Jacob Steinhardt is with the Department of Statistics and the Department of Electrical Engineering and Computer Sciences, University of California, Berkeley. Email: \{banghua, jiantao,jsteinhardt\}@berkeley.edu.}}
\date{\today}
\begin{document}

\maketitle

\begin{abstract}

Robust statistics traditionally focuses on outliers, or perturbations in total variation distance. 
However, a dataset could be maliciously corrupted in many other ways, such as systematic measurement errors and missing covariates. 
We consider corruption in either $\TV$ or Wasserstein distance,
and show that robust estimation is possible whenever the true population distribution satisfies a property called \emph{generalized resilience}, which  holds under moment or hypercontractive conditions. %
For $\TV$ corruption model, our finite-sample analysis improves over previous results for mean estimation with bounded $k$-th moment, linear regression, and joint mean and covariance estimation. For $W_1$ corruption, we provide the first finite-sample guarantees for second moment estimation and linear regression.   %

Technically, our robust estimators are a generalization of minimum distance (MD) functionals, 
which project the corrupted distribution onto a given set of well-behaved distributions.  
The error of these MD functionals is bounded by a certain modulus of continuity, and we provide 
a systematic method for upper bounding this modulus for the class of generalized resilient distributions, 
which usually gives sharp population-level results and good finite-sample guarantees.
\end{abstract}

\tableofcontents
\newpage

\section{Introduction}\label{introduction}

We study the problem of robust estimation from high-dimensional corrupted data. Corruptions can occur in many forms, such as 
\emph{process error} that affects the outputs, \emph{measurement error} that affects the covariates, or some fraction of arbitrary 
\emph{outliers}. We will provide a framework for analyzing these and other types of corruptions, 
study minimal assumptions needed to enable robust estimation at the population level, 
corruptions, and construct estimators with provably good performance in finite samples.

We model corruptions in terms of a perturbation distance $D(p,q)$. 
Specifically, we posit a true population 
distribution $p^*$  that lies in some family of distribution $\mathcal{G}$, but observe 
samples $X_1, \ldots, X_n$ from a corrupted distribution $p$ such that  $D(p^*, p) \leq \epsilon$. %
Our goal is to output an estimate 
$\hat{\theta}(X_1, \ldots, X_n)$ such that some cost $L(p^*, \hat{\theta})$ is small. Note in particular 
that our goal is to estimate parameters of the original, uncorrupted distribution $p^*$.
As a result, even as $n \to \infty$ we typically incur some non-vanishing error that depends on $\epsilon$. We also consider a more powerful \emph{adaptive} model where  $X_1, \ldots, X_n$ are first sampled from true distribution, and then perturbed by adversary, which is formally defined in Section~\ref{subsec.corruption_models}. %

Throughout the paper, we focus on the case of corruption distance $D= \TV$ or $W_1$, though many of our results extend to 
Wasserstein distance over an arbitrary metric space. 
High dimensional robust statistics for $D = \TV$ has a long history.  
The majority of the classical statistics papers  focus  on  the minimum distance functional when the true distribution is   Gaussian or elliptical~\cite{huber1973robust,donoho1988automatic,adrover2002projection,huber2011robust,chen2002influence,  gao2017robust, gao2019generative}, while recent computationally efficient algorithms are instead based on assumed tail bounds via e.g.~moments or sub-Gaussianity~\cite{diakonikolas2017being, steinhardt2017resilience, steinhardt2017certified, diakonikolas2018learning, diakonikolas2018sever,  liu2018high,  chen2018robust, bateni2019minimax, lecue2019robust}.   
In this paper, we propose a different assumption called \emph{generalized resilience},  which is more general than either the widely 
used Gaussian or tail bound assumptions. Generalized resilience enables the systematic design of statistically efficient algorithms, 
which can also be efficiently computed in some cases. Furthermore, it gives near-optimal statistical rates even in the 
special case of Gaussian or tail-bounded distributions.

Corruptions under $\TV$ only allow an $\epsilon$-fraction of outliers or deletions. 
In many applications we might instead believe that all of the data have been slightly corrupted. 
We can model this by letting $D = W_1$ be the (standard) Wasserstein distance between $p$ and $q$, defined as the minimum cost in 
$\ell_2$-norm needed to move the points in $p$ to the points in $q$. For $W_1$ corruptions, mean estimation is trivial since the 
adversary can shift the mean by at most $\epsilon$. However, estimation of higher moments, as well as least squares estimation, are 
non-trivial and we focus on these in the $W_1$ case. %

In this paper, we connect ideas from both the classical and modern approaches to handling $\TV$ perturbations, and extend these ideas 
to Wasserstein perturbations. We summarize our main contributions as follows:
\begin{itemize}
    \item Motivated by  the minimum-distance (MD) functional \cite{donoho1988automatic} and the recent progress in efficient algorithms, we construct explicit non-parametric assumptions, \emph{generalized resilience}, under which MD functionals automatically give tight worst-case population error for both $\TV$ and $W_1$ perturbations, matching or improving previous bounds obtained under much stronger assumptions.  
   \item  We design  statistically efficient finite-sample algorithms based on MD functionals 
          for both $\TV$ corruption and $W_1$ corruption. We propose two different approaches, \emph{weakening the distance} and \emph{expanding the set},  that guarantee statistical efficiency and pave the way for designing computationally efficient algorithms.
  \end{itemize}
For $\TV$ corruption, our results improve the best existing bounds for tasks including mean estimation, linear regression and covariance estimation. For $W_1$ corruption, we are the first to provide any good robustness guarantee under natural assumptions.

\subsection{Main results for $\TV$ corruption (Section~\ref{sec.robust_TV})}

Throughout the paper, we design algorithms based on the \emph{minimum distance (MD) functional} estimator~\cite{donoho1988automatic}, which projects the corrupted empirical distribution $\hat p_n$ onto some set of distributions $\mathcal{M}$ under a discrepancy measure $\tilde D$:
\begin{equation}\label{eq.mdfintro}
\hat{\theta}(\hat p_n) = \theta^*(q), \text{ where } q = \argmin_{q \in \cM} \tilde D(q, \hat p_n), \theta^*(q) = \argmin_{\theta} L(q,\theta)
\end{equation}
Here $\cM$ and $\tilde D$ are design parameters to be specified (think of them as relaxations of $\cG$ and $D$).  In other words, this estimator projects the observed distribution $p$ onto the distribution set $\cM$ to get $q$, then outputs the optimal parameters for $q$.

\subsubsection{Design of set: generalized resilience (Section~\ref{sec.population_TV})}\label{subsec.intro_set_TV}
We begin with the main results for $\TV$ corruption. In the infinite sample case, if the true distribution $p^*$ lies in some family $\GG$ and we observe the population corrupted distribution $p$,  the performance of the MD functional estimator $q = \argmin_{q \in \GG} \TV(q, p)$ is upper bounded by the modulus of continuity (Lemma~\ref{lemma.population_limit},~\cite{donoho1988automatic}), defined as $\modu(\GG, 2\epsilon) = \sup_{p_1, p_2 \in \GG : \TV(p_1, p_2) \leq 2\epsilon} L(p_1, \theta^*(p_2))$. While the adversary can choose distributions outside of $\GG$, the modulus $\modu$ only involves pairs of distributions 
that lie within $\GG$, making it amenable to analysis. In mean estimation, when the set $\GG$ is taken as the set of resilient distributions~\cite{steinhardt2018resilience}, defined as 
\begin{align}\label{eqn.oldresilience}
\mathcal{G}_{\mathsf{mean}}(\rho, \epsilon) = \{p \mid \|\bE_{r}[X] - \bE_{p}[X]\| \leq \rho, \forall r\leq \frac{p}{1-\epsilon}\},
\end{align}
the modulus of continuity can be proved to be upper bounded by $2\rho$.  The notation $r \leq \frac{p}{1-\epsilon}$ indicates $r$ can be obtained from $p$ by conditioning on an event of probability $1-\epsilon$; thus \eqref{eqn.oldresilience} specifies the set of 
distributions whose mean is stable under deleting an $\epsilon$ fraction of points. 

The reason for the bounded modulus is a \emph{mid-point property} of $\TV$ 
distance: if $\TV(p_1, p_2) \leq \epsilon$ then there is a midpoint $r$ that can be obtained from either of the $p_i$ 
by conditioning on an event of probability $1-\epsilon$. Thus $\bE_r[X]$ is close to both $\bE_{p_1}[X]$ and $\bE_{p_2}[X]$ by 
resilience, and so $\bE_{p_1}[X]$ and $\bE_{p_2}[X]$ are close by the triangle inequality. 
The argument appears implicitly in \citet{steinhardt2018resilience} and~\citet{diakonikolas2017being}. Here we make it explicit  in Lemma~\ref{lem.G_TV_mean_modulus}.

Next, suppose that  the loss $L$ is arbitrary. We generalize 
\citeauthor{steinhardt2018resilience}'s definition of resilience to yield a family with bounded modulus for any given loss $L$.
For loss $L$ we will need two conditions: the first condition asks that the optimal parameters for $p$ do well on any $r\leq \frac{p}{1-\epsilon}$, while 
the second asks that if a parameter does well on $r$ then it also does well on $p$.
These are stated formally below:
\begin{align}
L(r, \theta^*(p)) &\leq \rho_1 \text{ whenever } r \leq \frac{p}{1-\epsilon}, \tag{$\downarrow$} \label{eq.down_intro} \\
L(p, \theta) &\leq \rho_2 \text{ whenever } L(r, \theta) \leq \rho_1 \text{ and } r \leq \frac{p}{1-\epsilon}. \tag{$\uparrow$} \label{eq.up_intro}
\end{align}

We show in Section~\ref{sec.robust_TV} that the family of distributions satisfying both \eqref{eq.down_intro} and \eqref{eq.up_intro} has modulus bounded by $\rho_2$ via the mid-point lemma (Theorem~\ref{thm.G_fundamental_limit}), and that it specializes to the 
family $\Gmean(\rho, \epsilon)$ for mean estimation by taking $\rho_1 = \rho$, $\rho_2 = 2\rho$.

While these conditions  may appear abstract, they  subsume bounded Orlicz-norm  type assumptions and provide a tight modulus 
for many concrete examples. 
More importantly, they pave the way for the universal finite-sample algorithms in the next section.

\subsubsection{Finite sample results for $\TV$ corruption}
A core difficulty in finite samples is that the $\TV$ distance between a continuous distribution $p$ and its empirical distribution $\hat p_n$ does not converge 
to zero  at all. Thus in the MD functional we cannot set $\tilde D = \TV$  when $\mathcal{M}$ only contains continuous distributions. We present two approaches to overcome the issue; the first approach, \emph{weaken the distance}, replaces $\TV$ with a weaker distance 
$\tTV$ that converges at a parametric rate, while the second approach, \emph{expand the set}, projects under $\TV$ to some set $\MM$ that is large enough. %

\paragraph{First approach: weaken the distance (Section~\ref{sec.finite_sample_algorithm_weaken})}
Intuitively, the issue is that the $\TV$ distance is too fine---it reports a large distance even 
between a population distribution $p$ and the finite-sample distribution $\hat{p}_n$. 
A solution to this is to relax the distance. We will replace $\TV$ with a smaller \emph{generalized Kolmogorov-Smirnov distance}: %
\begin{equation}
\tTV_{\sH}(p, q) \eqdef \sup_{f \in \sH, t \in \bR} |\bP_p(f(X) \geq t) - \bP_q(f(X) \geq t)|.
\end{equation}

The distance $\tTV$ is smaller than $\TV$ because it takes a supremum over only the events defined by threshold 
functions in $\sH$, while $\TV$ takes the same supremum over all measurable events. We then apply the minimum distance functional under the new distance $\tTV$.  The distance, when specified to the  case of Gaussian mean estimation, is analyzed in~\cite{donoho1988pathologies} with $\mathcal{H} = \{ v^{\top}x \mid v\in\bR^d\}$.

To show that projection under $\tTV$ works, we need to check two properties:
\begin{itemize}
\item The distance $\tTV_{\sH}(p, \hat p_n)$ between $p$ and its empirical distribution is small.
\item The modulus $\modu(\GG, \epsilon)$ is still bounded when replacing $\TV$ with $\tTV_{\sH}$.
\end{itemize}

As shown in Fig.~\ref{fig:intro_p_star}, if $\tTV_{\sH}(p, \hat p_n)$ is small, the triangle inequality implies that $\tTV_{\sH}(p^*, \hat p_n)$ is also small. 
Thus as long as the modulus  $\modu(\GG, \epsilon)$ is bounded under $\tTV$, the worst-case error for this projection algorithm can be bounded.  %

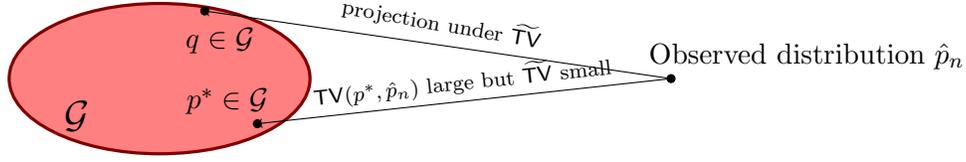
\begin{figure}[!htbp]
\begin{tikzpicture}
  \begin{scope}[shift={(0,0)}]
    \draw [opacity=1, very thick, fill=red!50, draw= red!50!black](2.4, 4.5) {ellipse (2cm and 1cm)};
     \draw  [](3.3, 4.2) node {$p^*\in\GG$};
     \draw  [](1.3, 4) node {\Large $\GG$};
       \draw [radius=1.5pt,fill](3.7, 3.9) circle;
      \draw  [](3.2, 5) node {$q\in\mathcal{G}$};
      \draw [radius=1.5pt,fill](3.0,5.4) circle; 
      \draw (9.2, 4.5) edge[->] node[above, rotate=-10]{{\scriptsize projection under $\tTV$}} (3.0,5.4);
     \draw  [](11,4.8) node {Observed distribution $\hat p_n$};
      \draw [radius=1.5pt,fill] (9.2, 4.5) circle;
    \draw (9.2, 4.5) edge[->] node[above, rotate=6,yshift=-1.5]{{\scriptsize $\TV(p^*, \hat p_n)$ large but $\tTV$ small }} (3.7, 3.9);
    \end{scope}
\end{tikzpicture}
\caption{Illustration of the analysis for \emph{weaken the distance}.}
\label{fig:intro_p_star}
\end{figure}
The first property is an instance of the VC inequality \citep{vapnik2015uniform,dudley1978central}, which implies that $\tTV_{\sH}(p, \hat p_n) = O(\sqrt{d/n})$ with high probability when the family of sets $\{\{x \mid f(x)\geq t\} \mid f\in \mathcal{H},t\in \mathbf{R}\}$ has VC-dimension $d$. 
With the first property, we can generally replace the perturbation level $\epsilon$ in the population worst-case risk with $\epsilon + \sqrt{(\mathsf{VC}(\mathcal{H})+\log(1/\delta))/n}$ for the finite-sample case. %

We establish the second property via a careful design of $\mathcal{H}$ for different tasks. The key underlying tool is 
a \emph{mean-cross} lemma %
showing that any pair of 
one-dimensional resilient distributions that are close in $\tTV$ have $\epsilon$-deletions whose means cross each other:

\begin{lemma}[Mean cross, Lemma~\ref{lem.mean_cross_tv}]\label{lem.intro_midpoint_KS}Suppose two distributions $p,q$ on the real line satisfy
$
\sup_{t\in \bR} |\bP_p(X\geq t) - \bP_q(Y\geq t)| \leq \epsilon.
$
Then one can find some $r_p \leq \frac{p}{1-\epsilon}$ and $r_q \leq \frac{q}{1-\epsilon}$
such that $r_p$ is stochastically dominated by $r_q$,
which implies that
$\mathbb{E}_{r_p}[X] \leq \mathbb{E}_{r_q}[Y]$.
\end{lemma}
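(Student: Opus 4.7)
The plan is to explicitly construct the $\epsilon$-deletions by truncating the top $\epsilon$-mass of $p$ and the bottom $\epsilon$-mass of $q$, and then verify stochastic dominance through a quantile coupling. Intuitively, removing the largest values of $p$ makes it smaller in stochastic order while removing the smallest values of $q$ makes it larger, and since the two CDFs were already within $\epsilon$ of each other, these truncations should swap order.

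Formally, let $F_p$ and $F_q$ denote the CDFs; the hypothesis is equivalent to $|F_p(t) - F_q(t)| \leq \epsilon$ for all $t \in \mathbb{R}$ by right-continuity of CDFs. I would define $r_p$ and $r_q$ through the truncated CDFs
\begin{equation*}
F_{r_p}(t) = \frac{\min(F_p(t),\, 1-\epsilon)}{1-\epsilon}, \qquad F_{r_q}(t) = \frac{\max(F_q(t)-\epsilon,\, 0)}{1-\epsilon}.
\end{equation*}
A short case analysis (with a randomized split of the atom at the truncation threshold, if any) shows $(1-\epsilon)\, r_p \leq p$ and $(1-\epsilon)\, r_q \leq q$ as measures, confirming these are valid $\epsilon$-deletions in the sense required by the lemma.

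The heart of the argument is stochastic dominance. Take $U$ uniform on $(0, 1-\epsilon)$ and set $X = F_p^{-1}(U)$, $Y = F_q^{-1}(U + \epsilon)$, where $F^{-1}$ denotes the left-continuous quantile function. The standard quantile identity gives $X \sim r_p$ and $Y \sim r_q$, so it suffices to prove $X \leq Y$ pointwise. For any $u \in (0, 1-\epsilon)$, set $t = F_q^{-1}(u+\epsilon)$; right-continuity of $F_q$ implies $F_q(t) \geq u+\epsilon$, and the Kolmogorov--Smirnov bound then gives $F_p(t) \geq F_q(t) - \epsilon \geq u$, whence $F_p^{-1}(u) \leq t$. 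This yields $r_p$ stochastically dominated by $r_q$, and taking expectations over $U$ gives $\mathbb{E}_{r_p}[X] \leq \mathbb{E}_{r_q}[Y]$.

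I do not anticipate a serious obstacle. The only subtlety is the potential presence of atoms at the truncation quantile, which forces a randomized rather than a purely conditional deletion; this is transparently absorbed by working with quantile functions and the canonical uniform coupling, which avoids ever having to choose an explicit conditioning set.
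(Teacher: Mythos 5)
Your proposal is correct and implements essentially the same construction as the paper: delete the top $\epsilon$-mass of $p$ to obtain $r_p$, delete the bottom $\epsilon$-mass of $q$ to obtain $r_q$, and show $r_p$ is stochastically dominated by $r_q$. The only stylistic difference is that the paper checks dominance by comparing survival functions of $r_p$ and $r_q$ directly from the Kolmogorov--Smirnov bound, whereas you exhibit an explicit monotone quantile coupling $\bigl(F_p^{-1}(U),\,F_q^{-1}(U+\epsilon)\bigr)$ with $U\sim\mathrm{Unif}(0,1-\epsilon)$; both routes are valid and equivalent.
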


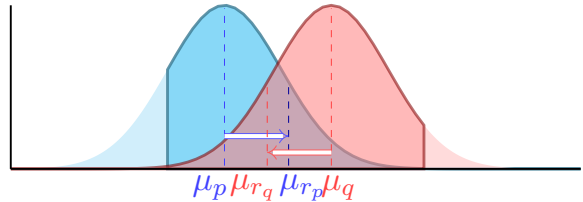
\begin{figure}[!htp]
    \centering
   \begin{tikzpicture}[
  implies/.style={double,double equal sign distance,-implies},
  scale=0.9,  every node/.style={scale=1.3}
]
\begin{axis}[
  no markers, domain=0:8, samples=50,
  axis lines*=left,
  every axis y label/.style={at=(current axis.above origin),anchor=south},
  every axis x label/.style={at=(current axis.right of origin),anchor=west},
  height=4cm, width=10cm,
  xtick=\empty, ytick=\empty,
  enlargelimits=false, clip=false, axis on top,
  axis line style = thick,
  grid = major
  ]
  \addplot [opacity=0.8,very thick,fill=cyan!20, draw=none, domain=0:2.2] {gauss(3.0,0.8,1)} \closedcycle;
  \addplot+[opacity=0.8,very thick,fill=cyan!50, draw=cyan!50!black, domain=2.2:8] {gauss(3.0,0.8,1)} \closedcycle;
  \addplot+[opacity=0.7,very thick,fill=red!20, draw=none, domain=5.8:8] {gauss(4.5,0.8,1)} \closedcycle;
  \addplot+[opacity=0.55,very thick,fill=red!50, draw=red!50!black, domain=0:5.8] {gauss(4.5,0.8,1)} \closedcycle;
\draw [color=blue!75, yshift=0.0cm, dashed](axis cs:3.0,0.01) -- (axis cs:3.0,0.49);
\draw [color=blue!75, yshift=-0.27cm](axis cs:2.8,0) node {$\mu_{p}$};
\draw [color=blue!75, yshift=-0.27cm](axis cs:4.1,0) node {$\mu_{r_{p}}$};
\draw [color=blue!60!black, yshift=0.0cm, dashed](axis cs:3.9,0.01) -- (axis cs:3.9,0.26);
\draw [color=blue!66](axis cs:3.0,0.1) edge[implies] (axis cs:3.9,0.1);
\draw [color=red!75, yshift=0.0cm, dashed](axis cs:3.6,0.01) -- (axis cs:3.6,0.26);
\draw [color=red!75, yshift=-0.27cm](axis cs:3.4,0) node {$\mu_{r_{q}}$};
\draw [color=red!75, yshift=0.0cm, dashed](axis cs:4.5,0.01) -- (axis cs:4.5,0.49);
\draw [color=red!75, yshift=-0.27cm](axis cs:4.6,0) node {$\mu_{q}$};
\draw [color=red!66](axis cs:4.5,0.05) edge[implies] (axis cs:3.6,0.05);
\end{axis}
\end{tikzpicture}
\caption{Illustration of mean cross lemma. For any distributions $p_1, p_2$ that are close under $\tTV$, we can truncate the $\epsilon$-tails of each distribution to make their means cross.}
\label{fig.mean_cross_tv}
\end{figure}

As a concrete application of the mean cross lemma, we illustrate how to bound the modulus for mean estimation: $\mathfrak{m} = \sup_{p,q: \tTV(p, q)\leq \epsilon, p, q\in\GG_{\mathsf{mean}}(\rho, \epsilon)} \| \bE_q[X] - \bE_{p}[X]\|$. We take $\sH$ in $\tTV_\sH$ to consist of functions of the form $f(x) = v^{\top}x$.  For any pair $p, q$ with $\tTV_\sH(p, q)\leq \epsilon$, one can find a projection vector $v^*$ with $\|v^*\|_2 \leq 1$ such that $ \| \bE_q[X] - \bE_{p}[X]\| = v^{*\top} (\bE_q[X] - \bE_{p}[X])$. Then from $\sup_{t\in \bR} |\bP_p( v^{*\top}X\geq t) - \bP_q( v^{*\top}Y\geq t)| \leq \epsilon$  and Lemma~\ref{lem.intro_midpoint_KS}, there exists some distribution $r_{p}, r_{q}$ such that $r_{p} \leq \frac{p}{1-\epsilon}$, $r_{q} \leq \frac{q}{1-\epsilon}$, and their mean is crossed, i.e. $v^{*\top}(\bE_{r_{q}}[X] - \bE_{r_{p}}[X]) \leq 0$, as is shown in Figure~\ref{fig.mean_cross_tv}. From $p, q\in\GG_{\mathsf{mean}}$ we know that $v^{*\top}\bE_q[X]$ is within $\rho$ distance to the mean of  deleted distribution $v^{*\top}\bE_{r_q}[X]$, $v^{*\top}\bE_p[X]$ is within $\rho$ distance to the mean of  deleted distribution $v^{*\top}\bE_{r_p}[X]$. Thus the distance between $v^{*\top}\bE_p[X]$ and $v^{*\top}\bE_q[X]$ is at most $2\rho$ from Figure~\ref{fig.mean_cross_tv}. This gives $\|\bE_p[X] - \bE_q[X]\|\leq 2\rho$. %

The mean-cross lemma provides a new and systematic approach for bounding the modulus of generalized resilience sets, while traditional approaches~\cite{chen2018robust, gao2019generative} are specific to Gaussian and elliptical assumptions. 

For other tasks beyond mean estimation, it suffices to design an appropriate $\sH$ in   $\tTV_\sH$ and apply the mean-cross lemma.
We provide a general way of constructing $\sH$  based on the Fenchel-Moreau dual representation of the cost function (Section~\ref{subsec.discussion_tTV}) and prove its effectiveness in Appendix~\ref{appendix.general_sHandtTV}. Intuitively, we want to represent the cost function as the supremum over the expectation of a set of 1-d functions of the data. Then we take $\sH$ as the set of 1-d functions.

As a concrete example, for linear regression the cost function considered is the excess loss $L(p, \theta) = \bE_p[(Y-X^{\top}\theta)^2 - (Y-X^{\top}\theta^*(p))^2]$. Assume $\TV(p^*, p) \leq \epsilon$ and let $\hat{p}_n$ denote the corrupted empirical distribution given $n$ samples. We take  $\sH$ to consist of functions of the form $f(x) = (v_1^{\top}x)^2 - (v_2^{\top}x)^2$, which gives the 
following result:

\begin{theorem}[A special case of Theorem~\ref{thm.linearregressiontvtildeproof}]\label{thm.intro_linreg_finite}
For $(X, Y)\sim p^*$,  let $Z = Y-X^{\top}\theta^*(p^*)$ denote the residual error. Suppose that
    \begin{align}
        \bE_{p^*}[(v^{\top}X)^4] &\leq \kappa^4 \bE_{p*}[(v^{\top}X)^2]^2 \text{ for all } v \in \bR^d, \text{ and} \\
          \bE_{p^*}[Z^4] &\leq \sigma^4.
    \end{align}
 Then the MD functional estimator $\hat{\theta}(\hat p_n)$ under $\tTV_\mathcal{H}$ projection  satisfies 
$L(p^*, \hat{\theta}(\hat{p}_n)) = O(\sigma^2\kappa^2(\epsilon + \sqrt{(d+\log(1/\delta))/n}))$ with probability at least 
$1-\delta$.
\end{theorem}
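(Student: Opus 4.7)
The plan is to assemble three ingredients highlighted in the excerpt: a population-level generalized resilience bound, its preservation under the weakened distance $\tTV_{\mathcal{H}}$, and a VC uniform-convergence step to pass to finite samples. Throughout, the excess loss takes the compact form $L(p,\theta)=(\theta-\theta^*(p))^{\top}\Sigma_{p}(\theta-\theta^*(p))$, where $\Sigma_{p}=\bE_{p}[XX^{\top}]$.

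\textbf{Step 1: resilience of $p^*$.} For any deletion $r\leq p^*/(1-\epsilon)$, solving the normal equations gives $\theta^*(p^*)-\theta^*(r)=-\Sigma_{r}^{-1}\bE_{r}[XZ]$ and therefore $L(r,\theta^*(p^*))=\bE_{r}[XZ]^{\top}\Sigma_{r}^{-1}\bE_{r}[XZ]$. Since $\bE_{p^*}[XZ]=0$ by optimality of $\theta^*(p^*)$, the standard second-moment resilience bound yields, for every unit vector $v$,
\[
|v^{\top}\bE_{r}[XZ]|\leq O(\sqrt{\epsilon})\sqrt{\bE_{p^*}[(v^{\top}X)^{2}Z^{2}]}\leq O(\sqrt{\epsilon})\,\kappa\sigma\sqrt{v^{\top}\Sigma_{p^*}v},
\]
by Cauchy--Schwarz together with the two fourth-moment hypotheses. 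Hypercontractivity also forces $\Sigma_{r}\succeq(1-O(\sqrt{\epsilon}\kappa^{2}))\Sigma_{p^*}$, so $L(r,\theta^*(p^*))=O(\sigma^{2}\kappa^{2}\epsilon)$, i.e.\ $(\downarrow)$ holds with $\rho_{1}=O(\sigma^{2}\kappa^{2}\epsilon)$. An analogous computation for generic $\theta$ yields $(\uparrow)$ with $\rho_{2}=O(\sigma^{2}\kappa^{2}\epsilon)$, placing $p^*$ in a resilience class $\mathcal{G}$ with these parameters.

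\textbf{Step 2: modulus under $\tTV_{\mathcal{H}}$.} With $\tilde{x}=(x,y)\in\bR^{d+1}$ and $v=(-\theta,1)$, the function $f(\tilde{x})=(Y-X^{\top}\theta^*(q))^{2}-(Y-X^{\top}\theta^*(p))^{2}$ has exactly the form $(v_{1}^{\top}\tilde{x})^{2}-(v_{2}^{\top}\tilde{x})^{2}$, so $f\in\mathcal{H}$ and $L(p,\theta^*(q))=\bE_{p}[f]$. For $p,q\in\mathcal{G}$ with $\tTV_{\mathcal{H}}(p,q)\leq\epsilon$, Lemma~\ref{lem.mean_cross_tv} applied to the scalar random variable $f$ produces deletions $r_{p}\leq p/(1-\epsilon)$ and $r_{q}\leq q/(1-\epsilon)$ with $\bE_{r_{p}}[f]\leq\bE_{r_{q}}[f]$. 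A direct identity gives $\bE_{r_{q}}[f]=L(r_{q},\theta^*(q))-L(r_{q},\theta^*(p))\leq L(r_{q},\theta^*(q))\leq\rho_{1}$ by $(\downarrow)$ at $q$, and analogously $L(r_{p},\theta^*(q))=L(r_{p},\theta^*(p))+\bE_{r_{p}}[f]\leq 2\rho_{1}$. Applying $(\uparrow)$ at $p$ then bounds $L(p,\theta^*(q))=O(\rho_{2})=O(\sigma^{2}\kappa^{2}\epsilon)$.

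\textbf{Step 3 and main obstacle.} The sublevel sets $\{\tilde{x}:(v_{1}^{\top}\tilde{x})^{2}-(v_{2}^{\top}\tilde{x})^{2}\geq t\}$ form a semi-algebraic family of bounded polynomial degree indexed by $O(d)$ parameters, so its VC-dimension is $O(d)$, and the VC inequality yields $\tTV_{\mathcal{H}}(p,\hat{p}_{n})=O(\sqrt{(d+\log(1/\delta))/n})$ with probability $1-\delta$. Since $\tTV_{\mathcal{H}}\leq\TV$, the triangle inequality gives $\tTV_{\mathcal{H}}(p^*,\hat{p}_{n})\leq\epsilon':=\epsilon+O(\sqrt{(d+\log(1/\delta))/n})$, and the MD projection $q$ of $\hat{p}_{n}$ onto $\mathcal{G}$ satisfies $\tTV_{\mathcal{H}}(p^*,q)\leq 2\epsilon'$; invoking Step~2 at this effective perturbation level delivers $L(p^*,\theta^*(q))=O(\sigma^{2}\kappa^{2}\epsilon')$, which is precisely the claimed rate. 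The most delicate step is the $(\uparrow)$ verification in Step~1: as $r$ varies over deletions of $p$, both $\theta^*(r)$ and $\Sigma_{r}$ shift simultaneously, and keeping the joint perturbation under control requires hypercontractivity of $v^{\top}X$ uniformly in $v$. This is precisely where the factor $\kappa^{2}$ (rather than $\kappa$) arises, and why a bounded second-moment hypothesis alone would not suffice.
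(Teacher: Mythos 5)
Your proposal is correct and tracks the paper's own argument quite closely: Step~1 mirrors the reduction $L(r,\theta^*(p^*))=\bE_r[XZ]^\top\Sigma_r^{-1}\bE_r[XZ]$ together with the Cauchy--Schwarz/hypercontractivity bound on $\bE_r[XZ]$ and the lower bound $\Sigma_r\succeq(1-\kappa^2\sqrt{\epsilon})\Sigma_{p^*}$ used in the paper's verification that $\GG(\psi)\subset\GG^\TV(\rho_1,8\rho_1,\eta)$; Step~2 is exactly Lemma~\ref{lem.appendix_finite_linreg_TV}'s use of the mean-cross lemma on $f=\ell(\theta^*(q),X)-\ell(\theta^*(p),X)$; and Step~3 is the VC uniform-convergence step of Proposition~\ref{prop.tTV_general}. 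The only point you gloss over is that crossing the mean of a single $f$ only delivers $L(r_p,\theta^*(q))\leq 2\rho_1$, so $(\uparrow)$ must be instantiated with premise $2\rho_1$ rather than $\rho_1$ (the paper handles this by shrinking the target set to $\GG^\TV_\downarrow(\rho_1,\eta)\cap\GG^\TV_\uparrow(2\rho_1,\rho_2,\eta)$); this is only a constant and is absorbed in your $O(\cdot)$, so it is not a gap.
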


When $n \gg d/\epsilon^2$, our performance matches the infinite-data limit $O(\sigma^2\kappa^2\epsilon)$ up to constants. In contrast, \citet{klivans2018efficient} achieve a weaker error of $O(\sqrt{\epsilon})$ under the same assumptions on $p^*$ when $n \gg \mathsf{poly}(d^4, 1/\epsilon)$.  As a follow-up of our work, \citet{bakshi2020robust} achieves $O(\epsilon)$ error with sample size $O((d\log(d))^2)$, which is quadratic in dimension.  %

For the task of joint mean and covariance  estimation, the target cost functions considered are    $\|\Sigma_{p^*}^{-1/2}(\mu_{p^*} -\hat \mu) \|_2$ and $
    \| I_d - \Sigma_{p^*}^{-1/2}\hat\Sigma\Sigma_{p^*}^{-1/2}\|_2 $, where $\hat \mu$ and $\hat \Sigma$ are estimation of the mean and covariance, separately.  We  show that the same $\sH$ for mean estimation also works for joint mean and covariance estimation, which 
yields the following: %
\begin{theorem}[A special case of Theorem~\ref{thm.tTV_joint_multiplicative}]\label{thm.intro_tTV_joint_multiplicative}
Denote $\tilde \epsilon = 2\epsilon + 2C^{\mathsf{vc}}\sqrt{\frac{d+1+\log(1/\delta)}{n}}$. Assume $\tilde \epsilon < 1/2$, and that $p^*$ satisfies
\begin{align*} 
    \bE_{p^*}\left[ {({v^{\top}(X-\mu_p)})^4}\right] \leq {\kappa^4\bE_{p^*}[(v^\top (X-\mu_p))^2]^2} \text{ for all } v \in \bR^d.
\end{align*}
Let $\hat \mu(\hat p_n)$, $\hat \Sigma(\hat p_n)$ be  the MD functional estimator  under $\tTV_\mathcal{H}$ projection. Then there exist some $C>0$ such that when $\tilde \epsilon \leq C$, with probability at least $1-\delta$, 
\begin{align*}
     \|\Sigma_{p^*}^{-1/2}(\mu_{p^*} -\hat \mu(\hat p_n)) \|_2 & \lesssim \kappa\tilde \epsilon^{3/4}, \\
    \| I_d - \Sigma_{p^*}^{-1/2}\hat \Sigma(\hat p_n)\Sigma_{p^*}^{-1/2}\|_2 & \lesssim \kappa^2 \tilde \epsilon^{1/2}.
\end{align*}
\end{theorem}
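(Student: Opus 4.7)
My plan is to follow the MD-functional template of Section~\ref{sec.robust_TV}: first I will reduce the finite-sample problem to a population-level modulus bound via the VC inequality, and then I will bound the modulus by applying the mean-cross lemma (Lemma~\ref{lem.intro_midpoint_KS}) to two different families of one-dimensional test functions---linear for the mean and quadratic for the covariance.

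For the reduction, I will note that the set of half-spaces $\{x\in\bR^d: v^\top x\ge t\}$ has VC dimension $d+1$, so the VC inequality will give $\tTV_{\sH}(p,\hat p_n)\le C^{\mathsf{vc}}\sqrt{(d+1+\log(1/\delta))/n}$ with probability $\ge 1-\delta$. Since $\tTV_{\sH}\le\TV$ and $\TV(p^*,p)\le\epsilon$, two triangle inequalities---together with the fact that $p^*\in\mathcal{M}$, so that the MD projection $q$ onto $\mathcal{M}$ satisfies $\tTV_{\sH}(q,\hat p_n)\le\tTV_{\sH}(p^*,\hat p_n)$---produce $\tTV_{\sH}(q,p^*)\le\tilde\epsilon$. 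It thus suffices to prove the two stated inequalities for every pair $p,q\in\mathcal{M}$ with $\tTV_{\sH}(p,q)\le\tilde\epsilon$.

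For the covariance modulus (rate $\tilde\epsilon^{1/2}$), I will fix a unit vector $v$ and exploit that $\sH$ is closed under sign-flip, so both tails of the one-dimensional law of $Y=v^\top X$ under $p$ and $q$ agree up to $\tilde\epsilon$ and hence the Kolmogorov--Smirnov distance between the laws of $Y^2$ (a monotone transform of $|Y|$) is at most $2\tilde\epsilon$. The mean-cross lemma applied to $Y^2$ will then produce deletions $r_p\le p/(1-2\tilde\epsilon)$, $r_q\le q/(1-2\tilde\epsilon)$ whose second moments in direction $v$ are crossed, while the bounded fourth-moment hypothesis gives $\mathsf{Var}((v^\top(X-\mu))^2)\le\kappa^4\bE[(v^\top(X-\mu))^2]^2$ and therefore a variance-based resilience bound that perturbs the second moment in direction $v$ by at most a factor $O(\kappa^2\sqrt{\tilde\epsilon})$. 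Chaining will give the multiplicative bound $v^\top M_q v=(1\pm O(\kappa^2\sqrt{\tilde\epsilon}))\,v^\top M_p v$ for the second-moment matrices $M$; substituting $v=\Sigma_p^{-1/2}u$ with $\|u\|_2=1$ and subtracting the rank-one mean contribution (of the same or smaller order) then yields $\|I-\Sigma_p^{-1/2}\Sigma_q\Sigma_p^{-1/2}\|_2\lesssim\kappa^2\tilde\epsilon^{1/2}$.

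For the mean modulus (rate $\tilde\epsilon^{3/4}$), I will apply mean-cross directly to $Y=v^\top X$, producing deletions whose means are crossed. The $k=4$ moment-resilience bound from Section~\ref{sec.population_TV} then gives $|\bE_p[Y]-\bE_{r_p}[Y]|\lesssim\kappa\sqrt{v^\top\Sigma_p v}\,\tilde\epsilon^{3/4}$, and similarly under $q$. Summing and setting $v=\Sigma_p^{-1/2}u$ with $\|u\|_2=1$ will yield a bound of the form $\kappa\tilde\epsilon^{3/4}\bigl(1+\sqrt{u^\top\Sigma_p^{-1/2}\Sigma_q\Sigma_p^{-1/2}u}\bigr)$, and the previous step shows the second summand is $O(1)$, which closes the argument. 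The main obstacle is precisely this apparent circular dependence: the normalized mean inequality needs $\Sigma_q$ and $\Sigma_p$ to be multiplicatively comparable, while a clean covariance bound cannot simply ignore the mean shift. I will resolve it by proving the covariance bound first in terms only of $\Sigma_p$-dependent quantities, carrying the rank-one mean-shift contribution as a self-correcting lower-order term, and only afterwards closing the loop on the mean. A secondary subtlety is that $\tTV_{\sH}$-control over linear half-spaces must be upgraded to control over quadratic level sets $\{(v^\top x)^2\ge t\}$; this is free via the $\pm v$ symmetry of $\sH$, and is precisely the reason the small linear VC class suffices for both parameters simultaneously.
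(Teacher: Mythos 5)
Your reduction step (VC inequality, triangle inequalities, and the resulting modulus bound over pairs in $\mathcal{M}$ with $\tTV_{\sH}\le\tilde\epsilon$) is the same as the paper's. The modulus bound itself, however, is a genuinely different argument. The paper does \emph{not} use the mean-cross lemma for this multiplicative joint estimate. Instead it first shows (via Lemma~\ref{lem.tail_bound}) that $v^\top\Sigma_{p_1}^{-1/2}(X-\mu_{p_1})$ has small tails under $p_1$ and therefore under $p_2$ as well; it then truncates both $p_1$ and $p_2$ to distributions $r_{p_1},r_{p_2}$ supported on a bounded window of width $\asymp\sqrt{\rho(\tilde\epsilon)/\tilde\epsilon}$, and exploits the \emph{integral representation} of mean and covariance as integrals of tail-probability differences over that bounded window, directly controlled by $\tTV_\sH$; the additive bounds are then converted to the multiplicative form through PSD ordering. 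You instead apply the mean-cross lemma (Lemma~\ref{lem.intro_midpoint_KS}) to $Y=v^\top X$ and $Y^2$ and invoke moment-resilience. This works, but it has to confront the centering mismatch you correctly identify: the KS control and the mean-cross deletions live in a common centering (say $\mu_p$), yet the hypercontractivity of $q$ is stated at $\mu_q$. The paper's truncation-plus-integral-representation route sidesteps this entirely, because truncating at a $p_1$-dependent threshold and integrating tail differences is agnostic to which distribution supplies the center. Your route instead requires the observation that even after the center shift, $\mathsf{Var}_q\bigl((v^\top(X-\mu_p))^2\bigr)\lesssim\kappa^4\,\bE_q[(v^\top(X-\mu_p))^2]^2$ (one can verify this with AM--GM on the cross terms involving $v^\top(\mu_p-\mu_q)$, using $\kappa\ge1$), after which the three scalar inequalities you obtain for $s:=u^\top\Sigma_p^{-1/2}\Sigma_q\Sigma_p^{-1/2}u$ and $m:=|u^\top\Sigma_p^{-1/2}(\mu_p-\mu_q)|$ close as a fixed-point argument: $s=O(1)$, hence $m\lesssim\kappa\tilde\epsilon^{3/4}$, hence $|s-1|\lesssim\kappa^2\sqrt{\tilde\epsilon}$. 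I'd make that variance bound explicit, since it is the load-bearing fact your sketch leaves implicit; with it written out, your version is a clean alternative that keeps the mean-cross philosophy uniform across mean, regression, and joint estimation, whereas the paper's integral-representation argument is tailored specifically to this multiplicative cost (they do use mean-cross for the additive-cost variant in Theorem~\ref{thm.tTV_joint_correct}).
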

This matches the dependence on $\epsilon$ in~\citet{kothari2017outlier} while improving the sample complexity  from $O((d\log(d))^k)$ to $O(d)$.  Theorem~\ref{thm.tTV_joint_multiplicative} in the main text provides a more general population-level result under the assumption of bounded Orlicz norm. %

\paragraph{Second approach: expand the set (Section~\ref{sec.expand})}\label{sec.intro_expand}
We show in this section that if the target set is large enough to cover the true ``uncorrupted'' empirical distribution, 
then projecting under $\TV$ distance has good finite-sample guarantees even though the population and empirical 
distribution are far apart in $\TV$. This analysis is closest in spirit to existing analyses of computationally-efficient 
estimators, and versions of it appear implicitly in the computation-oriented literature
\citep{lai2016agnostic,diakonikolas2019robust,diakonikolas2018robustly,steinhardt2018resilience,steinhardt2018robust}. 

For this analysis, we consider an adaptive corruption model (Definition~\ref{def.adaptivecont}), 
where we first sample from $p^*$ to get the empirical true distribution $\hat p_n^*$, and the adversary inspects the samples and 
modifies an $\epsilon'$ fraction of them. When $\epsilon' \asymp\epsilon+1/n$, the adaptive corruption is stronger than the 
level-$\epsilon$ oblivious corruption discussed above (where we corrupt the population distribution before sampling).

We describe the intuition of the analysis  here.  %
In all the tasks we considered, it suffices to   
find an estimator that does well on $\hat p_n^*$ due to standard generalization bounds ($L(\hat p_n^*, \theta)$ small implies $L(p^*, \theta)$ small when $n$ is large enough). Because of this, it suffices for the projection algorithm to find a $q$ that is close to 
$\hat p_n^*$, even if it is far from $p^*$ itself. More specifically, 
we rely on the following three conditions for the analysis to go through:
\begin{itemize}
\item $\MM$ is large enough (hence the name ``expand the set''): $\hat p_n^* \in \MM$ with high probability. %
\item The modulus is still bounded: $\modu(\MM, \epsilon)$ is small.
\item The empirical loss $L(\hat p_n^*, \theta)$ is a good approximation to the population loss $L(p^*, \theta)$.
\end{itemize}

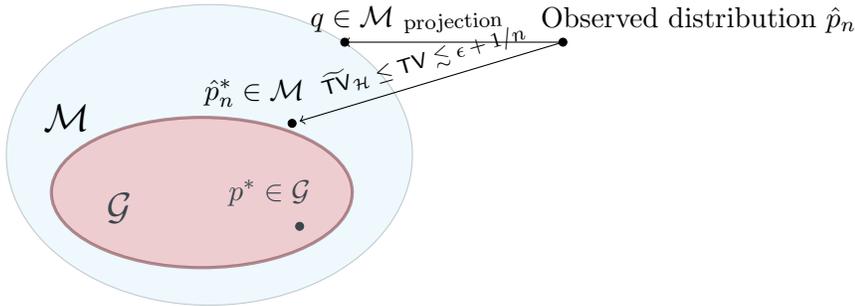
\begin{figure}[!htbp]
\begin{tikzpicture}
  \begin{scope}[shift={(0,0)}]
    {\draw [opacity=0.6, very thick, fill=red!40, draw= red!40!black](2.4, 4.5) {ellipse (2cm and 1cm)};
     \draw  [](3.3, 4.5) node {$p^*\in\GG$};
     \draw  [](1.3, 4.3) node {\Large $\GG$};
     \draw [radius=1.5pt,fill](3.7, 4.05) circle;
    }
      \draw [opacity=0.3, fill= cyan!20, draw= cyan!20!black](2.5, 5.0) {ellipse (2.7cm and 2 cm)};
    
     \draw  [](0.6, 5.5) node {\Large $\cM$};
       
     \draw  [](3.1, 5.82) node {$\hat p_n^*\in\cM$};
      \draw [radius=1.5pt,fill](3.60,5.42) circle;
    \draw  [](4.4, 6.8) node {$q\in\cM$};
      \draw [radius=1.5pt,fill](4.3,6.5) circle;
       \draw (7.2, 6.5) edge[->] node[above, rotate=16, xshift=-0.5]{{\scriptsize $\tTV_\sH\leq  \TV\lesssim \epsilon + 1/n$}} (3.7,5.45); 
    \draw (7.2, 6.5) edge[->] node[above, rotate=2]{{\scriptsize projection}} (4.3,6.5) ;
     \draw  [](9,6.8) node {Observed distribution $\hat p_n$};
      \draw [radius=1.5pt,fill] (7.2, 6.5) circle;
    \end{scope}
\end{tikzpicture}
\caption{Illustration of the analysis for \emph{expand the set}.}
\label{fig.intro_hatp}
\end{figure}

We actually need slightly weaker properties than the three above, as we discuss in Section~\ref{sec.expand}; for instance 
we only need to bound a certain generalized modulus that is smaller than $\modu(\MM, \epsilon)$.  %
In addition, the same argument can be applied for $\tTV_\mathcal{H}$ projection as well as $\TV$.

This analysis strategy requires a good choice of $\mathcal{M}$ such that its modulus is bounded and $\hat p_n^* \in\mathcal{M}$ with high probability.  As a concrete example, suppose the true distribution has bounded $k$-th moment. Although $\hat p_n^*$ does not have small $k$-th moment with less than $d^{k/2}$ samples, 
our key insight is that we can bound a certain \emph{linearized} $k$-th moment with only $\Theta(d)$ samples, which is {sufficient} to ensure {resilience} (see Lemma~\ref{lem.empirical_psi_resilient} for a rigorous statement and extension to any Orlicz norm). 
For instance, if $p^*$ has $4$-th moments bounded by $\sigma^4$ then we will bound $\sup_{\|v\|_2 \leq 1} \bE_{\hat p_n^*}[\psi(|v^{\top}X|)]$, 
where $\psi(x)$ is the smallest convex function on $[0,\infty)$ that coincides with $x^4$ for small $x$ (for instance, for $x\leq 4\sigma$ in the case that $n=d$). %
With the linearized moment technique, we are able to  design an MD functional  for
bounded $k$-th moment  isotropic distributions  as below:  %

\begin{theorem}[Theorem~\ref{thm.projection_kth_moment_identity_covariance}]\label{thm.intro_kth}
Suppose that $p^*$ has mean $\mu$, identity covariance, and bounded $k$-th moment: $\sup_{\|v\|_2 \leq 1} \bE_{p^*}[|v^{\top}(X-\mu)|^k] \leq \sigma^k$. 
Then, given $n$ samples, the MD functional which  projects the corrupted distribution $\hat p_n$ to the set of distributions with bounded covariances under either $\TV$ or $\tTV$ distance leads to an estimator with $\ell_2$ error 
$O(\sigma \cdot (\epsilon^{1-1/k} + \sqrt{d\log d / n}))$ with high probability.
\end{theorem}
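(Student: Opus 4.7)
The plan is to invoke the \emph{expand the set} framework of Section~\ref{sec.expand}. Take the expanded target set to be $\mathcal{M} = \{q : \|\mathrm{Cov}_q(X)\|_2 \leq C \sigma^2\}$ for a suitable absolute constant $C > 1$. The MD functional outputs $q = \argmin_{q \in \mathcal{M}} D(q, \hat p_n)$ where $D \in \{\TV, \tTV_{\mathcal{H}}\}$, with $\mathcal{H}$ chosen as the class of linear threshold functions for mean estimation so that the analyses of both the ``weaken-the-distance'' and ``expand-the-set'' arguments are available.

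The first step is to show that $\hat p_n^* \in \mathcal{M}$ with high probability once $n \gtrsim d \log d$. Since $p^*$ has identity covariance and bounded $k$-th moment for some $k \geq 4$, a matrix concentration argument (matrix Bernstein applied to the truncated outer products $X_i X_i^{\top}\mathbf{1}\{\|X_i\| \leq R\}$ with threshold $R \asymp \sigma n^{1/k}$, combined with a union bound on the heavy-tail residue using the $k$-th moment) yields $\|\hat \Sigma_{\hat p_n^*} - I_d\|_2 \leq O(\sqrt{d \log d / n})$. Choosing $C$ slightly larger than $1$ places $\hat p_n^*$ inside $\mathcal{M}$ with probability $1 - \delta$.

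The second step is to control the appropriate modulus. By the adaptive-corruption setup of Definition~\ref{def.adaptivecont}, $D(\hat p_n^*, \hat p_n) \leq \epsilon + O(1/n)$, so by the optimality of the projection and the triangle inequality, $D(q, \hat p_n^*) \leq 2\epsilon + O(1/n) =: \tilde \epsilon$. The crucial point, emphasized in Section~\ref{sec.expand}, is that one does not need the symmetric modulus over $\mathcal{M} \times \mathcal{M}$ (which would only yield $O(\sigma \sqrt{\tilde\epsilon})$ from covariance alone); instead one uses the asymmetric ``generalized modulus'' in which one endpoint is $\hat p_n^*$. By Lemma~\ref{lem.empirical_psi_resilient}, $\hat p_n^*$ inherits a bounded \emph{linearized} $k$-th moment from $p^*$ with only $O(d)$ samples, which promotes it to an $(O(\sigma \tilde\epsilon^{1-1/k}), \tilde\epsilon)$-resilient distribution for mean estimation. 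Applying the mid-point lemma (or the mean-cross Lemma~\ref{lem.mean_cross_tv} in the $\tTV$ case) produces a common deletion $r$ of $\hat p_n^*$ and $q$; $r$ then also inherits the linearized $k$-th moment from $\hat p_n^*$. The mean gap $\|\mu_{\hat p_n^*} - \mu_r\|$ is controlled at rate $O(\sigma \tilde\epsilon^{1-1/k})$ by resilience, and $\|\mu_q - \mu_r\|$ is controlled using that $q = (1-\tilde\epsilon)r + \tilde\epsilon\delta$ together with the covariance bound on $q$ and the inherited $k$-th moment of $r$, which together pin down $\mu_\delta$ sharply enough that the joint rate remains $O(\sigma \tilde\epsilon^{1-1/k})$.

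The third step is a triangle inequality: $\|\mu_q - \mu_{p^*}\| \leq \|\mu_q - \mu_{\hat p_n^*}\| + \|\mu_{\hat p_n^*} - \mu_{p^*}\|$. The first term is $O(\sigma \tilde\epsilon^{1-1/k}) = O(\sigma(\epsilon^{1-1/k} + n^{-(1-1/k)}))$ by the modulus bound, and the second term is $O(\sigma \sqrt{d/n})$ by standard mean concentration for bounded $k$-th moment distributions. Both finite-sample contributions are dominated by $O(\sigma \sqrt{d \log d/n})$, yielding the claimed error. The hardest part is the second step: the $\epsilon^{1-1/k}$ improvement over the naive $\sqrt{\epsilon}$ rate that one would get from covariance alone hinges on carrying the linearized $k$-th moment of $\hat p_n^*$ through the midpoint and combining it with the covariance constraint on $q$, rather than treating $q$ and $\hat p_n^*$ symmetrically.
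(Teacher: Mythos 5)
Your plan follows the paper's broad strategy (expand the set, project onto bounded covariance, use linearized moments to get resilience of the empirical reference, combine via a generalized modulus, finish by triangle inequality), but Step~1 as written contains a genuine gap: $\hat p_n^*$ is \emph{not} in the bounded-covariance set $\mathcal M$ with high probability when $n \asymp d \log d$, and the matrix-Bernstein-plus-residue argument cannot rescue this. With $n$ samples from a bounded-$k$-th-moment distribution, the tail bound $\bP(\|X\| \geq t) \lesssim (\sigma\sqrt{d}/t)^k$ means that with constant probability some sample $X_i$ satisfies $\|X_i\| \gtrsim \sigma\sqrt{d}\,n^{1/k}$, which alone contributes $\|X_i\|^2/n \gtrsim \sigma^2 d\, n^{2/k-1}$ to the spectral norm of the empirical covariance; for $n \asymp d\log d$ this is $\sigma^2 d^{2/k}/(\log d)^{1-2/k} \gg 1$, far larger than the $O(\sqrt{d\log d/n})$ you claim. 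So ``choosing $C$ slightly larger than $1$'' does not place $\hat p_n^*$ inside $\mathcal M$.

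This is precisely why the paper's proof introduces an intermediate distribution $\hat p'$: one truncates the \emph{population} $p^*$ at radius $R \asymp \sigma\sqrt{kd}/\epsilon_1^{1/k}$ to get $p'$ with $\TV(p^*,p') \leq \epsilon_1$, then uses the \emph{empirical distribution of $p'$} as the surrogate for $\hat p_n^*$. Because $p'$ has bounded support, matrix Bernstein does control $\|\hat\Sigma_{\hat p'} - I\|_2$, and Lemma~\ref{lem.coupling_TV} shows $\hat p'$ can be coupled to $\hat p_n^*$ with small TV. This ``perturb $\hat p_n^*$ to $\hat p'$'' step is not optional; it is exactly the mechanism that makes Step~1 go through, and its absence breaks your proposal. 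Your Step~2 is somewhat vague (it refers to the ``inherited $k$-th moment of $r$'' where the paper's Lemma~\ref{lem.empirical_identity_cov_modulus} actually requires a $\lambda_{\min}$ lower bound on the covariance of deletions of the reference distribution, derived via Lemma~\ref{lem.kth_deletion_set}), but the idea is salvageable; Step~1, as stated, is not.
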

In the past literature, the best general analysis yields a suboptimal complexity of $d^{1.5}$ \citep{steinhardt2018resilience}, while sum-of-squares based approach requires %
at least $d^{k/2}$ samples~\cite{kothari2017outlier}.
Our analysis achieves the optimal infinite sample error $\epsilon^{1-1/k}$ and near-linear sample complexity.  When combined with the statistical results in Theorem~\ref{thm.intro_kth}, the filtering algorithm in~\cite{diakonikolas2016robust, diakonikolas2017being, diakonikolas2019recent, zhu2020robust} achieves efficient computation. The follow-up work in~\citet{diakonikolas2020outlier} improved our results with a sub-gaussian rate and the same dependence on dimension.

\subsection{Main results for $W_1$ corruption (Section~\ref{sec.robust_W1})}

\subsubsection{Generalized resilience set for $W_1$ (Section~\ref{sec.population_w1})}\label{sec.W1_population_intro}

We show how to extend the idea of resilience to the Wasserstein distance $W_1$ (many of the ideas below also hold for 
Wasserstein distances over an arbitrary metric). As in the $\TV$ case, we focus on bounding the modulus 
of continuity, which remains an upper bound on the error of the MD functional. 

For $\TV$ distance, bounding the modulus crucially relied on the midpoint property that 
any $p_1$, $p_2$ have a midpoint $r$ obtained via \emph{deletions} of $p_1$ or $p_2$. 
In other words, we used the fact that any $\TV$ perturbation can be decomposed into a ``friendly'' 
operation (deletion) and its opposite (addition). We think of deletion as friendlier than addition, 
as the latter can move the mean arbitrarily far by adding probability mass at infinity.

To extend this to other Wasserstein distances, we need to identify a similar way of decomposing a 
Wasserstein perturbation into a friendly perturbation and its inverse. To do this, we 
re-interpret deletion in the following useful way:
\emph{Deletion is equivalent to movement towards the mean under $\TV$}. 
More precisely:
\begin{quote}
$\hat{\mu}$ is a possible mean of an $\epsilon$-deletion of $p$ if and only if some 
$r$ with mean $\hat{\mu}$ can be obtained from $p$ by moving points \emph{towards} $\hat{\mu}$ 
with $\TV$ distance at most $\epsilon$.
\end{quote}
This is more easily seen in the following diagram:
\begin{center}
\begin{tikzpicture}[
  implies/.style={double,double equal sign distance,-implies},
]
\begin{axis}[
  no markers, domain=0:8, samples=50,
  axis lines*=left,
  every axis y label/.style={at=(current axis.above origin),anchor=south},
  every axis x label/.style={at=(current axis.right of origin),anchor=west},
  height=5cm, width=12cm,
  xtick=\empty, ytick=\empty,
  enlargelimits=false, clip=false, axis on top,
  axis line style = thick,
  grid = major
  ]
  \addplot [very thick,fill=cyan!20, draw=none, domain=0:2.2] {gauss(3.5,0.8,1)} \closedcycle;
  \addplot [very thick,fill=cyan!50, draw=cyan!50!black, domain=2.2:8] {gauss(3.5,0.8,1)} \closedcycle;
\draw [color=blue!75, yshift=0.0cm, dashed](axis cs:3.5,0.01) -- (axis cs:3.5,0.49);
\draw [color=blue!75, yshift=-0.27cm](axis cs:3.5,0) node {$\mu_p$};
\draw [color=blue!60!black, yshift=0.0cm, dashed](axis cs:3.9,0.01) -- (axis cs:3.9,0.40);
\draw [color=blue!60!black, yshift=-0.27cm](axis cs:3.9,0) node {$\mu_r$};
\draw [color=blue!60!black, yshift=0.0cm, ultra thick, -o](axis cs:3.9,0.00) -- (axis cs:3.9,0.53);
\draw [color=blue!66](axis cs:2.2,0.07) edge[implies] (axis cs:3.9,0.07);
\end{axis}
\end{tikzpicture}
\end{center}
Here we can equivalently either delete the left tail of $p$ or 
shift all of its mass to $\mu_r$; both yield a modified distribution 
with the same mean $\mu_r$. This motivates the following informal definition for friendliness in $W_1$:
\begin{quote}
    A distribution $r$ is an $\epsilon$-friendly perturbation of $p$ for a function $f(x)$ and distance $W_1$ if and only if one can transport $X\sim p$ to $Y\sim r$ with cost no more than $\epsilon$, while ensuring $f(Y)$ is between $f(X)$ and $\bE_{r}[f(Y)]$ almost surely.
\end{quote}

The friendliness is defined only in terms of one-dimensional functions $f : \mathcal{X} \to \bR$; we will see how to handle 
higher-dimensional objects later. 
Intuitively, a friendly perturbation is a distribution $r$ for which there exists a coupling that ``squeezes'' $p$ to $\mu_r$. 
The key property of friendliness is the \emph{midpoint} lemma, which states that
every pair of nearby distributions has a friendly midpoint, analogously to the $\TV$ case:
\begin{lemma}[formal version Lemma~\ref{lem.wc_friend_pert}]\label{lemma.midpointintro}
    Given any $p,q$ with $W_{1}(p,q)\leq \epsilon$ and any  one-dimensional function $f$, there exists an $r$ that is an 
$\epsilon$-friendly perturbation of \emph{both} $p$ and $q$ for the function $f$.
\end{lemma}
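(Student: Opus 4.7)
The plan is to construct the midpoint $r$ through a coupling argument combined with a fixed-point determination of its $f$-mean, in direct analogy with the $\TV$ midpoint lemma via the ``deletion = shift toward the mean'' reinterpretation stressed right before the lemma. First I would fix an optimal $W_1$ coupling $\pi$ of $p$ and $q$, so that $(X,Y)\sim\pi$ with $X\sim p$, $Y\sim q$, and $\mathbb{E}_\pi[\|X-Y\|]\leq\epsilon$. The common midpoint $r$ will be built by letting $p$-samples and $q$-samples travel along this coupling toward a reference value $\mu^*\in\mathbb{R}$, which is destined to be the $f$-mean of $r$.

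Step 1 (parametrize candidates). For each target mean $\mu\in\mathbb{R}$, define a selection rule that, given $(X,Y)$ and auxiliary randomness, produces $Z_\mu\in\{X,Y\}$ as follows: if $f(X)$ and $f(Y)$ lie on the same side of $\mu$, pick whichever of $X,Y$ has $f$-value closer to $\mu$; in the straddle case $f(X)\leq\mu\leq f(Y)$ (or the symmetric one), use \emph{two different} couplings --- the $p$-to-$r$ coupling sets $Z_\mu=X$ while the $q$-to-$r$ coupling sets $Z_\mu=Y$ --- with mixing weights chosen so that both couplings yield the same marginal $r_\mu\eqdef\operatorname{Law}(Z_\mu)$. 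Step 2 (pointwise friendliness). By construction, along each coupling $f(Z_\mu)$ lies between $f(X)$ (resp.\ $f(Y)$) and $\mu$ almost surely, and the transport distance is at most $\|X-Y\|$ pointwise, so its expectation is at most $\epsilon$. Step 3 (close the loop). The map $\mu\mapsto \mathbb{E}_{r_\mu}[f]$ is continuous and takes values between the essential minimum and maximum of $f(X)\wedge f(Y)$ and $f(X)\vee f(Y)$; an intermediate-value argument delivers $\mu^*$ with $\mu^*=\mathbb{E}_{r_{\mu^*}}[f]$, and $r\eqdef r_{\mu^*}$ is the desired friendly midpoint.

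The main obstacle I expect is precisely the straddle case. Pointwise no single $Z\in\{X,Y\}$ can simultaneously satisfy both friendliness inequalities, so the two couplings to $p$ and $q$ must carry genuinely different joint laws while still agreeing on the marginal $r$. Designing a measurable randomization that equalizes these marginals without inflating transport cost, and verifying continuity of $\mu\mapsto\mathbb{E}_{r_\mu}[f]$ under this randomization, is the delicate part; once this is in place, the rest is standard and mirrors the $\TV$ midpoint argument from Section~\ref{sec.population_TV}. A cleaner alternative that I would also consider is to first prove the lemma in the one-dimensional pushforward picture (where quantile couplings make the straddle case transparent) and then lift the construction back to the ambient space by composing with $\pi$, thereby inheriting the transport-cost bound from $\pi$ directly.
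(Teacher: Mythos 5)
Your construction restricts the midpoint variable $Z_\mu$ to lie in the set $\{X,Y\}$, and this is where the argument breaks down. In the straddle case $f(X)\leq\mu\leq f(Y)$, friendliness toward $p$ forces $f(Z_\mu)\in[f(X),\mu]$ almost surely (so $Z_\mu=Y$ is disallowed unless $f(Y)=\mu$), while friendliness toward $q$ forces $f(Z_\mu)\in[\mu,f(Y)]$ (so $Z_\mu=X$ is disallowed unless $f(X)=\mu$). Hence the $p$-to-$r$ coupling must push mass onto the set of $X$-values and the $q$-to-$r$ coupling must push the corresponding mass onto the set of $Y$-values, and there is in general no ``mixing weight'' that can reconcile the two marginals. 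A minimal counterexample: take $p=\delta_{x_0}$, $q=\delta_{y_0}$ with $f(x_0)=-1$, $f(y_0)=1$, $\|x_0-y_0\|=\epsilon$, and try $\mu=0$; the $p$-side coupling must output $x_0$ and the $q$-side coupling must output $y_0$, so the two candidate marginals are $\delta_{x_0}$ and $\delta_{y_0}$, which cannot be equalized by any randomization among $\{x_0,y_0\}$. Your intermediate-value step cannot even get started here because $\mu\mapsto\mathbb{E}_{r_\mu}[f]$ is not well defined on the straddle region.

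The paper's proof (Appendix~\ref{proof.coupling_unimodal}) avoids this exactly by allowing $r$ to charge points that are \emph{neither} $X$ nor $Y$: given a level $u$, it picks $z_{xy}(u)$ with $f(z_{xy}(u))$ equal to the truncation of $u$ to $[f(x),f(y)]$, and uses Assumption~\ref{ass:topo} (the intermediate-value/path-metric property) to guarantee $\max\bigl(c(x,z),\,c(z,y)\bigr)\leq c(x,y)$. For $W_1$ with the Euclidean metric and continuous $f$ one simply takes $z$ on the segment from $x$ to $y$; this is the ``stop the movement from $p_1$ to $p_2$ at $\mu_r$'' picture in Figure~\ref{fig.midpoint_wc}. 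Only then does the fixed-point step on $u\mapsto u-\mu(u)$ go through, because the map is genuinely defined and monotone. Your closing alternative (prove in the $1$-d pushforward picture and lift) runs into a related issue: the transport cost is measured in the ambient space $\mathcal{X}$, not on $\mathbb{R}$, so ``moving $f(X)$ to the intermediate value $\mu$'' does not automatically supply a short move in $\mathcal{X}$ unless you again invoke the segment or intermediate-value structure; once you do, you have essentially reproduced the paper's construction.
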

We generalize resilience to Wasserstein distances by saying that a distribution is resilient if $\bE_r[f(X)]$ is close to 
$\bE_p[f(X)]$ for every $\epsilon$-friendly %
perturbation $r$ and every function $f$ lying within some appropriate family $\sF$.
For instance, for second moment estimation we would consider functions $f_v(x) = \langle x, v \rangle^2$ with $\|v\|_2 = 1$.
For more general losses $L(p, \theta)$, we show how to obtain an appropriate family $\sF$ via the Fenchel-Moreau 
representation~\cite{borwein2010convex} of $L$, as long as $L$ is convex in $p$ for fixed $\theta$. Convexity in $p$ is a mild condition that often holds, 
e.g.~any loss of the form $L(p, \theta) = \bE_{X \sim p}[\ell(\theta; X)]$ is linear (and hence convex) in $p$.
This construction  yields robust estimators in concrete cases, 
such as the $W_1$ linear regression
example. We discuss it in detail in the next section.

\subsubsection{Finite sample results for $W_1$ corruption (Section~\ref{sec.finite_sample_w1})}

We produce finite sample algorithms under $W_1$ perturbation by weakening $W_1$ to a distance $\tW_1$, similar to the 
$\TV \to \tTV$ weakening discussed above. The Kantorovich-Rubinstein duality yields the representation $W_1(p,q) = \sup\{ \bE_{p}[f(X)] - \bE_{q}[f(X)] \mid f \text{ is $1$-Lipschitz}\}$, so to weaken the $W_1$ distance we pick out a subset of $1$-Lipschitz functions; 
specifically, we will take the set of all $1$-Lipschitz linear or rectified linear functions.

We show that the weakened distance $\tW_1$  gives a mean cross lemma similar to that under $\TV$ perturbation: if $\tW_1(p, q) \leq \epsilon$ then for any convex function $f$ there are $7\epsilon$-friendly perturbations %
$r_p$, $r_q$ of $p$ and $q$ such that that $\bE_{r_q}[f(X)] \leq \bE_{r_p}[f(X)]$ (Lemma~\ref{lem:tw1_cross_mean}).
The main idea is to use the \emph{integral representation} of convex functions~\citep{lukevs2009integral} in one dimension, which states that any one-dimensional convex function can be decomposed 
into a weighted integral of linear or rectified linear $1$-Lipschitz functions. %

We present the first finite sample result for $W_1$ perturbations based on the two analysis techniques above. %
Assume $W_1(p^*, p )\leq \epsilon$ and let $\hat{p}_n$ denote the corrupted empirical distribution given $n$ samples. For linear regression where the cost function considered is    the predictive loss $L(p, \theta) = \bE_p[(Y-X^{\top}\theta)^2]$,  we obtain the following:

\begin{theorem}[A special case of Theorem~\ref{thm.linreg_tw_1_proj}]\label{thm.w1_intro}

For $(X, Y)\sim p^*$,  let $Z = Y-X^{\top}\theta^*(p^*)$ denote the residual error, where $\theta^*(p^*) \triangleq \argmin_{\theta \in \Theta} L(p^*,\theta), \Theta = \{\theta \mid  \|\theta\|_2 \leq R\}$. Let $X' = [X, Z] \in \bR^{d+1}$ be the concatenation of $X$ and $Z$. Assume that $p^*$ satisfies:
\begin{align}
    \bE_{p^*}[Z^2] \leq \sigma_2^2,  \bE_{p^*}\bigg[{|v^{\top}X'|^{3}}\bigg] \leq {\sigma_1^{3}} 
  \text{ for all } v\in\bR^{d+1}, \|v\|_2=1.
\end{align} 
Let $\hat\theta(\hat p_n)$  be the MD functional estimator under $\tW_1$ projection and $\bar R = \max(R, 1)$. Then with high probability, $L(p^*, \hat\theta(\hat p_n)) \leq \sigma_2^2 +  O(\sigma_1^{3/2}\bar R^{5/2}\sqrt{\tilde \epsilon} +  \bar R^4\tilde \epsilon^2)$. Here $\tilde \epsilon = O\left(\epsilon + \sigma_1 \bar R\sqrt{ d/n}+ \sigma_1 \bar R/n^{1/4}\right)  $.
\end{theorem}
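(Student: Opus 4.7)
The plan is to apply the \emph{weaken the distance} strategy of Section~\ref{sec.finite_sample_algorithm_weaken} with the estimator $\hat\theta(\hat p_n) = \theta^*(q)$, where $q = \argmin_{q\in\MM} \tW_1(q, \hat p_n)$ and $\MM$ is the family of distributions inheriting (up to constants) the bounded third-moment and residual-variance conditions imposed on $p^*$. The first step is to verify that $p^*\in\MM$ with high probability and that
\[
\tW_1(p^*, \hat p_n) \;\leq\; W_1(p^*, p) + \tW_1(p, \hat p_n) \;\lesssim\; \tilde\epsilon.
\]
The first term is at most $\epsilon$ by assumption. For the second, I would use the definition of $\tW_1$ as a supremum over $1$-Lipschitz linear and rectified-linear functionals of $X'$: the VC dimension of the resulting index class is $O(d)$, which gives the $\sigma_1\bar R\sqrt{d/n}$ contribution, while a truncation argument controlled by the third-moment bound $\bE_{p^*}[|v^\top X'|^3]\leq\sigma_1^3$ yields the $\sigma_1\bar R/n^{1/4}$ tail term. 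Minimality of the projection $q$ then produces $\tW_1(p^*, q) \leq 2\tilde\epsilon$.

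The heart of the proof is to control the excess risk $L(p^*, \theta^*(q)) - L(p^*, \theta^*(p^*))$ via the $\tW_1$-modulus of continuity at $2\tilde\epsilon$; adding back the irreducible noise $L(p^*, \theta^*(p^*))\leq\sigma_2^2$ then yields the stated bound. Since $L(p,\theta)$ is linear (hence convex) in $p$ and the integrand $f_\theta(x,y) = (y-x^\top\theta)^2$ is convex on $\bR^{d+1}$, I would apply the mean-cross lemma (Lemma~\ref{lem:tw1_cross_mean}) to $f_\theta$ taken at $\theta = \theta^*(q)$ and at $\theta = \theta^*(p^*)$. This produces $O(\tilde\epsilon)$-friendly perturbations $r_{p^*}, r_q$ of $p^*$ and $q$ whose expectations of the test function are ordered in the favourable direction, so the modulus bound reduces to estimating the resilience errors $|\bE_{p^*}[f_\theta] - \bE_{r_{p^*}}[f_\theta]|$ and its $q$-side analogue.

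For the resilience step, I would use the Fenchel--Moreau/integral representation from Section~\ref{sec.W1_population_intro} to decompose the quadratic $f_\theta$ into a positive combination of $1$-Lipschitz linear and ReLU functionals of $X'$ with coefficients of order $\bar R^2$. Because a friendly perturbation transports mass by $W_1$-distance at most $\tilde\epsilon$ and the transported mass itself is of order $\tilde\epsilon$, Cauchy--Schwarz combined with the third-moment bound (controlling the $L^2$-mass of the relevant tail via Markov) converts the displacement into an expectation shift of order $\sigma_1^{3/2}\bar R^{5/2}\sqrt{\tilde\epsilon}$, while a second-order remainder from the unconstrained part of the transport contributes the $\bar R^4\tilde\epsilon^2$ correction. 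The main obstacle, and the delicate bookkeeping of the proof, lies precisely here: propagating the $\bar R$ factors faithfully through the dual representation of the quadratic loss, and verifying that the square-root scaling $\sqrt{\tilde\epsilon}$ (rather than the $\tilde\epsilon^{2/3}$ rate one obtains for mean estimation with bounded third moment) is the correct and unavoidable exponent induced by the quadratic growth of $f_\theta$.
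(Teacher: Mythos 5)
Your outline matches the paper's structure at a high level — project under $\tW_1$, bound $\tW_1(p^*,\hat p_n)$, and control the $\tW_1$-modulus of continuity via the mean-cross lemma — and your diagnosis of the $\sqrt{\tilde\epsilon}$ exponent as arising from the quadratic loss under linear ($W_1$) transport cost is the right intuition: for $k=3$ the Jensen step in Lemma~\ref{lem.coupling_g_bounded} is indeed exactly Cauchy--Schwarz. However, two of your technical appeals do not match the tools that actually make the argument go through. First, you attribute the $\sigma_1\bar R\sqrt{d/n}$ term to a VC-dimension bound, but the VC inequality controls uniform convergence of \emph{probabilities}; the supremum defining $\tW_1$ runs over \emph{unbounded} linear and ReLU functionals, so VC does not directly apply. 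The paper instead bounds $\bE_p[\tW_1(p,\hat p_n)]$ via symmetrization and Talagrand contraction (Lemma~\ref{lem.tW_1_empirical_to_population}), with both the $\sqrt{d/n}$ and $n^{-1/4}$ terms falling out of one balanced Rademacher calculation, and the high-probability statement obtained by Markov (hence the $1/\delta$ buried in the constants). A truncate-then-VC variant could be patched together, but as stated you risk a stray factor of the truncation level $M$ multiplying $\sqrt{d/n}$.

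Second, the population resilience step in the paper does not go through a Fenchel--Moreau/integral decomposition of the quadratic with "$\bar R^2$ coefficients"; that representation appears only as a conceptual framing. What the proof actually uses (Lemma~\ref{lem.proof_linreg_w1_population}) is the elementary but crucial identity $Y - X^\top\theta = X^\top(\theta-\theta^*(p^*)) + Z = \sqrt{\|\theta-\theta^*(p^*)\|_2^2+1}\cdot v^\top X'$ for a unit vector $v$ in the direction of $(\theta - \theta^*(p^*),1)$; this reduces linear regression directly to the second-moment estimation result (Theorem~\ref{thm.sec_tw_1_proj}) applied to $X'$, and the various $\bar R$ powers appear transparently from the $\sqrt{R^2+1}$ scaling entering both the effective $W_1$-budget (which gets multiplied by $\bar R$) and the squared observable (another $\bar R^2$). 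You should also note that the mean-cross lemma is invoked once, with $g(x') = |Y - X^\top\theta^*(q)|$, after which the $\downarrow$-property of $q$ and the $\uparrow$-property of $p^*$ close the argument — not at two values of $\theta$ as you suggest. So: right skeleton, but the two key quantitative ingredients (the concentration tool and the mechanism generating the $\bar R$ powers) need to be replaced by the paper's actual devices to make the bookkeeping come out.
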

In $W_1$ perturbation, $\epsilon$ has the same units as $X$, hence $\sigma_1 \bar R$ matches the unit of $\epsilon$.%
The sample complexity needed to achieve the same performance as the population is 
$O(\frac{d}{\epsilon^2} + \frac{1}{\epsilon^{4}})$. %

We also generalize the expanding the set idea and  provide the first computationally efficient result for second moment estimation via $W_1$ projection: %
\begin{theorem}[A special case of Theorem~\ref{thm.efficient_sec_w1}]
Assume  the adversary is able to corrupt the true empirical distribution $\hat p_n^*$ to some distribution $\hat p$ such that $W_1(\hat p_n^*, \hat p)\leq \epsilon$, and $p^*$ satisfies
    $\sup_{v\in\bR^d, \|v\|_2=1}\bE_{p^*}[|v^\top X|^4] \leq \sigma^4$
for some $\sigma>0$.  When $n\gtrsim (d\log(d/\delta))^{2}$, we can design an MD functional estimator $\hat M(\hat p)$ under $W_1$ or $\tW_1$ projection that  satisfies
\begin{align*}
    \| \hat M(\hat p) - \bE_{p^*}[XX^{\top}]\|_2 \leq \min(\sigma^2, C_1 \sigma^{4/3}  \epsilon^{2/3})+C_2\sigma^2 \max(\sqrt{\frac{ d\log(d/\delta)}{n\delta^{1/2}}},  \frac{ d\log(d/\delta)}{n\delta^{1/2}})
\end{align*}
with probability at least $1-\delta$, 
where $C_1, C_2$ are some universal constants.
\end{theorem}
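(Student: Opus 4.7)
The plan is to follow the \emph{expand-the-set} template outlined in Section~\ref{sec.intro_expand}, now instantiated for second-moment estimation under $W_1$. I would take the projection set to be
\[
\cM = \Bigl\{\,q : \sup_{\|v\|_2=1}\bE_q[\psi(v^\top X)] \leq C\sigma^4 \,\Bigr\},
\]
where $\psi$ is the convex truncation of $x\mapsto x^4$ used in Lemma~\ref{lem.empirical_psi_resilient} (agreeing with $x^4$ on a ball of radius roughly $\sigma n^{1/4}$ and linear beyond), and let $\hat M(\hat p)$ be the second moment of $q \in \argmin_{q\in\cM} W_1(q,\hat p)$ (the $\tW_1$ case is parallel). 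The trivial estimator $0$ achieves error $\sigma^2$, which explains the $\min$ in the statement, so the whole effort is to establish the $\sigma^{4/3}\epsilon^{2/3}$ branch.

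First I would handle \textbf{empirical containment}: show $\hat p_n^*\in\cM$ with probability at least $1-\delta/3$ when $n\gtrsim (d\log(d/\delta))^2$. This is a direct application of the linearized-moment lemma (Lemma~\ref{lem.empirical_psi_resilient}): the truncation in $\psi$ is exactly what is needed so that the supremum over $v$ concentrates at rate $\tilde O(d/\sqrt n)$ under only a fourth-moment hypothesis, rather than at the $d^{2}/n$ rate one would need for a raw fourth-moment bound. Second, by optimality of $q$ and containment of $\hat p_n^*$, $W_1(q,\hat p)\leq W_1(\hat p_n^*,\hat p)\leq \epsilon$, so $W_1(q,\hat p_n^*)\leq 2\epsilon$ by the triangle inequality, and both $q$ and $\hat p_n^*$ lie in $\cM$.

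The third and hardest step is the \textbf{modulus bound} for $\cM$ under $W_1$: for all $p_1,p_2\in\cM$ with $W_1(p_1,p_2)\leq 2\epsilon$,
\[
\bigl\|\bE_{p_1}[XX^\top]-\bE_{p_2}[XX^\top]\bigr\|_2 \;\lesssim\; \sigma^{4/3}\epsilon^{2/3}.
\]
I would fix a direction $v$ with $\|v\|_2=1$, apply the friendly-midpoint construction (Lemma~\ref{lemma.midpointintro}) to the convex function $f_v(x)=(v^\top x)^2$, and analyze the coupling $(X,Y)$ associated with the friendly perturbation. Using the identity $|f_v(x)-f_v(y)| = |v^\top(x-y)|\cdot|v^\top(x+y)|$ together with $|v^\top(x-y)|\leq \|x-y\|_2$, I would split on whether $|v^\top(x+y)|\leq \tau$: the low part is bounded by $\tau\,\bE\|X-Y\|_2\leq \tau\epsilon$, while the high part is bounded by $\bE[|v^\top(x+y)|^2 \Indc\{|v^\top(x+y)|>\tau\}]\lesssim \sigma^4/\tau^2$ via Markov and the membership in $\cM$. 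Optimizing $\tau\asymp \sigma^{4/3}/\epsilon^{1/3}$ yields the advertised $\epsilon^{2/3}$ rate. For the $\tW_1$ version, the same argument runs through the mean-cross lemma (Lemma~\ref{lem:tw1_cross_mean}), since $f_v$ is convex in each coordinate after a rotation and the integral representation of convex functions converts $\tW_1$ control of linear and rectified linear functionals into control of quadratic ones.

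The final step is a \textbf{sampling error} bound $\|\bE_{\hat p_n^*}[XX^\top]-\bE_{p^*}[XX^\top]\|_2$ obtained by a matrix Chebyshev/Markov inequality using only the fourth-moment hypothesis; this produces exactly the $\sigma^2\max(\sqrt{d\log(d/\delta)/(n\delta^{1/2})},\,d\log(d/\delta)/(n\delta^{1/2}))$ term, with the $\delta^{-1/2}$ reflecting the use of Markov rather than a sub-Gaussian tail. Assembling Steps~2--4 by the triangle inequality yields the theorem. The main obstacle will be Step~3: producing the $\epsilon^{2/3}$ rate requires interpolating the $W_1$ (first-moment) budget against the fourth-moment tail \emph{uniformly in $v$}, and this must be done with friendly perturbations (so that the midpoint construction, rather than pointwise coupling arguments, does the work), while ensuring the truncation defining $\cM$ is loose enough for empirical containment yet tight enough to deliver the modulus bound.
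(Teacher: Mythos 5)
Your proposal diverges from the paper in the choice of projection set, and this divergence creates a genuine gap.

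The paper takes $\cM$ to be the \emph{raw} bounded-$k$th-moment set $\{p : \sup_{\|v\|_2=1}\bE_p[|v^\top X|^k]\leq 2\sigma^k\}$ (with $k=4$ here), and establishes $\hat p_n^*\in\cM$ with high probability by invoking \citep[Lemma 5.5]{kothari2017outlier}, which requires $n\gtrsim (d\log(d/\delta))^{k/2}$. That is precisely where the quadratic-in-$d$ sample complexity in the theorem statement comes from, and keeping the constraint polynomial is what later enables the sum-of-squares relaxation the authors advertise. You instead propose to truncate $x^4$ into a convex $\psi$ and invoke Lemma~\ref{lem.empirical_psi_resilient}. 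There are two problems with this route. First, Lemma~\ref{lem.empirical_psi_resilient} establishes that $\hat p_n^*$ lies in a \emph{resilience} family $\GG_{W_\sF}(\rho_\delta(\eta),\eta)$; it does not directly give a uniform-over-$v$ bound of the form $\sup_v\bE_{\hat p_n^*}[\psi(v^\top X)]\leq C\sigma^4$, so your ``empirical containment'' step is not a direct application of that lemma. Second, and more seriously, your modulus argument for $\cM$ needs a fourth-moment tail bound: you split $|f_v(x)-f_v(y)|=|v^\top(x-y)|\cdot|v^\top(x+y)|$ and control the high part via $\bE[|v^\top(X+Y)|^2\Indc\{|v^\top(X+Y)|>\tau\}]\lesssim\sigma^4/\tau^2$, which requires $\sup_v\bE[|v^\top X|^4]\lesssim\sigma^4$ for all $p_1,p_2\in\cM$. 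But the truncated $\psi$ grows only linearly beyond $\sigma n^{1/4}$, so membership in your $\cM$ does \emph{not} control the raw fourth moment (a distribution can concentrate a small mass arbitrarily far out and still satisfy $\bE[\psi(v^\top X)]\leq C\sigma^4$). Your $\cM$ is therefore loose enough for cheap empirical containment but too loose to support the $\sigma^4/\tau^2$ tail estimate, and the two halves of your argument are in tension. The paper resolves this by keeping the polynomial constraint and paying for it with $n\gtrsim(d\log(d/\delta))^2$ samples, which is consistent with the stated theorem. A smaller point: your $\delta^{-1/2}$ in the sampling-error term is attributed to Markov, whereas in the paper it comes from truncating $\|X\|_2\leq\sigma\sqrt d/\delta^{1/4}$ (Lemma~\ref{lem.kth_population_deletion_R}) before applying matrix Bernstein (Lemma~\ref{lem.matrix_bernstein}); the proofs are compatible in spirit but you should route through the truncation, not a matrix Chebyshev, to get the stated dependence. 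The modulus bound itself (once you switch to the raw moment set, so that both $p_1,p_2$ genuinely have bounded fourth moments) is sound, and your split-on-$|v^\top(x+y)|$ argument is a valid alternative to the Orlicz-Jensen change-of-measure in Lemma~\ref{lem.coupling_g_bounded}, giving the same $\sigma^{4/3}\epsilon^{2/3}$ rate.
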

The MD functional can be computed via sum-of-squares programming if we strengthen the bounded moment assumption to sum-of-squares certifiable. This gives the first computationally efficient algorithm under Wasserstein perturbation.

We summarize the analysis techniques and frameworks for both \emph{weaken the distance} and \emph{expand the set} algorithms in Appendix~\ref{sec.general_analysis}. The two analyses can be further extended to corruption under a general discrepancy metric and loss function.  Furthermore,  the spirit of two analyses can be applied to distributional robust optimization framework. We  discuss their connections in Appendix~\ref{sec.connection_dro}. %

\section{Preliminaries}\label{sec.preliminaries}

In this section, we provide definitions for frequently used terms throughout the paper, 
introduce the two corruption models considered in this paper, and discuss the  information theoretic limits for robust estimation. 

\subsection{Notations and Definitions}
\label{subsec.definitions}

We first collect important notations and definitions  throughout this paper. The Wasserstein-1 distance is defined as  $W_{1}(p, q) =  
    \inf_{\pi_{p, q} \in \Pi(p, q) } \int \|x-y\| d \pi_{p, q} (x, y)$, where  $\Pi(p, q)$ denotes the set of all couplings between $p$ and $q$. A function $\psi: [0, +\infty) \mapsto [0, +\infty)$ is called an \emph{Orlicz function} if $\psi$ is convex, non-decreasing, and satisfies $\psi(0) = 0$, $\psi(x) \to \infty$ as $x\to \infty$. For a given Orlicz function $\psi$, the Orlicz norm of a random variable $X$ is defined as
$  \| X \|_{\psi} \triangleq \inf\left \{ t>0 : \bE_p\left [\psi\left(|X|/t\right )\right ] \leq 1\right \}.$
For univariate random variables $X$ and $Y$, we say that
$Y$ \emph{stochastically dominates} $X$ (in first order) 
if $\bP(X \leq t)\geq  \bP(Y \leq t)$ for all $t \in \bR$ \citep[Definition B.19.a, B.19.b]{marshall1979inequalities}.
We denote this as $X \leq_{sd}Y$ or $P_X \leq_{sd} P_Y$. It implies that $\bE[X] \leq \bE[Y]$. A \emph{pseudometric} is a function $d:X\times X \rightarrow\mathbf{R}_+$ satisfying the 
following three properties:
$d(x,x)=0$, $d(x,y) = d(y,x)$ (symmetry), and $d(x,z)\leq d(x,y)+ d(y,z)$ (triangle inequality).
Unlike a metric, one may have $d(x,y)=0$ for $x\neq y$.
Similarly, a function $\| \cdot\|:X \rightarrow\mathbf{R}_+$ is a \emph{pseudonorm} if $\|0\| = 0$, $\|cx\| = |c| \|x\|$ (homogeneity), and $\|x+y\|\leq \|x\|+\|y\|$ (triangle inequality). 
We define the generalized inverse of a non-decreasing function $\psi$ as 
$\psi^{-1}(y) = \inf\{x \mid \psi(x) > y\}$%
The rest of the notations are collected in the beginning of Appendix~\ref{sec.appendixgeneralemmas}.

\subsection{Corruption Models}
\label{subsec.corruption_models}

We consider two types of corruption model under perturbation discrepancy $D$: 
corruptions either to the population distribution (with samples drawn from the corrupted distribution), 
or to the empirical distribution on $n$ samples (with samples originally drawn from the true distribution $p^*$ and then perturbed).
The latter defines a more powerful adversary that is allowed to make decisions after seeing the random draw 
generating the data. It recovers the \emph{finite sample robustness} model of~\citep{donoho1988automatic} and the \emph{full adversary} setup in~\citet{diakonikolas2019robust} when we take $D = \TV$. We accordingly call the first model the \emph{oblivious} model and the second the 
\emph{adaptive} model:

\begin{definition}[Oblivious Corruption]\label{def.obliviouscorruption}
Denote the true distribution as $p^*$. The oblivious adversary with level $\epsilon$ under $D$ is allowed to do the following. It first perturbs $p^*$ to $p$ such that $D(p^*, p)\leq \epsilon$, then takes $n$ $i.i.d.$ samples from $p$ to get $(X_1, X_2, \ldots, X_n)$. We denote the empirical distribution of the observations $(X_1, X_2, \ldots, X_n) $ as $\hat p_n$.
\end{definition} 

\begin{definition}[Adaptive Corruption]\label{def.adaptivecont}
Denote the true distribution as $p^*$. The adaptive adversary with level $\epsilon$ under $D$ is allowed to do the following. We take $n$ i.i.d. samples from $p^*$ to get the empirical distribution $\hat p_n^*$. The adversary inspects $\hat p_n^*$ and produces another empirical distribution supported on $n$ points, denoted as $\hat p_n$. The conditional distribution $\hat p_n \mid \hat{p}_n^*$ is allowed if there exists some coupling $\pi_{X,Y}$ satisfying $\pi_X = p^*, D(\pi_X, \pi_Y) \leq \epsilon$ such that $D(\hat p_n^*, \hat p_n)$ is stochastically dominated by $D\left(\frac{1}{n}\sum_{i = 1}^n \delta_{X_i}, \frac{1}{n}\sum_{i=1}^n \delta_{Y_i} \right )$, where $\{(X_i,Y_i)\}_{i = 1}^n$ denotes $n$ i.i.d. samples from $\pi_{X,Y}$. 
\end{definition} 

From the definition of adaptive corruption, we see that oblivious corruption is a special case when $\pi_Y = p$. Thus the adaptive corruption with level $\epsilon$ is always stronger than oblivious corruption with level $\epsilon$. We characterize the behavior of $D(\hat p_n^*, \hat p_n)$  under adaptive corruption model in Appendix~\ref{proof.lem_coupling}.

\subsection{Population (Infinite-Sample) Limit}
\label{sec.population_limit}

It will be helpful to consider the infinite-sample regime where $\hat{p}_n^* = p^*$ and $\hat{p}_n = p$; in this case 
both corruption models defined above become identical. We will be interested in understanding the \emph{population limit}, 
or the best (worst-case) error achievable by any estimator $\theta(p)$:
\begin{definition}[Population limit with $(\mathcal{G},D,L, \epsilon)$] \label{def.populationlimit}
Given distribution family $\mathcal{G}$, discrepancy $D$, loss $L$, and perturbation level $\epsilon$, 
the population limit for the robust inference problem is
\begin{align}\label{eqn.fundamental_limit}
   \inf_{\theta(p)} \sup_{(p^*,p): D(p^*,p)\leq \epsilon, p^*\in \mathcal{G}} L(p^*, \theta(p))
\end{align}
\end{definition}

This population limit is defined in~\citet[Prop. 4.2]{donoho1988automatic}. It draws a precise connection between robust statistics and robust optimization, which provides one potential answer to the questions raised in~\citet[page xvi and 339]{ben2009robust}.

\subsubsection*{Modulus of continuity bounds population limit}
The following bound on the population limit will guide our design of $\GG$ in the sequel: 

\begin{lemma}\cite{donoho1988automatic} \label{lemma.population_limit}
Suppose $D$ is a pseudometric. Then the population limit in~(\ref{eqn.fundamental_limit}) is 
at most the maximum loss between any pair of 
distributions in $\GG$ of distance at most $2\epsilon$:
\begin{align}
\label{eqn.modulus}
\modu(\GG, 2\epsilon, D, L) \triangleq \sup_{p_1, p_2 \in \GG:  D(p_1, p_2) \leq 2 \epsilon} L(p_1, \theta^*(p_2)).
\end{align}
The upper bound is achieved by the projection (minimum distance functional) algorithm $q = \argmin\{D(q,p) \mid q \in \GG\}$, where we output $\theta = \theta^*(q) = \argmin_{\theta \in \Theta} L(q, \theta)$.
\end{lemma}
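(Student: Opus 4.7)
The plan is to prove this by exhibiting the minimum distance functional as a concrete estimator and then using the triangle inequality in $D$ to show its worst-case loss is bounded by the modulus. Since the lemma claims not just an inequality but also that the MD estimator attains it, the argument should go through in one direction.

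First I would fix an arbitrary pair $(p^*, p)$ with $p^* \in \mathcal{G}$ and $D(p^*, p) \leq \epsilon$, and define the estimator
\begin{equation*}
q = \arg\min_{q' \in \mathcal{G}} D(q', p), \qquad \hat\theta = \theta^*(q).
\end{equation*}
The central observation is that $p^*$ itself is a feasible point in the minimization defining $q$, so by optimality $D(q, p) \leq D(p^*, p) \leq \epsilon$. Because $D$ is assumed to be a pseudometric, the triangle inequality yields
\begin{equation*}
D(q, p^*) \;\leq\; D(q, p) + D(p, p^*) \;\leq\; 2\epsilon.
\end{equation*}
Now both $q$ and $p^*$ lie in $\mathcal{G}$, and their $D$-distance is at most $2\epsilon$, so the pair $(p^*, q)$ is admissible in the supremum defining the modulus $\modu(\mathcal{G}, 2\epsilon, D, L)$. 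Hence
\begin{equation*}
L(p^*, \hat\theta) = L(p^*, \theta^*(q)) \;\leq\; \sup_{p_1, p_2 \in \mathcal{G}:\, D(p_1, p_2) \leq 2\epsilon} L(p_1, \theta^*(p_2)) = \modu(\mathcal{G}, 2\epsilon, D, L).
\end{equation*}
Taking the supremum over all admissible $(p^*, p)$ on the left and the infimum over estimators (which is at most the MD functional's value) yields the bound on the population limit.

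There is essentially no main obstacle here, since the proof is a two-line application of optimality of $q$ plus the triangle inequality; the only subtleties are (i) ensuring the $\arg\min$ is attained, which is a standard caveat handled by taking an approximate minimizer and letting the slack tend to zero if $\mathcal{G}$ is not closed under $D$, and (ii) making sure we are using $D$ only as a pseudometric (symmetry and triangle inequality), which is precisely what the statement assumes. No properties of $L$ beyond its definition via $\theta^*$ are needed, which is why this bound is universal across estimation tasks.
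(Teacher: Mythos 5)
Your proof is correct and follows essentially the same route as the paper's: use feasibility of $p^*$ in the projection to get $D(q,p)\leq\epsilon$, apply the triangle inequality to obtain $D(q,p^*)\leq 2\epsilon$, and then note that the pair $(p^*,q)\in\GG\times\GG$ is admissible in the supremum defining the modulus. Your remark about handling the argmin via approximate minimizers matches the paper's phrasing, which only requires finding some $q\in\GG$ with $D(p,q)\leq\epsilon$.
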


The quantity $\modu$ 
is called the \emph{gauge function} or \emph{modulus of continuity}~\cite{donoho1988automatic, chen2018robust}. Later we omit $D, L$ in the parameters if they are obvious from context. 
The projection procedure in Lemma~\ref{lemma.population_limit} is oblivious
to the cost $L$ as well as the perturbation level $\epsilon$, which are desirable features in practice. When we know the perturbation level $\epsilon$, then the upper bound also holds if output $\theta^*(q)$ for \emph{any} $q \in \GG$ such that $D(p, q) \leq \epsilon$.
We show in Lemma~\ref{lemma.population_limit_opt} that the projection algorithm is optimal up to constants if $L(p, \theta^*(q))$ is a pseudometric over $(p, q)$, and the modulus of continuity is a nearly tight upper bound if we further assume the space induced by $D$ is a complete convex metric space~\citep[Theorem 1.97]{penot2012calculus} such as $\TV$ or $W_1$.

\section{Robust Inference under $\TV$ corruption}\label{sec.robust_TV}

In this section, we focus on the robust inference problem under $\TV$ corruption. We begin with introducing the population assumption as generalized resilience, and show the generality of the assumption via finite sample results for mean estimation, linear regression and joint mean and covariance estimation under transformed space. Our results match the population limit in the infinite sample case in terms of dependence on corruption level $\epsilon$, and give near optimal finite-sample rate. Furthermore, they enable the design of computationally efficient algorithms.

\subsection{Population assumption: generalized resilience}\label{sec.population_TV}

The \emph{distribution family $\GG^\TV$} encodes our assumptions about the true distribution $p^*$. 
Restricting $p^*$ to some non-trivial family $\GG^\TV$ is generally necessary in the robust setting. Take for instance 
robust mean estimation under $\TV$ corruption; then two distributions 
could be nearby in $\TV$ but have arbitrarily different means due to a small amount of mass at $\infty$. We would like to have some assumption $\GG^\TV$ with the following property:
\begin{enumerate}
   \item \emph{Not too big:} the modulus of continuity of this set can be controlled (and thus population limit by Lemma~\ref{lemma.population_limit}); 
   \item \emph{Not too small:} $\GG^\TV$ is a superset of  most of the widely used assumptions underlying the recent provably computationally efficient algorithms; %
   \item \emph{Near-optimal finite sample algorithm:} we can design a general finite-sample algorithm that guarantees robust inference for the $\GG^\TV$ we designed. It is near-optimal for known cases, and can inspire the design of computationally efficient algorithms. 
\end{enumerate}

As is discussed in Section~\ref{subsec.intro_set_TV}, in the task of mean estimation, the resilience set defined in~\eqref{eqn.oldresilience} has bounded modulus and subsumes tail bound type assumptions. %
We provide rigorous lemma for bounding the modulus as below. 

\begin{lemma}\label{lem.G_TV_mean_modulus}
The modulus of continuity $\modu$ in (\ref{eqn.modulus}) for $\GG^\TV_{\mathsf{mean}}(\rho, \eta)$ in~\eqref{eqn.oldresilience}
satisfies the bound
$    \modu(\GG^\TV_{\mathsf{mean}}(\rho, \eta), 2\epsilon) \leq 2\rho$
for any $ 2\epsilon \leq \eta < 1$. 
\end{lemma}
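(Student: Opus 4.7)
The goal is to bound $L(p_1, \theta^*(p_2)) = \|\bE_{p_1}[X] - \bE_{p_2}[X]\|$ for any $p_1, p_2 \in \GG^\TV_{\mathsf{mean}}(\rho,\eta)$ with $\TV(p_1, p_2) \leq 2\epsilon$, since for mean estimation the optimal parameter on $p_2$ is $\theta^*(p_2) = \bE_{p_2}[X]$ and the loss is the norm of the difference. The plan is to exhibit a common ``midpoint'' distribution $r$ that can be obtained from both $p_1$ and $p_2$ by an allowable deletion, and then apply the resilience property to each $p_i$ and the triangle inequality.

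First, I will invoke the standard midpoint property of $\TV$: if $\TV(p_1, p_2) \leq 2\epsilon$, then letting $r$ be the measure $\min(p_1, p_2)$ normalized to a probability distribution, the mass $\int \min(p_1, p_2) = 1 - \TV(p_1, p_2) \geq 1 - 2\epsilon$, and one obtains decompositions $p_i = (1 - 2\epsilon)\,r + 2\epsilon\, q_i$ for some distributions $q_i$ ($i = 1,2$). Equivalently, $r \leq \frac{p_i}{1 - 2\epsilon}$ for both $i$, so $r$ is an allowable deletion of each $p_i$ at level $2\epsilon$.

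Next, since $2\epsilon \leq \eta < 1$, we have $\frac{1}{1 - 2\epsilon} \leq \frac{1}{1 - \eta}$, hence $r \leq \frac{p_i}{1 - \eta}$ for $i = 1, 2$ as well. Applying the defining property of $\GG^\TV_{\mathsf{mean}}(\rho, \eta)$ to each $p_i$ with this common $r$ yields
\begin{equation*}
\|\bE_{p_1}[X] - \bE_r[X]\| \leq \rho, \qquad \|\bE_r[X] - \bE_{p_2}[X]\| \leq \rho.
\end{equation*}
The triangle inequality then gives $\|\bE_{p_1}[X] - \bE_{p_2}[X]\| \leq 2\rho$, and taking the supremum over admissible $(p_1, p_2)$ produces $\modu(\GG^\TV_{\mathsf{mean}}(\rho, \eta), 2\epsilon) \leq 2\rho$.

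There is really no serious obstacle in the argument; the only thing to verify carefully is that the normalized pointwise minimum $r$ genuinely satisfies $r \leq \frac{p_i}{1-2\epsilon}$ (the midpoint property of $\TV$) and that the deletion level $2\epsilon$ does not exceed the tolerance $\eta$ baked into the resilience class. Both are immediate from the assumption $2\epsilon \leq \eta$ and the definition of $\TV$ via the coupling/overlap identity.
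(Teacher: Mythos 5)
Your proof is correct and follows essentially the same route as the paper: construct the normalized pointwise minimum $r = \frac{\min(p_1,p_2)}{1-\TV(p_1,p_2)}$ as a common deletion of $p_1$ and $p_2$ at level at most $\eta$, apply the resilience bound to each, and conclude by the triangle inequality. The only cosmetic difference is that you spell out the step $r \leq \frac{p_i}{1-2\epsilon} \leq \frac{p_i}{1-\eta}$, which the paper folds into its Lemma~\ref{lem.deletionrproperties}.
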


By Lemma~\ref{lemma.population_limit}, the population limit for 
$\GG^\TV_{\mathsf{mean}}(\rho, \eta)$ is at most $2\rho$ when $2\epsilon \leq \eta < 1$. 
\begin{proof}[Proof of Lemma~\ref{lem.G_TV_mean_modulus}]

Denote $\mu_{p_1} = \bE_{p_1}[X], \mu_{p_2} = \bE_{p_2}[X]$. Recall that
\begin{align}
    \modu(\GG^\TV_{\mathsf{mean}}(\rho, \eta), 2\epsilon) = \sup_{(p_1, p_2):  \TV(p_1, p_2) \leq 2 \epsilon,  p_1, p_2 \in \GG^\TV_{\mathsf{mean}}(\rho, \eta) } \|\mu_{p_1}- \mu_{p_2} \|. 
\end{align}
From $\TV(p_1, p_2)\leq 2\epsilon \leq \eta$,
we know that the distribution $r = \frac{\min(p_1, p_2)}{1-\TV(p_1,p_2)}$ satisfies 
$r\leq \frac{p_1}{1-\eta}$ and $r\leq \frac{p_2}{1-\eta}$ (see Lemma~\ref{lem.deletionrproperties} for a formal proof). 
It then follows from $p_1, p_2 \in \GG_{\mathsf{mean}}^{\TV}(\rho, \eta)$ and the triangle inequality that
$\|\mu_{p_1} - \mu_{p_2} \| \leq \|\mu_{p_1} - \mu_{r} \| + \|\mu_{p_2} - \mu_{r} \| \leq 2\rho$.
\end{proof}

A close inspection of the proof above shows that the loss can be generalized from $\| \mu_{p} - \mu_q\|$ to all pseudonorms $W_\sF(p,q) = \sup_{f\in\sF}\bE_p[f(X)] - \bE_q[f(X)]$. We discuss it in Appendix~\ref{sec.resilienceforwfdefinition}. 
We defer the discussion on the relationship with tail bound type assumptions to Section~\ref{sec.finite_sample_algorithm_weaken}. %

Now we extend the definition of {resilience} for mean estimation to arbitrary cost 
functions $L(p, \theta)$ that may not satisfy the triangle inequality. 
The general definition below imposes two conditions: (1) the optimal parameter for the distribution $\theta^*(p)$ should do 
well on all deleted distributions $r \leq \frac{p}{1-\eta}$, and (2) any parameter that does 
well on some deleted distribution $r \leq \frac{p}{1-\eta}$ also does well on $p$. We measure performance on 
$r$ with a \emph{bridge function} $B(r, \theta)$, which is often the same as the loss 
$L$ but need not be.

\begin{definition}[$\GG^\TV(\rho_1, \rho_2, \eta)$]\label{def.G_TV}
Given an arbitrary loss function $L(p, \theta)$, we 
define 
$\mathcal{G}(\rho_1, \rho_2, \eta) = \mathcal{G}_{\downarrow}(\rho_1, \eta) \cap \mathcal{G}_{\uparrow}(\rho_1, \rho_2, \eta)$, where:
\begin{align}
    \GG^\TV_{\downarrow}(\rho_1, \eta) & \triangleq \{ p \mid   \sup_{r\leq\frac{p}{1-\eta}}B(r, \theta^*(p))\leq \rho_1\},\label{eqn.G_TV_down} \\
    \GG^\TV_{\uparrow}(\rho_1, \rho_2, \eta) & \triangleq \{ p \mid  \text{ for all } \theta \in \Theta,  r \leq \frac{p}{1-\eta}, \left( B(r, \theta)\leq \rho_1 \Rightarrow  L(p, \theta)\leq \rho_2\right) \},
\end{align}
The function $B(p, \theta)$ is an arbitrary cost function that serves the purpose of bridging. Here  $\theta^*(p) \in \argmin_{\theta  \in \Theta} B(p,\theta)$.
\end{definition}
If we do not specify $\Theta$, then it is clear from the context. 
The added flexibility in choosing $B$ is important for finite-sample algorithms, which we will show in later sections. 
If we take $B(p, \theta) = L(p, \theta) = \|\bE_p[X] - \bE_\theta[X]\|, \rho_2 =2\rho_1$, then our design exactly reduces to the resilient set for mean estimation\footnote{To see the reduction, note that resilience is equivalent to $\GG^\TV_{\downarrow}$ in Equation (\ref{eqn.G_TV_down}). Thus we only need to show that $\GG^\TV_{\uparrow}$ is a subset of $\GG^\TV_{\downarrow}$. By our choice of 
$B, L$ and $\rho_2$, the implication condition in $\GG^\TV_{\uparrow}$ follows from the triangle inequality.}.

We show that $\GG^\TV$ is \emph{not too big} by bounding its modulus of 
continuity, and that it is \emph{not too small} by showing it subsumes various assumptions imposed in the literature in the next section together with the finite-sample 
results. 
\begin{theorem}\label{thm.G_fundamental_limit}
For $\GG^\TV(\rho_1, \rho_2, \eta)$  in Definition~\ref{def.G_TV}, if $2\epsilon \leq \eta < 1$, we have 
\begin{align}
   \modu(\GG^\TV(\rho_1, \rho_2, \eta), 2\epsilon)  \leq \rho_2.
\end{align}
\end{theorem}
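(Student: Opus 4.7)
The plan is to mimic the proof of Lemma~\ref{lem.G_TV_mean_modulus} but replace the triangle-inequality step with the interaction between $\GG^\TV_{\downarrow}$ and $\GG^\TV_{\uparrow}$. Concretely, I would fix an arbitrary pair $p_1, p_2 \in \GG^\TV(\rho_1, \rho_2, \eta)$ with $\TV(p_1, p_2) \leq 2\epsilon$ and show that $L(p_1, \theta^*(p_2)) \leq \rho_2$; taking the supremum then gives the claim.

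The key observation is the same midpoint property of $\TV$ used in Lemma~\ref{lem.G_TV_mean_modulus}: the distribution $r \triangleq \frac{\min(p_1, p_2)}{1 - \TV(p_1, p_2)}$ is a common deletion of both $p_1$ and $p_2$. More precisely, because $\TV(p_1, p_2) \leq 2\epsilon \leq \eta$, we have both $r \leq \frac{p_1}{1-\eta}$ and $r \leq \frac{p_2}{1-\eta}$ (this is the content of Lemma~\ref{lem.deletionrproperties}, which I would just cite).

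Now I would chain the two defining conditions in Definition~\ref{def.G_TV} through $r$. First, since $p_2 \in \GG^\TV_{\downarrow}(\rho_1, \eta)$ and $r \leq \frac{p_2}{1-\eta}$, the downward condition gives
\[
B(r, \theta^*(p_2)) \leq \sup_{r' \leq p_2/(1-\eta)} B(r', \theta^*(p_2)) \leq \rho_1.
\]
Second, since $p_1 \in \GG^\TV_{\uparrow}(\rho_1, \rho_2, \eta)$ and $r \leq \frac{p_1}{1-\eta}$, applying the implication in the definition with $\theta = \theta^*(p_2)$ upgrades the bound on $B(r, \theta^*(p_2))$ to $L(p_1, \theta^*(p_2)) \leq \rho_2$. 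Together these give exactly the desired inequality.

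I do not expect any substantive obstacle: the mid-point construction is the same one already used in the mean-estimation proof, and the definitions of $\GG^\TV_{\downarrow}$ and $\GG^\TV_{\uparrow}$ are tailored precisely so that they compose through the common deletion $r$. The only minor subtlety is checking that $r$ is actually a valid deletion of \emph{both} distributions at level $\eta$ (rather than $2\epsilon$), which is where the hypothesis $2\epsilon \leq \eta$ is used; this is packaged in Lemma~\ref{lem.deletionrproperties} and does not require any extra work.
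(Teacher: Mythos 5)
Your proof is correct and takes the same approach as the paper: both use the $\TV$ midpoint $r$ from Lemma~\ref{lem.deletionrproperties} to chain the downward condition (bounding $B(r,\cdot)$ by $\rho_1$) through the upward condition (upgrading to a bound on $L$). The only cosmetic difference is which of $p_1,p_2$ plays which role --- the paper bounds $L(p_2,\theta^*(p_1))$ while you bound $L(p_1,\theta^*(p_2))$ --- which is immaterial since the supremum defining the modulus ranges over all ordered pairs in $\GG^\TV$.
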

\begin{proof}
We  rely on the midpoint distribution $r$ to bridge the modulus. 
Consider any  $p_1, p_2$ satisfying $\TV(p_1, p_2)\leq 2\epsilon \leq \eta$.  
From Lemma~\ref{lem.deletionrproperties}, there is a midpoint $r$ such that $r \leq \frac{p_1}{1-\eta}$ and $r \leq \frac{p_2}{1-\eta}$. From the fact that $p_1\in \GG^\TV(\rho_1, \rho_2, \eta) \subset \mathcal{G}_{\downarrow}(\rho_1, \eta)$, we have $B(r, \theta^*(p_1))\leq \rho_1$. From this and the fact that $p_2\in \GG^\TV(\rho_1, \rho_2, \eta) \subset \mathcal{G}_{\uparrow}(\rho_1, \rho_2, \eta)$, we then have
$    L(p_2, \theta^*(p_1))\leq \rho_2$. Since $p_1$ and $p_2$ are arbitrary, this bounds the modulus of continuity by $\rho_2$. %

\end{proof}

\subsection{Finite Sample Algorithms for $\TV$: weaken the distance}\label{sec.finite_sample_algorithm_weaken}
In Lemma~\ref{lemma.population_limit} we saw that the minimum distance functional defined as $q  = \argmin_{q\in \GG^\TV} \TV(q,p) $, $\hat \theta   = \theta^*(q) = \argmin_{\theta \in \Theta} L(q, \theta)$
yield good bounds whenever the modulus of continuity $\modu(\GG^\TV, \epsilon)$ is small. However, in the finite-sample case, the $\TV$ distance reports a large distance even 
between a population distribution $p$ and the finite-sample distribution $\hat{p}_n$.  Thus simply replacing $p$ with $\hat p_n$ in the MD functional would fail.
We provide solutions to this is via relaxing the distance and expanding the set in projection.
The general projection algorithm is presented below. 

\begin{algorithm}[H]
\begin{algorithmic}
\State \textbf{Input} 
observed distribution $\hat{p}_n$, discrepancy $\tD$, destination set $\cM$, optional parameter $\tilde \epsilon$
\If {$\tilde \epsilon$ is given}  
\\ {\qquad
find any $q \in \mathcal{M}$ such that $\tD(q, \hat{p}_n)\leq \tilde \epsilon$. }
\Else 
{\quad find $q = \argmin_{q\in \mathcal{M}} \tD(q, \hat{p}_n).$
}
\EndIf
\State \textbf{Output}  $q$.
\end{algorithmic}

 \caption{Projection algorithm $\Pi(\hat{p}_n; \tD, \mathcal{M})$ or $\Pi(\hat{p}_n; \tD, \mathcal{M}, \tilde \epsilon)$}\label{alg.general_projection}
\end{algorithm}
After obtaining $q$ from Algorithm~\ref{alg.general_projection}, 
we output $\hat \theta= \theta^*(q) \triangleq \argmin_{\theta \in \Theta} L(q, \theta)$ as the estimated parameter.
This algorithm is a generalization of the minimum distance functional~\citep{donoho1988automatic}, where we allow the flexibility of choosing  $\tD$   for the projection function, and 
 $\MM$ for the destination set, and we allow the algorithm to output any $q\in \mathcal{M}$ satisfying $\tD(q, \hat{p_n})\leq \tilde \epsilon$ if the parameter $\tilde \epsilon$ is given.

In this section we focus on the approach of weakening the discrepancy $\TV$  and keeping the destination set intact ($\cM = \GG^\TV$)\footnote{All the argument can be extended to the case of projecting onto $\mathcal{M} = \GG^\TV_\downarrow \supset \GG^\TV$ when $\GG^\TV$ is the generalized resilience set. Here for simplicity we keep $\mathcal{M} = \GG^\TV$.}.
We define a family of pseudonorms that is weaker than $\TV$, called  \emph{generalized Kolmogorov--Smirnov distance} $\tTV_\mathcal{H}$,  to avoid the issue:
\begin{align} \label{eqn.ksdistancedefinition}
    \tTV_\mathcal{H}(p, q) \triangleq \sup_{f\in\mathcal{H},  t\in \bR}|\bP_p[f(X)\geq t] - \bP_q[f(X) \geq t]|.
\end{align}
When $X$ is a one-dimensional random variable and $\mathcal{H}$ is the singleton $\{\mathrm{Id}: x \mapsto x\}$, 
we recover the Kolmogorov-Smirnov distance~\cite{massey1951kolmogorov}.
When $\mathcal{H}$ contains all functions, $\tTV_\mathcal{H} = \TV$; otherwise $\tTV_\mathcal{H}$ is weaker than $\TV$ distance: $\tTV_\mathcal{H} \leq \TV$. For any $\mathcal{H}$, $\tTV_\mathcal{H}$ is a pseudometric.

Assume $p^* \in \GG^\TV$ and $\TV(p^*, p) \leq \epsilon$ (oblivious corruption model).
The following result generalizes~\citet{donoho1988automatic} and guarantees good performance of the projection algorithm 
as long as we can bound $\tTV_\mathcal{H}(p, \hat p_n)$ along with the modulus of continuity under $\tTV_{\sH}$.
\begin{proposition}\label{prop.tTV_general}
The projection algorithm $q=\Pi(\hat{p}_n; \tTV_{\mathcal{H}}, \GG^\TV)$  satisfies
\begin{align}\label{eqn.tTV_modulus_general}
    L(p^*, \theta^*(q)) \leq  
    \modu(\GG^\TV, \tilde\epsilon, \tTV_\mathcal{H}, L) = \sup_{p_1, p_2\in \GG^\TV: \tTV_\mathcal{H}(p_1, p_2) \leq \tilde \epsilon} L(p_2, \theta^*(p_1)), 
\end{align} where $\tilde \epsilon = 2\epsilon + 2\tTV_\mathcal{H}(p, \hat p_n)$. Same bound can be achieved via $q=\Pi(\hat{p}_n; \tTV_{\mathcal{H}}, \GG^\TV, \tilde \epsilon).$
\end{proposition}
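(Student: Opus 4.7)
The plan is to mirror the classical Donoho--Liu projection argument, but carried out with respect to the weakened pseudometric $\tTV_\sH$ rather than $\TV$ itself. The two ingredients we need are (i) $\tTV_\sH$ satisfies the triangle inequality (it is a pseudometric, as noted earlier in the excerpt) and (ii) $\tTV_\sH \leq \TV$ pointwise, so that the corruption bound $\TV(p^*,p)\leq \epsilon$ transfers to $\tTV_\sH(p^*,p)\leq \epsilon$.

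First I would verify that $p^*$ is itself a feasible candidate for the projection. Since $p^*\in\GG^\TV$ by assumption, the minimizer $q = \argmin_{q\in\GG^\TV} \tTV_\sH(q,\hat p_n)$ must satisfy
\begin{equation*}
\tTV_\sH(q,\hat p_n) \;\leq\; \tTV_\sH(p^*,\hat p_n) \;\leq\; \tTV_\sH(p^*,p) + \tTV_\sH(p,\hat p_n) \;\leq\; \epsilon + \tTV_\sH(p,\hat p_n),
\end{equation*}
where the middle step uses the triangle inequality for the pseudometric $\tTV_\sH$, and the last step uses $\tTV_\sH(p^*,p)\le \TV(p^*,p)\le\epsilon$.

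Next, applying the triangle inequality once more and inserting the estimate above yields
\begin{equation*}
\tTV_\sH(q,p^*) \;\leq\; \tTV_\sH(q,\hat p_n) + \tTV_\sH(\hat p_n,p^*) \;\leq\; 2\epsilon + 2\tTV_\sH(p,\hat p_n) \;=\; \tilde\epsilon.
\end{equation*}
Since both $q$ and $p^*$ lie in $\GG^\TV$ and are within $\tilde\epsilon$ of each other in $\tTV_\sH$, the pair $(p_1,p_2)=(q,p^*)$ is admissible in the supremum defining the modulus, giving
\begin{equation*}
L(p^*,\theta^*(q)) \;\leq\; \sup_{\substack{p_1,p_2\in\GG^\TV\\ \tTV_\sH(p_1,p_2)\leq \tilde\epsilon}} L(p_2,\theta^*(p_1)) \;=\; \modu(\GG^\TV,\tilde\epsilon,\tTV_\sH,L),
\end{equation*}
which is exactly the claim. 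For the variant $\Pi(\hat p_n; \tTV_\sH, \GG^\TV, \tilde\epsilon)$ where a feasibility radius is supplied, the same chain applies: $p^*$ certifies that the feasible set is non-empty (since $\tTV_\sH(p^*,\hat p_n)\leq \epsilon + \tTV_\sH(p,\hat p_n)\leq \tilde\epsilon$ by construction), and any returned $q$ already satisfies $\tTV_\sH(q,\hat p_n)\leq \tilde\epsilon$, so the triangle-inequality step goes through identically.

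There is really no hard step here; the only thing worth double-checking is the pseudometric property of $\tTV_\sH$, which follows immediately from its definition as a supremum of absolute differences of probabilities indexed by $(f,t)\in\sH\times\bR$. The whole argument is essentially an accounting exercise: the projection keeps us at least as close to $\hat p_n$ as $p^*$ is, and a single triangle inequality converts this into closeness between $q$ and $p^*$, upon which the modulus bound is definitional.
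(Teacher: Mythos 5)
Your proof is correct and uses essentially the same argument as the paper: both rely on the minimality of $q$ over $\GG^\TV$ (certified by $p^* \in \GG^\TV$), the pseudometric property of $\tTV_\sH$, and the bound $\tTV_\sH(p^*,p)\le\TV(p^*,p)\le\epsilon$ to conclude $\tTV_\sH(q,p^*)\le\tilde\epsilon$ and then invoke the modulus. The only cosmetic difference is the order in which you split the triangle inequalities; the content is identical.
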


As its proof below shows, the conclusions in Proposition~\ref{prop.tTV_general} remain unchanged if we change the corruption model to 
allow $\tTV_{\sH}$ perturbations instead of $\TV$.
In other words, Proposition~\ref{prop.tTV_general} analyzes the original algorithm under larger corruptions, and we will obtain good finite sample error bounds throughout this section for this more difficult problem.

\begin{proof}
Since $\tTV_\mathcal{H}$ satisfies triangle inequality, we have 
\begin{align}
    \tTV_\mathcal{H}(q, p^*) & \leq  \tTV_\mathcal{H}(q, \hat p_n) + \tTV_\mathcal{H}(\hat p_n, p^*) \leq 2\tTV_\mathcal{H}(\hat p_n, p^*),
\end{align} 
where the second step is because $\tTV_{\sH}(q, \hat{p}_n)$ is minimizes $\tTV_{\sH}(\cdot, \hat{p}_n)$ over $\GG^\TV$. 
Applying the triangle inequality again and $\tTV_\sH(p^*,p)\leq \epsilon$, we obtain
\begin{align}
2\tTV_{\mathcal{H}}(\hat p_n, p^*) \leq 2(\tTV_\mathcal{H}(\hat p_n, p) + \tTV_\mathcal{H}(p, p^*))  \leq 2\epsilon + 2\tTV_\mathcal{H}(p, \hat p_n).
\end{align}
Thus from $p^*, q \in \GG^\TV$ and Lemma~\ref{lemma.population_limit},
we know that the error is bounded by $ \modu(\GG^\TV, \tilde\epsilon, \tTV_\mathcal{H}, L)$.
\end{proof}

We can bound $\tTV_\mathcal{H}(p, \hat p_n)$ with the complexity of $\mathcal{H}$ using the following lemma. We defer its proof to Appendix~\ref{Appendix.proof_vcinequality}.
\newcommand{\vc}{\mathsf{vc}}
\begin{lemma}\label{lemma.vcinequality}
Let $\hat{p}_n$ be the empirical distribution of $n$ \iid samples from $p$ and let $\vc(\sH)$ be the VC dimension of the collection of sets $\{\{x \mid f(x)\geq t\} \mid f\in \mathcal{H},t\in \mathbf{R}\}$. Then, each of the following holds with probability at least $1-\delta$:
\begin{align}\label{eqn.vc_inequality}
    \tTV_\mathcal{H}(p,\hat{p}_n) &\leq C^{\mathsf{vc}} \cdot \sqrt{\frac{\mathsf{vc}(\mathcal{H})+\log(1/\delta)}{n} } \text{ for some universal constant $C^{\vc}$}, \\
\label{eqn.dkwtvh}
        \tTV_\mathcal{H}(p,\hat{p}_n) &\leq \sqrt{\frac{\ln(2|\mathcal{H}|/\delta)}{2n}}, \text{ where $|\sH|$ denotes the cardinality of $\sH$.}
    \end{align}
\end{lemma}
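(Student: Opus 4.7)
The plan is to rewrite $\tTV_\mathcal{H}$ as a uniform deviation of empirical probabilities over a class of sets, and then invoke two classical empirical process inequalities. Let $\mathcal{S} = \{\{x : f(x) \geq t\} : f \in \mathcal{H},\, t \in \mathbb{R}\}$. By the definition~\eqref{eqn.ksdistancedefinition} of $\tTV_\mathcal{H}$, and the identity $\bP_q(f(X) \geq t) = q(\{x : f(x) \geq t\})$,
\begin{equation*}
\tTV_\mathcal{H}(p, \hat p_n) \;=\; \sup_{S \in \mathcal{S}} |p(S) - \hat p_n(S)|.
\end{equation*}
So both claims reduce to controlling a supremum of empirical deviations over $\mathcal{S}$, and the hypothesis of the lemma defines $\vc(\mathcal{H})$ to be precisely the VC dimension of $\mathcal{S}$.

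For the first bound~\eqref{eqn.vc_inequality}, I would apply the standard VC inequality (in the form due to Vapnik-Chervonenkis, sharpened by Talagrand): for any class $\mathcal{S}$ of measurable sets with VC dimension $d$ and any $n$ i.i.d.~samples from $p$, there is a universal constant $C^{\vc}$ such that
\begin{equation*}
\sup_{S \in \mathcal{S}} |p(S) - \hat p_n(S)| \;\leq\; C^{\vc}\sqrt{\tfrac{d + \log(1/\delta)}{n}}
\end{equation*}
with probability at least $1-\delta$. Plugging in $d = \vc(\mathcal{H})$ yields the claim directly.

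For the second bound~\eqref{eqn.dkwtvh}, I would proceed pointwise in $f$ and union-bound over the finite class $\mathcal{H}$. For each fixed $f \in \mathcal{H}$, the quantities $\bP_p(f(X) \geq t)$ and $\bP_{\hat p_n}(f(X) \geq t)$ are one minus the cumulative distribution functions of $f(X)$ under $p$ and $\hat p_n$ respectively, so the Dvoretzky-Kiefer-Wolfowitz inequality with Massart's sharp constant gives
\begin{equation*}
\bP\!\left(\sup_{t \in \mathbb{R}} |\bP_p(f(X) \geq t) - \bP_{\hat p_n}(f(X) \geq t)| > \epsilon\right) \;\leq\; 2 e^{-2 n \epsilon^2}.
\end{equation*}
Applying this at confidence level $\delta / |\mathcal{H}|$ for each $f$ and union-bounding over the at most $|\mathcal{H}|$ choices yields the claim with $\epsilon = \sqrt{\ln(2|\mathcal{H}|/\delta) / (2n)}$.

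There is no serious obstacle: both parts are direct applications of named uniform-convergence inequalities once the class of sets $\mathcal{S}$ is identified. The only points requiring minor care are that the absolute value in the definition of $\tTV_\mathcal{H}$ is already matched by the absolute deviations appearing in both the VC and DKW statements, so no separate argument is needed for the two tails; and that keeping the supremum over $t$ inside the DKW statement (rather than union-bounding over thresholds) is what avoids paying an additional logarithmic factor in the level $\epsilon$.
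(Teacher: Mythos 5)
Your proposal is correct and matches the paper's proof essentially verbatim: both invoke the VC inequality on the class of sets $\mathcal{S}=\{\{x:f(x)\geq t\}\}$ for the first bound, and both prove the second bound by applying the DKW inequality (with Massart's constant) to each fixed $f\in\mathcal{H}$ and then union-bounding over the finite family $\mathcal{H}$.
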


Since Lemma~\ref{lemma.vcinequality} bounds $\tTV_{\sH}(p, \hat{p}_n)$, it remains to bound the modulus $\modu(\GG^\TV, \epsilon, \tTV_{\sH})$ for some $\sH$ with small VC dimension. We provide a general analysis via the mean cross lemma.  We show that for two 1-dimensional distributions that are close under $\tTV_\mathcal{H}$, we can delete a small fraction of probability mass to make their means cross:

\begin{lemma}[Mean cross]
\label{lem.tvt_cross_mean}\label{lem.mean_cross_tv}
Suppose two distributions $p,q$ on the real line satisfy
\begin{align} 
\sup_{t\in \bR} |\bP_p(X\geq t) - \bP_q(Y\geq t)| \leq \epsilon.
\end{align}
Then one can find some $r_p \leq \frac{p}{1-\epsilon}$ and $r_q \leq \frac{q}{1-\epsilon}$
such that $r_p$ is stochastically dominated by $r_q$,
which implies that
$\mathbb{E}_{r_p}[X] \leq \mathbb{E}_{r_q}[Y]$.
\end{lemma}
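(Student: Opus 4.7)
The plan is to pass to CDFs and quantile functions and produce $r_p,r_q$ via a monotone coupling. Write $F_p(t)=\bP_p(X\le t)$, $F_q(t)=\bP_q(Y\le t)$, and the generalized inverse $F_p^{-1}(u)=\inf\{x:F_p(x)\ge u\}$ (similarly for $q$). The hypothesis is two-sided, but for the conclusion I only need the direction $\bP_p(X\ge t)-\bP_q(Y\ge t)\le \epsilon$ for every $t\in\bR$. Using $\bP(X\ge t)=1-F(t^-)$ together with a short limiting argument that exploits right-continuity of CDFs, this translates to the clean pointwise bound $F_q(t)\le F_p(t)+\epsilon$ for every $t\in\bR$.

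The heart of the argument is to convert this CDF inequality into the quantile inequality $F_p^{-1}(v)\le F_q^{-1}(v+\epsilon)$ for all $v\in(0,1-\epsilon]$. Evaluating $F_q\le F_p+\epsilon$ at the point $t=F_q^{-1}(v+\epsilon)$ gives $F_q(t)\ge v+\epsilon$, hence $F_p(t)\ge v$, and then the definition of the generalized inverse yields $F_p^{-1}(v)\le t=F_q^{-1}(v+\epsilon)$.

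Next I would exhibit the deletions by quantile coupling. Let $W\sim\mathrm{Unif}(0,1)$ and set
\[
X_p \,=\, F_p^{-1}\bigl((1-\epsilon)W\bigr), \qquad Y_q \,=\, F_q^{-1}\bigl(\epsilon+(1-\epsilon)W\bigr),
\]
and let $r_p,r_q$ be their respective laws. A change of variables identifies $r_p$ with the law of $F_p^{-1}(U)$ conditioned on $\{U\le 1-\epsilon\}$ for $U\sim\mathrm{Unif}(0,1)$, so for every Borel set $A$,
\[
(1-\epsilon)\,r_p(A) \,=\, \bP\bigl(F_p^{-1}(U)\in A,\,U\le 1-\epsilon\bigr) \,\le\, \bP\bigl(F_p^{-1}(U)\in A\bigr) \,=\, p(A),
\]
so $r_p\le p/(1-\epsilon)$; the symmetric argument with $\{U\ge \epsilon\}$ gives $r_q\le q/(1-\epsilon)$. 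Combined with the quantile inequality above, $X_p\le Y_q$ pointwise in $W$, so this monotone coupling certifies $r_p\le_{sd} r_q$, and in particular $\bE_{r_p}[X]\le \bE_{r_q}[Y]$.

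The main obstacle to be careful about is bookkeeping at atoms of $p$ or $q$: because $\bP(X\ge t)$ involves the left-limit $F(t^-)$ rather than $F(t)$, upgrading the hypothesis to $F_q(t)\le F_p(t)+\epsilon$ at every $t$ (including jump points) requires right-continuity of both CDFs together with a one-sided limit, and one must also check that $F_p^{-1},F_q^{-1}$ are finite on the relevant range, which they are for $u\in(0,1)$. Once that technicality is dispatched, the rest of the argument is a direct manipulation of generalized inverses and needs no smoothness hypothesis on $p$ or $q$.
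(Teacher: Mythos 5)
Your proof is correct, and it reaches the same construction the paper uses: $r_p$ keeps the bottom $1-\epsilon$ quantiles of $p$ (deleting the top $\epsilon$ of mass), while $r_q$ keeps the top $1-\epsilon$ quantiles of $q$ (deleting the bottom $\epsilon$). Where you diverge is in how stochastic dominance is certified. The paper argues directly on survival functions: with these $r_p,r_q$, one has $\bP_{r_p}(X\geq t)=\max\bigl(0,\tfrac{\bP_p(X\geq t)-\epsilon}{1-\epsilon}\bigr)$ and $\bP_{r_q}(Y\geq t)=\min\bigl(1,\tfrac{\bP_q(Y\geq t)}{1-\epsilon}\bigr)$, and the one-sided bound $\bP_p(X\geq t)-\bP_q(Y\geq t)\le\epsilon$ immediately gives $\bP_{r_p}(X\geq t)\le\bP_{r_q}(Y\geq t)$ for every $t$. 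You instead pass to CDFs, derive the quantile inequality $F_p^{-1}(v)\le F_q^{-1}(v+\epsilon)$, and exhibit an explicit monotone coupling $X_p\le Y_q$. The two routes are logically equivalent; the paper's is shorter, but your quantile construction has the virtue of making precise what ``delete $\epsilon$ mass from the top'' means when $p$ or $q$ has atoms, and it gives $r_p\le p/(1-\epsilon)$ as an immediate byproduct of the restricted-uniform definition rather than requiring a separate check. Your care with the left-limit $F(t^-)$ versus $F(t)$ and the right-continuity limiting step is the detail the paper silently elides, and you handle it correctly.
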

\begin{proof}
The idea of the proof is illustrated in Figure~\ref{fig.mean_cross_tv}. 
Suppose $X \sim p, Y\sim q$. Starting from $p, q$, we delete $\epsilon$ probability mass corresponding to the largest points of $X$ in $p$ to get $r_p$, and delete $\epsilon$ probability mass corresponding to the smallest points $Y$ in $q$ to get $r_q$.  Equation (\ref{eqn.tTV_XY}) implies that $\bP_{r_p}(X\geq t) \leq \bP_{r_q}(X\geq t)$ holds for all $t\in\mathbb{R}$. Hence, $r_q$ stochastically dominates $r_p$ and $\bE_{r_p}[X] \leq \bE_{r_q}[Y]$.
\end{proof}

With this one-dimensional mean cross lemma, the key idea to show modulus of continuity in high dimension is to identify the optimal projected direction and apply the  lemma in that direction. We show in the next sections via concrete results how the lemma can be applied to derive finite-sample rates for different cases.
 
\subsubsection{$\tTV_{\sH}$ projection for mean estimation}
Recall that in mean estimation, we have $ \GG^\TV_{\mathsf{mean}}(\rho, \eta) \eqdef \big\{p  \mid \|\bE_r[X] - \bE_p[X] \| \leq \rho \text{ for all } r \leq \frac{p}{1-\eta}\big\}$. 
We choose $\mathcal{H}$ as
$\{v^\top X \mid v\in \bR^d\}.
$
This particular $\tTV_\mathcal{H}$ is also used in~\citet{donoho1982breakdown,donoho1988automatic}.
Intuitively, the reason for choosing this $\sH$ is that linear projections of our data contain all information needed to recover the mean, so perhaps it is enough for distributions to be close only under these projections. We have the following results for mean estimation.

\begin{theorem}\label{thm.tTV_mean} 
Denote $\tilde \epsilon = 2\epsilon + 2C^{\mathsf{vc}}\sqrt{\frac{d+1+\log(1/\delta)}{n}}$, where $C^{\mathsf{vc}}$ is from Lemma~\ref{lemma.vcinequality}. 
For any Orlicz function $\psi$, assume that $p^*\in\GG(\psi)$, where  
\begin{align} \label{eqn.boundedpsimeanexample}
   \GG(\psi) = \left\{p \mid \sup_{v\in\bR^d, \|v \|_*=1} \bE_{p}\left[\psi\left(\frac{|\langle v, X - \bE_{p}[X]\rangle|}{\sigma}\right)\right] \leq 1\right\}.
\end{align}
Here $\| \cdot\|_*$ is the dual norm of $\|\cdot\|$. 
 For $\mathcal{H} = \{v^\top X \mid v\in \bR^d\}$, let $q $ denote the output of the projection algorithm $ \Pi(\hat{p}_n; \tTV_{\mathcal{H}},  \GG(\psi))$. Then  for any $\tilde\epsilon \in [0, 1)$, with probability at least $1-\delta$,
\begin{align}\label{eqn.finalboundmeanestimation}
    \|\bE_{p^*}[X] - \bE_q[X]\|\leq \frac{2\sigma\tilde\epsilon \psi^{-1}(1/\tilde\epsilon)}{1-\tilde\epsilon}.
\end{align} 
Furthermore, the population limit (Definition~\ref{def.populationlimit}) for $\GG(\psi)$ is  $\Theta(\sigma\epsilon\psi^{-1}(\frac{1}{2\epsilon}))$ for $\epsilon < 0.499$. 
\end{theorem}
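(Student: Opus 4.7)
The plan is to invoke Proposition~\ref{prop.tTV_general}, which reduces the finite-sample bound to controlling two quantities: the sampling error $\tTV_{\sH}(p, \hat{p}_n)$ and the modulus $\modu(\GG(\psi), \tilde\epsilon, \tTV_{\sH}, L)$ under the weaker distance. For the sampling term, the sets $\{\{x : v^\top x \geq t\} : v \in \bR^d, t \in \bR\}$ are precisely halfspaces in $\bR^d$, whose VC dimension is $d+1$; Lemma~\ref{lemma.vcinequality} then yields the claimed $\tilde\epsilon$ via the triangle inequality. The main work is in bounding the modulus and producing a matching lower bound on the population limit.

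For the modulus, I would first show $\GG(\psi) \subseteq \GG^\TV_{\mathsf{mean}}(\rho, \tilde\epsilon)$ with $\rho = \frac{\sigma\tilde\epsilon\,\psi^{-1}(1/\tilde\epsilon)}{1-\tilde\epsilon}$. Writing any deletion as $r = p|_E/(1-\tilde\epsilon)$ for an event $E$ with $\bP_p(E) = 1-\tilde\epsilon$, a direct computation gives $\bE_r[X] - \bE_p[X] = -\frac{1}{1-\tilde\epsilon}\bE_p[(X-\bE_p[X])\mathbf{1}_{E^c}]$. Pairing with $v$ of unit dual norm and applying Jensen's inequality to the conditional expectation of $\psi$ on $E^c$ yields $\bE_p[|\langle v, X-\bE_p[X]\rangle|\mathbf{1}_{E^c}]/\sigma \leq \tilde\epsilon\,\psi^{-1}(\bE_p[\psi(Z_v)|E^c]) \leq \tilde\epsilon\,\psi^{-1}(1/\tilde\epsilon)$, and the resilience bound follows by taking the supremum over $v$. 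Now for any $p_1, p_2 \in \GG(\psi)$ with $\tTV_{\sH}(p_1, p_2) \leq \tilde\epsilon$, choose $v^\ast \in \arg\max_{\|v\|_\ast = 1} \langle v, \bE_{p_1}[X] - \bE_{p_2}[X]\rangle$. Since $v^{\ast\top}X \in \sH$, Lemma~\ref{lem.mean_cross_tv} applied to the 1-dimensional pushforwards produces deletions $r_1 \leq p_1/(1-\tilde\epsilon)$ and $r_2 \leq p_2/(1-\tilde\epsilon)$ whose means cross in direction $v^\ast$. Telescoping through $r_1$ and $r_2$ and applying the resilience bound in direction $v^\ast$ to both $p_1$ and $p_2$ yields $\|\bE_{p_1}[X] - \bE_{p_2}[X]\| \leq 2\rho$, which is exactly the claimed finite-sample error.

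For the population limit, the upper bound $O(\sigma\epsilon\,\psi^{-1}(1/(2\epsilon)))$ follows from Lemma~\ref{lemma.population_limit} combined with the modulus estimate at the infinite-sample limit $\tilde\epsilon = 2\epsilon$. For the matching lower bound, I would use the standard two-point construction: take $p_1 = \delta_0$ and $p_2 = (1-2\epsilon)\delta_0 + 2\epsilon\,\delta_{M e_1}$ with $M \asymp \sigma\,\psi^{-1}(1/(2\epsilon))$. One checks that both lie in $\GG(\psi)$ (the nontrivial check for $p_2$ reduces to $2\epsilon\,\psi(M/\sigma) \lesssim 1$ after discarding a lower-order term at $2\epsilon M/\sigma$), that $\TV(p_1, p_2) = 2\epsilon$, and that their means differ by $\Omega(\sigma\epsilon\,\psi^{-1}(1/(2\epsilon)))$. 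A Le Cam two-point argument through the common midpoint distribution then shows any estimator must incur at least half this gap on one of the two laws.

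The main obstacle is the modulus step: lifting the $\TV$-based resilience argument of Lemma~\ref{lem.G_TV_mean_modulus} to the weaker $\tTV_{\sH}$ requires applying the one-dimensional mean-cross lemma only after optimally selecting the dual direction $v^\ast$, and the Jensen inequality in the Orlicz computation is essential for recovering the sharp constant (rather than a naive factor of two larger one) that makes the finite-sample bound match the population limit up to constants. Verifying that the two-point family lives in $\GG(\psi)$ and aligning the constants between the upper and lower bounds are routine by comparison.
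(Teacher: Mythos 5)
Your proposal follows the same skeleton as the paper's proof: reduce to the modulus via Proposition~\ref{prop.tTV_general} plus the VC bound, show $\GG(\psi)\subset\GG^{\TV}_{\mathsf{mean}}(\rho(\tilde\epsilon),\tilde\epsilon)$ via Jensen on the Orlicz constraint, bound the $\tTV_\sH$-modulus in the optimal dual direction, and match with a two-point lower bound. One small variation: the paper's appendix proof bounds the modulus using the midpoint lemma (Lemma~\ref{lem.tvt_midpoint}), which produces a single $r$ that is a simultaneous deletion of $p_1$ and $p_2$ for all half-line events in direction $v^*$; you instead invoke the mean-cross lemma (Lemma~\ref{lem.mean_cross_tv}), which produces two separate deletions $r_{p_1},r_{p_2}$ whose projected means cross. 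The two routes are interchangeable here and yield the same $2\rho$ bound; the mean-cross route is in fact the one sketched in the paper's introduction. For the lower bound, you use a direct $\delta_0$ versus $(1-2\epsilon)\delta_0+2\epsilon\delta_{Me_1}$ construction, whereas the paper routes through the more general machinery of Lemma~\ref{lem.Wf_orlicz_closed} and Lemma~\ref{lem.Wf_orlicz_lower_bound_limit}; your direct construction works but needs a bit more care than ``discard a lower-order term'' when verifying $p_2\in\GG(\psi)$ (the term $(1-2\epsilon)\psi(2\epsilon M/\sigma)$ is controlled via convexity and $\psi(0)=0$ giving $\psi(yx)\leq y\psi(x)$ for $y\leq 1$, after which a constant shrinkage of $M$ makes the total $\leq 1$).

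One step is stated imprecisely: you write
\begin{align}
\bE_p\bigl[|\langle v, X-\bE_p[X]\rangle|\mathbf{1}_{E^c}\bigr]/\sigma \;\leq\; \tilde\epsilon\,\psi^{-1}\bigl(\bE_p[\psi(Z_v)\mid E^c]\bigr),
\end{align}
but this is not correct as an intermediate inequality when $\bP_p(E^c)<\tilde\epsilon$, since then $\bE_p[\psi(Z_v)\mid E^c]$ can exceed $1/\tilde\epsilon$. The correct chain is $\bE_p[Z_v\mathbf{1}_{E^c}] = \bP_p(E^c)\,\bE_p[Z_v\mid E^c] \leq \bP_p(E^c)\,\psi^{-1}\bigl(1/\bP_p(E^c)\bigr)$ (Jensen plus $\bE_p[\psi(Z_v)\mid E^c]\leq 1/\bP_p(E^c)$), and then one must invoke the monotonicity of $x\mapsto x\psi^{-1}(1/x)$ (Lemma~\ref{lem.psi_increasing}) to pass to $\tilde\epsilon\,\psi^{-1}(1/\tilde\epsilon)$. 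Your final bound is right and this is exactly what the paper's Lemma~\ref{lem.cvx_mean_resilience} establishes, so the fix is just to cite that lemma or rewrite the chain; the monotonicity step is not optional.
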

The proof is deferred to  Appendix~\ref{proof.mean_psi}, where we first show that $\GG(\psi)$ is a subset of the resilience set $\GG^\TV_{\mathsf{mean}}(\rho(\tilde \epsilon), \tilde \epsilon)$, and then bound the performance of projection algorithm on the resilience set via Proposition~\ref{prop.tTV_general}. Thus the conclusion also holds if we consider the larger  resilience set as assumption and target projection set.

\textbf{Interpretation and Comparison.}
If $p^*$ is sub-Gaussian, then the population limit is $\rho(\epsilon) = \Theta(\epsilon \sqrt{\log(1/\epsilon)})$ for $\epsilon<0.499$ and projecting onto the set of sub-Gaussian distributions achieves the matching rate 
when $n \gtrsim \frac{d + \log(1/\delta)}{\epsilon^2}$.
If $p^*$ has bounded covariance, then the population limit is $\rho(\epsilon) = \Theta(\sqrt{\epsilon})$ and projection achieves the matching rate at the same sample complexity $n \gtrsim \frac{d + \log(1/\delta)}{\epsilon^2}$. This also holds true for sub-exponential and bounded $k$-th moment distributions.
Although the dependence of $n$ on $\epsilon$ might be sub-optimal, we improve the sample complexity by another analysis method in Section~\ref{sec.expand}, and defer the detailed discussion to Section~\ref{sec.mean_comparison_discussion}.

\subsubsection{$\tTV_{\sH}$ projection for linear regression}
For the linear regression problem, we take the bridge function and cost function as
$B(p, \theta) = L(p, \theta) = \bE_p[(Y-X^{\top}\theta)^2 - (Y-X^{\top}\theta^*(p))^2]$ in Definition~\ref{def.G_TV}. Here $\theta^*(p) = \argmin_\theta \bE_p[(Y-X^\top \theta)^2]$. 
We design the corresponding $\mathcal{H}$ as
\begin{align}\label{eqn.H_linreg}
\mathcal{H}& = \{(Y - X^\top\theta_1)^2 - (Y - X^\top\theta_2)^2 \mid  \theta_1, \theta_2 \in \bR^d\}.
\end{align}
The following theorem characterizes the performance of $\tTV_{\mathcal{H}}$ projection:
\begin{theorem} \label{thm.linearregressiontvtildeproof}
Assume $p^*\in\GG(\psi)$, where
    \begin{align*} 
  \GG(\psi) = \Bigg\{p \mid \bE_{p}\bigg[\psi \bigg(\frac{(v^{\top}X)^2}{\sigma_1^2 \bE_{p}[(v^{\top}X)^2]}\bigg)\bigg] &\leq 1 \text{ for all } v \in \bR^d, \text{ and }  
          \bE_{p}\left[\psi \left(\frac{(Y - X^\top \theta^*(p))^2}{\sigma_2^2}\right)\right] \leq 1 \Bigg\}.
    \end{align*}

Denote $\tilde \epsilon =  2\epsilon + 2 C^{\mathsf{vc}}\sqrt{\frac{10d +\log(1/\delta)}{n} }$.
For $\mathcal{H}$ designed in (\ref{eqn.H_linreg}), let $q$ denote the output of the projection algorithm $\Pi(\hat{p}_n; \tTV_{\mathcal{H}}, \GG(\psi))$. Then for any $ \tilde \epsilon$ satisfying $\sigma_1^2\tilde \epsilon\psi^{-1}(\frac{1}{\tilde \epsilon}) < \frac{1}{2}$, with probability at least $1-\delta$, 
\begin{align*}
    \bE_{p^*}[(Y-X^\top\theta^*(q))^2 - (Y-X^\top\theta^*(p^*))^2]\leq  \left(\frac{2\sigma_1\sigma_2\tilde\epsilon \psi^{-1}(1/\tilde\epsilon)}{1-\tilde\epsilon}\right)^2.
\end{align*}   

The  population limit (Definition~\ref{def.populationlimit}) for $\GG(\psi)$  is $ \Theta( ({\sigma_1\sigma_2\epsilon \psi^{-1}(1/\epsilon)})^2)$ when the perturbation level $\epsilon$ satisfies  $\epsilon < 1/2$ and $ 2\sigma_1^2\epsilon\psi^{-1}(\frac{1}{2\epsilon}) < 1/2$.
\end{theorem}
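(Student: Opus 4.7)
My plan is to parallel the proof outline of Theorem~\ref{thm.tTV_mean}, proceeding in three conceptual steps: (i) embed $\GG(\psi)$ into the generalized resilience family $\GG^\TV(\rho_1,\rho_2,\eta)$ of Definition~\ref{def.G_TV} for the excess-loss choice $L(p,\theta)=B(p,\theta)=\bE_p[(Y-X^\top\theta)^2-(Y-X^\top\theta^*(p))^2]$; (ii) bound the modulus of continuity $\modu(\GG^\TV,\tilde\epsilon,\tTV_{\mathcal{H}},L)$ via the mean-cross lemma applied to the specifically designed $\mathcal{H}$; and (iii) combine Proposition~\ref{prop.tTV_general} with Lemma~\ref{lemma.vcinequality} to pass to finite samples. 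The matching $\Omega\bigl((\sigma_1\sigma_2\epsilon\psi^{-1}(1/\epsilon))^2\bigr)$ lower bound on the population limit will come from a two-point construction.

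For step (i), I would show that both Orlicz conditions — on $(v^\top X)^2$ and on $(Y-X^\top\theta^*(p))^2$ — imply that after any $\eta$-deletion $r\le p/(1-\eta)$, the first- and second-order quantities that enter the normal equations move by at most $\tfrac{\eta}{1-\eta}\psi^{-1}(1/\eta)$ times their scales $\sigma_1^2$ and $\sigma_2$. Writing $L(r,\theta^*(p))=(\theta^*(p)-\theta^*(r))^\top\Sigma_r(\theta^*(p)-\theta^*(r))$ and using first-order optimality of both $\theta^*(p)$ and $\theta^*(r)$ converts the gap $\Sigma_r(\theta^*(p)-\theta^*(r))=\bE_r[X(Y-X^\top\theta^*(p))]-\bE_p[X(Y-X^\top\theta^*(p))]$ into a deleted-mean quantity, which the Orlicz bounds plus Cauchy–Schwarz control by $\rho_1$ of order $(\sigma_1\sigma_2\eta\psi^{-1}(1/\eta))^2$; the same chain of inequalities establishes the bridging condition with $\rho_2$ of the same order. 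For the VC part, I note that $(y-x^\top\theta_1)^2-(y-x^\top\theta_2)^2=2y\,x^\top(\theta_2-\theta_1)+x^\top(\theta_1\theta_1^\top-\theta_2\theta_2^\top)x$, so the $y^2$ term cancels; the class of sub-level sets is therefore indexed by $(\theta_1,\theta_2,t)\in\bR^{2d+1}$ and has VC dimension at most $10d$ by the standard linearization argument, which yields the stated $\tilde\epsilon$.

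The main obstacle is step (ii). Given $p_1,p_2\in\GG(\psi)$ with $\tTV_{\mathcal{H}}(p_1,p_2)\le\tilde\epsilon$, I want to bound $L(p_1,\theta^*(p_2))$. I would apply Lemma~\ref{lem.mean_cross_tv} to the single coordinate function $f(x,y)=(y-x^\top\theta^*(p_1))^2-(y-x^\top\theta^*(p_2))^2\in\mathcal{H}$, producing deletions $r_1\le p_1/(1-\tilde\epsilon)$, $r_2\le p_2/(1-\tilde\epsilon)$ with $\bE_{r_2}[f]\le\bE_{r_1}[f]$. Chain this through
\[
\bE_{p_1}[f]=\bigl(\bE_{p_1}[f]-\bE_{r_1}[f]\bigr)+\bigl(\bE_{r_1}[f]-\bE_{r_2}[f]\bigr)+\bigl(\bE_{r_2}[f]-\bE_{p_2}[f]\bigr)+\bE_{p_2}[f],
\]
using $\bE_{p_2}[f]\le 0$ by optimality of $\theta^*(p_2)$, the middle term $\le 0$ by mean cross, and the two resilience-type gaps controlled by $\rho_1$ from step (i). The subtle point is that the scale $\rho_1$ itself depends on the unknown direction $\theta^*(p_1)-\theta^*(p_2)$ through the covariance $\Sigma_{p_i}$, so I would bootstrap: write the gap as $\lambda\cdot\sqrt{L(p_1,\theta^*(p_2))}$ with $\lambda=O(\sigma_2\tilde\epsilon\psi^{-1}(1/\tilde\epsilon))$, and the direction factor $\sqrt{L(p_1,\theta^*(p_2))}\le\sigma_1\|\theta^*(p_1)-\theta^*(p_2)\|$ via the moment condition, yielding a quadratic inequality in $\sqrt{L(p_1,\theta^*(p_2))}$ whose solution gives exactly the target $(2\sigma_1\sigma_2\tilde\epsilon\psi^{-1}(1/\tilde\epsilon)/(1-\tilde\epsilon))^2$.

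For the lower bound witnessing optimality of the population limit, I would follow the standard two-point recipe used for mean estimation in Theorem~\ref{thm.tTV_mean}: start from a base distribution satisfying both Orlicz conditions at the prescribed scales, and perturb an $\epsilon$-mass to inject a shift in the regressor of magnitude $\Theta(\sigma_2\psi^{-1}(1/\epsilon)/\sigma_1)$ along a unit direction whose $\sigma_1$-scaled covariance is $\Theta(\sigma_1)$, producing excess loss $\Theta((\sigma_1\sigma_2\epsilon\psi^{-1}(1/\epsilon))^2)$ between the two hypotheses, neither of which any estimator can distinguish from the common contaminated distribution.
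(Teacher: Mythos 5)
Your plan tracks the paper's actual proof (Appendix E.3) quite closely: the paper also (a) shows $\GG(\psi)\subset\GG^\TV(\rho_1,8\rho_1,\eta)$ with $\rho_1\asymp(\sigma_1\sigma_2\eta\psi^{-1}(1/\eta))^2$ by writing $B(r,\theta^*(p))=\|M_r^{-1/2}\mu_r\|_2^2$ with $\mu_r=\bE_r[XZ]$, bounding $\|M_{p^*}^{-1/2}\mu_r\|$ by Orlicz/Hölder and bounding $\|M_{p^*}^{1/2}M_r^{-1/2}\|_2\le\sqrt 2$ via hypercontractivity, (b) uses the VC-dimension $10d$ bound for $\{(v_1^\top x)^2-(v_2^\top x)^2\ge t\}$, and (c) applies the mean-cross lemma to exactly the function $f=\ell(\theta^*(p_2),\cdot)-\ell(\theta^*(p_1),\cdot)\in\mathcal H$ and chains through the midpoint. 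So the architecture of your argument is the paper's.

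Two points where your write-up slips. First, a sign issue: with your $f=\ell(\theta^*(p_1),\cdot)-\ell(\theta^*(p_2),\cdot)$ you get $\bE_{p_2}[f]=L(p_2,\theta^*(p_1))\ge 0$ (not $\le 0$), and by the mean-cross direction you picked the middle term is $\ge 0$ (not $\le 0$); the signs work out only if you take $f=\ell(\theta^*(p_2),\cdot)-\ell(\theta^*(p_1),\cdot)$, as the paper does. Second, and more substantively, the closing step of your bootstrap asserts $\sqrt{L(p_1,\theta^*(p_2))}\le\sigma_1\|\theta^*(p_1)-\theta^*(p_2)\|$ "via the moment condition" — but the hypothesis on $X$ is a \emph{relative} (hypercontractivity) bound $\bE_p[\psi((v^\top X)^2/(\sigma_1^2\bE_p[(v^\top X)^2])]\le 1$, which gives no absolute bound on $\|\Sigma_{p_1}\|$, so that inequality does not follow. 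The paper avoids this: rather than relating $L$ to a Euclidean norm, it stays in the $M_r$-geometry and uses hypercontractivity only to certify $M_r\succeq\tfrac12 M_{p^*}$, which is what makes the $\GG_\uparrow$ implication (and, effectively, your quadratic inequality) close. Relatedly, the paper explicitly records a subtlety you omit: because the mean-cross lemma only crosses means of the \emph{same} function for both distributions, one must shrink from $\GG_\uparrow(\rho_1,\cdot,\cdot)$ to $\GG_\uparrow(2\rho_1,\cdot,\cdot)$ (i.e.\ pay an extra $\rho_1$ when translating the crossed quantity $\bE_{r_{p_1}}[\ell(\theta^*(p_2))-\ell(\theta^*(p_1))]$ into the bridge $B(r_{p_1},\theta^*(p_2))$). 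Your decomposition hides this cost in the vaguely described "resilience-type gaps." Fixing these two details — the relative-vs-absolute moment bound, and the $2\rho_1$ shrinkage — turns your outline into the paper's proof.
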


 We defer the proof to Appendix~\ref{appendix.proof_linreg_tvt}. 
We first show that any $\GG(\psi)$ is a subset of the generalized resilience set. Then we show that for any $p^*$ in the generalized resilience set, the worst-case error of projection algorithm is bounded via Proposition~\ref{prop.tTV_general}.

\textbf{Interpretation and Comparison.} 
The first condition in $\GG(\psi)$ bounds the tails of $X$ in every direction \emph{relative to} the 
second moment (also known as hyper-contractivity or anti-concentration), while  the second bounds the tails of the error $Z$.  The hyper-contractivity condition is satisfied for general Gaussian distribution $X\sim \mathcal{N}(\mu, \Sigma)$, which is shown in e.g. ~\citep[Corollary 5.21]{boucheron2013concentration}.
One might wonder whether a simpler condition such as sub-Gaussianity of $X$ and $Z$ 
would also guarantee a finite population limit.  We prove 
in Appendix~\ref{proof.delete_dimension_linreg} that even if $Z \equiv 0$, sub-Gaussianity of $X$ is \emph{not} sufficient. 
The hyper-contractivity condition prevents the deletion of dimension, and only can be replaced by usual bounded Orlicz norm assumption if we assume $\|\theta\|$ is upper bounded.

Taking $\psi(x) = |x|^{k/2}$, we recover the  $k$-hyper-contractivity condition of $X$ and bounded $k$-th moment condition of $Y-X^\top \theta^*(p)$ as is studied in \citet{klivans2018efficient}. 
Our projection algorithm guarantees the excess predictive loss to be  $O(\epsilon^{2-4/k})$ given $O(d/\epsilon^2)$ samples, while \citet{klivans2018efficient} gives $O(\epsilon^{1-2/k})$ assuming $O(\mathsf{poly}(d^k,1/\epsilon))$ samples. This also matches the dependence on $\epsilon$ with a better dependence on dimension compared to the follow-up work in~\citet{bakshi2020robust}. When $X$ and $Z$
are independent, we can improve this further to 
$\epsilon^{2-2/k}$, as we show in the proof. 

When $X$ and $Z$ are both sub-Gaussian, our dependence on $\epsilon$ is the same as the Gaussian example in~\citet{gao2017robust} up to a log factor while the sample complexity matches~\citet{gao2017robust} exactly. 
\citet{diakonikolas2019efficient} guarantee parameter error $\|\hat \theta - \theta^*\|_2 \lesssim (\epsilon\log(1/\epsilon))^2$ given $O(d/\epsilon^2)$ samples when $X$ is isotropic Gaussian and $Z$ has bounded second moment, which is implied by our analysis by taking the Orlicz function $\psi$  as exponential function and use it as the generalized resilience set.

\subsubsection{$\tTV$ projection for joint mean and covariance estimation}

For joint estimation of the mean and covariance of a distribution $p$,   we use 
the recovery metric as in~\citet{kothari2017outlier}:
\begin{align}
   L(p, (\mu, \Sigma)) & = \max\big(\|\Sigma_p^{-1/2}(\mu_p-\mu)\|_2^2/\eta, \|I_d - \Sigma^{-1/2}_p\Sigma\Sigma_p^{-1/2}\|_2 \big), \label{eqn.joint_L}
\end{align}
where $\mu_p$ and $\Sigma_p$ are the mean and covariance of $p$. 
Making $L$ small requires that the estimated covariance $\Sigma$ is close to $\Sigma_p$, and also that 
$\mu$ is close to $\mu_p$ under the norm induced by $\Sigma_p$.
We apply the $\tTV$ projection algorithm for joint mean and covariance estimation and  present the theorem as below.  
 \begin{theorem}\label{thm.tTV_joint_multiplicative}Assume that $p^*\in\GG(\psi)$, where 
\begin{align}\label{eqn.joint_sufficient}
   \GG(\psi) = \left\{  \sup_{v\in\bR^d, \|v\|_2 = 1}\bE_p\left[\psi\left(\frac{({v^{\top}(X-\mu_p)})^2}{\kappa^2\bE_{p}[(v^\top (X-\mu_p))^2]}\right)\right] \leq 1\right\},
\end{align}

Denote $\tilde \epsilon = 2\epsilon + 2C^{\mathsf{vc}}\sqrt{\frac{d+1+\log(1/\delta)}{n}}$.  For $\mathcal{H} = \{v^\top x \mid v\in\bR^d, \|v\|_2=1\}$, let $q$ denote the output of the projection algorithm $\Pi(\hat{p}_n; \tTV_{\mathcal{H}}, \GG(\psi))$. Then there exist some $C$ such that when $\tilde \epsilon \leq C$, with probability at least $1-\delta$, 
\begin{align}
     \|\Sigma_{p^*}^{-1/2}(\mu_{p^*} -\mu_q) \|_2 & \lesssim \kappa\tilde\epsilon\sqrt{\psi^{-1}({1}/{\tilde\epsilon})}, 
    \| I_d - \Sigma_{p^*}^{-1/2}\Sigma_q\Sigma_{p^*}^{-1/2}\|_2 \lesssim \kappa^2\epsilon{\psi^{-1}({1}/{\epsilon})}.
\end{align}

\end{theorem}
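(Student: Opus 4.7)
The plan is to follow the blueprint used in the proof of Theorem~\ref{thm.tTV_mean}. I would apply Proposition~\ref{prop.tTV_general} with $\mathcal{M} = \GG(\psi)$ and discrepancy $\tTV_\mathcal{H}$, separately for the two losses $\|\Sigma_{p^*}^{-1/2}(\mu_{p^*}-\mu_q)\|_2$ and $\|I_d - \Sigma_{p^*}^{-1/2}\Sigma_q\Sigma_{p^*}^{-1/2}\|_2$. Since $\mathcal{H}$ consists of linear functions of $X$, the associated level-set class is the family of halfspaces in $\bR^d$ of VC dimension $d+1$, so Lemma~\ref{lemma.vcinequality} gives $\tTV_\mathcal{H}(p^*,\hat p_n) \leq C^{\vc}\sqrt{(d+1+\log(1/\delta))/n}$ with probability at least $1-\delta$. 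Combined with $\TV(p^*,p) \leq \epsilon$ and the triangle inequality, this produces the stated $\tilde\epsilon$, and reduces the problem to bounding $\modu(\GG(\psi),\tilde\epsilon, \tTV_\mathcal{H}, L)$ for each of the two losses.

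Next I would verify that $\GG(\psi)$ lies inside an appropriate resilience family. For any $p \in \GG(\psi)$, any unit $v$, and any $r \leq p/(1-\eta)$, a standard Orlicz-norm deletion argument yields the multiplicative estimates
\begin{align*}
|v^\top(\mu_r - \mu_p)| &\lesssim \eta \cdot \kappa \sqrt{\psi^{-1}(1/\eta)\, v^\top \Sigma_p v}, \\
\bigl|\bE_r[(v^\top(X-\mu_p))^2] - v^\top \Sigma_p v\bigr| &\lesssim \eta \cdot \kappa^2\, \psi^{-1}(1/\eta)\, v^\top \Sigma_p v.
\end{align*}
The first follows from Cauchy--Schwarz together with the $\psi$-Orlicz bound on $(v^\top(X-\mu_p))^2$ given in \eqref{eqn.joint_sufficient}; the second is immediate from the same bound. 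These are the multiplicative analogues of the additive resilience used in the proof of Theorem~\ref{thm.tTV_mean}.

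I would then bound the modulus via two invocations of the mean-cross lemma (Lemma~\ref{lem.mean_cross_tv}). For the mean modulus, pick the worst unit direction $v$ (in $\Sigma_{p^*}^{-1/2}$-geometry) and apply the lemma to the one-dimensional projection $(\Sigma_{p^*}^{-1/2}v)^\top X$ under $p_1$ and $p_2$; the resulting deletions $r_{p_1},r_{p_2}$ have crossed projected means, and combining with the mean resilience estimate (applied to both $v$ and $-v$ via two separate deletions) gives the $\kappa\tilde\epsilon\sqrt{\psi^{-1}(1/\tilde\epsilon)}$ bound. For the covariance modulus, pick the worst unit $v$ and apply the mean-cross lemma to the quadratic $f(x) = (v^\top(x-\mu_{p_1}))^2$: each level set $\{f \geq t\}$ is the union of two halfspaces, so the Kolmogorov distance of $f(X)$ between $p_1$ and $p_2$ is at most $2\tilde\epsilon$. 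The lemma then produces deletions whose $p_1$-centered second moments in direction $v$ cross, and the second resilience estimate yields the $\kappa^2\tilde\epsilon\psi^{-1}(1/\tilde\epsilon)$ bound.

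The main obstacle is the covariance modulus, for two reasons. First, $\mathcal{H}$ contains only linear functions, so one must bootstrap to the quadratic $f$; this is exactly what the two-halfspace union observation above handles (at the cost of a factor of $2$ in $\tilde\epsilon$). Second, $f$ is centered at $\mu_{p_1}$ rather than at $\mu_{p_2}$, so comparing $\bE_{p_2}[f(X)]$ with $v^\top \Sigma_{p_2} v$ produces a residual of order $|v^\top(\mu_{p_1}-\mu_{p_2})|^2$. This residual is precisely the square of the mean modulus, namely $\bigl(\kappa\tilde\epsilon\sqrt{\psi^{-1}(1/\tilde\epsilon)}\bigr)^2 = \kappa^2\tilde\epsilon^2\psi^{-1}(1/\tilde\epsilon)$, which is absorbed into the target $\kappa^2\tilde\epsilon\psi^{-1}(1/\tilde\epsilon)$ once $\tilde\epsilon \leq C$; this is the source of the smallness hypothesis in the statement. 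The multiplicative-in-$\Sigma_{p^*}$ normalization is preserved throughout by working with directions of the form $\Sigma_{p^*}^{-1/2}v$ and using that $\Sigma_{p_1}$ and $\Sigma_{p_2}$ are comparable up to $I \pm O(\kappa^2\tilde\epsilon\psi^{-1}(1/\tilde\epsilon))$ once the covariance bound itself is in hand, closing a mild bootstrap.
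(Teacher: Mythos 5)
Your proposal is correct, but it follows a genuinely different route from the paper's own proof of this theorem. The paper's argument (Appendix~\ref{proof.G_TV_joint}) first shows $\GG(\psi)$ sits inside the generalized resilience family $\GG^{\TV}(\rho,6\rho,\eta)$ with the bridge function $B(p,(\mu,\Sigma))$ of~\eqref{eqn.joint_B}, and then bounds the modulus by a \emph{fixed-threshold truncation}: it uses the tail bound from Lemma~\ref{lem.tail_bound} to cut both $p_1,p_2$ at $|v^\top\Sigma_{p_1}^{-1/2}(X-\mu_{p_1})|\leq\sqrt{\rho/\tilde\epsilon}$, and then controls the mean and second-moment differences of the truncated distributions via the \emph{integral representations} $\mu=\int(1-F(t))\,dt$ and second moment $=\int 2t(1-F(t))\,dt$ over the bounded truncation interval, where the integrand difference is pointwise bounded by $5\tilde\epsilon$. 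Your plan instead applies the mean-cross lemma (Lemma~\ref{lem.mean_cross_tv}) to the linear projection $(\Sigma_{p^*}^{-1/2}v)^\top X$ for the mean and to the quadratic $f(x)=(w^\top(x-\mu_{p_1}))^2$ for the covariance, exploiting that $\{f\geq t\}$ is a union of two halfspaces so the Kolmogorov distance of $f(X)$ is controlled by $2\tTV_\mathcal{H}$. This is in fact precisely the method the paper uses for the closely related additive-loss variant (Theorem~\ref{thm.tTV_joint_correct}), so you are extending that argument to the multiplicative normalization rather than following the paper's truncation route. What each buys: your route is structurally uniform with Theorem~\ref{thm.tTV_mean}, reusing a single lemma as the backbone; the paper's truncation route decouples the mean and covariance estimates more transparently, whereas yours produces a genuinely coupled system (the mean modulus depends on $w^\top\Sigma_{p_2}w$, and the covariance modulus depends on the squared mean modulus as a residual) that must be closed simultaneously, which is where your ``mild bootstrap'' lives. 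That bootstrap does close once $\tilde\epsilon\kappa^2\psi^{-1}(1/\tilde\epsilon)$ is small, so the smallness hypothesis $\tilde\epsilon\leq C$ is used in the same spirit by both proofs. One elision worth tightening in a write-up: the mean-cross lemma is one-sided (it constructs $r_{p_1}$ stochastically dominated by $r_{p_2}$), so to obtain a two-sided bound on the covariance discrepancy in a fixed direction you need to apply it twice with the roles of $p_1,p_2$ swapped; replacing $v$ by $-v$ as you do for the mean does not work for the quadratic since $f$ is even in $v$.
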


The proof is deferred to Appendix~\ref{proof.G_TV_joint}. 
We first show that any $\GG(\psi)$ is a subset of the generalized resilience set. Then we show that for any $p^*$ in the generalized resilience set, the worst-case error of projection algorithm is bounded via Proposition~\ref{prop.tTV_general}. In the analysis we provide a new way of bounding the modulus by deleting the two distributions in the modulus formulation differently in the transformed space, and utilizing the integral representation of mean and covariance to connect with the $\tTV$ distance.  %

\textbf{Interpretation and Comparison.}
First take $\psi(x) = x^k$;  when $\Sigma_p^{-1/2}X$ has $2k$-th central moment bounded by $\kappa^{2k}$, 
we can estimate the mean and covariance with respective errors $O(\kappa\epsilon^{1-1/(2k)})$ and 
$O(\kappa^2\epsilon^{1-1/k})$.
This matches results in~\citet{kothari2017outlier} while improving the sample complexity's dependence on dimension from $O((d\log(d))^k)$ to $O(d)$.
Next take $\psi(x) = e^x -1$; when $\Sigma_p^{-1/2}X$ is sub-Gaussian with parameter $\kappa$, we can estimate the mean and
covariance with respective errors $O(\kappa \epsilon\sqrt{\log(1/\epsilon)})$ and $O(\kappa^2 \epsilon\log(1/\epsilon))$.
For another cost under 2-norm (Theorem~\ref{thm.tTV_joint_correct}), our results generalize  \citet{gao2019generative} from Gaussians to a non-parametric set with a necessary sacrifice of a log factor. 

\subsubsection{Discussion and remarks on the results}\label{subsec.discussion_tTV}

\paragraph*{Not only the minimizer works in MD functional}

We show in Proposition~\ref{prop.any_q_suffices_oblivious}  that instead of looking for the exact projection $ q = \Pi(\hat{p}_n; \tTV_{\mathcal{H}}, \GG)$, all the above results also hold if we simply find some distribution $q=\Pi(\hat{p}_n; \tTV_{\mathcal{H}}, \GG, \tilde \epsilon/2)$ in Algorithm~\ref{alg.general_projection}, i.e. it suffices to find some $q \in \GG$ that is within $\epsilon + f(n, \delta, \mathcal{H})$ $ \tTV_\mathcal{H}$-distance of $\hat p_n$, where $f(n, \delta, \mathcal{H})$ is the $1- \delta$ quantile of $\TV(p, \hat p_n)$, which usually takes the form $C^{\mathsf{vc}}\cdot \sqrt{\frac{\mathsf{vc}(\mathcal{H})+\log(1/\delta)}{n} }$ (Lemma~\ref{lemma.vcinequality}).

\paragraph*{Design of $\mathcal{H}$ in $\tTV_{\mathcal{H}}$ for general cases}

In the above examples, we give specific choice of $\mathcal{H}$ for different $\GG^\TV$, including $\GG^\TV$ for mean estimation, linear regression and joint mean and covariance estimation. We remark that all the choice of $\mathcal{H}$ can be unified by the dual representation of bridge function $B$ in the generalized resilience definition.
Assume $B(p, \theta)$ is convex in $p$ for all $\theta$ in $\GG^\TV(\rho_1, \rho_2, \eta)$, consider the Fenchel-Moreau dual representation~\cite{borwein2010convex} of $B$:
\begin{align*}
    B(p, \theta) = \sup_{f\in\sF_\theta} \bE_p[f(X)] - B^*(f, \theta).
\end{align*}
Here $\sF_\theta = \{f \mid B^*(f,\theta) <\infty\}$. Then we take 
$\mathcal{H}=  \bigcup_{\theta\in \Theta} \sF_\theta.
$
All the three examples above are taking the $\mathcal{H}$ according to this rule. We justify in Appendix~\ref{appendix.general_sHandtTV} that with this design of $\sH$, the modulus  for generalized resilience set is bounded. 

\paragraph*{Performance guarantee for adaptive corruption model}

So far we only focus on the performance guarantee for oblivious corruption model. In fact, all the above results also hold for adaptive corruption model. We only need to substitute the term $\tilde \epsilon = 2\epsilon + 2 C^{\mathsf{vc}} \sqrt{\frac{\mathsf{vc}(\mathcal{H})+\log(1/\delta)}{n}}$ in the results with $\tilde \epsilon = 2(\sqrt{\epsilon}+\sqrt{\frac{\log(1/\delta)}{n}})^2   + 2 C^{\mathsf{vc}} \sqrt{\frac{\mathsf{vc}(\mathcal{H})+\log(1/\delta)}{n}}$. The results can be seen from a combination of Theorem~\ref{thm.admissible-1-adaptive} for analysis technique and Lemma~\ref{lem.coupling_TV} for the upper bound on $\TV(\hat p_n, \hat p_n^*)$.  The difference in analysis is sketched in Theorem~\ref{thm.admissible-1} and Theorem~\ref{thm.admissible-1-adaptive}.

Besides all the examples above, we also show in Appendix~\ref{appendix.classification} that with different choices of $B$ and $L$, we are able to derive different $\GG^\TV$ and corresponding sufficient conditions for robust classification.

\subsection{Finite Sample Algorithms - Expand Destination Set $\mathcal{M}$}\label{sec.expand}

Besides weakening the distance $  \TV$, an alternative way to rescue the projection algorithm 
is to ensure that the destination set $\cM$ is {large enough}. 
In this section, we assume the corruption model is  adaptive corruption (Definition~\ref{def.adaptivecont}) of level $\epsilon$, which is stronger than the oblivious corruption (Definition~\ref{def.obliviouscorruption}) of the same level. We denote $\hat p_n$ as the corrupted empirical distribution and $\hat{p}_n^*$ as the empirical distribution sampled from the clean population distribution $p^*$.  %

We will focus on mean estimation because it is well-studied, but our analysis 
strategy applies more generally.  
All the results in this section apply to both $\TV$ and $\tTV$ projection. For $\TV$ projection, the existing filtering and convex programming approaches can be viewed as obtaining one solution of our projection; for $\tTV_\mathcal{H}$ projection, our results provide new bounds for the algorithm in Section~\ref{sec.finite_sample_algorithm_weaken}.  The key difference of this approach compared to distance weakening is that we no longer search for the population distribution $p^*$ in projection; we instead search for the empirical distribution $\hat{p}_n^*$ in projection. The intuitive reasoning is summarized in Section~\ref{sec.intro_expand}. We formalize and generalize it in the following proposition:

\begin{proposition}
\label{prop.expand_sufficient}
For a set $\GG' \subset \MM$, define the generalized modulus of continuity as 
\begin{equation}
\modu(\GG', \MM, \epsilon) \eqdef \min_{p \in \GG', q \in \MM : \TV(p, q) \leq \epsilon} L(p, \theta^*(q)).
\end{equation}
Assume $\TV(\hat p', \hat p_n^*)\leq \epsilon_1$  with probability at least $1-\delta$ and $\hat p' \in \GG'$ with probability at least $1-\delta$.
Then the minimum distance functional projecting under $\TV$ onto $\MM$ has empirical error 
$L(\hat p', \hat{\theta})$ at most $\modu(\GG', \MM, \tilde \epsilon)$ with probability at least 
$1 - 3\delta$, where $\tilde \epsilon =2(\sqrt{\epsilon}+\sqrt{\frac{\log(1/\delta)}{2n}})^2+2\epsilon_1$.
\end{proposition}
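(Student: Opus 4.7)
My plan is to use $\hat p'$ as a clean ``bridge'' distribution between $\hat p_n^*$ and the projected output $q$, and then to verify that the pair $(\hat p', q)$ lies inside the feasible set defining $\modu(\GG', \MM, \tilde\epsilon)$. Once that admissibility is in hand the conclusion drops out of the definition of the generalized modulus, so the entire argument reduces to controlling $\TV(\hat p', q)$ via two triangle inequalities plus the defining optimality of the minimum-distance projection.

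Concretely I would proceed in four steps. First, invoke the appendix bound on adaptive corruption (referenced just after Definition~\ref{def.adaptivecont}, specifically Lemma~\ref{lem.coupling_TV} of Appendix~\ref{proof.lem_coupling}) to get
\begin{equation*}
\TV(\hat p_n^*, \hat p_n) \;\leq\; \left(\sqrt{\epsilon} + \sqrt{\tfrac{\log(1/\delta)}{2n}}\right)^{2}
\end{equation*}
with probability at least $1-\delta$. Second, combine this with the hypothesis $\TV(\hat p', \hat p_n^*) \leq \epsilon_1$ by the triangle inequality of $\TV$ to obtain $\TV(\hat p', \hat p_n) \leq \tilde\epsilon/2$. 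Third, since $\hat p' \in \GG' \subset \MM$ is itself a feasible candidate for the projection, the minimizer $q = \argmin_{q \in \MM} \TV(q, \hat p_n)$ satisfies $\TV(q, \hat p_n) \leq \TV(\hat p', \hat p_n) \leq \tilde\epsilon/2$, and a second triangle inequality then yields $\TV(\hat p', q) \leq \tilde\epsilon$. Fourth, note that $\hat p' \in \GG'$, $q \in \MM$, and $\TV(\hat p', q) \leq \tilde\epsilon$, so the pair $(\hat p', q)$ is admissible in the optimization defining the generalized modulus, and therefore $L(\hat p', \theta^*(q)) \leq \modu(\GG', \MM, \tilde\epsilon)$. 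A union bound over the three $\delta$-probability failure events (the adaptive-corruption concentration, the event $\TV(\hat p', \hat p_n^*) \leq \epsilon_1$, and the event $\hat p' \in \GG'$) produces the overall $3\delta$ failure probability.

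The main obstacle is Step~1: the concentration $\TV(\hat p_n^*, \hat p_n) \leq (\sqrt{\epsilon} + \sqrt{\log(1/\delta)/(2n)})^2$ must be proved in the \emph{adaptive} corruption model of Definition~\ref{def.adaptivecont}, where the adversary may inspect the clean sample before acting. In the oblivious model this would follow from a routine DKW-style argument around mass $\epsilon$, but here one must instead use the coupling characterization built into Definition~\ref{def.adaptivecont}: reduce the adaptive $\TV$ to an oblivious coupling plus the empirical--population $\TV$ of that coupling, and then apply a binomial concentration inequality to pick up the $\sqrt{\log(1/\delta)/(2n)}$ slack. Once this concentration fact is granted, Steps~2--4 are essentially mechanical triangle-inequality manipulations together with optimality of the projection, and the proposition follows.
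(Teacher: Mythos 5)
Your proposal matches the paper's proof step for step: invoke Lemma~\ref{lem.coupling_TV} for the adaptive-corruption concentration bound, chain two triangle inequalities through $\hat p'$ and $\hat p_n$ using the optimality of the minimum-distance projection over $\MM \supset \GG'$, conclude that $(\hat p', q)$ is admissible in the modulus, and close with a union bound over the three failure events. The argument is correct and is the same one the authors give.
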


\begin{proof}
From Lemma~\ref{lem.coupling_TV}, we know that with probability at least $1-\delta$, $\TV(\hat p_n^*, \hat p_n)\leq (\sqrt{\epsilon}+\sqrt{\frac{\log(1/\delta)}{2n}})^2$. Thus by triangle inequality, $\TV(\hat p_n, \hat p')\leq \epsilon_1 + (\sqrt{\epsilon}+\sqrt{\frac{\log(1/\delta)}{2n}})^2 = \tilde \epsilon/2$ with probability at least $1-2\delta$.
If $\hat p'$ lies in $\GG'$, then since $\GG' \subset \MM$ we know that $\hat{p}_n$ has distance 
at most $\tilde \epsilon/2$ from $\MM$, and so the projected distribution $q$ satisfies $\TV(q, \hat{p}_n) \leq \tilde \epsilon/2$ 
and hence $\TV(q, \hat p') \leq \tilde \epsilon$. It follows from the definition that 
$L(\hat p', \hat{\theta}) = L(\hat p', \theta^*(q)) \leq \modu(\GG', \MM, \tilde \epsilon)$.
\end{proof}

To employ Proposition~\ref{prop.expand_sufficient}, we construct $\MM$ and $\GG'$ such that 
the generalized modulus is small, then exhibit some $\hat{p}' \in \GG'$ that is close to 
$\hat{p}_n^*$. Often $\hat{p}'$ will just be $\hat{p}_n^*$, but sometimes it is a perturbed 
version of $\hat{p}_n^*$ that deletes heavy-tailed ``bad'' events.

The key difficulty in analysis is that the empirical distribution $\hat p_n^*$ may not inherit the good properties of $p^*$, e.g.~the empirical distribution of an isotropic Gaussian distribution does not have constant $k$-th moment unless the sample size $n \gtrsim d^{k/2}$ for constant $k$. Additionally, when $k = 2$, even if the original distribution has bounded Euclidean norm $\sqrt{d}$, its empirical distribution with $d$ samples is not resilient with the right scale of $\rho = O(\sqrt{\eta})$.
We discusse this in Appendix~\ref{sec.negativeresultresilient}.
Since we cannot hope to establish properties like bounded $k$th moments for $\hat p_n^*$, we instead rely on two main techniques 
to control some other properties of $\hat p_n^*$ or $\hat p'$:
moment linearization, and perturbing $\hat p_n^*$ to $\hat p'$.

\textbf{Linearized moment.} %
Although $\hat p_n^*$ does not have small $k$-th moment with less than $d^{k/2}$ samples, 
our key insight is that we can bound a certain \emph{linearized} $k$-th moment with only $\Theta(d)$ samples, which is {sufficient} to ensure {resilience} (see Lemma~\ref{lem.empirical_psi_resilient} for a rigorous statement and extension to any Orlicz norm). 
For instance, if $p^*$ has $4$th moments bounded by $\sigma^4$ then we will bound $\sup_{\|v\|_2 \leq 1} \bE_{\hat p_n^*}[\psi(|v^{\top}X|)]$, 
where $\psi(x)$ is the smallest convex function on $[0,\infty)$ that coincides with $x^4$ when $0\leq x\leq 4\sigma$ and $n=d$.

\textbf{Perturb $\hat p_n^*$ to $\hat p'$.} If $p^*$ has covariance operator norm bounded by $\sigma$, then in general with even $d\log d$ samples one cannot guarantee that $\hat p_n^*$ has covariance with operator norm $O(\sigma)$. However, it was realized in~\citet{steinhardt2017resilience} that one may construct another distribution $\hat p'$ with $\TV(\hat p_n^*, \hat p')\lesssim \epsilon$ such that $\hat p'$ has covariance bounded by $O(\sigma)$ given $d\log d$ samples.
Thus instead of checking the empirical distribution $\hat p_n^* \in\GG'$, we can construct some $\hat p' \in \GG'$ such that $\TV(\hat p_n^*, \hat p' ) \lesssim \epsilon$. 
This allows us to take $\cM$ to be the set of bounded covariance distributions instead of all resilient distributions; 
the advantage of this is that there are 
computationally efficient algorithms that approximately solve 
the projection in some cases~\cite{diakonikolas2017being, diakonikolas2019efficient}. 

This motivates us to consider mean estimation for bounded $k$-th moment distribution with identity covariance. The identity covariance assumption allows us to take $\cM$ to be the set of bounded covariance distributions, which we believe admits an efficient projection 
algorithm analogous to that of \citet{diakonikolas2017being} for isotropic sub-Gaussians.
    The statistical analysis requires the simultaneous application of the {linearized moment} and {perturbation} techniques described above, as well as a generalized modulus result~(Lemma~\ref{lem.empirical_identity_cov_modulus}). We obtain:

\begin{theorem}[Bounded $k$-th moment and identity covariance]\label{thm.projection_kth_moment_identity_covariance}
Denote
$\tilde \epsilon = 4(\sqrt{\epsilon + \frac{\log(1/\delta)}{n}} + \sqrt{\frac{\log(1/\delta)}{2n}})^2$.
Take $\GG^\TV$ to be the set of isotropic distributions with bounded $k$-th moment, $k\geq 2$, and $\MM$ to be the set of bounded covariance distributions:
\begin{align}
    \mathcal{G} &= \{p \mid \mathbb{E}_p[(X-\mu_p)(X-\mu_p)^{\top}] = I_d,  \sup_{v\in \bR^d, \|v\|_2 = 1} \bE_p[ |v^{\top}(X - \mu_p)|^k] \leq \sigma^k \},\\
    \mathcal{M} &= \{p \mid  \|\bE_p[(X-\mu_p)(X-\mu_p)^\top] \|_2 \leq 1+ f(n, d, \epsilon, \delta, \sigma)\}.
\end{align}
If $p^*\in\GG^\TV$ and $\tilde \epsilon < 1/2$, then there exists some $f(n, d, \epsilon, \delta, \sigma)$ such that the projection $q = \Pi(\hat p_n; \TV / \tTV_\mathcal{H}, \cM)$ of $\hat{p}_n$ onto $\MM$ satisfies
\begin{align}
     \|\mu_{p^*}-\mu_q\|_2 & =   O\left(k\sigma\cdot \left(\frac{ \epsilon^{1-1/k}}{\delta^{1/k}} + \frac{1}{\delta}\sqrt{\frac{d\log(d)}{n}}\right) \right)
\end{align}
with probability at least $1-8\delta$. Moreover,
this bound holds for any $q \in \MM$ within $\TV$ (or $\tTV$) distance $\tilde \eps/2$ of $\hat{p}_n$.
\end{theorem}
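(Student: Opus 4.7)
The plan is to invoke Proposition~\ref{prop.expand_sufficient} with an intermediate set $\GG'\subset \MM$ chosen so that (i)~with high probability there is an $\epsilon_1$-$\TV$-perturbation $\hat p'$ of $\hat p_n^*$ lying in $\GG'$, and (ii)~the generalized modulus $\modu(\GG',\MM,\tilde\epsilon)$ scales as $k\sigma\,\tilde\epsilon^{1-1/k}$ rather than the naive $\sqrt{\tilde\epsilon}$ one would obtain from the bounded-covariance constraint alone. Once this is in hand, translating the empirical bound on $L(\hat p',\hat\theta)$ into a population bound on $\|\mu_{p^*}-\mu_q\|_2$ is routine via the triangle inequality and concentration of $\mu_{\hat p_n^*}$ around $\mu_{p^*}$.

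The set $\GG'$ combines the two main tools of Section~\ref{sec.expand}. Linearize the $k$-th moment: let $\psi$ be the smallest convex function on $[0,\infty)$ that coincides with $x^k$ on $[0,T]$ and is affine on $[T,\infty)$, with threshold $T\asymp\sigma$ tuned so that Lemma~\ref{lem.empirical_psi_resilient} applies with $\Theta(d)$ samples. Then set
\begin{align*}
\GG' \;=\; \Big\{\, p \;\Big|\; \sup_{\|v\|_2 = 1}\bE_p[\psi(|v^\top(X-\mu_p)|/\sigma)] \le C,\quad \|\Sigma_p\|_2 \le 1+f \,\Big\},
\end{align*}
where $f=f(n,d,\epsilon,\delta,\sigma)$ is chosen so that $\GG' \subset \MM$. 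Since $\psi(x)\le x^k$, any $p^*\in \GG^\TV$ satisfies the linearized bound, and by Lemma~\ref{lem.empirical_psi_resilient} this bound is inherited by $\hat p_n^*$ from $\Theta(d)$ samples. The covariance condition can fail for $\hat p_n^*$ when $n\ll d^{k/2}$; applying the \emph{perturb $\hat p_n^*$ to $\hat p'$} technique, I delete the $\epsilon_1 \asymp \epsilon + \sqrt{d\log(d)/n}$-fraction of sample points contributing the largest spectral mass to obtain $\hat p'\in\GG'$ with $\TV(\hat p',\hat p_n^*)\le \epsilon_1$.

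The hard step is the modulus bound, which I expect to be the main obstacle. Given $p\in\GG'$ and $q\in\MM$ with $\TV(p,q)\le\tilde\epsilon$, take a midpoint $r$ satisfying $r\le p/(1-\tilde\epsilon)$ and $r\le q/(1-\tilde\epsilon)$ as in the proof of Lemma~\ref{lem.G_TV_mean_modulus}, and split $\|\mu_p-\mu_q\|_2 \le \|\mu_p-\mu_r\|_2 + \|\mu_q-\mu_r\|_2$. The first term is $O(k\sigma\tilde\epsilon^{1-1/k})$ by the Orlicz-resilience calculation of Theorem~\ref{thm.tTV_mean} specialized to the linearized $\psi$. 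The second term is subtle: a standalone bound from $\|\Sigma_q\|_2\le 1+f$ would only yield $O(\sqrt{\tilde\epsilon})$, which is too weak. The workaround, encoded in Lemma~\ref{lem.empirical_identity_cov_modulus}, is to exploit that $r$ is \emph{simultaneously} a deletion of $p$: because $p\in\GG'$ enjoys a linearized $k$-th moment, this structure transfers to $r$ and then to the $q$-side deletion, restoring the $\tilde\epsilon^{1-1/k}$ scaling.

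With the modulus in hand, applying Proposition~\ref{prop.expand_sufficient} with $\epsilon_1$ as above and the stated $\tilde\epsilon = 4(\sqrt{\epsilon+\log(1/\delta)/n}+\sqrt{\log(1/\delta)/(2n)})^2$ gives the empirical error bound $L(\hat p',\hat\theta)\lesssim k\sigma\tilde\epsilon^{1-1/k}$. Translating to the population requires $\|\mu_{p^*}-\mu_{\hat p'}\|_2 \le \|\mu_{p^*}-\mu_{\hat p_n^*}\|_2 + O(\sigma\epsilon_1^{1-1/k})$; a Markov tail bound on the sample-mean deviation contributes the $1/\delta$ factor on the $\sqrt{d\log(d)/n}$ term, and a Markov bound on the linearized-moment event contributes the $1/\delta^{1/k}$ factor on the $\epsilon^{1-1/k}$ term, yielding the stated $O\bigl(k\sigma(\epsilon^{1-1/k}/\delta^{1/k} + \sqrt{d\log(d)/n}/\delta)\bigr)$ rate. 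The $\tTV_\sH$ variant is identical because $\tTV_\sH\le \TV$ and the class $\sH=\{v^\top x\}$ has VC dimension $O(d)$, so Lemma~\ref{lemma.vcinequality} delivers the same concentration and the whole argument goes through unchanged.
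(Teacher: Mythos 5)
Your high-level roadmap matches the paper's (Theorem~\ref{thm.admissible-2} / Proposition~\ref{prop.expand_sufficient}, linearized moments via Lemma~\ref{lem.empirical_psi_resilient}, the perturb-$\hat p_n^*$-to-$\hat p'$ device, Lemma~\ref{lem.empirical_identity_cov_modulus} for the modulus), but there is a genuine gap in the step that actually drives the $\epsilon^{1-1/k}$ rate, and your description of that step does not reflect what Lemma~\ref{lem.empirical_identity_cov_modulus} proves.

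First, your $\GG'$ encodes only a linearized Orlicz bound on $|v^\top(X-\mu_p)|$ and a covariance \emph{upper} bound. That pair implies mean resilience (the $\rho_1$ parameter in Lemma~\ref{lem.empirical_identity_cov_modulus}), but it does not encode the \emph{eigenvalue lower bound} $\lambda_{\min}\bigl(\bE_r[(X-\mu_p)(X-\mu_p)^\top]\bigr)\geq 1-\rho_2$ over all $\eta$-deletions $r$, which is the other half of the lemma's hypothesis on $\GG_1$. That property is where near-isotropy of $p^*$ enters and it is precisely what lets the modulus be $O(\sqrt{(\tau+\rho_2)\epsilon}+\rho_1)$ rather than $O(\sqrt{\epsilon})$: with $\rho_2\asymp\sigma^2\epsilon^{1-2/k}$ (not a constant) one gets $\sqrt{\rho_2\epsilon}\asymp\sigma\epsilon^{1-1/k}$. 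The paper establishes this empirically as a separate event (part 4 of Lemma~\ref{lem.kth_deletion_set}), applying Lemma~\ref{lem.empirical_psi_resilient} not to $v^\top X$ but to $(v^\top X)^2$ (i.e.\ a linearized $k/2$-th moment of the squared projections), and it uses a fixed population-level truncation radius $\sigma\sqrt{kd}/\epsilon_1^{1/k}$ on $p^*$ rather than adaptively deleting the largest-spectral-mass samples, so that all three properties of $\hat p'$ are provable. Your $\GG'$ would need this third constraint added, and you would need to verify the empirical distribution satisfies it.

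Second, your account of why Lemma~\ref{lem.empirical_identity_cov_modulus} beats $\sqrt{\tilde\epsilon}$ is not correct as stated. The lemma does not split $\|\mu_p-\mu_q\|$ by a midpoint triangle inequality and then ``transfer structure to the $q$-side deletion.'' It writes $q=\tilde r+(q-\tilde r)$ with $\tilde r=(1-\epsilon_0)r$ the shared overlap, uses the eigenvalue lower bound on $\tilde r$ (inherited from $p$) and the covariance \emph{upper} bound on $q$ to show that if $\|\mu_{q-\tilde r}\|$ were large, Cauchy–Schwarz on the extra piece $q-\tilde r$ would force $v^\top\Sigma_q v>1+\tau$ in the direction of $\mu_{q-\tilde r}$. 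The deletions of $q$ play no role; $q$ has no resilience property at all beyond bounded operator norm. If you tried to salvage a midpoint-plus-triangle argument, the $\|\mu_q-\mu_r\|_2$ term really would be stuck at $O(\sqrt{\tilde\epsilon})$, which is the $\sqrt{\epsilon}$ rate of Theorem~\ref{thm.projection_bounded_covariance}, not the $\epsilon^{1-1/k}$ rate claimed here. Finally, a minor note: the paper takes $\epsilon_1=\epsilon+\log(1/\delta)/n$ (the $\sqrt{d\log d/n}$ contribution arises separately, through the $\Delta_2$ matrix-concentration term in Lemma~\ref{lem.kth_deletion_set}), not $\epsilon_1\asymp\epsilon+\sqrt{d\log d/n}$.
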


The proof is deferred to Appendix~\ref{proof.bounded_kth_identity}. When $\delta$ is constant and $n$ goes to infinity, this bound recovers $O(\epsilon^{1-1/k})$, which is the population limit for bounded $k$-th moment distributions. It  guarantees sample complexity of $O(d\log(d)/\epsilon^{2-2/k})$.

\textbf{Interpretation and Comparison.}
For  $k$-th moment bounded distributions, 
the prior work~\citet{steinhardt2018robust} shows that   projection onto resilient set under $\TV$ distance  works  with $d^{3/2}$ samples. \citet{prasad2019unified} used the approach of reducing high-dimensional mean estimation to one dimensional via covering to achieve error $\epsilon^{1-1/k} + \sqrt{\frac{d+\log(1/\delta)}{n}}$.  When combined with the statistical results in Theorem~\ref{thm.projection_kth_moment_identity_covariance}, the filtering algorithm in~\cite{diakonikolas2016robust, diakonikolas2017being, diakonikolas2019recent, zhu2020robust} achieves efficient computation. The follow-up work in~\citet{diakonikolas2020outlier} improved our results with a sub-gaussian rate and the same dependence on dimension. 

With the same analysis framework, we improve more rates of the mean estimation $\tTV$ projection algorithm in Appendices. We briefly summarize the rate we achieve in Table~\ref{tab.projection_summary}. 
The general framework of this analysis approach  for different $D$, $L$, and $\GG^\TV$ is summarized in Theorem~\ref{thm.admissible-2}.

\begin{table}[H]
\hskip-0.9cm
\begin{tabular}{|c|c|c|c|}\hline 
 $\GG$ & $\cM$ & $D$ & $\|\mu_{p^*}-\mu_q\|_2 $ \\
 \hline
Resilience set $\GG_{\mathsf{mean}}$ & $\GG_{\mathsf{mean}}$ & $\tTV$ & $\rho(2\epsilon + 2C^{\mathsf{vc}}\sqrt{\frac{d+ 1+\log(1/\delta)}{n}})$ [Theorem~\ref{thm.tTV_mean}]  \\
  sub-Gaussian & $\GG_{\mathsf{mean}}$ & $\TV$ or $\tTV$ & $\epsilon\sqrt{\log(1/\epsilon)} + \sqrt{\frac{d+\log(1/\delta)}{n}} $ [Theorem~\ref{thm.general_subgaussian_tv_projection}]\\
 bdd $k$-th moment ($k\geq 2$) &  $\GG_{\mathsf{mean}} $ &  $\TV$ or $\tTV$ & $\frac{\epsilon^{1-1/k}}{\delta^{1/k}} +\frac{1}{\delta}\sqrt{\frac{d}{n}}$ 
 [Theorem~\ref{thm.general_kth_tv_projection}]\\
 bdd cov  &  bdd cov &  $\TV$ or $\tTV$ & $\sqrt{ \epsilon} +\sqrt{\frac{d\log(d/\delta)}{n}}$ [Theorem~\ref{thm.projection_bounded_covariance}] \\
 bdd $k$-th moment ($k>2$) + $I_d$   &  bdd cov  & $\TV$ or $\tTV$ & $\frac{ \epsilon^{1-1/k}}{\delta^{1/k}} + \frac{1}{\delta}\sqrt{\frac{d\log(d)}{n}}$  [Theorem~\ref{thm.projection_kth_moment_identity_covariance}] \\
 \hline
\end{tabular}
\caption{Summary of results for generalized projection algorithm
assuming $p^*\in\GG$. Here `bdd' is short for `bounded'. }
\label{tab.projection_summary}
\end{table}

\section{Robust inference under $W_1$  corruption}\label{sec.robust_W1}

In this section, 
we present a general recipe for constructing estimator that are robust to Wasserstein-1 perturbations. 

\subsection{Population assumption: generalized resilience}\label{sec.population_w1}

In $\TV$ perturbation, we bound the modulus of continuity via the exisence of a midpoint:  we used the fact that any $\TV$ perturbation can be decomposed into a ``friendly'' 
operation (deletion) and its opposite (addition). We think of deletion as friendlier than addition, 
as the latter can move the mean arbitrarily far by adding probability mass at infinity. As is discussed in Section~\ref{sec.W1_population_intro},  we can extend  this to other Wasserstein distances via decomposing a 
Wasserstein perturbation into a friendly perturbation and its inverse, where the friendly perturbation shall not make huge effect on the target loss. We provide one definition of friendly perturbation as follows:
\begin{definition}[Friendly perturbation]\label{def.friendly_perturbation}
For a distribution $p$ over $\mathcal{X}$, fix a function $f : \mathcal{X}\to \bR$. A distribution $r$ 
is an {$\eta$-friendly perturbation} of $p$ for $f$ under $W_{1}$, denoted as $r \in \mathbb{F}(p, \eta, f) $, if there is a coupling $\pi_{X,Y}$ between $X\sim p$ 
and $Y \sim r$ such that:
\begin{itemize}
\item The cost $(\bE_{\pi}[\|X-Y\|])$ is at most $\eta$.
\item All points move towards the mean of $r$: $f(Y)$ is between $f(X)$ and $\bE_{r}[f(Y)]$ almost surely.
\end{itemize}
\end{definition}
The friendliness is defined only in terms of one-dimensional functions $f : \mathcal{X} \to \bR$; we will see how to handle 
higher-dimensional objects later. 
Intuitively, a friendly perturbation is a distribution $r$ for which there exists a coupling that `squeezes' $p$ to $\mu_r$. 

We provide the midpoint lemma for friendly perturbation in $W_{1}$ perturbation. We   show that  given any $p,q$ with $W_{1}(p,q)\leq \epsilon$ and any $f$, there exists an $r$ that is an 
$\epsilon$-friendly perturbation of \emph{both} $p$ and $q$ for the function $f$. 
To show the existence of a midpoint, we rely on the intuition that any coupling between two one-dimensional distributions can be separated into two stages: in one stage all the mass only moves towards some point, in the other stage all the mass moves away from that point. This is illustrated in Figure~\ref{fig.midpoint_wc}.
\begin{figure}[!htp] 
    \centering
   \begin{tikzpicture}[
  implies/.style={double,double equal sign distance,-implies},
  scale=0.9,  every node/.style={scale=1.5}
]
\begin{axis}[
  no markers, domain=0:8, samples=50,
  axis lines*=left,
  every axis y label/.style={at=(current axis.above origin),anchor=south},
  every axis x label/.style={at=(current axis.right of origin),anchor=west},
  height=4cm, width=10cm,
  xtick=\empty, ytick=\empty,
  enlargelimits=false, clip=false, axis on top,
  axis line style = thick,
  grid = major
  ]
  \addplot [opacity=0.8,very thick,fill=cyan!20, draw=none, domain=0:2.2] {gauss(3.5,0.8,1)} \closedcycle;
  \addplot+[opacity=0.8,very thick,fill=cyan!50, draw=cyan!50!black, domain=2.2:8] {gauss(3.5,0.8,1)} \closedcycle;
  \addplot+[opacity=0.7,very thick,fill=red!20, draw=none, domain=5.8:8] {gauss(4.5,0.8,1)} \closedcycle;
  \addplot+[opacity=0.55,very thick,fill=red!50, draw=red!50!black, domain=0:5.8] {gauss(4.5,0.8,1)} \closedcycle;
\draw [color=blue!75, yshift=0.0cm, dashed](axis cs:3.5,0.01) -- (axis cs:3.5,0.49);
\draw [color=blue!75, yshift=-0.27cm](axis cs:3.3,0) node {$\mu_{p_1}$};
\draw [color=blue!60!black, yshift=0.0cm, dashed](axis cs:3.9,0.01) -- (axis cs:3.9,0.44);
\draw [,color=blue!60!black, yshift=-0.27cm](axis cs:3.9,0) node {$\mu_r$};
\draw [,color=blue!60!black, yshift=0.0cm, ultra thick, -o](axis cs:3.9,0.00) -- (axis cs:3.9,0.53);
\draw [color=blue!40!red!80!black, yshift=-0.27cm](axis cs:3.9,0) node {$\mu_r$};
\draw [color=blue!40!red!80!black, yshift=0.0cm, ultra thick, -o](axis cs:3.9,0.00) -- (axis cs:3.9,0.53);
\draw [color=blue!66](axis cs:2.2,0.07) edge[implies] (axis cs:3.9,0.07);
\draw [color=red!75, yshift=0.0cm, dashed](axis cs:4.5,0.01) -- (axis cs:4.5,0.49);
\draw [color=red!75, yshift=-0.27cm](axis cs:4.5,0) node {$\mu_{p_2}$};
\draw [color=red!66](axis cs:5.8,0.07) edge[implies] (axis cs:3.9,0.07);
\end{axis}
\end{tikzpicture}
\caption{Illustration of midpoint lemma. For any distributions $p_1, p_2$ that are close under $W_{1}$, the coupling between 
$p_1$ and $p_2$ can be split into couplings $\pi_{p_1, r}$, $\pi_{p_2, r}$ such that $p_1, p_2$ only move towards $\mu_r$. We do this by ``stopping'' the movement from $p_1$ to $p_2$ at $\mu_r$.}
\label{fig.midpoint_wc}
\end{figure}
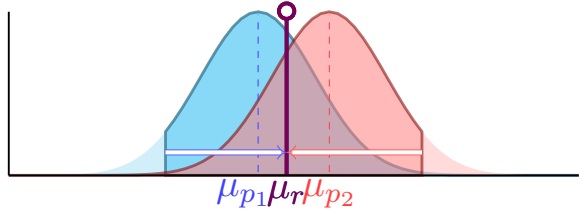
\begin{lemma}[Midpoint lemma for $W_{1}$ perturbation]\label{lem.wc_friend_pert}
Assume that $f$ is continuous under the topology induced by the metric $\|\cdot \|$. Then for
 any $p_1$ and $p_2$ such that $W_{1}(p_1,p_2)<\eta$ and any $f$, there exists a distribution $r$ such that
\begin{align}
 r \in \mathbb{F}(p_1, \eta, f) \cap   \mathbb{F}(p_2, \eta, f).
\end{align}
In other words, $r$ is an $\eta$-friendly perturbation of both $p_1$ and $p_2$ for $f$ under $W_{1}$.  
\end{lemma}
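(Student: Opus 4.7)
The plan is to construct the midpoint distribution $r$ directly from a near-optimal $W_1$-coupling $\pi$ between $p_1$ and $p_2$ by ``stopping'' each transport trajectory at a point whose $f$-value equals $\mathrm{med}(f(X),f(Y),m^*)$ for a scalar $m^*$ determined by a one-dimensional fixed-point argument. This is the rigorous instantiation of the stopping intuition depicted in Figure~\ref{fig.midpoint_wc}: in one dimension the ``stopping point'' is literally $\mu_r$, while in general we stop at the point on the segment from $X$ to $Y$ whose $f$-value matches the target level $m^*$.

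I would first fix any coupling $\pi$ with $\mathbb{E}_\pi[\|X-Y\|]<\eta$, which exists since $W_{1}(p_1,p_2)<\eta$. For $m\in\mathbb{R}$, define
\[
g(m)\;\triangleq\;\mathbb{E}_\pi\bigl[\mathrm{med}(f(X),f(Y),m)\bigr].
\]
Because $\mathrm{med}(a,b,\cdot)$ is $1$-Lipschitz and non-decreasing, $g$ is $1$-Lipschitz and non-decreasing; moreover $g(m)\to\mathbb{E}_\pi[\min(f(X),f(Y))]$ as $m\to-\infty$ and $g(m)\to\mathbb{E}_\pi[\max(f(X),f(Y))]$ as $m\to+\infty$, so $g(m)-m$ changes sign and the intermediate value theorem yields a fixed point $m^*=g(m^*)$. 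If $f$ fails to be integrable against $p_1$ or $p_2$, the same argument applies after truncating $f$ and passing to a limit.

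To build the coupling, I use that since $f$ is continuous, $t\mapsto f((1-t)X+tY)$ is continuous on $[0,1]$ with endpoints $f(X)$ and $f(Y)$, so the value $\mathrm{med}(f(X),f(Y),m^*)$—which always lies between $f(X)$ and $f(Y)$—is attained at some $T=T(X,Y)\in[0,1]$. Taking $T$ to be the infimum of all such $t$ and applying a standard measurable-selection theorem to the jointly measurable map $(X,Y,t)\mapsto f((1-t)X+tY)$ makes $T$ measurable in $(X,Y)$. Set $Z=(1-T)X+TY$ and let $r$ be the law of $Z$ under $\pi$.

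Finally, I verify the two conditions of Definition~\ref{def.friendly_perturbation} for each of $p_1$ and $p_2$. The cost bound is immediate: $\mathbb{E}[\|X-Z\|]=\mathbb{E}[T\|X-Y\|]\leq\mathbb{E}[\|X-Y\|]<\eta$, and analogously $\mathbb{E}[\|Y-Z\|]<\eta$. For the ``movement towards the mean'' property, a three-case check on the ordering of $f(X)$, $f(Y)$, and $m^*$ shows that $\mathrm{med}(f(X),f(Y),m^*)$ always lies in $[\min(f(X),m^*),\max(f(X),m^*)]$ and in $[\min(f(Y),m^*),\max(f(Y),m^*)]$, which—combined with $\mathbb{E}_r[f(Z)]=g(m^*)=m^*$—is exactly what is required for both $r\in\mathbb{F}(p_1,\eta,f)$ and $r\in\mathbb{F}(p_2,\eta,f)$. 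The main technical wrinkle I anticipate is the measurable choice of $T$, handled above by the infimum characterization; everything else reduces to the 1-Lipschitz/monotonicity properties of the scalar median.
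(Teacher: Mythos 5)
Your proof is correct and follows essentially the same route as the paper's: stop each transported trajectory at the scalar level $\mathrm{med}(f(X),f(Y),m)$ (the paper's ``slider'' $s_{xy}(u)$, realized in $\mathbb{R}^d$ via the segment $(1-t)X+tY$), then pin down $m^*$ by a fixed-point/intermediate-value argument so that $m^*=\mathbb{E}_r[f(Z)]$, and read off the two required couplings from $X\mapsto Z$ and $Y\mapsto Z$. You are more explicit than the paper about integrability, the $1$-Lipschitz continuity of $g$, and measurable selection of $T$, but the underlying construction is the same.
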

See Appendix~\ref{proof.coupling_unimodal} for a formal proof. 
With this lemma in hand, 
we generalize resilience to Wasserstein distance by saying that a distribution is resilient if $\bE_r[f(X)]$ is close to 
$\bE_p[f(X)]$ for every $\eta$-friendly perturbation $r$ and every function $f$ lying within some appropriate family $\sF$.
For instance, for second moment estimation we would consider functions $f_v(x) = \langle x, v \rangle^2$ with $\|v\|_2 = 1$.
We discuss this in more detail below.

\subsubsection{Warm-up: Second Moment Estimation under $W_1$ Perturbation}

Consider estimation of the second moment ($L(p, M) = \|M - \bE_p[XX^{\top}]\|_2$ where $M\in \bR^{d\times d}$). We do not consider mean estimation since it is trivial under $W_1$ perturbation (outputting the mean of 
$p$ incurs error $\epsilon$, which is optimal).

Recall that the resilience set for mean estimation  $\GG^{\TV}_{\mathsf{mean}}$ is defined by asking that friendly perturbations 
(in that case deletions) did not move the mean by too much. 
With our definition of friendly perturbation for $W_{1}$ in hand, we similarly define $\GG^{W_1}_{\mathsf{sec}}$ for second moment estimation as the following:
\begin{align}\label{eqn.G_w1_sec}
    \GG^{W_1}_{\mathsf{sec}}(\rho, \eta) = \{p \mid \sup_{ \|v\|_2 = 1, r\in \mathbb{F}(p, \eta, W_1, |v^{\top}X|^2)} |\bE_{p}[|v^{\top}X|^2] - \bE_{r}[|v^{\top}X|^2]| \leq \rho  \},
\end{align}
This asks that in all unit directions $v$, friendly perturbations under $|v^{\top} x|^2$ cannot move the second moment by more than $\rho$. 
As before we will show that the set $\GG^{W_1}_{\mathsf{sec}}$ is not too big (has bounded modulus) and not too small (contains natural nonparametric distribution families).

\paragraph*{Not too big}
As in the $\TV$ perturbation case, we show that $\mathcal{G}^{W_1}$ has controllable population limit by upper bounding its modulus of continuity.
\begin{theorem}
The modulus of continuity $\modu$ in (\ref{eqn.modulus}) for $\GG^{W_{1}}_{\mathsf{sec}}(\rho, \eta)$ is bounded above as 
$\modu(\GG^{W_{1}}_{\mathsf{sec}}(\rho, \eta), 2\epsilon) \leq 2\rho$ for any $ 2\epsilon \leq \eta$.
\end{theorem}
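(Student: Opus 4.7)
The plan is to reduce the operator-norm modulus to a supremum of one-dimensional discrepancies, then for each direction invoke the midpoint lemma (Lemma~\ref{lem.wc_friend_pert}) and the resilience definition \eqref{eqn.G_w1_sec} twice, and close with a triangle inequality. This mirrors the $\TV$ proof of Lemma~\ref{lem.G_TV_mean_modulus} but replaces the deletion midpoint by a friendly-perturbation midpoint.

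Concretely, fix any $p_1,p_2\in \GG^{W_1}_{\mathsf{sec}}(\rho,\eta)$ with $W_1(p_1,p_2)\le 2\epsilon$. Since $\bE_{p_i}[XX^\top]$ is symmetric, I would first write
\begin{align*}
\|\bE_{p_1}[XX^\top]-\bE_{p_2}[XX^\top]\|_2
= \sup_{\|v\|_2=1}\bigl|\bE_{p_1}[(v^\top X)^2]-\bE_{p_2}[(v^\top X)^2]\bigr|.
\end{align*}
Now fix an arbitrary unit vector $v$ and apply the midpoint lemma with the one-dimensional function $f_v(x)=|v^\top x|^2$, which is continuous in the ambient norm. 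Because $W_1(p_1,p_2)\le 2\epsilon\le\eta$, the lemma produces a distribution $r=r_v$ that is simultaneously a $2\epsilon$-friendly perturbation of $p_1$ and of $p_2$ for the function $f_v$, and hence also an $\eta$-friendly perturbation since friendliness is monotone in the cost budget.

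Applying the defining inequality of $\GG^{W_1}_{\mathsf{sec}}(\rho,\eta)$ in direction $v$ to both $p_1$ and $p_2$ with the common friendly perturbation $r$ yields
\begin{align*}
\bigl|\bE_{p_1}[(v^\top X)^2]-\bE_{r}[(v^\top X)^2]\bigr|\le\rho,\qquad
\bigl|\bE_{p_2}[(v^\top X)^2]-\bE_{r}[(v^\top X)^2]\bigr|\le\rho.
\end{align*}
The triangle inequality then gives $|\bE_{p_1}[(v^\top X)^2]-\bE_{p_2}[(v^\top X)^2]|\le 2\rho$, and taking the supremum over unit $v$ produces $\|\bE_{p_1}[XX^\top]-\bE_{p_2}[XX^\top]\|_2\le 2\rho$, which bounds $L(p_1,\theta^*(p_2))$. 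Since $p_1,p_2$ were arbitrary pairs in $\GG^{W_1}_{\mathsf{sec}}(\rho,\eta)$ within $W_1$-distance $2\epsilon$, this establishes $\modu(\GG^{W_1}_{\mathsf{sec}}(\rho,\eta),2\epsilon)\le 2\rho$.

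The only delicate point in this plan is the $v$-dependence of the midpoint $r_v$: the coupling used in Lemma~\ref{lem.wc_friend_pert} is constructed from the one-dimensional pushforward under $f_v$, so $r_v$ need not be the same midpoint across directions. This is however harmless because the argument fixes $v$ before invoking the lemma and we take the supremum over $v$ only at the end. The other mild obligation is verifying that the second-moment resilience condition in \eqref{eqn.G_w1_sec} applies to $r_v$; this is immediate because $r_v$ is an $\eta$-friendly perturbation of $p_i$ for exactly the function $f_v=|v^\top\cdot|^2$ appearing in the supremum, which is precisely the class of functions quantified over in \eqref{eqn.G_w1_sec}.
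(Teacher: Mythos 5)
Your proof is correct and takes essentially the same approach as the paper: both reduce the operator-norm modulus to a directional second-moment discrepancy, invoke the midpoint lemma (Lemma~\ref{lem.wc_friend_pert}) with $f_v(x)=|v^\top x|^2$ to produce a common friendly perturbation of both distributions, and close with two applications of the resilience inequality from \eqref{eqn.G_w1_sec} plus the triangle inequality. The only surface-level difference is that the paper fixes the single extremizing direction $v^*$ up front and argues one-sidedly after a symmetry reduction, whereas you bound every direction and take the supremum at the end; these are equivalent.
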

\begin{proof}
Denote $M_p=\bE_p[XX^\top]$. 
The modulus is defined as
\begin{align}
\sup_{p_1, p_2 \in \GG^{W_1}_{\mathsf{sec}}(\rho, \eta), W_1(p_1, p_2) \leq 2 \epsilon} \|M_{p_1} - M_{p_2}\|_2.
\end{align}
By Lemma~\ref{lem.wc_friend_pert}, for any unit vector $v$ we can find $r$ such that $W_1(p_1, r)\leq 2\epsilon$,  $W_1(p_2, r)\leq 2\epsilon$, and $r \in  \mathbb{F}(p_1, 2\epsilon, W_1, |v^\top X|^2) \bigcap \mathbb{F}(p_2, 2\epsilon, W_1, |v^\top X|^2)$ is a friendly perturbation for both  $p_1$ and $p_2$. Now take some $v^*$ with $\|v^*\|_2 = 1$ such that 
\begin{align}
    |(v^*)^\top (M_{p_1} - M_{p_2})v^*| = \|M_{p_1} - M_{p_2}\|_2. 
\end{align}
By symmetry of $p_1, p_2$ we may assume that the term inside the absolute value is positive.
From $p_1, p_2\in \GG^{W_1}_{\mathsf{sec}}(\rho, \eta)$, we know that for any $2 \epsilon \leq \eta$, 
\begin{align}
\bE_{p_1}[(v^{*\top}X)^2] - \bE_{r}[(v^{*\top}X)^2] \leq \rho,\\
\bE_{r}[(v^{*\top}X)^2] - \bE_{p_2}[(v^{*\top}X)^2] \leq \rho.
\end{align}
Combining the two equations together gives us 
\begin{align}
    \bE_{p_1}[(v^{*\top}X)^2] - \bE_{p_2}[(v^{*\top}X)^2]\leq 2\rho.
\end{align}
This shows that $\|M_{p_1} - M_{p_2}\|_2\leq 2\rho$.
\end{proof}

The set $\GG^{W_1}_{\mathsf{sec}}$ is a superset of Orlicz-norm  bounded distributions, which is shown in Section~\ref{sec.finite_sample_w1}.
We also extend the design of $\GG^{W_1}_{\mathsf{sec}}$ to arbitrary  pseudonorm cost in Appendix~\ref{sec.G_Wc_WF}. 

The function $f$ in the friendly perturbation requirement ($r\in\mathbb{F}(p, \eta, W_1, f)$) can be chosen differently without changing the conclusions above. For example, we may replace the constraint  $r\in\mathbb{F}(p, \eta, W_1, |v^\top X|^2)$ with $r\in\mathbb{F}(p, \eta, W_1, |v^\top X|)$, and the sufficient condition and modulus of continuity remains the same. This is discussed in Appendix~\ref{proof.psi_w1_resilience} and is used in Section~\ref{sec.finite_sample_w1} for finite sample algorithm design. 

\subsubsection{General Design of $\GG^{W_1}$}

Recall that for the general $\GG^\TV$ (Definition~\ref{def.G_TV}), we said a distribution is resilient if 
(1) the parameter $\theta^*(p)$ does
well on all deletion $r \leq \frac{p}{1-\eta}$, and (2) any parameter that does 
well on some deletion $r \leq \frac{p}{1-\eta}$ also does well on $p$. 
We used a bridge function $B$ to measure performance on $r$.

Inspired by this argument, 
we extend the definition of resilience for second moment estimation $\GG^{W_1}_{\mathsf{sec}}$    to other losses. 
Since friendly perturbation for $W_{1}$ is only defined for a one-dimensional random variable $f(X)$, we apply the Fenchel-Moreau 
representation~\cite{borwein2010convex} of $B$ to decompose $B$ to the expectation of one dimensional functions $f\in\sF$, as long as $B$ is lower semi-continuous and convex in $p$ for fixed $\theta$:
\begin{align}\label{eqn.dual_B}
    B(p, \theta) = \sup_{f\in\sF_\theta} \bE_p[f(X)] - B^*(f, \theta).
\end{align}
Here $B^*(f, \theta)$ is the convex conjugate of $B$, and $\sF_\theta = \{f\mid B^*(f,\theta) < \infty\}$. Convexity in $p$ is a mild condition that often holds, 
e.g.~any loss of the form $B(p, \theta) = \bE_{X \sim p}[\ell(\theta; X)]$ is linear (and hence convex) in $p$.

We thus define the resilient set in the same way as $\TV$: we say a distribution $p$ is resilient if (1) $\theta^*(p)$ does well on all friendly perturbation perturbation $r$ and every function $f\in\sF_{\theta^*(p)}$, and (2) for any parameter $\theta$, if all $f\in\sF_{\theta}$, $\theta$ have a friendly perturbation $r\in\mathbb{F}(p, \eta, W_{1}, f)$ where $\theta$ does well, then $\theta$ also does well on $p$ under $L$. We formally define the set $\GG^{W_1}$ below.

\begin{definition}[$\GG^{W_1}$]\label{def.G^Wc}
We define \begin{align}\label{eqn.def_w1_G}
& \GG^{W_1}(\rho_1, \rho_2, \eta) = \GG^{W_1}_{\downarrow}(\rho_1, \eta) \cap \GG^{W_1}_{\uparrow}(\rho_1, \rho_2, \eta), \text{ where}
\end{align}
\begin{align*}
     \GG_{\downarrow}^{W_1}(\rho_1, \eta) = \{p \mid  & \sup_{f\in\sF_{\theta^*(p)},  r\in \mathbb{F}(p, \eta, W_{1}, f)} \bE_r[f(X)] - B^*(f, \theta^*(p)) \leq \rho_1  \}, \\
     \GG_{\uparrow}^{W_1}(\rho_1, \rho_2, \eta) =  \bigg\{p \mid  & \text{for all } \theta\in\Theta, \bigg( \Big(  \sup_{f\in \sF_\theta} \inf_{r\in \mathbb{F}(p, \eta, W_{1}, f)} \bE_r[f(X)] - B^*(f, \theta) \leq \rho_1 \Big) \Rightarrow  L(p, \theta) \leq \rho_2  \bigg) \bigg\}.
\end{align*}
\end{definition}

The construction of $\GG^{W_1}$ generalizes the idea  in $\GG^{\TV}$ (Definition~\ref{def.G_TV}) and $\GG^{W_1}_\mathsf{sec}$ (Equation~\eqref{eqn.G_w1_sec}). %
We control the population limit of $\GG^{W_1}$ by bounding its modulus of continuity. 
\begin{theorem}\label{thm.G_Wc_fundamental_limit}

The modulus of continuity $\modu$ in (\ref{eqn.modulus}) for $\GG^{W_1}(\rho_1, \rho_2, \eta)$ is bounded above by 
$\modu(\GG^{W_1}(\rho_1, \rho_2, \eta), 2\epsilon) \leq \rho_2$ for any $ 2\epsilon \leq \eta$.
\end{theorem}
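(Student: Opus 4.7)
The plan is to follow the same template used in the proof of Theorem~\ref{thm.G_fundamental_limit} for $\GG^\TV$: given any pair $p_1, p_2 \in \GG^{W_1}(\rho_1, \rho_2, \eta)$ with $W_1(p_1, p_2) \leq 2\epsilon$, produce, for each test function $f$, a single ``midpoint'' $r_f$ that simultaneously witnesses the $\downarrow$-property for $p_1$ and feeds into the $\uparrow$-property for $p_2$ with the parameter choice $\theta = \theta^*(p_1)$. The fact that this can be done per $f$ (rather than uniformly in $f$) is exactly why the definition of $\GG^{W_1}_{\uparrow}$ is phrased with $\sup_f \inf_r$ instead of $\inf_r \sup_f$ — this asymmetry is the key bookkeeping point to get right.

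More concretely, I would proceed as follows. Fix arbitrary $p_1, p_2 \in \GG^{W_1}(\rho_1, \rho_2, \eta)$ with $W_1(p_1, p_2) \leq 2\epsilon$, and fix any $f \in \sF_{\theta^*(p_1)}$. By the midpoint lemma for $W_1$-friendly perturbations (Lemma~\ref{lem.wc_friend_pert}), there exists a distribution $r_f$ such that
\begin{equation*}
r_f \in \mathbb{F}(p_1, 2\epsilon, f) \cap \mathbb{F}(p_2, 2\epsilon, f).
\end{equation*}
Since $2\epsilon \leq \eta$ and friendliness is monotone in the budget, $r_f \in \mathbb{F}(p_1, \eta, W_1, f) \cap \mathbb{F}(p_2, \eta, W_1, f)$. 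Because $p_1 \in \GG^{W_1}_{\downarrow}(\rho_1, \eta)$, the defining inequality at $\theta^*(p_1)$ applied to this specific $r_f$ yields
\begin{equation*}
\bE_{r_f}[f(X)] - B^*(f, \theta^*(p_1)) \leq \rho_1.
\end{equation*}

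Next I would feed this into the $\uparrow$-condition for $p_2$ with the test parameter $\theta = \theta^*(p_1)$. Since the $r_f$ above also lies in $\mathbb{F}(p_2, \eta, W_1, f)$, we have
\begin{equation*}
\inf_{r \in \mathbb{F}(p_2, \eta, W_1, f)} \bE_r[f(X)] - B^*(f, \theta^*(p_1)) \leq \rho_1
\end{equation*}
for every $f \in \sF_{\theta^*(p_1)}$. Taking the supremum over such $f$ preserves the bound $\rho_1$, so the hypothesis of the implication in the definition of $\GG^{W_1}_{\uparrow}(\rho_1, \rho_2, \eta)$ holds for $p_2$ at $\theta = \theta^*(p_1)$. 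Membership of $p_2$ in $\GG^{W_1}_{\uparrow}$ then delivers $L(p_2, \theta^*(p_1)) \leq \rho_2$. Since $p_1, p_2$ were arbitrary, taking the supremum over admissible pairs yields $\modu(\GG^{W_1}(\rho_1, \rho_2, \eta), 2\epsilon) \leq \rho_2$.

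The only real subtlety is step two: one needs the midpoint $r_f$ produced by Lemma~\ref{lem.wc_friend_pert} to be selected \emph{per $f$}, not uniformly, and to observe that the definition of $\GG^{W_1}_{\uparrow}$ was formulated precisely to allow this per-$f$ choice via the $\inf_r$ placed inside $\sup_f$. Everything else is a direct translation of the $\TV$ argument, with ``deletion midpoint'' replaced by ``friendly perturbation midpoint'' and the resulting monotonicity $2\epsilon \leq \eta$ used in the same place.
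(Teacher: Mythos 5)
Your proof is correct and follows essentially the same route as the paper's: apply the midpoint lemma (Lemma~\ref{lem.wc_friend_pert}) per test function $f$ to obtain a shared friendly perturbation $r_f$ of both $p_1$ and $p_2$, use membership of $p_1$ in $\GG^{W_1}_{\downarrow}$ to bound $\bE_{r_f}[f(X)] - B^*(f,\theta^*(p_1))$ by $\rho_1$, and then exploit the $\sup_f \inf_r$ structure of $\GG^{W_1}_{\uparrow}$ applied to $p_2$ at $\theta=\theta^*(p_1)$. You make the monotonicity-in-budget step and the per-$f$ choice of midpoint more explicit than the paper does, but the argument is the same.
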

The proof is deferred to Appendix~\ref{proof.thm_G_WcG_Wc_fundamental_limit} and follows the same lines as 
Theorem~\ref{thm.G_fundamental_limit}, using 
Lemma~\ref{lem.wc_friend_pert} to produce the required midpoint distribution $r$. By taking $B$ and $ L$ as the cost of second moment estimation, we can recover the definition of $\GG_{\mathsf{sec}}^{W_1}$. The concrete examples of $\GG$ is deferred to  the next section

\subsection{Finite Sample Algorithm for $W_1$}\label{sec.finite_sample_w1}

In this section, we design finite sample algorithms for robust estimation under $W_1$ perturbations.  Throughout the section, we assume that $\hat p_n$ follows oblivious corruption model (Definition~\ref{def.obliviouscorruption}) of level $\epsilon$ under $W_1$ perturbation. It can also be extended to the adaptive corruption model using Theorem~\ref{thm.admissible-1-adaptive}. 

Similar to the issue of $\TV(p, \hat p_n)$ discussed in Section~\ref{sec.finite_sample_algorithm_weaken}, in general $W_{1}(p, \hat p_n)$ converges to zero slowly even when $p$ is well behaved; indeed,  
$\mathbb{E}[W_1(p, \hat p_n)] \gtrsim_p n^{-1/d}$ for 
any measure $p$ that is absolutely continuous with respect to the Lebesgue measure on $\bR^d$ \citep{dudley1969speed}.
As Section~\ref{sec.finite_sample_algorithm_weaken}, we therefore weaken $W_1$ to a distance $\tW_1$ to achieve better finite-sample performance.

\subsubsection{$\tW_1$ projection for Second Moment Estimation}
We first introduce our design of $\tW_1$ for second moment estimation. 
In below analysis we assume an oblivious corruption model (Definition~\ref{def.obliviouscorruption}), i.e. $\TV(p^*, p)\leq \epsilon$ and we observe an empirical distribution of $p$. But our results also apply directly to adaptive corruption model (Definition~\ref{def.adaptivecont}) as $\TV$ case, which is shown  from a combination of Theorem~\ref{thm.admissible-1-adaptive} for analysis technique and Lemma~\ref{lem.coupling} for the upper bound on $W_1(\hat p_n, \hat p_n^*)$.

Recall the dual representation of $W_1$ as $ W_1(p, q) = \sup_{u\in \mathcal{U}}  \bE_p[u(X)] - \bE_q[u(X)] $, 
where $\mathcal{U} = \{u: \bR^d \mapsto \bR \mid |u(x) - u(y)| \leq \|x-y \|\}$ is the set of all
$1$-Lipschitz functions.
To design the weakened distance $\tW_1(p, q)$, we take the supremum over a smaller set. Let $\sU'$ be the set of all 
$1$-Lipschitz linear or rectified linear functions:
\begin{align}
\mathcal{U}' = \{\max(0, v^{\top}x-a) \mid v\in \bR^d, \|v\|\leq 1, a\in\bR \} \cup \{v^{\top}x \mid v \in \bR^d, \|v\|\leq 1\}.
\end{align} 
Since all functions in $\mathcal{U}'$ are Lipschitz-1, $\mathcal{U}' \subset \mathcal{U}$, and we define $\tW_1$ as follows:
\begin{align}
\label{eqn.tw_def}
    \tW_1(p, q) & = \sup_{u \in \mathcal{U}'} \left|\bE_p[u(X)] - \bE_q[u(X)] \right|.
\end{align}
Similarly to Proposition~\ref{prop.tTV_general}, the key to analyzing $\tW_1(p, q)$ is to control (i) $\tW_1(p, \hat p_n)$ and (ii) the modulus of continuity for $\GG^{W_1}$ under $\tW_1$. 

We first use the following lemma to bound the statistical error $\tW_1(p, \hat p_n)$:

\begin{lemma}\label{lem.tW_1_empirical_to_population}
Assume $W_1(p^*, p)\leq \epsilon$ and for some Orlicz function $\psi$, $p^*$ satisfies 
\begin{align}
   \sup_{v\in\bR^d, \|v\|_2=1} \bE_{p^*}[\psi(|v^\top(X-\mu_{p^*})|/\kappa)] \leq 1, \\
    \|\bE_{p^*}[(X-\mu_{p^*})(X-\mu_{p^*})^\top]\|_2\leq \sigma^2.
\end{align}
Then for $\tW_1$ defined in (\ref{eqn.tw_def}), we have \begin{align}
    \bE_{p}[\tW_1(p, \hat p_n)]\leq 2\epsilon + 8\sigma \sqrt{\frac{d}{n}}+\frac{3\kappa \psi^{-1}(\sqrt{n})}{\sqrt{n}}.
\end{align}
\end{lemma}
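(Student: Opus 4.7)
The plan is to pass from $\tW_1(p,\hat p_n)$ to $\tW_1(p^*,\hat p_n^*)$ via coupling, and then control the latter on the two function classes $\mathcal{U}'_{\mathsf{lin}}=\{v^\top x:\|v\|_2\le 1\}$ and $\mathcal{U}'_{\mathsf{ReLU}}=\{(v^\top x-a)_+:\|v\|_2\le 1,a\in\mathbb{R}\}$ separately. First, let $\pi$ be an optimal $W_1$-coupling of $p^*$ and $p$ and draw $\{(X_i^*,X_i)\}_{i=1}^n$ i.i.d.\ from $\pi$, producing empirical distributions $\hat p_n^*$ and $\hat p_n$. Because $\tW_1\le W_1$, the triangle inequality and Kantorovich duality give $\tW_1(p,\hat p_n)\le \tW_1(p,p^*)+\tW_1(p^*,\hat p_n^*)+\tW_1(\hat p_n^*,\hat p_n)\le \epsilon+\tW_1(p^*,\hat p_n^*)+\tfrac{1}{n}\sum_i\|X_i^*-X_i\|$; taking expectations yields $\mathbb{E}[\tW_1(p,\hat p_n)]\le 2\epsilon+\mathbb{E}[\tW_1(p^*,\hat p_n^*)]$.

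For the linear part, $\sup_{u\in\mathcal{U}'_{\mathsf{lin}}}|\mathbb{E}_{p^*}u-\mathbb{E}_{\hat p_n^*}u|=\|\mu_{p^*}-\hat\mu\|_2$, and $\mathbb{E}\|\mu_{p^*}-\hat\mu\|_2\le \sqrt{\mathrm{tr}(\Sigma_{p^*})/n}\le \sigma\sqrt{d/n}$ by Jensen and the operator-norm bound on the covariance. For the ReLU part, center $p^*$ at zero (this only changes $a$) and split according to the truncation level $A\eqdef \kappa\psi^{-1}(\sqrt n)$. When $|a|\le A$, symmetrization and the Ledoux--Talagrand contraction (with $\phi(z)=z_+$, which is $1$-Lipschitz and satisfies $\phi(0)=0$) give
\begin{align*}
\mathbb{E}\sup_{\|v\|_2\le 1,|a|\le A}\!\!\bigl|\mathbb{E}_{p^*}(v^\top X-a)_+-\mathbb{E}_{\hat p_n^*}(v^\top X-a)_+\bigr|
&\le 4\,\mathbb{E}\sup_{\|v\|_2\le 1,|a|\le A}\Bigl|\tfrac{1}{n}\textstyle\sum_i\sigma_i(v^\top X_i-a)\Bigr|\\
&\le 4\,\mathbb{E}\Bigl\|\tfrac{1}{n}\textstyle\sum_i\sigma_i X_i\Bigr\|_2+4A\,\mathbb{E}\bigl|\tfrac{1}{n}\textstyle\sum_i\sigma_i\bigr|\\
&\le 4\sigma\sqrt{d/n}+4A/\sqrt n.
\end{align*}

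For the complementary range $|a|>A$, use the algebraic identity $(v^\top x-a)_+=v^\top x-a+(a-v^\top x)_+$ to reduce to
$|\mathbb{E}_{p^*}u-\mathbb{E}_{\hat p_n^*}u|\le \|\mu_{p^*}-\hat\mu\|_2+|\mathbb{E}_{p^*}(a-v^\top X)_+-\mathbb{E}_{\hat p_n^*}(a-v^\top X)_+|$. The crucial observation is that $(a-v^\top X)_+$ is supported on $\{|v^\top X|>A\}$ (at least for $|a|>A$ of the appropriate sign; the other sign is handled symmetrically by $(v,a)\to(-v,-a)$). Using convexity of $\psi$ (so $\psi(x)\ge \psi'(A/\kappa)(x-A/\kappa)$ for $x\ge A/\kappa$) together with $\mathbb{E}_{p^*}\psi(|v^\top X|/\kappa)\le 1$ and Markov's inequality, I bound $\mathbb{E}_{p^*}(|v^\top X|-A)_+\le \kappa/\psi'(A/\kappa)\le A/\sqrt n$ and $A\,\mathbb{P}(|v^\top X|>A)\le A/\psi(A/\kappa)=A/\sqrt n$, yielding $\sup_v\mathbb{E}_{p^*}|v^\top X|\mathbf{1}\{|v^\top X|>A\}\le 2\kappa\psi^{-1}(\sqrt n)/\sqrt n$. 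The empirical counterpart has the same expectation, so combining these gives a contribution of order $\sigma\sqrt{d/n}+\kappa\psi^{-1}(\sqrt n)/\sqrt n$ for the large-offset regime.

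Summing the three pieces and tracking explicit constants yields the claimed bound $2\epsilon+8\sigma\sqrt{d/n}+3\kappa\psi^{-1}(\sqrt n)/\sqrt n$. The main obstacle is precisely the large-offset ReLU regime: the functions $(v^\top x-a)_+$ are unbounded in $a$, so no direct Rademacher/symmetrization argument works uniformly over all $a\in\mathbb{R}$. The decomposition into linear plus ``flipped ReLU'' is what isolates the genuinely tail-dependent piece, and the sharp dependence on $\psi$ then comes from the convex-conjugate-style bound $\mathbb{E}(|v^\top X|-A)_+\le \kappa/\psi'(A/\kappa)$, which is the reason the rate involves $\psi^{-1}(\sqrt n)$ rather than $\psi^{-1}(n)$ or moment-based quantities.
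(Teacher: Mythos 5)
Your proof follows essentially the same route as the paper's: pass to $\tW_1(p^*,\hat p_n^*)$ via the coupling, split $\mathcal{U}'$ into linear and ReLU pieces, handle bounded offsets by symmetrization plus Ledoux--Talagrand contraction, reduce large offsets to the tail of $v^\top X$ via the identity $(v^\top x - a)_+ = v^\top x - a + (a - v^\top x)_+$, and balance the truncation level $M = \kappa\psi^{-1}(\sqrt n)$. The one step you leave imprecise is the phrase ``the empirical counterpart has the same expectation'': since the supremum over $(v,a)$ sits \emph{inside} the expectation, matching expectations pointwise in $(v,a)$ does not bound $\bE\bigl[\sup_{v,a}\bE_{\hat p_n^*}(a - v^\top X)_+\bigr]$. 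You need one more symmetrization-plus-contraction pass on the empirical tail term (as the paper does, yielding an extra $4\xi_1 + \xi_2(M)$); the $\sigma\sqrt{d/n}$ term you quote for the large-offset regime suggests you had this in mind, but as written it is a gap. Your Orlicz estimate via the tangent-line bound $\psi(x) \ge \psi(A/\kappa) + \psi'(A/\kappa)(x - A/\kappa)$ is a legitimate alternative to the paper's $\xi_2(M)\le M/\psi(M/\kappa)$ derived from the conditional-mean resilience lemma and the monotonicity of $x\psi^{-1}(1/x)$; both give the same rate, though the paper's version avoids assuming differentiability of $\psi$.
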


The proof is deferred to Appendix~\ref{proof.tW_1_empirical_to_population}. Compared to the lower bound  $\bE[W_1(p, \hat p_n)]\gtrsim n^{-1/d}$, the upper bound for $\tW_1$ reduces the  sample complexity from exponential in $d$ to polynomial in $d$.

To bound the modulus of continuity, we establish a 
mean-cross lemma showing that $p$ and $q$ have friendly perturbations $r_p$ and $r_q$ with 
$\bE_{r_q}[f(X)] \leq \bE_{r_p}[f(X)]$ (cf.~Lemma~\ref{lem.tvt_cross_mean}).
In Lemma~\ref{lem.tvt_cross_mean} for $\tTV$, we established the mean crossing property 
by showing that $r_p$ stochastically dominated $r_q$; for $\tW_1$ we will instead 
show that $r_p$ dominates $r_q$ in the convex order, which establishes 
mean crossing for convex functions $f$:

\begin{figure}[!htp]
    \centering
   \begin{tikzpicture}[
  implies/.style={double,double equal sign distance,-implies},
  scale=0.9,
]
\begin{axis}[
  no markers, domain=0:8, samples=50,
  axis lines*=left,
  every axis y label/.style={at=(current axis.above origin),anchor=south},
  every axis x label/.style={at=(current axis.right of origin),anchor=west},
  height=4cm, width=10cm,
  xtick=\empty, ytick=\empty,
  enlargelimits=false, clip=false, axis on top,
  axis line style = thick,
  grid = major
  ]
  \addplot+[opacity=0.8,very thick,fill=cyan!50, draw=cyan!50!black, domain=0:8] {gauss(3.6,0.6,1)} \closedcycle;
  \addplot+[opacity=0.7,very thick,fill=red!30, draw=none, domain=4.3:8]
  {gauss(3.6,0.9,0.8)} \closedcycle;
  \addplot+[opacity=0.7,very thick,fill=red!30, draw=none, domain=0:2.9]
  {gauss(3.6,0.9,0.8)} \closedcycle;
  \addplot+[opacity=0.55,very thick,fill=red!50, draw=red!50!black, domain=2.9:4.3] {gauss(3.6,0.9,0.8)} \closedcycle;
\draw [color=red!66](axis cs:5.1,0.05) edge[implies] (axis cs:4.3,0.05);
\draw [color=red!66](axis cs:2.1,0.05) edge[implies] (axis cs:2.9,0.05);
\end{axis}
\end{tikzpicture}
\caption{The mean cross lemma for $\tW_1$ in the special case that $p_1$ and $p_2$ have the same mean. If $\tW_1(p_1,p_2)$ is small, we can push the $\epsilon$-tails of $p_2$  to two points on the boundary of the new distribution to create a friendly perturbation 
$r_{p_2}$ that guarantees convex ordering.}
\label{fig.mean_cross_w1}
\end{figure}
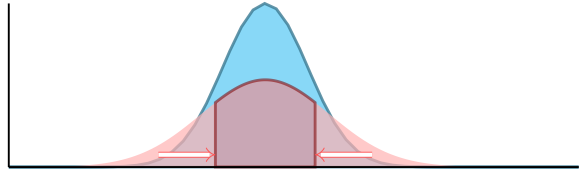

\begin{lemma}
\label{lem:tw1_cross_mean}
Consider two distributions $p,q$ on the real line such that $\tW_1(p, q) \leq \epsilon$.
Then, for $g(x) = x$ or $g(x) = |x|$, one can find $r_p\in \mathbb{F}(p, 7\epsilon, W_1, g(x))$ and $r_q \in \mathbb{F}(q, 7\epsilon, W_1, g(x))$ 
such that $r_p$ is less than $r_q$ in the convex order with respect to the random variable $g(X)$:
$r_p \leq_{\mathrm{cx}} r_q$. This 
is equivalent to saying that for all convex functions $f$ such that the expectations exist, 
\begin{equation}
\mathbb{E}_{r_p}[f(g(X))] \leq \mathbb{E}_{r_q}[f(g(X))].
\end{equation}
\end{lemma}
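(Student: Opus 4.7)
My plan is to use the integral representation of one-dimensional convex functions (invoked just above the lemma) to reduce the convex-order claim to a pointwise call-function inequality together with a mean-matching constraint, and then to build $r_p$ and $r_q$ via a quantile-based tail squeeze calibrated to the $\tW_1$ bound.

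I first unpack the hypothesis. Specializing $u \in \mathcal{U}'$ to $\pm x$ gives $|\bE_p[X] - \bE_q[X]| \leq \epsilon$, and specializing to $\max(0, \pm x - a)$ gives the call and put bounds $|\bE_p[(X-a)_+] - \bE_q[(X-a)_+]| \leq \epsilon$ and $|\bE_p[(a-X)_+] - \bE_q[(a-X)_+]| \leq \epsilon$ for every $a \in \bR$. For $g(x)=|x|$, the identities $(|X|-t)_+ = (X-t)_+ + (-X-t)_+$ for $t \geq 0$ and $|X| = X_+ + (-X)_+$ convert these into analogous bounds on the distribution of $|X|$, at the price of an extra factor of $2$. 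By the integral representation, every convex $f : \bR \to \bR$ admits a decomposition $f(y) = \alpha + \beta y + \int (y-u)_+\,d\nu(u)$ with $\nu \geq 0$, so $r_p \leq_{\mathrm{cx}} r_q$ on $g(X)$ is equivalent to the conjunction $\bE_{r_p}[g(X)] = \bE_{r_q}[g(X)]$ and $\bE_{r_p}[(g(X)-u)_+] \leq \bE_{r_q}[(g(X)-u)_+]$ for every $u \in \bR$. The task reduces to constructing friendly perturbations $r_p, r_q$ satisfying both conditions with $W_1$-cost at most $7\epsilon$.

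I build $r_p, r_q$ by quantile-based tail squeezes on the distribution of $g(X)$: push the upper $\alpha_u^\bullet$ and lower $\alpha_l^\bullet$ quantile tails of $p$ (resp.\ $q$) inward to the corresponding cutoff quantiles $U_p, L_p$ (resp.\ $U_q, L_q$), producing upper/lower squeeze costs $c_p, d_p$ (resp.\ $c_q, d_q$). A direct quantile-integral calculation shows that an upper squeeze of cost $c$ reduces $\bE[(g(X)-u)_+]$ by exactly $c$ at interior $u$ and drives it to $0$ above the cutoff, while a lower squeeze of cost $d$ raises the call function by at most $d$ below the lower cutoff and leaves it unchanged above; the overall mean shifts by $-c+d$. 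I choose $c_p = 2\epsilon$, $c_q = \epsilon$, $d_q = 0$, and $d_p = \epsilon - (\bE_p[g(X)] - \bE_q[g(X)]) \in [0, 2\epsilon]$, which simultaneously enforces $\bE_{r_p}[g(X)] = \bE_{r_q}[g(X)]$ and guarantees the pointwise call inequality in the common interior, where the drop $c_p - c_q = \epsilon$ dominates the slack $\bE_p[(g(X)-u)_+] - \bE_q[(g(X)-u)_+] \leq \epsilon$. The $\tW_1$ bound on call functions forces the upper cutoffs to satisfy $U_p \leq U_q$ (since $\bE_p[(g(X)-U_q)_+] \leq c_q + \epsilon = 2\epsilon = \bE_p[(g(X)-U_p)_+]$), so the transition region $(U_p, U_q)$ trivially satisfies the inequality ($\bE_{r_p}[(g(X)-u)_+] = 0$), while a direct computation in the region $u < L_p$ produces $\bE_{r_p}[(g(X)-u)_+] - \bE_{r_q}[(g(X)-u)_+] = -\bE_q[(u - g(X))_+] \leq 0$. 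Total $W_1$-cost is $c_p + d_p \leq 4\epsilon$ on the $p$-side and $c_q + d_q = \epsilon$ on the $q$-side, both safely within the $7\epsilon$ budget.

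The main obstacle I anticipate is verifying the friendly-perturbation compatibility condition that the squeeze targets lie on the correct side of the new mean, i.e.\ $L_p \leq \bE_{r_p}[g(X)] \leq U_p$ (and analogously for $q$); for well-spread distributions this is automatic from the quantile ordering, but in degenerate cases where $p$ or $q$ is so concentrated that the target cost is not attainable under this constraint, one must tune the squeeze parameters adaptively---exploiting the fact that in such cases both distributions are themselves close to a point mass, so a much smaller squeeze suffices. The case $g(x) = |x|$ is handled by applying the same construction to the distribution of $|X|$: preserving the sign of each moved atom ensures that the $X$-coordinate $W_1$-cost equals the squeeze cost on $|X|$, and the friendly-perturbation condition for $|Y|$ lying between $|X|$ and $\bE_r[|Y|]$ is inherited directly from the $|X|$-squeeze.
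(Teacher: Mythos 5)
Your approach is conceptually the same as the paper's: both verify convex order through the call-function / integrated-CDF criterion and build $r_p,r_q$ by pushing $\epsilon$-sized tails of one-dimensional distributions toward the center, using the $\tW_1$ bound on call functions to control the pointwise inequality. The difference is in the arithmetic of the squeeze: you squeeze both tails of both distributions with budgets tuned to equalize means, whereas the paper (in its generic case) holds $r_p=p$ fixed, squeezes only $q$, and handles the mean mismatch by a separate one-sided preliminary move.

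There is, however, a genuine gap, and it is precisely the one you flag as the ``main obstacle.'' Your construction defines $U_p$ by $\bE_p[(g(X)-U_p)_+]=2\epsilon$ and $L_p$ by a cost-$d_p$ lower squeeze, but you never actually carry out the ``adaptive tuning'' needed when the friendly-perturbation compatibility $L_p\leq\bE_{r_p}[g]\leq U_p$ fails. This is not a corner case: if $p=q=\delta_0$ (so $\tW_1=0\leq\epsilon$), your recipe forces $U_p=-2\epsilon$ and $L_p=+\epsilon$, so $L_p>U_p$ and the construction is vacuous. More generally, whenever $\bE_p[(g(X)-\mu_p)_+]<2\epsilon$ the cutoff $U_p$ crosses to the wrong side of the mean and the map is no longer ``toward the mean,'' so $r_p\notin\mathbb{F}(p,\cdot,W_1,g)$. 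The paper closes this case by a different mechanism --- when the tail cost from the (matched) mean is at most $2\epsilon$, it shows the $\tW_1$ bound forces the same smallness for the other distribution and then simply collapses \emph{both} $p$ and $q$ entirely to a single point, giving $r_p=r_q$ and hence trivial convex order at cost $\leq 6\epsilon$ --- which is not a perturbation of your squeeze parameters but a qualitatively different construction. Your write-up needs either that case split or a construction that is well-defined uniformly; as it stands the proof does not go through.

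A second, smaller issue: you implicitly use $|\mu_p-\mu_q|\leq\epsilon$ to keep $d_p\in[0,2\epsilon]$, which is fine, but you should also note that the call-function/put-function bounds you extract for $g(x)=|x|$ carry an extra factor of $2$, so the per-step budgets in the $|x|$ case must be rescaled accordingly before claiming the total stays under $7\epsilon$.
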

The proof is deferred to Appendix~\ref{proof.tw1_cross_mean}. The idea is illustrated in Figure~\ref{fig.mean_cross_w1} in the special case that $p$ and $q$ have equal means. Intuitively, one can guarantee convex ordering by squeezing one of the distributions towards the mean. 
To apply the one-dimensional mean cross lemma, we consider all one-dimensional projections and identify the optimal direction.

The reason we only consider $g(x)$  in the friendly perturbation constraint is that the landscape of general functions can be complicated and it is harder to characterize the moving towards mean operation. Thus for instance for 
second moment estimation, we consider friendly perturbations $r\in\mathbb{F}(p, \eta, W_1, |v^\top x|)$ instead of $r\in\mathbb{F}(p, \eta, W_1, |v^\top x|^2)$. %

With the mean-cross lemma and the generalized resilience set, we can prove the following result on second moment estimation:

\begin{theorem}[Second moment estimation, $\tW_1$ projection]\label{thm.sec_tw_1_proj}
Assume $p^*\in\GG(k)$, where
    $\GG(k) = \{p \mid \sup_{v\in\bR^d, \|v\|_2=1}\bE_{p}[|v^\top X|^k] \leq \sigma^k\}$
for some $\sigma>0$, $k>2$.
Denote $ \tilde \epsilon = \frac{C_1}{\delta} \left(\epsilon +  \sigma \sqrt{\frac{d}{n}}+\frac{\sigma }{\sqrt{n^{1-1/k}}}\right)$, where $C_1$ is some universal constant. 
Then the projection algorithm $q = \Pi(\hat p_n; \tW_1, \GG(k))$ or $q = \Pi(\hat p_n; \tW_1, \GG(k), \tilde \epsilon/2)$    satisfies
\begin{align}
    \|\bE_{q}[XX^{\top}] - \bE_{p^*}[XX^{\top}]\|_2 \leq C_2 \min(\sigma^2, \sigma^{1+1/(k-1)}\tilde \epsilon^{1-1/(k-1)})
\end{align}
with probability at least $1-\delta$, 
where $C_2$ is some universal constant. 
The population limit when the perturbation level is $\epsilon$ is  $\Theta(\min(\sigma^2, \sigma^{1+1/(k-1)}\tilde \epsilon^{1-1/(k-1)}))$.

\end{theorem}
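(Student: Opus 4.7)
My plan is to follow the same template used for the other $\tW_1$-projection results in the paper: show that $\GG(k)$ is contained in a suitably-parameterized generalized resilience set $\GG^{W_1}_{\mathsf{sec}}(\rho(\eta),\eta)$ and then combine (i) a modulus-of-continuity bound under $\tW_1$ with (ii) a high-probability upper bound on $\tW_1(p^*,\hat p_n)$, so that an analog of Proposition~\ref{prop.tTV_general} for $\tW_1$ yields the stated error.

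\textbf{Step 1: resilience parameter for bounded $k$-th moment.} The key quantitative computation is a one-dimensional estimate: for a distribution $p$ on $\bR$ with $\bE_p[|X|^k]\leq\sigma^k$ and any $\eta$-friendly perturbation $r\in\mathbb{F}(p,\eta,W_1,|X|)$, I want to bound $|\bE_p[X^2]-\bE_r[X^2]|$. Since friendly perturbations only move points toward $\bE_r[|X|]$, the worst case is to shrink a tail mass of size $\alpha$ sitting near a level $T$. The $W_1$ budget gives $\alpha T\lesssim\eta$, while the moment constraint gives $\alpha T^k\leq\sigma^k$, so optimizing over $T$ yields change in second moment at most of order $\eta T\lesssim \sigma^{k/(k-1)}\eta^{(k-2)/(k-1)}=\sigma^{1+1/(k-1)}\eta^{1-1/(k-1)}$. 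Applied to $v^\top X$ in every direction $\|v\|_2=1$, this places $p^*$ in $\GG^{W_1}_{\mathsf{sec}}(\rho(\eta),\eta)$ with $\rho(\eta)\asymp \sigma^{1+1/(k-1)}\eta^{1-1/(k-1)}$. The trivial bound $\rho(\eta)\leq\sigma^2$ gives the $\min(\sigma^2,\cdot)$ in the conclusion.

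\textbf{Step 2: modulus under $\tW_1$.} I will upgrade the modulus bound from $W_1$ to $\tW_1$ by the same device used in the paper for $\tTV$: pick the optimal unit direction $v^*$ achieving $\|\bE_{p_1}[XX^\top]-\bE_{p_2}[XX^\top]\|_2$, reducing to a one-dimensional problem along $v^{*\top}X$. The Kantorovich--Rubinstein duality restricted to rectified linear functions gives $\tW_1\geq \tW_1(\text{pushforward})$ in one dimension, so Lemma~\ref{lem:tw1_cross_mean} with $g(x)=|x|$ and $f(y)=y^2$ supplies friendly perturbations $r_{p_1},r_{p_2}$ of cost $O(\epsilon)$ with $\bE_{r_{p_1}}[(v^{*\top}X)^2]\leq\bE_{r_{p_2}}[(v^{*\top}X)^2]$. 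Applying the resilience bound of Step~1 along $v^*$ to both $p_1,p_2$ and adding gives $\modu(\GG(k),2\epsilon,\tW_1)\lesssim \rho(O(\epsilon))$. This is the analog of the argument for $\GG^\TV_{\mathsf{mean}}$ using the $\tTV$ mean-cross lemma.

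\textbf{Step 3: finite-sample and Markov.} Lemma~\ref{lem.tW_1_empirical_to_population}, specialized to $p^*\in\GG(k)$ (which implies bounded variance and an Orlicz-type control with $\psi(x)=x^k$), bounds $\bE[\tW_1(p,\hat p_n)]\lesssim \epsilon+\sigma\sqrt{d/n}+\sigma/n^{(k-1)/(2k)}$. Markov's inequality then converts this to a $1-\delta$ bound with an extra $1/\delta$ factor, matching the $\tilde\epsilon$ in the theorem. Plugging this into the $\tW_1$ version of Proposition~\ref{prop.tTV_general} (via the triangle inequality $\tW_1(q,p^*)\leq 2\tW_1(\hat p_n,p^*)\leq 2(\tW_1(\hat p_n,p)+W_1(p,p^*))$ and $q\in\GG(k)$ achieves the projection minimum) gives the claimed rate $\sigma^{1+1/(k-1)}\tilde\epsilon^{1-1/(k-1)}$, capped by the trivial $\sigma^2$. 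The matching lower bound for the population limit should follow from a standard two-point construction: take $p^*,q^*\in\GG(k)$ that place mass $\alpha\asymp(\epsilon/\sigma)^{k/(k-1)}$ at $\pm T$ with $T\asymp(\sigma^k/\epsilon)^{1/(k-1)}$ and mass $1-\alpha$ at $0$, verify $W_1(p^*,q^*)\leq 2\epsilon$, and compute the second-moment gap.

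\textbf{Main obstacle.} The cleanest friction is Step~2: the mean-cross lemma for $\tW_1$ is stated for perturbations of cost $7\epsilon$ (not $\epsilon$), and the friendliness constraint is for $g(x)=|x|$ rather than $x^2$, so I need to verify carefully that the resilience parameter in Step~1 still controls $\bE[(v^{*\top}X)^2]$ after such a perturbation---this is exactly why the resilience set is defined with $|v^\top X|$ in the friendliness constraint rather than $|v^\top X|^2$ (cf.~Appendix~\ref{proof.psi_w1_resilience}). A secondary subtlety is handling the absence of a mean-centering assumption: since $\GG(k)$ controls raw rather than central moments, everything can be stated for $XX^\top$ directly, but the friendly-perturbation reduction must use $g(x)=|x|$ so that convexity of $y\mapsto y^2$ rather than $y\mapsto(y-c)^2$ can be invoked.
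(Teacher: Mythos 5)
Your proposal is correct and follows essentially the same route as the paper's proof in Appendix~\ref{proof.psi_w1_resilience}: embed $\GG(k)$ into $\GG^{W_1}_{\mathsf{sec}}$ with the right resilience parameter, apply the $\tW_1$ mean-cross lemma with $g(x)=|x|$ along the optimal direction $v^*$, and invoke Lemma~\ref{lem.tW_1_empirical_to_population} plus Markov for the finite-sample term. One omission in Step 1: a $W_1$-friendly perturbation for $f(x)=|x|$ can also move mass near $0$ \emph{outward} toward $\bE_r[|Y|]$, increasing the second moment rather than shrinking a tail, and the paper bounds this expanding case separately by $4\eta^2+2\sigma\eta$ (Equation~\eqref{eqn.moving_up_bound}), which is dominated by $\sigma^{1+1/(k-1)}\eta^{1-1/(k-1)}$ when $\eta\lesssim\sigma$ and by the trivial $\sigma^2$ cap otherwise, so your stated conclusion holds but the outward-movement case should be verified explicitly.
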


The proof is deferred to Appendix~\ref{proof.psi_w1_resilience}, where we first show that $\GG(k)$ is a subset of the resilience set $\GG^{W_1}_{\mathsf{sec}}$, and then show that the projection algorithm has bounded modulus for the larger resilience set via the mean ross lemma.

\textbf{Interpretation and Comparison.} When the third moment of $X$ is bounded by ${\sigma}^{3}$, the projection algorithm guarantees error
$O(\sigma^{3/2}\sqrt{\epsilon})$ for $\epsilon$ sufficiently small and $n\gtrsim \max(d/\epsilon^2, 1/\epsilon^4)$, while the same condition in $\TV$ perturbation would give error of $O(\sigma \epsilon^{2/3})$ for $n\gtrsim d/\epsilon^2$ (Theorem~\ref{thm.tTV_joint_correct}).

Different from $\TV$ case, the error  is the minimum of two terms: $\sigma^2$ and $\sigma\eta\psi^{-1}(\frac{\sigma}{\eta})$. The second term is similar to $\TV$ and dominates the first term when $\eta$ is smaller than $\sigma$. Although $\eta$ in $W_1$ perturbation can be arbitrarily large, the error cannot exceed $\sigma^2$ since the assumption already implies the second moment to be bounded by $\sigma^2$.
The breakdown point for $W_1$ perturbation is in general infinity in contrast to $1/2$ in $\TV$ case.

\subsubsection{$\tW_1$ projection for Linear Regression}

We further study linear regression under $W_1$ perturbation below and provide sufficient conditions for distributions to be inside the $W_1$-resilient set. Here we measure the cost in $W_1$ as $c((x_0,y_0),(x_1,y_1)) = \sqrt{ \|x_0 - x_1\|_2^2 + \|y_0 - y_1\|_2^2 }$, i.e. the adversary is allowed to perturb both $X$ and $Y$ simultaneously. 
We obtain the following result for robust linear regression under $W_1$ perturbation via taking $B(p, \theta) = L(p, \theta)  = \bE_p[(Y - X^\top \theta)^2] $ in Definition~\ref{def.G^Wc}.

\begin{theorem}[$\tW_1$ projection for linear regression]
\label{thm.linreg_tw_1_proj}
Denote by $X' = [X, Z]$ the $d+1$ dimensional vector that concatenates $X$ with the noise $Z = Y-X^{\top}\theta^*(p)$.
Assume $p^*\in\GG(k)$ for some $k>2$,
where \begin{align*} 
   \GG(k) = \left\{ p \mid \sup_{v\in\bR^{d+1}, \|v\|_2=1}  \bE_{p}\bigg[{|v^{\top}X'|^{k}}\bigg] \leq \sigma_1^{k}, 
   \bE_{p}[Z^2] \leq \sigma_2^2, \|\theta^*(p)\|_2\leq R\right\}.
\end{align*} 
Denote $\bar R = \max(R, 1)$,   $ \tilde \epsilon = \frac{C_1}{\delta} \left(\epsilon + \sigma_1  \bar R\sqrt{ d/n}+ \sigma_1 \bar R/\sqrt{n^{1-1/k}}\right)$,  where $C_1$ is some universal constant. Let $q = \Pi(\hat p_n; \tW_1, \GG(k))$ or $q = \Pi(\hat p_n; \tW_1, \GG(k),  \tilde \epsilon/2)$. Then with probability at least $1-\delta$, there exists some universal constant $C_2$,
\begin{align}
    \bE_{p^*}[(Y-X^\top \theta^*(q))^2 ] \leq \sigma_2^2 + C_2 \bar R^2 (\sigma_1^{1+1/(k-1)} (\bar R \tilde \epsilon)^{1-1/(k-1)} + (\bar R \tilde \epsilon)^2).
\end{align}  

\end{theorem}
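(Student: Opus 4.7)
The plan is to follow the same three–stage template used for second moment estimation in Theorem~\ref{thm.sec_tw_1_proj}: (i) combine a statistical bound on $\tW_{1}(p,\hat p_{n})$ with the triangle inequality to get $\tW_{1}(q,p^{*}) \leq \tilde\epsilon$ with high probability, (ii) argue that $\GG(k)$ is contained in an appropriate generalized resilience set $\GG^{W_{1}}$ (Definition~\ref{def.G^Wc}) by controlling the effect of $W_{1}$-friendly perturbations on the linear functional $B(p,\theta)=L(p,\theta)=\bE_{p}[(Y-X^{\top}\theta)^{2}]$, and (iii) apply the mean-cross Lemma~\ref{lem:tw1_cross_mean} to bound the $\tW_{1}$ modulus of continuity of this set and read off the final excess-loss bound. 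Since $L$ is linear in $p$, its Fenchel--Moreau representation has $\sF_{\theta}$ equal to the singleton $\{(y-x^{\top}\theta)^{2}\}$, so the generalized-resilience condition collapses to a single friendly-perturbation stability statement, which by the remark after Equation~\eqref{eqn.G_w1_sec} can equivalently be phrased with $|u^{\top}(x,y)|$ (where $u=(-\theta,1)$) as the friendliness function.

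For the statistical step I would apply Lemma~\ref{lem.tW_1_empirical_to_population} to the joint distribution $(X,Y)\sim p^{*}$. The $k$-th moment assumption on $X'=(X,Z)$ transfers to $(X,Y)$ by the reparametrization $v^{\top}(X,Y)=(v_{1}+v_{2}\theta^{*}(p^{*}))^{\top}X+v_{2}Z=v'^{\top}X'$, with $\|v'\|\leq 1+R+R\leq 3\bar R$. Hence $\sup_{\|v\|=1}\bE_{p^{*}}[|v^{\top}(X,Y)|^{k}]\leq (3\bar R\sigma_{1})^{k}$, which gives both the Orlicz norm (with $\psi(x)=x^{k}$) and the covariance bound required by Lemma~\ref{lem.tW_1_empirical_to_population}. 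Plugging in $\psi^{-1}(\sqrt n)/\sqrt n = n^{-(k-1)/(2k)}$ and using Markov's inequality on $\tW_{1}(p,\hat p_{n})$ produces the $1/\delta$ factor. Triangle inequality together with $\tW_{1}(q,\hat p_{n})\leq \tW_{1}(p^{*},\hat p_{n})$ (because $q$ is the $\tW_{1}$-projection onto a set containing $p^{*}$) and $\tW_{1}\leq W_{1}\leq \epsilon$ yields $\tW_{1}(q,p^{*})\leq \tilde\epsilon$ w.p.\ $\geq 1-\delta$.

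For the modulus step, fix $u=(-\theta^{*}(q),1)$ and note $\|u\|\leq \sqrt{2}\bar R$ and $L(p,\theta^{*}(q))=\bE_{p}[(u^{\top}(X,Y))^{2}]$. The pushforward of $\tW_{1}$ along $z\mapsto u^{\top}z$ is $\|u\|$-contracting because every $1$-Lipschitz linear/rectified-linear function of $s=u^{\top}z$ lifts to a $\|u\|$-Lipschitz function of the same type on $\bR^{d+1}$. Hence the $1$D $\tW_{1}$ between the distributions of $u^{\top}(X,Y)$ under $p^{*}$ and $q$ is at most $\|u\|\tilde\epsilon = O(\bar R\tilde\epsilon)$. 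Applying Lemma~\ref{lem:tw1_cross_mean} with $g(s)=|s|$ and the convex function $f(s)=s^{2}$ produces friendly perturbations $r_{p^{*}}$ and $r_{q}$ of $p^{*}$ and $q$ with cost $O(\bar R\tilde\epsilon)$ in the direction $|u^{\top}\cdot|$ and the convex-ordering inequality $\bE_{r_{p^{*}}}[(u^{\top}(X,Y))^{2}]\leq \bE_{r_{q}}[(u^{\top}(X,Y))^{2}]$. I then bound the friendly-perturbation gap $|\bE_{p}[(u^{\top}(X,Y))^{2}]-\bE_{r}[(u^{\top}(X,Y))^{2}]|$ by the same truncation-plus-H\"older argument as in Theorem~\ref{thm.sec_tw_1_proj}: writing $|a^{2}-b^{2}|\leq 2|a-b|\max(|a|,\mu_{r})$, splitting at a threshold $T$, using the friendliness constraint to get $|v^{\top}Y|\leq |v^{\top}X|$ on $\{|v^{\top}X|>T\}$, and controlling the tail by $\bE[|u^{\top}(X,Y)|^{k}]\leq (3\bar R\sigma_{1})^{k}$ gives
\begin{align*}
\text{Gap} \lesssim T\cdot \bar R\tilde\epsilon + (\bar R\sigma_{1})^{2}(\bar R\sigma_{1}/T)^{k-2} + (\bar R\tilde\epsilon)^{2},
\end{align*}
and optimizing $T\sim ((\bar R\sigma_{1})^{k}/(\bar R\tilde\epsilon))^{1/(k-1)}$ yields a gap of order $\bar R^{2}\bigl(\sigma_{1}^{1+1/(k-1)}(\bar R\tilde\epsilon)^{1-1/(k-1)}+(\bar R\tilde\epsilon)^{2}\bigr)$. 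Combining the gap bound on both sides of the mean-cross inequality with $\bE_{q}[(u^{\top}(X,Y))^{2}]=L(q,\theta^{*}(q))\leq \sigma_{2}^{2}$ (from $q\in\GG(k)$ and optimality of $\theta^{*}(q)$) gives the claimed bound.

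The main obstacle is the friendly-perturbation gap estimate: obtaining the $\tilde\epsilon^{1-1/(k-1)}$ rate requires the truncation parameter to be tuned so that mass displaced by the optimal coupling and mass in the $k$-th-moment tail contribute equally, and this is the step where the bounded-moment hypothesis and the ``squeezing'' structure of friendly perturbations interact. Two secondary subtleties I expect to handle carefully are (a) tracking the powers of $\bar R$ that appear when translating between the $(X,Z)$ coordinates in which $\GG(k)$ is phrased and the $(X,Y)$ coordinates in which $\tW_{1}$ and $L$ naturally live---each appearance of $\theta^{*}(p^{*})$ or $\theta^{*}(q)$ costs a factor of $\bar R$ and these must be combined with the contraction $\|u\|$ in the pushforward---and (b) verifying that the Fenchel--Moreau domain $\sF_{\theta}$ for the linear bridge $B$ reduces to a singleton, so that Definition~\ref{def.G^Wc} specializes to the one-inequality form that is directly compatible with the $|u^{\top}\cdot|$ variant of friendliness used by the mean-cross lemma.
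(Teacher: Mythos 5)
Your proposal is correct and, at the level of ingredients, matches the paper's own argument: both use Lemma~\ref{lem.tW_1_empirical_to_population} plus the triangle inequality for the statistical step, the mean-cross Lemma~\ref{lem:tw1_cross_mean} to compare friendly perturbations of $p^*$ and $q$, the reparametrization $Y - X^{\top}\theta = v^{\top}X'$ with $\|v\|\leq O(\bar R)$, and a truncation-plus-H\"older estimate of the friendly-perturbation gap that optimizes a threshold $T$ against the $k$-th-moment tail. The organizational difference is that the paper factors the argument through the $\GG^{W_1}_{\downarrow}/\GG^{W_1}_{\uparrow}$ abstraction, proving a population membership lemma (Lemma~\ref{lem.proof_linreg_w1_population}, which shows $\GG(k)\subset\GG^{W_1}(\sigma_2^2+C_1\Delta(\bar R,\eta),\sigma_2^2+C_2\bar R^2\Delta(\bar R,\eta),\eta)$) and then a separate finite-sample lemma that reads off the modulus via Proposition~\ref{prop.tW1_general}; you instead unfold the two resilience conditions into two explicit gap estimates and chain them with the mean-cross ordering and the constraint $L(q,\theta^*(q))=\bE_q[Z_q^2]\leq\sigma_2^2$ baked into $\GG(k)$. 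This direct computation and the paper's modular version produce the same $\sigma_2^2 + \bar R^2\bigl(\sigma_1^{1+1/(k-1)}(\bar R\tilde\epsilon)^{1-1/(k-1)}+(\bar R\tilde\epsilon)^2\bigr)$ rate; the modular version is more re-usable for other losses, while yours is more transparent about where the powers of $\bar R$ enter (from the norm of $u=(-\theta^*(q),1)$, from the $(X,Y)\leftrightarrow(X,Z)$ reparametrization, and from the threshold optimization). One point you should not gloss over in a full write-up is that the friendly-perturbation gap estimate requires separately handling the ``squeezing up'' case (where $|u^{\top}Y|\geq|u^{\top}X|$ on the coupling support, controlled by a quadratic self-bounding inequality as in Equation~\eqref{eqn.proof_W1_sec_resilience1} of the paper and contributing the $(\bar R\tilde\epsilon)^2$ term) and the ``squeezing down'' case (controlled by the $k$-th-moment tail); your one-line bound $|a^2-b^2|\leq 2|a-b|\max(|a|,\mu_r)$ does cover both, but the bound on $\mu_r$ itself then needs a short self-bounding argument to avoid circularity.
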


The proof is deferred to   Appendix~\ref{proof.linreg_tw_1_proj}, where we first show that $\GG(k)$ is a subset of the generalized resilience set under the choice of $B$ and $L$ for linear regression, and then show that the worst-case error of the projection algorithm is upper bounded via the mean cross lemma.

\textbf{Interpretation and Comparison.} Taking $k=3$, we see that if the vector $X'=[X, Z]$ has its third moment bounded by $\sigma_1^3$ and $Z$ has its second moment bounded by $\sigma_2^2$, our algorithm guarantees error $\sigma_2^2 +O(\bar R^2(\bar R^2\epsilon^2 + \sigma_1^{3/2}\sqrt{\bar R\epsilon})$  for $\epsilon$ sufficiently small and $n\gtrsim \max(d/\epsilon^2, 1/\epsilon^4)$.
Although higher moment bound naturally implies bounded second moment of $Z$, we introduce $\sigma_2$ since it can be much smaller than $\sigma_1$ in practice, and our final error would approach $\sigma_2^2$ which is the population limit for non-robust problem. 
We also present another set of sufficient conditions in Appendix~\ref{appendix.proof_W1_linreg_sufficient} that micmic the hypercontractivity condition in TV  linear regression case.

We  also show in Appendix~\ref{appendix.necessity_hyper_w1} that the boundedness assumption on $\|\theta\|_2$ is necessary: even if $X$ is a Gaussian distribution and $Z \equiv 0$, the population limit would be infinite since $\|\theta\|_2$ can go to infinity. 
The boundedness assumption on $\theta$ can  be dropped when we consider total regression~\cite{golub1980analysis} instead of linear regression, i.e. $L(p, \theta) = \bE_{p}[(\theta^\top X'')^2]$, where $X'' = [X, Y], \theta\in\Theta=\{\theta\mid \|\theta\|_2 = 1\}$.   
This is equivalent to  estimating the second moment along some direction $\theta$. With the same proof as the second moment estimation, we know that for the set $\GG = \{ p \mid \bE_{p}[(v^\top X'')^{k+1}] \leq \sigma^{k+1}\}$, we have
the population limit upper as $ O( \min(\sigma^2, \sigma^{1+1/k} \epsilon^{1-1/k} ))$.  The proof in Appendix~\ref{appendix.necessity_hyper_w1} also shows that this set $\GG$ for total regression has infinite population limit when we consider linear regression cost.

\subsection{Expanding the set for second moment estimation}

Similar to Section~\ref{sec.expand}, we implement the expanding set idea to provide both statistically efficient and computationally efficient algorithm via $W_1$ projection:

\begin{theorem}[Second moment estimation, $W_1$ or $\tW_1$ projection]\label{thm.efficient_sec_w1}
Assume that $p^*$ has bounded $k$-th moment for $k>2$, i.e. 
    $\sup_{v\in\bR^d, \|v\|_2=1}\bE_{p^*}[|v^\top X|^k] \leq \sigma^k$
for some $\sigma>0$, and that the adversary is able to corrupt the true empirical distribution $\hat p_n^*$ to some distribution $\hat p$ such that $W_1(\hat p_n^*, \hat p)\leq \epsilon$. 
Take the projection set as $ \GG = \{p \mid \sup_{v\in\bR^d, \|v\|_2=1}\bE_{p}[|v^\top X|^k] \leq 2\sigma^k\}$. When $n\gtrsim (d\log(d/\delta))^{k/2}$ the projection algorithm $q = \Pi(\hat p_n; W_1 / \tW_1, \GG)$ or $q = \Pi(\hat p_n; W_1 / \tW_1, \GG, \epsilon)$    satisfies
\begin{align*}
    \|\bE_{q}[XX^{\top}] - \bE_{p^*}[XX^{\top}]\|_2 \leq \min(\sigma^2, C_1 \sigma^{1+1/(k-1)}  \epsilon^{1-1/(k-1)})+C_2\sigma^2 \max(\sqrt{\frac{ d\log(d/\delta)}{n\delta^{2/k}}},  \frac{ d\log(d/\delta)}{n\delta^{2/k}})
\end{align*}
with probability at least $1-\delta$, 
where $C_1, C_2$ are some universal constants.
\end{theorem}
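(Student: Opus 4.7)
The plan is to apply the ``expand the set'' template from Section~\ref{sec.expand}, transposed from $\TV$ to $W_1$: treat $\GG$ as a target set that is large enough to contain (a small $W_1$-perturbation of) the clean empirical distribution $\hat p_n^*$, and then combine a modulus bound on $\GG$ with a statistical concentration bound that relates $\bE_{\hat p_n^*}[XX^\top]$ back to $\bE_{p^*}[XX^\top]$. Write $q$ for the projection output. The argument splits into four steps:

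Step 1: show $\hat p_n^* \in \GG$ (or a $W_1$-negligible perturbation $\hat p'$ of it) with probability at least $1-O(\delta)$ once $n\gtrsim (d\log(d/\delta))^{k/2}$. Fix a $\tfrac12$-net $N$ of the unit sphere, of cardinality $9^d$. For each $v\in N$, truncate $Y_v = |v^\top X|^k$ at level $T=\sigma^k n^{1/2}/\delta^{1/k}$ (say), apply Bernstein to $\min(Y_v,T)$ using $\bE_{p^*}[Y_v]\leq \sigma^k$ as a proxy for the variance, and control the untruncated tail by Markov; union-bounding over $N$ gives $\bE_{\hat p_n^*}[Y_v]\leq 2\sigma^k$ uniformly on $N$, and a standard $\tfrac12$-net slack promotes this to all $v$. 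If directly controlling the empirical $k$-th moment is too fragile, construct $\hat p'$ by moving the $O(\delta)$-fraction of samples with largest $|v^\top X_i|^k$ toward the sample mean; a covering argument shows the induced $W_1$ shift is $O(\epsilon)$ and that $\hat p'\in\GG$.

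Step 2: the triangle inequality bounds $W_1(q,\hat p_n^*)$. Since $\hat p'\in\GG$ and $W_1(\hat p',\hat p)\leq W_1(\hat p',\hat p_n^*)+W_1(\hat p_n^*,\hat p)\leq O(\epsilon)$, the minimizer $q$ satisfies $W_1(q,\hat p)\leq O(\epsilon)$, whence $W_1(q,\hat p_n^*)\leq O(\epsilon)$. The same chain of inequalities works with $\tW_1$ in place of $W_1$ because $\tW_1\leq W_1$.

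Step 3: invoke the modulus of continuity for $\GG$ established in the proof of Theorem~\ref{thm.sec_tw_1_proj}. Both $q$ and $\hat p'$ lie in $\GG\subset \GG^{W_1}_{\mathsf{sec}}$, and $\GG$ inherits a $\min(\sigma^2,\,\sigma^{1+1/(k-1)}\epsilon^{1-1/(k-1)})$ modulus via the mean-cross lemma for $\tW_1$ (Lemma~\ref{lem:tw1_cross_mean}) applied in each one-dimensional direction $|v^\top X|$, combined with the friendly-perturbation decomposition from Lemma~\ref{lem.wc_friend_pert}. This yields
\begin{equation*}
\|\bE_q[XX^\top]-\bE_{\hat p_n^*}[XX^\top]\|_2 \;\leq\; C_1\,\min\!\bigl(\sigma^2,\,\sigma^{1+1/(k-1)}\epsilon^{1-1/(k-1)}\bigr).
\end{equation*}

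Step 4: control the vanilla statistical fluctuation $\|\bE_{\hat p_n^*}[XX^\top]-\bE_{p^*}[XX^\top]\|_2$ by a matrix Bernstein / Rosenthal inequality applied to the truncated rank-one summands $X_iX_i^\top\mathbf{1}\{\|X_i\|\leq M\}$ with $M\asymp \sigma(n/(\delta^{2/k}\log(d/\delta)))^{1/k}$ (chosen so that the truncation bias is negligible after a Markov bound using the $k$-th moment assumption). Covering and Bernstein together produce the claimed $C_2\sigma^2\max(\sqrt{d\log(d/\delta)/(n\delta^{2/k})},\,d\log(d/\delta)/(n\delta^{2/k}))$ term, where the $\delta^{-2/k}$ factor is the price of a polynomial (rather than exponential) tail. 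Adding the two contributions finishes the proof.

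The main obstacle is Step~1: certifying membership in $\GG$ with only $(d\log(d/\delta))^{k/2}$ samples and only a $k$-th moment hypothesis. The low-moment regime rules off-the-shelf sub-exponential concentration out of reach, and naively bounding $\bE_{\hat p_n^*}[|v^\top X|^k]$ uniformly over the sphere loses too many $\log$ factors or too many powers of $1/\delta$. Balancing the truncation level $T$ against the net size $9^d$ and the target deviation $\sigma^k$ is therefore the delicate piece; the $W_1$-perturbation variant (moving outlier points toward the mean to form $\hat p'$) is the natural fallback and is the closest analogue to the $\TV$ construction used for $\GG^{\TV}_{\mathsf{mean}}$ with bounded $k$-th moment in Theorem~\ref{thm.projection_kth_moment_identity_covariance}.
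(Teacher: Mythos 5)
Your Steps 2--4 align with the paper's argument: take $\hat p' = \hat p_n^*$, get $W_1(q,\hat p_n^*)\leq 2\epsilon$ by two triangle inequalities, apply the modulus bound for the bounded $k$-th-moment set established in the proof of Theorem~\ref{thm.sec_tw_1_proj}, and control $\|\bE_{\hat p_n^*}[XX^\top]-\bE_{p^*}[XX^\top]\|_2$ by matrix Bernstein with a truncation. One small divergence in Step~4: the paper derives the almost-sure norm bound $\|X\|_2\lesssim \sigma\sqrt d/\delta^{1/k}$ from Lemma~\ref{lem.kth_population_deletion_R} and plugs that as the range parameter into Lemma~\ref{lem.matrix_bernstein}; your truncation level $M\asymp\sigma(n/(\delta^{2/k}\log(d/\delta)))^{1/k}$ is different and as chosen does not reproduce the stated $\delta^{-2/k}$-scaled bound, so you would need to redo that bookkeeping.

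The real gap is Step~1, which you correctly identify as the crux but do not close. The paper does not attempt a direct proof at all: it cites \citep[Lemma~5.5]{kothari2017outlier} as a black box for the statement that $\hat p_n^*$ has bounded $k$-th moments (with constant $2\sigma^k$) with probability $1-\delta$ once $n\gtrsim (d\log(d/\delta))^{k/2}$. Your direct net-plus-truncated-Bernstein sketch is the right shape for proving such a lemma, but the parameters as you wrote them (truncation $T=\sigma^k n^{1/2}/\delta^{1/k}$, union bound over a $9^d$-net) do not obviously land at $(d\log(d/\delta))^{k/2}$ samples for all $k>2$, and you flag this yourself. Moreover your fallback of constructing $\hat p'$ by ``moving the $O(\delta)$-fraction of samples with largest $|v^\top X_i|^k$ toward the sample mean'' is not supported: the $W_1$ cost of such a move is the fraction times the distance moved, and the distance is dimension- and moment-dependent (on the order of $\sigma\sqrt d/\delta^{1/k}$ per sample by Lemma~\ref{lem.kth_population_deletion_R}), so the shift is not $O(\epsilon)$ in general. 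This fallback is not in the paper, and as stated it does not work. The correct move is simply to invoke the Kothari--Steinhardt lemma for Step~1 rather than trying to reprove it.
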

The proof is deferred to Appendix~\ref{proof.efficient_sec_w1}. Compared to the sample complexity that is linear in $d$ in Theorem~\ref{thm.sec_tw_1_proj}, we sacrifice the sample complexity in the exchange of computational efficiency. 
When $k=4$, the MD functional is equivalent to minimizing $\frac{1}{n}\sum_{i=1}^n \|x_i - y_i\|_2$ over $y_1, \cdots, y_n$ subject to $ \sup_{v\in\bR^d} \frac{1}{n}\sum_{i=1}^n  (v^\top y_i)^4\leq 2\sigma^4.$  
The algorithm can be relaxed and made efficient via sum-of-squares program by assuming that the $k$-th moment bound on distribution is certifiable. This provides the first statistically and computationally efficient algorithm under $W_1$ corruption.
\section*{Acknowledgements}
The authors are grateful to Roman Vershynin for discussions that inspired Lemma~\ref{lem.empirical_psi_resilient}, Peter Bartlett for pointing out the VC dimension bound needed for Theorem~\ref{thm.linearregressiontvtildeproof}, and Adarsh Prasad for comments on results of mean estimation with bounded covariance assumption.

\bibliographystyle{plainnat} %
\bibliography{di}       %
\newpage
\begin{appendix}
 {\bf  \Large Supplementary Material for Generalized Resilience and Robust Statistics}

\section{Two Approaches for Finite Sample Analysis}\label{sec.general_analysis}

We summarize two approaches to analyze the general projection algorithm~(Algorithm~\ref{alg.general_projection}) for oblivious corruption~(Definition~\ref{def.obliviouscorruption}) and adaptive corruption~(Definition~\ref{def.adaptivecont}). %

\begin{enumerate}
    \item \emph{Find $p^*$}: This approach aims at finding the true population distribution $p^*$ or its perturbations in the projection. Theorem~\ref{thm.admissible-1} and~\ref{thm.admissible-1-adaptive} present the general conditions under which this approach works. 
    \item \emph{Find $\hat p_n^*$}: This approach aims at finding the empirical distribution sampled from the true distribution $\hat p^*_n$ or its perturbations in the projection. Theorem~\ref{thm.admissible-2} presents the general conditions under which this approach works.  
\end{enumerate}

Among these two approaches there does not exist one analysis approach that strictly dominates the other, and in various cases one can apply both analysis approaches to obtain different bounds that are better in different parameter regimes.

\subsection{Find $p^*$}

\begin{theorem}[Find $p^*$ (Oblivious Corruption)]\label{thm.admissible-1}
Assume the oblivious corruption model of level $\epsilon$ under $D$. Denote the true distribution as $p^* \in \GG$ and the perturbed population distribution as $p$ with $D(p^*,p) \leq \epsilon$. Denote the cost function as $L(p^*, \theta)$ and the empirical distribution of observed data as $\hat{p}_n$. Assume the following conditions: 
\begin{enumerate}
    \item \textbf{Robust to perturbation:} there exists a function $\overline{D}$ such that for any $p_2, p_3$, 
    \begin{align}
      \sup_{p_1\in\cM} |\tD(p_1, p_2) - \tD(p_1, p_3) | \leq \overline{D}(p_2, p_3) \leq D(p_2, p_3). 
    \end{align}
    \item \textbf{Generalized Modulus of Continuity:}  $\mathcal{M} \supset \GG$, and for $\tilde{\epsilon} = 2\epsilon + 2\overline{D}(p, \hat{p}_n)$, we have
    \begin{align}\label{eqn.modulus_general}
    \sup_{p_1^*\in \cM, p_2^* \in \GG, \tD(p_1^*, p_2^*) - \tD(p_2^*, p_2^*) \leq \tilde{\epsilon}} L(p_2^*, \theta^*(p_1^*)) \leq \rho(\tilde{\epsilon}).
    \end{align}

\end{enumerate}
 
Then the projection algorithm $ q=\Pi(\hat p_n; \tD, \cM)$, $\theta^*(q) = \argmin_{\theta\in\Theta} L(q, \theta)$ satisfies
\begin{align}\label{eqn.ad1_robust}
    L(p^*, \theta^*(q)) \leq \rho(2\epsilon + 2\overline{D}(p, \hat{p}_n)).
\end{align}
\end{theorem}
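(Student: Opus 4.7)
The plan is to convert the projection inequality $\tD(q,\hat p_n)\le \tD(p^*,\hat p_n)$ (which holds because $p^*\in\GG\subset\cM$ so $p^*$ is a feasible competitor for the projection) into an ``in-sample'' closeness bound $\tD(q,p^*)-\tD(p^*,p^*)\le \tilde\epsilon$, and then invoke the generalized modulus in Condition 2 with $p_1^*=q\in\cM$ and $p_2^*=p^*\in\GG$ to conclude $L(p^*,\theta^*(q))\le \rho(\tilde\epsilon)$, which is exactly \eqref{eqn.ad1_robust}.

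To perform the conversion I will treat Condition 1 as a uniform Lipschitz-type property of $\tD(p_1,\cdot)$ on $\cM$ with modulus $\overline{D}$. The naive impulse is to bound $|\tD(q,p^*)-\tD(q,\hat p_n)|\le \overline{D}(p^*,\hat p_n)$ in one shot, but $\overline{D}$ is only stipulated to be pointwise dominated by $D$ and is not guaranteed to satisfy its own triangle inequality, so one cannot legitimately split $\overline{D}(p^*,\hat p_n)$ into $\overline{D}(p^*,p)+\overline{D}(p,\hat p_n)$. The workaround is to telescope through the corrupted population distribution $p$: apply Condition 1 once to $(p_2,p_3)=(p^*,p)$ and once to $(p_2,p_3)=(p,\hat p_n)$, then add, obtaining
\[
|\tD(q,p^*)-\tD(q,\hat p_n)|\ \le\ \overline{D}(p^*,p)+\overline{D}(p,\hat p_n)\ \le\ \epsilon+\overline{D}(p,\hat p_n),
\]
where the last step uses $\overline{D}(p^*,p)\le D(p^*,p)\le \epsilon$. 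The same telescope with $p_1=p^*$ (and with $p^*$ in place of $q$ on the left) gives $|\tD(p^*,\hat p_n)-\tD(p^*,p^*)|\le \epsilon+\overline{D}(p,\hat p_n)$.

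Combining the projection inequality from the first paragraph with these two sensitivity bounds via a single triangle-style decomposition
\[
\tD(q,p^*)-\tD(p^*,p^*)=[\tD(q,p^*)-\tD(q,\hat p_n)]+[\tD(q,\hat p_n)-\tD(p^*,\hat p_n)]+[\tD(p^*,\hat p_n)-\tD(p^*,p^*)]
\]
yields $\tD(q,p^*)-\tD(p^*,p^*)\le 2\epsilon+2\overline{D}(p,\hat p_n)=\tilde\epsilon$. Invoking Condition 2 with $(p_1^*,p_2^*)=(q,p^*)$ finishes the proof.

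The one delicate point, and really the only place requiring care, is the telescoping step in the second paragraph: it is necessary precisely because $\overline{D}$ is not assumed to be a (pseudo)metric, so a direct bound on $\overline{D}(p^*,\hat p_n)$ would either demand an unstated triangle inequality for $\overline{D}$ or else introduce $D(p,\hat p_n)$ on the right-hand side (which is typically $\Theta(1)$ when $\hat p_n$ is an empirical distribution from a continuous $p$ and $D=\TV$ or $W_1$, and would therefore destroy the bound). Everything else is a mechanical combination of Conditions 1 and 2 with the definition of the projection.
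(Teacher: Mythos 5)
Your proof is correct and follows essentially the same route as the paper's: four applications of Condition 1 telescoping through the corrupted population distribution $p$ (two to move from $\tD(q,p^*)$ to $\tD(q,\hat p_n)$, two to move from $\tD(p^*,\hat p_n)$ to $\tD(p^*,p^*)$), the projection inequality in the middle, and finally Condition 2 with $(p_1^*,p_2^*)=(q,p^*)$. The observation that telescoping is forced because $\overline{D}$ has no triangle inequality is exactly the point the paper's forward chain also respects, just written as a sequence of substitutions rather than a three-term decomposition.
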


\begin{proof}
By the `robust to perturbation' property of $D$, we have
\begin{align}
\tD(q,p^*) -  \tD(p^*,p^*) & \leq \tD(q, p) + \overline{D}(p^*, p) -  \tD(p^*,p^*) \nonumber \\
& \leq \tD(q, \hat p_n) + \overline{D}(p, \hat p_n) + \overline{D}(p^*, p) -  \tD(p^*,p^*)  \nonumber \\
& \leq \tD(p^*, \hat p_n) + \overline{D}(p, \hat p_n)+ \overline{D}(p^*, p) -  \tD(p^*,p^*) \nonumber \\
& \leq \tD(p^*, p) + 2\overline{D}(p, \hat p_n) + \overline{D}(p^*, p)  -  \tD(p^*,p^*) \nonumber \\
& \leq \tD(p^*, p^*)+ 2\overline{D}(p, \hat p_n)  + 2\overline{D}(p^*, p)  -  \tD(p^*,p^*) \nonumber \\
& =2\overline{D}(p, \hat p_n) + 2\overline{D}(p^*, p) \nonumber \\
& \leq 2\overline{D}(p, \hat p_n) + 2D(p^*, p) \nonumber \\
& \leq 2\overline{D}(p, \hat p_n) +2\epsilon.
\end{align}

We also know that $q\in \cM$. 
Hence, by the generalized modulus of continuity property: 
\begin{align}
L(p^*, \theta^*(q))\leq 
\sup_{p_1^*\in \cM, p_2^* \in \GG, \tD(p_1^*, p_2^*) - \tD(p^*_2,p^*_2) \leq 2\epsilon + 2\overline{D}(p, \hat p_n) } L(p_2^*, \theta^*(p_1^*)) \leq \rho(2\epsilon+ 2\overline{D}(p, \hat p_n)),
\end{align}
we can derive the conclusion.
\end{proof}

\begin{proposition}[Any $q$ suffices, not just the minimizer]\label{prop.any_q_suffices_oblivious}
Assume the conditions in Theorem~\ref{thm.admissible-1} and further $\tilde{D}(p,p) = 0$ for any $p$. Suppose that for any $p$ such that $D(p^*,p)\leq \epsilon, p^*\in \GG$, we have $\overline{D}(p, \hat{p}_n) \leq d_n$ uniformly over $p$ with probability at least $1-\delta$. Then, it follows from the proof of Theorem~\ref{thm.admissible-1} that for any $q \in \mathcal{M}$ such that
\begin{align}\label{eqn.qingeneral}
    \tD(q,\hat{p}_n) \leq \epsilon + d_n,
\end{align}
we have,
\begin{align}
    L(p^*, \theta^*(q)) \leq \rho(2\epsilon + 2 d_n),
\end{align}
and the the existence of $q$ satisfying~(\ref{eqn.qingeneral}) happens with probability at least $1-\delta$.  
\end{proposition}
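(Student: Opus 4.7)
The plan is to reuse the inequality chain from the proof of Theorem~\ref{thm.admissible-1} with two minor adjustments: first, exhibit $p^*$ as a witness to show that the feasible set $\{q\in\MM : \tD(q,\hat p_n)\le \epsilon+d_n\}$ is nonempty with probability at least $1-\delta$; second, replace the projection-minimality step with the hypothesis $\tD(q,\hat p_n)\le \epsilon+d_n$ directly.

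For the existence claim, I will use that $p^*\in\GG\subset\MM$, so the ``robust to perturbation'' property applies with $p_1=p^*$. Invoking it twice, once with $(p_2,p_3)=(\hat p_n,p)$ and once with $(p_2,p_3)=(p,p^*)$, together with the assumed $\tD(p^*,p^*)=0$ and the general bound $\overline D\le D$, gives
\begin{align*}
\tD(p^*,\hat p_n)
&\le \tD(p^*,p)+\overline D(p,\hat p_n) \\
&\le \tD(p^*,p^*)+\overline D(p^*,p)+\overline D(p,\hat p_n)
\le \epsilon+d_n,
\end{align*}
where the last step uses $D(p^*,p)\le\epsilon$ together with the uniform high-probability bound $\overline D(p,\hat p_n)\le d_n$. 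Hence $q=p^*$ itself is admissible with probability at least $1-\delta$.

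For the error bound, I will fix any $q\in\MM$ satisfying $\tD(q,\hat p_n)\le \epsilon+d_n$ and rerun the perturbation inequalities of Theorem~\ref{thm.admissible-1} with $p_1=q\in\MM$:
\begin{align*}
\tD(q,p^*) - \tD(p^*,p^*)
&\le \tD(q,p)+\overline D(p^*,p) \\
&\le \tD(q,\hat p_n)+\overline D(p,\hat p_n)+\overline D(p^*,p) \\
&\le (\epsilon+d_n)+d_n+\epsilon = 2\epsilon+2d_n.
\end{align*}
I then apply the generalized modulus of continuity condition~\eqref{eqn.modulus_general} at $p_1^*=q\in\MM$ and $p_2^*=p^*\in\GG$ to conclude $L(p^*,\theta^*(q))\le\rho(2\epsilon+2d_n)$.

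There is no genuine obstacle here: the projection minimality used in Theorem~\ref{thm.admissible-1} only served to derive $\tD(q,\hat p_n)\le \tD(p^*,\hat p_n)\le \epsilon+d_n$, and the hypothesis of the proposition supplies exactly this bound directly for any admissible $q$. The added assumption $\tD(p,p)=0$ is what lets the existence computation collapse from $\tD(p^*,\hat p_n)-\tD(p^*,p^*)\le\epsilon+d_n$ to $\tD(p^*,\hat p_n)\le\epsilon+d_n$, so that $p^*$ actually meets the feasibility condition~\eqref{eqn.qingeneral}. The statement is thus essentially a bookkeeping strengthening of Theorem~\ref{thm.admissible-1} showing that approximate rather than exact minimization suffices.
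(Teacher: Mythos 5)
Your proof is correct and is exactly the fleshing-out that the paper intends: the paper offers no explicit argument for this proposition (it only says ``it follows from the proof of Theorem~\ref{thm.admissible-1}''), and your chain of inequalities is the implied one, replacing the projection-minimality step $\tD(q,\hat p_n)\leq\tD(p^*,\hat p_n)$ by the hypothesis $\tD(q,\hat p_n)\leq\epsilon+d_n$ and using $p^*\in\GG\subset\MM$ as a feasibility witness. You are also right to invoke the ``robust to perturbation'' condition twice for the existence step rather than assuming $\tD$ is a pseudometric, since Theorem~\ref{thm.admissible-1} does not grant the triangle inequality for $\tD$ itself; the added hypothesis $\tD(p,p)=0$ is needed precisely so that $\tD(p^*,\hat p_n)\leq \tD(p^*,p^*)+\overline D(p^*,p)+\overline D(p,\hat p_n)$ collapses to $\epsilon+d_n$.
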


\begin{proposition}[Solving robust inference under more general perturbations] \label{prop.obliviousmoregeneralperturbation}
It follows from the proof of Theorem~\ref{thm.admissible-1} that the final finite sample error bound still holds if we allow more general perturbations: instead of allowing any $p$ such that $D(p^*,p)\leq \epsilon$, we allow any $p$ such that $\sup_{r\in \mathcal{M}} |\tD(r, p^*) - \tD(r,p)|\leq 
\epsilon$. Hence, as long as the conditions in Theorem~\ref{thm.admissible-1} are satisfied, the projection algorithm performs well with this bigger set of arbitrary perturbations. 
\end{proposition}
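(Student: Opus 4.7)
The plan is to inspect the proof of Theorem~\ref{thm.admissible-1} and observe that the hypothesis $D(p^*,p)\le \epsilon$ is used only through quantities of the form $|\tD(r,p^*) - \tD(r,p)|$ for $r\in \mathcal{M}$, which are in turn bounded by combining the \textbf{robust to perturbation} property with $D(p^*,p)\le\epsilon$. Since the proposition's hypothesis already gives $\sup_{r\in\mathcal{M}}|\tD(r,p^*)-\tD(r,p)|\le \epsilon$ directly, substituting this bound at the appropriate places yields the same final error estimate.

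Concretely, I would retrace the seven-line chain displayed in the proof of Theorem~\ref{thm.admissible-1} and mark each inequality by whether it is (i) the definition of $q$ as the minimizer of $\tD(\cdot,\hat p_n)$ on $\mathcal{M}$ (which uses $p^*\in\mathcal{G}\subset\mathcal{M}$), (ii) the statistical deviation $\overline{D}(p,\hat p_n)$, or (iii) a ``transfer from $p$ to $p^*$'' step of the form $|\tD(r,p^*)-\tD(r,p)|$ with $r\in\mathcal{M}$. Category (iii) appears in exactly two places: once with $r = q\in\mathcal{M}$ (the very first line) and once with $r = p^*\in\mathcal{M}$ (in the second-to-last line). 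Under the old hypothesis both are bounded by $\overline{D}(p^*,p)\le D(p^*,p)\le \epsilon$; under the new hypothesis both are bounded by $\sup_{r\in\mathcal{M}}|\tD(r,p^*)-\tD(r,p)|\le \epsilon$ directly. Hence the identical chain of inequalities yields
\[
\tD(q,p^*) - \tD(p^*,p^*) \;\le\; 2\overline{D}(p,\hat p_n) + 2\epsilon = \tilde\epsilon,
\]
and applying the generalized modulus of continuity hypothesis exactly as in Theorem~\ref{thm.admissible-1} gives $L(p^*,\theta^*(q))\le \rho(\tilde\epsilon)$.

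No single step should be difficult; the only subtle point is to verify that the hypothesis $\sup_{r\in\mathcal{M}}|\tD(r,p^*)-\tD(r,p)|\le \epsilon$ is indeed enough and that no hidden use of the metric inequality $D(p^*,p)\le\epsilon$ is lurking (e.g.\ in the statement of the generalized modulus, which is stated purely in terms of $\tD$-distances and therefore unaffected). I would therefore present the proof as a single remark: read the proof of Theorem~\ref{thm.admissible-1} and replace every invocation of ``$\overline{D}(p^*,p)\le D(p^*,p)\le\epsilon$'' by the new assumption, noting that in every such invocation the first argument of $\tD$ belongs to $\mathcal{M}$ so the substitution is legitimate. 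The conclusion and the numerical bound $\rho(2\epsilon + 2\overline{D}(p,\hat p_n))$ are unchanged.
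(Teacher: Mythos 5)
Your proof is correct and matches the paper's intent: the paper states Proposition~\ref{prop.obliviousmoregeneralperturbation} as a direct consequence of the proof of Theorem~\ref{thm.admissible-1} without elaborating, and your line-by-line audit supplies exactly the missing details. Indeed the only places the hypothesis $D(p^*,p)\le\epsilon$ enters are the two steps that introduce $\overline{D}(p^*,p)$, namely $\tD(q,p^*)\le \tD(q,p)+\overline{D}(p^*,p)$ with $q\in\mathcal{M}$ and $\tD(p^*,p)\le \tD(p^*,p^*)+\overline{D}(p^*,p)$ with $p^*\in\mathcal{G}\subset\mathcal{M}$, and both are bounds on $|\tD(r,p^*)-\tD(r,p)|$ with $r\in\mathcal{M}$, so the new hypothesis substitutes cleanly; and you correctly observe that the generalized modulus condition is stated purely in terms of $\tD$ and is therefore unaffected. (Minor nit: the step using $r=p^*$ occurs where $\tD(p^*,p)$ is converted to $\tD(p^*,p^*)$, i.e.\ mid-chain rather than at the literal second-to-last display line, but this does not affect the argument.)
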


\begin{corollary}
Consider the case of $n = \infty$ in Theorem~\ref{thm.admissible-1}, we know that if $q=\argmin\{\tD(q, p) \mid q \in \cM\}$, $\theta^*(q) = \argmin_{\theta\in\Theta} L(q, \theta)$, then
\begin{align}
    L(p^*, \theta^*(q)) \leq \rho(2\epsilon).
\end{align}
\end{corollary}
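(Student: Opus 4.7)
The plan is to apply Theorem~\ref{thm.admissible-1} directly with the population-limit specialization $\hat{p}_n = p$, and observe that the extra slack $\overline{D}(p,\hat{p}_n)$ in the bound \eqref{eqn.ad1_robust} collapses to zero. The only content of the proof, beyond this specialization, is verifying that $\overline{D}(p,p) = 0$ so that the modulus argument yields exactly $\rho(2\epsilon)$.

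First, I would note that when $n = \infty$, the empirical distribution of the corrupted samples coincides with the corrupted population $p$, so we may set $\hat{p}_n = p$ throughout the hypotheses of Theorem~\ref{thm.admissible-1}. All other ingredients (the family $\mathcal{M} \supset \mathcal{G}$, the relaxed discrepancy $\tD$, the modulus bound~\eqref{eqn.modulus_general}) carry over verbatim, and the projection algorithm $q = \argmin_{q \in \cM}\tD(q,p)$ described in the corollary is exactly $\Pi(\hat{p}_n;\tD,\cM)$ with $\hat{p}_n = p$.

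Second, I would check that $\overline{D}(p,p) = 0$. From the \textbf{Robust to perturbation} hypothesis, $\overline{D}(p,p) \geq \sup_{p_1 \in \cM}|\tD(p_1,p) - \tD(p_1,p)| = 0$ is not what we need; rather, we exploit the upper bound $\overline{D}(p_2,p_3) \leq D(p_2,p_3)$, so that $\overline{D}(p,p) \leq D(p,p) = 0$ since $D$ is a pseudometric. (Equivalently, $\overline{D}$ can be taken without loss of generality to be itself a pseudometric.) Substituting into Theorem~\ref{thm.admissible-1} gives
\begin{equation*}
    L(p^*, \theta^*(q)) \leq \rho(2\epsilon + 2\overline{D}(p,p)) = \rho(2\epsilon),
\end{equation*}
which is the desired conclusion.

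There is essentially no obstacle here: the corollary is a one-line specialization and the main technical work is already encapsulated in Theorem~\ref{thm.admissible-1}. The only subtlety worth flagging is the observation that the generalized modulus in \eqref{eqn.modulus_general} reduces in this regime to the ordinary modulus $\sup_{p_1^* \in \cM,\, p_2^* \in \GG,\, \tD(p_1^*,p_2^*) \leq 2\epsilon} L(p_2^*,\theta^*(p_1^*))$, which matches the projection-algorithm bound of Lemma~\ref{lemma.population_limit} in spirit and confirms consistency of the framework with the classical minimum-distance functional analysis of~\citet{donoho1988automatic}.
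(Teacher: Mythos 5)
Your proof is correct and is the natural (indeed essentially unique) way to derive the corollary; the paper itself gives no explicit proof of this statement, treating it as an immediate specialization of Theorem~\ref{thm.admissible-1}. The only load-bearing observation is $\overline{D}(p,p)=0$, which you correctly derive from the sandwich condition $\overline{D}\leq D$ together with $D(p,p)=0$ — the latter holding because in every instance of the framework $D$ is a pseudometric (as also assumed in Lemma~\ref{lemma.population_limit}), even though Theorem~\ref{thm.admissible-1} does not restate that hypothesis. One small imprecision worth flagging: your parenthetical ``equivalently, $\overline{D}$ can be taken to be a pseudometric'' is not literally a hypothesis of Theorem~\ref{thm.admissible-1} (only the adaptive version Theorem~\ref{thm.admissible-1-adaptive} requires this), and your final remark that the generalized modulus~\eqref{eqn.modulus_general} reduces to the constraint $\tD(p_1^*,p_2^*)\leq 2\epsilon$ implicitly uses $\tD(p_2^*,p_2^*)=0$, which Theorem~\ref{thm.admissible-1} does not assume (Proposition~\ref{prop.any_q_suffices_oblivious} adds it separately). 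Neither of these asides affects the validity of the proof, since you only need $\overline{D}(p,p)=0$.
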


\begin{remark}
Setting $\tD = \overline{D} = \TV$ and $\mathcal{M} = \mathcal{G}^{\TV}(\rho_1,\rho_2, \tilde{\epsilon})$ in Theorem~\ref{thm.admissible-1} leads to the following bound~(Theorem~\ref{thm.G_fundamental_limit})
\begin{align}
    L(p^*, \theta^*(q)) \leq \rho_2(\rho_1(\tilde{\epsilon}), \tilde{\epsilon}),
\end{align}
where $\tilde{\epsilon} = 2\epsilon + 2\TV(p, \hat p_n)$. However, it would easily be a very loose bound if the contaminated distribution $p$ is a continuous distribution since in this case $\TV(p, \hat p_n) =1$ almost surely. To fully utilize the power of Theorem~\ref{thm.admissible-1}, one needs to design $\tD$ and $\overline{D}$ such that $\overline{D}(p, \hat{p}_n)$ vanishes fast enough. 
\end{remark}

Next theorem discusses the ``find $p^*$'' approach for adaptive corruption. 

\begin{theorem}[Find $p^*$ (Adaptive Corruption)]\label{thm.admissible-1-adaptive}
Assume the adaptive corruption model of level $\epsilon$ under $D$ (Definition~\ref{def.adaptivecont}). 
Denote the true distribution as $p^* \in \GG$ and the corresponding empirical distribution as $\hat p_n^*$. Denote the cost function as $L(p^*, \theta)$ and the empirical distribution of observed data as $\hat{p}_n$. We further assume the following conditions: 
\begin{enumerate}
    \item \textbf{Robust to perturbation:} there exists a pseudometric $\overline{D}$ such that for any $p_2, p_3$, 
    \begin{align}
      \sup_{p_1\in\cM} |\tD(p_1, p_2) - \tD(p_1, p_3) | \leq \overline{D}(p_2, p_3) \leq D(p_2, p_3). 
    \end{align}
      \item \textbf{Closeness between empirical distributions:}  $D(\hat p_n^*, \hat p_n) \leq \tilde \epsilon_\delta$ with probability at least $1-\delta$.
    \item \textbf{Generalized Modulus of Continuity:}  $\mathcal{M} \supset \GG$, and for $\tilde{\epsilon} = 2\tilde \epsilon_\delta + 2\overline{D}(p^*, \hat{p}_n^*)$, we have
    \begin{align}
    \sup_{p_1^*\in \cM, p_2^* \in \GG, \tD(p_1^*, p_2^*) - \tD(p_2^*, p_2^*) \leq \tilde{\epsilon}} L(p_2^*, \theta^*(p_1^*)) \leq \rho(\tilde{\epsilon}).
    \end{align}
\end{enumerate}
 
Then with probability at least $1-\delta$, the projection algorithm $ q=\Pi(\hat p_n; \tD, \cM)$, $\theta^*(q) = \argmin_{\theta\in\Theta} L(q, \theta)$ satisfies
\begin{align}
    L(p^*, \theta^*(q)) \leq \rho(2\tilde \epsilon_\delta + 2\overline{D}(p^*, \hat{p}_n^*)).
\end{align}
\end{theorem}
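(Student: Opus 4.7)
The plan is to closely parallel the proof of Theorem~\ref{thm.admissible-1}, with the key structural change that the triangle chain must now route through the \emph{clean empirical} distribution $\hat p_n^*$ rather than through a clean population distribution $p$. The adaptive model does not give us a population-level $p$ with $D(p^*,p)\le\epsilon$; instead, the adversary operates directly on $\hat p_n^*$, so the only control we have on the gap between the observation $\hat p_n$ and something clean is the high-probability bound $D(\hat p_n^*,\hat p_n)\le\tilde\epsilon_\delta$ provided by condition~2.

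First I would fix the high-probability event $\{D(\hat p_n^*,\hat p_n)\le\tilde\epsilon_\delta\}$, which by condition~2 has probability at least $1-\delta$; the rest of the argument is deterministic on this event. Since $p^*\in\GG\subset\cM$, the projection $q=\Pi(\hat p_n;\tD,\cM)$ satisfies $\tD(q,\hat p_n)\le\tD(p^*,\hat p_n)$. Now I would expand $\tD(q,p^*)-\tD(p^*,p^*)$ by repeated application of the robust-to-perturbation property (which turns changes in one argument of $\tD$ into $\overline D$ between those two distributions):
\begin{align*}
\tD(q,p^*)-\tD(p^*,p^*)
&\le \tD(q,\hat p_n^*)+\overline D(p^*,\hat p_n^*)-\tD(p^*,p^*)\\
&\le \tD(q,\hat p_n)+\overline D(\hat p_n^*,\hat p_n)+\overline D(p^*,\hat p_n^*)-\tD(p^*,p^*)\\
&\le \tD(p^*,\hat p_n)+\overline D(\hat p_n^*,\hat p_n)+\overline D(p^*,\hat p_n^*)-\tD(p^*,p^*)\\
&\le \tD(p^*,\hat p_n^*)+2\overline D(\hat p_n^*,\hat p_n)+\overline D(p^*,\hat p_n^*)-\tD(p^*,p^*)\\
&\le \tD(p^*,p^*)+2\overline D(p^*,\hat p_n^*)+2\overline D(\hat p_n^*,\hat p_n)-\tD(p^*,p^*)\\
&= 2\overline D(p^*,\hat p_n^*)+2\overline D(\hat p_n^*,\hat p_n).
\end{align*}
Using $\overline D\le D$ and the event above, the second term is at most $2\tilde\epsilon_\delta$, so $\tD(q,p^*)-\tD(p^*,p^*)\le 2\overline D(p^*,\hat p_n^*)+2\tilde\epsilon_\delta=\tilde\epsilon$.

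Finally, since $q\in\cM$ and $p^*\in\GG$, I would invoke condition~3 (generalized modulus of continuity) with the pair $(p_1^*,p_2^*)=(q,p^*)$ to conclude $L(p^*,\theta^*(q))\le\rho(\tilde\epsilon)=\rho(2\tilde\epsilon_\delta+2\overline D(p^*,\hat p_n^*))$, which is exactly the desired bound. The main (only) subtlety is the step $\tD(q,\hat p_n)\le\tD(p^*,\hat p_n)$: it requires $p^*\in\cM$, which is given by $\GG\subset\cM$ in condition~3, and that $q$ is an actual minimizer. If one instead wants an ``any feasible $q$'' version analogous to Proposition~\ref{prop.any_q_suffices_oblivious}, one would replace that inequality by $\tD(q,\hat p_n)\le\epsilon+\tilde\epsilon_\delta$ and proceed identically. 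No new analytic difficulty arises beyond carefully inserting $\hat p_n^*$ as the bridge point, which is why the argument is essentially a one-to-one adaptation of Theorem~\ref{thm.admissible-1}.
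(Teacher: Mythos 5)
Your proof is correct and matches the paper's argument in structure: the paper's proof uses the identical triangle chain $\tD(q,p^*)\le\tD(q,\hat p_n)+\overline D(\hat p_n,p^*)$, then $\tD(q,\hat p_n)\le\tD(p^*,\hat p_n)$ (since $p^*\in\GG\subset\cM$), then $\tD(p^*,\hat p_n)\le\tD(p^*,p^*)+\overline D(\hat p_n,p^*)$, and finally splits $\overline D(\hat p_n,p^*)\le\overline D(\hat p_n,\hat p_n^*)+\overline D(\hat p_n^*,p^*)$ via the pseudometric triangle inequality for $\overline D$. You instead apply the robust-to-perturbation property twice more to route through $\hat p_n^*$ explicitly rather than invoking the triangle inequality for $\overline D$ at the end, but this is a trivial rearrangement yielding the same final bound $2\overline D(p^*,\hat p_n^*)+2\overline D(\hat p_n^*,\hat p_n)\le\tilde\epsilon$.
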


\begin{proof}
By the `robust to perturbation' property of $D$, we have
\begin{align}
\tD(q,p^*) -  \tD(p^*,p^*) & \leq \tD(q, \hat p_n) + \overline{D}(\hat p_n, p^*) -  \tD(p^*,p^*) \nonumber \\
& \leq \tD(p^*, \hat p_n) + \overline{D}(\hat p_n, p^*) -  \tD(p^*,p^*)  \nonumber \\
& \leq \tD(p^*, p^*) + 2\overline{D}(\hat p_n, p^*) -  \tD(p^*,p^*) \nonumber \\
& \leq 2\overline{D}(\hat p_n, \hat p^*_n) + 2\overline{D}(\hat p_n^*, p^*) \nonumber \\
& \leq 2D(\hat p_n, \hat p^*_n) + 2\overline{D}(\hat p_n^*, p^*) \nonumber \\ 
& \leq 2\tilde \epsilon_\delta + 2\overline{D}(p^*, \hat p_n^*)
\end{align}
with probability at least $1-\delta$. 

We also know that $q\in \cM$. 
Hence, by the generalized modulus of continuity property: 
\begin{align}
L(p^*, \theta^*(q))\leq 
\sup_{p_1^*\in \cM, p_2^* \in \GG, \tD(p_1^*, p_2^*) - \tD(p^*_2,p^*_2) \leq 2\tilde \epsilon_\delta + 2\overline{D}(p, \hat p_n) } L(p_2^*, \theta^*(p_1^*)) \leq \rho(2\tilde \epsilon_\delta + 2\overline{D}(p^*, \hat p_n^*)),
\end{align}
we can derive the conclusion.
\end{proof}

Theorem~\ref{thm.admissible-1} is summarized in the Figure~\ref{fig:oblivious}.
\begin{figure}[H]
    \centering
    \begin{tikzpicture}
    \node (p-star) at (-8, 0) {$p^*$};
    \node[above=0.15em of p-star] (p-star-label) {{\footnotesize true distribution $\in \GG$}};
    \node (p-tilde) at (-4, 0) {$p$};
    \node[above=0.15em of p-tilde] (p-tilde-label) {{\footnotesize corrupted distribution}};
    \draw (p-star) edge[->] node[above]{{\footnotesize $D(p^*, p) \leq \epsilon$}} (p-tilde);
    \node (samples) at (0, 0) {$\hat p_n$};
    \node[above=0.1em of samples] (samples-label) {{\footnotesize empirical distribution}};
    \draw (p-tilde) edge[->] (samples);
    \node (q) at (4, 0) {$q$};
    \node[above=0.1em of q] (q-label) {{\footnotesize projected $q\in \mathcal{M}$}};
    \draw (samples) edge[->] node[above]{\footnotesize projection} (q);
    \node (result) at (-2, -4.4) {$L(p^*, \theta^*(q))\leq \cdots$};
    \node (theta-hat) at (4, -4.4) {$\theta^*(q)$};
    \node[above=0.1em of theta-hat] (theta-hat-label) {{\footnotesize estimated parameters}};
    \draw (q)  edge[->]   (result);
    \draw (q)  edge[->] (theta-hat-label);
    \draw (p-star) edge[->]   (result);
    \draw (theta-hat) edge[->]  (result);
    \node[above=2.0em of result] (result-label) {{\footnotesize modulus of continuity }};
    \node[above=4.0em of result] (result-label2) {{\footnotesize $\tD(q, p^*) - \tD(p^*, p^*) \leq 2\epsilon + 2\overline{D}(p, \hat p_n)$ }};
    \end{tikzpicture}
    \caption{Framework for analysis in Theorem~\ref{thm.admissible-1}. }
    \label{fig:oblivious}
\end{figure}
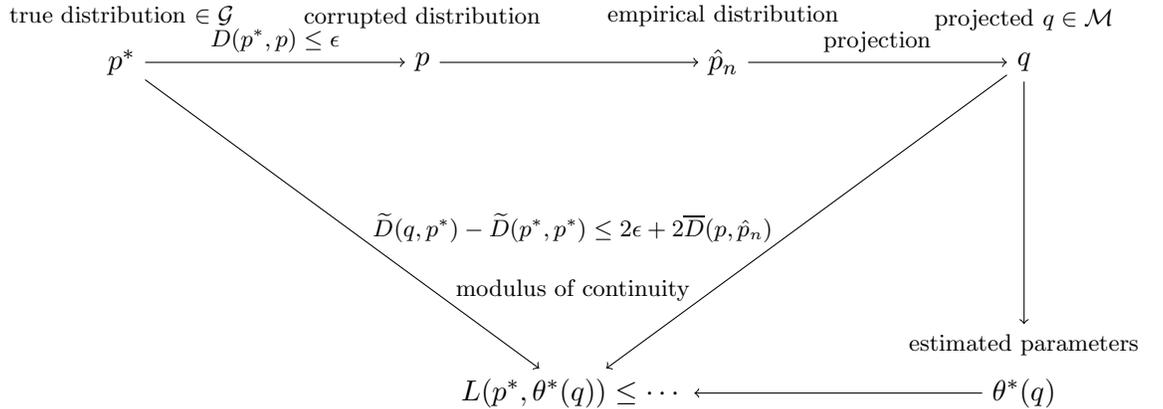

\subsection{Find $\hat p_n^*$}

One motivation for finding $\hat p_n^*$ instead of $p^*$ in the projection is that if $p^*$ is a continuous distribution and we use $\TV$ projection, then $\TV(p^*, \hat p_n)$ is always one, but $\TV(\hat p_n^*, \hat p_n)$ is small.

\begin{theorem}[Find $\hat p_n^*$]\label{thm.admissible-2}
Assume either oblivious corruption or adaptive corruption model of level $\epsilon$ under $D$. Denote the true distribution as $p^* \in \GG$, $\hat{p}_n^*$ as the empirical distribution sampled from $p^*$ and $\hat p_n$ as the empirical distribution of observed data. Denote the cost function as $L(p^*,\theta)$. Assume the following conditions hold:

\begin{enumerate}
     \item \textbf{Robust to perturbation:} there exists a function $\overline{D}$ such that for any $p_2, p_3$, 
    \begin{align}
      \sup_{p_1\in\cM} |\tD(p_1, p_2) - \tD(p_1, p_3) | \leq  \overline{D}(p_2, p_3). 
    \end{align}
    \item \textbf{Limited corruption:} $\overline{D}(\hat p_n, \hat p_n^*) \leq \epsilon_2$ with probability at least $1-\delta$.
     \item \textbf{Set for (perturbed) empirical distribution:} there exists a set $\GG' \subset \mathcal{M}$ such that there exists a distribution $\hat p'\in \GG'$ satisfying $\overline{D}(
    \hat p_n^*, \hat p')\leq \epsilon_1$ with probability at least $1-\delta$.
    \item \textbf{Generalized Modulus of Continuity:}  $\GG' \subset \mathcal{M}$, and for $\tilde{\epsilon} = 2(\epsilon_1 + \epsilon_2)$, we have
    \begin{align}
    \sup_{p_1^*\in \mathcal{M}, p_2^* \in \GG', \tD(p_1^*, p_2^*) - \tD(p_2^*, p_2^*)\leq \tilde{\epsilon}} L(p_2^*, \theta^*(p_1^*)) \leq \rho(\tilde{\epsilon}),
    \end{align}
    \item \textbf{Generalization bound:} for any $p^*\in \GG, \theta$, there exists some constant $C$ and some function $g$ such that $L(p^*, \theta) \leq C\cdot L(\hat p', \theta)+   g(\hat p', p^*) $. 
\end{enumerate}

 Then with probability at least $1-2\delta$, projection algorithm $ q=\Pi(\hat p_n; \tD, \cM), \theta^*(q) = \argmin_{\theta} L(q, \theta)$ satisfies
\begin{align}\label{eqn.admissible2_bound}
     L(p^*, \theta^*(q)) \leq  C \rho(2\epsilon_1 + 2\epsilon_2) + g(\hat p', p^*).
\end{align}
\end{theorem}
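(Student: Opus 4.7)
The plan is to mirror the argument of Theorem~\ref{thm.admissible-1}, but with $\hat p'$ playing the role that $p^*$ played there. Since $q = \Pi(\hat p_n; \tD, \mathcal{M})$ sits in $\mathcal{M}$ and minimizes $\tD(\cdot, \hat p_n)$ over $\mathcal{M}$, while $\hat p' \in \GG' \subset \mathcal{M}$ is a competitor that is close to $\hat p_n^*$ and hence close to $\hat p_n$, I expect to obtain a chain of inequalities yielding $\tD(q, \hat p') - \tD(\hat p', \hat p') \leq 2\overline{D}(\hat p_n, \hat p')$, from which the generalized modulus of continuity and the generalization bound will finish the job.

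Concretely, I would first bound $\overline{D}(\hat p_n, \hat p')$. Under conditions 2 and 3, which hold simultaneously with probability at least $1-2\delta$ by a union bound, a triangle-type inequality for $\overline{D}$ (analogous to the pseudometric assumption in Theorem~\ref{thm.admissible-1-adaptive}) gives $\overline{D}(\hat p_n, \hat p') \leq \overline{D}(\hat p_n, \hat p_n^*) + \overline{D}(\hat p_n^*, \hat p') \leq \epsilon_1 + \epsilon_2$. I would then chain
\begin{align*}
\tD(q, \hat p') &\leq \tD(q, \hat p_n) + \overline{D}(\hat p_n, \hat p') \\
 &\leq \tD(\hat p', \hat p_n) + \overline{D}(\hat p_n, \hat p') \\
 &\leq \tD(\hat p', \hat p') + 2\,\overline{D}(\hat p_n, \hat p'),
\end{align*}
where the first and third steps invoke robust-to-perturbation with $q, \hat p' \in \mathcal{M}$ respectively, and the middle step uses that $q$ minimizes $\tD(\cdot, \hat p_n)$ over $\mathcal{M}$ together with $\hat p' \in \mathcal{M}$. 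Rearranging yields $\tD(q, \hat p') - \tD(\hat p', \hat p') \leq 2(\epsilon_1 + \epsilon_2)$, which is precisely the $\tilde \epsilon$ appearing in condition 4.

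At this point the generalized modulus of continuity, applied with $p_1^* = q \in \mathcal{M}$ and $p_2^* = \hat p' \in \GG'$, immediately gives $L(\hat p', \theta^*(q)) \leq \rho(2\epsilon_1 + 2\epsilon_2)$. The generalization bound (condition 5) then transfers the surrogate-empirical loss back to the population loss on $p^*$:
$$L(p^*, \theta^*(q)) \leq C\cdot L(\hat p', \theta^*(q)) + g(\hat p', p^*) \leq C\rho(2\epsilon_1 + 2\epsilon_2) + g(\hat p', p^*),$$
which is the desired bound~\eqref{eqn.admissible2_bound}.

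The main conceptual move, and likely the principal obstacle, is the decision to use $\hat p'$ rather than $p^*$ as the comparator inside the projection argument: this is exactly what allows the expand-the-set strategy to succeed even when $p^*$ is far from $\hat p_n$ under $\tD$ (e.g.\ when $\tD = \TV$ and $p^*$ is continuous, so that $\tD(p^*,\hat p_n) = 1$ almost surely, rendering the Theorem~\ref{thm.admissible-1} route vacuous). A secondary, mostly bookkeeping issue is verifying that $\overline{D}$ admits a triangle-type inequality strong enough to combine conditions 2 and 3; in practice this is inherited from the pseudometric structure of the distances from which $\overline{D}$ is constructed, and I would either state it as an implicit hypothesis on $\overline{D}$ (as done in Theorem~\ref{thm.admissible-1-adaptive}) or require conditions 2 and 3 to be stated jointly as $\overline{D}(\hat p_n, \hat p') \leq \epsilon_1 + \epsilon_2$.
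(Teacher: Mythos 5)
Your outline is close to the paper's proof, but the key intermediate step has a gap that the paper's argument is carefully designed to avoid. You bound $\tD(q,\hat p') - \tD(\hat p',\hat p')$ by $2\,\overline{D}(\hat p_n, \hat p')$, and then assert $\overline{D}(\hat p_n, \hat p') \le \overline{D}(\hat p_n, \hat p_n^*) + \overline{D}(\hat p_n^*, \hat p') \le \epsilon_1 + \epsilon_2$. This requires $\overline{D}$ to satisfy a triangle inequality, which is not among the stated hypotheses of Theorem~\ref{thm.admissible-2}: here $\overline{D}$ is only assumed to be a \emph{function} with the robust-to-perturbation property, whereas Theorem~\ref{thm.admissible-1-adaptive} explicitly assumes $\overline{D}$ is a pseudometric. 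You flag this issue yourself, but your proposed fixes (strengthen the hypothesis, or restate conditions 2 and 3 jointly) change the theorem rather than prove it.

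The paper sidesteps the issue by never composing two $\overline{D}$-bounds. Instead of the three-step chain $\tD(q, \hat p') \to \tD(q, \hat p_n) \to \tD(\hat p', \hat p_n) \to \tD(\hat p', \hat p')$, which converts $\overline{D}(\hat p_n, \hat p')$ into the budget, the paper uses the five-step chain
\begin{align*}
\tD(q, \hat p') \;\to\; \tD(q, \hat p_n^*) \;\to\; \tD(q, \hat p_n) \;\to\; \tD(\hat p', \hat p_n) \;\to\; \tD(\hat p', \hat p_n^*) \;\to\; \tD(\hat p', \hat p'),
\end{align*}
with minimality of $q$ in the middle and the robust-to-perturbation inequality applied four times, each time changing only the \emph{second} argument by one hop and always with the first argument in $\cM$. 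Each hop incurs either $\overline{D}(\hat p', \hat p_n^*) \le \epsilon_1$ or $\overline{D}(\hat p_n, \hat p_n^*) \le \epsilon_2$ directly from conditions 2 and 3, totaling $2\epsilon_1 + 2\epsilon_2$ without ever needing $\overline{D}(\hat p_n, \hat p')$ or any triangle inequality. Once the gap is closed this way, the remainder of your argument (modulus of continuity applied to $(q,\hat p')$, then the generalization bound) matches the paper.
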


\begin{remark}
In the mean estimation setting where $L(p,\theta) = \| \mathbb{E}_p[X] - \theta\|_2$, a generalization bound may be of the form $C = 1$, 
\begin{align}
    g(\hat{p}', p^*) = \| \mathbb{E}_{\hat{p}'}[X] - \mathbb{E}_{p^*}[X] \|_2,
\end{align}
which can be shown using the triangle inequality. 
\end{remark}

\begin{proof}
It follows from the assumptions that with probability at least $1-2\delta$, there exists $\hat p' \in \GG', \overline{D}(\hat p', \hat p_n^*)\leq \epsilon_1, \overline{D}(\hat p_n^*, \hat p_n)\leq \epsilon_2$. Then, 
\begin{align}
  \tD(q, \hat p') - \tD(\hat p', \hat p') & \leq    \tD(q, \hat p_n^*) + \overline{D}(\hat p', \hat p_n^*)  - \tD(\hat p', \hat p') \nonumber \\
  & \leq    \tD(q, \hat p_n^*) + \epsilon_1  - \tD(\hat p', \hat p') \nonumber \\
  & \leq    \tD(q, \hat p_n) + \overline{D}(\hat p_n, \hat p_n^*)  + \epsilon_1  - \tD(\hat p', \hat p') \nonumber \\
      & \leq \tD(q, \hat{p}_n)   + \epsilon_1 + \epsilon_2 - \tD(\hat p', \hat p') \nonumber \\
    & \leq \tD(\hat p', \hat{p}_n)   + \epsilon_1 + \epsilon_2 - \tD(\hat p', \hat p') \nonumber \\
    & \leq \tD(\hat p', \hat{p}_n^*)   + \epsilon_1 + 2\epsilon_2 - \tD(\hat p', \hat p') \nonumber \\
& \leq \tD(\hat p', \hat p') + 2 \epsilon_1 + 2\epsilon_2 - \tD(\hat p', \hat p') \nonumber \\
& = 2 \epsilon_1 + 2\epsilon_2.
\end{align}

By the modulus of continuity condition and $q\in \mathcal{M}$, we know that with probability at least $1-2\delta$, we have
\begin{align}
     L(\hat p', \theta^*(q)) \leq \sup_{p_1^*\in \mathcal{M}, p_2^* \in \GG', \tD(p_1^*, p_2^*) - \tD(p_2^*, p_2^*)   \leq 2\epsilon_1 + 2\epsilon_2} L(p_2^*, p_1^*)  \leq \rho(2\epsilon_1 + 2\epsilon_2).
\end{align}

By the generalization bound condition, we have with probability at least $1-2 \delta$,
\begin{align}
    L(p^*, \theta^*(q)) & \leq C \rho(2\epsilon_1 + 2\epsilon_2) + g(\hat{p}', p^*). 
\end{align}

\end{proof}

\begin{proposition}[Any $q$ suffices, not just the minimizer]
Assume the conditions in Theorem~\ref{thm.admissible-2} and further $\tD(p,p) = 0$ for any $p$. Then, it follows from the proof of Theorem~\ref{thm.admissible-2} that for any $q\in \cM$ such that
\begin{align}\label{eqn.qingeneral2}
    \tD(q,\hat{p}_n) \leq \epsilon_1 + \epsilon_2,
\end{align}
we have
\begin{align}
    L(p^*, \theta^*(q)) \leq C \rho(2\epsilon_1 + 2\epsilon_2) + g(\hat{p}', p^*),
\end{align}
and the the existence of $q$ satisfying~(\ref{eqn.qingeneral2}) happens with probability at least $1-2\delta$.  
\end{proposition}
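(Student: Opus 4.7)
The plan is to show that the proof of Theorem~\ref{thm.admissible-2} never really uses the global minimality of $q$; it only uses the single inequality $\tD(q,\hat p_n)\le \tD(\hat p',\hat p_n)$ at one step. Once that substitution is observed, the bound (\ref{eqn.admissible2_bound}) drops out verbatim, so the work reduces to (i) pointing to the exact line in the proof of Theorem~\ref{thm.admissible-2} where minimality was invoked and rewriting that line, and (ii) producing one witness $q\in\cM$ satisfying (\ref{eqn.qingeneral2}) so that the statement is non-vacuous with the promised probability.

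For (i), I would retrace the chain of inequalities in the proof of Theorem~\ref{thm.admissible-2} bounding $\tD(q,\hat p')-\tD(\hat p',\hat p')$. Every step except one is just repeated use of the ``robust to perturbation'' property and the high-probability bounds $\overline D(\hat p_n,\hat p_n^*)\le \epsilon_2$ and $\overline D(\hat p_n^*,\hat p')\le \epsilon_1$, which do not depend on how $q$ was obtained. The single step that invokes the projection is replacing $\tD(q,\hat p_n)$ by $\tD(\hat p',\hat p_n)$; I would replace it instead by the assumed bound $\tD(q,\hat p_n)\le \epsilon_1+\epsilon_2$, which plugs in cleanly since $\tD(\hat p',\hat p')=0$ is assumed. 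The modulus-of-continuity step and the generalization step then apply unchanged because they only require $q\in\cM$ and the inequality $\tD(q,\hat p')-\tD(\hat p',\hat p')\le 2(\epsilon_1+\epsilon_2)$, yielding (\ref{eqn.admissible2_bound}) with probability at least $1-2\delta$.

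For (ii), the natural candidate is $q=\hat p'$ itself. Since $\GG'\subset\cM$ we have $\hat p'\in\cM$. Using $\tD(\hat p',\hat p')=0$ together with two applications of the ``robust to perturbation'' property,
\begin{align*}
\tD(\hat p',\hat p_n)\ &\le\ \tD(\hat p',\hat p_n^*)+\overline D(\hat p_n^*,\hat p_n)\\
&\le\ \tD(\hat p',\hat p')+\overline D(\hat p',\hat p_n^*)+\overline D(\hat p_n^*,\hat p_n)\ \le\ \epsilon_1+\epsilon_2
\end{align*}
on the intersection of the two high-probability events from Theorem~\ref{thm.admissible-2}, which by a union bound has probability at least $1-2\delta$. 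Hence (\ref{eqn.qingeneral2}) is realizable on that same event.

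There is no real obstacle here; the entire statement is essentially a book-keeping observation extracted from the proof of Theorem~\ref{thm.admissible-2}. The only mildly delicate point is that the witness argument and the finite-sample error bound need to be controlled on the \emph{same} $1-2\delta$ event (the intersection of the events $\{\overline D(\hat p_n,\hat p_n^*)\le\epsilon_2\}$ and $\{\exists\,\hat p'\in\GG':\overline D(\hat p_n^*,\hat p')\le\epsilon_1\}$), so I would be careful to couple the existence claim and the error claim through this common event rather than applying two separate union bounds.
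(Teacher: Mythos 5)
Your proposal is correct and matches exactly what the paper has in mind: the paper gives no explicit proof, stating only that the claim ``follows from the proof of Theorem~\ref{thm.admissible-2},'' and you have correctly identified that the sole place minimality of $q$ is invoked in that proof is the replacement $\tD(q,\hat p_n)\le \tD(\hat p',\hat p_n)$, which the hypothesis $\tD(q,\hat p_n)\le\epsilon_1+\epsilon_2$ together with $\tD(\hat p',\hat p')=0$ substitutes for directly, reproducing the bound $\tD(q,\hat p')-\tD(\hat p',\hat p')\le 2\epsilon_1+2\epsilon_2$ and hence the conclusion via the modulus-of-continuity and generalization steps unchanged. Your witness argument with $q=\hat p'$ on the same $1-2\delta$ event is likewise the intended one.
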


Theorem~\ref{thm.admissible-2} is summarized in Figure~\ref{fig:coupling}.

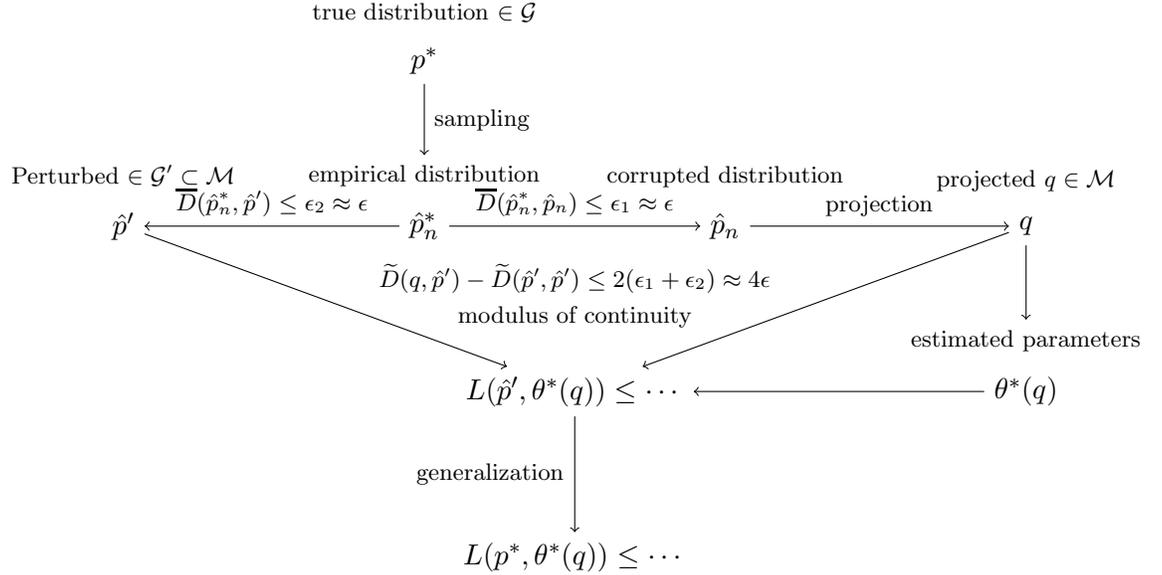
\begin{figure}[H]
    \centering
    \begin{tikzpicture}
    \node (p-star) at (-4, 0) {$p^*$};
    \node[above=0.2em of p-star] (p-star-label) {{\footnotesize true distribution $\in \GG$}};
    \node (p-tilde) at (-4, -2.2) {$\hat p^*_n$};
    \node[above=0.2em of p-tilde] (p-tilde-label) {{\footnotesize empirical distribution}};
    \draw (p-star) edge[->] node[right]{{\footnotesize sampling}} (p-tilde-label);
    \node (samples) at (0, -2.2) {$\hat p_n$};
    \node[above=0.2em of samples] (samples-label) {{\footnotesize corrupted distribution}};
    \draw (p-tilde) edge[->]  node[above]{{\footnotesize $\overline{D}(\hat p^*_n, \hat p_n)\leq \epsilon_1 \approx \epsilon$}} (samples);
    \node (perturbed) at (-8, -2.2) {$\hat p'$};
    \node[above=0.2em of perturbed] (perturbed-label) {{\footnotesize  Perturbed $\in \GG'\subset \MM$}};
    \draw (p-tilde) edge[->] node[above]{{\footnotesize $\overline{D}(\hat p_n^*, \hat p')\leq \epsilon_2 \approx \epsilon$}} (perturbed);
    \node (q) at (4, -2.2) {$q$};
    \node[above=0.2em of q] (q-label) {{\footnotesize projected $q\in \mathcal{M}$}};
    \draw (samples) edge[->] node[above]{\footnotesize projection} (q);
    \node (theta-hat) at (4, -4.4) {$\theta^*(q)$};
    \node[above=0.2em of theta-hat] (theta-hat-label) {{\footnotesize estimated parameters}};
    \draw (q)  edge[->] (theta-hat-label);
    \node (error) at (-2, -4.4) {$L(\hat p', \theta^*(q))\leq \cdots$};
    \draw (theta-hat) edge[->]    (error);
    \draw (perturbed) edge[->] (error);
     \draw (q) edge[->] node[above]{\footnotesize  } (error);
     \node (result) at (-2, -6.6) {$L(p^*, \theta^*(q))\leq \cdots$};
     \draw (error) edge[->] node[left]{\footnotesize generalization} (result);
     \node[above=1.0em of error] (result-label1) {{\footnotesize modulus of continuity}};  
     \node[above=2.2em of error] (result-label2) {{\footnotesize $\tD(q, \hat p') - \tD(\hat p', \hat p')  \leq 2(\epsilon_1 + \epsilon_2) \approx 4\epsilon$}};  
     \end{tikzpicture}
    \caption{Framework for analysis in Theorem~\ref{thm.admissible-2}. }
    \label{fig:coupling}
\end{figure}

\section{Connections with robust optimization and agnostic distribution learning}
\label{sec.connection_dro}

\subsection{Distributionally robust optimization (DRO)}
We provided two approaches to analyze the finite sample projection algorithm $q = \Pi(\hat p_n; \tD, \cM)$: ``Find $p^*$'' in Theorem~\ref{thm.admissible-1}, Theorem~\ref{thm.admissible-1-adaptive} and ``Find $\hat p_n^*$'' in Theorem~\ref{thm.admissible-2}. In this section, we build the connections between our projection algorithms and distributionally robust optimization (DRO). 

We first show that under appropriate conditions (Theorem~\ref{thm.droadmissible1}), the projection algorithm is approximately solving the following distributionally robust optimization problem, and any approximate solution of the DRO below produces a good robust estimate of $\theta$. 

\begin{align}\label{eqn.DRO1}
    \hat \theta = \argmin_{\hat \theta(\hat p_n)} \sup_{r: r\in\GG, 
    \tD(r, \hat{p}_n)\leq \epsilon + \tD(p, \hat p_n)} L(r, \hat \theta(\hat p_n))
\end{align}

Note that here we use $r$ to denote the dummy variable in the DRO. 

\begin{theorem}\label{thm.droadmissible1}
Assume the oblivious contamination model of level $\epsilon$ under $D$. Denote the true distribution as $p^* \in \GG$ and the perturbed population distribution as $p$ with $D(p,p^*)\leq \epsilon$. Denote the cost function as $L(p^*,\theta)$ and the empirical distribution of observed data as $\hat p_n$. Assume the following conditions. 
\begin{enumerate}
    \item \textbf{Robust to perturbation:} $\tD(p,q)$ is a pseudometric that satisfies
    \begin{align}
        \tD(p,q) \leq D(p,q). 
    \end{align}
    \item \textbf{Generalized Modulus of Continuity:}  there exists a set $\mathcal{M} \supset \GG$ and for $\tilde{\epsilon} = 2\epsilon + 2 \tD(p, \hat p_n)$, we have
    \begin{align}
    \sup_{p_1^*\in \cM, p_2^* \in \GG, \tD(p_1^*, p_2^*)  \leq \tilde{\epsilon}} L(p_2^*, \theta^*(p_1^*)) \leq \rho(\tilde{\epsilon}).
    \end{align}
\end{enumerate}

Then, 
\begin{enumerate}   
    \item \emph{Any approximate solution of DRO suffices:} for any $\theta, \rho$, if
    \begin{align}
   \sup_{r: r\in\GG, 
    \tD(r, \hat{p}_n) \leq \epsilon + \tD(p, \hat p_n)} L(r, \theta) \leq \rho,
    \end{align}
    then
    \begin{align}
        L(p^*, \theta) \leq  \rho. 
    \end{align}
    \item \emph{Projection approximately solves DRO:} for $q=\Pi(\hat p_n;\tD, \cM, \epsilon + \tD(p, \hat p_n))$, $\theta^*(q) = \argmin_{\theta\in\Theta} L(q, \theta)$, there is 
\begin{align}
\sup_{r: r\in\GG, 
    \tD(r, \hat{p}_n) \leq \epsilon + \tD(p, \hat p_n)} L(r, \theta^*(q))  \leq  \rho(2\epsilon + 2\tD(p, \hat p_n)).
\end{align}
\end{enumerate}
\end{theorem}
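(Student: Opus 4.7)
The plan is to reduce both claims to direct applications of the pseudometric triangle inequality for $\tD$, combined with the generalized modulus assumption. The structure is completely parallel to that of Theorem~\ref{thm.admissible-1}: in part~1 we certify that $p^*$ sits inside the DRO feasible region, while in part~2 we certify that the projected $q$ forces every feasible $r$ to remain within the modulus window around $q$.

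For part~1, I would first show that $p^*$ is a feasible point for the inner supremum. By hypothesis $p^* \in \GG$, and the pseudometric triangle inequality for $\tD$ together with $\tD(p^*, p) \leq D(p^*, p) \leq \epsilon$ gives
\begin{equation*}
\tD(p^*, \hat p_n) \;\leq\; \tD(p^*, p) + \tD(p, \hat p_n) \;\leq\; \epsilon + \tD(p, \hat p_n).
\end{equation*}
Therefore $p^*$ belongs to the DRO constraint set, so $L(p^*, \theta) \leq \sup_r L(r, \theta) \leq \rho$, which is the conclusion.

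For part~2, the definition of $q = \Pi(\hat p_n; \tD, \cM, \epsilon + \tD(p, \hat p_n))$ ensures $q \in \cM$ with $\tD(q, \hat p_n) \leq \epsilon + \tD(p, \hat p_n)$. Fix any $r$ feasible for the DRO, i.e.\ $r \in \GG$ and $\tD(r, \hat p_n) \leq \epsilon + \tD(p, \hat p_n)$. A second application of the triangle inequality yields
\begin{equation*}
\tD(q, r) \;\leq\; \tD(q, \hat p_n) + \tD(\hat p_n, r) \;\leq\; 2\epsilon + 2\tD(p, \hat p_n) \;=\; \tilde \epsilon.
\end{equation*}
Since $q \in \cM$, $r \in \GG$, and $\tD(q, r) \leq \tilde\epsilon$, the generalized modulus of continuity hypothesis (applied with $p_1^* = q$, $p_2^* = r$) immediately gives $L(r, \theta^*(q)) \leq \rho(\tilde\epsilon)$. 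Taking the supremum over all feasible $r$ produces the claimed bound $\rho(2\epsilon + 2\tD(p, \hat p_n))$.

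I do not anticipate any real obstacle here: the proof is a short two-line chase once the pseudometric property, the relation $\tD \leq D$, and the modulus hypothesis are lined up. The only minor subtlety worth checking is the alignment of quantifiers: the modulus hypothesis in this theorem is stated with constraint $\tD(p_1^*, p_2^*) \leq \tilde\epsilon$, whereas Theorem~\ref{thm.admissible-1} uses $\tD(p_1^*, p_2^*) - \tD(p_2^*, p_2^*) \leq \tilde\epsilon$; because $\tD$ is a pseudometric, $\tD(p_2^*, p_2^*) = 0$ and the two formulations coincide, so Theorem~\ref{thm.admissible-1}'s modulus machinery can be invoked verbatim.
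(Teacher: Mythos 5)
Your proof is correct and follows essentially the same two-step argument as the paper's own: first certify that $p^*$ lies in the DRO feasible set via the triangle inequality and $\tD \leq D$, then chain two triangle inequalities to bound $\tD(q, r) \leq \tilde\epsilon$ for any feasible $r$ and invoke the modulus hypothesis. Your closing remark on the quantifier alignment with Theorem~\ref{thm.admissible-1} is a correct observation, though not needed since the modulus hypothesis of the present theorem is applied directly.
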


\begin{remark}
In practice, we need to know $\tilde D(p, \hat p_n)$ for the proposed DRO while we do not have the access to corrupted population distribution $p$. Thus we need to upper bound $\tilde D(p, \hat p_n)$ such that with probability at least $1-\delta$, $\bar D(p, \hat p_n) \leq d_n(\delta)$ for some $d_n(\delta)$. Then we can search over the larger $(\epsilon + d_n(\delta))$-ball and give the same guarantee.
\end{remark}

\begin{proof}
Regarding the first claim, it suffices to show that $\tD(p^*, \hat p_n) \leq \epsilon + \tD(p, \hat{p}_n)$. It is true since $\tD$ is a pseudometric and $\tD \leq D$:
\begin{align}
    \tD(p^*, \hat p_n) & \leq \tD(p^*, p) + \tD(p, \hat{p}_n) \\
    & \leq D(p^*, p) + \tD(p, \hat{p}_n) \\
    & \leq \epsilon + \tD(p, \hat{p}_n).
\end{align}

Now we verify the second claim. In order to apply the Generalized Modulus of Continuity property, it suffices to show that for any $r$ such that $r\in \GG, \tD(r, \hat p_n)  \leq \epsilon + \tD(p, \hat{p}_n)$ and $q \in \mathcal{M} \supset \GG$ being the output of the projection algorithm, we have
\begin{align}
    \tD(q,r) \leq 2\epsilon + 2 \tD(p, \hat{p}_n). 
\end{align}

Note that $q$ is the output of the projection algorithm implies either $\tD(q, \hat{p}_n) \leq \epsilon + \tD(p, \hat{p}_n)$ or $\tD(q, \hat{p}_n) \leq  \tD(r, \hat{p}_n) \leq \epsilon + \tD(p,\hat{p}_n)$. Hence, 
\begin{align}
    \tD(q,r) & \leq \tD(q, \hat{p}_n) + \tD(r, \hat{p}_n) \\
    & \leq 2 (\epsilon + \tD(p,\hat{p}_n)). 
\end{align}

\end{proof}

The above theorem shows that solving DRO in Equation (\ref{eqn.DRO1}) will also provide robustness guarantee. We remark that for $\GG$ as generalized resilience set, we can solve the following DRO:
\begin{align}
    \hat \theta = \argmin_{\hat \theta(\hat p_n)} \sup_{p^*: p^*\in\GG_{\uparrow}, 
    \tD(p^*, \hat{p}_n)  \leq \epsilon + \tD(p, \hat p_n)} L( p^*, \hat \theta(\hat p_n))
\end{align}

It is interesting to note that when solving DRO, we only need to consider the $p^*$ inside $\GG_{\uparrow} $. Meanwhile for projection algorithm, it suffices to project $\hat p_n$ onto $\GG_{\downarrow}$.

We now show that similar interpretations can also be made for the DRO formulation below. 
\begin{align}
    \hat{\theta} = \argmin_{\hat{\theta}(\hat{p}_n)} \sup_{r: r\in \GG', \tD(r, \hat{p}_n)  \leq \epsilon_1 + \epsilon_2} L(r, \hat{\theta}(\hat{p}_n))
\end{align}

\begin{theorem}\label{eqn.droadmissible2}
Assume either oblivious contamination or adaptive contamination model of level $\epsilon$ under $D$. Denote the true distribution as $p^* \in \GG$, $\hat{p}_n^*$ as the empirical distribution sampled from $p^*$ and $\hat p_n$ as the empirical distribution of observed data. Denote the cost function as $L(p^*,\theta)$. Assume the following conditions. 
\begin{enumerate}
     \item \textbf{Project via pseudometric:} $\tD(p,q)$ is a pseudometric. 
    \item \textbf{Limited contamination:} $\tD(\hat p_n, \hat p_n^*) \leq \epsilon_2$ with probability at least $1-\delta$.
     \item \textbf{Set for (perturbed) empirical distribution:} there exists a set $\GG' \subset \mathcal{M}$ such that there exists a distribution $\hat p'\in \GG'$ satisfying $\tD(
    \hat p_n^*, \hat p')\leq \epsilon_1$ with probability at least $1-\delta$.
    \item \textbf{Generalized Modulus of Continuity:}  $\GG' \subset \mathcal{M}$, and for $\tilde{\epsilon} = 2(\epsilon_1 + \epsilon_2)$, there is
    \begin{align}
    \sup_{p_1^*\in \mathcal{M}, p_2^* \in \GG', \tD(p_1^*, p_2^*) \leq \tilde{\epsilon}} L(p_2^*, \theta^*(p_1^*)) \leq \rho(\tilde{\epsilon}),
    \end{align}
    \item \textbf{Generalization bound:} for any $p^*\in \GG, \theta \in \Theta$, there exists some constant $C$ and some function $g$ such that $L(p^*, \theta) \leq C\cdot L(\hat p', \theta)+   g(\hat p', p^*) $. 
\end{enumerate}
Then, 
\begin{enumerate}
    \item \emph{Any approximate solution of DRO suffices:} for any $\theta, \rho$, if 
    \begin{align}
       \sup_{r: r\in \GG', \tD(r, \hat{p}_n) \leq \epsilon_1 + \epsilon_2} L(r, \theta) \leq \rho,
    \end{align}
    then, with probability at least $1-2\delta$, 
    \begin{align}
        L(p^*, \theta) \leq C\rho + g(\hat{p}',p^*). 
    \end{align}
    \item \emph{Projection approximately solves DRO:} for $q = \Pi(\hat{p}_n;\tD,\mathcal{M}, \epsilon_1 + \epsilon_2), \theta^*(q) = \argmin_{\theta \in \Theta}L(q,\theta)$, 
    \begin{align}
         \sup_{r: r\in \GG', \tD(r, \hat{p}_n)  \leq \epsilon_1 + \epsilon_2} L(r, \theta^*(q)) & \leq  \rho(2\epsilon_1 + 2\epsilon_2). 
    \end{align}
\end{enumerate}
\end{theorem}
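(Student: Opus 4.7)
The plan is to mirror the structure of the proof of Theorem~\ref{thm.droadmissible1}, but route everything through the perturbed empirical witness $\hat{p}'$ rather than $p^*$. The central fact I will establish once and reuse for both claims is that, with probability at least $1-2\delta$, the distribution $\hat{p}'$ belongs to the DRO uncertainty set $\{r \in \GG' : \tD(r,\hat{p}_n) \leq \epsilon_1 + \epsilon_2\}$. For this, I combine the three high-probability events from the assumptions via union bound: $\hat{p}' \in \GG'$ and $\tD(\hat{p}_n^*, \hat{p}') \leq \epsilon_1$ hold together with probability at least $1-\delta$ (they are the content of the same hypothesis), and $\tD(\hat{p}_n, \hat{p}_n^*) \leq \epsilon_2$ holds with probability at least $1-\delta$. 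On their intersection, the triangle inequality for the pseudometric $\tD$ gives
\begin{equation*}
\tD(\hat{p}',\hat{p}_n) \;\leq\; \tD(\hat{p}',\hat{p}_n^*) + \tD(\hat{p}_n^*,\hat{p}_n) \;\leq\; \epsilon_1 + \epsilon_2,
\end{equation*}
so $\hat{p}'$ is feasible for the DRO.

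For claim~1, I would instantiate the DRO hypothesis at $r = \hat{p}'$ to obtain $L(\hat{p}',\theta) \leq \rho$, and then invoke the generalization bound $L(p^*,\theta) \leq C \cdot L(\hat{p}',\theta) + g(\hat{p}',p^*)$ to conclude $L(p^*,\theta) \leq C\rho + g(\hat{p}',p^*)$, all on the $(1-2\delta)$-probability event identified above.

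For claim~2, the same feasibility of $\hat{p}'$ ensures the projection $q = \Pi(\hat{p}_n;\tD,\mathcal{M},\epsilon_1+\epsilon_2)$ is well-defined, and by definition satisfies $\tD(q,\hat{p}_n) \leq \epsilon_1+\epsilon_2$ with $q \in \mathcal{M}$. For any feasible $r \in \GG'$ with $\tD(r,\hat{p}_n) \leq \epsilon_1 + \epsilon_2$, the triangle inequality yields $\tD(q,r) \leq 2(\epsilon_1+\epsilon_2)$. Since $q \in \mathcal{M}$ and $r \in \GG'$, the generalized modulus of continuity assumption immediately gives $L(r,\theta^*(q)) \leq \rho(2\epsilon_1+2\epsilon_2)$, and taking the supremum over $r$ completes the argument.

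I do not anticipate a genuinely hard step: the whole proof is essentially bookkeeping of triangle inequalities under $\tD$, combined with the pre-packaged modulus bound. The only mildly subtle point is confirming that the projection with threshold $\epsilon_1+\epsilon_2$ is realizable, which is why establishing $\hat{p}' \in \GG' \subset \mathcal{M}$ with $\tD(\hat{p}',\hat{p}_n) \leq \epsilon_1+\epsilon_2$ first is essential — it underpins both claims on the same $(1-2\delta)$-probability event, and it is what lets the DRO supremum and the projection feasibility reduce to the same deterministic inequality.
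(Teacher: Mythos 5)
Your proposal is correct and follows essentially the same route as the paper's own proof: both establish on a $(1-2\delta)$-probability event that $\hat{p}'$ is feasible for the DRO uncertainty set via the triangle inequality $\tD(\hat{p}',\hat{p}_n)\leq\tD(\hat{p}',\hat{p}_n^*)+\tD(\hat{p}_n^*,\hat{p}_n)\leq\epsilon_1+\epsilon_2$, then instantiate the DRO supremum at $r=\hat{p}'$ and apply the generalization bound for claim 1, and use $\tD(q,r)\leq\tD(q,\hat{p}_n)+\tD(\hat{p}_n,r)\leq 2(\epsilon_1+\epsilon_2)$ with the generalized modulus for claim 2. Your extra remark that feasibility of $\hat{p}'$ is what makes the threshold projection realizable is a correct and slightly more explicit reading than the paper, which glosses this by considering both the threshold and minimizer variants of $\Pi$, but the substance is identical.
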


\begin{proof}
We know that with probability at least $1-2\delta$, there exist $\hat p' \in \GG', \tD(\hat p', \hat p_n^*)\leq \epsilon_1, \tD(\hat p_n^*, \hat p_n)\leq \epsilon_2$. Hence
\begin{align}
    \tD(\hat{p}', \hat{p}_n) & \leq \tD(\hat{p}', \hat p_n^*) + \tD(\hat{p}_n^*, \hat{p}_n)\\
    & \leq \epsilon_1 + \epsilon_2. 
\end{align}

Hence, we know $L(\hat{p}',\theta)\leq \rho$, and it follows from the generalization bound that 
\begin{align}
    L(p^*,\theta) \leq C \rho + g(\hat{p}', p^*). 
\end{align}

Now we prove the second claim using the Generalized Modulus of Continuity property. It suffices to show that for any $r\in \GG', \tD(r,\hat{p}_n) \leq \epsilon_1 + \epsilon_2$, $q\in \mathcal{M} \supset \GG'$ being the output of the projection algorithm, we have
\begin{align}
    \tD(q,r) \leq 2\epsilon_1 + 2\epsilon_2. 
\end{align}

Note that $q$ is the output of the projection algorithm implies either $\tD(q, \hat{p}_n) \leq \epsilon_1 + \epsilon_2$ or $\tD(q, \hat{p}_n) \leq  \tD(r, \hat{p}_n) \leq \epsilon_1 + \epsilon_2$. Hence,
\begin{align}
    \tD(q,r) & \leq \tD(q, \hat{p}_n) + \tD(\hat{p}_n,r) \\
    & \leq 2 (\epsilon_1 + \epsilon_2). 
\end{align}

\end{proof}

\subsubsection*{Connection to distributionally robust optimization  }

Distributionally robust optimization (DRO) solves the min-max problem
\begin{equation}
\label{eq:dro-intro} \inf_{\theta} \sup_{q: \TV(q, p) \leq \epsilon} L(q; \theta).
\end{equation}
This is similar to our setting but omits the assumptions $\GG$ (or equivalently, takes $\GG$ to be all probability distributions). 
Consequently, \eqref{eq:dro-intro} is in many cases not defined. For instance, when  $L$ is any unbounded loss, 
the supremum is infinite for all $\theta$. As a consequence, DRO typically considers bounded loss functions~\citep{duchi2018learning}, replaces $\TV$ with some $f$ divergences that only allow a small family of perturbations 
\citep{delage2010distributionally,namkoong2016stochastic}, or takes $L$ to be Lipschitz and replace the discrepancy measure with Wasserstein distance 
\citep{volpi2018generalizing}.

More conceptually, the optimal $q$ in \eqref{eq:dro-intro} will typically push outlying points to be 
even more outlying, and thus \emph{magnifies} the influence of outliers, which is counter to our goal of resisting the effects of 
corruptions. On the other hand, if we restrict $q$ in \eqref{eq:dro-intro} to lie in $\GG$, we do resist outliers and in fact 
this modified DRO is the min-max optimal estimator in infinite samples under our framework, underscoring the importance of the 
assumptions $\GG$. We analyze a projection estimator below rather than DRO as it more readily admits bounds especially in finite samples, 
but study a DRO-based estimator in Appendix~\ref{sec.connection_dro}.

\subsection{Connection with agnostic distribution learning}\label{sec.agnostic_connection}
Agnostic distribution learning ~\citep{yatracos1985rates,devroye2012combinatorial, chan2014efficient,acharya2017sample, zhu2019deconstructing} concerns finding the distribution $p^*\in \GG$ that is \emph{closest} to distribution $p$ when only empirical samples from $p$ are observed. The ``Yatracos'' method in agnostic learning constructs a distance $\tTV \leq \TV$ such that $\tTV(p,q) \approx \TV(p,q)$ for all $p,q \in \GG$ and then projects the empirical distribution $\hat p_n$ to $\GG$ under $\tTV$.  A key difference from our work is that lost function is also $\TV$ in agnostic learning.
For large sets $\GG$ such as the resilient set, there does not exist some $\tTV$ weaker than $\TV$ such that $\tTV(p,q) = \TV(p,q)$ for all $p,q\in \GG$. However, one can in fact apply the Yatracos method on top of our $\tTV_{\mathcal{H}}$, by 
taking the loss in agnostic learning to be $\tTV_\sH$. Furthermore, our design of $\tTV_\sH$ depends on the \emph{representation} of the set $\GG$, but the Yatracos method is independent of the set representation.

\section{General Lemmas and Facts}\label{sec.appendixgeneralemmas}
\subsection{Notations}
We first collect the notations used throughout this paper. 
We use capital letter $X$ for random variable, lowercase letter $p, q$ for population distribution, and the corresponding empirical distributions with $n$ samples are denoted as ${\hat p_n}, {\hat q_n}$. Blackbold letter $\bbP$ is used for probability, e.g. $\bbP_q(A)$ represents the probability of event $A$ under distribution $q$, and blackbold letter $\bbE$ is used for expectation. 
We denote $\mu_p = \bE_p[X]$ and $\Sigma_p = \bE_p[(X-\mu_p)(X-\mu_p)^\top]$ as mean and covariance for distribution $p$.
We use $\mathsf{TV}(p,q) = \sup_{A} \bbP_p(A) - \bbP_q(A)$ to denote the total variation distance between $p$ and $q$. We use $\triangleq$ to make definition. We denote $\min\{a,b\}$ by $a \wedge b$, and $\max\{a,b\}$ by $a \vee b$. For non-negative sequences $a_\gamma$, $b_\gamma$, we use the notation $a_\gamma \lesssim_{\alpha} b_\gamma$ to denote that there exists a constant $C$
that only depends on $\alpha$ such that $\sup_\gamma \frac{a_\gamma}{b_\gamma}\leq C$, and $a_\gamma \gtrsim_{\alpha} b_\gamma$  is equivalent to $b_\gamma \lesssim_{\alpha} a_\gamma$.  When the constant $C$ is universal
we do not write subscripts for $\lesssim$ and $\gtrsim$. Notation $a_\gamma\asymp b_\gamma$ is equivalent to $a_\gamma \gtrsim b_\gamma$ and $a_\gamma \lesssim b_\gamma$. Notation $a_\gamma \gg b_\gamma$ means that $\liminf_{\gamma}\frac{a_\gamma}{b_\gamma}=\infty$, and $a_\gamma \ll b_\gamma$ is equivalent to $b_\gamma \gg a_\gamma$. We write $f(x) = O(g(x))$ for $x \rightarrow 0$ if there exists some non-negative constant $C_1, C_2$ such that $|f(x)| \leq C_1 |g(x)| $ for all $x \in [0, C_2]$.  We write $f(x) = \Omega(g(x))$ for $x \rightarrow 0$ if there exists some non-negative constant $C_1, C_2$ such that $|f(x)| \geq C_1 |g(x)| $ for all $x \in [0, C_2]$. We write $f(x) = \Theta(g(x))$ if $f(x) = O(g(x))$ and $f(x)= \Omega(g(x))$. For any norm $\|\cdot\|$, we use $\| \cdot\|_*$ to denote its dual norm, which is defined as $\| z\|_* = \sup\{z^\top x \mid \|x\|\leq 1\}$.

For a pseudometric $c$, the Wasserstein distance is the minimum-cost matching between $p$ and $q$ 
according to $c$:

\begin{definition}[{$W_{c,k}$ distance~\citep[Theorem 7.3]{villani2003topics}}]\label{def.W_ck}
Suppose $c(x,y)$ is a pseudometric. The Wasserstein-$k$ transportation distance for $k\geq 0$ is defined as
\begin{align}\label{eqn.def_wc}
    W_{c, k}(p, q) = \begin{cases} 
    \inf_{\pi_{p, q} \in \Pi(p, q) } \left (\int c^k(x, y) d \pi_{p, q} (x, y)\right )^{1/k} & k\geq 1 \\  \inf_{\pi_{p, q} \in \Pi(p, q) } \left (\int c^k(x, y) d \pi_{p, q} (x, y)\right ) & k \in [0,1) \end{cases},
\end{align}
where $\Pi(p, q)$ denotes the set of all couplings between $p$ and $q$.
\end{definition}
If $c$ is a pseudometric, then so is $W_{c, k}$~\citep[Page 209]{villani2003topics}. If $c(x,y)$ is the Euclidean distance $\|x-y\|_2$, we usually omit the subscript $c$ and write $W_k$. 

\subsection{Lemmas and facts}

In this section we present some general lemmas and facts that we use throughout the paper. 

\begin{lemma}[Non-decreasing property of function 
$x\psi^{-1}(\sigma/x)$]\label{lem.psi_increasing}
For any Orlicz function $\psi$, the function $x\psi^{-1}(\sigma/x)$ is non-decreasing for $x$ for the region $x\in[0, +\infty)$ for any constant $\sigma>0$, where $\psi^{-1}$ is  the (generalized) inverse function of $\psi$. 
\end{lemma}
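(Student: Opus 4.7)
The plan is to reduce the claim, via the substitution $y=\sigma/x$, to showing that the auxiliary function $y\mapsto \psi^{-1}(y)/y$ is non-increasing on $(0,\infty)$. Observe that $x\psi^{-1}(\sigma/x)=\sigma\cdot\psi^{-1}(y)/y$ with $y=\sigma/x$, and as $x$ grows $y$ shrinks, so monotonicity of $f(x)=x\psi^{-1}(\sigma/x)$ in $x$ is equivalent to $\psi^{-1}(y)/y$ being non-increasing in $y$. The values $x=0$ and $y\to\infty$ can be handled separately since $f(0)=0$ and $\psi^{-1}(y)\to\infty$ slower than $y$ by the argument below.

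The workhorse is the elementary convexity fact: since $\psi$ is convex with $\psi(0)=0$, for any $0<\alpha\le 1$ and $t\ge 0$ we have $\psi(\alpha t)=\psi(\alpha t+(1-\alpha)\cdot 0)\le\alpha\psi(t)+(1-\alpha)\psi(0)=\alpha\psi(t)$. Now fix $0<y_1<y_2$ and set $t_2=\psi^{-1}(y_2)=\inf\{t:\psi(t)>y_2\}$; I will show
\begin{equation*}
\psi^{-1}(y_1)\ge\frac{y_1}{y_2}\,\psi^{-1}(y_2),
\end{equation*}
which rearranges to $\psi^{-1}(y_1)/y_1\ge\psi^{-1}(y_2)/y_2$, as required. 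To this end, pick any $s<(y_1/y_2)t_2$ and write $s=\alpha t_2$ with $\alpha<y_1/y_2$. By the definition of the generalized inverse and (left) continuity of the convex function $\psi$ on $(0,\infty)$, one has $\psi(t_2)\le y_2$. Combining with the convexity estimate yields $\psi(s)\le\alpha\psi(t_2)\le\alpha y_2<y_1$. Hence every $s<(y_1/y_2)t_2$ lies in the set $\{t:\psi(t)\le y_1\}$, which by the definition of $\psi^{-1}$ forces $\psi^{-1}(y_1)\ge(y_1/y_2)t_2$, establishing the desired inequality.

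Substituting back, for $0<x_1<x_2$ we set $y_i=\sigma/x_i$ so that $y_1>y_2$, and the inequality $\psi^{-1}(y_2)/y_2\le\psi^{-1}(y_1)/y_1$ becomes exactly $x_2\psi^{-1}(\sigma/x_2)\ge x_1\psi^{-1}(\sigma/x_1)$. The case $x_1=0$ is trivial since $f(0)=0$ and $f(x)\ge 0$ everywhere.

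The only mild subtlety is the use of the generalized inverse: one must be careful that $\psi(\psi^{-1}(y))\le y$, which is where continuity of the convex function $\psi$ on $(0,\infty)$ enters. If $\psi$ happens to have a jump at $0$ (the only place where a finite convex function can fail to be continuous on its domain), the boundary case can be checked separately using $\psi(0)=0$ and the monotonicity of $\psi^{-1}$. Beyond this bookkeeping the argument is entirely driven by the elementary convexity inequality $\psi(\alpha t)\le\alpha\psi(t)$ for $\alpha\in[0,1]$, so I do not anticipate any serious obstacle.
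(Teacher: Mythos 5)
Your proof is correct and rests on the same convexity fact the paper uses, namely that $\psi(0)=0$ and convexity give $\psi(\alpha t)\le\alpha\psi(t)$ for $\alpha\in[0,1]$, equivalently that $\psi(t)/t$ is non-decreasing. The paper packages this as the composition $\frac{x}{\sigma}\psi^{-1}(\sigma/x)=\bigl(t/\psi(t)\bigr)\circ\psi^{-1}(\sigma/x)$, which implicitly uses $\psi(\psi^{-1}(y))=y$; you instead work directly from the definition of the generalized inverse and only invoke $\psi(\psi^{-1}(y))\le y$, so your version is a bit more careful at that step, but it is otherwise the same argument.
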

\begin{proof}
Denote $t(x) = \psi^{-1}(\sigma/x) = \inf\{y \mid \psi(y) > \sigma/x\} $. Since $\psi(x)$ is non-decreasing, we know that $\psi^{-1}(\sigma/x)$ is a non-increasing function. Consider the function $\frac{\psi(t)}{t}$. From the property of convex functions, we know that for any $0<x_1<x_2$, 
\begin{align}
    \frac{\psi(x_1) - \psi(0)}{x_1} \leq     \frac{\psi(x_2) - \psi(0)}{x_2}. 
\end{align}
Thus we know $\frac{\psi(t)}{t}$ is an non-decreasing function. Thus the function $f(t) = \frac{t}{\psi(t)}$ is an non-increasing function. Since the function $\frac{x}{\sigma}\psi^{-1}(\sigma/x)$ is composition $(f \circ t)(x)$, we know that it is non-decreasing.
\end{proof}
\begin{lemma}[Generalized Holder's Inequality]\label{lem.multi_orlicz}
Define composition function as $(\psi \circ f) (x) = \psi(f(x))$. Given some Orlicz function $\psi$,  for any two random variables $X,Y$, any $p, q>0, 1/p+1/q = 1$,
\begin{align}
     \| XY \|_{\psi}\leq \|X \|_{\psi\circ x^p} \|Y\|_{\psi\circ x^q}. 
\end{align}
\end{lemma}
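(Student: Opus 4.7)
The plan is to mirror the classical proof of Hölder's inequality, but applied after composing with the Orlicz function $\psi$. Write $a = \|X\|_{\psi \circ x^p}$ and $b = \|Y\|_{\psi \circ x^q}$, and assume first that both are finite and positive (the degenerate cases $a = 0$ or $a = \infty$ are handled separately by standard arguments, and are essentially trivial). By the definition of the Orlicz norm and the fact that the infimum is attained in the limit, we have
\begin{equation*}
\mathbb{E}[\psi((|X|/a)^p)] \leq 1, \qquad \mathbb{E}[\psi((|Y|/b)^q)] \leq 1.
\end{equation*}
The goal is to show $\mathbb{E}[\psi(|XY|/(ab))] \leq 1$, from which the definition of $\|\cdot\|_\psi$ gives $\|XY\|_\psi \leq ab$.

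The key step is to apply Young's inequality pointwise to the non-negative random variables $|X|/a$ and $|Y|/b$:
\begin{equation*}
\frac{|XY|}{ab} \;=\; \frac{|X|}{a} \cdot \frac{|Y|}{b} \;\leq\; \frac{1}{p}\left(\frac{|X|}{a}\right)^p + \frac{1}{q}\left(\frac{|Y|}{b}\right)^q.
\end{equation*}
Then use the two defining properties of an Orlicz function listed in Section~\ref{subsec.definitions}: $\psi$ is non-decreasing, so the inequality is preserved when passing both sides through $\psi$; and $\psi$ is convex with $\psi(0)=0$, so for $1/p + 1/q = 1$ we get
\begin{equation*}
\psi\!\left(\tfrac{1}{p}(|X|/a)^p + \tfrac{1}{q}(|Y|/b)^q\right) \;\leq\; \tfrac{1}{p}\psi((|X|/a)^p) + \tfrac{1}{q}\psi((|Y|/b)^q).
\end{equation*}
Chaining these two bounds and taking expectations yields $\mathbb{E}[\psi(|XY|/(ab))] \leq \frac{1}{p} + \frac{1}{q} = 1$, which is what we need.

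There is no real technical obstacle; the only thing to be careful about is using convexity of $\psi$ in the form $\psi(\lambda u + (1-\lambda) v) \leq \lambda \psi(u) + (1-\lambda)\psi(v)$ (which requires $\lambda \in [0,1]$, satisfied since $1/p, 1/q \in [0,1]$ when $p, q \geq 1$; note that the requirement $1/p+1/q=1$ with $p,q>0$ forces both $\geq 1$). The edge cases $a=0$ (then $X=0$ a.s., so both sides vanish) and $a$ or $b$ infinite (bound is vacuous) can be dispensed with in one line each.
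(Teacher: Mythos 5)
Your proof is correct and follows essentially the same route as the paper: apply Young's inequality pointwise to the scaled variables $|X|/a$ and $|Y|/b$, pass through $\psi$ using monotonicity, then use convexity of $\psi$ (with $\psi(0)=0$ implicit) and take expectations to get $\le 1/p + 1/q = 1$. If anything you are slightly more careful than the paper, noting the degenerate cases $a=0$ or $a,b=\infty$ and the fact that the infimum in the Orlicz norm definition yields $\mathbb{E}[\psi((|X|/a)^p)] \le 1$ only after a limiting argument.
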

\begin{proof}
Denote $\|X \|_{\psi\circ x^p} = \sigma_1$, $\|Y\|_{\psi\circ x^q} = \sigma_2$. It follows from Young's inequality~\cite{young1912classes} that
\begin{align}
    |XY| \leq \frac{1}{p}|X|^p + \frac{1}{q}|Y|^q, \forall p, q >0, \frac{1}{p} + \frac{1}{q} = 1.
\end{align}
Thus 
\begin{align}
\bE \left[ \psi \left (\frac{|XY|}{\sigma_1\sigma_2}\right ) \right] &\leq\bE \left[  \psi \left (\frac{1}{p}\cdot \left |\frac{X}{\sigma_1}\right |^p+ \frac{1}{q}\cdot \left |\frac{Y}{\sigma_2}\right |^p \right )  \right] \nonumber \\
& \leq \bE \left[\frac{1}{p} \psi\left (\left |\frac{X}{\sigma_1}\right |^p\right ) \right] + \bE \left[\frac{1}{q} \psi\left (\left |\frac{X}{\sigma_2}\right |^q\right ) \right]\nonumber \\
& \leq \frac{1}{p}+ \frac{1}{q}  = 1.
 \end{align}
This shows that $ \| XY \|_{\psi}\leq \sigma_1 \sigma_2 $.
\end{proof}

The following lemma is a generalization of~\citep[Lemma 2.6.8]{vershynin2018high} and shows that if a distribution has its non-centered Orlicz norm bounded, then its centered Orlicz norm is also bounded.
\begin{lemma}[Centering]\label{lem.centering_psi}
For any Orlicz function $\psi$, then if  
\begin{align}
    \sup_{f\in\sF}\bE_p\left[\psi\left(\frac{|f(X) |}{\sigma}\right)\right]\leq 1,
\end{align}
then 
\begin{align}
     \sup_{f\in\sF}\bE_p\left[\psi\left(\frac{|f(X)-\bE_p[f(X)] |}{2\sigma}\right)\right]\leq 1.
\end{align}
\end{lemma}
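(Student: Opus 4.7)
The plan is a direct application of convexity of $\psi$ together with Jensen's inequality, which is the standard centering trick for Orlicz norms. Fix $f \in \sF$ and write $Y = f(X)$, $m = \bE_p[f(X)]$. By the triangle inequality $|Y - m| \leq |Y| + |m|$, and since $\psi$ is non-decreasing we have
\[
\psi\!\left(\frac{|Y-m|}{2\sigma}\right) \leq \psi\!\left(\frac{|Y|+|m|}{2\sigma}\right).
\]
Next I would invoke convexity of $\psi$ together with $\psi(0)=0$ to split the right-hand side: writing the argument as $\tfrac{1}{2}\cdot \tfrac{|Y|}{\sigma} + \tfrac{1}{2}\cdot \tfrac{|m|}{\sigma}$ and applying convexity gives
\[
\psi\!\left(\frac{|Y|+|m|}{2\sigma}\right) \leq \tfrac{1}{2}\psi\!\left(\frac{|Y|}{\sigma}\right) + \tfrac{1}{2}\psi\!\left(\frac{|m|}{\sigma}\right).
\]

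Taking expectations under $p$ yields
\[
\bE_p\!\left[\psi\!\left(\frac{|Y-m|}{2\sigma}\right)\right] \leq \tfrac{1}{2}\bE_p\!\left[\psi\!\left(\frac{|Y|}{\sigma}\right)\right] + \tfrac{1}{2}\psi\!\left(\frac{|m|}{\sigma}\right).
\]
The first term is at most $1/2$ by assumption. For the second term I would use $|m| = |\bE_p[f(X)]| \leq \bE_p[|f(X)|]$ together with the fact that $\psi$ is non-decreasing, and then apply Jensen's inequality (valid since $\psi$ is convex) to pull the expectation outside:
\[
\psi\!\left(\frac{|m|}{\sigma}\right) \leq \psi\!\left(\frac{\bE_p[|f(X)|]}{\sigma}\right) \leq \bE_p\!\left[\psi\!\left(\frac{|f(X)|}{\sigma}\right)\right] \leq 1.
\]
Combining these bounds gives the desired inequality, and taking the supremum over $f \in \sF$ completes the proof.

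There is no real obstacle here; the only mildly delicate point is making sure the chain $|\bE_p[f(X)]| \leq \bE_p[|f(X)|]$ followed by monotonicity then Jensen is invoked in the correct order, which is why the factor of $2$ in the denominator is essential (splitting $|Y|+|m|$ equally lets each half be bounded by $1$).
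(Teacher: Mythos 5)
Your proof is correct, and it takes a genuinely more elementary route than the paper's. The paper works in the Orlicz normed space: it invokes the triangle inequality for $\|\cdot\|_\psi$ to write $\|f(X)-\bE_p[f(X)]\|_\psi \le \|f(X)\|_\psi + \|\bE_p[f(X)]\|_\psi$, identifies $\|\bE_p[f(X)]\|_\psi = |\bE_p[f(X)]|/\psi^{-1}(1)$ for a constant, and then uses a Jensen argument to show $|\bE_p[f(X)]|/\psi^{-1}(1) \le \|f(X)\|_\psi \le \sigma$, concluding $\|f(X)-\bE_p[f(X)]\|_\psi \le 2\sigma$. You instead work directly with the defining integral inequality: you split the argument of $\psi$ with equal weights, apply convexity of $\psi$ once to get $\psi\bigl((|Y|+|m|)/(2\sigma)\bigr) \le \tfrac12\psi(|Y|/\sigma)+\tfrac12\psi(|m|/\sigma)$, and then bound the second half by a single application of Jensen. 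The two arguments are of course cousins — the Orlicz-norm triangle inequality is itself a convexity fact — but yours is self-contained and sidesteps both the (unproven in the paper) triangle inequality for $\|\cdot\|_\psi$ and the formula for the $\psi$-norm of a constant. The paper's phrasing in norm language fits the notation it reuses elsewhere, but as a standalone proof of this lemma your version is cleaner.
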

\begin{proof}

For some fixed $f$, note that $\|\cdot\|_\psi$ satisfies the triangle inequality. Thus
\begin{align}
    \| f(X) - \bE_p[f(X)]\|_\psi \leq \| f(X) \|_\psi + \| \bE_p[f(X)]\|_\psi \leq \sigma + \| \bE_p[f(X)]\|_\psi \leq \sigma + |\bE_p[f(X)]|/\psi^{-1}(1),
\end{align}
where $\psi^{-1}$ is  the (generalized) inverse function of $\psi$. 
Now we show that $|\bE_p[f(X)]|/\psi^{-1}(1) \leq \| f(X)\|_\psi = \sigma$.
By Jensen's inequality, we have
\begin{align}
    \bE_p\left[\psi\left(\frac{\psi^{-1}(1)|f(X) |}{\bE_p[f(X)]}\right)\right] \geq \psi\left(\frac{\psi^{-1}(1)|\bE_p\left[f(X) \right]|}{\bE_p[f(X)]}\right) = \psi(\psi^{-1}(1)) = 1. 
\end{align}
This shows that $|\bE_p[f(X)]|/\psi^{-1}(1) \leq \| f(X)\|_\psi = \sigma$. Thus $ \| f(X) - \bE_p[f(X)]\|_\psi \leq 2\sigma$.
\end{proof}

\begin{lemma}[Convergence of mean under 2-norm for distribution with bounded second moment]\label{lem.kth_convergence}
Assume distribution $p$ has its second moment bounded, i.e.
\begin{align}
    \sup_{v\in\bR^d, \|v\|_2=1}\bE_{p}\left[|v^\top (X-\bE_p[X])|^2\right]& \leq \sigma^2.
\end{align}
Then
\begin{align}
    \bE_{p}  \left \| \frac{1}{n} \sum_{i = 1}^n  X_i - \mathbb{E}_{p}[X] \right \|_2 & \leq \sigma \sqrt{\frac{d}{n}}.
\end{align}
\end{lemma}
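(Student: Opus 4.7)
The plan is to apply Jensen's inequality to pull the expectation inside a square root, then expand the squared $\ell_2$-norm coordinate-wise to reduce the problem to a trace bound on the covariance matrix. Specifically, writing $\bar X = \frac{1}{n}\sum_{i=1}^n X_i$ and $\mu = \bE_p[X]$, Jensen's inequality yields
\begin{equation*}
\bE_p \|\bar X - \mu\|_2 \leq \sqrt{\bE_p \|\bar X - \mu\|_2^2}.
\end{equation*}
So the goal reduces to showing $\bE_p \|\bar X - \mu\|_2^2 \leq \sigma^2 d / n$.

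Next I would expand the squared norm as $\|\bar X - \mu\|_2^2 = \sum_{j=1}^d (\bar X_j - \mu_j)^2$ and use the fact that the $X_i$ are i.i.d., so $\bE_p[(\bar X_j - \mu_j)^2] = \frac{1}{n}\Var_p(X_j) = \frac{1}{n} (\Sigma_p)_{jj}$, where $\Sigma_p = \bE_p[(X-\mu)(X-\mu)^\top]$. Summing over $j$ gives
\begin{equation*}
\bE_p \|\bar X - \mu\|_2^2 = \frac{1}{n}\trace(\Sigma_p).
\end{equation*}

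Finally, the bounded second moment assumption $\sup_{\|v\|_2 = 1} \bE_p[|v^\top(X-\mu)|^2] \leq \sigma^2$ is exactly the statement $\|\Sigma_p\|_2 \leq \sigma^2$ (operator norm), which implies every eigenvalue of the positive semidefinite matrix $\Sigma_p$ is at most $\sigma^2$, and hence $\trace(\Sigma_p) \leq d\sigma^2$. Combining these three steps gives $\bE_p\|\bar X - \mu\|_2 \leq \sqrt{d\sigma^2/n} = \sigma\sqrt{d/n}$, as desired. There is no real obstacle here; the lemma is a routine consequence of Jensen's inequality plus the trace-versus-operator-norm bound, and all three steps are standard.
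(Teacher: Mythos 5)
Your proof is correct and follows essentially the same route as the paper: Jensen's inequality, then identifying $\bE_p\|\bar X - \mu\|_2^2$ with $\frac{1}{n}\trace(\Sigma_p)$ via independence, then bounding the trace by $d$ times the operator norm. The paper phrases the middle step through the trace of the expected outer product rather than a coordinate-wise sum, but this is a cosmetic difference; the argument is the same.
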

\begin{proof}
By Jensen's inequality,
\begin{align}
    \bE_{p}  \left \| \frac{1}{n} \sum_{i = 1}^n  X_i - \mathbb{E}_{p}[X] \right \|_2
    & \leq  \left(\bE_{p}  \left \| \frac{1}{n} \sum_{i = 1}^n  X_i - \mathbb{E}_{p}[X] \right \|_2^2\right)^{1/2} \nonumber \\
    & =  \left( \bE_{p} \mathsf{Tr} \left(\left(\frac{1}{n} \sum_{i = 1}^n  X_i - \mathbb{E}_{p}[X]\right)\left(\frac{1}{n} \sum_{i = 1}^n  X_i - \mathbb{E}_{p}[X]\right)^\top \right) \right)^{1/2} \nonumber \\
    & =  \mathsf{Tr} \left( \bE_{p}  \left[\left(\frac{1}{n} \sum_{i = 1}^n  X_i - \mathbb{E}_{p}[X]\right)\left(\frac{1}{n} \sum_{i = 1}^n  X_i - \mathbb{E}_{p}[X]\right)^\top \right] \right)^{1/2} \nonumber \\
    & \leq \sqrt{d} \left\|\bE_{p}  \left[\left(\frac{1}{n} \sum_{i = 1}^n  X_i - \mathbb{E}_{p}[X]\right)\left(\frac{1}{n} \sum_{i = 1}^n  X_i - \mathbb{E}_{p}[X]\right)^\top \right] \right\|_2^{1/2} \nonumber \\
    & = \sqrt{\frac{d}{n}} \|\bE_p[(X-\bE_p[X])(X-\bE_p[X])^\top] \|_2^{1/2} \nonumber \\
    & \leq \sigma \sqrt{\frac{d}{n}}.
\end{align}

\end{proof}

\section{Related discussions and remaining proofs in Section~\ref{sec.preliminaries}}

\subsection{Characterization of adaptive corruptions}\label{proof.lem_coupling}

The following lemma is useful for controlling the behavior of adaptive corruptions:
\begin{lemma}\label{lem.coupling}
Suppose $\hat p_n \mid \hat{p}_n^*$ is an allowed adaptive corruption with level $\epsilon$ under 
$D = W_{c,k},k\geq 1$ (Definition~\ref{def.W_ck}). Then
the perturbed and true empirical distribution are close in expectation:
    \begin{align}\label{eqn.cou2}
          \bE[W_{c, k}^k(\hat p_n, \hat p^*_n)]& \leq \epsilon^k.
    \end{align}
If additionally $0\leq c(x, y)\leq C$ for all $x, y$, then with probability at least $1-\delta$, 
    \begin{align}\label{eqn.cou3}
    W_{c, k}(\hat p_n, \hat p^*_n)\lesssim \left ( \epsilon^k + \frac{C^k\log(1/\delta)}{n} \right )^{\frac{1}{k}}.
\end{align}
\end{lemma}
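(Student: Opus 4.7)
\textbf{Proof proposal for Lemma~\ref{lem.coupling}.} My plan is to reduce both claims to a straightforward tail analysis of an \iid sum, by exploiting the identity coupling hidden inside the definition of adaptive corruption. By Definition~\ref{def.adaptivecont}, there is a fixed coupling $\pi_{X,Y}$ with $\pi_X = p^*$ and $W_{c,k}(\pi_X,\pi_Y)\leq \epsilon$, and \iid draws $(X_i,Y_i)\sim\pi_{X,Y}$, such that $W_{c,k}(\hat p_n^*,\hat p_n)$ is stochastically dominated by $W_{c,k}\bigl(\tfrac{1}{n}\sum_i\delta_{X_i},\tfrac{1}{n}\sum_i\delta_{Y_i}\bigr)$. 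Raising to the $k$-th power is monotone on $[0,\infty)$, so stochastic dominance is preserved. Moreover, plugging the pairwise identity coupling $(X_i,Y_i)$ into the definition of $W_{c,k}$ gives the key deterministic bound
\begin{equation*}
W_{c,k}^{k}\Bigl(\tfrac{1}{n}\sum_i\delta_{X_i},\tfrac{1}{n}\sum_i\delta_{Y_i}\Bigr)\;\leq\;\frac{1}{n}\sum_{i=1}^{n} c^{k}(X_i,Y_i).
\end{equation*}
Thus the entire problem reduces to controlling $S_n \triangleq \tfrac{1}{n}\sum_i Z_i$ where $Z_i\triangleq c^{k}(X_i,Y_i)$ are \iid nonnegative with $\bE[Z_i]=\bE_{\pi}[c^{k}(X,Y)]\leq \epsilon^{k}$ by the definition of $W_{c,k}$.

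For \eqref{eqn.cou2}, I would just take expectations across the chain: using that stochastic dominance implies ordered expectations for nonnegative random variables,
\begin{equation*}
\bE\bigl[W_{c,k}^{k}(\hat p_n,\hat p_n^*)\bigr]\;\leq\;\bE[S_n]\;=\;\bE_{\pi}[c^{k}(X,Y)]\;\leq\;\epsilon^{k},
\end{equation*}
which gives \eqref{eqn.cou2} immediately.

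For the high-probability bound \eqref{eqn.cou3} under the additional assumption $c(x,y)\leq C$, the variables $Z_i$ lie in $[0,C^{k}]$ and satisfy the variance estimate $\Var(Z_i)\leq \bE[Z_i^{2}]\leq C^{k}\bE[Z_i]\leq C^{k}\epsilon^{k}$. I would then invoke Bernstein's inequality to conclude that, with probability at least $1-\delta$,
\begin{equation*}
S_n\;\leq\;\epsilon^{k}+O\!\left(\sqrt{\tfrac{C^{k}\epsilon^{k}\log(1/\delta)}{n}}\right)+O\!\left(\tfrac{C^{k}\log(1/\delta)}{n}\right),
\end{equation*}
and then apply AM--GM to the middle term, $\sqrt{C^{k}\epsilon^{k}\log(1/\delta)/n}\leq \tfrac{1}{2}\bigl(\epsilon^{k}+C^{k}\log(1/\delta)/n\bigr)$, so that $S_n\lesssim \epsilon^{k}+C^{k}\log(1/\delta)/n$. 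Combining this with the stochastic-dominance reduction and taking $k$-th roots yields \eqref{eqn.cou3}.

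I do not expect a serious obstacle: the only nontrivial step is remembering to use Bernstein (rather than Hoeffding) so that the variance proxy $C^{k}\epsilon^{k}$ produces the claimed $C^{k}\log(1/\delta)/n$ rate instead of the weaker $C^{k}\sqrt{\log(1/\delta)/n}$ one would get by naive bounding. The only thing to double-check is that the stochastic-dominance assumption in Definition~\ref{def.adaptivecont} is interpreted for the random variable $W_{c,k}(\hat p_n^*,\hat p_n)$ itself (which is what the statement literally says), so that both the expectation bound and any tail bound for the dominating variable transfer to the dominated one.
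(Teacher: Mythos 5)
Your proposal is correct and follows essentially the same route as the paper: plug the identity (paired-sample) coupling into the definition of $W_{c,k}$ to get the deterministic bound $W_{c,k}^{k}\leq\frac{1}{n}\sum_i c^{k}(X_i,Y_i)$, take expectations for the first claim, and apply Bernstein with the variance proxy $C^{k}\epsilon^{k}$ for the second. The one place where you are slightly more careful than the paper's write-up is in explicitly invoking the stochastic-dominance clause of Definition~\ref{def.adaptivecont} (together with monotonicity of $x\mapsto x^{k}$) to transfer the expectation and tail bounds from the dominating variable $W_{c,k}\bigl(\frac{1}{n}\sum_i\delta_{X_i},\frac{1}{n}\sum_i\delta_{Y_i}\bigr)$ to $W_{c,k}(\hat p_n^*,\hat p_n)$; the paper simply identifies $\hat p_n^*$, $\hat p_n$ with those two empiricals and proceeds, which is the same idea but less spelled out.
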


\begin{proof}
Construct a coupling between $\hat p_n^* = \frac{1}{n} \sum_{i = 1}^n \delta_{X_i}$ and $\hat p_n = \frac{1}{n}\sum_{i = 1}^n \delta_{Y_i}$ as
\begin{align}
    \lambda_{\hat p_n^*, \hat p_n} = \frac{1}{n} \sum_{i = 1}^n \delta_{(X_i,Y_i)}. 
\end{align}

Then, it follows from the definition of $W_{c,k}(\hat p_n^*, \hat p_n)$ that
\begin{align}
    \left (W_{c,k}(\hat p_n^*, \hat p_n)\right )^k \leq \frac{1}{n} \sum_{i = 1}^n c^k(X_i,Y_i),
\end{align}
which proves the first two claims. Regarding the third part of the lemma, noting that $ |c^k(X,Y)| \leq C^k$ and $\mathbb{E}_{\pi}[c^{2k}(X,Y)] \leq C^k \epsilon^k$ and applying Bernstein's inequality, we have
\begin{align}
    \bP\left( \frac{1}{n}\sum_{i = 1}^n c^k(X_i,Y_i) - \bE_{\pi}[c^k(X,Y)] \geq t^k \right ) \leq \exp\left (  - \frac{n^2 t^{2k}}{2 \left( n C^k \epsilon^k + \frac{C^k n t^k}{3}  \right)} \right). 
\end{align}

It implies that with probability at least $1-\delta$, 
\begin{align}
    \frac{1}{n}\sum_{i = 1}^n c^k(X_i,Y_i) & \leq \epsilon^k + \max\left\{ \frac{4C^k \ln\left ( \frac{1}{\delta}\right)}{3n}, \sqrt{\frac{4C^k \epsilon^k \ln\left ( \frac{1}{\delta}\right) }{n}} \right \} \\
    & \lesssim \epsilon^k + \frac{C^k \ln\left ( \frac{1}{\delta}\right)}{n}. 
\end{align}
\end{proof}
Furthermore, when $W_{c, k} = \TV$, i.e. $c(x,y) = \mathbbm{1}(x\neq y), k = 1$, we can provide a stronger bound. 

\begin{lemma}\label{lem.coupling_TV}
For any $\hat p_n$ generated by adaptive corruption model with level $\epsilon$ under $D = \TV$, with probability at least $1-\delta$, we have
\begin{align}
    \TV(\hat p_n, \hat p^*_n)\leq \begin{cases} (\sqrt{\epsilon} + \sqrt{\frac{\log(1/\delta)}{2n}})^2 & \text{ for all }\epsilon \in [0,1],\delta\in (0,1],n\geq 1\\ 0 & \delta \geq 1-(1-\epsilon)^n \end{cases},
\end{align}
where $\hat{p}_n^*$ is the empirical distribution of $n$ \iid samples from the true distribution $p^*$. 
\end{lemma}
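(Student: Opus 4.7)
The plan is to reduce the bound on $\TV(\hat p_n, \hat p_n^*)$ to a binomial tail estimate and then apply the variance-sensitive Chernoff bound (Okamoto's inequality); this is what recovers the sharper $(\sqrt{\epsilon} + \sqrt{\log(1/\delta)/(2n)})^2$ rate rather than the additive Hoeffding rate $\epsilon + \sqrt{\log(1/\delta)/(2n)}$.

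The reduction itself is straightforward. By Definition~\ref{def.adaptivecont}, and exactly as used in the proof of Lemma~\ref{lem.coupling}, there is a coupling $\pi_{X,Y}$ with $\pi_X = p^*$ and $\bP_\pi(X \neq Y) \leq \epsilon$ under which $\TV(\hat p_n^*, \hat p_n)$ is stochastically dominated by $\TV(\frac{1}{n}\sum_i \delta_{X_i}, \frac{1}{n}\sum_i \delta_{Y_i})$ for \iid draws $(X_i, Y_i) \sim \pi$. A one-line counting argument gives $\TV(\frac{1}{n}\sum_i \delta_{X_i}, \frac{1}{n}\sum_i \delta_{Y_i}) \leq \frac{1}{n} \#\{i : X_i \neq Y_i\} = N/n$, where $N \sim \mathrm{Bin}(n, q)$ with $q \leq \epsilon$. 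It therefore suffices to bound the upper tail of $N/n$.

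The second case $\delta \geq 1 - (1-\epsilon)^n$ is immediate: $\bP(N = 0) = (1-q)^n \geq (1-\epsilon)^n \geq 1 - \delta$, so $\TV(\hat p_n^*, \hat p_n) = 0$ with probability at least $1-\delta$. For the main case, set $t = \sqrt{\log(1/\delta)/(2n)}$ and $a = (\sqrt{\epsilon} + t)^2$; since the upper tail is monotone in $q$, we may take $q = \epsilon$. Chernoff then gives $\bP(N/n \geq a) \leq \exp(-n\, d(a,\epsilon))$ where $d(a,p) = a\log(a/p) + (1-a)\log\tfrac{1-a}{1-p}$ is the binary KL divergence. It then suffices to verify Okamoto's inequality
\[
d(a, p) \;\geq\; 2(\sqrt{a} - \sqrt{p})^2 \qquad (0 \leq p \leq a \leq 1),
\]
since this implies $n\, d(a,\epsilon) \geq 2n(\sqrt{a} - \sqrt{\epsilon})^2 = 2nt^2 = \log(1/\delta)$ and hence $\bP(N/n \geq a) \leq \delta$.

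The only real obstacle is Okamoto's inequality; plain Hoeffding would give only the additive $\epsilon + t$ rate, so the variance-sensitive form is essential. It follows by convexity: setting $g(a) = d(a,p) - 2(\sqrt{a} - \sqrt{p})^2$, one verifies $g(p) = g'(p) = 0$ and
\[
g''(a) \;=\; \frac{1 - (1-a)\sqrt{p/a}}{a(1-a)} \;\geq\; 0 \quad \text{on } (p, 1),
\]
since $(1-a)\sqrt{p/a} \leq 1 - a < 1$ whenever $a \geq p$. Hence $g \geq 0$ on $[p, 1]$ and the inequality holds, closing the proof.
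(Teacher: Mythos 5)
Your proof is correct and follows the same approach as the paper: reduce $\TV(\hat p_n,\hat p_n^*)$ to the tail of a $\mathrm{Bin}(n,\epsilon)$ random variable divided by $n$, then invoke the variance-sensitive Chernoff/Okamoto bound $d(a,\epsilon)\geq 2(\sqrt{a}-\sqrt{\epsilon})^2$, and treat the $\delta\geq 1-(1-\epsilon)^n$ regime by observing $\bP(N=0)\geq 1-\delta$. The only difference is cosmetic: the paper cites Okamoto (1959) for the binomial tail inequality where you supply the convexity proof that $g(a)=d(a,p)-2(\sqrt{a}-\sqrt{p})^2$ has $g(p)=g'(p)=0$ and $g''\geq 0$ on $(p,1)$, which is a correct and self-contained verification of the cited fact.
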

\begin{proof}
Note that $\frac{1}{n}\sum_{i = 1}^n \mathbbm{1}(X_i\neq Y_i)$ is being stochastically dominated by a binomial distribution $\mathsf{Bin}(n, \epsilon)$. 
The first result is a direct result of tail bound for binomial distribution~\cite{okamoto1959some}. To see the later result, note that the binomial distribution equals $0$ with probability $(1-\epsilon)^n$. Thus when $\delta \geq 1-(1-\epsilon)^n$, we have with probability at least $1-\delta$, $\TV(\hat p_n^*, \hat p_n) = 0. $
\end{proof}

\subsection{Discussions on the  population limit in Definition~\ref{def.populationlimit}}

Given distribution family $\mathcal{G}$, discrepancy $D$, loss $L$, and perturbation level $\epsilon$, we define
the population limit for the robust inference problem as
\begin{align}\label{eqn.appendix_population_limit}
  \gamma_1 =  \inf_{\theta(p)} \sup_{(p^*,p): D(p^*,p)\leq \epsilon, p^*\in \mathcal{G}} L(p^*, \theta(p)).
\end{align}

This definition is only considering the  limit for deterministic decision rule $\theta(p)$ given infinite number of samples. However, we claim that it is consistent with the minimax risk for randomized decision rule under certain level. The minimax risk in statistical literature~\cite{chen2018robust} is defined as 
\begin{align}\label{eqn.minimax_risk}
    \gamma_2(\delta) = \inf \{\gamma \mid \inf_{\theta(p)} \sup_{(p^*,p): D(p^*,p)\leq \epsilon, p^*\in \mathcal{G}} \mathbb{P}(L(p^*, \theta(p))> \gamma)\leq \delta\}.
\end{align}

Note that the upper bound for $\gamma_1$ is naturally an upper bound for $\gamma_2(0)$. To relate the lower bound of $\gamma_1$ and $\gamma_2(\frac{1}{2})$, we introduce the following lemma that shows any lower bound derived for $\gamma_1$ from Le Cam's two point method would also give a lower bound for $\gamma_2(\frac{1}{2})$. With this lemma, we can adapt all the lower bounds on $\gamma_1$ derived in this paper to the lower bound on $\gamma_2(\frac{1}{2})$.

\begin{lemma}\label{lem.minimax_random}
Suppose there exist $p_1, p_2$ such that
\begin{align}
    \inf_{\theta} L(p_1, \theta) + L(p_2, \theta) \geq 2\alpha.
\end{align}
Then for any randomized decision rule $\theta_r(p)$,
\begin{align}
    \bP(L(p_1, \theta_r(p))\geq {\alpha}) + \bP(L(p_2, \theta_r(p))\geq {\alpha}) \geq 1.
\end{align}
\end{lemma}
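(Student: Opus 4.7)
The plan is to prove this by fixing the randomness of $\theta_r$ and applying the hypothesis pointwise, then integrating. The key observation is that the assumption $\inf_{\theta} L(p_1,\theta) + L(p_2,\theta) \geq 2\alpha$ is a \emph{deterministic} statement that holds for every $\theta$ in the parameter space, so in particular it holds for every realization of the randomized rule.

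First, I would write $\theta_r(p)$ as a random variable $T$ taking values in the parameter space $\Theta$, with distribution determined by the (internal) randomness of the rule (together with any randomness in $p$; for this lemma $p$ is fixed). For every realization $\theta \in \Theta$ of $T$, the hypothesis yields
\begin{equation*}
L(p_1,\theta) + L(p_2,\theta) \geq 2\alpha.
\end{equation*}
This pointwise inequality has the immediate consequence that $\max(L(p_1,\theta), L(p_2,\theta)) \geq \alpha$: if both losses were strictly less than $\alpha$, their sum would be strictly less than $2\alpha$, a contradiction. Equivalently,
\begin{equation*}
\indc{L(p_1,\theta) \geq \alpha} + \indc{L(p_2,\theta) \geq \alpha} \geq 1
\end{equation*}
for every $\theta \in \Theta$, and therefore the same inequality holds almost surely with $\theta$ replaced by $T = \theta_r(p)$.

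The final step is to take expectations of both sides over the randomness of $T$, which turns the indicators into probabilities and yields
\begin{equation*}
\bP\bigl(L(p_1,\theta_r(p))\geq \alpha\bigr) + \bP\bigl(L(p_2,\theta_r(p))\geq \alpha\bigr) \geq 1,
\end{equation*}
as desired. I do not foresee a real obstacle: the entire content of the lemma is the elementary fact that if a sum of two nonnegative quantities is at least $2\alpha$ then at least one of them is at least $\alpha$, lifted from deterministic $\theta$ to randomized $\theta_r(p)$ by conditioning on the randomness. This is precisely the mechanism by which Le Cam's two-point lower bound for deterministic rules extends verbatim to randomized rules, and it is the reason the lower bound on $\gamma_1$ derived by two-point arguments transfers directly to a lower bound on $\gamma_2(1/2)$: if both probabilities in the displayed inequality were strictly less than $1/2$, their sum would be strictly less than $1$.
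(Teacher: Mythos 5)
Your proof is correct and takes essentially the same route as the paper: both establish the pointwise indicator inequality $\indc{L(p_1,\theta) \geq \alpha} + \indc{L(p_2,\theta) \geq \alpha} \geq 1$ for every fixed $\theta$, then take expectations over the randomness of $\theta_r(p)$. The only cosmetic difference is that you argue via $\max(L(p_1,\theta),L(p_2,\theta)) \geq \alpha$ while the paper chains through $\indc{L(p_1,\theta)+L(p_2,\theta)\geq 2\alpha}$; these are the same observation.
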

    
\begin{proof}
Consider the indicator function, we have for any fixed $\theta$, 
\begin{align}
    \mathbb{1}(L(p_1, \theta) \geq {\alpha}) +  \mathbb{1}(L(p_2, \theta) \geq {\alpha}) & \geq  \mathbb{1}(L(p_1, \theta) + L(p_2, \theta)\geq {2\alpha}) \nonumber \\ 
    & \geq \mathbb{1}(\inf_\theta L(p_1, \theta) + L(p_2, \theta)\geq {2\alpha}) \nonumber \\ 
    & = 1.
\end{align}
Now let $\theta_r(p)$ be randomized decision rule that is a function of $p$ and (possibly) outside randomness. Taking expectation with respect to $\theta_r(p)$ yields
\begin{align}
    \mathbb{P}(L(p_1, \theta_r(p)) \geq {\alpha}) +  \mathbb{P}(L(p_2, \theta_r(p)) \geq {\alpha}) \geq 1.
\end{align}

\end{proof}

\subsection{Proof of Lemma~\ref{lemma.population_limit} and related discussions}\label{proof.fundamental_limit}

We show a more general conclusion that for any distance $D$ with $D(p^*, p)\leq \epsilon$. As long as $D$ is a pseudometric, the conclusion on modulus holds. 
It follows from the assumption $D(p,p^*) \leq \epsilon$, and $p^* \in \mathcal{G}$ that the projection algorithm can find some $q \in \GG$ such that 
\begin{align}
D(p, q) \leq \epsilon. 
\end{align}
It follows from the triangle inequality of $D$ that
\begin{align}
D(p^*, q) \leq D(p^*, p) + D(p, q) \leq 2 \epsilon. 
\end{align}
Hence, 
\begin{align}
L(p^*, \theta^*(q)) \leq \sup_{p_1 \in \mathcal{G}, p_2 \in \mathcal{G}, D(p_1, p_2) \leq 2 \epsilon } L(p_1, \theta^*(p_2)). 
\end{align}

The following lemma provides lower bounds on the population limit. Note that the $\TV$ distance satisfies all the conditions required for $D$.

\begin{lemma}\cite{donoho1988automatic,chen2018robust}
\label{lemma.population_limit_opt}
Suppose that $D(p,q)$ is a pseudometric and $L(p,\theta^*(q))$ is a pseudometric over $(p,q)$. Then,
\begin{enumerate}
    \item \emph{projection algorithm is near-optimal}: for $q = \argmin_{q\in \GG} D(q,p)$ and the observed corrupted distribution $p$, 
    \begin{align}
   \sup_{p^* \in\GG, D(p^*, p)\leq \epsilon} L(p^*, \theta^*(q))  \leq    2\inf_{\theta(p)} \sup_{p^*: D(p^*,p)\leq \epsilon, p^*\in \mathcal{G}} L(p^*, \theta(p)) 
    \end{align}
    \item if the space induced by $D$ is a complete convex metric space~\citep[Theorem 1.97]{penot2012calculus}, then the population limit in (\ref{eqn.fundamental_limit}) is lower bounded by the modulus of continuity in (\ref{eqn.modulus}) up to a factor of $2$: 
\begin{align}
\label{eq.population_limit_opt}
\inf_{\theta(p)} \sup_{(p, p^*): p^* \in \GG, D(p^*, p)\leq \epsilon} L(p^*, \theta(p)) \geq \frac{1}{2}\modu(\GG, 2\epsilon, D, L).
\end{align}

It was discussed in detail in~\cite{donoho1988automatic} when the factor of $1/2$ is tight.  

\item For any (possibly random) decision rule $\theta_r(p)$, 
\begin{align}
    \sup_{(p^*, p): p^* \in \mathcal{G}, D(p^*, p)\leq \epsilon } \bP(L(p^*, \theta_r(p))\geq  \frac{1}{2}\modu(\GG, 2\epsilon, D, L))
     \geq  \frac{1}{2}.
\end{align}
\end{enumerate}
\end{lemma}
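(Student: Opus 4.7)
The plan is to prove Part 2 first by the standard two-point (Le Cam) method, then derive Part 1 by combining Part 2 with the upper bound already established in Lemma~\ref{lemma.population_limit}, and finally deduce Part 3 by applying Lemma~\ref{lem.minimax_random} to the same two-point construction.

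For Part 2, I fix an arbitrary pair $p_1,p_2 \in \GG$ with $D(p_1,p_2) \leq 2\epsilon$. Completeness together with convexity of the metric space induced by $D$ produces a midpoint $p_0$ satisfying $D(p_0,p_1) = D(p_0,p_2) = D(p_1,p_2)/2 \leq \epsilon$ (this is exactly the feature used in~\citep[Theorem 1.97]{penot2012calculus}). Consequently, both $(p^*,p)=(p_1,p_0)$ and $(p^*,p)=(p_2,p_0)$ are admissible pairs in the supremum defining the population limit. For any (possibly non-projection-form) estimator $\theta(\cdot)$, the pseudometric property of $L(p,\theta^*(q))$ yields the triangle-type inequality
\[
L(p_1,\theta(p_0)) + L(p_2,\theta(p_0)) \;\geq\; L(p_1,\theta^*(p_2)),
\]
so $\max\bigl(L(p_1,\theta(p_0)),\,L(p_2,\theta(p_0))\bigr) \geq \tfrac{1}{2}L(p_1,\theta^*(p_2))$. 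Taking the supremum over the admissible pair $(p_1,p_2)$ gives the lower bound $\tfrac{1}{2}\modu(\GG,2\epsilon,D,L)$.

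Part 1 then follows immediately: Lemma~\ref{lemma.population_limit} already shows that the projection estimator $q=\arg\min_{q\in\GG}D(q,p)$ achieves worst-case loss at most $\modu(\GG,2\epsilon,D,L)$, while Part 2 shows that no estimator can do better than $\tfrac{1}{2}\modu(\GG,2\epsilon,D,L)$; the ratio is at most $2$. For Part 3, I invoke Lemma~\ref{lem.minimax_random} with the pair $(p_1,p_2)$ constructed above and the threshold $\alpha = \tfrac{1}{2}L(p_1,\theta^*(p_2))$. The same triangle inequality used in Part 2 yields $\inf_\theta[L(p_1,\theta)+L(p_2,\theta)] \geq L(p_1,\theta^*(p_2)) = 2\alpha$, so the hypothesis of Lemma~\ref{lem.minimax_random} is met; hence $\mathbb{P}(L(p_1,\theta_r(p_0))\geq\alpha)+\mathbb{P}(L(p_2,\theta_r(p_0))\geq\alpha)\geq 1$, forcing at least one of the two probabilities to be $\geq \tfrac12$. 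Taking suprema over $(p_1,p_2)$ and passing to the limit in the definition of $\modu$ delivers the claimed bound.

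The main obstacle is the triangle-type bound $L(p_1,\theta)+L(p_2,\theta)\geq L(p_1,\theta^*(p_2))$ when $\theta$ is an arbitrary element of the parameter space rather than of the form $\theta^*(q)$. The pseudometric hypothesis, as stated, is only about pairs $(p,q)$, so I need to justify extending it to arbitrary parameters; this is the standard Donoho--Liu setting where $L(p,\theta)$ takes a distance-like form (e.g.\ $L(p,\theta)=\|T(p)-\theta\|$ with $\theta^*(q)=T(q)$), in which case both the $(p,q)$-pseudometric property and the extended triangle inequality hold from the same underlying norm. A secondary technicality, easily absorbed by an arbitrarily small slack, is that the supremum in $\modu(\GG,2\epsilon,D,L)$ may not be attained, so the two-point bound is stated for any pair approaching the supremum and the final inequality is obtained by a limiting argument.
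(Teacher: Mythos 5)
Your Parts 2 and 3 follow the paper's route essentially verbatim: the midpoint $r$ supplied by convex completeness, the two-point bound $L(p_1,\theta)+L(p_2,\theta)\geq L(p_1,\theta^*(p_2))$, and for Part 3 an invocation of Lemma~\ref{lem.minimax_random}. You also correctly flag the one real subtlety in both your argument and the paper's: the hypothesis says $L(p,\theta^*(q))$ is a pseudometric \emph{over pairs of distributions}, which only yields $L(p_1,\theta^*(p_2))\leq L(p_1,\theta^*(r))+L(p_2,\theta^*(r))$, so using it against an arbitrary $\theta$ requires that $\theta$ be of the form $\theta^*(r)$ (or that $L$ be norm-like). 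The paper glosses over this exactly as you suspect; your discussion of it is a genuine improvement.

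Your derivation of Part 1 from Part 2 is, however, not valid, for two reasons. First, Part 1 is a statement for a \emph{fixed} observed $p$: the supremum on both sides is only over $p^*$ with $D(p^*,p)\leq\epsilon$, and the $\inf_{\theta(p)}$ is just the infimum over a single parameter value at that $p$. Chaining Lemma~\ref{lemma.population_limit} (which gives $\sup_{(p^*,p)}L(p^*,\theta^*(q))\leq\modu$) with Part 2 (which gives $\modu\leq 2\inf_{\theta}\sup_{(p^*,p)}L$) only yields the worst-case-over-$p$ comparison
\begin{align}
\sup_{p}\sup_{p^*}L(p^*,\theta^*(q(p))) \;\leq\; 2\,\inf_{\theta}\sup_{p}\sup_{p^*}L(p^*,\theta(p)),
\end{align}
which is strictly weaker: it does not give you the pointwise bound for each individual $p$, and indeed cannot, since one can have a $p$ where the projection is much worse than the local inf-max while the global ratio is still $2$. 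Second, your derivation smuggles in the completeness/convexity hypothesis of Part 2, whereas Part 1 of the lemma holds under the pseudometric assumptions alone. The correct argument for Part 1 is direct: since $q$ minimizes $D(\cdot,p)$ over $\GG$, both $p^*$ and $q$ lie in $\GG$ within $D$-distance $\epsilon$ of $p$, so for any $\theta$,
\begin{align}
L(p^*,\theta^*(q)) \;\leq\; L(p^*,\theta) + L(q,\theta) \;\leq\; 2\sup_{p': D(p',p)\leq\epsilon,\, p'\in\GG}L(p',\theta),
\end{align}
and one then takes the infimum over $\theta$. This is what the paper does, requires no midpoint, and delivers the pointwise-in-$p$ statement.
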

\begin{proof}
Now we show the near-optimality of projection algorithm when $L(p,\theta^*(q))$ is a pseudometric for $(p,q)$. Indeed, for $q = \argmin_{q\in \GG} D(q,p)$ and any estimator $\theta(p)$, we have 
\begin{align}
    \sup_{p^* \in\GG, D(p^*, p)\leq \epsilon} L(p^*, \theta^*(q)) & \leq \sup_{p_1, p_2 \in\GG, D(p, p_1)\leq \epsilon, D(p, p_2)\leq \epsilon} L(p_1, \theta^*(p_2)) \\
    & \leq \sup_{p_1, p_2 \in\GG, D(p, p_1)\leq \epsilon, D(p, p_2)\leq \epsilon} (L(p_1, \theta(p)) + L(p_2,\theta(p))) \\
    & \leq 2 \sup_{p^*: D(p^*,p)\leq \epsilon, p^*\in \mathcal{G}} L(p^*, \theta(p)).
\end{align}
Since the derivations above holds for any $\theta(p)$, we know
\begin{align}
   \sup_{p^* \in\GG, D(p^*, p)\leq \epsilon} L(p^*, \theta^*(q))  \leq 2   \inf_{\theta(p)} \sup_{p^*: D(p^*,p)\leq \epsilon, p^*\in \mathcal{G}} L(p^*, \theta(p)).
\end{align}

Last, we prove the near-optimality of modulus. For any $p_1 \in \mathcal{G}, p_2 \in \mathcal{G}, D(p_1, p_2) \leq 2\epsilon$, since we assumed that the space induced by $D$ is a complete convex metric space~\citep[Theorem 1.97]{penot2012calculus}, we can find some $r$ such that 
\begin{align}
D(p_1, r) & \leq \epsilon \\
D(p_2, r) & \leq \epsilon,
\end{align}
Hence, we get exactly the same observation $r$ for two different true distributions $p_1, p_2$. Setting $p = r$,
\begin{align}
\inf_{\theta} \sup_{(p^*, p): D(p^*, p)\leq \epsilon, p^* \in \mathcal{G}} L(p^*, \theta) & \geq \inf_{\theta} \frac{1}{2} \left( L(p_1, \theta) + L(p_2, \theta) \right) \\
& \geq \frac{1}{2} L(p_1, \theta^*(p_2)). 
\end{align}
The last inequality comes from the assumption that $L(p, \theta^*(q))$ is a pseudometric for $p, q$.
Since this inequality holds for \emph{any} $p_1 \in \mathcal{G}, p_2 \in \mathcal{G}, D(p_1, p_2) \leq 2\epsilon$, the result follows. 

By Lemma~\ref{lem.minimax_random}, we know that for random decision rule $\theta_r(p)$ and any $p_1, p_2$ satisfying the condition above, 
\begin{align}
    & \inf_{\theta_r(p)} \sup_{(p^*, p): D(p^*, p)\leq \epsilon, p^* \in \mathcal{G}} \bP(L(p^*, \theta_r(p))\geq \frac{1}{2} L(p_1, \theta^*(p_2))) \nonumber \\
     \geq & \inf_{\theta_r(p)} \frac{1}{2} \left( \bP( L(p_1, \theta_r(p))\geq \frac{1}{2} L(p_1, \theta^*(p_2))) + \bP( L(p_2, \theta_r(p))\geq \frac{1}{2} L(p_1, \theta^*(p_2))) \right) \nonumber \\
     \geq & \frac{1}{2}. \nonumber
\end{align}
\end{proof}

\subsection{Example when modulus is not a tight bound}

In Lemma~\ref{lemma.population_limit} and Lemma~\ref{lemma.population_limit_opt}, we show that modulus is a valid upper bound for the population limit when $D$ is a pseudometric and the bound is tight when $L$ is also a pseudometric and the space induced by $D$ is a complete convex metric space. Here we give a concrete example such that the modulus is not a tight bound when $D= \TV$ but $L$ is not a pseudometric. 

\begin{theorem}
Consider an one-dimensional classification problem. 
Take  $L(p, (\theta_1, \theta_2)) = \bE_p[\mathbb{1}(Y(\theta_1X- \theta_2) \leq 0)]$. For any constant $C > 0$, there exist two distributions $p_1, p_2$ such that 
\begin{align}
  \modu(\{p_1, p_2\}, 2\epsilon)\geq (C+1) \cdot \inf_{\theta(p)}\sup_{(p^*, p): \TV(p^*, p) \leq \epsilon, p^* \in \{p_1, p_2\}} L(p^*, \theta(p)).  
\end{align}
\end{theorem}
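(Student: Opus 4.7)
The plan is to build a simple two-point counterexample exploiting the fact that the $0/1$ loss is not a pseudometric, so the minimizer of $L(p_2,\cdot)$ can differ drastically from the best ``compromise'' classifier that works for both $p_1$ and $p_2$. Fix $\alpha \in (0,1/2)$ to be chosen as a function of $C$, and take $p_1$ supported on a single covariate $X=0$ and $p_2$ supported on $X=1$, each with labels $Y=+1$ with probability $1-\alpha$ and $Y=-1$ with probability $\alpha$. Then $\TV(p_1,p_2)=1$, so choosing $\epsilon = 1/2$ makes $\TV(p_1,p_2)\leq 2\epsilon$ and the modulus is defined. Parametrize classifiers by $(s,t)$ with $s=\mathrm{sign}(\theta_1)$, $t=\theta_2/\theta_1$, so that the rule predicts $+1$ exactly when $s(X-t)>0$.

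For the modulus, I will exhibit a particular element $\theta^*(p_2)\in\argmin_\theta L(p_2,\theta)$ that is badly miscalibrated for $p_1$: the choice $(s,t)=(+1,1/2)$ predicts $+1$ at $X=1$ and so achieves the Bayes-optimal loss $\alpha$ on $p_2$, yet predicts $-1$ at $X=0$ and thus incurs loss $1-\alpha$ on $p_1$. Since the modulus is a supremum over pairs $(p_i,p_j)$ in $\{p_1,p_2\}$ at $\TV$-distance at most $2\epsilon$, and since $\theta^*$ is defined as an arbitrary element of the argmin set, this gives $\modu(\{p_1,p_2\},2\epsilon)\geq 1-\alpha$.

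For the minimax risk, I will plug in the constant estimator $\theta(p)\equiv (+1,-1/2)$, whose classifier predicts $+1$ at both $X=0$ and $X=1$ and therefore attains $L(p_i,\theta)=\alpha$ for $i=1,2$. Since the adversary must choose $p^*\in\{p_1,p_2\}$, the worst-case loss of this estimator is $\alpha$, so the infimum over estimators is at most $\alpha$. Combining gives
\[
\frac{\modu(\{p_1,p_2\},2\epsilon)}{\inf_{\theta(p)}\sup_{(p^*,p)}L(p^*,\theta(p))} \;\geq\; \frac{1-\alpha}{\alpha},
\]
and taking $\alpha = 1/(C+2)$ makes the right side exactly $C+1$, completing the proof.

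The main conceptual obstacle is interpreting the modulus in the presence of a non-unique argmin: because the $0/1$ loss is piecewise constant, $\argmin_\theta L(p_2,\theta)$ is a large set containing both ``good'' compromise classifiers and ``bad'' ones like $(+1,1/2)$. The whole point of this counterexample is that allowing $\theta^*$ to be any minimizer (the natural convention for a worst-case modulus) is exactly what breaks the tight upper bound from Lemma~\ref{lemma.population_limit_opt}, which required $L(p,\theta^*(q))$ to be a pseudometric in $(p,q)$---a hypothesis that fails here. No other subtleties arise: $\TV$ is trivially a pseudometric, and all expectations are finite sums over two points, so the loss computations are elementary.
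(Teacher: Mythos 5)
Your construction is in the same spirit as the paper's (a two-point counterexample exploiting that the $0/1$ loss is not a pseudometric), but it differs materially and has two genuine gaps.

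\textbf{Quantification over $\epsilon$.} You set $\epsilon = 1/2$ so that $\TV(p_1,p_2)=1 \leq 2\epsilon$. But $\epsilon$ is a free parameter of the statement, and the paper's construction proves the inequality for every $\epsilon \in (0,1/2]$ by building $p_1,p_2$ (a four-point covariate distribution with label flips on the middle two points) so that $\TV(p_1,p_2) = 2\epsilon$ exactly. For $\epsilon < 1/2$, your $p_1,p_2$ satisfy $\TV(p_1,p_2) = 1 > 2\epsilon$, so the pair $(p_1,p_2)$ never enters the supremum defining $\modu(\{p_1,p_2\},2\epsilon)$; only the diagonal terms $L(p_i,\theta^*(p_i))=\alpha$ remain, and the claimed gap disappears. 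Your proof therefore only establishes the statement at the single value $\epsilon = 1/2$, which is not the regime the paper cares about.

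\textbf{Argmin tie-breaking.} You correctly identify the non-uniqueness of $\argmin_\theta L(p_2,\theta)$ as the crux, but then resolve it by fiat in the adversarial direction. In your example $\argmin_\theta L(p_2,\theta) = \{\theta : \theta_1 > \theta_2\}$, and this set contains both $(1,1/2)$ (giving $L(p_1,\cdot)=1-\alpha$) and $(1,-1/2)$ (giving $L(p_1,\cdot)=\alpha$). If $\theta^*(p_2)$ is defined with the latter choice, the modulus collapses to $\alpha$ and no gap survives. So your lower bound on the modulus is not robust to the tie-breaking convention, yet the statement is supposed to exhibit genuine looseness of the modulus bound, not looseness that an algorithm could eliminate simply by breaking ties sensibly. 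The paper's construction is engineered precisely so this cannot happen: every minimizer of $L(p_2,\cdot)$ (all thresholds in an open interval with $s=+1$) incurs the \emph{same} loss $2\epsilon$ on $p_1$, and every other classifier is strictly suboptimal for $p_2$, so $L(p_1,\theta^*(p_2))$ is unambiguously $2\epsilon$ no matter how ties are broken. Meanwhile the population limit is bounded by $2\epsilon/(C+1)$ because on the middle two covariate points $p_1$ and $p_2$ put asymmetric masses $\frac{2\epsilon}{C+1}$ and $\frac{2C\epsilon}{C+1}$, so the threshold sitting between them errs only on the light mass. That asymmetry is the mechanism driving the gap in the paper and it has no analogue in your two-atom construction. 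You would need to rework the example along these lines to make the lower bound on the modulus hold for every choice of argmin and for every $\epsilon < 1/2$.
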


\begin{proof}
For given $C>0$, we design $p_1, p_2$ as follows. Consider two dimensional distributions $p_1, p_2$. We use $X$ to denote the covariate and $Y$ to denote the label. Here we assume $X$ is supported on $[0, 4]$ and $Y$ is supported on $\{-1, 1\}$. Let $\bP_{p_1}(Y = 1\mid X) = \mathbb{1}(X\geq  1)$, $\bP_{p_2}(Y = 1 \mid X) = \mathbb{1}(X\geq 3)$. We design marginal distribution of $p_1, p_2$ as follows
\begin{align}
    \bP_{p_1}(X=t) = \begin{cases} \frac{1}{2}-\epsilon, & t = 0.5\\
    \frac{2\epsilon}{C+1}, & t = 1.5\\
     \frac{2C \epsilon}{C+1}, & t = 2.5\\
    \frac{1}{2}-\epsilon, & t = 3.5
    \end{cases}, \quad  \bP_{p_2}(X=t) = \begin{cases} \frac{1}{2}-\epsilon, & t = 0.5\\
    \frac{2C\epsilon}{C+1}, & t = 1.5\\
     \frac{2 \epsilon}{C+1}, & t = 2.5\\
    \frac{1}{2}-\epsilon, & t = 3.5
    \end{cases}, 
\end{align}
Then we have $\TV(p_1, p_2) = 2\epsilon$. If we observe $p$ such that $\TV(p_1, p)<\epsilon$, then the true distribution $p^*$ must be $p_1$, we can take $\theta(p) = \theta^*(p_1)$, the induced cost $L(p_1, \theta(p)) = 0$.  Similarly, the cost is also $0$ when we observe $p$ such that $\TV(p_2, p)<\epsilon$. When $\TV(p_1, p) = \epsilon$ and $\TV(p_2, p) = \epsilon$. We have $p = \frac{p_1 + p_2}{2}$. Then we will output $\theta_1 = 1, \theta_2 = 2$, which gives  cost $L(p_1, (\theta_1, \theta_2)) = L(p_2, (\theta_1, \theta_2)) = \frac{2\epsilon}{C+1}$. Thus the population limit is upper bounded by $\frac{2\epsilon}{C+1}$.

However, the modulus is at least $L(p_1, \theta^*(p_2)) = 2\epsilon$. Thus we have 
\begin{align}
  \modu(\{p_1, p_2\}, 2\epsilon, \TV, L)\geq (C+1) \cdot \inf_{\theta(p)}\sup_{(p^*, p): \TV(p^*, p) \leq \epsilon, p^* \in \{p_1, p_2\}} L(p^*, \theta(p)).  
\end{align}
\end{proof}

\section{Related discussions and remaining proofs in Section~\ref{sec.population_TV}}\label{sec.appendix_population_TV}

\subsection{Resilience for pseudonorm loss: generalization of mean estimation}\label{sec.resilienceforwfdefinition}

We  present a straightforward generalization of the mean estimation example to the so-called $W_\sF$ (pseudo)norm \footnote{It was shown in~\citep[Lemma 1]{zhu2019deconstructing} that under some appropriate topology of distributions, any pseudonorm can be represented by $W_\sF$-norm for some symmetric family $\sF$ satisfying that $-\sF = \sF$. }. The $W_{\sF}(p, q)$ pseudonorm between two probability distributions $p,q$ is defined as
\begin{align}\label{eqn.def_WF}
    W_{\sF}(p, q) = \sup_{f\in\sF}\bE_p[f(X)] - \bE_q[f(X)],
\end{align}
where $\sF$ is symmetric, i.e. $\forall f\in \sF$, we have $-f\in\sF$. The corresponding resilient set is defined as
\begin{align}\label{eqn.def_G_TV_WF}
     \GG_{W_\sF}(\rho, \eta) = \{p  \mid  \sup_{r \leq \frac{p}{1-\eta}}  W_\sF(r, p)  \leq \rho\}.
\end{align}
With the same technique as mean estimation case, we can show 
\begin{align}\label{eqn.wfpopulationlimitmodulusbound}
    \modu(\GG_{W_\sF}(\rho, \eta), 2\epsilon) \leq 2\rho
\end{align}
if $2\epsilon \leq \eta < 1$.

\subsection{Key Lemmas}

\subsubsection{General properties of $\GG$}

For any two distributions $p,q$, a new distribution $r = \frac{\min(p, q)}{1-\TV(p, q)}$ is defined as follows. For any dominating measure $\nu$ satisfying $p \ll \nu, q\ll \nu$, we define $\frac{dr}{d\nu} = \min(\frac{dp}{d\nu}, \frac{dq}{d\nu})/(1-\TV(p,q))$. 

\begin{lemma}[Properties of deletion] \label{lem.deletionrproperties}
Denote by $\mathcal{P}$ the space of probability distributions. For any $\eta\in [0,1)$, the following statements are true.  
\begin{enumerate}
    \item \emph{$\eta$-deletion belongs to $\eta$-TV perturbation:} for any $r,p\in \mathcal{P}$, 
    \begin{align}
        r \leq \frac{p}{1-\eta} \Rightarrow \TV(r,p)\leq \eta
    \end{align}
    \item \emph{Existence of middle point:} for any $p\in \mathcal{P},q \in \mathcal{P},\TV(p,q)\leq \eta$, there exists some $r\in \mathcal{P}$ such that $r\leq \frac{p}{1-\eta}, r\leq \frac{q}{1-\eta}$. 
    \item \emph{Composition preserves being deletion:}
    If $r\leq \frac{p}{1-\eta}$, $r'\leq \frac{r}{1-\eta}$, then $r'\leq \frac{p}{(1-\eta)^2}$. 
    \item \emph{For any fixed $p\in \mathcal{P}$, the following three sets are equivalent:} 
\begin{itemize}
    \item $\mathcal{A}_1 = \{r  \mid r\leq \frac{p}{1-\eta}, r\in \mathcal{P} \}$,
    \item $\mathcal{A}_2 = \{r \mid \text{for all }A, \bP_r[X \in A] = \bP_p[X \in A|Z=0], Z \in \{0, 1\}, \bP(Z=0)\geq 1-\eta, \bP_p(X\in A) = p(A)\}$,
    \item $\mathcal{A}_3 = \{ \frac{\min(p, q)}{1-\TV(p, q)} \mid \TV(p, q)\leq \eta, q\in \mathcal{P} \}$. 
\end{itemize}
 \item If $r\in \mathcal{P}, p\in \mathcal{P}$ as distributions of $X$ satisfy $r\leq \frac{p}{1-\eta}$, then the induced distribution for $f(X)$ under both $r$ and $p$ satisfy the same relation for any measurable $f$. 
 \end{enumerate}
\end{lemma}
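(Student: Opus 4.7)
The plan is to prove each of the five claims in sequence, since they have a natural order of dependence: (1) is the basic ``deletion implies small TV'' direction, (2) constructs the midpoint used throughout the paper, (3) is a trivial chaining, (4) is the set-equivalence bookkeeping that connects the three equivalent formulations of deletion, and (5) is the measurability pushforward. I will use a common dominating measure $\nu$ (so $p,q,r$ are replaced by their densities with respect to $\nu$) and work pointwise; everything is a short density-level calculation.

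For (1), from the hypothesis $(1-\eta) r \leq p$ $\nu$-a.e., note that $(p - r)_{+} \leq p - (1-\eta) r$ everywhere (on $\{p \geq r\}$ this is $p - r \leq p - (1-\eta) r$; on $\{p < r\}$ both sides are non-negative with the left one equal to $0$). Integrating gives $\TV(p,r) = \int (p-r)_+ \, d\nu \leq \int p \, d\nu - (1-\eta) \int r \, d\nu = \eta$. For (2), let $r \eqdef \min(p,q)/(1-\TV(p,q))$; this is a valid probability density since $\int \min(p,q)\,d\nu = 1 - \TV(p,q)$, and $(1-\eta) r \leq \min(p,q) \leq p$ because $(1-\eta)/(1-\TV(p,q)) \leq 1$ when $\TV(p,q)\leq \eta$, and symmetrically with $q$. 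For (3), simply chain $(1-\eta)^2 r' \leq (1-\eta) r \leq p$.

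The main work is in (4). The inclusion $\mathcal{A}_3 \subset \mathcal{A}_1$ is exactly the argument of (2). For $\mathcal{A}_1 \subset \mathcal{A}_2$, given $r \leq p/(1-\eta)$, I will define a coupling: sample $Z \in \{0,1\}$ with $\bP(Z=0)=1-\eta$, and on $\{Z=0\}$ draw $X$ from $r$; on $\{Z=1\}$ draw $X$ from the density $(p - (1-\eta)r)/\eta$, which is non-negative by hypothesis and integrates to $1$. Then $\bP(X\in A) = (1-\eta) r(A) + \eta \cdot (p(A)-(1-\eta)r(A))/\eta = p(A)$, while $\bP(X\in A \mid Z=0)= r(A)$. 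The reverse $\mathcal{A}_2 \subset \mathcal{A}_1$ is immediate: $(1-\eta) r(A) \leq \bP(Z=0)r(A) = \bP(X\in A, Z=0) \leq p(A)$. For $\mathcal{A}_1 \subset \mathcal{A}_3$, given $r \leq p/(1-\eta)$, I will construct an explicit $q$ witnessing membership in $\mathcal{A}_3$: enlarge the sample space by a single point $\star$ disjoint from the support of $p$, and set $q = (1-\eta) r + \eta \delta_{\star}$. Then $p$ and $\eta \delta_{\star}$ are mutually singular, so $\min(p,q) = (1-\eta)r$, hence $\TV(p,q) = \eta$ and $\min(p,q)/(1-\TV(p,q)) = r$, as required.

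Finally, for (5), I use the fact that if $X \sim p$ and $X' \sim r$ with $(1-\eta) r \leq p$, then for any measurable $B$ in the codomain of $f$, the pre-images satisfy $(1-\eta)\bP_{r}(f(X')\in B) = (1-\eta)r(f^{-1}(B)) \leq p(f^{-1}(B)) = \bP_p(f(X)\in B)$, which is exactly the deletion relation for the pushforward distributions. The only potential subtlety throughout is a pedantic one in (4): the third set $\mathcal{A}_3$ implicitly permits $q$ to live on a larger probability space than $p$ and $r$ originally did, so the enlargement step in the $\mathcal{A}_1 \subset \mathcal{A}_3$ direction must be flagged; otherwise the argument is a routine density manipulation with no analytic difficulty.
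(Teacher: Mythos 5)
Your proof is correct, and for parts (1), (2), (3), (5), and the $\mathcal{A}_1 \leftrightarrow \mathcal{A}_2$ directions of (4), it is essentially the same density-level argument as the paper's (the paper proves (1) via $r-p\leq\eta r$ rather than your $(p-r)_+\leq p-(1-\eta)r$, but these are the same computation). Where you genuinely diverge is in the $\mathcal{A}_1\subset\mathcal{A}_3$ direction of (4): the paper goes through $\mathcal{A}_2\subset\mathcal{A}_3$ and constructs $q$ \emph{on the same sample space} by setting $q(x)=\bP(Z=0\mid X=x)\,p(x)$ and rescaling by a constant $C\geq 1$ on the set $\{x:\bP(Z=0\mid X=x)=1\}$, whereas you add a fresh point $\star$ outside the support of $p$ and set $q=(1-\eta)r+\eta\delta_\star$.

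You flag your enlargement as a ``pedantic'' subtlety, but it is actually more than that: the paper's construction breaks down precisely when the set $\{x:\bP(Z=0\mid X=x)=1\}$ is $p$-null, since then $\int q\,d\nu=\bP(Z=0)<1$ and no finite $C$ can normalize $q$. This really can happen --- e.g.\ take $\mathcal{X}=\mathbb{N}$, $r(n)=2^{-n}$, and $p(n)=r(n)(1-\eta+1/n)/(1-\eta+\ln 2)$; then $p/r$ attains its infimum only in the limit $n\to\infty$, so for the required $t=\TV(p,q)$ one would need $q(n)=(1-t)r(n)<p(n)$ for \emph{every} $n$, giving $\sum_n q(n)=1-t<1$, i.e.\ there is no valid $q$ on $\mathcal{X}$ at all. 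So $\mathcal{A}_1\subsetneq\mathcal{A}_3$ on a fixed space, and the paper's lemma as literally stated (and its proof of $\mathcal{A}_2\subset\mathcal{A}_3$) has a gap. Your enlargement trick is a correct and clean repair, and the honest way to state the lemma is that $\mathcal{A}_1=\mathcal{A}_2$ always, while $\mathcal{A}_3=\mathcal{A}_1$ holds once $q$ is permitted to place mass either at a point where $(1-t)r=p$ or on an auxiliary point as you do; in all the uses of this lemma in the paper only $\mathcal{A}_3\subset\mathcal{A}_1$ (which is unconditional) is actually invoked, so the downstream results are unaffected.
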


\begin{proof}
The first claim can be shown via the following inequalities:
\begin{align}
    r\leq \frac{p}{1-\eta}& \Rightarrow r - p \leq \eta r\nonumber \\
    & \Rightarrow \TV(r, p) = \sup_{A} \bP_r(A) - \bP_p(A)\leq \sup_{A}   \eta r(A) \leq \eta.
\end{align}
The second claim can be shown via taking $r = \frac{\min\{p,q\}}{1-\TV(p,q)}$.   From $\int_{\{x: p(x) > q(x) \}} (p(x) - q(x)) \nu(dx) = \TV(p, q)$ we can see that $r$ is a probability distribution. Furthermore, 
from $\TV(p, q)\leq \eta$, it is clear that
\begin{align}
    r \leq \frac{p}{1-\eta}, r \leq \frac{q}{1-\eta}. 
\end{align}
The third claim can be seen by
\begin{align}
    r'\leq \frac{r}{1-\eta} \leq \frac{p}{(1-\eta)^2}.
\end{align}
Now we show the equivalence of three sets in 
the fourth claim. 
We first show that $\mathcal{A}_1 \subset \mathcal{A}_2 $.
For any $r\leq \frac{p}{1-\eta}$, set distribution $q$ to satisfy that for any set $A$, $q(A) = \frac{p(A)  - (1-\eta)r(A) }{\eta}$. Then $q$ is a valid probability measure. We design the joint distribution of $X, Z$ such that
\begin{align*}
    X|(Z = 0) & \sim r, \\
    X|(Z = 1) & \sim q, \\
    \bP_p(Z = 0) & = 1-\eta, \\
    \bP_p(Z = 1) & = \eta.
\end{align*}
Then one can verify that $X\sim p$. We have found some $Z$ such that  $\bP_r[X\in A] = \bP_p[X\in A|Z=0], Z \in \{0, 1\}, \bP(Z=0)\geq 1-\eta$ for any measurable set $A$. This shows that $\mathcal{A}_1 \subset \mathcal{A}_2$.

We then show that $\mathcal{A}_2 \subset \mathcal{A}_3 $. Given a distribution $r\in \mathcal{A}_2$ and $p$, we choose a dominating measure $\nu$ such that $r\ll \nu, p\ll \nu$ and write the corresponding Radon--Nikodym derivatives as $r(x),p(x)$. Now the goal is to find some $q$ such that $r = \frac{\min(p,q)}{1-\TV(p,q)}$. We construct $q(x)$ as follows
\begin{align}
    q(x) = \begin{cases}  \bP(Z = 0| X=x)p(x), &\bP(Z = 0| X=x)< 1 \\ C\cdot p(x), & \bP(Z = 0| X=x) = 1 \end{cases}.
\end{align}
Here $C \geq 1$ is chosen such that $\int q(x) = 1$. Thus $\TV(p, q)$ can be computed as
\begin{align}
    \TV(p, q)  & = \int_{\{x: q(x)<p(x)\}} (p(x) - q(x)) dx \nonumber \\
    & =  \int_{\{x: \bP(Z = 0| X=x) < 1\}} (1- \bP(Z = 0| X=x))p(x) dx\nonumber \\
    & = \int_{\{x: \bP(Z = 0| X=x) < 1\}}  \bP(Z = 1| X=x)p(x) dx\nonumber\\
    & = \int_{\mathcal{X}}  \bP(Z = 1| X=x)p(x) dx\nonumber\\
    & = \bP(Z = 1) \leq \eta .
\end{align}
One can check that
\begin{align}
    \frac{\min(p(x), q(x))}{1-\TV(p, q)} = p(x \mid Z = 0) =  \begin{cases}  \frac{\bP(Z = 0| X=x) p(x)}{\bP(Z = 0)}, &\bP(Z = 0| X=x)< 1 \\ \frac{p(x)}{\bP(Z = 0)}, & \bP(Z = 0| X=x) = 1 \end{cases}
\end{align}
which shows that $\mathcal{A}_2 \subset \mathcal{A}_3$.

Lastly, we show that $\mathcal{A}_3 \subset \mathcal{A}_1$. This can be seen by the construction in the second claim. From $\int_{\{x: p(x) > q(x) \}} (p(x) - q(x)) = \TV(p, q)$ we can see that $r$ is a distribution. Furthermore, 
from $\TV(p, q)\leq \eta$, it is clear that
\begin{align}
    r \leq \frac{p}{1-\eta}, r \leq \frac{q}{1-\eta}. 
\end{align}

To show the fifth claim, if we know that for any measurable set $A$, $r_X(A) \leq \frac{p_X(A)}{1-\eta}$, then for any measurable function $f(X)$, and any measurable set $A$, we have
\begin{align}
    r_{f(X)}(A) = r( f^{-1}(A))\leq \frac{p(f^{-1}(A))}{1-\eta} = \frac{p_{f(X)}(A)}{1-\eta},
\end{align}
where $f^{-1}(A) = \{x \mid f(x)\in A\}$. 
\end{proof}

Now, we show that if a distribution has bounded Orlicz norm, then it is inside some resilient set $\GG_{W_\sF}$ defined in \eqref{eqn.def_G_TV_WF}. 
\begin{lemma}[Bounded Orlicz norm implies resilience]\label{lem.cvx_mean_resilience}
Given an Orlicz function $\psi$, assume 
\begin{align}
   \sup_{f\in\sF} \bE_p\left[\psi\left(\frac{\left|f(X) - \bE_p[f(X)]\right|}{\sigma}\right)\right] \leq 1
\end{align}
for some symmetric family $\sF$ and some $\sigma >0$.
For any $\eta \in [0, 1)$, we have
\begin{align}
        p \in \GG_{W_\sF}^{\TV}\left(    \frac{{\sigma\eta} \psi^{-1}(1/\eta)}{1-\eta} \wedge \sigma \psi^{-1}\left ( \frac{1}{1-\eta} \right), \eta\right ),
\end{align}
where $\GG_{W_\sF}^{\TV}$ is defined in~(\ref{eqn.def_G_TV_WF}),  $\psi^{-1}$ is the (generalized) inverse function of $\psi$.
\end{lemma}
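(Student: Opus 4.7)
The plan is to show directly that for every $f\in\sF$ and every $r\leq p/(1-\eta)$, the quantity $|\bE_r[f(X)]-\bE_p[f(X)]|$ is bounded by both $\sigma\psi^{-1}(1/(1-\eta))$ and $\sigma\eta\psi^{-1}(1/\eta)/(1-\eta)$. Since $\sF$ is symmetric, this controls $W_\sF(r,p)$, and taking the supremum over $r$ and $f$ gives membership in $\GG_{W_\sF}^{\TV}$. The symmetric cost function of $\sF$ lets us drop the absolute value when stating the bound in terms of $W_\sF$.

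First I would invoke Lemma~\ref{lem.deletionrproperties} (equivalence $\mathcal{A}_1=\mathcal{A}_2$) to represent $r$ as the conditional law of $X\mid Z=0$, where $Z\in\{0,1\}$ is a binary variable with $\bP(Z=0)\geq 1-\eta$ on an enlarged probability space carrying $X\sim p$. Writing $\tilde Y = f(X) - \bE_p[f(X)]$, the hypothesis gives $\bE[\psi(|\tilde Y|/\sigma)]\leq 1$ and $\bE[\tilde Y]=0$. The quantity to bound becomes $|\bE[\tilde Y\mid Z=0]|$.

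For the first bound, I would apply Jensen's inequality to the convex function $\psi$:
\begin{align*}
\psi\!\left(\frac{\bE[|\tilde Y|\mid Z=0]}{\sigma}\right)
\leq \bE\!\left[\psi(|\tilde Y|/\sigma)\mid Z=0\right]
\leq \frac{\bE[\psi(|\tilde Y|/\sigma)]}{\bP(Z=0)}
\leq \frac{1}{1-\eta},
\end{align*}
so $|\bE[\tilde Y\mid Z=0]|\leq \sigma\psi^{-1}(1/(1-\eta))$. For the second (sharper when $\eta$ is small) bound, I would use $\bE[\tilde Y]=0$ to rewrite $\bE[\tilde Y\mid Z=0]=-\tfrac{\bP(Z=1)}{\bP(Z=0)}\bE[\tilde Y\mid Z=1]$, and then apply Jensen on the $Z=1$ branch to obtain $\bE[|\tilde Y|\mid Z=1]\leq \sigma\psi^{-1}(1/\bP(Z=1))$. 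This gives
\begin{align*}
|\bE[\tilde Y\mid Z=0]| \leq \frac{\sigma}{1-\eta}\cdot \bP(Z=1)\,\psi^{-1}\!\left(\frac{1}{\bP(Z=1)}\right).
\end{align*}
Finally I would invoke Lemma~\ref{lem.psi_increasing} (with its parameter set to $1$), which asserts that $x\mapsto x\psi^{-1}(1/x)$ is non-decreasing, so the right side is maximized at $\bP(Z=1)=\eta$, yielding $|\bE[\tilde Y\mid Z=0]|\leq \sigma\eta\psi^{-1}(1/\eta)/(1-\eta)$.

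The whole argument is short and the main subtlety is purely bookkeeping: one must remember that the standard ``conditional Jensen'' step pays a factor $1/\bP(Z=j)$ on the right side, which is harmless for $Z=0$ but blows up for $Z=1$; the trade-off is exactly resolved by the monotonicity of $x\psi^{-1}(1/x)$, so the non-trivial input is Lemma~\ref{lem.psi_increasing}, and the only real pitfall is picking the wrong branch of $Z$ on which to apply Jensen.
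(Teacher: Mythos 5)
Your proposal is correct and takes essentially the same route as the paper's own proof: you condition on the binary variable $Z$ (the paper uses the equivalent event $E$ and its complement $E^c$), apply Jensen to the $Z=0$ branch to get the $\sigma\psi^{-1}(1/(1-\eta))$ bound and to the $Z=1$ branch together with the zero-mean identity $\bE[\tilde Y\mid Z=0]=-\tfrac{\bP(Z=1)}{\bP(Z=0)}\bE[\tilde Y\mid Z=1]$ to get the $\frac{\sigma\eta}{1-\eta}\psi^{-1}(1/\eta)$ bound, and finish with Lemma~\ref{lem.psi_increasing} to pass from $\bP(Z=1)$ to $\eta$. The small step you leave implicit but which is fine — using $\psi\geq 0$ to justify $\bE[\psi(|\tilde Y|/\sigma)\mid Z=j]\leq\bE[\psi(|\tilde Y|/\sigma)]/\bP(Z=j)$ — is exactly what the paper does as well.
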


\begin{proof}
The proof uses the property that for any $r\leq \frac{p}{1-\eta}$, there exists some event $E$ such that $\bP_p(E)\geq 1-\eta$ and $\bE_r[f(X)] = \bE_p[f(X)|E]$ for any measurable $f$~(Lemma~\ref{lem.deletionrproperties}).
    For any event $E$ with $\bP_p(E)\geq 1-\eta$, denote its compliment as $E^c$,  by the definition of conditional expectation, we have
\begin{align}
    \sup_{f\in\sF} \bE_p[f(X) | E] - \bE_p[f(X) ]  & = \sup_{f\in\sF} \frac{\bP_p(E^c)}{1-\bP_p( E^c)}\bE_p\left[f(X)  - \bE_p[f(X)]|E^c\right] 
\end{align}
By the bounded condition and convexity of $\psi$, one can see
\begin{align}
    1 & \geq \sup_{ f\in\sF} \bE_p\left[\psi\left(\frac{|f(X)-\bE_p[f(X)]|}{\sigma}\right)\right] \nonumber \\
    & \geq \sup_{ f\in\sF} \bP_p(E^c)\bE_p\left[\psi\left(\frac{|f(X)-\bE_p[f(X)]|}{\sigma}\right) \mid E^c\right] \nonumber \\
    & \geq  \sup_{f\in\sF} \bP_p(E^c)\psi\left(\frac{|\bE_p\left[f(X)-\bE_p[f(X)] \mid E^c\right] |}{\sigma}\right).
\end{align}
By definition of $\psi^{-1}$, this gives us 
\begin{align}
     \sup_{f\in\sF} \bE_p[f(X) | E] - \bE_p[f(X) ] & \leq  \sigma\frac{\bP_p(E^c)}{1-\bP_p( E^c)} \psi^{-1}(1/\bP_p(E^c)) \nonumber \\
     & \leq \frac{\sigma\eta}{1-\eta}\psi^{-1}(1/\eta).
     \end{align}
The last inequality uses the fact that $x\psi^{-1}(1/x)$ is a non-decreasing function from Lemma~\ref{lem.psi_increasing}. Thus we have $p \in \GG_{W_\sF}^{\TV}(\frac{\sigma\eta \psi^{-1}(1/\eta)}{1-\eta}, \eta)$ for any $\eta \in [0, 1)$.

Similarly, we have
\begin{align}
    \sup_{f\in \sF} \bE_p[f(X)|E] - \bE_p[f(X)] & \leq \sigma \psi^{-1}\left( \frac{1}{\bP_p(E)} \right) \\
    & \leq \sigma \psi^{-1}\left( \frac{1}{1-\eta} \right),
\end{align}
since $\bP_p(E)\geq 1-\eta$ and $\psi^{-1}(1/x)$ is a non-increasing function of $x$. It implies that
$p \in \GG_{W_\sF}^{\TV}(\sigma \psi^{-1}\left( \frac{1}{1-\eta} \right), \eta)$ for any $\eta \in [0, 1)$. This part can also be derived from~\citep[Lemma 10]{steinhardt2017resilience}.
\end{proof}

The results can be improved if we know a non-centered Orlicz norm bound.

\begin{lemma}[Lower bound on deleted distribution]\label{lem.noncentered_orlicz}
Given an Orlicz function $\psi$, assume 
\begin{align}
   \sup_{f\in\sF} \bE_p\left[\psi\left(\frac{\left|f(X)\right|}{\sigma}\right)\right] \leq 1
\end{align}
for some family $\sF$ and some $\sigma >0$. Assume $f(x)\geq 0$ for any $x\in\bR^d, f\in\sF$. 
For any $\eta \in [0, 1)$ and any $r \leq \frac{p}{1-\eta}$, we have
\begin{align}
        \bE_r[f(X)] \geq  \bE_p[f(X)] - \sigma\eta\psi^{-1}(1/\eta).%
\end{align}
where  $\psi^{-1}$ is the (generalized) inverse function of $\psi$.
\end{lemma}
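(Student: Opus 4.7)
The plan is to leverage the event‐representation of deletions from Lemma~\ref{lem.deletionrproperties} together with Jensen's inequality applied through $\psi$, essentially the same template used in Lemma~\ref{lem.cvx_mean_resilience}, but now exploiting the extra hypothesis $f \geq 0$ so that the ``deleted tail'' can be controlled one-sidedly rather than being symmetrically bounded by the centered Orlicz norm.

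First, I would invoke Lemma~\ref{lem.deletionrproperties} (the equivalence of $\mathcal{A}_1$ and $\mathcal{A}_2$) to realize $r \leq \tfrac{p}{1-\eta}$ as conditioning: there exists an event $E$ with $\bP_p(E) \geq 1-\eta$ such that $\bE_r[f(X)] = \bE_p[f(X)\mid E]$ for every measurable $f$. Writing $\alpha = \bP_p(E^c) \leq \eta$, the total expectation decomposition gives
\begin{align*}
\bE_p[f(X)] - \bE_r[f(X)] &= \bE_p[f(X)] - \bE_p[f(X)\mid E] \\
&= \alpha\bigl(\bE_p[f(X)\mid E^c] - \bE_p[f(X)\mid E]\bigr) \\
&\leq \alpha\,\bE_p[f(X)\mid E^c],
\end{align*}
where the last inequality uses $f \geq 0$ so that $\bE_p[f(X)\mid E] \geq 0$. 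This is the crucial place where the non-negativity hypothesis enters; without it we would need a two-sided Orlicz bound as in Lemma~\ref{lem.cvx_mean_resilience}.

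Next, I would bound $\bE_p[f(X)\mid E^c]$ via Jensen applied to the convex, non-decreasing function $\psi$. From $\bE_p[\psi(f(X)/\sigma)] \leq 1$ we get
\begin{equation*}
1 \;\geq\; \alpha\,\bE_p\!\left[\psi\!\left(\tfrac{f(X)}{\sigma}\right)\,\Big|\,E^c\right] \;\geq\; \alpha\,\psi\!\left(\tfrac{\bE_p[f(X)\mid E^c]}{\sigma}\right),
\end{equation*}
so $\bE_p[f(X)\mid E^c] \leq \sigma\,\psi^{-1}(1/\alpha)$. Combining this with the previous display yields
\begin{equation*}
\bE_p[f(X)] - \bE_r[f(X)] \;\leq\; \sigma\,\alpha\,\psi^{-1}(1/\alpha).
\end{equation*}

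Finally, to replace $\alpha$ with $\eta$ I would cite Lemma~\ref{lem.psi_increasing}, which says that $x \mapsto x\,\psi^{-1}(\sigma_0/x)$ is non-decreasing on $[0,\infty)$ (apply with $\sigma_0 = 1$ after rescaling). Since $\alpha \leq \eta$, this monotonicity gives $\alpha\,\psi^{-1}(1/\alpha) \leq \eta\,\psi^{-1}(1/\eta)$, completing the proof. I do not anticipate any real obstacle here — each step is a direct invocation of an earlier lemma, and the argument is strictly shorter than that of Lemma~\ref{lem.cvx_mean_resilience} because non-negativity eliminates the need to control both the upward and downward deviations of $f$.
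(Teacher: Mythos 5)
Your proof is correct and follows essentially the same route as the paper's: realize the deletion as conditioning on an event $E$, drop the $\bE_p[f\mid E]$ term using $f\geq 0$, bound $\bE_p[f\mid E^c]$ by $\sigma\psi^{-1}(1/\alpha)$ via Jensen and convexity of $\psi$, then pass from $\alpha$ to $\eta$ via Lemma~\ref{lem.psi_increasing}. The only cosmetic difference is the algebraic rearrangement — you subtract $\bE_p[f\mid E]$ on both sides before dropping it, whereas the paper keeps $\bE_p[f\mid E]\bP_p[E]$ and bounds $\bP_p[E]\leq 1$ — but the two are equivalent.
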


\begin{proof}
The proof uses the property that for any $r\leq \frac{p}{1-\eta}$, there exists some event $E$ such that $\bP_p(E)\geq 1-\eta$ and $\bE_r[f(X)] = \bE_p[f(X)|E]$ for any measurable $f$~(Lemma~\ref{lem.deletionrproperties}).
    For any event $E$ with $\bP_p(E)\geq 1-\eta$, denote its compliment as $E^c$,  by the definition of conditional expectation, we have $\forall f\in\sF$,
\begin{align}
     \bE_p[f(X)]  & = \bE_p[f(X) | E] \cdot \bP_p[E] +  \bE_p[f(X) | E^c] \cdot \bP_p[E^c]
\end{align}
By the bounded condition and convexity of $\psi$, one can see
\begin{align}
    1 & \geq \sup_{ f\in\sF} \bE_p\left[\psi\left(\frac{|f(X)|}{\sigma}\right)\right] \nonumber \\
    & \geq \sup_{ f\in\sF} \bP_p(E^c)\bE_p\left[\psi\left(\frac{|f(X)|}{\sigma}\right) \mid E^c\right] \nonumber \\
    & \geq  \sup_{f\in\sF} \bP_p(E^c)\psi\left(\frac{|\bE_p\left[f(X) \mid E^c\right] |}{\sigma}\right).
\end{align}
By definition of $\psi^{-1}$, this gives us 
\begin{align}
   \bE_p[f(X)]  & = \bE_p[f(X) | E] \cdot \bP_p[E] +  \bE_p[f(X) | E^c] \cdot \bP_p[E^c] \nonumber \\ 
   & \leq  \bE_p[f(X) | E] \cdot \bP_p[E] +  \sigma{\bP_p(E^c)} \psi^{-1}(1/\bP_p(E^c)) \nonumber \\
     & \leq \bE_p[f(X) | E]  + {\sigma\eta}\psi^{-1}(1/\eta).
     \end{align}
The last inequality uses the fact that $x\psi^{-1}(1/x)$ is a non-decreasing function from Lemma~\ref{lem.psi_increasing}. Thus we have the desired bound. 
\end{proof}

Lemma~\ref{lem.cvx_mean_resilience} can be improved (usually by a constant) if each $f(X)$ has moment generating function. 
\begin{lemma}\citep[Lemma 2.3]{massart2007concentration}
Let $\psi$ be some convex and continuously differentiable function on $[0,b)$ with $0<b\leq \infty$, such that $\psi(0) = \psi'(0) = 0$. Assume that for $\lambda \in (0,b)$,
\begin{align}
    \sup_{f\in \sF} \ln(\bE_p[\exp(\lambda (f(X) - \bE_p[f(X)]))]) \leq \psi(\lambda). 
\end{align}
Then, for any $\eta \in [0,1)$, 
\begin{align}
    p \in \GG_{W_\sF}\left ( \frac{\eta}{1-\eta} \psi^{*-1}(\ln(1/\eta)) \wedge \psi^{*-1}\left( \ln \left (\frac{1}{1-\eta}\right ) \right ), \eta \right),
\end{align}
where $\GG_{W_\sF}^{\TV}$ is defined in~(\ref{eqn.def_G_TV_WF}), $\psi^{*-1}$ is the generalized inverse of the Fenchel--Legendre dual of $\psi$:
\begin{align}
    \psi^*(x) = \sup_{\lambda \in (0,b)} (\lambda x - \psi(\lambda)).
\end{align}
\end{lemma}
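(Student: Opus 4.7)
The plan is to exploit the characterization from Lemma~\ref{lem.deletionrproperties} of $\eta$-deletions as conditionings on events of probability at least $1-\eta$: for any $r \leq \frac{p}{1-\eta}$ there is an event $E$ with $\bP_p(E)\geq 1-\eta$ such that $\bE_r[f(X)] = \bE_p[f(X)\mid E]$ for every measurable $f$. Fix $f\in\sF$ and write $\alpha = \bP_p(E^c) \leq \eta$. I will produce two upper bounds on $\bE_p[f(X)\mid E] - \bE_p[f(X)]$, one matching each term in the stated minimum, and the result then follows by taking the better of the two uniformly over $f$ and over deletions.

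For the bound $\psi^{*-1}(\ln(1/(1-\eta)))$, I apply the Cramer--Chernoff argument directly to $E$. The MGF hypothesis gives $\bE_p[\exp(\lambda(f(X)-\bE_p[f(X)]))]\leq e^{\psi(\lambda)}$, and restricting the integral to $E$ (of mass at least $1-\eta$) and dividing by $\bP_p(E)$ yields $\bE_p[\exp(\lambda(f(X)-\bE_p[f(X)]))\mid E] \leq e^{\psi(\lambda)}/\bP_p(E)$. Jensen followed by taking logs gives $\bE_p[f(X)\mid E] - \bE_p[f(X)] \leq \tfrac{1}{\lambda}(\psi(\lambda) + \ln(1/\bP_p(E)))$ for every admissible $\lambda$, and the standard variational identity $\inf_{\lambda>0} \tfrac{1}{\lambda}(\psi(\lambda)+c) = \psi^{*-1}(c)$ yields the bound $\psi^{*-1}(\ln(1/\bP_p(E)))$, which is at most $\psi^{*-1}(\ln(1/(1-\eta)))$ by monotonicity of $\psi^{*-1}$.

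For the bound $\tfrac{\eta}{1-\eta}\psi^{*-1}(\ln(1/\eta))$, I use the identity
\begin{equation*}
\bE_p[f(X)\mid E] - \bE_p[f(X)] \;=\; \frac{\alpha}{1-\alpha}\bigl(\bE_p[f(X)] - \bE_p[f(X)\mid E^c]\bigr),
\end{equation*}
and bound the right-hand factor by applying exactly the same Chernoff argument to $-f$ on the event $E^c$ (of mass $\alpha$). Symmetry of $\sF$ guarantees that the MGF hypothesis also holds with $-f$ in place of $f$, giving $\bE_p[f(X)] - \bE_p[f(X)\mid E^c] \leq \psi^{*-1}(\ln(1/\alpha))$. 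Consequently $\bE_p[f(X)\mid E] - \bE_p[f(X)] \leq \tfrac{\alpha}{1-\alpha}\psi^{*-1}(\ln(1/\alpha))$, so the remaining step is to show that $\phi(\alpha) := \tfrac{\alpha}{1-\alpha}\psi^{*-1}(\ln(1/\alpha))$ is non-decreasing on $(0,1)$ so that the worst case over $\alpha \in (0,\eta]$ is attained at $\alpha=\eta$.

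The monotonicity of $\phi$ is where the main work lies, and it is the step I expect to be the trickiest. Reparametrizing by $t = \psi^{*-1}(\ln(1/\alpha))$, i.e.\ $\alpha = e^{-\psi^*(t)}$, the monotonicity of $\phi$ in $\alpha$ becomes the monotonicity of $t \mapsto t/(e^{\psi^*(t)}-1)$ being non-increasing on $t \geq 0$. A direct differentiation reduces this to the inequality $1-e^{-\psi^*(t)} \leq t(\psi^*)'(t)$, which follows from two standard facts: the convexity of $\psi^*$ with $\psi^*(0)=0$ implies $t(\psi^*)'(t)\geq \psi^*(t)$, and the elementary inequality $x \geq 1-e^{-x}$ for $x \geq 0$ gives $\psi^*(t)\geq 1-e^{-\psi^*(t)}$. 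Combining these two proves the monotonicity of $\phi$ and completes the argument. The second variant of the statement (with $\GG_{W_\sF}^{\TV}$) then follows since both bounds are taken uniformly in $f\in\sF$ and the right-hand side of the $W_\sF$-norm is exactly $\sup_{f\in\sF}\bE_r[f(X)]-\bE_p[f(X)]$.
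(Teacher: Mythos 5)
Your proof is correct and follows essentially the same structure as the paper's: both split the claim into the two terms of the minimum, derive the first via a Chernoff argument on $E$, re-express the second as $\tfrac{\alpha}{1-\alpha}\bigl(\bE_p[f]-\bE_p[f\mid E^c]\bigr)$ with a Chernoff bound on $E^c$, and conclude by monotonicity of $x\mapsto\tfrac{x}{1-x}\psi^{*-1}(\ln(1/x))$. Where the paper invokes Massart's Lemma 2.3 as a black box for the Chernoff step and asserts the monotonicity without justification, you correctly re-derive the former from the MGF hypothesis and supply a valid proof of the latter via the reparametrization $t=\psi^{*-1}(\ln(1/\alpha))$, filling in details the paper leaves implicit.
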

In particular, if $\psi(\lambda) = \frac{\lambda^2 \sigma^2}{2}$ for all $\lambda \in (0,\infty)$, then
\begin{align}
    p\in \GG_{W_\sF}\left ( \frac{\sigma\eta}{1-\eta} \sqrt{2 \ln(1/\eta)} \wedge \sigma \sqrt{2\ln(1/(1-\eta))} ,\eta \right).
\end{align}
\begin{proof}
Fix $f\in \sF$. It follows from~\citep[Lemma 2.3]{massart2007concentration} that for any set $E$ we have
\begin{align}
    \bE_p[f(X)|E] - \bE_p[f(X)] \leq \psi^{*-1}(\ln(1/\bP_p(E))). 
\end{align}
Hence, for any set $E$ such that $\bP_p(E)\geq 1-\eta$, we have
\begin{align}
    \bE_p[f(X)|E] - \bE_p[f(X)] \leq \psi^{*-1}(\ln(1/(1-\eta))),
\end{align}
where we used the fact that $\psi^*$ is a non-negative convex and non-decreasing function on $\mathbf{R}_+$. Regarding the second bound, we first write
\begin{align}
    \bE_p[f(X)] -\bE_p[f(X)|E] = \frac{\bP_p(E^c)}{1-\bP_p(E^c)} (\bE_p[f(X)|E^c] - \bE_p[f(X)]). 
\end{align}
Thus,
\begin{align}
     \bE_p[f(X)] -\bE_p[f(X)|E] & \leq \frac{\bP_p(E^c)}{1-\bP_p(E^c)} \psi^{*-1}(\ln(1/\bP_p(E^c))) \\
    & \leq \frac{\eta}{1-\eta} \psi^{*-1}(\ln(1/\eta)),
\end{align}
where in the last step we used the fact that $\frac{x}{1-x}\psi^{*-1}(\ln(1/x))$ is a non-decreasing function on $(0,1)$. 
\end{proof}

When $W_\sF(p, q) = \|\bE_p[XX^T] - \bE_q[XX^T] \|_2 = \sup_{v\in\bR^d, \|v\|_2\leq 1, \xi\in\{\pm 1 \}}  (\bE_p[\xi(v^TX)^2] - \bE_q[\xi(v^TX)^2])$, the two lemmas above provide a simple proof for the upper bound of population limit of second moment estimation under operator norm, which matches the results of Gaussian case in~\cite{gao2018robust} up to logarithmic factor. One can also design $\tTV_\mathcal{H}$ for this that achieves the same sample complexity as~\cite{gao2018robust} up to logarithmic factor, which is shown in Theorem~\ref{thm.tTV_joint_correct}. 
We further provide ways to give lower bound for the population limit in Lemma~\ref{lem.Wf_orlicz_lower_bound_limit}.

The resilient set is closely related to tail bounds, which is described in the following lemma. Similar results are also shown in literature~\citep[Lemma 2.4]{massart2007concentration},~\citep[Example 2.7]{steinhardt2018robust}.

\begin{lemma}[Resilience implies tail bounds]\label{lem.tail_bound}
If $p \in \GG_{W_\sF}^\TV(\rho,  \eta)$, then for every $f\in \sF$, 
\begin{align}
\bP_p\left( f(X)- \mathbb{E}_p[f(X)] \geq \frac{(1-\eta)\rho}{\eta} \right) & \leq \eta, \\
\bP_p\left( f(X)- \mathbb{E}_p[f(X)] \leq - \frac{(1-\eta)\rho}{\eta} \right) & \leq \eta.
\end{align} 
\end{lemma}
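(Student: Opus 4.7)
The plan is to prove the tail bounds by contraposition using a soft-deletion construction. Writing $g(x) = f(x) - \bE_p[f(X)]$ so that $\bE_p[g] = 0$, I will assume toward contradiction that $\alpha := \bP_p(g(X) \geq t) > \eta$ with $t = (1-\eta)\rho/\eta$, and exhibit a deletion $r \leq p/(1-\eta)$ for which $\bE_r[g] \leq -\rho$. Combined with symmetry of $\sF$ (so $-f \in \sF$), this will force $W_{\sF}(r,p) \geq \bE_r[-f] - \bE_p[-f] = -\bE_r[g] \geq \rho$, contradicting $p \in \GG^\TV_{W_\sF}(\rho,\eta)$ whenever the inequality is strict. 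The second tail inequality will then follow by applying the first to $-f \in \sF$.

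The explicit deletion I will use is $r = p\phi$ with
\[
\phi(x) \;=\; \frac{1}{1-\eta}\,\mathbf{1}\{g(x) < t\} \;+\; c\,\mathbf{1}\{g(x) \geq t\}, \qquad c \;=\; \frac{\alpha-\eta}{\alpha(1-\eta)}.
\]
The value of $c$ is forced by the normalization $\int\phi\,dp = (1-\alpha)/(1-\eta) + c\alpha = 1$, and because $\eta < \alpha \leq 1$ one has $0 \leq c \leq 1/(1-\eta)$. Thus $\phi \in [0, 1/(1-\eta)]$, which gives $r \leq p/(1-\eta)$ directly from the Radon--Nikodym definition (equivalently, via the characterization $\mathcal{A}_1$ in Lemma~\ref{lem.deletionrproperties}). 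Using $\bE_p[g] = 0$ to rewrite $\int_{g < t} g\,dp = -\int_{g \geq t} g\,dp$, together with the identity $c - 1/(1-\eta) = -\eta/(\alpha(1-\eta))$, a short calculation gives
\[
\bE_r[g] \;=\; \Big(c - \tfrac{1}{1-\eta}\Big)\int_{g \geq t} g\,dp \;=\; -\frac{\eta}{\alpha(1-\eta)}\int_{g \geq t} g\,dp \;\leq\; -\frac{\eta \cdot t\alpha}{\alpha(1-\eta)} \;=\; -\rho,
\]
where the last step uses $\int_{g\geq t} g\,dp \geq t\alpha$.

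The main subtlety I anticipate is that this final inequality is tight precisely when the upper tail $\{g \geq t\}$ is carried by a single atom at $g = t$; in that situation the construction yields $\bE_r[g] = -\rho$ rather than a strict violation of resilience. I plan to handle this via a thresholding perturbation: since $\bP_p(g > t) = \lim_{u \downarrow t}\bP_p(g \geq u)$ as a monotone limit, $\bP_p(g > t) > \eta$ forces the existence of some $t' > t$ with $\bP_p(g \geq t') > \eta$, and rerunning the construction at $t'$ delivers $\bE_r[g] \leq -t'\eta/(1-\eta) < -\rho$ and a strict contradiction. The only residual edge case is when the tail excess above $\eta$ is supplied entirely by an atom at $t$; that is the sole non-trivial point of the argument, and everything else reduces to the display above.
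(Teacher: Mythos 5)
Your soft-deletion density $\phi$ is exactly the deletion the paper's own proof considers --- conditioning on the complement of a (soft) event of mass $\eta$ inside $\{g\geq t\}$ and then applying Markov's inequality --- so your computation and the paper's are the same estimate in different coordinates; this is not a new route. Both give only the weak inequality $\bE_r[g]\leq-\rho$, and resilience forbids only the strict violation $W_\sF(r,p)>\rho$, so, as you noticed, neither argument is finished at that point.

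The ``residual edge case'' you flag --- the tail probability exceeds $\eta$ only because of an atom sitting exactly at $g=t$ --- is not a gap you can close: it is a genuine counterexample to the lemma as stated. Fix $\eta\in(0,1/2)$, $\rho>0$, $t=(1-\eta)\rho/\eta$, pick $\alpha\in(\eta,1/2]$, and let $p$ place mass $\alpha$ at a point with $g:=f-\bE_p[f]=t$ and mass $1-\alpha$ at a point with $g=-\alpha t/(1-\alpha)$, so $\bE_p[g]=0$. Any $r\leq p/(1-\eta)$ puts weight $w_1\in\left[\tfrac{\alpha-\eta}{1-\eta},\,\tfrac{\alpha}{1-\eta}\right]$ on the first atom, giving $\bE_r[g]=\tfrac{t(w_1-\alpha)}{1-\alpha}\in\left[-\rho,\ \rho\cdot\tfrac{\alpha}{1-\alpha}\right]\subseteq[-\rho,\rho]$; hence $p\in\GG_{W_\sF}^{\TV}(\rho,\eta)$ while $\bP_p(g\geq t)=\alpha>\eta$. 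What your construction actually proves --- and what the paper's argument proves once one restricts to sub-events of probability exactly $\eta$ --- is the open-tail bound $\bP_p(g>t)\leq\eta$, equivalently $\bP_p(g\geq a)\leq\eta$ for all $a>t$, and your thresholding perturbation already delivers exactly that. So trust your instinct: the atom-at-$t$ case cannot be salvaged, and the clean fix is to restate the lemma with a strict inequality inside the probability (or to assume $\sup_{r\leq p/(1-\eta)}W_\sF(r,p)<\rho$ strictly). None of the downstream uses in the paper are sensitive to this distinction, but the non-strict statement is not literally correct, and you were right to refuse to wave it away.
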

\begin{proof}
We adopt a similar proof as~\citep[Lemma 2.4]{massart2007concentration}. 
Note that the set  $ \GG_{W_\sF}(\rho,  \eta)$ can be alternatively written as
\begin{align}
      \GG_{W_\sF}^{\TV}(\rho, \eta
      ) = \{p  \mid  \sup_{p(E)\geq 1-\eta, f\in\sF} \bE_p[f(X)|E] - \bE_p[f(X)]  \leq \rho\}.
\end{align}
Note that for any event $E$, we have
\begin{align}
    \bP_p(E)(\bE_p[f(X)|E] - \bE_p[f(X)]) + \bP_p(E^c)( \bE_p[f(X)|E^c] - \bE_p[f(X)]) = 0.
\end{align}
It follows from the symmetry of $\sF$ that
\begin{align}
    \sup_{f\in\sF}\bP_p(E)( \bE_p[f(X)|E] - \bE_p[f(X)]) = \sup_{f\in\sF}  \bP_p(E^c)(\bE_p[f(X)|E^c] - \bE_p[f(X)]).
\end{align}
Thus we have
\begin{align}
     \sup_{p(E)\leq \eta, f\in\sF} \bE_p[f(X)  - \bE_p[f(X)] |E] \leq \frac{(1-\eta)\rho}{\eta}.
\end{align}
Taking $E = \{ f(X) - \bE_p[f(X)]\geq a\}$, by Markov's inequality, we have
\begin{align}
    a \leq \bE_p[f(X)  - \bE_p[f(X)] |E] \leq \frac{(1-\eta)\rho}{\bP_p(E)}.
\end{align}
Thus
\begin{align}
    \bP_p(f(X) - \bE_p[f(X)]\geq a)\leq \frac{(1-\eta)\rho}{a}.
\end{align}
The other side holds analogously. 

\end{proof}

\subsubsection{Key Lemmas for lower bound}

The below lemma shows that the population limit for resilient set is optimal up to constant under some topology assumption of $f$. We first show that if $p$ is inside some resilient set, then its deleted distribution $r$ is also inside some resilient set that has same population limit up to a multiplicative constant in many cases. 

\begin{lemma}[Resilience is approximately closed under deletion]\label{lem.resilience_closed}
Assume  $p\in  \GG_{W_\sF}^{\TV}(\rho, \eta(2-\eta))$ defined in Equation~\eqref{eqn.def_G_TV_WF}.
Then for any $r\leq \frac{p}{1-\eta}$, we have
\begin{align}
    r\in \GG_{W_\sF}^{\TV}(2\rho, \eta).
\end{align}
\end{lemma}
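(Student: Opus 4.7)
The plan is to reduce the claim to the composition property of deletions (part 3 of Lemma~\ref{lem.deletionrproperties}) together with the triangle inequality of the pseudonorm $W_\sF$. Fix $r$ with $r \leq \tfrac{p}{1-\eta}$, and take an arbitrary secondary deletion $r' \leq \tfrac{r}{1-\eta}$. The objective is to show that $W_\sF(r', r) \leq 2\rho$, since this uniform bound over $r'$ is exactly the statement $r \in \GG_{W_\sF}^\TV(2\rho, \eta)$.

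First I would invoke the composition property to get $r' \leq \tfrac{p}{(1-\eta)^2}$. The identity $(1-\eta)^2 = 1 - \eta(2-\eta)$ then rewrites this as $r' \leq \tfrac{p}{1-\eta(2-\eta)}$, so $r'$ is an $\eta(2-\eta)$-deletion of $p$. Next I would observe that, since $\eta \leq \eta(2-\eta)$ for all $\eta \in [0,1)$, the original deletion $r$ is \emph{also} an $\eta(2-\eta)$-deletion of $p$ (because $\tfrac{p}{1-\eta} \leq \tfrac{p}{1-\eta(2-\eta)}$). Thus both $r$ and $r'$ are legitimate $\eta(2-\eta)$-deletions of $p$, and the hypothesis $p \in \GG_{W_\sF}^\TV(\rho, \eta(2-\eta))$ directly yields $W_\sF(r, p) \leq \rho$ and $W_\sF(r', p) \leq \rho$.

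Finally, since $\sF$ is symmetric, $W_\sF$ is a pseudonorm and in particular satisfies the triangle inequality, so
\begin{equation*}
W_\sF(r', r) \;\leq\; W_\sF(r', p) + W_\sF(p, r) \;\leq\; 2\rho.
\end{equation*}
Taking the supremum over all $r' \leq \tfrac{r}{1-\eta}$ gives the desired conclusion $r \in \GG_{W_\sF}^\TV(2\rho, \eta)$.

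I do not anticipate a real obstacle here; the only points that deserve care are (i) checking that the assumed resilience level $\eta(2-\eta)$ is \emph{exactly} the right one to absorb both $r$ and $r'$ as deletions of $p$ (this is why the algebraic identity $(1-\eta)^2 = 1-\eta(2-\eta)$ is crucial), and (ii) confirming that the symmetry assumption on $\sF$ used in the definition \eqref{eqn.def_WF} legitimately yields the symmetric triangle inequality for $W_\sF$ in the form $W_\sF(r',r) \leq W_\sF(r',p)+W_\sF(p,r)$. Both are straightforward bookkeeping.
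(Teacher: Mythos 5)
Your proof is correct and matches the paper's argument essentially line by line: both pass to the composed deletion $r' \leq p/(1-\eta)^2 = p/(1-\eta(2-\eta))$, observe that $r$ itself is also an $\eta(2-\eta)$-deletion of $p$ since $\eta \leq \eta(2-\eta)$, and conclude by the triangle inequality for $W_\sF$. The two side points you flag are exactly the (implicit) reasons the paper's one-line chain of inequalities goes through, so there is nothing to fix.
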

\begin{proof}
From $r\leq \frac{p}{1-\eta}$,  we know that for any $q \leq \frac{r}{1-\eta}$, we have
\begin{align}
    q \leq \frac{p}{(1-\eta)^2} =\frac{p}{1-2\eta + \eta^2}.
\end{align}
Thus from $p \in \GG_{W_\sF}^{\TV}(\rho, \eta(2-\eta))$, $\eta(2-\eta) \geq \eta$, we have
\begin{align}
    W_\sF(q, p) \leq \rho,\\
    W_\sF(r, p) \leq \rho.
\end{align}
Thus
\begin{align}
    \sup_{q\leq\frac{r}{1-\eta}} W_\sF(q, r) \leq  \sup_{q\leq\frac{r}{1-\eta}}  W_\sF(q, p) + W_\sF(p, r) \leq 2\rho.
\end{align}
\end{proof}

\begin{lemma}[Population limit for $\GG_{W_\sF}^{\TV}(\rho, \eta)$ is optimal]\label{lem.population_lower_bound} 
Assume $\epsilon \in [0, 1)$ is the perturbation level, $\eta \geq \epsilon$, and there exist a distribution $p_1 \in \GG_{W_\sF}(\rho/2, \epsilon(2-\epsilon))$ and some distribution $r_1\leq \frac{p_1}{1-\epsilon}$ such that 
\begin{align}
    W_\sF(p_1, r_1) \geq c\rho.
\end{align}
Then the population limit of the set $ \GG_{W_\sF}^{\TV}(\rho, \eta)$ under perturbation level $\epsilon$ is lower bounded by $c\rho/2$, i.e.,
\begin{align}
     \inf_{q(p)} \sup_{(p^*,p): \TV(p^*, p)\leq \epsilon, p^*\in \GG_{W_\sF}(\rho, \eta)} W_\sF(p^*, q) \geq \frac{1}{2}c\rho.
\end{align}
This matches the upper bound of population limit in~(\ref{eqn.wfpopulationlimitmodulusbound}) for $\GG_{W_\sF}^{\TV}$ up to a constant. Furthermore, for randomized decision rule $q_r(p)$, 
\begin{align}
     \inf_{q_r(p)} \sup_{(p^*,p): \TV(p^*, p)\leq \epsilon, p^*\in \GG_{W_\sF}(\rho, \eta)} \bP(W_\sF(p^*, q_r(p)) \geq \frac{c\rho}{2}) & \geq \frac{1}{2}.
\end{align}
\begin{proof}
From Lemma~\ref{lem.resilience_closed} and $p_1\in\GG_{W_\sF}^{\TV}(\rho/2, \epsilon(2-\epsilon))$, we know that $r_1\in \GG_{W_\sF}(\rho, \epsilon)$. From the assumption we also know that $p_1$ is inside the same set. Assume the observed corrupted distribution is $p = p_1$. Then, 
\begin{align}
     \inf_{q(p)} \sup_{(p^*,p): \TV(p^*, p)\leq \epsilon, p^*\in \GG_{W_\sF}(\rho, \eta)} W_\sF(p^*, q(p)) & \geq \inf_{q} \sup_{p^*: \TV(p^*, p_1)\leq \epsilon, p^*\in \GG_{W_\sF}(\rho, \eta)} W_\sF(p^*, q) \nonumber \\
     & \geq \frac{1}{2} \inf_{q} (W_\sF(p_1, q) + W_\sF(q, r_1)) \nonumber \\
     & \geq \frac{1}{2} W_\sF(p_1, r_1) \nonumber \\
     & \geq \frac{c \rho}{2}.
\end{align}
Thus we know that the population limit of  $\GG_{W_\sF}(\rho, \epsilon)$ is lower bounded by $c\rho/2$. From Lemma~\ref{lem.minimax_random}, we know that for randomized decision rule $q_r(p)$, 
\begin{align*}
     \inf_{q_r(p)} \sup_{(p^*,p): \TV(p^*, p)\leq \epsilon, p^*\in \GG_{W_\sF}(\rho, \eta)} \bP(W_\sF(p^*, q_r(p)) \geq \frac{c\rho}{2}) & \geq \frac{1}{2} \inf_{q_r(p)} (\bP(W_\sF(p_1, q_r(p)) \geq \frac{c\rho}{2}) \nonumber \\ &  + \bP(W_\sF(r_1, q_r(p))  \geq \frac{c\rho}{2})) \nonumber \\ 
     &
      \geq \frac{1}{2}.
\end{align*}
\end{proof}
\end{lemma}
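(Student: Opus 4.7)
The approach will be the classical Le Cam two-point method, taking the two candidate true distributions to be $p_1$ itself and its deletion $r_1$, with the adversary producing the shared corrupted observation $p = p_1$ from either candidate.

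First I need to verify that both $p_1$ and $r_1$ lie in the assumed family $\GG_{W_\sF}^{\TV}(\rho, \eta)$. The resilient set is monotone---increasing in $\rho$ and (since enlarging $\eta$ loosens the deletion constraint $r \leq p/(1-\eta)$) decreasing in $\eta$---so the hypothesis $p_1 \in \GG_{W_\sF}^{\TV}(\rho/2, \epsilon(2-\epsilon))$ automatically places $p_1$ inside $\GG_{W_\sF}^{\TV}(\rho, \epsilon)$. For $r_1$, I will invoke Lemma~\ref{lem.resilience_closed} (resilience is approximately closed under deletion), which applied with $\eta = \epsilon$ says that any $\epsilon$-deletion of a distribution resilient with parameter $(\rho/2, \epsilon(2-\epsilon))$ is itself resilient with parameter $(\rho, \epsilon)$. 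Both points therefore lie in the relevant resilient set.

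Next, Lemma~\ref{lem.deletionrproperties} guarantees that a deletion $r_1 \leq p_1/(1-\epsilon)$ satisfies $\TV(p_1, r_1) \leq \epsilon$, so both pairs $(p^* = p_1, p = p_1)$ and $(p^* = r_1, p = p_1)$ are admissible under the $\TV$-$\epsilon$ corruption model. The deterministic lower bound then drops out of the triangle inequality for the pseudonorm $W_\sF$: for any deterministic estimator output $q$,
\[
W_\sF(p_1, q) + W_\sF(r_1, q) \;\geq\; W_\sF(p_1, r_1) \;\geq\; c\rho,
\]
so at least one summand must exceed $c\rho/2$, which is the promised worst-case error. The randomized version is obtained by invoking Lemma~\ref{lem.minimax_random}, which is the standard device for lifting such a two-point argument from expectations to tail probabilities via an indicator-function decomposition.

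The proof is essentially just parameter bookkeeping once the right two candidate distributions are identified, so I do not anticipate a genuine technical obstacle. The only subtle point worth flagging is why the hypothesis uses the stronger parameter $(\rho/2, \epsilon(2-\epsilon))$ rather than $(\rho, \epsilon)$---this slack is exactly what Lemma~\ref{lem.resilience_closed} consumes when propagating resilience through the deletion from $p_1$ to $r_1$, and it is the same constant-factor gap that prevents the lower bound $c\rho/2$ from matching the modulus upper bound of $2\rho$ exactly.
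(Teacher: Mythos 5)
Your proof is correct and follows essentially the same route as the paper's: the identical two-point construction with $p^* \in \{p_1, r_1\}$ observed through $p = p_1$, the same appeal to Lemma~\ref{lem.resilience_closed} to place $r_1$ in the resilient set, the same triangle-inequality step for the pseudonorm, and the same use of Lemma~\ref{lem.minimax_random} for the randomized decision rule. The only difference is cosmetic---you spell out the monotonicity of $\GG_{W_\sF}(\rho,\eta)$ in $\rho$ and $\eta$ to place $p_1$ in $\GG_{W_\sF}(\rho,\epsilon)$, a step the paper states without elaboration.
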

This lemma combined with Theorem~\ref{thm.G_fundamental_limit} empowers us to show that under appropriate choice of $W_\sF$ and $\rho$, one can tightly bound the information-theoretic limit of $\GG_{W_\sF}$ within universal constant factors.

The next two lemmas shows that we can also show similar results for Orlicz norm bounded set under some topological assumption of $f$. 
\begin{lemma}[Orlicz norm bounded set is approximately closed under deletion]\label{lem.Wf_orlicz_closed}
For some Orlicz function $\psi$, define
\begin{align}
   \GG_\psi(\sigma) = \{ p \mid \sup_{f\in\sF} \bE_p\left[\psi\left(\frac{|f(X) - \bE_p[f(X)]|}{\sigma}\right)\right] \leq 1 \}.
\end{align}
for some symmetric family $\sF$. Assume that there exist some distribution $p\in \GG_\psi(\sigma)$ and $\epsilon \leq 1/2$, then
\begin{align}
    r\in \GG_{(1-\epsilon)\psi}\left(5\sigma\right) = \left\{ p \mid \sup_{f\in\sF} \bE_p\left[(1-\epsilon)\psi\left(\frac{|f(X) - \bE_p[f(X)]|}{5\sigma}\right)\right] \leq 1 \right\}
\end{align}
\end{lemma}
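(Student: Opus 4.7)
The goal is to show that for every $f \in \sF$,
$(1-\epsilon)\,\bE_r\!\left[\psi\!\left(\tfrac{|f(X) - \bE_r[f(X)]|}{5\sigma}\right)\right] \leq 1.$
My first move is to reduce the computation under $r$ to one under $p$. Since $r \leq p/(1-\epsilon)$, for any nonnegative measurable $g$ we have $(1-\epsilon)\,\bE_r[g(X)] \leq \bE_p[g(X)]$, so it suffices to bound $\bE_p\!\left[\psi(|f(X)-\bE_r[f(X)]|/(5\sigma))\right]$ by $1$.

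The next step is to peel off the shift between $\bE_p[f]$ and $\bE_r[f]$ via the triangle inequality:
$|f(X)-\bE_r[f(X)]| \leq |f(X)-\bE_p[f(X)]| + |\bE_p[f(X)]-\bE_r[f(X)]|.$
Writing the argument of $\psi$ as a convex combination with weights $\tfrac15$ and $\tfrac45$ (matched to the $5$ in the denominator) and using convexity of $\psi$ with $\psi(0)=0$, I would split
\begin{align*}
\psi\!\left(\tfrac{|f-\bE_r f|}{5\sigma}\right)
&\leq \tfrac{1}{5}\psi\!\left(\tfrac{|f-\bE_p f|}{\sigma}\right) + \tfrac{4}{5}\psi\!\left(\tfrac{|\bE_p f - \bE_r f|}{4\sigma}\right).
\end{align*}
Taking expectation under $p$, the first term contributes at most $\tfrac{1}{5} \cdot 1 = \tfrac{1}{5}$ by the assumed Orlicz bound on $p$.

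For the deterministic second term, I would invoke Lemma~\ref{lem.cvx_mean_resilience} with $\eta = \epsilon$: since $p \in \GG_\psi(\sigma)$, one has $p \in \GG^{\TV}_{W_\sF}\!\left(\sigma\psi^{-1}(1/(1-\epsilon)),\epsilon\right)$, hence $|\bE_p[f(X)]-\bE_r[f(X)]| \leq \sigma\psi^{-1}(1/(1-\epsilon)) \leq \sigma\psi^{-1}(2)$ whenever $\epsilon \leq 1/2$. Then convexity and $\psi(0)=0$ give $\psi(\psi^{-1}(2)/4) \leq \tfrac{1}{4}\psi(\psi^{-1}(2)) \leq \tfrac{1}{2}$, so the second contribution is at most $\tfrac{4}{5}\cdot\tfrac{1}{2} = \tfrac{2}{5}$. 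Summing yields $\tfrac{1}{5}+\tfrac{2}{5} = \tfrac{3}{5} \leq 1$, which is exactly what was needed.

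The only delicate step is the quantitative shift bound in the third paragraph: one must confirm that the generalized inverse satisfies $\psi(\psi^{-1}(y)) \leq y$ in the sense used by the earlier lemma (and that the symmetry of $\sF$ gives a two-sided control $|\bE_p f - \bE_r f| \leq W_\sF(r,p)$). Once this is in hand, the rest is just the convex split with weights $(1/5,4/5)$ and the trivial monotonicity estimates. The constant $5$ is not sharp — any weight pair $(\alpha,1-\alpha)$ such that $\alpha \geq $ a small threshold works — but $5$ yields the cleanest arithmetic.
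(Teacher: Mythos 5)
Your proof is correct, and it uses the same three ingredients as the paper's proof — the triangle inequality to isolate the shift $|\bE_p[f]-\bE_r[f]|$, the change of measure $(1-\epsilon)\bE_r[g]\leq \bE_p[g]$ for nonnegative $g$, and Lemma~\ref{lem.cvx_mean_resilience} to bound that shift — but it arranges them rather differently. The paper works at the Orlicz-norm level under the deleted measure $r$: it applies the triangle inequality of $\|\cdot\|_{\tilde\psi,r}$ with $\tilde\psi=(1-\epsilon)\psi$, performs the change of measure inside the estimate of $\|f-\bE_p f\|_{\tilde\psi,r}$, and reads off the shift contribution as the norm of a constant, $|c|/\tilde\psi^{-1}(1)$, finishing via the $\frac{\sigma\eta\psi^{-1}(1/\eta)}{1-\eta}$ branch of Lemma~\ref{lem.cvx_mean_resilience} together with Lemma~\ref{lem.psi_increasing}. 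You instead push the change of measure to the very front so that the entire estimate runs under $p$, and replace the norm-subadditivity step by an explicit convexity split of $\psi$ with weights $(1/5,4/5)$; you also use the simpler $\sigma\psi^{-1}(1/(1-\eta))$ branch of the same lemma. The upshot is an elementary calculation that exposes the $(1/5,4/5)$ accounting behind the constant $5$ and avoids manipulating Orlicz norms altogether, at the cost of being slightly less modular. Your flagged caveat is harmless: $\psi(\psi^{-1}(y))\leq y$ holds for the paper's generalized inverse because an Orlicz function is convex and hence continuous on $(0,\infty)$, and the symmetry of $\sF$ does give the two-sided control $|\bE_p f-\bE_r f|\leq W_\sF(r,p)$, which is exactly the object Lemma~\ref{lem.cvx_mean_resilience} bounds. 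The argument is therefore complete.
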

\begin{proof}

We use $\|\cdot \|_{\psi, r}$ to represent the $\psi-$norm of $X\sim r$. Denote $\tilde \psi = (1-\epsilon)\psi$.
By triangle inequality,
\begin{align}\label{eqn.proof_closed_bound}
    \sup_{f\in\sF} \|f(X) - \bE_r[f(X)]\|_{\tilde \psi, r} & \leq    \sup_{f\in\sF} \|f(X) - \bE_p[f(X)]\|_{\tilde \psi, r} + \sup_{f\in\sF}\| \bE_p[f(X)] - \bE_r[f(X)]\|_{\tilde \psi, r}.
\end{align}
Now we bound the first term of RHS, note that $p\in\GG_\psi(\sigma)$ implies that
\begin{align}
    \sup_{f\in\sF} \bE_p\Big[\tilde \psi\big(\frac{|f(X) - \bE_p[f(X)]|}{\sigma}\big)\Big] = (1-\epsilon)\sup_{f\in\sF} \bE_p\Big[\psi\big(\frac{|f(X) - \bE_p[f(X)]|}{\sigma}\big)\Big] \leq  1-\epsilon.
\end{align}
Thus from $r\leq \frac{p}{1-\epsilon}$, we have 
\begin{align}
        \sup_{f\in\sF} \bE_r\Big[\tilde \psi\big(\frac{|f(X) - \bE_p[f(X)]|}{\sigma}\big)\Big] \leq  \frac{1}{1-\epsilon} \sup_{f\in\sF} \bE_p\Big[\tilde \psi\big(\frac{|f(X) - \bE_p[f(X)]|}{\sigma}\big)\Big] \leq 1.
\end{align}

From
Lemma~\ref{lem.cvx_mean_resilience} and the definition of $\psi^{-1}$,
the second term of RHS in Equation (\ref{eqn.proof_closed_bound}) is
\begin{align}
    \frac{\sup_{f\in\sF}|\bE_p[f(X)] - \bE_r[f(X)]|}{\tilde \psi^{-1}(1)} \leq   \frac{2\sigma\epsilon \psi^{-1}(1/\epsilon)}{\psi^{-1}(\frac{1}{1-\epsilon})}.
\end{align}

Combining the upper bound of two terms together,   we have
\begin{align}
    \sup_{f\in\sF} \|f(X) - \bE_r[f(X)]\|_{\tilde \psi, r} \leq \sigma\left(\frac{2\epsilon \psi^{-1}(1/\epsilon)}{\psi^{-1}(\frac{1}{1-\epsilon})}+1\right).
\end{align}

If we further assume that   that $\epsilon \leq 1/2$, by Lemma~\ref{lem.psi_increasing}, we know $\epsilon\psi^{-1}(1/\epsilon)\leq (1-\epsilon)\psi^{-1}(1/(1-\epsilon)) $. Thus
\begin{align}
     \sup_{f\in\sF} \|f(X) - \bE_r[f(X)]\|_{\tilde \psi, r} \leq \sigma\left(\frac{2\epsilon \psi^{-1}(1/\epsilon)}{\psi^{-1}(\frac{1}{1-\epsilon})}+1\right) \leq \sigma(\frac{2}{1-\epsilon}+1) \leq 5 \sigma. 
\end{align}
\end{proof}
Based on the above Lemma, we are able to show that under mild topological condition of the range of $f$, the upper bound we derive in Lemma~\ref{lem.cvx_mean_resilience} is optimal up to a constant. 
\begin{lemma}[Population limit  for Orlicz norm bounded set  is optimal] \label{lem.Wf_orlicz_lower_bound_limit}
For some Orlicz function $\psi$, define
\begin{align}
   \GG_\psi(\sigma) = \{ p \mid \sup_{f\in\sF} \bE_p\left[\psi\left(\frac{|f(X) - \bE_p[f(X)]|}{\sigma}\right)\right] \leq 1 \}.
\end{align}
for some symmetric family $\sF$. Assume that $\epsilon \leq 1/2$
and there exists some  function $f\in\sF$ that has range as a superset of $(-\infty, 0]$ or $ [0, +\infty)$. 
Then the population limit of  $\GG_{\psi}(\sigma)$ is lower bounded below:
\begin{align}
      \inf_{q(p)} \sup_{(p^*, p): \TV(p^*, p)\leq \epsilon, p^*\in \GG_{\psi}(\sigma)} W_\sF(p^*, q) \geq \frac{\sigma\epsilon\psi^{-1}(1/\epsilon)}{20},
\end{align}
where $\psi^{-1}$ is the (generalized) inverse function of $\psi$. This matches the upper bound in Lemma~\ref{lem.cvx_mean_resilience} up to a constant. Furthermore, for random decision rule $q(p)$, we also have
\begin{align}
     \inf_{q_r(p)} \sup_{(p^*,p): \TV(p^*, p)\leq \epsilon, p^*\in \GG_{W_\sF}(\rho, \eta)} \bP(W_\sF(p^*, q_r(p)) 
     \geq  \frac{ \sigma\epsilon\psi^{-1}(1/\epsilon)}{20}) 
     \geq  \frac{1}{2}.
\end{align}
\end{lemma}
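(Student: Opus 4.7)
The plan is to run a two-point minimax reduction of the same flavor as Lemma~\ref{lem.population_lower_bound}: I will exhibit a pair $p_1, r_1 \in \GG_\psi(\sigma)$ with $r_1 \leq p_1/(1-\epsilon)$ and $W_\sF(p_1, r_1) \gtrsim \sigma\epsilon\psi^{-1}(1/\epsilon)$. Because $\TV(p_1, r_1) \leq \epsilon$ (Lemma~\ref{lem.deletionrproperties}), the observation $p_1$ is consistent with either distribution as the true law, so any estimator $q$ satisfies
\[
\max\{W_\sF(p_1, q), W_\sF(r_1, q)\} \geq \tfrac{1}{2}W_\sF(p_1, r_1),
\]
which gives the deterministic lower bound; the randomized-rule version then comes from Lemma~\ref{lem.minimax_random} exactly as in Lemma~\ref{lem.population_lower_bound}.

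For the explicit construction, let $f^* \in \sF$ be the function whose range contains $[0,\infty)$ (the other case is symmetric since $\sF$ is symmetric, using $-f^* \in \sF$). Set $T := \sigma\psi^{-1}(1/\epsilon)$ and pick two points $x_0, x_1$ in the domain with $f^*(x_0) = 0$ and $f^*(x_1) = T$. Define
\[
p_1 := (1-\epsilon)\delta_{x_0} + \epsilon\delta_{x_1}, \qquad r_1 := \delta_{x_0}.
\]
Then $r_1 \leq p_1/(1-\epsilon)$ by construction, and
\[
W_\sF(p_1, r_1) \geq \bE_{p_1}[f^*(X)] - \bE_{r_1}[f^*(X)] = \epsilon T = \sigma\epsilon\psi^{-1}(1/\epsilon).
\]
The Orlicz expectation under $p_1$ along the direction $f^*$ is
\[
(1-\epsilon)\,\psi\!\left(\frac{\epsilon T}{\sigma'}\right) + \epsilon\,\psi\!\left(\frac{(1-\epsilon)T}{\sigma'}\right);
\]
convexity of $\psi$ with $\psi(0)=0$ yields $\psi(\lambda y) \leq \lambda\psi(y)$ for $\lambda \in [0,1]$, and combining with $\psi(\psi^{-1}(1/\epsilon)) \leq 1/\epsilon$ shows that, for $\sigma' = C\sigma$ with $C$ an absolute constant (one checks $C = 2$ suffices), the expectation is bounded by $1$. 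The point mass $r_1$ is trivially in $\GG_\psi(\sigma)$.

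The main obstacle is bookkeeping two kinds of constants. First, the rescaling $\sigma \mapsto C\sigma$ above must be absorbed—this is why the stated constant is $1/20$ rather than $1/2$, and combining the factor $\tfrac{1}{2}$ from the two-point argument with $C$ from the Orlicz verification gives room. Second, and more subtly, the two-point distribution $p_1$ must satisfy the Orlicz bound uniformly over all $f \in \sF$, not just over $\pm f^*$. When $\sF$ has additional members, one may choose $x_0, x_1$ to differ only along a direction aligned with $f^*$ so that other $f$'s contribute nothing to the centered deviation; in the worst case, invoking Lemma~\ref{lem.Wf_orlicz_closed} in the form ``$p_1$ lies in the enlarged class $\GG_{(1-\epsilon)\psi}(5\sigma)$'' absorbs the loss into another constant. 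Either way, the scaling in $\epsilon$ and $\sigma$ is unaffected, only the absolute constant, and the bound $\sigma\epsilon\psi^{-1}(1/\epsilon)/20$ follows.
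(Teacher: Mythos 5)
Your proof is essentially the paper's: a two-point construction along one direction $f^*$, Orlicz verification, then Le Cam and Lemma~\ref{lem.minimax_random}. The one cosmetic difference is that the paper parks $p_1$ in the auxiliary class $\GG_{\psi/(1-\epsilon)}(\sigma/5)$ and uses Lemma~\ref{lem.Wf_orlicz_closed} to place both $p_1$ and every deletion $r_1 \leq p_1/(1-\epsilon)$ inside $\GG_\psi(\sigma)$, whereas you verify the centered Orlicz bound for $p_1$ directly and take $r_1$ to be the point mass (trivially in $\GG_\psi(\sigma)$); your route is cleaner and yields a better constant ($1/4$ vs.\ $1/20$), both of which suffice.

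One small flag: your fallback sentence invoking Lemma~\ref{lem.Wf_orlicz_closed} ``in the form $p_1$ lies in the enlarged class'' to handle the supremum over all $f\in\sF$ is not what that lemma does — it compares a distribution with its deletions under a single $f$, not across members of $\sF$ — so that sentence does not actually discharge the concern. The concern is real but shared with the paper: both arguments implicitly place the two-point mass so that the Orlicz constraint is saturated only along $\pm f^*$ (which is easy for linear families like $\{v^\top x\}$ by putting the spike on the line $\mathbf{span}(v^*)$, but is stated only informally in general). This does not affect the correctness of the approach, just its level of rigor, and the rigor gap is the same one the paper leaves.
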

\begin{proof}

From Lemma~\ref{lem.Wf_orlicz_closed}, we know that 
\begin{align}
    p_1\in \GG_{\psi / (1-\epsilon)}(\sigma/5) \Rightarrow  \forall r_1 \leq \frac{p_1}{1-\epsilon}, p_1, r_1 \in \GG_{\psi}(\sigma).
\end{align}
Thus here we would like to show that 
there exist a distribution $p_1 \in \GG_{\psi / (1-\epsilon)}(\sigma/5)$ and some distribution $r_1\leq \frac{p_1}{1-\epsilon}$ such that
\begin{align}
    W_\sF(p_1, r_1) \geq C_1\sigma \epsilon\psi^{-1}(1/\epsilon),
\end{align}

Assume the range of some $f\in\sF$ is a superset of $[0, +\infty)$. We construct the distribution $p_1$ as follows,
\begin{align}
     \bP_{p_1}(f(X) =  t) &= \left\{ \begin{array}{cl}
     \epsilon,    & t  = \sigma {\psi^{-1}((1-\epsilon)/\epsilon)}/5   \\
    1-\epsilon,   & t   = 0\\
    0 ,    & \text{otherwise}
    \end{array}\right. \\
\end{align}
Then we have
\begin{align}
    \bE_{p_1}\left[\psi\left(\frac{f(X)}{\sigma/5}\right)/ (1-\epsilon)\right] = 1, 
\end{align}
which means that $p_1 \in \GG_{\psi / (1-\epsilon)}(\sigma/5)$, furthermore, we can design $r\leq \frac{p_1}{1-\epsilon}$ by deleting the non-zero part of $p_1$, thus for $\epsilon \leq 1/2$, we have
\begin{align}
    W_\sF(p_1, r_1)& \geq |\bE_{p_1}[f(X)] - \bE_r[f(X)]| \geq \frac{\sigma}{5} \epsilon \psi^{-1}(\frac{1-\epsilon}{\epsilon}) \geq \frac{\sigma}{5} \epsilon \psi^{-1}(\frac{1}{2\epsilon})\nonumber \\ 
    & \geq \frac{\sigma}{10} \epsilon (\psi^{-1}(\frac{1}{\epsilon})+\psi^{-1}(0)) = \frac{\sigma}{10} \epsilon \psi^{-1}(\frac{1}{\epsilon}).
\end{align}
The last two inequality is from Jensen's inequality and the fact that $\psi^{-1}$ is a concave function, $\psi^{-1}(0) =0$. 
Thus if the observed corrupted population distribution $p = p_1$, 
\begin{align}
    \inf_{q(p)} \sup_{(p^*, p): \TV(p^*, p)\leq \epsilon, p^*\in \GG_{\psi}(\sigma)} W_\sF(p^*, q) & \geq \inf_{q(p_1)} \sup_{p^*: \TV(p^*, p_1)\leq \epsilon, p^*\in \GG_{\psi}(\sigma)} W_\sF(p^*, q) \nonumber \\
     & \geq \frac{1}{2} \inf_{q} \left ( W_\sF(p_1, q) + W_\sF(q, r_1) \right )\nonumber \\
     & \geq \frac{1}{2} W_\sF(p_1, r_1) \nonumber \\
     & \geq \frac{ \sigma\epsilon\psi^{-1}(1/\epsilon)}{20}.
\end{align}

From Lemma~\ref{lem.minimax_random}, we know that for randomized decision rule $q_r(p)$, 
\begin{align}
     & \inf_{q_r(p)} \sup_{(p^*,p): \TV(p^*, p)\leq \epsilon, p^*\in \GG_{W_\sF}(\rho, \eta)} \bP(W_\sF(p^*, q_r(p)) 
     \geq  \frac{ \sigma\epsilon\psi^{-1}(1/\epsilon)}{20}) \nonumber  \\  \geq & \frac{1}{2} \inf_{q_r(p)} (\bP(W_\sF(p_1, q_r(p)) \geq \frac{ \sigma\epsilon\psi^{-1}(1/\epsilon)}{20}) + \bP(W_\sF(r_1, q_r(p)) \geq \frac{ \sigma\epsilon\psi^{-1}(1/\epsilon)}{20})) \nonumber \\ 
     \geq & \frac{1}{2}.
\end{align}

\end{proof}

\section{Related discussions and remaining proofs in Section~\ref{sec.finite_sample_algorithm_weaken}}

\subsection{Proof of Lemma~\ref{lemma.vcinequality}}\label{Appendix.proof_vcinequality}

\begin{proof}
The first statement follows from the VC inequality~\citep[Chap 2, Chapter 4.3]{devroye2012combinatorial}. Now we prove the second statement. Fix $f \in \mathcal{H}$ and denote $M = |\mathcal{H}|$. By the Dvoretzky-Kiefer-Wolfowitz inequality~\cite{dvoretzky1956asymptotic}, with probability $1 - 2\exp(-2n\epsilon^2)$ we have $|\bP_{\hat{p}_n}[f(x) \geq t] - \bP_{p}[f(x) \geq t]| \leq \epsilon$ for all $t \in \bR$.
Union bounding over $f \in \mathcal{H}$, we have that 
$\tTV_{\mathcal{H}}(\hat{p}_n, p) \leq \epsilon$ with probability at least $1 - 2M\exp(-2n\epsilon^2)$. 
Solving for $\delta$, we obtain $\epsilon = \sqrt{\log(2M/\delta)/2n}$, which proves the 
lemma.
\end{proof}

\subsection{Proof of Theorem~\ref{thm.tTV_mean}}\label{proof.mean_psi}

We rely on the combination of the two lemmas. First, we have the population result that $\GG(\psi)\subset \GG_{\mathsf{mean}}^{\TV}$ in the following lemma.
\begin{lemma}
For any Orlicz function $\psi$, assume that
\begin{align}  
   \sup_{v\in\bR^d, \|v \|_*=1} \bE_p\left[\psi\left(\frac{|\langle v, X - \bE_p[X]\rangle|}{\sigma}\right)\right] \leq 1,
\end{align}
where $\| \cdot\|_*$ is the dual norm of $\|\cdot\|$. Then:
\begin{enumerate}
    \item For any $\eta \in [0, 1)$, we have 
$p \in \GG^\TV_{\mathsf{mean}}\left(\frac{\sigma\eta \psi^{-1}(1/\eta)}{1-\eta}, \eta\right )$;
\item The population limit for the set of distributions satisfying~(\ref{eqn.boundedpsimeanexample}) is  $\Theta(\sigma\epsilon\psi^{-1}(\frac{1}{2\epsilon}))$ for $\epsilon < 0.499$. 
\end{enumerate}
\end{lemma}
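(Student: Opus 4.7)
The plan is to establish the two claims separately: the first is a direct specialization of the Orlicz-norm-implies-resilience result to the linear functional family $\sF = \{\langle v, \cdot \rangle \mid \|v\|_* = 1\}$, and the second combines the corresponding upper bound on the modulus with a matching lower bound via a two-point construction.

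For claim (1), I would take $\sF = \{x \mapsto \langle v, x\rangle : \|v\|_* = 1\}$ so that $W_\sF(p, q) = \sup_{\|v\|_* = 1} \langle v, \mu_p - \mu_q\rangle = \|\mu_p - \mu_q\|$ (by duality of $\|\cdot\|$ and $\|\cdot\|_*$). Then the hypothesis on $p$ is precisely that $\sup_{f \in \sF} \bE_p[\psi(|f(X) - \bE_p[f(X)]|/\sigma)] \leq 1$, and $\GG^\TV_{\mathsf{mean}}(\rho, \eta)$ coincides with $\GG^\TV_{W_\sF}(\rho, \eta)$ from Equation~\eqref{eqn.def_G_TV_WF}. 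Hence Lemma~\ref{lem.cvx_mean_resilience} directly yields $p \in \GG^\TV_{\mathsf{mean}}\bigl(\tfrac{\sigma\eta\psi^{-1}(1/\eta)}{1-\eta}, \eta\bigr)$.

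For the upper bound in claim (2), I would apply Lemma~\ref{lem.G_TV_mean_modulus} together with Lemma~\ref{lemma.population_limit}: taking $\eta = 2\epsilon$ and using the resilience from claim (1), the modulus is bounded by $2 \cdot \tfrac{2\sigma\epsilon\psi^{-1}(1/(2\epsilon))}{1-2\epsilon} = O(\sigma\epsilon\psi^{-1}(1/(2\epsilon)))$ for $\epsilon < 0.499$. For the matching lower bound, I would invoke Lemma~\ref{lem.Wf_orlicz_lower_bound_limit} with the same $\sF$; since $\sF$ contains functions $f(x) = \langle v, x\rangle$ whose range is all of $\bR$ (so in particular a superset of $[0,\infty)$), the lemma provides a two-point construction showing $\inf_{\theta(p)}\sup L(p^*, \theta(p)) \gtrsim \sigma\epsilon\psi^{-1}(1/\epsilon)$, which together with the concavity-based estimate $\psi^{-1}(1/\epsilon) \asymp \psi^{-1}(1/(2\epsilon))$ matches the upper bound up to constants.

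The main obstacle is largely bookkeeping rather than substantive: namely verifying that the generic Lemma~\ref{lem.Wf_orlicz_lower_bound_limit} can be invoked with the linear-functional $\sF$ in question, and checking the elementary bound $\psi^{-1}(1/\epsilon) \leq 2\psi^{-1}(1/(2\epsilon))$ (which follows from concavity of $\psi^{-1}$ and $\psi^{-1}(0) = 0$) so that upper and lower bounds match up to a universal constant on $\epsilon \in (0, 0.499)$. Both of these are straightforward given the lemmas already in hand, so this proof is essentially a packaging of previously established machinery.
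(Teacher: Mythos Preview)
Your proposal is correct and follows essentially the same approach as the paper: claim (1) is deduced from Lemma~\ref{lem.cvx_mean_resilience} by specializing to the linear family $\sF=\{\langle v,\cdot\rangle:\|v\|_*=1\}$, and claim (2) combines the modulus bound of Lemma~\ref{lem.G_TV_mean_modulus} with the lower-bound construction of Lemma~\ref{lem.Wf_orlicz_lower_bound_limit}. The only additional detail you spell out beyond the paper is the concavity argument $\psi^{-1}(1/\epsilon)\asymp\psi^{-1}(1/(2\epsilon))$, which is correct and worth including.
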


Note here that the conclusion $p\in\GG_{\mathsf{mean}}(\frac{\sigma\epsilon\psi^{-1}(1/\epsilon)}{1-\eta}, \eta)$ for any $\eta \in [0, 1)$ is a corollary of Lemma~\ref{lem.cvx_mean_resilience}. Thus the by Lemma~\ref{lem.G_TV_mean_modulus} the population limit when the perturbation level is $\epsilon$ for some $\epsilon<1/4$ is upper bounded by $C\sigma\epsilon\psi^{-1}(1/\epsilon)$ for some universal constant $C$. Furthermore, we show in Lemma~\ref{lem.Wf_orlicz_lower_bound_limit} that the population limit for Orlicz norm bounded set is lower bounded by $C\sigma\epsilon\psi^{-1}(1/\epsilon)$ when $\epsilon\leq \frac{1}{2}$.

Second, we have the finite-sample result for any distribution in the generalized resilience set.
\begin{lemma}
Denote $\tilde \epsilon = 2\epsilon + 2C^{\mathsf{vc}}\sqrt{\frac{d+1+\log(1/\delta)}{n}}$, where $C^{\mathsf{vc}}$ is from Lemma~\ref{lemma.vcinequality}. Assume $p^* \in \GG^\TV_{\mathsf{mean}}(\rho(\tilde \epsilon), \tilde \epsilon)$. For $\mathcal{H} = \{v^\top X \mid v\in \bR^d\}$, let $q $ denote the output of the projection algorithm $ \Pi(\hat{p}_n; \tTV_{\mathcal{H}}, \GG^\TV_{\mathsf{mean}}(\rho(\tilde \epsilon), \tilde \epsilon))$. Then, with probability at least $1-\delta$, 
\begin{align} 
    \|\bE_{p^*}[X] - \bE_q[X]\|\leq 2\rho(\tilde \epsilon) = 2\rho\Bigg(2\epsilon + 2C^{\mathsf{vc}}\sqrt{\frac{d+1+\log(1/\delta)}{n}}\Bigg).
\end{align} 
\end{lemma}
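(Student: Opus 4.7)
The plan is to combine the general projection bound in Proposition~\ref{prop.tTV_general} with a direct bound on the modulus of continuity for $\GG^{\TV}_{\mathsf{mean}}$ under $\tTV_{\mathcal{H}}$, using the VC inequality to control $\tTV_{\mathcal{H}}(p,\hat p_n)$.

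\textbf{Step 1: Controlling the empirical fluctuation.} Since $\mathcal{H}=\{v^{\top}x\mid v\in\bR^d\}$, the associated sub-level sets $\{\{x:v^{\top}x\geq t\}\mid v\in\bR^d,t\in\bR\}$ are closed half-spaces, which have VC dimension $d+1$. Lemma~\ref{lemma.vcinequality} therefore yields $\tTV_{\mathcal{H}}(p,\hat p_n)\leq C^{\mathsf{vc}}\sqrt{(d+1+\log(1/\delta))/n}$ with probability at least $1-\delta$. Combined with the hypothesis $\TV(p^*,p)\leq\epsilon$ and $\tTV_{\mathcal H}\leq\TV$, Proposition~\ref{prop.tTV_general} bounds $\|\bE_{p^*}[X]-\bE_q[X]\|$ by the modulus $\modu(\GG^{\TV}_{\mathsf{mean}}(\rho(\tilde\epsilon),\tilde\epsilon),\tilde\epsilon,\tTV_{\mathcal{H}},\|\cdot\|)$ with $\tilde\epsilon=2\epsilon+2C^{\mathsf{vc}}\sqrt{(d+1+\log(1/\delta))/n}$.

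\textbf{Step 2: Reducing the modulus to one dimension.} Fix $p_1,p_2\in\GG^{\TV}_{\mathsf{mean}}(\rho(\tilde\epsilon),\tilde\epsilon)$ with $\tTV_{\mathcal H}(p_1,p_2)\leq\tilde\epsilon$. By duality of the norm $\|\cdot\|$, there exists $v^*\in\bR^d$ with $\|v^*\|_*=1$ such that
\[
\|\bE_{p_1}[X]-\bE_{p_2}[X]\|=v^{*\top}(\bE_{p_1}[X]-\bE_{p_2}[X]),
\]
and by definition of $\tTV_{\mathcal H}$, the one-dimensional random variables $v^{*\top}X$ under $p_1$ and $p_2$ satisfy $\sup_t|\bP_{p_1}(v^{*\top}X\geq t)-\bP_{p_2}(v^{*\top}X\geq t)|\leq\tilde\epsilon$.

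\textbf{Step 3: Mean-cross plus resilience.} Apply the mean-cross lemma (Lemma~\ref{lem.mean_cross_tv}) to the pushforwards of $p_1,p_2$ along $v^*$, with the roles chosen so that the deletions produce crossed means in the desired direction: there exist $r_1\leq\frac{p_1}{1-\tilde\epsilon}$ and $r_2\leq\frac{p_2}{1-\tilde\epsilon}$ with $\bE_{r_1}[v^{*\top}X]\leq\bE_{r_2}[v^{*\top}X]$. By property (5) of Lemma~\ref{lem.deletionrproperties}, these $r_i$ correspond to genuine deletions of $p_i$ at the level $\tilde\epsilon$. Resilience of $p_1,p_2$ (applied with the dual direction $v^*$, for which $\|v^*\|_*=1$) then gives
\[
v^{*\top}\bE_{p_1}[X]-v^{*\top}\bE_{r_1}[X]\leq\rho(\tilde\epsilon),\qquad v^{*\top}\bE_{r_2}[X]-v^{*\top}\bE_{p_2}[X]\leq\rho(\tilde\epsilon).
\]
Adding these with the mean-cross inequality telescopes to $v^{*\top}(\bE_{p_1}[X]-\bE_{p_2}[X])\leq 2\rho(\tilde\epsilon)$, which by the choice of $v^*$ yields $\|\bE_{p_1}[X]-\bE_{p_2}[X]\|\leq 2\rho(\tilde\epsilon)$. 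Since $p_1,p_2$ were arbitrary, $\modu\leq 2\rho(\tilde\epsilon)$, finishing the proof.

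The only non-routine step is Step~3: one must verify that $\GG^{\TV}_{\mathsf{mean}}$ is defined in terms of all deletions (not merely those that preserve $\tTV_{\mathcal H}$-closeness), so that the resilience bound can be invoked for the specific one-dimensional deletions produced by the mean-cross lemma, and that the direction dependence in Lemma~\ref{lem.mean_cross_tv} can be oriented to match the sign of $v^{*\top}(\bE_{p_1}[X]-\bE_{p_2}[X])$ (handled by swapping $p_1\leftrightarrow p_2$ if necessary).
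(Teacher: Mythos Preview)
Your proof is correct and follows the mean-cross route the paper sketches in the introduction; the only difference from the paper's formal appendix proof is that the latter uses the midpoint lemma (Lemma~\ref{lem.tvt_midpoint}) instead of Lemma~\ref{lem.mean_cross_tv}, producing a single one-dimensional distribution $r$ that is a $\tilde\epsilon$-deletion of both pushforwards and then applying resilience twice via the triangle inequality. The paper explicitly notes that both tools give the same modulus bound, so the two arguments are interchangeable.

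One technical remark: your appeal to property~(5) of Lemma~\ref{lem.deletionrproperties} is in the wrong direction. That property pushes a $d$-dimensional deletion forward to a deletion of the pushforward, but you need the converse---lifting a one-dimensional deletion $\tilde r_i\leq \frac{v^*_\# p_i}{1-\tilde\epsilon}$ back to a $d$-dimensional $r_i\leq\frac{p_i}{1-\tilde\epsilon}$. This lift is routine (reweight $p_i$ by the Radon--Nikodym factor $\frac{d\tilde r_i}{d(v^*_\# p_i)}(v^{*\top}x)$), or more simply one can bypass the lift altogether: $d$-dimensional resilience of $p_i$ with respect to $\|\cdot\|$ immediately implies that each one-dimensional pushforward $v^*_\# p_i$ with $\|v^*\|_*\leq 1$ lies in the scalar resilient set $\GG^{\TV}_{\mathsf{mean}}(\rho(\tilde\epsilon),\tilde\epsilon)$, so the resilience bound can be applied directly in one dimension.
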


\begin{proof}
We provide the first half finite-sample results here and defer the population results to By Proposition~\ref{prop.tTV_general} it suffices to bound $\tTV_\mathcal{H}(p, \hat p_n)$ and to bound the modulus of continuity for $\GG^\TV$ under $\tTV_\mathcal{H}$. 

Since the VC dimension of hyper-planes in $\bR^d$ is $d+1$, it follows from Lemma~\ref{lemma.vcinequality} that $\tTV_\mathcal{H} \leq C^{\mathsf{vc}}\sqrt{\frac{d+1+\log(1/\delta)}{n}}$ with probability at least $1-\delta$. Now we upper bound the modulus, which equals
\begin{align}
    \sup_{p_1, p_2\in \GG^\TV_\mathsf{mean}(\rho(\tilde \epsilon),\tilde \epsilon): \tTV_\mathcal{H}(p_1, p_2) \leq \tilde \epsilon}  \|\bE_{p_1}[X] - \bE_{p_2}[X]\|.
\end{align}
The condition that $\tTV_\mathcal{H}(p_1, p_2) \leq \tilde \epsilon$ implies that for any $v\in\bR^d$,  $\|v\|_* = 1$, where $\|\cdot \|_*$ is the dual norm of $ \|\cdot \|$,
\begin{align}
    \sup_{t\in \bR}|\bP_{p_1}[v^\top X\geq t] - \bP_{p_2}[v^\top X \geq t]| \leq \tilde \epsilon.
\end{align}

It follows from Lemma~\ref{lem.tvt_midpoint} and  $p_1, p_2\in \GG^\TV_\mathsf{mean}(\rho(\tilde \epsilon), \tilde \epsilon)$ that there exist some distribution $r$ with $r(A) \leq \frac{p(A)}{1-\epsilon}$ and $r(A) \leq \frac{q(A)}{1-\epsilon}$ for any event $A\in\mathcal{A} = \{v^\top X\geq t, v^\top X\leq t \mid t\in\bR\}$, and   
\begin{align}
   | \bE_{p_1}[v^\top X]  - \bE_{r}[v^\top X]| \leq \rho(\tilde \epsilon), 
   | \bE_{r}[v^\top X]  - \bE_{p_2}[v^\top X]| \leq \rho(\tilde \epsilon).
\end{align}
This yields $\bE_{p_1}[v^{\top}X] - \bE_{p_2}[v^{\top}X] \leq 2\rho(\tilde \epsilon)$, or 
$v^{\top}(\mu_{p_1} - \mu_{p_2}) \leq 2\rho(\tilde \epsilon)$. Taking the maximum over $\|v\|_* = 1$ yields 
$\|\mu_{p_1} - \mu_{p_2}\| \leq 2\rho(\tilde \epsilon)$, where $\|\cdot \|_*$ is the dual norm of $ \|\cdot \|$.
which shows the modulus is small. The final conclusion follows from Proposition~\ref{prop.tTV_general}. 
\end{proof}

\subsection{Proof of Theorem~\ref{thm.linearregressiontvtildeproof}}\label{appendix.proof_linreg_tvt}

Similar to the case of mean estimation, the proof can be decomposed into the two lemmas below:
\begin{lemma}
 Assume the second moments of $X$ and $Z$ exist 
and satisfy the following conditions:
    \begin{align}
        \bE_{p^*}\bigg[\psi \bigg(\frac{(v^{\top}X)^2}{\sigma_1^2 \bE_{p^*}[(v^{\top}X)^2]}\bigg)\bigg] &\leq 1 \text{ for all } v \in \bR^d, \text{ and} \\
          \bE_{p^*}\left[\psi \left(Z^2/\sigma_2^2\right)\right] &\leq 1.
    \end{align}
Then $p^*\in  \GG^\TV( \rho, 8 \rho, \eta)$ for $\rho = 2(\frac{\sigma_1\sigma_2\eta \psi^{-1}(1/\eta)}{1-\eta})^2$, 
for all $ \eta$ satisfying $\sigma_1^2\eta\psi^{-1}(\frac{1}{\eta}) < \frac{1}{2}$. Here the bridge function and cost function  in Definition~\ref{def.G_TV} are
$B(p, \theta) = L(p, \theta) = \bE_p[(Y-X^{\top}\theta)^2 - (Y-X^{\top}\theta^*(p))^2]$.
The  population limit for the set satisfying the two conditions is $ \Theta( ({\sigma_1\sigma_2\epsilon \psi^{-1}(1/\epsilon)})^2)$ when the perturbation level $\epsilon$ satisfies  $\epsilon < 1/2$ and $ 2\sigma_1^2\epsilon\psi^{-1}(\frac{1}{2\epsilon}) < 1/2$.
\end{lemma}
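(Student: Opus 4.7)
The goal is to verify the two conditions defining $\GG^\TV(\rho,8\rho,\eta)$ for the linear-regression bridge/loss pair, using the Orlicz-moment hypotheses on $X$ and $Z$. Writing $M_q \triangleq \bE_q[XX^\top]$ and $Z^* \triangleq Y-X^\top\theta^*(p^*)$, the normal equations give the clean quadratic representations
\begin{align*}
    B(r,\theta) &= (\theta-\theta^*(r))^\top M_r (\theta-\theta^*(r)),\\
    L(p^*,\theta) &= (\theta-\theta^*(p^*))^\top M_{p^*}(\theta-\theta^*(p^*)),\\
    \theta^*(r)-\theta^*(p^*) &= M_r^{-1}\bE_r[XZ^*],
\end{align*}
so the whole argument reduces to controlling the $M_{p^*}$- and $M_r$-norms of two vectors. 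The main preparatory step is a two-sided comparison of $M_r$ with $M_{p^*}$. Applying Lemma~\ref{lem.noncentered_orlicz} to the nonnegative functional $(v^\top X)^2$ with its non-centered Orlicz bound $\|(v^\top X)^2\|_\psi \leq \sigma_1^2\,\bE_{p^*}[(v^\top X)^2]$ yields, for every $v$ and every $r\leq p^*/(1-\eta)$,
\begin{equation*}
   \bigl(1-\sigma_1^2\eta\psi^{-1}(1/\eta)\bigr)\,\bE_{p^*}[(v^\top X)^2]\ \leq\ \bE_r[(v^\top X)^2].
\end{equation*}
Under the hypothesis $\sigma_1^2\eta\psi^{-1}(1/\eta)<1/2$ this gives $M_{p^*}\preceq 2 M_r$, which I will use throughout.

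For the $\downarrow$ condition I bound $B(r,\theta^*(p^*))=\sup_{v}\frac{(v^\top \bE_r[XZ^*])^2}{v^\top M_r v}$. Since $\bE_{p^*}[v^\top X Z^*]=0$ by the normal equations for $p^*$, Lemma~\ref{lem.cvx_mean_resilience} applied to the (already centered) functional $v^\top X Z^*$ gives $|v^\top \bE_r[XZ^*]|\leq \|v^\top X Z^*\|_\psi\cdot\eta\psi^{-1}(1/\eta)/(1-\eta)$. The generalized Hölder inequality (Lemma~\ref{lem.multi_orlicz}) together with the two Orlicz hypotheses yields $\|v^\top X\cdot Z^*\|_\psi\leq \|v^\top X\|_{\psi\circ x^2}\,\|Z^*\|_{\psi\circ x^2}\leq \sigma_1\sigma_2\sqrt{v^\top M_{p^*} v}$. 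Combining these with $v^\top M_{p^*} v\leq 2 v^\top M_r v$ produces $B(r,\theta^*(p^*))\leq 2\bigl(\sigma_1\sigma_2\eta\psi^{-1}(1/\eta)/(1-\eta)\bigr)^2=\rho$ as required.

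For the $\uparrow$ condition I use the triangle inequality in the semi-norm $\|u\|_{M_{p^*}}=\sqrt{u^\top M_{p^*} u}$. Whenever $B(r,\theta)\leq\rho$, the comparison $M_{p^*}\preceq 2M_r$ gives $\|\theta-\theta^*(r)\|_{M_{p^*}}^2\leq 2\|\theta-\theta^*(r)\|_{M_r}^2\leq 2\rho$, and the identity $\theta^*(r)-\theta^*(p^*)=M_r^{-1}\bE_r[XZ^*]$ combined with the same comparison yields $\|\theta^*(r)-\theta^*(p^*)\|_{M_{p^*}}^2\leq 2\,B(r,\theta^*(p^*))\leq 2\rho$. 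Adding these via the triangle inequality gives $\|\theta-\theta^*(p^*)\|_{M_{p^*}}\leq 2\sqrt{2\rho}$, i.e.\ $L(p^*,\theta)\leq 8\rho$. Together the two steps show $p^*\in\GG^\TV(\rho,8\rho,\eta)$, and Theorem~\ref{thm.G_fundamental_limit} then upper-bounds the population limit by $8\rho\asymp(\sigma_1\sigma_2\epsilon\psi^{-1}(1/\epsilon))^2$.

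The matching lower bound for the population limit is obtained by a standard two-point Le Cam construction: take $X$ scalar (or along a fixed direction) and $Z$ supported on two levels so that both $\GG(\psi)$-type Orlicz conditions are tight, then push $\epsilon$-mass to points where $X\cdot Z$ is coherent, producing two regression lines whose parameters differ by $\Theta(\sigma_2/\sigma_1\cdot\epsilon\psi^{-1}(1/\epsilon))$ while being $\epsilon$-close in $\TV$; the induced excess predictive loss is $\Theta((\sigma_1\sigma_2\epsilon\psi^{-1}(1/\epsilon))^2)$. Lemma~\ref{lem.minimax_random} then converts this into a lower bound for any (even randomized) estimator. The step I expect to require the most care is the $M_{p^*}$-vs.-$M_r$ comparison, since the conditions $\sigma_1^2\eta\psi^{-1}(1/\eta)<1/2$ and the factor $1-\eta$ in $\rho$ both enter delicately; the rest is essentially bookkeeping once the quadratic-form identities are in place.
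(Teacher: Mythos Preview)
Your proposal is correct and follows essentially the same approach as the paper: the same quadratic-form identities, the same matrix comparison $M_{p^*}\preceq 2M_r$ via Lemma~\ref{lem.noncentered_orlicz}, the same Orlicz bound $\|v^\top X\cdot Z\|_\psi\leq\sigma_1\sigma_2\sqrt{v^\top M_{p^*}v}$ via Lemma~\ref{lem.multi_orlicz}, and the same application of Lemma~\ref{lem.cvx_mean_resilience}. The only cosmetic difference is that the paper argues via whitening (bounding $\|M_{p^*}^{-1/2}\mu_r\|_2$ and $\|M_{p^*}^{1/2}M_r^{-1/2}\|_2$ separately) whereas you keep the variational form $\sup_v (v^\top\mu_r)^2/(v^\top M_r v)$ and do the triangle inequality in the $M_{p^*}$-seminorm rather than the $M_r$-seminorm; the constants and the final $8\rho$ bound come out identical, and the lower-bound sketch matches the paper's one-dimensional two-point construction.
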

\begin{lemma}\label{lem.appendix_finite_linreg_TV}
Denote $\tilde \epsilon =  2\epsilon + 2 C^{\mathsf{vc}}\sqrt{\frac{10d +\log(1/\delta)}{n} }$.
Assume $p^* \in \GG^\TV_\downarrow(\rho_1(\tilde \epsilon), \tilde \epsilon)\cap \GG^\TV_\uparrow(2\rho_1(\tilde \epsilon), \rho_2(\tilde \epsilon), \tilde \epsilon)$. For $\mathcal{H}$ designed in (\ref{eqn.H_linreg}), let $q$ denote the output of the projection algorithm $\Pi(\hat{p}_n; \tTV_{\mathcal{H}}, \GG^\TV)$. Then, with probability at least $1-\delta$, 
\begin{align*}
    \bE_{p^*}[(Y-X^\top\theta^*(q))^2 - (Y-X^\top\theta^*(p^*))^2]\leq \rho_2(\tilde \epsilon) = \rho_2\Bigg( 2\epsilon + 2 C^{\mathsf{vc}}\sqrt{\frac{10d +\log(1/\delta)}{n} }\Bigg).
\end{align*} 
\end{lemma}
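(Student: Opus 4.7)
The plan is to apply Proposition~\ref{prop.tTV_general} to the output $q$ of the projection $\Pi(\hat p_n;\tTV_\sH,\GG^\TV)$. This reduces the task to two ingredients: (i) a uniform empirical-process bound $\tTV_\sH(p,\hat p_n) \lesssim \sqrt{(d+\log(1/\delta))/n}$ with probability at least $1-\delta$, which by Proposition~\ref{prop.tTV_general} lets us take the effective perturbation to be $\tilde\epsilon = 2\epsilon + 2C^{\mathsf{vc}}\sqrt{(10d+\log(1/\delta))/n}$; and (ii) the modulus bound $\modu(\GG^\TV,\tilde\epsilon,\tTV_\sH,L) \leq \rho_2(\tilde\epsilon)$. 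For (i) I will invoke Lemma~\ref{lemma.vcinequality} applied to the family of threshold sets $\{\{(y,x):(y-x^\top\theta_1)^2-(y-x^\top\theta_2)^2 \geq t\} : (\theta_1,\theta_2,t) \in \bR^{2d+1}\}$. Using the factorization
\[
(y-x^\top\theta_1)^2-(y-x^\top\theta_2)^2 = \bigl(x^\top(\theta_1-\theta_2)\bigr)\bigl(x^\top(\theta_1+\theta_2)-2y\bigr),
\]
these sets are cut out by a degree-two polynomial in $2d+1$ parameters, so a standard sign-of-polynomial VC bound yields VC dimension at most $10d$.

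For the modulus in (ii), fix $p_1,p_2 \in \GG^\TV$ with $\tTV_\sH(p_1,p_2) \leq \tilde\epsilon$, set $\theta_i \triangleq \theta^*(p_i)$, and consider the one-dimensional statistic $g(X,Y) \triangleq (Y-X^\top\theta_1)^2-(Y-X^\top\theta_2)^2 \in \sH$. The $\tTV_\sH$ bound implies the pushforwards of $p_1, p_2$ under $g$ are within $\tilde\epsilon$ in Kolmogorov--Smirnov, so by Lemma~\ref{lem.mean_cross_tv} combined with part~5 of Lemma~\ref{lem.deletionrproperties} there exist deletions $r_i \leq p_i/(1-\tilde\epsilon)$ satisfying $\bE_{r_2}[g] \leq \bE_{r_1}[g]$ (one can choose the favorable direction by swapping the roles of $p_1$ and $p_2$, or equivalently by replacing $g$ with $-g$). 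Because $p_2 \in \GG^\TV_\downarrow(\rho_1(\tilde\epsilon),\tilde\epsilon)$ with bridge $B(r,\theta) = \bE_r[(Y-X^\top\theta)^2] - \bE_r[(Y-X^\top\theta^*(r))^2]$, the optimality of $\theta^*(r_2)$ yields
\[
\bE_{r_2}[g] + B(r_2,\theta_2) = \bE_{r_2}[(Y-X^\top\theta_1)^2] - \bE_{r_2}[(Y-X^\top\theta^*(r_2))^2] \geq 0,
\]
so $\bE_{r_2}[g] \geq -\rho_1(\tilde\epsilon)$, and therefore $\bE_{r_1}[g] \geq -\rho_1(\tilde\epsilon)$, i.e.\ $\bE_{r_1}[(Y-X^\top\theta_2)^2] - \bE_{r_1}[(Y-X^\top\theta_1)^2] \leq \rho_1(\tilde\epsilon)$.

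Adding this inequality to $B(r_1,\theta_1) \leq \rho_1(\tilde\epsilon)$, which comes from $p_1 \in \GG^\TV_\downarrow(\rho_1(\tilde\epsilon),\tilde\epsilon)$, gives $B(r_1,\theta_2) \leq 2\rho_1(\tilde\epsilon)$. The hypothesis $p_1 \in \GG^\TV_\uparrow(2\rho_1(\tilde\epsilon),\rho_2(\tilde\epsilon),\tilde\epsilon)$ then implies $L(p_1,\theta_2) \leq \rho_2(\tilde\epsilon)$, which is the desired modulus bound. Plugging this into Proposition~\ref{prop.tTV_general} with $(p_1,p_2) = (p^*,q)$ closes the argument.

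The main obstacle I anticipate is the VC-dimension bookkeeping: one must verify that the explicit $10d$ constant really does come out of a polynomial-sign argument (since $\sH$ is not a halfspace class), and that the $2d+1$ parameters together with degree two give exactly that constant. The mean-cross step itself is a clean use of Lemma~\ref{lem.mean_cross_tv}; the key trick is choosing $g$ symmetrically in $(\theta_1,\theta_2)$ so that no auxiliary direction such as $\theta^*(r)$ (which would depend on the deletion and create circularity) enters the definition of $g$, and instead bounding $\bE_r[g]$ against $B(r,\theta_2)$ via the variational characterization of $\theta^*(r)$.
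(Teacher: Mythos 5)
Your proof is correct and takes essentially the same route as the paper: reduce to the VC bound on $\tTV_\sH(p,\hat p_n)$ plus a modulus bound via Proposition~\ref{prop.tTV_general}, then establish the modulus by mean-crossing the scalar statistic $(Y-X^\top\theta_1)^2-(Y-X^\top\theta_2)^2$ and chaining $\GG^\TV_\downarrow$ twice to get $B(r_1,\theta_2)\leq 2\rho_1$, finishing with $\GG^\TV_\uparrow(2\rho_1,\rho_2,\tilde\epsilon)$. The only cosmetic differences are a sign flip in the choice of $g$ (you phrase the bound $\bE_{r_2}[g]\geq -\rho_1$ via optimality of $\theta^*(r_2)$ rather than the paper's direct $\bE_{r_2}[f]\leq B(r_2,\theta_2)$, which is the same inequality) and your polynomial-sign bookkeeping for the $10d$ VC bound, which corresponds to the paper's citation of Anthony and Bartlett.
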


For linear regression, we need to slightly shrink the resilient set from $
    \GG^{\TV}_\downarrow(\rho_1(\tilde \epsilon), \tilde \epsilon)\cap \GG^{\TV}_\uparrow(\rho_1(\tilde \epsilon), \rho_2(\tilde \epsilon), \tilde \epsilon)$
to $
    \GG^{\TV}_\downarrow(\rho_1(\tilde \epsilon), \tilde \epsilon)\cap \GG^{\TV}_\uparrow(2\rho_1(\tilde \epsilon), \rho_2(\tilde \epsilon), \tilde \epsilon).$ 
The conditions in the first lemma still imply that $p^*$ is inside this 
smaller set with appropriate parameters.
The shrinkage comes from the following consideration: 
when using the mean cross lemma, we can only cross the mean of the same function $f(X)$ for $p $ and $q$ while the excess predictive loss in the original $\GG^\TV$ requires the mean cross for two different functions.

Now we begin with proving the first lemma:
\paragraph*{Upper bound}
We first show the upper bound. 
Denote $Z = Y -X^{\top}\theta^*(p) $.   Since the second moment of $X$ and $Z$ exist, we can denote $\mathbb{E}_r[XZ] = \mu_r$, $\mathbb{E}_r[XX^{\top}] = M_r$, $\mathbb{E}_{p^*}[XZ] = \mu_{p^*}$, $\mathbb{E}_{p^*}[XX^{\top}] = M_{p^*}$.
The optimal $\theta$ in both cases can be written as a closed form solution\footnote{If either $M_r$ or $M_{p^*}$ is not invertible, we use $M^{-1} = M^{\dagger}$ as its pseudoinverse.}:
\begin{align}
    \theta^*(r) & = \bE_r[XX^{\top}]^{-1}\bE_r[XY] \nonumber \\
    & = \bE_r[XX^{\top}]^{-1}\bE_r[XX^{\top}\theta^*({p^*})+XZ] \nonumber \\
    & = \theta^*({p^*}) + \bE_r[XX^{\top}]^{-1}\bE_r[XZ] \nonumber \\
    & = \theta^*({p^*}) + M_r^{-1}\mu_r.
\end{align}

Then $p^* \in \GG_{\downarrow}$ is equivalent to that $\forall  r \leq \frac{p^*}{1-\eta}$, we have
\begin{align}
  \rho_1 & \geq \mathbb{E}_{r}[\ell(\theta^*({p^*}), X) - \ell(\theta^*(r), X)] \nonumber \\
  & = \mathbb{E}_{r}[(Y - X^{\top}\theta^*({p^*}))^2 - (Y - X^{\top}\theta^*(r))^2] \nonumber \\
  & = \mathbb{E}_{r}[2YX^{\top}(\theta^*(r) - \theta^*({p^*})) - \theta^*(r)^{\top}XX^{\top}\theta^*(r) + \theta^*({p^*})^{\top}XX^{\top}\theta^*({p^*})] \nonumber \\
  & = \mathbb{E}_{r}[2(X^{\top}\theta^*({p^*})+Z)X^{\top}(\theta^*(r) - \theta^*({p^*})) - \theta^*(r)^{\top}XX^{\top}\theta^*(r) + \theta^*({p^*})^{\top}XX^{\top}\theta^*({p^*})] \nonumber \\
  & = \mathbb{E}_{r}[2(X^{\top}\theta^*({p^*})+Z)X^{\top}M_r^{-1}\mu_r - (\theta^*({p^*}) + M_r^{-1}\mu_r)^{\top}XX^{\top}(\theta^*({p^*}) + M_r^{-1}\mu_r) \nonumber \\
  &\quad  + \theta^*({p^*})^{\top}XX^{\top}\theta^*({p^*})] \nonumber \\
  & =  2 \theta^*({p^*})^{\top} \mu_r + 2\mu_r^{\top}M_r^{-1}\mu_r - 2 \theta^*({p^*})^{\top} \mu_r -  \mu_r^{\top}M_r^{-1}\mu_r -  \theta^*({p^*})^{\top}M_r\theta^*({p^*})+  \theta^*({p^*})^{\top}M_r\theta^*({p^*}) \nonumber \\
  & = \mu_r^{\top}M_r^{-1}\mu_r \nonumber \\
  & = \|M_r^{-1/2}\mu_r \|_2^2
\end{align}

By similar calculation, we can see that $p^* \in \GG_{\uparrow}^{\TV}$ is equivalent to  
\begin{align}
     \forall \theta, \text{if } \forall r \leq \frac{{p^*}}{1-\eta}, 
    & \|M_r^{1/2}(\theta - \theta^*({p^*})) - M_r^{-1/2}\mu_r) \|_2^2 \leq \rho_1 \nonumber \\
    \Rightarrow & \|M_{p^*}^{1/2}(\theta - \theta^*({p^*})) \|_2^2 \leq \rho_2.
\end{align}

We first check that $p^* \in  \GG_{\downarrow}$. Note that $M_r^{-1/2}\mu_r = M_r^{-1/2}M_{p^*}^{1/2}M_{p^*}^{-1/2}\mu_r$. We bound the term $M_r^{-1/2}M_{p^*}^{1/2} $ and $M_{p^*}^{-1/2}\mu_r $ separately.

From the first condition, we know that the $\psi$ norm of $\frac{(v^{\top}X)^2}{\bE_{p^*}[(v^{\top}X)^2]}$ is upper bounded by $\sigma_1^2$, thus by Lemma~\ref{lem.centering_psi} we know the centered $\psi$ norm is bounded by $2\sigma_1^2$. By
Lemma~\ref{lem.noncentered_orlicz}, we have for any $v\in\bR^d$,
\begin{align}\label{eqn.linreg_TV_proof_used1}
\forall r\leq \frac{p^*}{1-\eta},
     \bE_r[(v^{\top}X)^2]  \geq (1-{\sigma_1^2\eta\psi^{-1}(\frac{1}{\eta})) \bE_{p^*}[(v^{\top}X)^2]}. 
\end{align}
Thus when ${\sigma_1^2\eta\psi^{-1}(\frac{1}{\eta}) } < \frac{1}{2}$, we have for any $v\in\bR^d$,
\begin{align}
    & v^{\top}M_rv \geq (1 - \sigma_1^2\eta\psi^{-1}(\frac{1}{\eta}) ) v^{\top}M_{p^*}v \nonumber \\
\Rightarrow & M_r \succeq  \frac{1}{2}  M_{p^*} \nonumber \\
\Rightarrow & M_r^{-1} \preceq 2  M_{p^*}^{-1} \label{eqn.matrix_operator_monotone} \\
\Rightarrow & M_{p^*}^{1/2} M_r^{-1}M_{p^*}^{1/2} \preceq 2 I \label{eqn.matrix_multiply_lem} \\
\Rightarrow & \|M_{p^*}^{1/2}M_r^{-1/2} \|_2 \leq \sqrt{2}.
\end{align}

Equation (\ref{eqn.matrix_operator_monotone}) comes from the monotone property of matrix operator. Equation (\ref{eqn.matrix_multiply_lem}) comes from the fact that $A \preceq B$ would lead to $C^{\top}AC \preceq C^{\top}BC$. 
From the two conditions in $\GG(\psi)$
and Lemma~\ref{lem.multi_orlicz}, we have
\begin{align}\label{eqn.linreg_eq1}
    \forall v\in \bR^d, \|v^{\top}XZ \|_\psi \leq \sigma_1 \sigma_2 \bE_{p^*}[(v^{\top}X)^2]^{1/2}.
\end{align}
Taking $v = v' M_{p^*}^{-1/2}$, where $\|v'\|_2 = 1$,  we can see that this is equivalent to 
\begin{align}\label{eqn.linreg_eq2}
    \forall v\in\bR^d, \|v\|_2 =1, \| v^{\top}M^{-1/2}_{p^*}XZ\|_\psi \leq \sigma_1 \sigma_2.
\end{align}
Note that $\bE_{p^*}[v^{\top}M^{-1/2}_{p^*}XZ ]=0$. By Lemma~\ref{lem.cvx_mean_resilience}, this gives us that for any $\eta$,
\begin{align}\label{eqn.linreg_TV_proof_used2}
      \forall v\in\bR^d, \|v\|_2 =1, \forall r\leq \frac{p^*}{1-\eta}, \bE_r[ v^{\top}M^{-1/2}_{p^*}XZ] \leq \frac{\sigma_1\sigma_2\eta \psi^{-1}(\frac{1}{\eta})}{1-\eta}.
\end{align}
Thus we have $\| M_{p^*}^{-1/2}\mu_r\|_2\leq \frac{\sigma_1\sigma_2\eta \psi^{-1}(\frac{1}{\eta})}{1-\eta}$.

From the above results, we have
\begin{align}
    \| M_r^{-1/2}\mu_r\|_2 & =  \| M_r^{-1/2}M_{p^*}^{1/2} M_{p^*}^{-1/2}\mu_r\|_2  \nonumber \\
    & \leq \| M_r^{-1/2}M_{p^*}^{1/2}\|_2 \| M_{p^*}^{-1/2}\mu_r\|_2 \nonumber \\
    & \leq \frac{\sqrt{2}\sigma_1\sigma_2\eta \psi^{-1}(\frac{1}{\eta})}{1-\eta}. 
\end{align}
Thus we can conclude that $p^* \in \GG_{\downarrow}(\rho, \eta)$ with $\rho = 2(\frac{\sigma_1\sigma_2\eta \psi^{-1}(\frac{1}{\eta})}{1-\eta})^2$ when $\sigma_1^2\eta\psi^{-1}(\frac{1}{\eta})  < \frac{1}{2}$.
Then we check that $p^*\in \GG_{\uparrow}^{\TV}$. 
If $\|M_r^{1/2}(\theta - \theta^*({p^*})) - M_r^{-1/2}\mu_r) \|_2^2 \leq \rho $ holds,
 we have
\begin{align}
    \|M_{p^*}^{1/2}(\theta^*({p^*}) - \theta) \|_2 & = \|M_{p^*}^{1/2}M_r^{-1/2}M_r^{1/2}(\theta^*({p^*}) - \theta)  \|_2 \nonumber \\
    & \leq \|M_{p^*}^{1/2}M_r^{-1/2}\|_2 \|M_r^{1/2}(\theta^*({p^*}) - \theta)   \|_2  \nonumber \\
    & \leq \|M_{p^*}^{1/2}M_r^{-1/2}\|_2 \left(\|M_r^{1/2}(\theta^*({p^*}) - \theta) - M_r^{-1/2}\mu_r \|_2 + \| M_r^{-1/2}\mu_r\|_2 \right) \nonumber \\
    & \leq 2\sqrt{2}\sqrt{\rho}.
\end{align}
This gives that $p^*\in \GG_{\uparrow}^{\TV}(\rho, 8\rho, \eta)$.

\begin{remark}
As the proof shows one may weaken the assumptions to 
   \begin{align}
       \forall v\in\bR^d, \|v\|_2=1, r \leq \frac{p^*}{1-\eta}, \bE_r[(v^{\top}X)^2] &\geq f(\eta) \cdot \bE_{p^*}[(v^{\top}X)^2], \text{ and} \\
       \sup_{v\in\bR^d, \|v\|_2=1}   \bE_{p^*}\left[\psi \left(\bE_{p^*}[XX^\top]^{-1/2}XZ/\sigma\right)\right] &\leq 1.
    \end{align}
Here $f(\eta) \in (0, 1), \forall \eta\in [0, 1)$. The excess predictive loss is $O(\frac{\sigma \eta \psi^{-1}(1/\eta)}{f(\eta)(1-\eta)})$ under this set of assumptions. One can verify that Gaussian distribution satisfies the first condition.

Note that when $\psi(x) = x^{k/2}$ and $X, Z$ are independent, we can bound the term 
\begin{align}\sup_{ v\in\bR^d, \|v\|_2 =1}  \bE_{p^*}[|v^{\top}M^{-1/2}_{p^*}XZ|^{k}]\leq \sup_{ v\in\bR^d, \|v\|_2 =1} \bE_{p^*}[|v^{\top}M^{-1/2}_{p^*}X|^{k}] \bE_{p^*}[|Z|^{k}] \leq \sigma_1^{k}\sigma_2^{k}.
\end{align}
Thus the final bound would be of the order  $O(\sigma_1^2\sigma_2^2\eta^{2-2/k})$ when $\eta$ is small enough. 
However, without independence assumption we can only bound the $k/2$-th moment of $v^{\top}M^{-1/2}_{p^*}XZ $, thus the result becomes $O(\sigma_1^2\sigma_2^2\eta^{2-4/k})$.
\end{remark}

\subsubsection*{Lower bound}
Then we show the lower bound for the population limit.  Consider the set $\tilde \GG_{\mathsf{LinReg}}$ that is smaller than $\GG(\psi)$:
\begin{align}
   \tilde \GG_{\mathsf{LinReg}}= \{ p \mid \bE_p[X^2] = 1, \bE_p[\psi(X^2)] \leq C, \bE_p[\psi((Y-\theta^*(p)X)^2)] \leq C\}.
\end{align}
Here $C$ is some universal constant that may depend on $\psi$. Then it suffices to show the population limit of the set $\tilde \GG_{\mathsf{LinReg}}$ is lower bounded by $(\epsilon \psi^{-1}(1/\epsilon))^2$, i.e.
we need to show that for any estimator $\theta(p)$,
\begin{align}
    \inf_{\theta(p)}\sup_{(p^*, p): p^*\in \tilde \GG_{\mathsf{LinReg}}, \TV(p^*, p)\leq \epsilon} \bE_{p^*}[(Y-X^{\top}\theta(p))^2 - (Y-X^{\top}\theta^*(p^*))^2] \geq C(\epsilon \psi^{-1}(\frac{1}{\epsilon}))^2.
\end{align} 

Consider the case of one-dimensional
distribution $X$. If  $\bE_{p^*}[X^2] = \bE_{q}[X^2]  = 1$, 
the cost $L(p^*,\theta)$ can be written as
\begin{align}\label{eqn.linreg_lower_bound_pseudo_metric}
     L(p^*, \theta) = \bE_{p^*}[(Y-X^{\top}\theta)^2 - (Y-X^{\top}\theta^*(p^*))^2] & =  \bE_{p^*}[(X\theta^*(p^*) + Z - X\theta)^2 - Z^2] \nonumber \\
     & = \bE_{p^*}[X^2](\theta - \theta^*(p^*))^2\nonumber \\
     & = (\theta - \theta^*(p^*))^2 \nonumber \\
     & = (\bE_{p^*}[XY] - \theta)^2.
\end{align}
Here we use the fact that  $\theta^*(p^*) = \bE_{p^*}[XX^T]^{-1} \bE_{p^*}[XY] = \bE_{p^*}[XY]$.

Now we construct distribution $p_1, p_2 \in \tilde \GG_{\mathsf{LinReg}}$ with the same marginal distribution on $X$:
\begin{align}
    \bP_{p_1}(X=t) = \bP_{p_2}(X=t) &= \left\{ \begin{array}{cl}
     \frac{1-\epsilon}{2},    & t  = 0  \\
     \frac{1-\epsilon}{2},    & t  = \sqrt{\frac{2(1-\epsilon \psi^{-1}(1/\epsilon))}{1-\epsilon}}  \\
    \epsilon,   & t   = \sqrt{\psi^{-1}(1/\epsilon)}\\
    0 ,    & \text{otherwise}
    \end{array}\right. \\
\end{align}
We have $\bE_{p_1}[X^2]=\bE_{p_2}[X^2]=1$, and
\begin{align}
   \bE_{p_1}[\psi(X^2)] = \frac{1-\epsilon}{2} \cdot \psi({\frac{2(1-\epsilon \psi^{-1}(1/\epsilon))}{1-\epsilon}})  + \epsilon \psi(\psi^{-1}(1/\epsilon)) \leq \frac{1}{2} \cdot \psi(2) + 1 \leq C,
\end{align}
where $C$ is some constant. 
Now we construct the conditional distribution $Y | X$ under $p_1$ as follows.
\begin{align}
    Y |_{X = t} = \begin{cases}  0, & t\neq \sqrt{\psi^{-1}(1/\epsilon)} \\ X, & t= \sqrt{\psi^{-1}(1/\epsilon)}  \end{cases}.
\end{align}
The conditional distribution $Y \mid X$ under $p_2$ is
\begin{align}
    Y |_{X = t} = \begin{cases}  0, & t\neq \sqrt{\psi^{-1}(1/\epsilon)} \\ -X, & t= \sqrt{\psi^{-1}(1/\epsilon)}  \end{cases}.
\end{align}
Then $\theta^*(p_1) = \bE_{p_1}[XY] \in [0, 1]$, $\theta^*(p_2) = \bE_{p_2}[XY] \in [-1, 0]$. For $Z = Y - \theta^*X$, in both cases we have $\bE[\psi(Z^2)] \leq \bE[\psi(X^2)] \leq C$. Furthermore, we have
\begin{align}
    |\theta^*(p_1) - \theta^*(p_2)| = |\bE_{p_1}[XY] - \bE_{p_2}[XY]| = 2\epsilon \psi^{-1}(1/\epsilon).
\end{align}
Then the population limit of the set $\tilde \GG_{\mathsf{LinReg}}$ is lower bounded by $\epsilon \psi^{-1}(1/\epsilon)$ once we assume the observed corrupted distribution $p = p_1$:
\begin{align}
    \inf_{\theta(p)} \sup_{(p^*,p): \TV(p^*, p)\leq \epsilon, p^*\in   \tilde \GG_{\mathsf{LinReg}}} L(p^*, \theta(p)) & \geq \inf_{\theta} \sup_{p^*: \TV(p^*, p_1)\leq \epsilon, p^*\in \tilde \GG_{\mathsf{LinReg}}} L(p^*, \theta) \nonumber \\
     & = \inf_{\theta} \sup_{p^*: \TV(p^*, p_1)\leq \epsilon, p^*\in \tilde \GG_{\mathsf{LinReg}}} (\theta^*(p^*)- \theta)^2 \nonumber \\
     & \geq \frac{1}{2}\inf_{\theta} ( (\theta^*(p_1)- \theta)^2 +   (\theta^*(p_2)- \theta)^2)\nonumber \\
     & \geq \frac{1}{4} (\theta^*(p_1)- \theta^*(p_2))^2 \nonumber \\
     & \geq  (\epsilon\psi^{-1}(1/\epsilon))^2.
\end{align}

From Lemma~\ref{lem.minimax_random}, we know that for randomized decision rule $\theta_r(p)$, 
\begin{align}
     & \inf_{\theta_r(p)} \sup_{(p^*,p): \TV(p^*, p)\leq \epsilon, p^*\in \tilde \GG_{\mathsf{LinReg}}} \bP(L(p^*, \theta_r(p)) 
     \geq  (\epsilon\psi^{-1}(1/\epsilon))^2) \nonumber  \\  \geq & \frac{1}{2} \inf_{q_r(p)} (\bP(L(p_1, \theta_r(p)) \geq (\epsilon\psi^{-1}(1/\epsilon))^2) + \bP(L(p_2,  \theta_r(p)) \geq (\epsilon\psi^{-1}(1/\epsilon))^2)) \nonumber \\ 
     \geq & \frac{1}{2}.
\end{align}

Now we prove Lemma~\ref{lem.appendix_finite_linreg_TV}.

\begin{proof}
By Proposition~\ref{prop.tTV_general} it suffices to bound $\tTV_\mathcal{H}(p, \hat p_n)$ and to bound the modulus of continuity for $\GG$ under $\tTV_\mathcal{H}$.

Denote $\tilde X = (X, Y)$, then $\tTV_\mathcal{H}(p, q)$ can be upper bounded in the following way:
\begin{align}
\tTV_\mathcal{H}(p, q) 
& \leq \sup_{ v_1, v_2 \in \bR^{d+1}, t\in\bR} \left| \bP_{p}[(v_1^{\top}\widetilde X)^2 - (v_2^{\top}\widetilde X)^2 \geq t] - \bP_{q}[(v_1^{\top}\widetilde X)^2 - (v_2^{\top}\widetilde X)^2 \geq t] \right|,
\end{align}
From~\citep[Theorem 8.3]{anthony2009neural} we know that the VC dimension
of the collection of sets $\{\{x\in\bR^{d+1} \mid (v_1^\top x)^2 - (v_2^\top x)^2\geq t\} \mid v_1, v_2 \in \bR^{d+1}, t\in \bR\}$ is at most $10d$. Thus  from Lemma~\ref{lemma.vcinequality}  we have $\tTV_\mathcal{H}(p, q)\leq C^{\mathsf{vc}}\sqrt{\frac{10d +\log(1/\delta)}{n} }$.

We now show the modulus of continuity is upper bounded by $\rho_2(\tilde \epsilon)$. We still apply mean cross lemma on the function $f = \ell(\theta^*(p_2), X) - \ell(\theta^*(p_1), X)$.
Define $\ell(\theta, X) = (\theta^{\top}X-Y)^2$, then the bridge function is $B(p, \theta) = \bE_{p}[\ell(\theta, X) - \ell(\theta^*(p), X)]$.

From Lemma~\ref{lem.tvt_cross_mean}, we know that for any $f \in \mathcal{H}$,  there exists $r_{p_1}\leq \frac{p_1}{1-\tilde \epsilon}, r_{p_2}\leq \frac{p_2}{1-\tilde \epsilon}$ such that the mean under $f$ of $r_{p_1}$ and $r_{p_2}$ can cross.
Taking $f = \ell(\theta^*(p_2), X) - \ell(\theta^*(p_1), X)$, we have
\begin{align}
    \bE_{r_{p_1}}[\ell(\theta^*(p_2), X) - \ell(\theta^*(p_1), X)] & \leq  \bE_{r_{p_2}}[\ell(\theta^*(p_2), X) - \ell(\theta^*(p_1), X)]  \nonumber \\
    & \leq \bE_{r_{p_2}}[\ell(\theta^*(p_2), X) - \ell(\theta^*(r_{p_2}), X)] = B(r_{p_2}, \theta^*(p_2)) \leq \rho_1(\tilde \epsilon).
\end{align}
The last inequality comes from the fact that $p_2\in\GG_{\downarrow}^{\TV}(\rho_1(\tilde \epsilon), \tilde \epsilon)$.  
Combining the above equation with the fact that  $p_1\in\GG_{\downarrow}^{\TV}(\rho_1(\tilde \epsilon), \tilde \epsilon)$, we know
\begin{align}
    \bE_{r_{p_1}}[\ell(\theta^*(p_2), X) - \ell(\theta^*(r_{p_1}), X)] & =  \bE_{r_{p_1}}[\ell(\theta^*(p_2), X)- \ell(\theta^*(p_1), X) + \ell(\theta^*(p_1), X) - \ell(\theta^*(r_{p_1}), X)] \nonumber \\ 
    & \leq 2\rho_1(\tilde \epsilon).
\end{align}
From $p \in \GG_{\uparrow}^{\TV}(2\rho_1(\tilde \epsilon), \rho_2(\tilde \epsilon), \tilde \epsilon) $, this implies that  $
    L(p_1, \theta^*(p_2)) \leq \rho_2(\tilde \epsilon)$,  
which implies the final conclusion once we take 
$  B(p,\theta)= L(p,\theta) = \bE_p[(Y - X^\top \theta)^2 - (Y - X^\top \theta^*(p))^2]$. This proof actually works for any $\ell$ and $B(p, \theta) = \bE_{p}[\ell(\theta, X) - \ell(\theta^*(p), X)]$ as excess predictive loss.
\end{proof}

\subsection{Another set of sufficient conditions for linear regression under $\TV$ perturbation}\label{appendix.another_proof_TV_linreg}

We show here that the hyper-contractivity condition in $\GG(\psi)$ can be dropped if we assume that the radius of $\theta$ is bounded by $R$, i.e. $\theta \in \Theta = \{ \theta \mid \|\theta\|_2 \leq R \}$. 

\begin{example}[Linear Regression with bounded parameter assumption]\label{example.TV_linreg_boundedR}
Let $(X, Y)\sim p^*$ and take
$B(p, \theta) = L(p, \theta) = \bE_p[(Y-X^{\top}\theta)^2 - (Y-X^{\top}\theta^*(p))^2]$.
Let $Z = Y-X^{\top}\theta^*(p)$ denote the residual error. Assume that $\theta \in \Theta = \{ \theta \mid \|\theta\|_2 \leq R \}$, the second moments of $X$, $Z$ exist
and satisfy the following conditions:
    \begin{align}
    \bE_{p^*}\bigg[\psi \bigg(\frac{(v^{\top}X)^2}{\sigma_1^2}\bigg)\bigg] &\leq 1, \bE_{p^*}[(v^\top X)^2 ] \geq \sigma_2^2 \text{ for all } v \in \bR^d, \text{ and} \\
    \bE_{p^*}\left[\psi \left(\frac{Z^2}{\sigma_3^2}\right)\right] &\leq 1.
    \end{align}
Then $p^* \in \GG(\frac{\rho^2}{4\sigma_2^2}, \frac{3\rho^2}{2\sigma_2^2}, \eta)$ for $\rho = (4\sigma_1^2 R+2\sigma_1\sigma_3)\eta\psi^{-1}(1/\eta)$, 
and any $ \eta<1/2$. 
The  population limit for the set satisfying the three conditions is $ O_{\sigma_1, \sigma_2, \sigma_3, R}( ({\epsilon \psi^{-1}(1/\epsilon)})^2)$ when the perturbation level $\epsilon$ is less than $1/4$. 
 
\end{example}

\begin{proof}
We show that under these assumptions, the gradient of $L(p, \theta)$ can be robustly estimated. Thus by applying Lemma~\ref{lem.robust_gradient_estimation} we can show the final results. 

From the two assumptions
and Lemma~\ref{lem.multi_orlicz}, we have
\begin{align}
    \forall v\in \bR^d, \|v^{\top}XZ \|_\psi \leq \sigma_1 \sigma_2.
\end{align}

Lemma~\ref{lem.robust_gradient_estimation} requires the gradient to be inside resilient set for all $\theta$, i.e. $\sup_{\theta\in \Theta, r\leq \frac{p^*}{1-\eta}} \|\bE_r[XX^\top(\theta^*(p)-\theta)+ XZ] -  \bE_{p^*}[XX^\top(\theta^*(p)-\theta)+ XZ] \|_2\leq \rho$. From the two conditions in assumption and that the radius of $\Theta$ is upper bounded by $R$, we can derive $\rho = (4\sigma_1^2 R+2\sigma_1\sigma_3)\eta\psi^{-1}(1/\eta)$ if $\eta<1/2$. Furthermore, we know that $\bE_{p^*}[(v^\top X)^2 ] \geq \sigma_2^2$ for any $v$. Thus $\bE_{p^*}[\ell(X, \theta)]$ is $2\sigma_2^2$-strongly convex. 
Then it follows from the second statement of Lemma~\ref{lem.robust_gradient_estimation} that $p^* \in \GG(\frac{\rho^2}{4\sigma_2^2}, \frac{3\rho^2}{2\sigma_2^2}, \eta)$.

\end{proof}

\subsection{Necessity of hyper-contractive condition for linear regression}\label{proof.delete_dimension_linreg}

\paragraph*{Lower bounds for linear regression }
One might wonder whether a simpler condition such as sub-Gaussianity of $X$ and $Z$ 
would also guarantee a finite population limit. Even if $Z \equiv 0$, sub-Gaussianity of $X$ is \emph{not} sufficient. 
Here we exhibit a univariate sub-Gaussian $X$ for which an adversary can perturb 
$X$ to be zero almost surely, thereby destroying all information between $X$ and $Y$. We also illustrate the construction in Figure~\ref{fig:tv-delete—dimension}. When $p^*$ has most of its mass concentrating on a degenerate subspace and at most $\epsilon$ mass outside, the adversary is able to completely delete all the information outside the subspace. Thus inferring $\theta$ outside the subspace is impossible. 

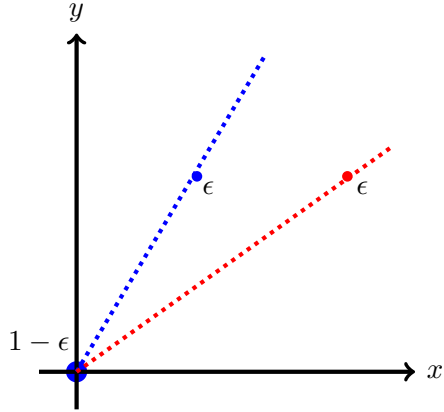
\begin{figure}[ht!]
\begin{center}
\begin{tikzpicture}[
  implies/.style={double,double equal sign distance,-implies},
]
 \def\x{0.8};
  \def\y{0.5};

 \draw (1.8, 0.45) node {$\epsilon$};
  \draw (-2.5,-1.6) node {$1-\epsilon$};
  \fill[red] (1.6, 0.6) circle[radius=2pt];
 \fill[blue] (-2, -2) circle[radius=4pt];
 
   \fill[blue] (-0.4, 0.6) circle[radius=2pt];
 \draw (-0.25, 0.45) node {$\epsilon$};

\draw[->,ultra thick] (-2.5,-2)--(2.5,-2) node[right]{$x$};
\draw[->,ultra thick] (-2,-2.5)--(-2,2.5) node[above]{$y$};
\draw (-2, -2) edge[ultra thick, blue, dotted] (0.5,2.2);
\draw (-2,-2) edge[ultra thick, red, dotted]  (2.2, 1.0);
\end{tikzpicture}
\end{center}
\caption{Dimension deletion phenomenon. Here $X$ follows some sub-Gaussian distribution with $1-\epsilon$ mass on $0$ and $\epsilon$ mass outside (on the blue point). By deleting the blue point and add the red point at arbitrary position, the adversary is able to completely delete the information for identifying $\theta$. Thus recovery of $\theta$ becomes impossible. }
\label{fig:tv-delete—dimension}
\end{figure}

\begin{theorem}\label{thm.delete_dimension_linreg}
Let 
\begin{align}
    \GG_{\mathsf{LinReg}}' = \{ p \mid \sup_{v\in\bR^d, \|v\|_2 =1} \bE_p[\exp((v^\top(X-\bE_p[X]))^2/\sigma^2)] \leq 2, Y = X^{\top}\theta, \theta \in \bR^d\}
\end{align} be the family of distributions with sub-Gaussian $X$ and no noise in $Y$. 
For any estimator $\theta(p)$, there is a pair of distributions $(p, p^*)$ such that $ p^*\in \GG_{\mathsf{LinReg}}', \TV(p, p^*) \leq \epsilon$, and
\begin{align}
     \bE_{p^*}[(Y-X^{\top}\theta(p))^2 - (Y-X^{\top}\theta^*(p^*))^2] = \infty.
\end{align}    
In other words, any estimator achieves arbitrarily large error in the worst case.
\end{theorem}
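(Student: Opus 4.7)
The plan is to exhibit a ``dimension deletion'' attack: the adversary concentrates all the parameter-distinguishing information on an $\epsilon$-fraction of points which it then removes, so that the observed distribution contains no information about $\theta^*$. Concretely, for any candidate estimator $\theta(\cdot)$ I will construct a one-parameter family $\{p_M^*\}_{M\in\bR}$ of distributions in $\GG_{\mathsf{LinReg}}'$, all at $\TV$ distance exactly $\epsilon$ from a single common observation $p=\delta_{(0,0)}$, whose true regression parameters diverge as $M\to\infty$. Since the estimator only sees $p$, its output is a fixed vector, and the excess predictive loss against $p_M^*$ can be made arbitrarily large.

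Fix any unit vector $v\in\bR^d$ (say $v=e_1$) and for $M\in\bR$ let $p_M^*$ be the law of $(X,Y)$ in which $X=0$ with probability $1-\epsilon$ and $X=v$ with probability $\epsilon$, and $Y=X^\top\theta_M^*$ with $\theta_M^*=Mv$. Take $p=\delta_{(0,0)}$. Since $p_M^*$ and $p$ share their $(1-\epsilon)$-mass at $(0,0)$ and differ only on the remaining $\epsilon$-mass, $\TV(p_M^*,p)=\epsilon$. To check $p_M^*\in\GG_{\mathsf{LinReg}}'$, note that the marginal of $X$ (and hence its sub-Gaussian constant) does not depend on $M$; for any unit $u\in\bR^d$,
\begin{equation*}
\bE_{p_M^*}[\exp((u^\top(X-\bE[X]))^2/\sigma^2)]=(1-\epsilon)\exp(\epsilon^2(u^\top v)^2/\sigma^2)+\epsilon\exp((1-\epsilon)^2(u^\top v)^2/\sigma^2)\leq \exp(1/\sigma^2),
\end{equation*}
which is at most $2$ once $\sigma^2\geq 1/\ln 2$, a choice depending only on an absolute constant. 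The noiseless condition $Y=X^\top\theta_M^*$ holds by construction.

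For the loss, given the estimator's output $\hat\theta_0\triangleq\theta(\delta_{(0,0)})\in\bR^d$, the excess predictive loss under $p_M^*$ reduces to
\begin{equation*}
\bE_{p_M^*}[(X^\top(\theta_M^*-\hat\theta_0))^2]=\epsilon\,(M-v^\top\hat\theta_0)^2.
\end{equation*}
Since $\hat\theta_0$ is a fixed finite vector depending only on the estimator, sending $|M|\to\infty$ drives this quantity to $+\infty$, so the supremum over admissible pairs $(p,p^*)$ is infinite, as claimed.

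The only real obstacle is conceptual: one must recognize that sub-Gaussianity of $X$ does \emph{not} prevent the adversary from entirely deleting the low-probability but parameter-informative ``spike'' at $v$, after which the observation carries no trace of the direction $v$. This is precisely what the hypercontractivity/anti-concentration condition in Theorem~\ref{thm.linearregressiontvtildeproof} rules out by demanding that every direction retain a nontrivial fraction of the second moment under $\epsilon$-deletions, so that no such rank-deficient attack is possible.
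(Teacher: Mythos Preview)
Your proof is correct and follows the same dimension-deletion idea as the paper: put all information about $\theta^*$ on an $\epsilon$-mass spike, delete the spike so that the observed $p=\delta_{(0,0)}$ carries no information, and let the true coefficient diverge. The executions differ slightly: the paper uses a Le Cam two-point argument with symmetric hypotheses $\theta^{(1)}=-\theta^{(2)}$ (and in fact proves the result for any convex loss $f$, not just squared loss), whereas you use a single one-parameter family $\{p_M^*\}$ and the observation that $\hat\theta_0=\theta(\delta_{(0,0)})$ is fixed, which is a bit cleaner for the squared loss case.

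One small technical point: you write that the sub-Gaussian condition holds ``once $\sigma^2\geq 1/\ln 2$, a choice depending only on an absolute constant,'' but $\sigma$ is a fixed parameter of the family $\GG_{\mathsf{LinReg}}'$, not something you may choose. To cover every $\sigma>0$, place the spike at $c\,v$ for some $c=c(\sigma,\epsilon)>0$ small enough that $(1-\epsilon)\exp(\epsilon^2 c^2/\sigma^2)+\epsilon\exp((1-\epsilon)^2 c^2/\sigma^2)\leq 2$; the excess loss then becomes $\epsilon\,c^2(M-v^\top\hat\theta_0)^2$, which still diverges as $|M|\to\infty$. The paper handles this analogously by taking the spike location $b(\sigma,\epsilon)$ to be the largest value compatible with the sub-Gaussian constraint.
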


\begin{proof}

We show a stronger result than Theorem~\ref{thm.delete_dimension_linreg} here: for any $f: \bR \mapsto \bR$ that is convex and non-negative, $f(0) = 0$, $f(x)\to \infty$ as $|x|\to \infty$ and any fixed estimator $\theta(p)$, there is a pair of distributions $(p, p^*)$ such that $ p^*\in \GG_{\mathsf{LinReg}}', \TV(p, p^*) \leq \epsilon$, and 
\begin{align}
     \bE_{p^*}[f(Y-X^{\top}\theta(p)) - f(Y-X^{\top}\theta^*(p^*))] = \infty.
\end{align}    

We consider the case that both $X$ and $Y$ are scalar random variables. We first construct the marginal distributions for two distributions $p^*_1, p^*_2$ as follows
\begin{align}
    \bP_{p^*_1}[X = x] & = \left\{ \begin{array}{cl}
     \epsilon,    & x  = b(\sigma, \epsilon)   \\
    1-\epsilon,   & x   = 0\\
    0 ,    & \text{otherwise}
    \end{array}\right. \\
    \bP_{p^*_2}[X = x] & = \left\{ \begin{array}{cl}
     \epsilon,    & x  = -b(\sigma, \epsilon)   \\
    1-\epsilon,   & x   = 0\\
    0 ,    & \text{otherwise}
    \end{array}\right. 
\end{align}
Here $b(\sigma, \epsilon)$ is the largest value such that $\|X\|_{\psi_2} \leq \sigma$, where $\psi_2$ is the Orlicz function for sub-Gaussian distributions.  We design the joint distribution between $X, Y$ for $p_1^*$ as $Y = \theta^{(1)}X$, where $\theta^{(1)} = \frac{t}{ b(\sigma, \epsilon)}$, and the joint distribution between $X, Y$ for $p_2^*$ as $Y = \theta^{(2)}X$, where $\theta^{(2)} = -\frac{t}{ b(\sigma, \epsilon)}$. Here $t>0$ is an arbitrary number that later will be taken to approach $\infty$. 

Now we define the observed distribution $\tilde p$. Define
\begin{align}
   \bP_{\tilde p}[X = t] = \left\{ \begin{array}{cl}
    1,   & t   = 0\\
    0 ,    & \text{otherwise}
    \end{array}\right. 
\end{align}
The distribution of $Y$ is also $0$ with probability $1$. One can see that $\TV(p^*_1, \tilde p)\leq \epsilon, \TV(p^*_2, \tilde p)\leq \epsilon$, $p^*_1 \in \GG_{\mathsf{LinReg}}', p^*_2 \in \GG_{\mathsf{LinReg}}'$. So we have
\begin{align}
    &  \inf_{\theta(\tilde p)} \sup_{p^*\in \GG_{\mathsf{LinReg}}', \TV(p^*, \tilde{p})\leq \epsilon} \bE_{p^*}[f(Y-X\theta) - f(Y-X\theta^*(p))] \nonumber \\
    \geq & \inf_{\theta} \max_{p^*\in\{p^*_1, p^*_2\}}  \bE_{p^*}[f(Y-X\theta)] \nonumber \\
     \geq & \frac{1}{2} \inf_{\theta} (\bE_{p^*_1}[f(Y-X\theta)] + \bE_{p^*_2}[f(Y-X\theta)]) \nonumber \\
     = & \frac{1}{2} \inf_{\theta} (\bE_{p^*_1}[f(X(\theta^{(1)}-\theta))] + \bE_{p^*_2}[f(X(\theta^{(2)}-\theta))]) \nonumber \\
     \geq & \frac{1}{2}\inf_{\theta} \left ( \epsilon f( b(\sigma, \epsilon)(\theta^{(1)} - \theta)) + \epsilon f( - b(\sigma, \epsilon)(\theta^{(2)} - \theta))  \right ) \nonumber \\
     \geq  &  \inf_{\theta} \epsilon f( b(\sigma, \epsilon)(\theta^{(1)} - \theta^{(2)})/2) \nonumber \\
     = & \epsilon f(t),
\end{align}
where the last inequality is due to Jensen's inequality. Taking $t\to \infty$ finishes the proof. 
\end{proof}

The hyper-contractivity condition prevents the deletion of dimension (the dimension preserving property is also known as anti-concentration in literature). When  $\frac{2\sigma_1^2\eta\psi^{-1}(1/\eta)}{1-\eta} < 1$, for some $f(\eta, \kappa)>0$, hyper-contractivity guarantees the following holds:
\begin{align}
   \forall v\in\bR^d, \bE_{q}[(v^\top X)^2]  \geq f(\eta, \kappa)  \bE_{p^*}[(v^\top X)^2],
\end{align}
where $q$ is any distribution such that $\TV(p^*, q)\leq \eta$.

\subsection{Proof of Theorem~\ref{thm.tTV_joint_multiplicative}}\label{proof.G_TV_joint}

Similar to the previous cases, we decompose the proof into two lemmas, one showing that $\GG(\psi)$ is a subset of generalized resilience, one showing the finite-sample rate for generalized resilience set. 

For the choice of $B, L$ in the generalized resilience set (Definition~\ref{def.G_TV}), 
due to the non-linear dependence of $\Sigma_p$ on $p$, 
$L$ turns out to be unsuitable as a bridge function. For bridge function  $B$ we need to use $\Sigma$ and $\mu$ rather than $\Sigma_p$ and $\mu_p$ so that $B$ is convex as a function of $p$, 
thus we instead define
\begin{align}
   B(p, (\mu, \Sigma)) & =  \max\big(\|\Sigma^{-1/2}(\mu_p-\mu)\|_2^2/\eta, \|I_d - \Sigma^{-1/2}\bE_p[(X-\mu)(X-\mu)^{\top}]\Sigma^{-1/2}\|_2 \big). \label{eqn.joint_B}
\end{align}
With this choice, we are ready to prove the following two lemmas, which give the proof for Theorem~\ref{thm.tTV_joint_multiplicative} once combined together.
 \begin{lemma}

Consider $B$ and $L$ defined in (\ref{eqn.joint_B}) and (\ref{eqn.joint_L}). 
For $X \sim p$ if we have 
\begin{align} 
    \sup_{v\in\bR^d, \|v\|_2 = 1}\bE_p\left[\psi\left(\frac{({v^{\top}(X-\mu_p)})^2}{\kappa^2\bE_{p}[(v^\top (X-\mu_p))^2]}\right)\right] \leq 1,
\end{align}
then $p \in \GG^\TV(\rho, 6\rho, \eta)$ with $\rho = 4\kappa^2\eta \psi^{-1}(1/\eta)$, assuming $\eta \leq \frac{1}{2}$ and $(1+\eta)\rho < \frac{1}{3}$.
Thus when the perturbation level is $\epsilon \leq \frac{\eta}{2}$, we can recover $\mu, \Sigma$ such that
$\|\Sigma_p^{-1/2}(\mu_p - \mu)\|_2 = O(\kappa\epsilon\sqrt{\psi^{-1}({1}/{\epsilon})})$ and 
$\|I_d - \Sigma_p^{-1/2}\Sigma\Sigma_p^{-1/2}\|_2 = O(\kappa^2\epsilon{\psi^{-1}({1}/{\epsilon})})$.
 \end{lemma}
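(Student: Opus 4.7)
The goal is to verify the two inclusions $p \in \GG^\TV_\downarrow(\rho,\eta)$ and $p \in \GG^\TV_\uparrow(\rho,6\rho,\eta)$ whose intersection defines $\GG^\TV(\rho,6\rho,\eta)$ in Definition~\ref{def.G_TV}, with the reference parameter $\theta^*(p)=(\mu_p,\Sigma_p)$ (the unique minimizer of $B(p,\cdot)$, attaining value zero). Once these are proved, the population-limit claim will follow by plugging them into the modulus bound of Theorem~\ref{thm.G_fundamental_limit} together with Lemma~\ref{lemma.population_limit}.

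For the downward step, I will pass to the whitened variable $Y=\Sigma_p^{-1/2}(X-\mu_p)$. Under this change of coordinates $Y$ has mean zero and identity covariance, and the hypothesis becomes $\sup_{\|u\|_2=1}\bE_p[\psi((u^\top Y)^2/\kappa^2)]\leq 1$. Applying Lemma~\ref{lem.cvx_mean_resilience} with the Orlicz function $\tilde\psi(x)=\psi(x^2)$ to the linear family $\{u^\top Y:\|u\|_2=1\}$ yields $\|\Sigma_p^{-1/2}(\mu_r-\mu_p)\|_2 \leq \kappa\eta\sqrt{\psi^{-1}(1/\eta)}/(1-\eta)$ for every $r\leq p/(1-\eta)$. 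Applying it again (after centering via Lemma~\ref{lem.centering_psi}) with $\psi$ itself to the quadratic family $\{\pm(u^\top Y)^2:\|u\|_2=1\}$ bounds the operator-norm deviation $\|I-\Sigma_p^{-1/2}\bE_r[(X-\mu_p)(X-\mu_p)^\top]\Sigma_p^{-1/2}\|_2$ by $2\kappa^2\eta\psi^{-1}(1/\eta)/(1-\eta)$. For $\eta\leq 1/2$, both quantities are at most $\rho = 4\kappa^2\eta\psi^{-1}(1/\eta)$, giving $p\in\GG^\TV_\downarrow(\rho,\eta)$.

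The upward step is the substantive part. Fix any $\theta=(\mu,\Sigma)$ admitting some $r\leq p/(1-\eta)$ with $B(r,\theta)\leq \rho$; then $\|\Sigma^{-1/2}(\mu_r-\mu)\|_2^2\leq \rho\eta$ and $(1-\rho)\Sigma \preceq \bE_r[(X-\mu)(X-\mu)^\top]\preceq (1+\rho)\Sigma$. I will combine these with the downward bounds already obtained by means of the algebraic identity
\begin{align*}
\bE_r[(X-\mu)(X-\mu)^\top] = \bE_r[(X-\mu_p)(X-\mu_p)^\top] + (\mu_r-\mu)(\mu_r-\mu)^\top - (\mu_r-\mu_p)(\mu_r-\mu_p)^\top,
\end{align*}
together with the semidefinite inequality $(\mu_r-\mu)(\mu_r-\mu)^\top\preceq \rho\eta\,\Sigma$ that is immediate from the mean component of $B(r,\theta)\leq \rho$. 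Chasing upper and lower bounds through this identity produces the two-sided inequality $\tfrac{(1-\rho)(1-\rho\eta)}{1+\rho}\,\Sigma_p \preceq \Sigma \preceq \tfrac{1+\rho}{1-(1+\eta)\rho}\,\Sigma_p$, and under the hypothesis $(1+\eta)\rho<1/3$ a direct computation shows $\|I-\Sigma_p^{-1/2}\Sigma\Sigma_p^{-1/2}\|_2\leq 6\rho$.

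For the mean half of $L(p,\theta)$, the upper bound on $N:=\Sigma_p^{-1/2}\Sigma\Sigma_p^{-1/2}$ gives $N^{-1}\succeq \tfrac{1-(1+\eta)\rho}{1+\rho}I$, which converts $\|\Sigma^{-1/2}(\mu_r-\mu)\|_2^2\leq \rho\eta$ into $\|\Sigma_p^{-1/2}(\mu_r-\mu)\|_2^2\leq 2\rho\eta$ under $(1+\eta)\rho<1/3$. The triangle inequality then combines this with the downward bound on $\|\Sigma_p^{-1/2}(\mu_r-\mu_p)\|_2$ to give $\|\Sigma_p^{-1/2}(\mu_p-\mu)\|_2^2/\eta\leq 6\rho$, completing $L(p,\theta)\leq 6\rho$. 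The main obstacle is exactly the circular coupling of the two halves: the operator inequality for $\Sigma$ needs $\|\Sigma^{-1/2}(\mu_r-\mu)\|$ to already be small, while the mean bound in the $\Sigma_p$-metric needs $\Sigma$ already close to $\Sigma_p$ in order to translate the $\Sigma^{-1/2}$-bound. The assumption $(1+\eta)\rho<1/3$ breaks the cycle by keeping the denominators in both steps uniformly bounded away from zero, and this is the only place that hypothesis is used.
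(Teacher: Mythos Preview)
Your proposal is correct and follows essentially the same route as the paper: both establish $p\in\GG_\downarrow$ from the Orlicz hypothesis via Lemma~\ref{lem.cvx_mean_resilience} (with centering for the quadratic family), and both handle $\GG_\uparrow$ by sandwiching $\Sigma$ between constant multiples of $\Sigma_p$ through the intermediate $r$ and then transferring the mean bound from the $\Sigma$-metric to the $\Sigma_p$-metric. The only cosmetic difference is that the paper proves $\GG_\downarrow(\rho,\eta)\subset\GG_\uparrow(\rho,6\rho,\eta)$ as an abstract inclusion by first computing $\|I-\Sigma^{-1/2}\Sigma_r\Sigma^{-1/2}\|_2\leq(1+\eta)\rho$ and $\|I-\Sigma_p^{-1/2}\Sigma_r\Sigma_p^{-1/2}\|_2\leq(1+\eta)\rho$ separately and combining, whereas you work directly with the rank-one identity for $\bE_r[(X-\mu)(X-\mu)^\top]$; these are algebraically equivalent rearrangements of the same comparison through $\Sigma_r$.
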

  \begin{lemma}
Denote $\tilde \epsilon = 2\epsilon + 2C^{\mathsf{vc}}\sqrt{\frac{d+1+\log(1/\delta)}{n}}, \GG^\TV = \bigcap_{\epsilon \in[0, 1/2)}\GG^\TV_\downarrow(\rho(\epsilon),  \epsilon)$. Assume $\tilde \epsilon < 1/2$, $p^* \in \GG^\TV$ .  For $\mathcal{H} = \{v^\top x \mid v\in\bR^d, \|v\|_2=1\}$, let $q$ denote the output of the projection algorithm $\Pi(\hat{p}_n; \tTV_{\mathcal{H}}, \GG^\TV)$ or $\Pi(\hat{p}_n; \tTV_{\mathcal{H}}, \GG^\TV, \tilde{\epsilon}/2)$. Then there exist some $C_1$ such that when $\tilde \epsilon \leq C_1$, with probability at least $1-\delta$, 
\begin{align}
     \|\Sigma_{p^*}^{-1/2}(\mu_{p^*} -\mu_q) \|_2 & \lesssim \sqrt{\tilde \epsilon\rho(3\tilde \epsilon)}, \\
    \| I_d - \Sigma_{p^*}^{-1/2}\Sigma_q\Sigma_{p^*}^{-1/2}\|_2 & \lesssim \rho(3\tilde \epsilon).
\end{align}
 \end{lemma}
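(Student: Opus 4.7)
The plan is to invoke Proposition~\ref{prop.tTV_general}, which reduces the projection error to the modulus
\[
\modu(\GG^\TV, \tilde\epsilon, \tTV_{\sH}, L) \;=\; \sup_{p_1, p_2 \in \GG^\TV,\, \tTV_{\sH}(p_1,p_2)\leq \tilde\epsilon}\; L(p_1,\theta^*(p_2)),
\]
together with the VC bound $\tTV_{\sH}(p,\hat p_n) \leq C^{\vc}\sqrt{(d+1+\log(1/\delta))/n}$ from Lemma~\ref{lemma.vcinequality} (the class of unit linear halfspaces has VC dimension $d+1$). Since $L$ is a max of a (scaled) squared-mean term and a covariance term, bounding the modulus by $\rho(3\tilde\epsilon)$ splits into proving, for every such pair $(p_1,p_2)$, that
\[
\|\Sigma_{p_1}^{-1/2}(\mu_{p_1}-\mu_{p_2})\|_2^2 \lesssim \tilde\epsilon\,\rho(3\tilde\epsilon), \qquad \|I_d - \Sigma_{p_1}^{-1/2}\Sigma_{p_2}\Sigma_{p_1}^{-1/2}\|_2 \lesssim \rho(3\tilde\epsilon).
\]

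I would pass to the $\Sigma_{p_1}$-normalized coordinate $Y=\Sigma_{p_1}^{-1/2}(X-\mu_{p_1})$. Because $\sH$ is a cone of unit linear functions, every event $\{v^\top Y\geq t\}$ is, after rescaling, a linear halfspace in $X$ with unit normal, so $\tTV_{\sH}$ is invariant under this affine change of variables. In the $Y$-frame $p_1$ has mean $0$ and covariance $I_d$, and the hypothesis $p_1 \in \GG^\TV_{\downarrow}(\rho(\eta),\eta)$ becomes: for every $r \leq p_1/(1-\eta)$ and every unit $w$,
\[
|\mathbb{E}_r[w^\top Y]| \leq \sqrt{\eta\,\rho(\eta)}, \qquad |1-\mathbb{E}_r[(w^\top Y)^2]|\leq \rho(\eta),
\]
while the target loss in these coordinates is the max of $\|\mathbb{E}_{p_2}Y\|_2^2/\tilde\epsilon$ and $\|I_d-\mathrm{Cov}_{p_2}(Y)\|_2$.

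For the covariance part I fix a unit $w^*$ attaining $\|I_d-\mathrm{Cov}_{p_2}(Y)\|_2$ and apply the mean-cross lemma (Lemma~\ref{lem.mean_cross_tv}) to the scalar $(w^{*\top}Y)^2$. Its super-level sets are unions of two linear halfspaces in $X$, so $\tTV_{\sH}(p_1,p_2)\leq\tilde\epsilon$ forces a one-dimensional KS gap of at most $2\tilde\epsilon$, and Lemma~\ref{lem.mean_cross_tv} produces $r_{p_1}\leq p_1/(1-2\tilde\epsilon)$ and $r_{p_2}\leq p_2/(1-2\tilde\epsilon)$ stochastically ordered in $(w^{*\top}Y)^2$. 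Coupling the $p_1$-resilience bound $|1-\mathbb{E}_{r_{p_1}}[(w^{*\top}Y)^2]|\leq\rho(2\tilde\epsilon)$ with the deletion inequality $\mathbb{E}_{r_{p_2}}[(w^{*\top}Y)^2]\leq\mathbb{E}_{p_2}[(w^{*\top}Y)^2]/(1-2\tilde\epsilon)$ and then running mean-cross in the opposite ordering yields matching lower and upper bounds on $\mathbb{E}_{p_2}[(w^{*\top}Y)^2]$ against $1$. The upper side is the delicate one: because deletions can in principle remove a $2\tilde\epsilon$-mass of arbitrarily large squared values from $p_2$, I rely on the fact that Lemma~\ref{lem.mean_cross_tv} constructs $r'_{p_2}$ by deleting the \emph{small} $(w^{*\top}Y)^2$ values of $p_2$, and pair this with the integral representation $\mathbb{E}[(w^{*\top}Y)^2]=2\int_0^\infty t\,\mathbb{P}(|w^{*\top}Y|\geq t)\,dt$, where the KS-gap controls the bulk and the truncation level is calibrated to $p_1$'s identity covariance. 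The subsequent centering correction $\mathbb{E}_{p_2}[Y]\mathbb{E}_{p_2}[Y]^\top$ is of order $\tilde\epsilon\,\rho(\tilde\epsilon)$ and hence negligible relative to $\rho(3\tilde\epsilon)$.

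The mean estimate is handled by a parallel mean-cross argument applied to the linear function $w^{*\top}Y$ (with $w^*$ the unit vector attaining $\|\mathbb{E}_{p_2}Y\|_2$, and KS gap only $\tilde\epsilon$): the $p_1$ side is controlled by the first display above, while the $p_2$ side only has resilience in its own $\Sigma_{p_2}^{-1/2}$-frame. The main obstacle, and the reason the argument must bootstrap rather than apply mean-cross once in a single frame, is precisely this nonlinear coupling of $\Sigma_{p_1}$ and $\Sigma_{p_2}$ through the normalization in $L$. The multiplicative closeness $\Sigma_{p_1}^{-1/2}\Sigma_{p_2}\Sigma_{p_1}^{-1/2}=I_d+O(\rho(3\tilde\epsilon))$ from the covariance step is what lets me transport $p_2$'s resilience into the $\Sigma_{p_1}^{-1/2}$-frame at the cost of only a constant factor, so long as $\rho(3\tilde\epsilon)\leq C_1$ is small. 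The factor $3\tilde\epsilon$ inside $\rho$ tracks the composition of deletions: $2\tilde\epsilon$ from the mean-cross applied to the quadratic $(w^{*\top}Y)^2$ plus an extra $\tilde\epsilon$ from the auxiliary deletion used to upper bound $\mathbb{E}_{p_2}[(w^{*\top}Y)^2]$ via the integral/tail decomposition.
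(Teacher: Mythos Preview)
Your high-level plan (reduce via Proposition~\ref{prop.tTV_general} to the modulus, work in the $\Sigma_{p_1}^{-1/2}$-frame, bootstrap the mean bound through the covariance bound) is correct and matches the paper. The gap is specifically in the covariance \emph{upper} bound; the lower bound via mean-cross on $(w^{*\top}Y)^2$ does work as you describe.

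For the upper bound, mean-cross cannot close the loop. If $r'_{p_2}$ is obtained by deleting the \emph{large} values of $(w^{*\top}Y)^2$ from $p_2$, Lemma~\ref{lem.mean_cross_tv} gives $\mathbb{E}_{r'_{p_2}}\leq\mathbb{E}_{r'_{p_1}}\leq 1+\rho(2\tilde\epsilon)$, but the deleted $2\tilde\epsilon$-mass of $p_2$ can carry arbitrarily large second moment in the $p_1$-frame, and you have no resilience for $p_2$ in that frame to control it. If instead $r_{p_2}$ deletes the \emph{small} values (so that $\mathbb{E}_{p_2}\leq\mathbb{E}_{r_{p_2}}$), the mean-cross inequality goes the wrong way: $\mathbb{E}_{r_{p_2}}\geq\mathbb{E}_{r_{p_1}}$. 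Your fallback to the integral $2\int_0^\infty t\,\mathbb{P}_{p_2}(|w^{*\top}Y|\geq t)\,dt$ does not help without an a priori tail bound on $p_2$ in the $p_1$-frame, which is precisely what you lack.

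The paper avoids mean-cross here entirely. It first invokes Lemma~\ref{lem.tail_bound} (resilience implies tail bound) on $p_1$ to get $\mathbb{P}_{p_1}(|w^\top Y|\geq T)\leq\tilde\epsilon$ with $T=\sqrt{\rho(\tilde\epsilon)/\tilde\epsilon}$. The KS-gap then transfers this to $p_2$: $\mathbb{P}_{p_2}(|w^\top Y|\geq T)\leq 3\tilde\epsilon$ (the $3$ is $\tilde\epsilon$ from $p_1$'s tail plus $2\tilde\epsilon$ from the two-sided halfspace event---not from composing deletions as you suggest). Truncating both $p_1$ and $p_2$ at $|w^\top Y|=T$ yields $r_{p_1},r_{p_2}$ with \emph{common bounded support} in the $p_1$-frame; the integral representation then compares their means and raw second moments directly via the KS-gap, with $T$ calibrated so that these comparisons are $O(\sqrt{\tilde\epsilon\rho})$ and $O(\rho)$ respectively. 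Finally, since $r_{p_2}$ is a $3\tilde\epsilon$-deletion of $p_2$, $p_2$'s resilience \emph{in its own frame} gives $(1-O(\rho(3\tilde\epsilon)))\Sigma_{p_2}\preceq\Sigma_{r_{p_2}}\preceq(1+O(\rho(3\tilde\epsilon)))\Sigma_{p_2}$, which transports the bound on $\Sigma_{r_{p_2}}$ back to $\Sigma_{p_2}$. The missing idea in your sketch is this tail-transfer step: it is what makes the same truncation a small deletion of \emph{both} distributions, and hence what allows $p_2$'s own-frame resilience to be invoked at all.
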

 
 We provide the proof for the first lemma as below.
 \begin{proof}
With the choice of $B, L$, we have
\begin{align*}
    \GG_{\downarrow}(\rho_1, \eta) & = \{ p \mid \forall r\leq \frac{p}{1-\eta}, \max\big(\|\Sigma^{-1/2}_p(\mu_r-\mu_p)\|_2^2/\eta, \nonumber \\
  &\qquad  \qquad \|I_d - \Sigma_p^{-1/2}\bE_r[(X-\mu_p)(X-\mu_p)^{\top}]\Sigma_p^{-1/2}\|_2 \big) \leq \rho_1\}, \\
    \GG_{\uparrow}(\rho_1, \rho_2, \eta) & = \Big\{ p \mid  \forall (\mu, \Sigma), \forall r\leq \frac{p}{1-\eta},\nonumber \\
    & \qquad \qquad  \Big(\max\big(\|\Sigma^{-1/2}(\mu_r-\mu)\|_2^2/\eta, \|I_d - \Sigma^{-1/2}\bE_r[(X-\mu)(X-\mu)^{\top}]\Sigma^{-1/2}\|_2 \big) \leq \rho_1 \nonumber \\
& \qquad \qquad \Rightarrow  \max\big(\|\Sigma_p^{-1/2}(\mu_p-\mu)\|_2^2/\eta, \|I_d - \Sigma_p^{-1/2}\Sigma\Sigma_p^{-1/2}\|_2 \big) \leq \rho_2\Big) \Bigg\}.
\end{align*}

We first show that with appropriate choices of $\rho_1$ and $\rho_2$, $\GG_{\downarrow}(\rho_1, \eta)$ is a subset of $\GG_{\uparrow}(\rho_1, \rho_2, \eta)$. 

It suffices to show that for any $p\in \GG_{\downarrow} $, any $\mu, \Sigma, r\leq \frac{p}{1-\eta}$ satisfying $\max\big(\|\Sigma^{-1/2}(\mu_r-\mu)\|_2^2/\eta, \|I_d - \Sigma^{-1/2}\bE_r[(X-\mu)(X-\mu)^{\top}]\Sigma^{-1/2}\|_2 \big) \leq \rho_1$, we have
\begin{align}
    \max\big(\|\Sigma_p^{-1/2}(\mu_p-\mu)\|_2^2/\eta, \|I_d - \Sigma_p^{-1/2}\Sigma\Sigma_p^{-1/2}\|_2 \big) \leq \rho_2. 
\end{align}

We first note that
\begin{align}
    \|I_d- \Sigma^{-1/2}\Sigma_r\Sigma^{-1/2} \|_2 = &
    \|I_d- \Sigma^{-1/2}\bE_r[(X-\mu_r)(X-\mu_r)^{\top}]\Sigma^{-1/2} \|_2 \nonumber \\
    = &\|I_d- \Sigma^{-1/2}\bE_r[(X-\mu)(X-\mu)^{\top}]\Sigma^{-1/2} \nonumber \\
    & + \Sigma^{-1/2}(\mu_r-\mu)(\mu_r-\mu)^{\top}\Sigma^{-1/2} \|_2 \nonumber \\
     \leq &\|I_d- \Sigma^{-1/2}\bE_r[(X-\mu)(X-\mu)^{\top}]\Sigma^{-1/2} \|_2 \nonumber \\
    & + \| \Sigma^{-1/2}(\mu_r-\mu)(\mu_r-\mu)^{\top}\Sigma^{-1/2} \|_2 \nonumber \\
    = &  \|I_d- \Sigma^{-1/2}\bE_r[(X-\mu)(X-\mu)^{\top}]\Sigma^{-1/2} \|_2 \nonumber \\
    & + \| \Sigma^{-1/2}(\mu_r-\mu) \|_2^2 \nonumber \\
    \leq & (1+\eta)\rho_1. 
\end{align}
Thus we have
\begin{align}
    (1-(1+\eta)\rho_1)I_d\preceq \Sigma^{-1/2}\Sigma_r\Sigma^{-1/2}\preceq  (1+(1+\eta)\rho_1)I_d.
\end{align}
From the fact  that $A \preceq B$ leads to $C^{\top}AC \preceq C^{\top}BC$ and taking $C = \Sigma^{1/2}$, we have
\begin{align}\label{eqn.proof_joint_sigma_re}
    (1-(1+\eta)\rho_1)\Sigma\preceq \Sigma_r\preceq  (1+(1+\eta)\rho_1)\Sigma.
\end{align}

Similarly, 
from $p \in \GG_{\downarrow}(\rho_1, \eta)$, we have
\begin{align}
    \|I_d- \Sigma_p^{-1/2}\Sigma_r\Sigma_p^{-1/2} \|_2 = &
    \|I_d- \Sigma_p^{-1/2}\bE_r[(X-\mu_r)(X-\mu_r)^{\top}]\Sigma_p^{-1/2} \|_2 \nonumber \\
    = &\|I_d- \Sigma_p^{-1/2}\bE_r[(X-\mu_p)(X-\mu_p)^{\top}]\Sigma_p^{-1/2} \nonumber \\
    & + \Sigma_p^{-1/2}(\mu_r-\mu_p)(\mu_r-\mu_p)^{\top}\Sigma_p^{-1/2} \|_2 \nonumber \\
     \leq &\|I_d- \Sigma_p^{-1/2}\bE_r[(X-\mu_p)(X-\mu_p)^{\top}]\Sigma_p^{-1/2} \|_2 \nonumber \\
    & + \| \Sigma_p^{-1/2}(\mu_r-\mu_p)(\mu_r-\mu_p)^{\top}\Sigma_p^{-1/2} \|_2 \nonumber \\
    = &  \|I_d- \Sigma_p^{-1/2}\bE_r[(X-\mu_p)(X-\mu_p)^{\top}]\Sigma_p^{-1/2} \|_2 \nonumber \\
    & + \| \Sigma_p^{-1/2}(\mu_r-\mu_p) \|_2^2 \nonumber \\
    \leq & (1+\eta)\rho_1. 
\end{align}
Thus we have
\begin{align}\label{eqn.proof_joint_sigma_rp}
    (1-(1+\eta)\rho_1)\Sigma_p\preceq \Sigma_r\preceq  (1+(1+\eta)\rho_1)\Sigma_p.
\end{align}
Combining Equation (\ref{eqn.proof_joint_sigma_re}) and (\ref{eqn.proof_joint_sigma_rp}), we know that
\begin{align}
    \frac{1-(1+\eta)\rho_1}{1+(1+\eta)\rho_1}\Sigma_p \preceq \Sigma \preceq \frac{1+(1+\eta)\rho_1}{1-(1+\eta)\rho_1} \Sigma_p
\end{align}
When $(1+\eta)\rho_1 \leq \frac{1}{3}$, we have
\begin{align}
(1-3(1+\eta)\rho_1)\Sigma_p \preceq \Sigma \preceq  (1+3(1+\eta)\rho_1)\Sigma_p 
\end{align}
Thus
\begin{align}
    \|I_d- \Sigma_p^{-1/2}\Sigma\Sigma_p^{-1/2} \|_2 &\leq 3(1+\eta)\rho_1.
\end{align}
Furthermore, we know that
\begin{align}
    \| \Sigma_p^{-1/2}(\mu_r -\mu)\|_2 \leq \| \Sigma_p^{-1/2}\Sigma^{1/2}\|_2 \| \Sigma^{-1/2}(\mu_r-\mu) \|_2 \leq \sqrt{(1+3(1+\eta)\rho_1)\eta\rho_1}.
\end{align}
From $\|\Sigma_p^{-1/2}(\mu_r-\mu_p) \|_2 \leq \sqrt{\eta\rho_1}$, by triangle inequality, we know that
\begin{align}
    \| \Sigma_p^{-1/2}(\mu_p-\mu)\|_2 \leq & \|\Sigma_p^{-1/2}(\mu_p-\mu_r) \|_2 + \| \Sigma_p^{-1/2}(\mu_r -\mu)\|_2 \nonumber \\
    \leq & (\sqrt{1+3(1+\eta)\rho_1)}+1)\sqrt{\eta\rho_1}.
\end{align}
Thus
\begin{align}
    \| \Sigma_p^{-1/2}(\mu_p-\mu)\|_2^2/\eta \leq (\sqrt{1+3(1+\eta)\rho_1)}+1)^2\rho_1 < 6\rho_1
\end{align}
assuming $(1+\eta)\rho_1 \leq \frac{1}{3}$.
Therefore $L(p, (\mu, \Sigma)) = \max\big(\|\Sigma_p^{-1/2}(\mu_p-\mu)\|_2^2/\eta, \|I_d - \Sigma_p^{-1/2}\Sigma\Sigma_p^{-1/2}\|_2 \big) \leq 6\rho_1$ if $(1+\eta)\rho_1 \leq \frac{1}{3} $. By taking $\rho_2 = 6\rho_1$, we know that $\GG_{\downarrow}(\rho_1,  \eta)\subset \GG_{\uparrow}(\rho_1, \rho_2, \eta) $. 

Now we only need to show that for any $p$ that satisfies 
\begin{align}
    \sup_{v\in\bR^d, \|v\|_2 = 1}\bE_p\bigg[\psi\big(\frac{({v^{\top}\Sigma_p^{-1/2}(X-\mu_p)})^2}{\kappa^2}\big)\bigg] \leq 1,
\end{align}
we have $p\in \GG_{\downarrow}(\rho_1, \eta)$ for some $\rho_1$.
We view $\Sigma_p^{-1/2}X$ as a random variable. Note that $\psi \circ x^2$ is also a Orlicz function. 
From Lemma~\ref{lem.centering_psi}, we know that bounded raw $\psi$ norm can imply bounded central $\psi$ norm. Thus from Lemma~\ref{lem.cvx_mean_resilience} and centering Lemma~\ref{lem.centering_psi}, for any $\eta < 1/2$,
\begin{align}
    \|\Sigma^{-1/2}_p(\mu_r-\mu_p)\|_2 &\leq 2\kappa\eta \sqrt{\psi^{-1}(1/\eta)}, \\
    \|I_d - \Sigma_p^{-1/2}\bE_r[(X-\mu_p)(X-\mu_p)^{\top}]\Sigma_p^{-1/2}\|_2 & \leq 4\kappa^2\eta \psi^{-1}(1/\eta).
\end{align}

We have shown that $p\in \GG_\downarrow(\rho_1, \eta)$ for $\rho_1 = 4\kappa^2\eta\psi^{-1}(1/\eta)$. Thus $p\in\GG(\rho, 6\rho, \eta)$ for $\rho = 4\kappa\eta\psi^{-1}(1/\eta)$ assuming $(1+\eta)\rho \leq \frac{1}{3}$.
\end{proof}

Now we provide a proof for the second lemma on the finite sample results:
\begin{proof}
The bound on $\tilde\epsilon$ is the same as in the proof of Theorem~\ref{thm.tTV_mean}. It suffices to show the modulus of continuity.

We first show that when $\tTV_\mathcal{H}(p_1, p_2)\leq \tilde \epsilon$, $p_1, p_2\in \GG$, we have $\|I_d - \Sigma_{p_1}^{-1/2}\Sigma_{p_2}\Sigma_{p_1}^{-1/2}\|_2\lesssim \rho(3\tilde \epsilon)$.  Without loss of generality, we assume $\Sigma_{p_1}$ is invertible. 
Consider any fixed direction $v\in\bR^d, \|v\|_2=1$, from $p_1 \in \GG_\downarrow$, we know that for any $r\leq \frac{p_1}{1-\epsilon}$, $\|\Sigma_{p_1}^{-1/2}(\bE_r[X] - \bE_p[X])\| \leq \rho(\epsilon)$.  By taking $\mathcal{F} = \{f(X) = v^\top \Sigma_{p_1}^{-1/2} X \mid v\in\bR^d, \|v\|_2 = 1\}$, this is equivalent to the condition $p_1 \in \GG_{W_\mathcal{F}}$. Thus from Lemma~\ref{lem.tail_bound}, we have
\begin{align}
   \bP_{p_1}\left(|v^\top \Sigma_{p_1}^{-1/2}(X-\mu_{p_1})| \geq \sqrt{\frac{\rho(\tilde\epsilon)}{\tilde\epsilon}}\right) \leq \tilde\epsilon.
\end{align}
From $\tTV_\mathcal{H}(p_1, p_2)\leq \tilde \epsilon$, we know that 
\begin{align}
     \bP_{p_2}\left(|v^\top \Sigma_{p_1}^{-1/2}(X-\mu_{p_1})| \geq \sqrt{\frac{\rho(\tilde\epsilon)}{\tilde\epsilon}}\right) \leq 3\tilde\epsilon.
\end{align}
We truncate $p_1, p_2$ by deleting all the mass the satisfies $|v^\top \Sigma_{p_1}^{-1/2}(X-\mu_{p_1})| \geq \sqrt{\frac{\rho(\tilde\epsilon)}{\tilde\epsilon}} $ to get deleted distribution $r_{p_1}, r_{p_2}$. Then we know that $\TV(p_1, r_{p_1})\leq \tilde \epsilon, \TV(p_2, r_{p_2})\leq 3\tilde \epsilon$. From $p_1, p_2\in\GG$, we know that
\begin{align}
     \|\Sigma_{p_1}^{-1/2}(\mu_{p_1} -\mu_{r_{p_1}}) \|_2  \leq \sqrt{\tilde \epsilon\rho(\tilde \epsilon)}, 
    \| I_d - \Sigma_{p_1}^{-1/2}\bE_{r_{p_1}}[(X-\mu_{p_1})(X-\mu_{p_1})^\top]\Sigma_{p_1}^{-1/2}\|_2  \leq \rho(\tilde \epsilon) \\
     \|\Sigma_{p_2}^{-1/2}(\mu_{p_2} -\mu_{r_{p_2}}) \|_2  \leq \sqrt{3\tilde \epsilon\rho(3\tilde \epsilon)}, 
    \| I_d - \Sigma_{p_2}^{-1/2}\bE_{r_{p_2}}[(X-\mu_{p_2})(X-\mu_{p_2})^\top]\Sigma_{p_2}^{-1/2}\|_2  \leq \rho(3\tilde \epsilon). \nonumber 
\end{align}
From the above inequality we also have
\begin{align}
    \| I_d - \Sigma_{p_2}^{-1/2}\Sigma_{r_{p_2}}\Sigma_{p_2}^{-1/2}\|_2 & \leq \| I_d - \Sigma_{p_2}^{-1/2}\bE_{r_{p_2}}[(X-\mu_{p_2})(X-\mu_{p_2})^\top]\Sigma_{p_2}^{-1/2}\|_2  + \|\Sigma_{p_2}^{-1/2}(\mu_{p_2} -\mu_{r_{p_2}}) \|_2^2 \nonumber \\ 
    & \leq \rho(3\tilde \epsilon) + 3\tilde \epsilon\rho(3\tilde \epsilon). \nonumber
\end{align}
This is equivalent to
\begin{align}\label{proof.joint_dominance}
    (1-\rho(3\tilde \epsilon) - 3\tilde \epsilon\rho(3\tilde \epsilon)) \Sigma_{r_{p_2}} \preceq \Sigma_{{p_2}} \preceq (1+\rho(3\tilde \epsilon) + 3\tilde \epsilon\rho(3\tilde \epsilon)) \Sigma_{r_{p_2}}.
\end{align}
Now we know that the random variable $v^\top \Sigma_{p_1}^{-1/2}X$ under $r_{p_1}, r_{p_2}$ has bounded support, and $\tTV_\mathcal{H}(r_{p_1}, r_{p_2})\leq 5\tilde \epsilon$, thus we have
\begin{align}\label{eqn.proof_mean_close_joint}
    |v^\top \Sigma_{p_1}^{-1/2}(\mu_{r_{p_1}} - \mu_{r_{p_2}})| & = |v^\top \Sigma_{p_1}^{-1/2}(\mu_{r_{p_1}}-\mu_{p_1}+ \sqrt{\frac{\rho(\tilde\epsilon)}{\tilde\epsilon}} -( \mu_{r_{p_2}}-\mu_{p_1}+ \sqrt{\frac{\rho(\tilde\epsilon)}{\tilde\epsilon}} ))|  \\ 
    & \stackrel{(i)}{=} |\int_{t=\mu_{p_1}- \sqrt{\frac{\rho(\tilde\epsilon)}{\tilde\epsilon}}}^{\mu_{p_1}+ \sqrt{\frac{\rho(\tilde\epsilon)}{\tilde\epsilon}}} (\bP_{r_{p_1}}(v^\top \Sigma_{p_1}^{-1/2}X \geq t) -  \bP_{r_{p_2}}(v^\top \Sigma_{p_1}^{-1/2}X \geq t)) dt | \nonumber \\
    & \stackrel{(ii)}{\leq} \int_{t=\mu_{p_1}- \sqrt{\frac{\rho(\tilde\epsilon)}{\tilde\epsilon}}}^{\mu_{p_1}+ \sqrt{\frac{\rho(\tilde\epsilon)}{\tilde\epsilon}}} 5\tilde \epsilon dt \nonumber \\
    & \leq 10\sqrt{\tilde \epsilon\rho(\tilde \epsilon)}.\nonumber 
\end{align}
Here (i) utilizes the integral representation of mean, (ii) uses the triangle inequality $\tTV_\sH(r_{p_1}, r_{p_2}) \leq \tTV_\sH(r_{p_1}, r_{p_2}) + \tTV_\sH(r_{p_1}, {p_1}) + \tTV_\sH({p_1}, {p_2}) +\tTV_\sH({p_2}, r_{p_2}) \leq 5\tilde \epsilon$.
\begin{align}\label{eqn.proof_cov_close_joint}
    & |\bE_{r_{p_1}}[(v^\top \Sigma_{p_1}^{-1/2}(X-\mu_{r_{p_1}}))^2] - \bE_{r_{p_2}}[(v^\top \Sigma_{p_1}^{-1/2}(X-\mu_{r_{p_1}}))^2]|  \\
   \stackrel{(i)}{=} &\left|\int_{t=0}^{ \sqrt{\frac{\rho(\tilde\epsilon)}{\tilde\epsilon}}}(t\bP_{r_{p_1}}(|v^\top \Sigma_{p_1}^{-1/2}(X-\mu_{r_{p_1}})|\geq t) -  t\bP_{r_{p_2}}(|v^\top \Sigma_{p_1}^{-1/2}(X-\mu_{r_{p_1}})|\geq t) dt\right| \nonumber \\ 
    \stackrel{(ii)}{\leq} & \left|\int_{t=0}^{ \sqrt{\frac{\rho(\tilde\epsilon)}{\tilde\epsilon}}} 10\tilde \epsilon  t dt \right|= 5\rho(\tilde \epsilon).\nonumber 
\end{align}
Here (i) utilizes the integral representation of covariancce, (ii) is a result of $\bP_{r_{p_1}}(|X|\geq t) -  \bP_{r_{p_2}}(|X|\geq t) = \bP_{r_{p_1}}(X\geq t) -  \bP_{r_{p_2}}(X\geq t) + \bP_{r_{p_1}}(X\leq -t) -  \bP_{r_{p_2}}(X\leq -t)$, and then apply the similar triangle inequality to both. 
Based on all the results above, we have
\begin{align}
  &|1 -\bE_{r_{p_2}}[(v^\top \Sigma_{p_1}^{-1/2}(X-\mu_{r_{p_2}}))^2]|  \nonumber \\ 
  \stackrel{(i)}{\leq} & |\bE_{r_{p_1}}[(v^\top \Sigma_{p_1}^{-1/2}(X-\mu_{{p_1}}))^2] -\bE_{r_{p_2}}[(v^\top \Sigma_{p_1}^{-1/2}(X-\mu_{r_{p_2}}))^2]| + \rho(\tilde \epsilon) \nonumber \\ 
  \stackrel{(ii)}{\leq} & |\bE_{r_{p_1}}[(v^\top \Sigma_{p_1}^{-1/2}(X-\mu_{{p_1}}))^2] - \bE_{r_{p_1}}[(v^\top \Sigma_{p_1}^{-1/2}(X-\mu_{r_{p_1}}))^2] \nonumber \\ 
  & + \bE_{r_{p_1}}[(v^\top \Sigma_{p_1}^{-1/2}(X-\mu_{r_{p_1}}))^2]  -\bE_{r_{p_2}}[(v^\top \Sigma_{p_1}^{-1/2}(X-\mu_{r_{p_1}}))^2] \nonumber \\
  &  + \bE_{r_{p_2}}[(v^\top \Sigma_{p_1}^{-1/2}(X-\mu_{r_{p_1}}))^2]  -\bE_{r_{p_2}}[(v^\top \Sigma_{p_1}^{-1/2}(X-\mu_{r_{p_2}}))^2] | + \rho(\tilde \epsilon) \nonumber \\ 
  \stackrel{(iii)}{\leq} & (v^\top \Sigma_{p_1}^{-1/2}(\mu_{r_{p_1}} - \mu_{{p_1}} ))^2 + 5\rho(\tilde \epsilon) +  (v^\top \Sigma_{p_1}^{-1/2}(\mu_{r_{p_1}} - \mu_{r_{p_2}} ))^2 + \rho(\tilde \epsilon) \nonumber \\ 
  \stackrel{(iv)}{\leq} & 6\rho(\tilde \epsilon) + 11\tilde \epsilon \sqrt{\rho(\tilde \epsilon)}\lesssim \rho(\tilde \epsilon).\nonumber
\end{align}
Here (i) comes from the fact that $p_1\in\GG$, thus $\rho(\tilde \epsilon)\geq \|I - \bE_{r_{p_1}}[\Sigma_{p_1}^{-1/2}(X-\mu_{{p_1}})(X-\mu_{{p_1}})^\top \Sigma_{p_1}^{-1/2}]\|$. This gives that $|\bE_{r_{p_1}}[(v^\top \Sigma_{p_1}^{-1/2}(X-\mu_{{p_1}}))^2] -1|\leq \rho(\tilde \epsilon)$. (ii) is a result of triangle inequality. (iii) is a result of \eqref{eqn.proof_cov_close_joint} and (iv) is a result of~\eqref{eqn.proof_mean_close_joint} and the assumption that $p_1\in\GG$. 
On the other hand, we have
\begin{align}
    |1 -\bE_{r_{p_2}}[(v^\top \Sigma_{p_1}^{-1/2}(X-\mu_{r_{p_2}}))^2]| = \left|1 - \frac{\bE_{r_{p_2}}[(v^\top \Sigma_{p_1}^{-1/2}(X-\mu_{r_{p_2}}))^2]}{\bE_{{p_1}}[(v^\top \Sigma_{p_1}^{-1/2}(X-\mu_{{p_1}}))^2]}\right|.  \nonumber 
\end{align}
Denote $v' = \frac{\Sigma_{p_1}^{-1/2}v}{\|\Sigma_{p_1}^{-1/2}v\|_2}$,  from~\eqref{proof.joint_dominance} we have
\begin{align}
(1-\rho(3\tilde \epsilon) -3\tilde \epsilon\rho(3\tilde \epsilon))\frac{\bE_{r_{p_2}}[(v'^\top(X-\mu_{r_{p_2}}))^2]}{\bE_{{p_1}}[(v'^\top (X-\mu_{{p_1}}))^2]} & \leq \frac{\bE_{{p_2}}[(v'^\top(X-\mu_{{p_2}}))^2]}{\bE_{{p_1}}[(v'^\top (X-\mu_{{p_1}}))^2]} \nonumber \\ 
& \leq (1+\rho(3\tilde \epsilon) + 3\tilde \epsilon\rho(3\tilde \epsilon))\frac{\bE_{r_{p_2}}[(v'^\top(X-\mu_{r_{p_2}}))^2]}{\bE_{{p_1}}[(v'^\top (X-\mu_{{p_1}}))^2]}.\nonumber
\end{align}
Thus
\begin{align}
    \left|1 - \frac{\bE_{{p_2}}[(v'^\top(X-\mu_{{p_2}}))^2]}{\bE_{{p_1}}[(v'^\top (X-\mu_{{p_1}}))^2]}\right| \lesssim \rho(3\tilde \epsilon).\nonumber
\end{align}
Now we have shown for any $v'\in\bR^d$, the above inequality holds. 
Taking $v' = {\Sigma_{p_1}^{-1/2}v} $, we have
\begin{align}
    \|I_d - \Sigma_{p_1}^{-1/2} \Sigma_{p_2}\Sigma_{p_1}^{-1/2}\|_2 \lesssim \rho(3\tilde \epsilon).\nonumber
\end{align}
This gives multiplicative bound for covariance. Now we only need to bound the difference between mean.
From above proof, we already know that when $\rho(3\tilde \epsilon)\leq 1$, for the fixed $v\in\bR^d, \|v\|_2=1$,
\begin{align}
     |v^\top \Sigma_{p_1}^{-1/2}(\mu_{r_{p_1}} - \mu_{r_{p_2}})|\lesssim 10\sqrt{\tilde \epsilon \rho(\tilde \epsilon)}, \\
     \|\Sigma_{p_1}^{-1/2}(\mu_{p_1} -\mu_{r_{p_1}}) \|_2  \leq \sqrt{\tilde \epsilon\rho(\tilde \epsilon)},  \\
     \|\Sigma_{p_1}^{-1/2}(\mu_{p_2} -\mu_{r_{p_2}}) \|_2  \lesssim \sqrt{\tilde \epsilon\rho(3\tilde \epsilon)}. \nonumber
\end{align}
Thus we have
\begin{align}
    |v^\top \Sigma_{p_1}^{-1/2}(\mu_{{p_1}} - \mu_{{p_2}})|\lesssim \sqrt{\tilde \epsilon\rho(3\tilde \epsilon)}. \nonumber
\end{align}
This shows that 
\begin{align}
    \|\Sigma_{p_1}^{-1/2}(\mu_{{p_1}} - \mu_{{p_2}})\|_2 \lesssim \sqrt{\tilde \epsilon\rho(3\tilde \epsilon)}.\nonumber
\end{align}
\end{proof}

We also justify the choice of $\mathcal{H}$ is consistent with the general design described in Section~\ref{subsec.discussion_tTV}. 
We first write down the dual representation of $B$. Recall that we take $B$ as 
\begin{align}
   B(p, (\mu, \Sigma)) & =  \max\big(\|\Sigma^{-1/2}(\mu_p-\mu)\|_2^2/\eta, \|I_d - \Sigma^{-1/2}\bE_p[(X-\mu)(X-\mu)^{\top}]\Sigma^{-1/2}\|_2 \big) \nonumber \\
   & = \max(\sup_{v_1 \in \bR^d, \|v_1\|_2=1} (v_1^\top \Sigma^{-1/2}(\mu_p-\mu))^2/\eta, \nonumber \\
   & \quad  \sup_{v_2 \in \bR^d, \|v_2\|_2=1} |1-v_2\Sigma^{-1/2}\bE_p[(X-\mu)(X-\mu)^{\top}]\Sigma^{-1/2} v_2|)\nonumber \\
   & = \max(\sup_{y\in\bR, v_1\in\bR^d,\|v_1\|_2 = 1} {y(v_1^\top \Sigma^{-1/2}(\mu_p-\mu)) - y^2)}/{\eta}, \nonumber \\
   & \quad \sup_{v_2\in\bR^d, \|v_2\|_2 =1}  |1-v_2\Sigma^{-1/2}\bE_p[(X-\mu)(X-\mu)^{\top}]\Sigma^{-1/2} v_2|)\nonumber \\
   & = \sup_{f\in \sF_1\bigcup \sF_2} \bE_p[f(X)],
\end{align}
where $\sF_1 = \{ (y(v_1^\top \Sigma^{-1/2}(x-\mu)) - y^2)/\eta \mid y\in\bR, v_1\in\bR^d, \|v_1\|_2=1 \}$, $\sF_2 = \{ \xi(1-v_2\Sigma^{-1/2}(x-\mu)(x-\mu)\Sigma^{-1/2}v_2) \mid \xi\in\{\pm 1\}, v_2\in\bR^d, \|v_2\|_2=1 \}$. This can also be viewede as $B(p, q)$ where $\mu = \mu_q, \Sigma = \Sigma_q$. 

We show that by taking $\mathcal{H} = \{ v^\top x \mid v\in\bR^d\}$, $\tTV_\mathcal{H}$ small can imply $\tTV_{\sF_1\bigcup \sF_2}$ small.
\begin{align}
\tTV_{\sF_1}(p, q) = & \sup_{y\in\bR, v_1\in\bR^d, \|v_1\|_2=1} |\bP_p((y(v_1^\top \Sigma_q^{-1/2}(x-\mu_q)) - y^2)/\eta \geq t) \nonumber \\
&  - \bP_q((y(v_1^\top \Sigma_q^{-1/2}(x-\mu_q)) - y^2)/\eta \geq t)| \nonumber \\
\leq &\sup_{v\in\bR^d, \|v\|_2=1, t\in\bR} |\bP_p(|v^\top \Sigma_q^{-1/2}(X-\mu_q)| \geq t) - \bP_q(|v^\top \Sigma_q^{-1/2}(X-\mu_q)| \geq t)| \nonumber \\
\leq &\sup_{v\in\bR^d, \|v\|_2=1, t\in\bR} |\bP_p(v^\top X \geq t) - \bP_p(v^\top X\geq t)|  \nonumber \\
\leq & \tTV_\mathcal{H}(p, q).
\end{align}
Furthermore, for $\tTV_{\sF_2}$, we have
\begin{align}
\tTV_{\sF_2}(p, q) = & \sup_{\xi \in\{\pm 1\}, v\in\bR^d, \|v\|_2=1, t\in\bR} |\bP_p(\xi v^\top \Sigma_q^{-1/2}(X-\mu_p)(X-\mu_p)^{\top}\Sigma_q^{-1/2}v \geq t) \nonumber \\
&  - \bP_q(\xi v^\top \Sigma_q^{-1/2}(X-\mu_p)(X-\mu_p)^{\top}\Sigma_q^{-1/2}v \geq t)| \nonumber \\
\leq &\sup_{v\in\bR^d, \|v\|_2=1, t\in\bR} |\bP_p(|v^\top \Sigma_q^{-1/2}(X-\mu_p)| \geq t) - \bP_q(|v^\top \Sigma_q^{-1/2}(X-\mu_p)| \geq t)|\nonumber \\
\leq &\sup_{v\in\bR^d, \|v\|_2=1, t>0} |\bP_p(v^\top(X-\mu_p) \geq t) - \bP_q(v^\top(X-\mu_p) \geq t)| \nonumber \\
& +  |\bP_p(v^\top(X-\mu_p) \leq -t) - \bP_q(v^\top(X-\mu_p) \leq -t)|\nonumber \\
\leq &\sup_{v\in\bR^d, \|v\|_2=1, t\in\bR} |\bP_p(v^\top X \geq t) - \bP_p(v^\top X\geq t)| \nonumber \\
& + \sup_{v\in\bR^d, \|v\|_2=1, t\in\bR} |\bP_p(v^\top X \geq t) - \bP_p(v^\top X\geq t)|  \nonumber \\
\leq & 2\tTV_\mathcal{H}(p, q).
\end{align}
Thus we have $\tTV_{\sF_1 \bigcup \sF_2}\leq 2\tTV_\mathcal{H}$. 

\subsection{Joint mean and covariance estimation under different cost function}
Also, we are able to guarantee robustness for joint mean and covariance estimation by choosing a different set of $B, L$ pairs in $\GG$:
\begin{align}\label{eqn.weaker_joint}
   B(p, (\mu, \Sigma)) & =  \max\big(\|\mu_p-\mu\|_2^2/\eta, \|\Sigma - \bE_p[(X-\mu)(X-\mu)^{\top}]\|_2 \big), \\ \label{eqn.weaker_joint_L}
    L(p, (\mu, \Sigma)) & = \max\big(\|\mu_p-\mu\|_2^2/\eta, \|\Sigma - \Sigma_p\|_2 \big).
\end{align}
We have the following finite sample error bounds, which generalizes~\citep[Theorem 4.1]{gao2019generative} to nonparametric classes. 
 \begin{theorem}\label{thm.tTV_joint_correct}
 
Denote $\tilde \epsilon = 2\epsilon + 2C^{\mathsf{vc}}\sqrt{\frac{d+1+\log(1/\delta)}{n}}, \GG = \GG_\downarrow(\rho(\tilde \epsilon), \tilde \epsilon) \bigcap \GG_\downarrow(\rho(2\tilde \epsilon), 2\tilde \epsilon)$ when $B, L $ are chosen as~\eqref{eqn.weaker_joint} and~\eqref{eqn.weaker_joint_L}. Assume $\tilde \epsilon < 1/2$, $p^* \in \GG$ . Assume the oblivious corruption model of level $\epsilon$. Denote the empirical distribution of observed data as $\hat p_n$. For $\mathcal{H} = \{v^\top x \mid v\in\bR^d, \|v\|_2=1\}$, let $q$ denote the output of the projection algorithm $\Pi(\hat{p}_n; \tTV_{\mathcal{H}}, \GG)$ or $\Pi(\hat{p}_n; \tTV_{\mathcal{H}}, \GG, \tilde{\epsilon}/2)$. Then, with probability at least $1-\delta$, 
\begin{align}
     \|\mu_{p^*} -\mu_q \|_2 & \leq 2\sqrt{\tilde \epsilon\rho(\tilde \epsilon)}, \\
    \| \Sigma_{p^*}-\Sigma_q\|_2 & \leq 7\rho(2\tilde \epsilon).
\end{align}
\end{theorem}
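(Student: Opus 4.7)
\textbf{Proof proposal for Theorem~\ref{thm.tTV_joint_correct}.}

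The plan is to invoke Proposition~\ref{prop.tTV_general}, which reduces the theorem to bounding the modulus of continuity of $\GG$ under $\tTV_\mathcal{H}$. Since the collection $\{\{x : v^\top x \geq t\} : v\in\bR^d, t\in\bR\}$ has VC dimension $d+1$, Lemma~\ref{lemma.vcinequality} gives $\tTV_\mathcal{H}(p,\hat p_n) \leq C^{\mathsf{vc}}\sqrt{(d+1+\log(1/\delta))/n}$ with probability at least $1-\delta$, whence the effective perturbation is $\tilde\epsilon$. The remainder of the argument is to show that for any $p_1,p_2\in\GG$ with $\tTV_\mathcal{H}(p_1,p_2)\leq \tilde\epsilon$, one has $\|\mu_{p_1}-\mu_{p_2}\|_2 \leq 2\sqrt{\tilde\epsilon\rho(\tilde\epsilon)}$ and $\|\Sigma_{p_1}-\Sigma_{p_2}\|_2 \leq 7\rho(2\tilde\epsilon)$. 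The argument parallels the proof of Theorem~\ref{thm.tTV_joint_multiplicative} in Appendix~\ref{proof.G_TV_joint}, but without the $\Sigma_{p_1}^{-1/2}$ preconditioning, which simplifies the algebra and produces additive rather than multiplicative bounds.

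\emph{Mean part.} Fix a unit $v\in\bR^d$. Membership of $p_1$ in $\GG_\downarrow(\rho(\tilde\epsilon),\tilde\epsilon)$ gives $\sup_{r\leq p_1/(1-\tilde\epsilon)}\|\mu_r-\mu_{p_1}\|_2 \leq \sqrt{\tilde\epsilon\rho(\tilde\epsilon)}$, so $p_1 \in \GG_{W_\sF}^\TV(\sqrt{\tilde\epsilon\rho(\tilde\epsilon)},\tilde\epsilon)$ for $\sF=\{x\mapsto v^\top x\}$. By Lemma~\ref{lem.tail_bound}, $\bP_{p_1}(|v^\top(X-\mu_{p_1})|\geq \sqrt{\rho(\tilde\epsilon)/\tilde\epsilon})\leq 2\tilde\epsilon$, and $\tTV_\mathcal{H}$-closeness transfers this to $\bP_{p_2}(\cdot)\leq 3\tilde\epsilon$. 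Truncate both distributions on the set $\{|v^\top(X-\mu_{p_1})|\leq \sqrt{\rho(\tilde\epsilon)/\tilde\epsilon}\}$ to obtain $r_{p_1}\leq p_1/(1-2\tilde\epsilon)$ and $r_{p_2}\leq p_2/(1-3\tilde\epsilon)$, both supported in a window of width $2\sqrt{\rho(\tilde\epsilon)/\tilde\epsilon}$ along $v$. Using the integral representation $v^\top\mu_{r_{p_1}} - v^\top\mu_{r_{p_2}} = \int_{-\infty}^{\infty}[\bP_{r_{p_1}}(v^\top X\geq t)-\bP_{r_{p_2}}(v^\top X\geq t)]\,dt$ restricted to the bounded support, together with $\tTV_\mathcal{H}(r_{p_1},r_{p_2})\leq 5\tilde\epsilon$ (by triangle inequality through $p_1,p_2$), yields $|v^\top(\mu_{r_{p_1}}-\mu_{r_{p_2}})| \lesssim \sqrt{\tilde\epsilon\rho(\tilde\epsilon)}$. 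Combining with the two resilience bounds $\|\mu_{p_i}-\mu_{r_{p_i}}\|_2\leq \sqrt{\tilde\epsilon\rho(\tilde\epsilon)}$ (valid since $p_i\in\GG_\downarrow(\rho(2\tilde\epsilon),2\tilde\epsilon)$, which covers the $2\tilde\epsilon,3\tilde\epsilon$ deletions after rescaling constants) and taking supremum over $v$ gives the desired $\|\mu_{p_1}-\mu_{p_2}\|_2\leq 2\sqrt{\tilde\epsilon\rho(\tilde\epsilon)}$.

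\emph{Covariance part.} Similarly fix $v$ and use the integral representation $\bE_r[(v^\top(X-c))^2] = 2\int_0^\infty t\,\bP_r(|v^\top(X-c)|\geq t)\,dt$ evaluated at the truncated $r_{p_1},r_{p_2}$ with $c=\mu_{p_1}$. Because the supports are bounded by $\sqrt{\rho(\tilde\epsilon)/\tilde\epsilon}$ and $\tTV_\mathcal{H}$ again controls the pointwise probability differences uniformly in $t$, the difference in $v^\top$-second moments between $r_{p_1}$ and $r_{p_2}$ is at most $\int_0^{O(\sqrt{\rho(\tilde\epsilon)/\tilde\epsilon})} t\cdot O(\tilde\epsilon)\,dt = O(\rho(\tilde\epsilon))$. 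To pass from truncated second moments to the full $\Sigma_{p_i}$, invoke $p_i\in \GG_\downarrow(\rho(2\tilde\epsilon),2\tilde\epsilon)$ applied to the covariance coordinate of $B$: this yields $\|\bE_{r_{p_i}}[(X-\mu_{p_i})(X-\mu_{p_i})^\top]-\Sigma_{p_i}\|_2\leq \rho(2\tilde\epsilon)$, and a small correction $\|\mu_{p_i}-\mu_{r_{p_i}}\|_2^2\leq \tilde\epsilon\rho(\tilde\epsilon)$ accounts for the recentering from $\mu_{p_1}$ to $\mu_{r_{p_i}}$. Chaining these four additive bounds (two truncation errors, one $\tTV_\mathcal{H}$-comparison, and a recentering term) and taking a supremum over $v$ gives $\|\Sigma_{p_1}-\Sigma_{p_2}\|_2 \leq 7\rho(2\tilde\epsilon)$.

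The main obstacle is bookkeeping the various deletion levels so that the final coefficients ($2$ in front of $\sqrt{\tilde\epsilon\rho(\tilde\epsilon)}$ and $7$ in front of $\rho(2\tilde\epsilon)$) come out correctly, particularly the need to invoke resilience at the slightly enlarged level $2\tilde\epsilon$ in the covariance step — this is why $\GG$ is defined as the intersection of $\GG_\downarrow$ at two different parameters. A secondary subtlety is that the integral representation for the covariance uses recentering at $\mu_{p_1}$ (not at the respective $\mu_{r_{p_i}}$), so one must carefully use the additive (rather than multiplicative) structure of $B$ to ensure the recentering error stays of the form $\tilde\epsilon\rho(\tilde\epsilon)\lesssim \rho(2\tilde\epsilon)$ rather than blowing up.
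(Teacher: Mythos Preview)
Your approach is correct but differs from the paper's in a way worth noting. You adapt the truncation-plus-integral-representation argument from the proof of Theorem~\ref{thm.tTV_joint_multiplicative}: truncate both distributions to a bounded window using the resilience tail bound, compare the truncated moments via $\int t\,\bP(|v^\top(X-c)|\geq t)\,dt$ on the window, and then pay resilience to undo the truncation. This works, but as you already suspect, the bookkeeping is heavy and will not deliver the stated constants $2$ and $7$ without additional effort (in the multiplicative proof the analogous integral comparison already costs a factor $5$--$10$ before any chaining).

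The paper's proof is shorter and gets the constants directly. For the mean it simply reuses the midpoint/mean-cross argument of Theorem~\ref{thm.tTV_mean}: the mean-resilience radius here is $\sqrt{\tilde\epsilon\rho(\tilde\epsilon)}$, so the bound $2\sqrt{\tilde\epsilon\rho(\tilde\epsilon)}$ falls out immediately. For the covariance the key observation you are missing is that the one-dimensional KS distance of the \emph{squared} variable $(v^\top(X-\mu_{p_2}))^2$ is at most $2\tTV_{\mathcal H}(p_1,p_2)$, since $\{(v^\top(X-c))^2\geq t\}$ is the union of two half-spaces. One can therefore apply Lemma~\ref{lem.tvt_cross_mean} directly to this squared variable, producing $r_{p_1}\leq p_1/(1-2\tilde\epsilon)$ and $r_{p_2}\leq p_2/(1-2\tilde\epsilon)$ with \emph{crossed} second moments about $\mu_{p_2}$. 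The middle comparison term is then $\leq 0$ rather than $O(\rho)$, and the telescoping $\Sigma_{p_1}\to\bE_{r_{p_1}}[(X-\mu_{p_1})(\cdot)^\top]\to\bE_{r_{p_1}}[(X-\mu_{p_2})(\cdot)^\top]\to\bE_{r_{p_2}}[(X-\mu_{p_2})(\cdot)^\top]\to\Sigma_{p_2}$ costs only $2\rho(2\tilde\epsilon)$ plus two recentering terms $\|\mu_{p_1}-\mu_{r_{p_1}}\|_2^2+\|\mu_{p_2}-\mu_{r_{p_1}}\|_2^2\leq 10\tilde\epsilon\rho(2\tilde\epsilon)$, yielding $7\rho(2\tilde\epsilon)$ under $\tilde\epsilon<1/2$. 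So the paper trades your integral comparison for a single mean-cross on the squared variable; this is why the additive theorem admits sharper constants than the multiplicative one and explains why $\GG$ is taken at level $2\tilde\epsilon$ rather than, say, $3\tilde\epsilon$.
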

\begin{proof}
The bound on $\tilde\epsilon$ is the same as in the proof of Theorem~\ref{thm.tTV_mean}. It suffices to show the modulus of continuity.

Assume $\tTV_\mathcal{H}(p_1, p_2) \leq \tilde \epsilon$, and $p_1, p_2\in\GG_\downarrow(\rho(\tilde \epsilon), \tilde \epsilon)$. Following the same argument as mean estimation in Theorem~\ref{thm.tTV_mean}, we know that
\begin{align}
    \|\mu_{p_1} - \mu_{p_2}\|_2\leq 2\sqrt{\tilde \epsilon \rho(\tilde \epsilon)}.
\end{align}
Thus it suffices to bound the modulus of continuity for covariance estimation. 
Note that
\begin{align}
 &\sup_{v\in\bR^d, \|v\|_2=1, t\in\bR^d} |\bP_{p_1}(v^\top  (X-\mu_{p_2})(X-\mu_{p_2})^{\top}v \geq t) - \bP_{p_2}(v^\top  (X-\mu_{p_2})(X-\mu_{p_2})^{\top}v \geq t)| \nonumber \\
= &\sup_{v\in\bR^d, \|v\|_2=1, t\in\bR^d} |\bP_{p_1}(|v^\top (X-\mu_{p_2})| \geq t) - \bP_{p_2}(|v^\top (X-\mu_{p_2})| \geq t)| \nonumber \\
\leq &\sup_{v\in\bR^d, \|v\|_2=1, t\in\bR^d} |\bP_{p_1}(v^\top(X-\mu_{p_2}) \geq t) - \bP_{p_2}(v^\top(X-\mu_{p_2}) \geq t)| \nonumber \\
& + |\bP_{p_1}(v^\top(X-\mu_{p_2}) \leq -t) - \bP_{p_2}(v^\top(X-\mu_{p_2}) \leq -t)|  \nonumber \\
\leq & 2\tTV_\mathcal{H}({p_1}, {p_2}) \nonumber \\
\leq & 2\tilde \epsilon. \nonumber
\end{align}
Without loss of generality, assume that there exists some $v^*$ such that 
\begin{align}
    v^{*\top}(\Sigma_{p_1}-\Sigma_{p_2})v^* = \|\Sigma_{p_1}-\Sigma_{p_2}\|_2. \nonumber
\end{align}
Thus from $\tTV_\mathcal{H}(p_1, p_2)\leq \tilde \epsilon$ and Lemma~\ref{lem.tvt_cross_mean},  there exist some $r_{p_1}\leq \frac{p_1}{1-2\tilde \epsilon}$, $r_{p_2}\leq \frac{p_2}{1-2\tilde \epsilon}, $ such that 
\begin{align}
    v^{*\top} \bE_{r_{p_1}}[(X-\mu_{p_2})(X-\mu_{p_2})^{\top}]v^* \leq  v^{*\top} \bE_{r_{p_2}}[(X-\mu_{p_2})(X-\mu_{p_2})^{\top}]v^*.  \nonumber
\end{align}
From $p_1, p_2 \in\GG_\downarrow(\rho(\tilde \epsilon), \tilde \epsilon)$, we know that 
\begin{align*}
\sup_{r_{p_1}\leq \frac{p_1}{1-2\tilde \epsilon}} v^{*\top}(\Sigma_{p_1} - \bE_{{r_{{p_1}}}}[(X-\mu_{p_2})(X-\mu_{p_2})^{\top}])v^* \leq \rho(2\tilde \epsilon),\\
\sup_{r_{p_2}\leq \frac{p_2}{1-2\tilde \epsilon}} v^{*\top}( \bE_{{r_{{p_2}}}}[(X-\mu_{p_2})(X-\mu_{p_2})^{\top}] - \Sigma_{p_2})v^* \leq \rho(2\tilde \epsilon).
\end{align*}
Thus overall, we have
\begin{align*}
     \|\Sigma_{p_1}-\Sigma_{p_2}\|_2  = & v^{*\top}(\Sigma_{p_1}-\Sigma_{p_2})v^* \nonumber \\
      = & v^{*\top}(\Sigma_{p_1} - \bE_{r_{p_1}}[(X-\mu_{p_1})(X-\mu_{p_1})^{\top}] \nonumber \\
     & + \bE_{r_{p_1}}[(X-\mu_{p_1})(X-\mu_{p_1})^{\top}] - \bE_{r_{p_1}}[(X-\mu_{r_{p_1}})(X-\mu_{r_{p_1}})^{\top}] \nonumber \\
     & + \bE_{r_{p_1}}[(X-\mu_{r_{p_1}})(X-\mu_{r_{p_1}})^{\top}] - \bE_{r_{p_1}}[(X-\mu_{p_2})(X-\mu_{p_2})^{\top}] \nonumber \\
     & + \bE_{r_{p_1}}[(X-\mu_{p_2})(X-\mu_{p_2})^{\top}] - \bE_{r_{p_2}}[(X-\mu_{p_2})(X-\mu_{p_2})^{\top}]  \nonumber \\ 
     & + \bE_{r_{p_2}}[(X-\mu_{p_2})(X-\mu_{p_2})^{\top}] -\Sigma_{p_2})v^* \nonumber \\ 
     \leq & \rho(2\tilde \epsilon) + \|\mu_{p_1}-\mu_{r_{p_1}}\|_2^2 + \|\mu_{p_2}-\mu_{r_{p_1}}\|_2^2 + 0 + \rho(2\tilde \epsilon) \nonumber \\ 
     \leq & 2\rho(2\tilde \epsilon) + 10\tilde \epsilon \rho(2\tilde \epsilon) \nonumber\\ 
     \leq & 7\rho(2\tilde \epsilon).
\end{align*}
\end{proof}

\subsection{Midpoint lemma for $\tTV$}

In Section~\ref{sec.finite_sample_algorithm_weaken},  we control  the modulus of continuity by mean-cross lemma. Another way to bound the modulus is via the midpoint lemma. We can show that for two 1-dimensional distributions that are close under $\tTV_\mathcal{H}$, we can also find a midpoint that is close to both distributions. This is formally proved in the following lemma.

\begin{lemma}[Midpoint for $\tTV$]
\label{lem.tvt_midpoint} 
Suppose two distributions $p,q$ on the real line satisfy
\begin{align}\label{eqn.tTV_XY}
\sup_{t\in \bR} |\bP_p(X\geq t) - \bP_q(Y\geq t)| \leq \epsilon.
\end{align}

Then one can find some distribution $r$ with $r(A) \leq \frac{p(A)}{1-\epsilon}$ and $r(A) \leq \frac{q(A)}{1-\epsilon}$ for any event $A\in\mathcal{A} = \{X\geq t, X\leq t \mid t\in\bR\}$.
Furthermore, $\sup_{r(A)\leq \frac{p(A)}{1-\epsilon}, \forall A \in\mathcal{A}} |\bE_r[X] - \bE_p[X]| = \sup_{r\leq \frac{p}{1-\epsilon}} |\bE_r[X] - \bE_p[X]|$.
\end{lemma}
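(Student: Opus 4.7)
The plan is to prove the two claims separately, first constructing an explicit midpoint via CDFs, then using the fact that on the real line the expectation is a functional of the CDF alone.

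For the first claim, let $F_p$ and $F_q$ denote the CDFs of $p$ and $q$. The hypothesis is equivalent (after taking one-sided limits) to $\sup_t |F_p(t) - F_q(t)| \leq \epsilon$. I would then define
\begin{align*}
F_r(t) \;\eqdef\; \frac{\max\bigl(F_p(t)-\epsilon,\ F_q(t)-\epsilon,\ 0\bigr)}{1-\epsilon}.
\end{align*}
Because $F_p,F_q$ are non-decreasing and right-continuous with limits $0$ and $1$, so is $F_r$, hence it is a valid CDF. To check the constraint on $A=\{X\leq t\}$, it suffices to note that $F_r(t)\leq F_p(t)/(1-\epsilon)$ reduces to $\max(F_p(t)-\epsilon,F_q(t)-\epsilon,0)\leq F_p(t)$, whose only nontrivial piece is $F_q(t)-F_p(t)\leq \epsilon$ — precisely the KS hypothesis. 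For $A=\{X\geq t\}$, the desired inequality $1-F_r(t^-)\leq (1-F_p(t^-))/(1-\epsilon)$ rearranges to $F_r(t^-)\geq (F_p(t^-)-\epsilon)/(1-\epsilon)$, which is immediate from the definition. The same two checks work with $q$ in place of $p$ by symmetry.

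For the second claim, the key tool is the integral representation
\begin{align*}
\bE_r[X] \;=\; \int_0^\infty (1-F_r(t))\,dt \;-\; \int_{-\infty}^0 F_r(t)\,dt,
\end{align*}
so $\bE_r[X]$ depends on $r$ only through $F_r$. The inclusion LHS $\geq$ RHS is trivial since the constraint set on the right is smaller. For LHS $\leq$ RHS, suppose $r$ satisfies only the half-line deletion bounds $r(A)\leq p(A)/(1-\epsilon)$ for $A\in\cA$. Those bounds translate to the CDF sandwich
\begin{align*}
\tfrac{\max(F_p(t)-\epsilon,\,0)}{1-\epsilon}\;\leq\;F_r(t)\;\leq\;\min\bigl(\tfrac{F_p(t)}{1-\epsilon},\,1\bigr),
\end{align*}
whose two envelopes are precisely the CDFs of the two extremal honest $\epsilon$-deletions of $p$ (delete the bottom-$\epsilon$ mass, respectively the top-$\epsilon$ mass). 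Call these $\bar r$ and $\underline r$; both satisfy the full measure-level bound $r\leq p/(1-\epsilon)$. Integrating the sandwich against the representation above yields $\bE_{\underline r}[X]\leq \bE_r[X]\leq \bE_{\bar r}[X]$, so the sup of $|\bE_r[X]-\bE_p[X]|$ on the LHS is attained at either $\bar r$ or $\underline r$, each of which lies in the RHS feasible set. This forces equality.

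The main piece of care is bookkeeping around right-continuity and the distinction between $F(t)$ and $F(t^-)$, so that the half-line constraints for $\{X\leq t\}$ and $\{X\geq t\}$ translate cleanly into the CDF inequalities above; I do not anticipate any conceptual obstacle, only a small amount of one-sided-limit checking.
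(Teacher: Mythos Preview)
Your proof is correct and follows essentially the same approach as the paper: both reduce the first claim to CDF constraints whose feasibility follows from the KS hypothesis, and both handle the second claim via the integral representation of the mean, showing the extremal $F_r$ in the CDF sandwich coincides with an honest $\epsilon$-deletion of the top or bottom mass. If anything, your version is a bit more explicit (you exhibit a specific $F_r$ and track the $F(t^-)$ bookkeeping), whereas the paper simply asserts the constraint interval is nonempty and that the optimizer is the pointwise extremal CDF.
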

\begin{proof}
To guarantee that $r(A) \leq \frac{p(A)}{1-\epsilon}$ and $r(A) \leq \frac{q(A)}{1-\epsilon}$ for any event $A\in\mathcal{A} = \{X\geq t, X\leq t \mid t\in\bR\}$, it is equivalent to the following constraints:
\begin{align*}
    \frac{\max( \bP_p(X\leq t), \bP_q(X\leq t) ) - \epsilon}{1-\epsilon}\leq \bP_r(X\leq t) \leq \frac{\min(\bP_p(X\leq t),\bP_q(X\leq t))}{1-\epsilon}, \forall t\in\bR. 
\end{align*}
Such distribution $r$  exists since it is always true that $ \min(\bP_p(X\leq t),\bP_q(X\leq t)) \geq \max( \bP_p(X\leq t), \bP_q(X\leq t) ) - \epsilon$.

Now it suffices to argue that $\sup_{r(A)\leq \frac{p(A)}{1-\epsilon}, \forall A \in\mathcal{A}} |\bE_r[X] - \bE_p[X]| = \sup_{r\leq \frac{p}{1-\epsilon}} |\bE_r[X] - \bE_p[X]|$. We consider the case when $\bE_r[X] \geq \bE_p[X]$. Then the left-hand side is equivalent to the following constrained optimization problem:
\begin{align}
    \sup_{F} & -\int_{-\infty}^0 F(t)dt + \int_{0}^{+\infty} (1-F(t))dt
    \nonumber \\ 
     s.t. &  \frac{\bP_p(X\leq t)-\epsilon}{1-\epsilon} \leq F(t) \leq \frac{\bP_p(X\leq t)}{1-\epsilon}, F(-\infty) =0, F(+\infty) =1, F:\text{non-decreasing}  \nonumber
\end{align}
One can see that it is maximized at the case when $F(t)$ is minimized everywhere, i.e. $F(t) =  \frac{\bP_p(X\leq t)-\epsilon}{1-\epsilon}, \forall \bP_p(X\leq t)\geq \epsilon, F(t) = 0$ otherwise, which achieves the same supremum when we restrict $r\leq p/(1-\epsilon)$. Similarly when $\bE_r[X]< \bE_p[X]$, the distance is maximized when $F(t)$ is maximized everywhere, which is equivalent to deleting the largest $\epsilon$ mass. This finishes the proof.

\end{proof}

The key observation in this lemma is that the worst-case perturbation under $\tTV$ distance is to delete $\epsilon$ mass from the smallest points (or the largest points), which coincides with the worst-case perturbation under $\TV$ distance. This motivates the design of the new mid-point.

 We remark here that both the mean cross lemma above and the mid-point lemma (Lemma~\ref{lem.tvt_midpoint}) can bound the modulus under $\tTV$ distance for all tasks we considered. In the main text, we have shown that mid-point lemma can bound the modulus for mean. Here we use other examples to illustrate the power of mean-cross lemma. In all the analyses below, the mean-cross lemma can be substituted with mid-point lemma. 

\subsection{Finite sample analysis for $\tTV_{
\mathcal{H}}$ projection to $\GG_{W_\sF}$ in~(\ref{eqn.def_G_TV_WF})}
 
We now present a theorem for robust learning with loss function $W_{\sF}$. 

\begin{theorem}
Choose some symmetric $\mathcal{H} \subset \sF$ such that $W_{\mathcal{H}}(p,q)\geq \frac{1}{2}W_{\sF}(p,q), \forall p,q \in \GG_{W_{\sF}}(\rho(\tilde \epsilon), \tilde \epsilon)$. Denote $\tilde \epsilon = 2\epsilon + \sqrt{\frac{2\ln(2|\mathcal{H}|/\delta)}{n}}$. Assume $p^* \in \GG_{W_{\sF}}(\rho(\tilde \epsilon), \tilde \epsilon)$ and the oblivious corruption model of level $\epsilon$ with $\TV$ perturbation. Denote the empirical distribution of observed data as $\hat p_n$. Let $q$ denote the output of the projection algorithm $\Pi(\hat{p}_n; \tTV_{\mathcal{H}}, \GG)$ or $\Pi(\hat{p}_n; \tTV_{\mathcal{H}}, \GG, \tilde{\epsilon}/2)$. Then, with probability at least $1-\delta$, 
\begin{align}
    W_{\sF}(p^*,q) \leq 4\rho\left ( 2\epsilon + \sqrt{\frac{2\ln(2|\mathcal{H}|/\delta)}{n}}\right ).
\end{align} 
\end{theorem}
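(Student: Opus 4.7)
The plan is to invoke Proposition~\ref{prop.tTV_general} to reduce the problem to controlling two quantities: the statistical error $\tTV_{\mathcal{H}}(p,\hat p_n)$ between the corrupted population $p$ and its empirical version, and the modulus of continuity of the target set $\GG_{W_\sF}(\rho(\tilde\epsilon),\tilde\epsilon)$ under the weakened metric $\tTV_{\mathcal{H}}$ with loss $W_{\sF}$. The first piece is immediate from the cardinality version of Lemma~\ref{lemma.vcinequality} (Equation~\eqref{eqn.dkwtvh}): with probability at least $1-\delta$, $\tTV_{\mathcal{H}}(p,\hat p_n) \leq \sqrt{\ln(2|\mathcal{H}|/\delta)/(2n)}$, which yields exactly the $\tilde\epsilon = 2\epsilon + \sqrt{2\ln(2|\mathcal{H}|/\delta)/n}$ appearing in the statement after combining with $\TV(p^*,p)\leq \epsilon$ and the triangle inequality for $\tTV_{\mathcal{H}}$.

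The bulk of the argument is then to show that
\[
\modu(\GG_{W_\sF}(\rho(\tilde\epsilon),\tilde\epsilon),\tilde\epsilon,\tTV_{\mathcal{H}},W_\sF) = \sup_{p_1,p_2 \in \GG_{W_\sF},\ \tTV_{\mathcal{H}}(p_1,p_2)\leq \tilde\epsilon} W_\sF(p_1,p_2) \leq 4\rho(\tilde\epsilon).
\]
The plan here is to first pass from $W_\sF$ to $W_{\mathcal{H}}$ using the hypothesis $W_{\mathcal{H}}(p_1,p_2)\geq \tfrac{1}{2} W_{\sF}(p_1,p_2)$ for $p_1,p_2\in \GG_{W_\sF}$, which costs a factor of $2$. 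It then remains to show $W_{\mathcal{H}}(p_1,p_2)\leq 2\rho(\tilde\epsilon)$. Since $\mathcal{H}$ is symmetric, for each fixed $f\in \mathcal{H}$ the condition $\tTV_{\mathcal{H}}(p_1,p_2)\leq \tilde\epsilon$ gives $\sup_t|\bP_{p_1}(f(X)\geq t)-\bP_{p_2}(f(X)\geq t)|\leq \tilde\epsilon$, so the one-dimensional mean-cross lemma (Lemma~\ref{lem.mean_cross_tv}) yields deletions $r_1\leq p_1/(1-\tilde\epsilon)$ and $r_2\leq p_2/(1-\tilde\epsilon)$ with $\bE_{r_1}[f(X)]\leq \bE_{r_2}[f(X)]$. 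Because $\mathcal{H}\subset \sF$ and $p_i \in \GG_{W_\sF}(\rho(\tilde\epsilon),\tilde\epsilon)$, resilience (together with symmetry of $\sF$) implies $|\bE_{p_i}[f(X)]-\bE_{r_i}[f(X)]|\leq \rho(\tilde\epsilon)$ for $i=1,2$. Chaining these via the triangle inequality and taking the supremum over $f\in\mathcal{H}$ gives $W_{\mathcal{H}}(p_1,p_2)\leq 2\rho(\tilde\epsilon)$, and hence $W_\sF(p_1,p_2)\leq 4\rho(\tilde\epsilon)$.

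Combining with Proposition~\ref{prop.tTV_general} yields $W_\sF(p^*,q)\leq 4\rho(\tilde\epsilon)$, which is the claimed bound. The main technical obstacle is really a conceptual one: making sure that each $f\in\mathcal{H}$ is usable simultaneously as a test function for the mean-cross step (which needs $\tTV_{\mathcal{H}}$-closeness of the one-dimensional marginals $f(X)$) and as a witness for resilience (which needs $f\in\sF$). The hypothesis $\mathcal{H}\subset \sF$ together with symmetry of $\mathcal{H}$ is precisely what makes these two uses compatible; once that compatibility is clear, the remainder is a routine triangle-inequality chain and an application of the already-proved Proposition~\ref{prop.tTV_general} and Lemma~\ref{lemma.vcinequality}.
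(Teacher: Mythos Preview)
Your proposal is correct and follows essentially the same approach as the paper's proof: reduce via Proposition~\ref{prop.tTV_general} and the cardinality bound in Lemma~\ref{lemma.vcinequality}, then bound the modulus by fixing a witness $f\in\mathcal{H}$, applying the mean-cross lemma (Lemma~\ref{lem.mean_cross_tv}) along with resilience of $p_1,p_2$ (using $\mathcal{H}\subset\sF$), and finally converting $W_{\mathcal{H}}\leq 2\rho(\tilde\epsilon)$ to $W_\sF\leq 4\rho(\tilde\epsilon)$ via the assumed inequality. The only cosmetic difference is that the paper picks $h=\argmax_{h\in\mathcal{H}}(\bE_{p_1}[h]-\bE_{p_2}[h])$ upfront rather than fixing an arbitrary $f$ and taking the supremum at the end.
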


\begin{proof}
The $\tilde \epsilon$ bound follows from Proposition~\ref{prop.tTV_general} and Lemma~\ref{lemma.vcinequality}. Now we upper bound the modulus:
\begin{align}
\sup_{p_1, p_2\in \GG_{W_{\sF}}(\rho(\tilde \epsilon),\tilde \epsilon): \tTV_\mathcal{H}(p_1, p_2) \leq \tilde \epsilon}  W_{\sF}(p_1,p_2) \leq 4\rho(\tilde \epsilon).
\end{align}

The condition that $\tTV_\mathcal{H}(p_1, p_2) \leq \tilde \epsilon$ implies that for any $h\in \mathcal{H}$,
\begin{align}
    \sup_{t\in \bR}|\bP_{p_1}[h(X)\geq t] - \bP_{p_2}[h(X) \geq t]|\leq \tilde \epsilon.
\end{align}
 
Take $h = \argmax_{h\in \mathcal{H}} \mathbb{E}_{p_1}[h(X)] - \mathbb{E}_{p_2}[h(X)]$, hence  $\mathbb{E}_{p_1}[h(X)] - \mathbb{E}_{p_2}[h(X)] = W_{\mathcal{H}}(p_1,p_2)$. It follows from Lemma~\ref{lem.tvt_cross_mean} that there exist some $r_{p_1} \leq \frac{p_1}{1-\tilde \epsilon}, r_{p_2} \leq \frac{p_2}{1-\tilde \epsilon}$ such that
  \begin{align}
      \bE_{r_{p_1}}[h(X)] \leq \bE_{r_{p_2}}[h(X)].
  \end{align}
Furthermore, from $p_1, p_2\in \GG_{W_{\sF}}(\rho(\tilde \epsilon), \tilde \epsilon), \mathcal{H} \subset \sF$, we have
\begin{align}
    \bE_{p_1}[h(X)]  - \bE_{r_{p_1}}[h(X)] \leq \rho(\tilde \epsilon), \\
    \bE_{r_{p_2}}[h(X)]  - \bE_{p_2}[h(X)] \leq \rho(\tilde \epsilon).
\end{align}
Then,
\begin{align}
W_{\mathcal{H}}(p_1,p_2) & =  \bE_{p_1}[h(X)] - \bE_{p_2}[h(X)] \\ 
& = \bE_{p_1}[h(X)] - \bE_{r_{p_1}}[h(X)] + \bE_{r_{p_1}}[h(X)] - \bE_{r_{p_2}}[h(X)] + \bE_{r_{p_2}}[h(X)] - \bE_{p_2}[h(X)]  \\
& \leq 2\rho(\tilde \epsilon),
\end{align}
which implies that $W_{\sF}(p_1,p_2)\leq 2 W_{\mathcal{H}}(p_1,p_2) \leq 4 \rho(\tilde \epsilon)$, which shows the modulus is small. The final conclusion follows from Proposition~\ref{prop.tTV_general}. 
\end{proof}

\subsection{Modulus bound on $\tTV_\mathcal{H}$}\label{appendix.general_sHandtTV}

It follows from Proposition~\ref{prop.tTV_general} that it suffices to check the following modulus
\begin{align}
    \sup_{p_1, p_2\in \GG: \tTV_\mathcal{H}(p_1, p_2) \leq \eta} L(p_2, \theta^*(p_1))
\end{align}
to guarantee the finite sample error of $\tTV_{\mathcal{H}}$ projection algorithms. The following lemma generalizes the modulus bound via assuming $B(p,\theta)$ is convex in $p$ and minimax theorem.   

\begin{lemma}\label{lemma.minimaxmodulus}
Assume $B(p, \theta)$ is convex in $p$ for all $\theta$ in $\GG(\rho_1, \rho_2, \eta)$, consider the dual representation of $B$:
\begin{align}
    B(p, \theta) = \sup_{f\in\sF_\theta} \bE_p[f(X)] - B^*(f, \theta).
\end{align}
Here $\sF_\theta = \{f \mid B^*(f,\theta) <\infty\}$. We take ${\mathcal{H}} =  \bigcup_{\theta\in \Theta} \sF_\theta$ and assume that for any $\theta \in \Theta$, the minimax theorem holds:
\begin{align}
       \min_{r \leq \frac{p}{1-\epsilon}} \sup_{f\in\sF_{\theta}} \bE_{r}[f(X)] - B^*(f, \theta) = \sup_{f\in\sF_{\theta}} \min_{r \leq \frac{p}{1-\epsilon}} \bE_{r}[f(X)] - B^*(f, \theta).
\end{align}
Then, the modulus is being controlled by $\rho_2$:
\begin{align}
    \sup_{p_1, p_2\in \GG: \tTV_\mathcal{H}(p_1, p_2) \leq \eta} L(p_2, \theta^*(p_1)) \leq \rho_2. 
\end{align}
\end{lemma}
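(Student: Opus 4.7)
The plan is to combine the dual representation of $B$ with the mean-cross lemma and the assumed minimax identity to manufacture a single deletion of $p_2$ that is simultaneously ``good'' for every $f \in \sF_{\theta^*(p_1)}$. Fix $p_1, p_2 \in \GG(\rho_1, \rho_2, \eta)$ with $\tTV_\mathcal{H}(p_1, p_2) \leq \eta$, abbreviate $\theta = \theta^*(p_1)$ and $\sF = \sF_{\theta}$. Since $\sF \subseteq \mathcal{H}$ by the definition $\mathcal{H} = \bigcup_{\theta} \sF_{\theta}$, the hypothesis $\tTV_\mathcal{H}(p_1,p_2)\leq \eta$ gives, for every $f \in \sF$, the one-dimensional bound $\sup_t |\bP_{p_1}(f(X)\ge t) - \bP_{p_2}(f(X)\ge t)| \leq \eta$.

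I would then apply the mean-cross lemma (Lemma~\ref{lem.mean_cross_tv}) in the direction $f$ for each $f \in \sF$ individually: this yields $r_{1,f} \leq p_1/(1-\eta)$ and $r_{2,f} \leq p_2/(1-\eta)$ with $\bE_{r_{2,f}}[f(X)] \leq \bE_{r_{1,f}}[f(X)]$. Because $p_1 \in \GG^{\TV}_{\downarrow}(\rho_1, \eta)$, the dual representation applied at the specific $g = f$ gives
\[
\bE_{r_{1,f}}[f(X)] - B^*(f,\theta) \;\leq\; \sup_{g \in \sF} \bE_{r_{1,f}}[g(X)] - B^*(g,\theta) \;=\; B(r_{1,f},\theta) \;\leq\; \rho_1,
\]
and chaining with the mean-cross inequality produces $\bE_{r_{2,f}}[f(X)] - B^*(f,\theta) \leq \rho_1$. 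So for \emph{each} $f$ separately we can find a deletion of $p_2$ that witnesses the dual objective being $\leq \rho_1$.

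The main obstacle is that the witness $r_{2,f}$ depends on $f$, whereas the definition of $\GG^{\TV}_{\uparrow}$ requires a \emph{single} $r \leq p_2/(1-\eta)$ on which $B(r,\theta) = \sup_{f\in \sF}\bE_r[f(X)] - B^*(f,\theta) \leq \rho_1$. This is precisely where the assumed minimax identity is needed. Using it at $p = p_2$ and $\theta = \theta^*(p_1)$,
\[
\min_{r \leq p_2/(1-\eta)} \sup_{f \in \sF} \Big(\bE_r[f(X)] - B^*(f,\theta)\Big) \;=\; \sup_{f \in \sF} \min_{r \leq p_2/(1-\eta)} \Big(\bE_r[f(X)] - B^*(f,\theta)\Big),
\]
the right-hand side is bounded by $\rho_1$ because for each $f$ the witness $r_{2,f}$ already achieves this bound, so the left-hand side is also $\leq \rho_1$. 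Consequently there exists $r^\star \leq p_2/(1-\eta)$ (or an approximate minimizer, which suffices after a routine $1/n$-limiting argument using the contrapositive form of the $\GG^{\TV}_{\uparrow}$ condition) with $B(r^\star, \theta^*(p_1)) \leq \rho_1$.

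Finally, invoking $p_2 \in \GG^{\TV}_{\uparrow}(\rho_1, \rho_2, \eta)$ at $\theta = \theta^*(p_1)$ and at the deletion $r^\star$ yields $L(p_2, \theta^*(p_1)) \leq \rho_2$, and taking the supremum over admissible $p_1, p_2$ completes the argument. The only delicate points beyond the minimax swap are the measurability of $\sF$ and the attainment of the infimum over $r$; both are harmless, the first because $\sF$ is determined by the conjugate $B^*$ and the second by the standard limiting trick just mentioned.
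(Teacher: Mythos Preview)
Your proposal is correct and follows essentially the same argument as the paper's proof: apply the mean-cross lemma for each $f\in\sF_{\theta^*(p_1)}$, use $p_1\in\GG_\downarrow$ to bound the dual objective at the $p_1$-deletion, transfer this to the $p_2$-deletion via mean-crossing, invoke the minimax identity to consolidate into a single deletion of $p_2$, and conclude via $p_2\in\GG_\uparrow$. The only cosmetic difference is that the paper uses the labels $q,p$ where you use $p_1,p_2$, and it does not spell out the attainment issue you mention.
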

\begin{proof}
Recall that
\begin{align}
    \tTV_{\mathcal{H}}(q, p) & = \sup_{f\in\bigcup_{\theta\in \Theta} \sF_\theta,  t\in \bR}|\bP_p[f(X)\geq t] - \bP_q[f(X) \geq t]|.
\end{align}
From $p, q\in\GG_{\downarrow}^{\TV}$,
\begin{align}
    & \forall r_q \leq \frac{q}{1-\eta},  \sup_{f\in\sF_{\theta^*(q)}} \bE_{r_q}[f(X)] - B^*(f, \theta^*(q)) \leq \rho_1
\end{align}
From Lemma~\ref{lem.tvt_cross_mean}, we know that for any $f \in \mathcal{H}$,  there exists $r_p, r_q$ such that 
\begin{align}
    \bE_{r_p}[f(X)] \leq  \bE_{r_q}[f(X)].
\end{align}
Thus for any $f\in\sF_{\theta^*(q)}$, we have
\begin{align}
    \bE_{r_p}[f(X)] - B^*(f, \theta^*(q)) \leq \bE_{r_q}[f(X)] - B^*(f, \theta^*(q)) \leq \rho_1.
\end{align}
Therefore we know
\begin{align}
    \sup_{f\in\sF_{\theta^*(q)}} \min_{r_p \leq \frac{p}{1-\eta}} \bE_{r_p}[f(X)] - B^*(f, \theta^*(q)) \leq \rho_1.
\end{align}
Since we assumed minimax theorem holds, one has 
\begin{align}
    \min_{r_p \leq \frac{p}{1-\eta}} \sup_{f\in\sF_{\theta^*(q)}} \bE_{r_p}[f(X)] - B^*(f, \theta^*(q)) = \sup_{f\in\sF_{\theta^*(q)}} \min_{r_p \leq \frac{p}{1-\eta}} \bE_{r_p}[f(X)] - B^*(f, \theta^*(q)) \leq \rho_1.
\end{align}
The LHS is exactly the condition in $\GG_{\uparrow}^{\TV}$. Thus from $p \in \GG_{\uparrow}^{\TV}$, we know that
\begin{align}
    L(p, \theta^*(q))\leq \rho_2.
\end{align}
This finishes the proof.
\end{proof}

\subsection{Robust gradient estimation implies robust regression (not necessarily optimally)}

One approach for robust learning is through robust estimation of the gradient of the loss function~\cite{diakonikolas2018sever, prasad2018robust}. In this example, we first show that if the gradient $\mathbb{E}_{p^*}\nabla \ell(\theta,X)$ can be estimated robustly for all $\theta$, the distribution $p^*$ is inside $\GG$ for both $B(p,\theta)$ and $L(p,\theta)$ being the \emph{excess predictive loss} $\bE_p[\ell(\theta, X) -  \ell(\theta^*(p), X)]$, where $\theta^*(p) = \argmin_\theta \bE_p[\ell(\theta,X)]$. Note that linear regression in Theorem~\ref{thm.linearregressiontvtildeproof}  is a special case of $\GG$ for excess predictive loss\footnote{We would like to point out it was shown in literature that any excess predictive loss function can be written as a Bregman divergence and any Bregman divergence can be represented as some excess predictive loss function from proper scoring function construction~\cite{gneiting2007strictly}. Thus we can define the set $\GG$ for Bregman divergence and guarantee the population limit similarly. We omit the details here. }.

\begin{lemma}\label{lem.robust_gradient_estimation}
For any $B(p, \theta) = L(p, \theta) = \bE_p[\ell(\theta, X) -  \ell(\theta^*(p), X)]$,
and any $X\sim p^*$, suppose that the distribution of  random variable $ \nabla \ell(\theta, X)$ is inside $\GG_{\mathsf{mean}}(\rho, \eta)$ for \emph{all} $\theta$, i.e. $\sup_{\theta\in \Theta, r\leq \frac{p^*}{1-\eta}} \|\bE_r[\nabla \ell(\theta, X)] -  \bE_{p^*}[\nabla \ell(\theta, X)] \|_2\leq \rho$. Then 
\begin{enumerate}
    \item If the radius
    of $\Theta$ is upper bounded by $R$, i.e. $\|\theta\|_2\leq R$ for all $\theta \in \Theta$,  then $p^*\in \mathcal{G}^{\TV}(2R \rho, 4R \rho, \eta) $. Thus the population limit is at most $4R\rho$ if $2\epsilon \leq \eta < 1$.
    \item If $\bE_{p^*}[\ell(\theta, X)]$ is $\xi$-strongly convex in $\theta$, i.e.
    \begin{align}
   \bE_{p^*}[ \ell(\theta_1, x)] - \bE_{p^*}[\ell(\theta_2, x)] \geq \nabla \bE_{p^*}[\ell(\theta_2, x)]^\top (\theta_1-\theta_2) + \frac{\xi}{2}\|\theta_1-\theta_2\|_2^2, \forall \theta_1, \theta_2\in\Theta .
    \end{align}
    then $p^*\in  \mathcal{G}^{\TV}(\frac{\rho^2}{2\xi}, \frac{3\rho^2}{\xi}, \eta)$.  Thus the population limit is at most $ \frac{3\rho^2}{\xi}$ if $2\epsilon \leq \eta <1$.
\end{enumerate}
\end{lemma}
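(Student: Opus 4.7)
The plan is to verify the $\GG^\TV_\downarrow$ and $\GG^\TV_\uparrow$ conditions of Definition~\ref{def.G_TV} directly for each of the two parts, relying on two common ingredients. First, I would invoke first-order optimality at the two minimizers: $\bE_{p^*}[\nabla \ell(\theta^*(p^*), X)] = 0$ and $\bE_r[\nabla \ell(\theta^*(r), X)] = 0$ (interior optima; the boundary case is handled by the analogous variational inequality, which suffices at every step below). Second, the gradient-resilience hypothesis says that the auxiliary function $\Delta(\theta) := \bE_r[\ell(\theta, X)] - \bE_{p^*}[\ell(\theta, X)]$ satisfies $\|\nabla \Delta(\theta)\|_2 \leq \rho$ at every $\theta$, so $\Delta$ is $\rho$-Lipschitz on $\Theta$. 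Writing $h(\theta) = \bE_r[\ell(\theta, X)]$, $H(\theta) = \bE_{p^*}[\ell(\theta, X)]$, we have $h = H + \Delta$, $B(r,\theta) = L(r,\theta) = h(\theta) - h(\theta^*(r)) \geq 0$, and $L(p^*,\theta) = H(\theta) - H(\theta^*(p^*)) \geq 0$.

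For Part 1, both inclusions will follow cleanly from the Lipschitz bound on $\Delta$. To show $\GG^\TV_\downarrow$, I decompose
\[B(r, \theta^*(p^*)) = [H(\theta^*(p^*)) - H(\theta^*(r))] + [\Delta(\theta^*(p^*)) - \Delta(\theta^*(r))];\]
the first bracket is non-positive by optimality of $\theta^*(p^*)$ for $H$, and the second is at most $\rho \|\theta^*(p^*) - \theta^*(r)\|_2 \leq 2R\rho$ by Lipschitzness of $\Delta$ and the diameter bound. For $\GG^\TV_\uparrow$, applying the same decomposition to an arbitrary $\theta$ and using $h = H + \Delta$ gives
\[L(p^*, \theta) \leq H(\theta) - H(\theta^*(r)) = B(r, \theta) + [\Delta(\theta^*(r)) - \Delta(\theta)] \leq B(r, \theta) + 2R\rho,\]
so $L(p^*, \theta) \leq 4R\rho$ whenever $B(r, \theta) \leq 2R\rho$.

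For Part 2 I would run the same decomposition but use the quadratic growth from $\xi$-strong convexity of $H$. For $\GG^\TV_\downarrow$, strong convexity strengthens the first bracket to yield
\[0 \leq B(r, \theta^*(p^*)) \leq -\frac{\xi}{2}\|\theta^*(p^*) - \theta^*(r)\|_2^2 + \rho \|\theta^*(p^*) - \theta^*(r)\|_2;\]
as a function of $t = \|\theta^*(p^*) - \theta^*(r)\|_2 \geq 0$, the right-hand side is maximized at $t = \rho/\xi$ with value $\rho^2/(2\xi)$, which delivers the claimed $\rho_1$ and, as a byproduct, $\|\theta^*(p^*) - \theta^*(r)\|_2 \leq 2\rho/\xi$. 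For $\GG^\TV_\uparrow$, I start from
\[L(p^*, \theta) \leq B(r, \theta) + \rho \|\theta - \theta^*(r)\|_2 + L(p^*, \theta^*(r)),\]
bound $L(p^*, \theta^*(r)) \leq \rho^2/(2\xi)$ by a symmetric application of the $\GG^\TV_\downarrow$ argument, split $\|\theta - \theta^*(r)\|_2$ by the triangle inequality using the byproduct above together with the strong-convexity estimate $\|\theta - \theta^*(p^*)\|_2 \leq \sqrt{2 L(p^*, \theta)/\xi}$, and resolve the resulting quadratic in $\sqrt{L(p^*, \theta)}$ to reach $L(p^*, \theta) \leq 3\rho^2/\xi$.

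The main obstacle will be the self-referential inequality at the end of Part 2, where $L(p^*, \theta)$ appears on both sides; recovering the stated constant $3$ is a routine but delicate quadratic-formula manipulation that benefits from an AM--GM step rather than a crude $\sqrt{a+b} \leq \sqrt{a}+\sqrt{b}$ split. A minor point is the interior-optimum assumption; replacing each first-order equality by the variational inequality $\bE[\nabla \ell(\theta^*, X)]^\top(\theta - \theta^*) \geq 0$ on $\Theta$ is all that is needed, since the proof only uses these quantities to cancel them against $(\theta - \theta^*)$ directions before absorbing the remainder into $\rho$ via gradient resilience.
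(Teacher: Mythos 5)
Your $\GG^{\TV}_\downarrow$ verifications in both parts are correct, and your function-analytic reformulation ($h = H + \Delta$, with $\Delta$ being $\rho$-Lipschitz) is a nice way to package the paper's fundamental-theorem-of-calculus computation. The gap is in $\GG^{\TV}_\uparrow$, and it is a real one: you are anchoring the Lipschitz increment of $\Delta$ at the wrong point. In Part~1 the displayed step $L(p^*,\theta) \leq H(\theta) - H(\theta^*(r))$ is backwards: since $H(\theta^*(p^*)) \leq H(\theta^*(r))$, in fact $L(p^*,\theta) = H(\theta) - H(\theta^*(p^*)) \geq H(\theta) - H(\theta^*(r))$, so that chain does not hold as written. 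In Part~2, the identity $L(p^*, \theta) = B(r,\theta) + [\Delta(\theta^*(r)) - \Delta(\theta)] + L(p^*, \theta^*(r))$ you use is correct, but bounding $\|\theta - \theta^*(r)\|_2$ by triangle inequality through $\theta^*(p^*)$ and then separately bounding $L(p^*, \theta^*(r))$ produces the self-referential inequality $u \leq 3\rho^2/\xi + \rho\sqrt{2u/\xi}$ (with $u = L(p^*,\theta)$), whose solution is $u \leq (4+\sqrt{7})\rho^2/\xi \approx 6.65\,\rho^2/\xi$, strictly worse than $3\rho^2/\xi$. No amount of AM--GM optimization over the split will recover the factor $3$; the constant is lost structurally, not by a coarse square-root inequality. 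Separately, the claim that $L(p^*,\theta^*(r)) \leq \rho^2/(2\xi)$ follows by ``a symmetric application'' is not literal symmetry --- $h$ need not be strongly convex, since adding a merely Lipschitz $\Delta$ to a strongly convex $H$ destroys strong convexity. The bound does hold, but via the observation $\|\nabla H(\theta^*(r))\|_2 = \|\nabla \Delta(\theta^*(r))\|_2 \leq \rho$ together with strong convexity of $H$ centered at $\theta^*(r)$, which is a different argument and deserves stating.

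The fix, which is what the paper actually does, is to write the identity
\begin{align*}
L(p^*,\theta) = H(\theta) - H(\theta^*(p^*)) = \big[h(\theta) - h(\theta^*(p^*))\big] + \big[\Delta(\theta^*(p^*)) - \Delta(\theta)\big],
\end{align*}
anchoring $\Delta$ at $\theta^*(p^*)$ rather than $\theta^*(r)$, and using optimality of $\theta^*(r)$ for $h$ to get $h(\theta) - h(\theta^*(p^*)) \leq h(\theta) - h(\theta^*(r)) = B(r,\theta)$. This gives in one step $L(p^*,\theta) \leq B(r,\theta) + \rho\,\|\theta - \theta^*(p^*)\|_2$, with no spurious $L(p^*,\theta^*(r))$ term and no triangle inequality through $\theta^*(r)$. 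In Part~1 the last term is at most $2R\rho$ and the bound $4R\rho$ follows immediately; in Part~2, strong convexity gives $\|\theta - \theta^*(p^*)\|_2 \leq \sqrt{2u/\xi}$, so $u \leq \rho^2/(2\xi) + \rho\sqrt{2u/\xi}$, which solves to $u \leq (3+2\sqrt{2})\rho^2/(2\xi) < 3\rho^2/\xi$. Your $\GG^{\TV}_\downarrow$ arguments already use exactly this anchoring; carrying it over to $\GG^{\TV}_\uparrow$ closes the gap.
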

\begin{remark}
Note that when we assume $\nabla \ell(\theta, X)$ has bounded covariance, i.e. $\mathsf{Cov}_{p^*}[\nabla \ell(\theta, X)] \preceq \sigma^2 I$ for all $\theta$, we have the distribution of $\nabla \ell(\theta, X) $ inside  $\GG_{\mathsf{mean}}(\rho, \eta)$ with $\rho  = 2\sigma\sqrt{\eta}$ when $\eta < 1/2$ from Theorem~\ref{thm.linearregressiontvtildeproof}. Thus when $\TV(p^*, q)\lesssim \epsilon$ and  both $p^*$ and $q$ has the covariance of gradient $\nabla \ell(\theta, X)$ bounded for all $\theta$, we can control $L(p^*, \theta^*(q))$ using Lemma~\ref{lem.robust_gradient_estimation}. 

If one aims to find some $\hat{\theta}$ such that $\| \mathbb{E}_{p^*}[\nabla \ell(\hat{\theta},X)]\|$ is small, then it suffices to find some $\hat \theta$ and $q$ such that $\| \bE_q[\nabla \ell (\hat \theta, X)]\|$ is small,  $\mathsf{Cov}_q[\nabla \ell (\hat \theta, X)] \preceq \tilde \sigma^2 I$ and $\TV(p^*, q)\lesssim \epsilon$. Indeed, it follows from the modulus of continuity that $\bE_{p^*}[\nabla \ell (\hat \theta, X)]$ is close to $\bE_{q}[\nabla \ell (\hat \theta, X)]$, which implies $\|\bE_{p^*}[\nabla \ell (\hat \theta, X)]\|$ is small. The algorithm in~\cite{diakonikolas2018sever} can be justified with this argument. 
\end{remark}

\begin{proof}
We first check all the distributions that satisfy the first condition are inside $\mathcal{G}_{\downarrow}^{\TV} \bigcap \mathcal{G}_{\uparrow}^{\TV}$. For $\mathcal{G}_{\downarrow}^{\TV}$ and any $r\leq \frac{p}{1-\eta}$, we have 
\begin{align}
     \mathbb{E}_{r}[\ell(\theta^*(p^*), X) - \ell(\theta^*(r), X)] & =  \mathbb{E}_{r}[\int_0^1 \nabla \ell(\theta^*(r) + t(\theta^*(p^*)-\theta^*(r)), X)^{\top}  (\theta^*(p^*)-\theta^*(r)) dt]  \nonumber \\
     & = \int_0^1 \mathbb{E}_{r}[ \nabla \ell(\theta^*(r) + t(\theta^*(p^*)-\theta^*(r)), X)] ^{\top}  (\theta^*(p^*)-\theta^*(r)) dt \nonumber \\
     & \leq  \int_0^1 (\mathbb{E}_{p^*}[ \nabla \ell(\theta^*(r) + t(\theta^*(p^*)-\theta^*(r)), X)] ^{\top}  (\theta^*(p^*)-\theta^*(r)) \nonumber \\
     & \quad + \rho \| \theta^*(p^*) - \theta^*(r)\|_2) dt \nonumber \\
     & \leq \mathbb{E}_{p^*}[\ell(\theta^*(p^*), X) - \ell(\theta^*(r), X)] + 2R \rho \nonumber \\
     & \leq 2R \rho.
\end{align}
The last equation comes from $\mathbb{E}_{p}[\ell(\theta^*(p^*), X) - \ell(\theta^*(r), X)] \leq 0$. This shows that $p^* \in \mathcal{G}_{\downarrow}^{\TV}(\rho, \eta)$.

Then we verify that the set is also inside $\mathcal{G}_{\uparrow}^{\TV}$, which is defined such that  for any $r\leq \frac{p^*}{1-\eta}$ and any $\theta$,
\begin{align}
& \mathbb{E}_{r}[\ell(\theta, X) - \ell(\theta^*(r), X)] \leq 2R\rho \nonumber \\
\Rightarrow  &
    \mathbb{E}_{p^*}[\ell(\theta, X) - \ell(\theta^*(p^*), X)]\leq 4R\rho.
\end{align}
From $\|\bE_r[\nabla \ell(\theta, X)] -  \bE_{p^*}[\nabla \ell(\theta, X)] \|_2\leq \rho$, we have 
\begin{align}
     \mathbb{E}_{p^*}[\ell(\theta, X) - \ell(\theta^*(p^*), X)] & =  \mathbb{E}_{p^*}[\int_0^1 \nabla \ell(\theta^*(p^*) + t(\theta-\theta^*(p^*)), X)^{\top}  (\theta-\theta^*(p^*)) dt]  \nonumber \\
     & = \int_0^1 \mathbb{E}_{p^*}[ \nabla \ell(\theta^*(p^*) + t(\theta-\theta^*(p^*)), X)] ^{\top}  (\theta-\theta^*(p^*)) dt \nonumber \\
     & \leq  \int_0^1 (\mathbb{E}_{r}[ \nabla \ell(\theta^*(p^*) + t(\theta-\theta^*(p^*)), X)] ^{\top}  (\theta-\theta^*(p^*)) + \rho \| \theta - \theta^*(p^*)\|_2) dt \nonumber \\
     & \leq  \mathbb{E}_{r}[\ell(\theta, X) - \ell(\theta^*(p), X)]  + 2R \rho \nonumber \\
     & \leq  \mathbb{E}_{r}[\ell(\theta, X) - \ell(\theta^*(r), X)]  + 2R \rho \nonumber \\
     & \leq 4R \rho.
\end{align}
This shows all distributions that satisfy condition 1 are inside $\mathcal{G}^{\TV}(2R \rho, 4R \rho, \eta)$. Thus the population information-theoretic limit is $4R\rho$ if $2\epsilon \leq \eta < 1$.

Now we check that all the distributions that satisfy the second condition are inside $\mathcal{G}^{\TV}(\frac{\rho^2}{2\xi}, \frac{3\rho^2}{\xi}, \eta)$. For  $\mathcal{G}_{\downarrow}^{\TV}$, similar to the previous proof, we have
\begin{align}
     &\mathbb{E}_{r}[\ell(\theta^*(p^*), X) - \ell(\theta^*(r), X)] \nonumber \\ & =  \mathbb{E}_{r}[\int_0^1 \nabla \ell(\theta^*(r) + t(\theta^*(p^*)-\theta^*(r)), X)^{\top} \cdot (\theta^*(p^*)-\theta^*(r)) dt]  \nonumber \\
     & = \int_0^1 \mathbb{E}_{r}[ \nabla \ell(\theta^*(r) + t(\theta^*(p^*)-\theta^*(r)), X)] ^{\top} \cdot (\theta^*(p^*)-\theta^*(r)) dt \nonumber \\
     & \leq  \int_0^1 (\mathbb{E}_{p^*}[ \nabla \ell(\theta^*(r) + t(\theta^*(p^*)-\theta^*(r)), X)] ^{\top} \cdot (\theta^*(p^*)-\theta^*(r)) \nonumber \\
     & \quad + \rho \| \theta^*(p) - \theta^*(r)\|_2) dt \nonumber \\
     & \leq  \int_0^1 (\mathbb{E}_{p^*}[ \nabla \ell(\theta^*(r) + t(\theta^*(p^*)-\theta^*(r)), X)] ^{\top} \cdot (\theta^*(p^*)-\theta^*(r))) dt \nonumber \\
     & \quad + \rho\sqrt{\frac{2}{\xi}}  \sqrt{ \mathbb{E}_{p^*}[\ell(\theta^*(r), X) - \ell(\theta^*(p^*), X)]} \nonumber \\
     & = - \mathbb{E}_{p^*}[\ell(\theta^*(r), X) - \ell(\theta^*(p^*), X)]  + \rho\sqrt{\frac{2}{\xi}} \sqrt{ \mathbb{E}_{p^*}[\ell(\theta^*(r), X) - \ell(\theta^*(p^*), X)]} \nonumber \\
     & \leq \frac{\rho^2}{2\xi}.
\end{align}

Here we use the property of strong convexity to get 
\begin{align}
    \mathbb{E}_{p^*}[\ell(\theta^*(r), X) - \ell(\theta^*(p^*), X)] & \geq \nabla \mathbb{E}_{p^*}[\ell(\theta^*(p), X)](\theta^*(r)- \theta^*(p)) + \frac{\xi}{2} \|\theta^*(r) - \theta^*(p)\|_2^2 \nonumber \\
   & = \frac{\xi}{2} \|\theta^*(r) - \theta^*(p)\|_2^2. 
\end{align}

For $\mathcal{G}_{\uparrow}^{\TV}$, if $\mathbb{E}_{r}[\ell(\theta, X) - \ell(\theta^*(r), X)] \leq \frac{\rho^2}{2\xi}$,  we have
\begin{align}
     &\mathbb{E}_{p^*}[\ell(\theta, X) - \ell(\theta^*(p^*), X)]\nonumber \\ & =  \mathbb{E}_{p^*}[\int_0^1 \nabla \ell(\theta^*(p^*) + t(\theta-\theta^*(p^*)), X)^{\top} \cdot (\theta-\theta^*(p^*)) dt]  \nonumber \\
     & = \int_0^1 \mathbb{E}_{p^*}[ \nabla \ell(\theta^*(p^*) + t(\theta-\theta^*(p^*)), X)] ^{\top} \cdot (\theta-\theta^*(p^*)) dt \nonumber \\
     & \leq  \int_0^1 (\mathbb{E}_{r}[ \nabla \ell(\theta^*(p^*) + t(\theta-\theta^*(p^*)), X)] ^{\top} \cdot (\theta-\theta^*(p^*)) + \rho(\epsilon) \| \theta - \theta^*(p^*)\|_2) dt \nonumber \\
     & \leq \frac{\rho^2}{2\xi} + \rho  \sqrt{\frac{2}{\xi}}  \sqrt{ \mathbb{E}_{p^*}[\ell(\theta^*(r), X) - \ell(\theta^*(p^*), X)]}.
\end{align}
Solving this inequality, we have 
\begin{align}
    \mathbb{E}_{p^*}[\ell(\theta, X) - \ell(\theta^*(p^*), X)] & \leq \frac{(\rho\sqrt{2/\xi}+2\rho\sqrt{1 / \xi})^2}{4} \nonumber \\
    & <\frac{3\rho^2}{\xi}.
\end{align}
This shows any distribution $p^*$ that satisfies condition 2 are inside $  \mathcal{G}_{\mathsf{pred}}^{\TV}(\frac{\rho^2}{2\xi}, \frac{3\rho^2}{\xi}, \eta)$.  Thus the population information-theoretic limit is $ \frac{3\rho^2}{\xi}$ assuming $2\epsilon  \leq \eta < 1 $.
\end{proof}

\subsection{Further discussion on robust classification}\label{appendix.classification}

We study the sufficient conditions that can ensure a distribution is in $\GG$ in Definition~\ref{def.G_TV} where $L(p,\theta) = \bE_p[\mathbbm{1}(YX^\top \theta \leq 0)]$ is the zero-one loss function for linear classification, where $(X,Y) \in \mathbf{R}^d \times \{-1,1\}$. We always assume that we have augmented $X$ by an additional dimension of constant $1$ to avoid the non-zero offset term in the classifier. 

\subsubsection{Bridge function $B(p,\theta)$ is zero-one loss and linearly separable }

We first consider the setting that $B(p,\theta) =L(p,\theta) = \bE_p[\mathbbm{1}(Y X^\top \theta \leq  0)]$. Assume $\theta\in\Theta = \{ \theta\in\bR^d: \| \theta\|_2\leq 1\}$. We consider the special case of $\rho_1 = \rho_2 = 0$ in Definition~\ref{def.G_TV}:
\begin{align} 
    \GG_{\downarrow}^{\TV}(0, \eta) & \triangleq \{ p \mid   \sup_{r\leq\frac{p}{1-\eta}}
    \bE_r[\mathbbm{1}(Y X^\top \theta^*(p) \leq  0)]\leq 0\}, \label{eqn.gdownclassexa1} \\
    \GG_{\uparrow}^{\TV}(0, 0, \eta) & \triangleq \{ p \mid  \forall \theta\in\Theta, \forall r \leq \frac{p}{1-\eta}, \left( \bE_r[\mathbbm{1}(Y X^\top \theta \leq  0)]\leq 0 \Rightarrow  \bE_p[\mathbbm{1}(Y X^\top \theta \leq  0)]\leq 0\right) \}, \label{eqn.gupclassexa1}
\end{align}
where $\theta^*(p) = \argmin_{\theta\in\Theta} \bE_p[\mathbbm{1}(Y X^\top \theta \leq  0)]$. 

We investigate the sufficient conditions that imply $p\in  \GG_{\downarrow}^{\TV}(0, \eta) \cap  \GG_{\uparrow}^{\TV}(0, 0, \eta)$. 

\begin{proposition}
Suppose distribution $p$ of $(X,Y) \in \mathbf{R}^d \times \{-1,1\}$ satisfies the following properties:
\begin{itemize}
    \item $(X,Y)$ is linearly separable under $p$;
    \item There  does not exist $\theta \in \Theta$ such that $\bP_p(YX^\top \theta \leq 0)\in(0, \eta]$.
\end{itemize}

Then, $p\in  \GG_{\downarrow}^{\TV}(0, \eta) \cap  \GG_{\uparrow}^{\TV}(0, 0, \eta)$ defined in~(\ref{eqn.gdownclassexa1}) and~(\ref{eqn.gupclassexa1}). 
\end{proposition}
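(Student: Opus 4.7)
The plan is to verify the two conditions of Definition~\ref{def.G_TV} separately, using the two hypotheses in tandem: linear separability handles $\GG^{\TV}_\downarrow$, while the ``forbidden interval'' $(0,\eta]$ handles $\GG^{\TV}_\uparrow$ via a contrapositive argument.

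For the $\GG^{\TV}_\downarrow$ inclusion, I would first observe that linear separability of $(X,Y)$ under $p$ means there is some $\theta_0 \in \Theta$ with $\bE_p[\mathbbm{1}(Y X^\top \theta_0 \leq 0)] = 0$. Since $\theta^*(p)$ minimizes this loss, it also achieves $\bE_p[\mathbbm{1}(Y X^\top \theta^*(p) \leq 0)] = 0$. Now for any $r \leq p/(1-\eta)$, Lemma~\ref{lem.deletionrproperties} gives the representation $r(A) = \bP_p(A \mid E)$ for some event $E$ with $\bP_p(E) \geq 1-\eta$; in particular $r$ is absolutely continuous with respect to $p$, so the event $\{Y X^\top \theta^*(p) \leq 0\}$ still carries zero mass under $r$. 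This gives $p \in \GG^{\TV}_\downarrow(0,\eta)$.

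For the $\GG^{\TV}_\uparrow$ inclusion, I would argue by contrapositive: fix $\theta \in \Theta$ and suppose $\bE_p[\mathbbm{1}(Y X^\top \theta \leq 0)] > 0$. By the second hypothesis, this probability must in fact exceed $\eta$, since it cannot lie in $(0,\eta]$. For any $r \leq p/(1-\eta)$, write $r = p(\,\cdot \mid E)$ with $\bP_p(E) \geq 1-\eta$ as in Lemma~\ref{lem.deletionrproperties}, and denote $A = \{Y X^\top \theta \leq 0\}$. Then
\begin{align*}
\bE_r[\mathbbm{1}(A)] = \bP_p(A \mid E) \geq \frac{\bP_p(A) - \bP_p(E^c)}{\bP_p(E)} \geq \frac{\bP_p(A) - \eta}{1-\eta} > 0,
\end{align*}
so no deletion can reduce the loss to zero. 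This is exactly the contrapositive of the implication defining $\GG^{\TV}_\uparrow(0,0,\eta)$.

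The proof is largely bookkeeping once the two hypotheses are translated into the right language; the only subtle point is recognizing why the ``no probability mass in $(0,\eta]$'' condition is needed. Without it, an adversary could have $\bP_p(A) \in (0,\eta]$ for some bad $\theta$, in which case a carefully chosen $\eta$-deletion would remove all of the misclassified mass and falsely certify $\theta$ as zero-error, breaking the $\GG^{\TV}_\uparrow$ implication. Thus the forbidden-interval hypothesis is precisely the quantitative gap that makes the contrapositive go through, and I expect this to be the main conceptual (rather than technical) obstacle to highlight cleanly.
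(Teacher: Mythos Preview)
Your proposal is correct and follows essentially the same approach as the paper: linear separability plus the density bound $r\leq p/(1-\eta)$ handles $\GG^{\TV}_\downarrow$, and the forbidden-interval hypothesis handles $\GG^{\TV}_\uparrow$ by showing that an $\eta$-deletion cannot bring a strictly positive loss down to zero. Your contrapositive is slightly more explicit than the paper's one-line ``deletion would at most decrease the cost by $\eta$''; note also that the bound $\bP_r(A)\geq(\bP_p(A)-\eta)/(1-\eta)$ follows directly from $r\leq p/(1-\eta)$ applied to $A^c$, so you can bypass the event-$E$ representation entirely.
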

\begin{proof}
If $(X,Y)$ is linearly separable under $p$, then 
\begin{align}
    \bE_p[\mathbbm{1}(Y X^\top \theta^*(p) \leq  0)] = 0. 
\end{align}
Thus, for any $r\leq \frac{p}{1-\eta}$, 
\begin{align}
    \bE_r[\mathbbm{1}(Y X^\top \theta^*(p) \leq  0)] \leq \frac{1}{1-\eta}   \bE_p[\mathbbm{1}(Y X^\top \theta^*(p) \leq  0)] = 0,
\end{align}
which shows that $p$ being linearly separable implies that $p\in\GG_\downarrow(0, \eta)$. It suffices to check that $p\in\GG_{\uparrow}$. We need to show that for any $\theta$ and any $r\leq \frac{p}{1-\eta}$, $ \bE_r[\mathbbm{1}(Y X^\top \theta \leq  0)]\leq 0 $ implies $ \bE_p[\mathbbm{1}(Y X^\top \theta \leq  0)]\leq 0$. 

For any $\theta\in\bR^d$, $\|\theta\|_2=1$, by assumption only two situations will occur: $\bE_p(\mathbbm{1}(YX^\top \theta \leq 0))=0$ or $\bE_p(\mathbbm{1}(YX^\top \theta \leq 0))>\eta$. If we observe $\bE_r[\mathbbm{1}(YX^\top \theta \leq 0)] = 0$, 
then there must be $\bE_p[\mathbbm{1}(YX^\top \theta \leq 0)] = 0$ since deletion would at most decrease the cost by $\eta$. 

\end{proof}

Intuitively, the second sufficient condition guarantees that deleting $\eta$ fraction of mass cannot decrease the loss from non-zero to zero. It is also necessary: if there exists some $\theta$ such that $\bE_p(\mathbbm{1}(YX^\top \theta \leq 0))\in(0, \eta]$, then the adversary can delete all the mass with $\mathbbm{1}(YX^\top \theta \leq 0)$ to construct a linearly separable distribution, then the implication in $\GG_\uparrow$ would fail.

However, the second sufficient condition is in general hard to be satisfied for continuous distributions in high dimensions: one can always rotate $\theta$ to satisfy $\bP_p(YX^\top \theta \leq 0)\in(0, \eta]$ since we know $\bP_p(YX^\top \theta^*(p) \leq 0)=0$ and $\bP_p(YX^\top (-\theta^*(p)) \leq 0)=1$. In next section, we change $B$ to hinge loss and show that the corresponding sufficient condition can be easier to satisfy.

\subsubsection{Bridge function $B(p,\theta)$ is hinge loss}

Take $B(p, \theta) = \bE_p[\max(0, 1- YX^{\top}\theta)]$, $L(p, \theta) = \bE_p[\mathbb{1}[YX^{\top}\theta \leq 0]]$. Assume $\theta\in\Theta = \{ \theta\in\bR^d: \| \theta\|_2\leq 1\}$. The set $\GG(\rho_1, \rho_2,\eta)$ is defined as  $\mathcal{G}^{\TV}_{\downarrow}(\rho_1,\eta) \cap \mathcal{G}^{\TV}_{\uparrow}(\rho_1,\rho_2,\eta)$, where
\begin{align}
    \GG_{\downarrow}^{\TV}(\rho_1,\eta) & = \{ p \mid \sup_{r\leq\frac{p}{1-\eta}}  \bE_r[\max(0, 1-YX^{\top}\theta^*(p))] \leq \rho_1\}. \label{eqn.gdownhingedefin1}\\
    \GG_{\uparrow}^{\TV}(\rho_1,\rho_2,\eta) & = \{ p \mid \forall \theta \in \Theta, \forall r \leq \frac{p}{1-\eta},  \bigg( \bE_r[\max(0, 1-YX^{\top}\theta)] \leq \rho_1  \Rightarrow   \bP_p(YX^{\top}\theta \leq 0) \leq \rho_2\}\bigg), \label{eqn.guphindgedefin2}
\end{align}
where $\theta^*(p) = \argmin_{\theta \in \Theta} \bE_p[\max(0, 1- Y X^\top \theta )]$. 

We investigate the sufficient conditions for a distribution to be inside $\GG(\rho_1, \rho_2, \eta)$ as follows,
\begin{proposition}
Suppose distribution $p$ of $(X,Y) \in \mathbf{R}^d \times \{-1,1\}$ satisfies the following properties:
\begin{itemize}
    \item $ \bE_p[\max(0, 1-YX^{\top}\theta^*(p))]\leq (1-\eta)\rho_1.$
   \item $\forall \theta\in \Theta,  \left( \bP_{p}(Y X^{\top}\theta\leq \frac{1}{2}) \leq  \eta + 2(1-\eta)\rho_1  \Rightarrow \bP_p(Y X^{\top}\theta \leq 0) \leq \rho_2\right).$
\end{itemize}
Then, $p\in  \GG(\rho_1, \rho_2, \eta)$ defined in~(\ref{eqn.gdownhingedefin1}) and~(\ref{eqn.guphindgedefin2}).  
\end{proposition}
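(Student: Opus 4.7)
The two conditions decouple cleanly across the two sets $\GG_\downarrow^\TV(\rho_1,\eta)$ and $\GG_\uparrow^\TV(\rho_1,\rho_2,\eta)$, so I would handle them in turn.

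For membership in $\GG_\downarrow^\TV(\rho_1,\eta)$, the first sufficient condition will follow almost immediately from the fact that the hinge loss $\ell(X,Y;\theta)=\max(0,1-YX^\top\theta)$ is a non-negative random variable. By Lemma~\ref{lem.deletionrproperties}, any $r\le p/(1-\eta)$ is of the form $p(\,\cdot\mid E)$ for some event with $\bP_p(E)\ge 1-\eta$; hence $\bE_r[\ell]\le \bE_p[\ell]/(1-\eta)$. Plugging in the hypothesis $\bE_p[\max(0,1-YX^\top\theta^*(p))]\le (1-\eta)\rho_1$ gives $\bE_r[\ell]\le \rho_1$ for every such $r$, which is exactly the defining condition of $\GG_\downarrow^\TV(\rho_1,\eta)$.

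For membership in $\GG_\uparrow^\TV(\rho_1,\rho_2,\eta)$, I would fix $\theta\in\Theta$ and $r\le p/(1-\eta)$ satisfying $\bE_r[\max(0,1-YX^\top\theta)]\le \rho_1$, and aim to deduce $\bP_p(YX^\top\theta\le 0)\le \rho_2$. The chain of inequalities I have in mind is: first, use the hinge-loss/zero-one loss comparison plus Markov's inequality at level $1/2$ to get
\begin{align*}
\bP_r(YX^\top\theta\le 1/2)\;=\;\bP_r\!\left(1-YX^\top\theta\ge \tfrac12\right)\;\le\;2\,\bE_r[\max(0,1-YX^\top\theta)]\;\le\;2\rho_1.
\end{align*}
Second, transfer this back to $p$ by writing $r=p(\,\cdot\mid E)$ with $\bP_p(E^c)=\eta'\le\eta$, so that for any event $A$,
\begin{align*}
\bP_p(A)\;\le\;(1-\eta')\bP_r(A)+\eta'\;\le\;(1-\eta)\bP_r(A)+\eta,
\end{align*}
where the last step uses that the middle expression is non-decreasing in $\eta'$ whenever $\bP_r(A)\le 1$. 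Applied to $A=\{YX^\top\theta\le 1/2\}$, this yields $\bP_p(YX^\top\theta\le 1/2)\le \eta+2(1-\eta)\rho_1$. Third, invoke the second sufficient condition to conclude $\bP_p(YX^\top\theta\le 0)\le \rho_2$.

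Both steps are short and essentially mechanical, so the main conceptual point is recognizing the correct ``margin threshold'' at which to apply Markov's inequality on the hinge loss---the value $1/2$ is what makes the quantity $\eta+2(1-\eta)\rho_1$ appear in the second hypothesis of the proposition, matching the form of the assumption the authors wrote. If one picked a different threshold $t\in(0,1)$, the factor $2$ would be replaced by $1/(1-t)$ and a corresponding variant of the second condition would be required. I do not expect any subtle obstacle; the only mildly delicate point is being careful to use the $(1-\eta)$-weighted bound $\bP_p(A)\le (1-\eta)\bP_r(A)+\eta$ rather than the looser $\bP_p(A)\le \bP_r(A)+\eta$, since the proposition's hypothesis is stated in the sharper form.
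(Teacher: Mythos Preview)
Your proposal is correct and matches the paper's argument essentially line for line. The only cosmetic difference is that for $\GG_\uparrow^\TV$ the paper argues by contradiction (assuming $\bP_p(YX^\top\theta\le 1/2)>\eta+2(1-\eta)\rho_1$ and deriving $\bE_r[\max(0,1-YX^\top\theta)]>\rho_1$), while you give the direct version via Markov followed by $\bP_p(A)\le(1-\eta)\bP_r(A)+\eta$; the underlying inequalities are identical.
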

\begin{proof}
We first show that $p\in \GG_{\downarrow}^{\TV}$. For any $r\leq \frac{p}{1-\eta}$, we have
\begin{align}
    \bE_r[\max(0, 1-X^{\top}\theta^*(p))] & \leq \frac{\bE_p[\max(0, 1-X^{\top}\theta^*(p))]}{1-\eta} \\
    & \leq \frac{(1-\eta)\rho_1}{1-\eta} \\
    & = \rho_1,
\end{align}
which implies that $p\in \GG_{\downarrow}^{\TV}(\rho_1,\eta)$. 

Next we show that $p\in \GG_{\uparrow}^{\TV}$. Assume there exists some $\theta\in\Theta$ and $r\leq \frac{p}{1-\eta}$ such that $\bE_r[\max(0, 1-YX^{\top}\theta)] \leq  \rho_1 $. Then we claim that $\bP_{p}(YX^{\top}\theta\leq \frac{1}{2}) \leq  \eta + 2(1-\eta)\rho_1$ must hold. If it does not hold, then $\bP_{p}(YX^{\top}\theta\leq \frac{1}{2}) >  \eta + 2(1-\eta)\rho_1$ would imply that $\bP_{r}(YX^{\top}\theta\leq \frac{1}{2}) >  2\rho_1$.
Therefore $ \bE_{r}[\max(0, 1-YX^{\top}\theta)] >  \rho_1$,
which contradicts the assumption. 
\end{proof}

Here the first condition is a standard assumption on the margin, and the second condition can be verified by the following condition: for any fixed $\theta \in \Theta$, denote $Z =  YX^\top \theta$, and the left $\rho_2$ quantile of $Z$ as $q$, then as long as the following inequality holds, the second condition holds:
\begin{align}
    \bP_p(Z\in[q, q+1/2]) \geq \eta + 2(1-\eta)\rho_1.
\end{align}
When $XY$ is isotropic Gaussian distribution, the above conditions are satisfied for certain parameters.

In robust classification case, since the target loss function is zero-one loss, the robust error would be at most $\epsilon$ given $\epsilon$ perturbation in total variation. Thus one needs more stringent results for robustness to make the guarantee meaningful. Here our condition on concentration allows $\rho_2$ to be $0$, which provides strong guarantee for the classification error.

\begin{remark}
In the literature of robust classification~\cite{klivans2009learning, awasthi2014power, diakonikolas2018learning}, it is usually assumed that $P_X$ instead of $P_{X|Y}$ satisfies some nice concentration properties. However, one can easily create a toy example where $P_{X|Y=1}$ and $P_{X|Y=-1}$ are well separated and satisfy our sufficient conditions, but $P_X$ does not have good concentration property.

\end{remark}

\subsubsection{Estimating Chow-parameters implies robust classification under polynomial threshold function}

As another example under our framework of $\GG$, it is proposed in~\citep[Lemma 3.4]{diakonikolas2018learning} that with appropriate estimate of Chow-parameters, one can guarantee certain level of classification accuracy if the classification function is the sign of some degree-$d$ polynomial threshold functions $f(x)$. Thus the classification problem can be reduced to robustly estimating the mean of $\bE_{p^*}[f(X)t(X)]$ where  $t(x)$ is one of the polynomial functions with degree at most $d$. Thus the result can be incorporated into our framework when $B$ measures the loss in estimating the Chow parameters, and $L$ is the classification zero-one loss, and the conditions in~\cite{diakonikolas2018learning} serves as sufficient conditions for $p^*$ being inside $\GG$.

 \section{Related discussions and remaining proofs in Section~\ref{sec.expand}}

\subsection{Key Lemmas}

\subsubsection{Generalized Modulus of Continuity}
The following Lemma is essentially the same  as~\citep[Corollary A.25]{diakonikolas2017being}. It shows that the generalized modulus of continuity between bounded covariance set and resilient set can be controlled.  For completeness we present the proof here.
\begin{lemma}[]\label{lem.empirical_identity_cov_modulus}
For some constant non-negative $\rho_1, \rho_2, \tau$, assume $\tau \geq \epsilon$,
denote $\mu_p = \bE_p[X]$. Define 
\begin{align}
    \mathcal{G}_1 &= \{p:   \forall r \leq \frac{p}{1-\epsilon},  \| \mu_r - \mu_p\|_2 \leq  \rho_1 ,   \lambda_{\mathsf{min}}( \mathbb{E}_r[(X - \mu_p)(X - \mu_p)^{\top}]) \geq  1- \rho_2 \}\\ 
    \mathcal{G}_2 &= \{p:  \| \mathbb{E}_p[(X - \mu_p)(X - \mu_p)^{\top}]\|_2 \leq 1+ \tau \}.
\end{align}
Here $\lambda_{\mathsf{min}}(A)$ is the smallest eigenvalue of symmetric matrix $A$. Then, for any $\epsilon \in [0,1)$ we have
\begin{align}
    \sup_{p\in\GG_1, q\in\GG_2, \TV(p, q)\leq \epsilon} \|\bE_p[X] - \bE_q[X]\|_2   \leq  C\cdot \left( \sqrt{\frac{(\tau+\rho_2)\epsilon}{1-\epsilon}} + \max(1, \frac{\epsilon}{1-\epsilon})\rho_1\right).
\end{align}
Here $C$ is some universal constant.
\end{lemma}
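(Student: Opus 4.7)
The plan is to use the classical ``midpoint'' construction: given $p\in\mathcal{G}_1$ and $q\in\mathcal{G}_2$ with $\eta\eqdef \TV(p,q)\le \epsilon$, set $r=\min(p,q)/(1-\eta)$, which by Lemma~\ref{lem.deletionrproperties} satisfies $r\le p/(1-\eta)\le p/(1-\epsilon)$ and $r\le q/(1-\eta)$. The triangle inequality then reduces the problem to bounding $\|\mu_p-\mu_r\|_2$ and $\|\mu_r-\mu_q\|_2$ separately. The first bound is immediate from $p\in\mathcal{G}_1$: it gives $\|\mu_p-\mu_r\|_2\le \rho_1$, which will produce the $\rho_1$ term in the final inequality.

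The substantive part is bounding $\|\mu_r-\mu_q\|_2$, for which I would use the decomposition $q=(1-\eta)r+\eta\tilde q$ (valid because $r\le q/(1-\eta)$), so that $\mu_q-\mu_r = \eta(\mu_{\tilde q}-\mu_r)$ and it suffices to control $\|\mu_{\tilde q}-\mu_r\|_2$. For any unit vector $v$, expanding $v^\top\Sigma_q v$ via the mixture decomposition yields the identity
\begin{align*}
v^\top\Sigma_q v \;=\; (1-\eta)\Var_r(v^\top X) + \eta\Var_{\tilde q}(v^\top X) + \eta(1-\eta)\bigl(v^\top(\mu_{\tilde q}-\mu_r)\bigr)^2.
\end{align*}
The next step is the key quantitative point: because $p\in\mathcal{G}_1$ gives both the eigenvalue bound $\mathbb{E}_r[(v^\top(X-\mu_p))^2]\ge 1-\rho_2$ and the mean bound $\|\mu_r-\mu_p\|_2\le\rho_1$, I can write $\Var_r(v^\top X)=\mathbb{E}_r[(v^\top(X-\mu_p))^2]-(v^\top(\mu_r-\mu_p))^2$ to obtain $\Var_r(v^\top X)\ge 1-\rho_2-\rho_1^2$. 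Combined with $v^\top \Sigma_q v\le 1+\tau$ and $\Var_{\tilde q}\ge 0$, this gives
\begin{align*}
\eta(1-\eta)\bigl(v^\top(\mu_{\tilde q}-\mu_r)\bigr)^2 \;\le\; \tau + \rho_2 + \rho_1^2 + \eta.
\end{align*}

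The main obstacle, and the place that requires the assumption $\tau\ge\epsilon$, is absorbing the extra $\eta$ term on the right-hand side: without it, the bound inherits a spurious $\sqrt{\eta/(1-\eta)}$ contribution that does not match the stated form. Using $\eta\le\epsilon\le\tau$ allows me to replace $\tau+\eta$ by $2\tau$, then take square roots and use $\sqrt{a+b+c}\le \sqrt a+\sqrt b+\sqrt c$ to split the right-hand side into a $\sqrt{\eta(\tau+\rho_2)/(1-\eta)}$ piece and a $\rho_1\sqrt{\eta/(1-\eta)}$ piece. Multiplying by $\eta$ turns the first piece into $\sqrt{\eta(\tau+\rho_2)/(1-\eta)}$ after collecting (the factor works out cleanly because $\|\mu_q-\mu_r\|_2=\eta\|\mu_{\tilde q}-\mu_r\|_2$), giving
\begin{align*}
\|\mu_r-\mu_q\|_2 \;\lesssim\; \sqrt{\tfrac{\eta(\tau+\rho_2)}{1-\eta}} \;+\; \rho_1\sqrt{\tfrac{\eta}{1-\eta}}.
\end{align*}

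Finally I combine with the first triangle-inequality term: since $\sqrt{x}\le\max(1,x)$ for $x\ge 0$, the factor $\sqrt{\eta/(1-\eta)}\le \sqrt{\epsilon/(1-\epsilon)}$ is absorbed into $\max(1,\epsilon/(1-\epsilon))$, and $\eta\le\epsilon$ inside the square root turns the Wasserstein-type term into $\sqrt{\epsilon(\tau+\rho_2)/(1-\epsilon)}$. This yields the stated bound $C\bigl(\sqrt{(\tau+\rho_2)\epsilon/(1-\epsilon)} + \max(1,\epsilon/(1-\epsilon))\rho_1\bigr)$ with an explicit universal $C$.
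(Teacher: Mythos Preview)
Your proposal is correct and follows essentially the same route as the paper: form the midpoint $r=\min(p,q)/(1-\eta)$, use $p\in\mathcal{G}_1$ to bound $\|\mu_p-\mu_r\|_2\le\rho_1$ and to lower bound the second moment of $r$ around $\mu_p$, then expand $\Sigma_q$ through the mixture $q=(1-\eta)r+\eta\tilde q$ and invoke $q\in\mathcal{G}_2$ together with $\eta\le\epsilon\le\tau$ to control $\|\mu_q-\mu_r\|_2$. Your use of the mixture variance identity and dropping $\Var_{\tilde q}\ge 0$ is a slightly cleaner bookkeeping than the paper's direct expansion (which leads to a quadratic in $\|\mu_{\tilde q}\|_2$), but the argument and the final bound are the same.
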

\begin{proof}
Assume $p\in \GG_1, q\in\GG_2$, $p\neq q$. 
Without loss of generality, we assume $\mu_p=0$. From $\TV(p, q)= \epsilon_0 \leq \epsilon$, we construct distribution $r = \frac{\min(p, q)}{1-\epsilon_0}$. By Lemma~\ref{lem.deletionrproperties} we know that 
$r\leq \frac{p}{1-\epsilon_0}$, $r\leq \frac{q}{1-\epsilon_0}$.
Denote $\tilde r = (1-\epsilon_0)r$. Consider measure $p-\tilde r, q-\tilde  r$. We have $\mu_q = \mu_p - \mu_{p-\tilde r} + \mu_{q-\tilde r} =  -\mu_{p-\tilde r} + \mu_{q-\tilde r}$. Note that $\| \mu_{p-\tilde r}\|_2 = \| \mu_p - \mu_{\tilde r}\|_2  = \| \mu_{\tilde r} \|_2 \leq (1-\epsilon_0)\rho_1 \leq \rho_1$.
For any $v\in\bR^d, \|v\|_2=1$, we have
\begin{align}
    v^{\top}\Sigma_qv^{\top} & = v^{\top}(\bE_q[XX^{\top}] -\mu_q\mu_q^{\top})v \nonumber \\
    & = v^{\top}(\bE_{\tilde r}[XX^{\top}] + \bE_{q-\tilde r}[XX^{\top}] - (\mu_{q-\tilde r}-\mu_{p-\tilde r})(\mu_{q-\tilde r}-\mu_{p-\tilde r})^{\top})v \nonumber \\
    & \geq (1-\rho_2)(1-\epsilon_0) + \bE_{q-\tilde r}[(v^{\top}X)^2] - (v^{\top}\mu_{q-\tilde r})^2 + 2 v^{\top}\mu_{q-\tilde r}v^{\top}\mu_{p-\tilde r} - (v^{\top}\mu_{p-\tilde r})^2 \nonumber \\
    & \geq  1-\rho_2-\tau  + \bE_{q-\tilde r}[(v^{\top}X)^2] - (v^{\top}\mu_{q-\tilde r})^2 - 2 \|\mu_{q-\tilde r}\|_2 \|\mu_{p-\tilde r}\|_2 - \|\mu_{p-\tilde r}\|^2 \nonumber \\
    & \geq  1-\rho_2-\tau  + \bE_{q-\tilde r}[(v^{\top}X)^2] - (v^{\top}\mu_{q-\tilde r})^2 - 2\rho_1 \|\mu_{q-\tilde r}\|_2 - \rho_1^2.\nonumber
\end{align}
Here we use the fact that $\epsilon_0 \leq \epsilon \leq \tau$. Denote $b_q = \frac{q-\tilde r}{\epsilon_0}$. Then $b_q$ is a distribution. If $\mu_{b_q} = 0$, then we already know that $\|\mu_q - \mu_r\|\leq \epsilon_0 \|\mu_{b_q}\|_2 =0$. Otherwise
we take $v = \frac{\mu_{b_q}}{\|\mu_{b_q}\|_2}$.
Then we can see $ \bE_{q-\tilde r}[(v^{\top}X)^2]= \epsilon_0\bE_{b_q}[(v^{\top}X)^2]\geq \epsilon_0 \|\mu_{b_q}\|_2^2$. 
From $q\in \GG_2$, we know  that $v^{\top}\Sigma_qv\leq 1 + \tau$. Thus
\begin{align}
    (\epsilon_0 - \epsilon^2_0)\|\mu_{b_q}\|^2_2 -  2 \epsilon_0 \rho_1 \|\mu_{b_q}\|_2 \leq \rho_1^2 + \rho_2  + 2\tau.
\end{align}
Solving the inequality, we derive that
\begin{align}
  \|\mu_q - \mu_r\|_2 \leq \epsilon_0 \|\mu_{b_q}\|_2 \leq C(\sqrt{\frac{\epsilon_0}{1-\epsilon_0}}( \sqrt{\tau+\rho_2} + \rho_1)+ \frac{\epsilon_0}{1-\epsilon_0}\rho_1)
\end{align}
where $C$ is some universal constant.
Thus we can conclude
\begin{align}
    \| \mu_p - \mu_q\|_2\leq \| \mu_p - \mu_r\|_2 + \| \mu_r - \mu_q\|_2 \leq  C\cdot \left( \sqrt{\frac{(\tau+\rho_2)\epsilon}{1-\epsilon}} + \max(1, \frac{\epsilon}{1-\epsilon})\rho_1\right).\nonumber
\end{align}
\end{proof}
\begin{remark}
The lemma is key for proving near-optimal modulus when $\GG_1$ is empirical distribution of sub-Gaussian (or bounded $k$-th moment) with identity covariance, and $\GG_2$ is bounded covariance matrix. 

When both $p$ and $q$ have bounded covariance, we have $\rho_1 = \frac{\sigma_1 \sqrt{\epsilon}}{1-\epsilon} $, $\rho_2 = 0$, $\tau = \sigma_2-1$. The result recovers the population modulus for bounded covariance set in Theorem~\ref{thm.tTV_mean}. In the meantime the coefficient of $\sigma_2\sqrt{\epsilon}$ improves from $1/(1-\epsilon)$ to $1/\sqrt{1-\epsilon}$, which can be much better when $\epsilon$ is large. 
\end{remark}
Note that compared with 
Furthermore, we show a stronger lemma that the generalized modulus of continuity for the same sets under $\tTV_\mathcal{H}$ distance is also bounded. 
The Lemma is critical in showing the generalized modulus of continuity for both bounded $k$-th moment distribution and sub-Gaussian distribution with identity covariance assumption under $\tTV_\mathcal{H}$ distance. 
\begin{lemma}\label{lem.tTV_identitycov_modulus}
For some non-negative constant $\rho_1, \rho_2, \tau$, assume $\tau \geq \epsilon$.
We denote $\mu_p = \bE_p[X]$. Define 
\begin{align}
     \mathcal{G}_1 & =  \{p: \forall r \leq \frac{p}{1-2\epsilon},  \|\mu_r - \mu_p\|_2 \leq \rho_1, 
     \lambda_{\mathsf{min}}(\mathbb{E}_r[(X - \mu_p)(X - \mu_p)^{\top}]) \geq 1- \rho_2\}, \\
    \GG_2 &= \{p:  \| \mathbb{E}_p[(X - \mu_p)(X - \mu_p)^{\top}]\|_2 \leq 1+ \tau\}, \\
    \tTV_\mathcal{H}(q, p) & = \sup_{v\in\bR^d, t\in\bR} |\bP_p(v^\top X \geq t) - \bP_q(v^\top X \geq t) |.\nonumber
\end{align}
Here $\lambda_{\mathsf{min}}(A)$ is the smallest eigenvalue of symmetric matrix $A$. Assume $\epsilon < 1/3$,
then there exists a universal constant $C$ such that
\begin{align}
\sup_{q\in \GG_2, p\in \GG_1, \tTV_\mathcal{H}(q,p)\leq \epsilon} \|\mu_{p} - \mu_q\|_2 \leq C\cdot (\sqrt{\tau\epsilon } + \sqrt{\rho_2 \epsilon } + \rho_1),
\end{align}
where $C$ is some universal constant.
\end{lemma}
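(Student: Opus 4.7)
The plan is to adapt the $\TV$ proof of Lemma~\ref{lem.empirical_identity_cov_modulus} by reducing to one dimension along the direction of maximal mean separation, and then invoking the $\tTV_\mathcal{H}$ mid-point / mean-cross structure of Lemmas~\ref{lem.mean_cross_tv} and~\ref{lem.tvt_midpoint}. After a translation making $\mu_p=0$, I would set $v^* = (\mu_q-\mu_p)/\|\mu_q-\mu_p\|_2$ and reduce the task to upper-bounding $v^{*\top}\mu_q$. Since $v^{*\top}x \in \mathcal{H}$, the one-dimensional push-forwards $\bar p, \bar q$ of $p, q$ along $v^*$ satisfy $\sup_t |F_{\bar p}(t) - F_{\bar q}(t)| \leq \epsilon_0 \leq \epsilon$.

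Following the $\TV$ proof, I construct a one-dimensional measure $\bar{\tilde r}$ of mass $1-\epsilon_0$ that acts as an $\epsilon_0$-deletion of both $\bar p$ and $\bar q$ via Lemma~\ref{lem.tvt_midpoint}, and lift it to a full-dimensional sub-measure $\tilde r(dx) = h(v^{*\top}x)\,p(dx)$ of $p$ whose $v^*$-marginal is $\bar{\tilde r}$; the normalized $r = \tilde r/(1-\epsilon_0)$ is then a $2\epsilon$-deletion of $p$, so the hypotheses of $\GG_1$ give $|v^{*\top}\mu_r|\leq \rho_1$ and $\bE_r[(v^{*\top}X)^2]\geq 1-\rho_2$. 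Assuming $\bar{\tilde r}$ is a genuine sub-measure of $\bar q$, the residual $b_q = (\bar q - \bar{\tilde r})/\epsilon_0$ is a probability measure on $\bR$ and the decomposition
\begin{align*}
\epsilon_0 \bE_{b_q}[Y^2] = \bE_{\bar q}[Y^2] - \bE_{\bar{\tilde r}}[Y^2] \leq (1+\tau)+(v^{*\top}\mu_q)^2 - (1-\epsilon_0)(1-\rho_2) \leq \tau+\rho_2+\epsilon_0+(v^{*\top}\mu_q)^2
\end{align*}
parallels the $\TV$ proof. Applying Jensen's inequality to $b_q$, writing $v^{*\top}\mu_q = (1-\epsilon_0)v^{*\top}\mu_r + \epsilon_0 \bE_{b_q}[Y]$, using $|(1-\epsilon_0)v^{*\top}\mu_r|\leq \rho_1$, and solving the resulting quadratic (the hypothesis $\epsilon<1/3$ is what keeps the leading coefficient bounded away from zero) yields the advertised bound $v^{*\top}\mu_q \leq C(\sqrt{\tau\epsilon}+\sqrt{\rho_2\epsilon}+\rho_1)$.

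The main obstacle is making $\bar{\tilde r}$ simultaneously an honest one-dimensional sub-measure of both $\bar p$ and $\bar q$ with mass at least $1-2\epsilon$: the canonical choice $\min(\bar p,\bar q)$ has mass $1-\TV(\bar p,\bar q)$, and the $\TV$ of the one-dimensional marginals is not controlled by the KS distance $\epsilon$ alone. I expect to resolve this either by (i) working directly with the restricted-deletion midpoint of Lemma~\ref{lem.tvt_midpoint} and verifying that the signed identity for $\int t^2\,d(\bar q - \bar{\tilde r})$ still furnishes the required lower bound on $\bE_{b_q}[Y^2]$, or (ii) truncating $\bar q$ and a matching piece of $\bar p$ at a Chebyshev-type level $T\asymp\sqrt{(1+\tau)/\epsilon}$ so that the truncations satisfy a genuine $\TV$ bound at the cost of only $O(\sqrt{(1+\tau)\epsilon})$ additional mass, and then running the $\TV$ construction on the truncated marginals.
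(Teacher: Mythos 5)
Your plan correctly reduces to one dimension along $v^* = (\mu_q-\mu_p)/\|\mu_q-\mu_p\|_2$, and you rightly flag the central obstruction: to run the Cauchy--Schwarz step of Lemma~\ref{lem.empirical_identity_cov_modulus} you need a measure $\tilde r$ of mass $1-O(\epsilon)$ that is a \emph{simultaneous} sub-measure of both $\bar p$ and $\bar q$, so that $\bar q - \tilde r$ is a non-negative measure of small total mass. But $\tTV_{\sH}(p,q) \le \epsilon$ does not control $\TV(\bar p,\bar q)$, which can be as large as $1$, so no such common sub-measure need exist. Unfortunately, neither of your proposed repairs closes this gap. For (i), the midpoint of Lemma~\ref{lem.tvt_midpoint} dominates $\bar p$ and $\bar q$ only on rays, not pointwise, so $\bar q - (1-\epsilon_0)\bar r$ is a genuinely signed measure and the Jensen/Cauchy--Schwarz step on the residual is false, not merely unproved. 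For (ii), Chebyshev truncation bounds the support but does nothing to control the $\TV$ distance between the truncated marginals in terms of their KS distance --- two compactly supported densities can interleave arbitrarily --- so the truncated pair still lacks a common sub-measure of large mass.

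The paper's argument sidesteps the common-submeasure problem entirely, and this is the idea your proposal is missing. Rather than seeking a mid-point inside both $\bar p$ and $\bar q$, it applies the mean-cross lemma (Lemma~\ref{lem.tvt_cross_mean}) to the two-parameter family
$g_{a,b}(x) = a\,v^{\top}(x-\mu_p)/\rho_1 + b\bigl(1-(v^{\top}(x-\mu_p))^2\bigr)/\rho_2$, $a,b\in[0,1]$,
whose super-level sets are unions of at most two rays and hence have KS distance at most $2\epsilon$ under $\tTV_{\sH}(p,q)\le\epsilon$. For each fixed $(a,b)$, the mean-cross lemma together with $p\in\GG_1$ produces some $r_q \le q/(1-2\epsilon)$ with $\bE_{r_q}[g_{a,b}]\le 2$, i.e.\ $\sup_{a,b}\inf_{r_q}\bE_{r_q}[g_{a,b}]\le 2$. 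The minimax theorem then upgrades this to $\inf_{r_q}\sup_{a,b}\bE_{r_q}[g_{a,b}]\le 2$, so a \emph{single} $r_q \le q/(1-2\epsilon)$ works for all $(a,b)$ simultaneously; taking $(a,b)=(1,0)$ and $(0,1)$ gives the mean and second-moment bounds. Because $r_q$ is an honest deletion of $q$ alone (it need not be a deletion of $p$ at all), the residual $q-(1-2\epsilon)r_q$ is non-negative with small mass, and Cauchy--Schwarz on it goes through exactly as in the $\TV$ case. In short, the paper transfers $p$'s resilience onto a deletion of $q$ via the mean-cross lemma and a minimax swap, never needing a joint deletion --- a step that cannot be extracted from the constructions you propose.
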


\begin{proof}
From Lemma~\ref{lem.cvx_mean_resilience} and $\tTV_\mathcal{H}(q, p)\leq \epsilon <2\epsilon$,
 for $v = \frac{\mu_p-\mu_q}{\|\mu_p-\mu_q\|_2}$, there exist $r_p\leq \frac{p}{1-2\epsilon}, r_q\leq \frac{q}{1-2\epsilon}$,
\begin{align}
    \bE_{r_p}[\langle X-\mu_q, v\rangle^2] \leq  \bE_{r_q}[\langle X-\mu_q, v\rangle^2]
\end{align}
Thus we have
\begin{align}
    \bE_q[\langle X-\mu_q, v \rangle^2] &  \geq (1-2\epsilon)\bE_{r_q}[\langle X-\mu_q, v \rangle^2] \nonumber \\
    & \geq (1-2\epsilon)\bE_{r_p}[\langle X-\mu_q, v \rangle^2] \nonumber \\
    & = (1-2\epsilon)\bE_{r_p}[\langle X-\mu_p, v \rangle^2 + \langle \mu_p-\mu_q, v \rangle^2 + 2\langle X-\mu_p, v \rangle \cdot \langle \mu_p-\mu_q, v \rangle ] \nonumber \\
    & \geq (1-2\epsilon)(1 - \rho_2 + \|  \mu_p-\mu_q\|_2^2 - 2\rho_1 \| \mu_p - \mu_q\|_2).\nonumber
\end{align}
The last inequality comes from  the fact that $p\in\GG_1$.
From $q\in \GG_2$, we know
\begin{align}
   & \|\bE_q[\langle X-\mu_q, v \rangle^2] \leq 1 + \tau \nonumber \\
    \Rightarrow & \| \mu_p-\mu_q\|_2^2 - 2\rho_1 \| \mu_p - \mu_q\|_2 \leq  \frac{1 + \tau}{1-2\epsilon} - 1 + \rho_2 < \rho_2 + 9\tau. \nonumber
\end{align}
Here we use the assumption that $\epsilon < 1/3$.
Solving the equation, we can derive
\begin{align}
    \| \mu_p - \mu_q\|_2 \leq 5\rho_1 + 4\sqrt{\rho_2 + 9\tau}. 
\end{align}

Next we show that if $\|\mu_p - \mu_q\|_2 \geq 11\rho_1 + 9\sqrt{ (\rho_2 + 9\tau)\epsilon}$, we have 
\begin{align}
    \| \bE_q[(X-\mu_q)(X-\mu_q)^{\top}] \|_2 > 1 + \tau.
\end{align}

Consider the unit vector $v = \frac{\mu_q - \mu_p}{\|\mu_q - \mu_p \|_2}$.
For $a\in[0, 1], b\in[0, 1]$, 
consider the random variable $\frac{av^{\top}(X-\mu_p)}{\rho_1}+\frac{b(1-(v^{\top}(X-\mu_p))^2)}{\rho_2}$. we have
\begin{align}
   & |\bP_p(\frac{av^{\top}(X-\mu_p)}{\rho_1}+\frac{b(1-(v^{\top}(X-\mu_p))^2)}{\rho_2}\geq t) - \bP_q(\frac{av^{\top}(X-\mu_p)}{\rho_1}+\frac{b(1-(v^{\top}(X-\mu_p))^2)}{\rho_2}\geq t)|
  \leq  2\epsilon. \nonumber
\end{align}
To see this, indeed, for any given $t\in\bR^d$, when the equation $\frac{av^{\top}(X-\mu_p)}{\rho_1}+\frac{b(1-(v^{\top}(X-\mu_p))^2)}{\rho_2} = t$ has two solutions (denoted as $x_0,x_1$), we have
\begin{align}
   & |\bP_p(\frac{av^{\top}(X-\mu_p)}{\rho_1}+\frac{b(1-(v^{\top}(X-\mu_p))^2)}{\rho_2}\geq t) - \bP_q(\frac{av^{\top}(X-\mu_p)}{\rho_1}+\frac{b(1-(v^{\top}(X-\mu_p))^2)}{\rho_2}\geq t)| \nonumber \\
   & \leq |\bP_p(v^{\top}X\leq x_0) - \bP_q(v^{\top}X\leq x_0) |+ |\bP_p(v^{\top}X\geq x_1) - \bP_q(v^{\top}X\geq x_1) | \nonumber \\
   & \leq 2\epsilon.\nonumber
\end{align}
When the equation has one or zero solution and $b\neq 0$, the difference is 0. When the equation has one solution and $b=0$, we know that the difference is bounded by $\epsilon$.
For any $a, b \in [0, 1]$, from Lemma~\ref{lem.cvx_mean_resilience}, we know that there exists $r_p \leq \frac{p}{1-2\epsilon}$, $r_q \leq \frac{q}{1-2\epsilon}$, such that 
\begin{align}
    \bE_{r_q}[\frac{av^{\top}(X-\mu_p)}{\rho_1}+\frac{b(1-(v^{\top}(X-\mu_p))^2)}{\rho_2}] & \leq \bE_{r_p}[\frac{av^{\top}(X-\mu_p)}{\rho_1}+\frac{b(1-(v^{\top}(X-\mu_p))^2)}{\rho_2}] \nonumber \\
    & \leq a + b \leq 2.
\end{align}
This is from that $p\in \GG_1$.
Thus we have
\begin{align}
    \max_{a\in[0, 1], b\in[0, 1]} \min_{r_q\leq \frac{q}{1-2\epsilon}} \bE_{r_q}[\frac{av^{\top}(X-\mu_p)}{\rho_1}+\frac{b(1-(v^{\top}(X-\mu_p))^2)}{\rho_2}] \leq 2.\nonumber
\end{align}
By minimax theorem, we can see that 
\begin{align}\nonumber
    \min_{r_q\leq \frac{q}{1-2\epsilon}}  \max_{a\in[0, 1], b\in[0, 1]} \bE_{r_q}[\frac{av^{\top}(X-\mu_p)}{\rho_1}+\frac{b(1-(v^{\top}(X-\mu_p))^2)}{\rho_2}] \leq 2.
\end{align}
Thus there exists some $r_q\leq \frac{q}{1-2\epsilon}$, such that for any $ a, b \in [0, 1]$,  
\begin{align}\nonumber
 \bE_{r_q}[\frac{av^{\top}(X-\mu_p)}{\rho_1}+\frac{b(1-(v^{\top}(X-\mu_p))^2)}{\rho_2}] \leq 2.
\end{align}
By taking $a=0, b=1$ and $a=1, b=0$, we have
\begin{align}\label{eqn.proof_tTV_modulus_rq1}
 \bE_{r_q}[(v^{\top}(X-\mu_p))^2)] & \geq 1 - 2\rho_2,\\\label{eqn.proof_tTV_modulus_rq2}
 \bE_{r_q}[{v^{\top}(X-\mu_p)}] & \leq 2\rho_1.
\end{align}

Denote $\tilde r_q = (1-2\epsilon)r_q$, then we have $q \geq \tilde r_q$.
To bound from below the maximum eigenvalue, it is sufficient to lower bound the term
\begin{align}\nonumber
    v^{\top} \bE_q[(X-\mu_q)(X-\mu_q)^{\top}] v = v^{\top} \bE_{\tilde r_q}[(X-\mu_q)(X-\mu_q)^{\top}] v + v^{\top} \bE_{q-\tilde r_q}[(X-\mu_q)(X-\mu_q)^{\top}] v. 
\end{align}
Now we bound the two terms separately, first note that
\begin{align}
    \bE_{q-\tilde r_q}[\langle X - \mu_q, v\rangle] & \geq \bE_{q-\tilde r_q}[\langle X - \mu_p, v\rangle]  - \int  (q-\tilde r_q)\langle \mu_p - \mu_q, v\rangle \nonumber \\
    & =  \bE_{q}[\langle X - \mu_p, v\rangle] - \bE_{\tilde r_q}[\langle X - \mu_p, v\rangle]  - \int (q-\tilde r_q) \langle \mu_p - \mu_q, v\rangle \nonumber \\
     & \geq \|\mu_q - \mu_p \|_2 - \bE_{\tilde r_q}[\langle X - \mu_p, v\rangle]  - 2\epsilon \|\mu_q - \mu_p \|_2 \nonumber \\
    & \geq (1-2\epsilon)\|\mu_q - \mu_p \|_2 - (1-2\epsilon)2\rho_1   \nonumber \\
    & > (1-2\epsilon)(11\rho_1 + 9\sqrt{(\rho_2+9\tau)\epsilon} - 2\rho_1) \nonumber \\
    & = (1-2\epsilon)(9\rho_1 + 9\sqrt{(\rho_2+9\tau)\epsilon}) \nonumber \\
    & > \frac{1}{3}(9\rho_1 + 9\sqrt{(\rho_2+9\tau)\epsilon}) \nonumber \\
    & > 3(\rho_1 +\sqrt{(\rho_2+9\tau)\epsilon})) \nonumber
\end{align}
By Cauchy-Schwarz inequality,
\begin{align}
    \bE_{q-\tilde r_q}[\langle X - \mu_q, v\rangle^2] \int (q-\tilde r_q) \geq  \Big(\bE_{q-\tilde r_q}[\langle X - \mu_q, v\rangle]\Big)^2 > 9(\rho_1+\sqrt{(\rho_2 + 9 \tau)\epsilon})^2.\nonumber
\end{align}
Thus 
\begin{align}
    \bE_{q-\tilde r_q}[\langle X - \mu_q, v\rangle^2]  > \frac{9(\rho_1+\sqrt{(\rho_2 + 9 \tau)\epsilon})^2}{\epsilon}.\nonumber
\end{align}
Now we bound the term $v^{\top} \bE_{\tilde r_q}[(X-\mu_q)(X-\mu_q)^{\top}] v $. We can see from Equation~\eqref{eqn.proof_tTV_modulus_rq1} and~\eqref{eqn.proof_tTV_modulus_rq2} that
\begin{align}
    \bE_{\tilde r_q}[\langle X-\mu_q, v  \rangle^2] & =   \bE_{\tilde r_q}[\langle X-\mu_p, v  \rangle^2 + \langle \mu_p - \mu_q, v\rangle^2 + 2\langle X-\mu_p, v\rangle \langle \mu_p - \mu_q, v \rangle] \nonumber \\
    & \geq (1-2\epsilon)(1-2\rho_2   - 4\rho_1 \cdot  (5\rho_1 + 4\sqrt{\rho_2+9\tau} ) \nonumber \\
    & > 1 - 2\epsilon - 2\rho_2 - 4\rho_1 \cdot  (5\rho_1 + 4\sqrt{\rho_2+9\tau} ) \nonumber \\
    & > 1 - 2\tau - 2\rho_2 - 20\rho_1^2 - 16\rho_1\sqrt{\rho_2+9\tau}. \nonumber
\end{align}
Combining two inequalities together, we see that when $\epsilon < 1/3$,
\begin{align}
     & v^{\top} \bE_q[(X-\mu_q)(X-\mu_q)^{\top}] v \nonumber \\ 
     & = v^{\top} \bE_{p-r_-}[(X-\mu_q)(X-\mu_q)^{\top}] v + v^{\top} \bE_{r_+}[(X-\mu_q)(X-\mu_q)^{\top}] v \nonumber \\
     & >1 - 2\tau - 2\rho_2 - 20\rho_1^2 - 16\rho_1\sqrt{\rho_2+9\tau} + \frac{9(\rho_1+\sqrt{\epsilon(\rho_2 + 9 \tau)})^2}{\epsilon} , \nonumber \\
     & = 1 -2\tau - 2\rho_2 - 20\rho_1^2 -16\rho_1\sqrt{\rho_2+9\tau}  + \frac{9\rho_1^2}{\epsilon}  + 9(\rho_2+9\tau) + \frac{18\rho_1\sqrt{\rho_2+9\tau}}{\sqrt{\epsilon}} \nonumber \\
     & > 1 +\tau,
\end{align}
which contradicts with the fact that $q\in \GG_2$. Thus we have
\begin{align}
\sup_{q\in \GG_2, p\in \GG_1, \tTV_\mathcal{H}(q,p)\leq \epsilon} \|\mu_{p} - \mu_q\|_2 \leq 11\rho_1 + 9\sqrt{ (\rho_2 + 9\tau)\epsilon}.
\end{align}
\end{proof}

\subsubsection{General Convergence and Concentration results}

\begin{lemma}[{Convergence of  mean for empirical distribution with bounded support and bounded second moment~\citep[Corollary 8.45]{foucart2017mathematical}}]\label{lem.bounded_support_sub_gaussian_tail} 

Given distribution $p$ satisfying the following conditions:
\begin{align}
    \sup_{v\in\bR^d, \|v\|_2=1}\bE_{p}\left[(v^\top (X-\bE_p[X]))^2\right]& \leq \sigma^2, \\
    \|X - \bE_p[X]\|_2 & \leq R \text{ a.s.}.
\end{align}
Denote the empirical distribution of $n$ \iid samples from $p$ as $\hat p_n$.  Then with probability at least $1-\delta$, there is some constant $C$ such that 
\begin{align}
    \|\bE_{p}[X] - \bE_{\hat p_n}[X]\|_2 \leq C\left(\sigma\sqrt{\frac{d}{n}} + \sigma \sqrt{\frac{\log(1/\delta)}{n}}+ \frac{R\log(1/\delta)}{n}\right).
\end{align}
\end{lemma}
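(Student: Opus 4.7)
The statement is a vector-valued Bernstein/Talagrand-type bound: the three terms correspond respectively to the ``in-expectation'' dimension-dependent rate, the Gaussian-regime concentration around the mean, and the Bernstein residual coming from the a.s.\ boundedness. My plan is to combine the expectation bound already proved in Lemma~\ref{lem.kth_convergence} with a Talagrand-type concentration inequality for the supremum of an empirical process over the unit sphere.

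First, center the variables: set $Y_i = X_i - \mathbb{E}_p[X]$, so $\mathbb{E}_p[Y_i]=0$, $\|Y_i\|_2 \leq R$ almost surely, and the covariance operator $\mathbb{E}_p[Y_i Y_i^\top]$ has operator norm at most $\sigma^2$. Writing $\bar Y_n = \tfrac{1}{n}\sum_{i=1}^n Y_i$, I express the quantity of interest as a supremum,
\begin{equation*}
   \|\mathbb{E}_p[X] - \mathbb{E}_{\hat p_n}[X]\|_2 \;=\; \|\bar Y_n\|_2 \;=\; \sup_{v \in S^{d-1}} v^\top \bar Y_n,
\end{equation*}
which is exactly the supremum of the empirical process indexed by the class $\mathcal{F} = \{x \mapsto v^\top(x-\mathbb{E}_p[X]) : \|v\|_2 = 1\}$. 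For each $f \in \mathcal{F}$, $|f(X)| \leq R$ almost surely and $\mathbb{E}_p[f(X)^2] \leq \sigma^2$.

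Second, I bound the expected supremum: Lemma~\ref{lem.kth_convergence} applied to $p$ gives $\mathbb{E}\,\|\bar Y_n\|_2 \leq \sigma\sqrt{d/n}$, which accounts for the first term in the stated bound. Third, I invoke Talagrand's concentration inequality for suprema of bounded empirical processes (in the Bousquet form, as in~\citep[Cor.~8.45]{foucart2017mathematical} which the lemma already cites): under the envelope bound $R$ and variance bound $\sigma^2$ there is a universal constant $c>0$ so that, for every $t>0$,
\begin{equation*}
   \mathbb{P}\!\left(\|\bar Y_n\|_2 \geq \mathbb{E}\|\bar Y_n\|_2 + t\right) \;\leq\; \exp\!\left(-c\, n \min\!\left(\frac{t^2}{\sigma^2},\, \frac{t}{R}\right)\right).
\end{equation*}
Setting the right-hand side equal to $\delta$ and inverting produces a deviation of order $\sigma\sqrt{\log(1/\delta)/n} + R\log(1/\delta)/n$. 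Combining this with the expectation bound yields the stated three-term inequality after adjusting $C$.

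The only nontrivial ingredient is the Talagrand/Bousquet inequality itself; everything else is either bookkeeping or a direct invocation of Lemma~\ref{lem.kth_convergence}. If one prefers to avoid the empirical-process machinery, the same bound can be recovered by an $\varepsilon$-net argument on $S^{d-1}$ combined with the scalar Bernstein inequality applied to each fixed $v$, but this route is less clean because the net size $(3/\varepsilon)^d$ trades off against the variance term and one has to tune $\varepsilon \asymp 1$ to recover the correct $\sigma\sqrt{d/n}$ scaling; the direct appeal to the cited Foucart--Rauhut corollary is therefore the more natural path.
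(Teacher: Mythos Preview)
The paper does not prove this lemma at all; it is stated with a direct citation to~\citep[Corollary~8.45]{foucart2017mathematical} and used as a black box. Your approach---bounding the expectation via Lemma~\ref{lem.kth_convergence} and then applying Talagrand/Bousquet concentration to the supremum over the unit sphere---is correct and is the standard way such vector-Bernstein bounds are derived, so there is nothing to compare against and no gap to flag.
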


\begin{lemma}[{Convergence of covariance for empirical distribution with bounded support~\citep[Thoerem 5.44]{vershynin2010introduction}}]\label{lem.matrix_bernstein}
Given distribution $p$, denote $\Sigma = \bE_p[XX^\top]$. Assume that $\|X\|_2\leq R$ almost surely. Denote $\Delta_1 = \sqrt{\frac{R^2\log(d/\delta)}{n}}$. Then with probability at least $1-\delta$, there exists some constant $C_1$ such that 
\begin{align}
   \|\frac{1}{n}\sum_{i=1}^n X_iX_i^\top - \Sigma\|_2 \leq C_1\max(\|\Sigma\|_2^{1/2}\Delta_1, \Delta_1^2).
\end{align}
Denote $\Delta_2 = \sqrt{\frac{R^2\log(d)}{n}}$.
By integrating over $\delta$, we know that for some constant $C_2$
\begin{align}
    \bE_{p}\left[\|\frac{1}{n} \sum_{i=1}^n X_iX_i^\top - \bE_{p}[XX^\top]\|_2 \right ] \leq C_2\max(\|\Sigma\|_2^{1/2}\Delta_2, \Delta_2^2).
\end{align}
\end{lemma}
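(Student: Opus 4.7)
The plan is to invoke the non-commutative (matrix) Bernstein inequality for a sum of independent, bounded, self-adjoint random matrices applied to the zero-mean summands $Z_i := X_i X_i^\top - \Sigma$. First I would verify the two hypotheses that Bernstein needs: a uniform operator-norm bound on each summand, and a bound on the operator norm of the ``matrix variance'' $\sum_i \mathbb{E}[Z_i^2]$.

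For the uniform bound, the assumption $\|X\|_2 \leq R$ a.s.\ gives $\|X_i X_i^\top\|_2 = \|X_i\|_2^2 \leq R^2$, and $\|\Sigma\|_2 \leq \mathbb{E}\|X\|_2^2 \leq R^2$, so $\|Z_i\|_2 \leq 2R^2$ almost surely. For the variance proxy, I would compute
\begin{align*}
\mathbb{E}[Z_i^2] &= \mathbb{E}[(X_i X_i^\top)^2] - \Sigma^2 \preceq \mathbb{E}[\|X_i\|_2^2 \cdot X_i X_i^\top] \preceq R^2 \Sigma,
\end{align*}
so $\|\sum_i \mathbb{E}[Z_i^2]\|_2 \leq n R^2 \|\Sigma\|_2$. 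Plugging these into the Bernstein tail bound for sums of independent self-adjoint matrices of dimension $d$ (e.g.\ Tropp's user-friendly bound), one obtains that with probability at least $1-\delta$,
\begin{align*}
\Bigl\| \tfrac{1}{n}\sum_{i=1}^n X_i X_i^\top - \Sigma \Bigr\|_2 &\lesssim \sqrt{\tfrac{R^2 \|\Sigma\|_2 \log(d/\delta)}{n}} + \tfrac{R^2 \log(d/\delta)}{n} \\
&= \|\Sigma\|_2^{1/2}\, \Delta_1 + \Delta_1^2,
\end{align*}
which is bounded by $2\max(\|\Sigma\|_2^{1/2}\Delta_1, \Delta_1^2)$ up to the universal constant $C_1$.

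For the expectation statement, I would use the standard tail-to-expectation conversion: write $\mathbb{E}[\|\cdot\|_2] = \int_0^\infty \mathbb{P}(\|\cdot\|_2 > t)\, dt$, split the integral at $t_0 := C_1 \max(\|\Sigma\|_2^{1/2}\Delta_2, \Delta_2^2)$, bound the first piece by $t_0$ and, for the tail piece, use the high-probability bound above to invert $t$ as a function of $\delta$ and integrate. The Gaussian-plus-exponential shape of the tail gives a $\delta$-integral that is absolutely convergent and yields a bound of the same order $t_0$, producing the claimed expectation estimate with a (slightly larger) universal constant $C_2$.

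The main obstacle is not conceptual but bookkeeping: getting the $\max(\cdot,\cdot)$ form cleanly (rather than an additive form) and tracking the universal constants through the Bernstein statement and the tail-integration step so that the final bound is dimension-free in its structure and depends on $d$ only through the $\log(d)$ or $\log(d/\delta)$ terms inside $\Delta_1,\Delta_2$. Since the result is quoted verbatim from \citet[Theorem 5.44]{vershynin2010introduction}, the cleanest presentation is simply to verify the two moment conditions and cite the matrix Bernstein bound; no nontrivial new argument is needed.
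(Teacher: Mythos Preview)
Your proposal is correct and is exactly the standard argument: verify the uniform bound and variance-proxy hypotheses for the summands $Z_i = X_iX_i^\top - \Sigma$, apply matrix Bernstein, and then integrate the tail to get the expectation bound. The paper does not supply its own proof of this lemma; it is quoted directly from \citet[Theorem~5.44]{vershynin2010introduction} as a cited fact, so there is nothing further to compare against.
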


The below lemma controls the tail of $\|X\|_2$ when $X$ has bounded $k$-th moment. 
\begin{lemma}[Tail bound for the norm of bounded $k$-th moment distribution]\label{lem.kth_population_deletion_R}
Assume distribution $p$ has its $k$-th moment bounded for $k\geq 2$, i.e.
\begin{align}
    \sup_{v\in\bR^d, \|v\|_2=1}\bE_{p}\left[|v^\top X|^k\right]& \leq \sigma^k.
\end{align}
Then,
\begin{align}\label{eqn.kth_moment_norm_tail_bound}
    \bP_p \left(\left \| X \right \|_2\geq t\right) & \leq  \left(\frac{\sigma\sqrt{d}}{t}\right)^k.
\end{align}
\end{lemma}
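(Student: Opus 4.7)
The plan is to combine Markov's inequality with a coordinatewise bound on $\mathbb{E}_p[\|X\|_2^k]$ obtained via Jensen's inequality.

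First I would apply Markov's inequality to the non-negative random variable $\|X\|_2^k$:
\[
\mathbb{P}_p(\|X\|_2 \geq t) = \mathbb{P}_p(\|X\|_2^k \geq t^k) \leq \frac{\mathbb{E}_p[\|X\|_2^k]}{t^k}.
\]
The task then reduces to showing $\mathbb{E}_p[\|X\|_2^k] \leq \sigma^k d^{k/2}$.

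Next, since $k\geq 2$ the function $u\mapsto u^{k/2}$ is convex on $[0,\infty)$, so Jensen's inequality applied to the uniform average over coordinates gives, pointwise,
\[
\|X\|_2^k = \left(\sum_{i=1}^d X_i^2\right)^{k/2} = d^{k/2}\left(\frac{1}{d}\sum_{i=1}^d X_i^2\right)^{k/2} \leq d^{k/2-1}\sum_{i=1}^d |X_i|^k.
\]
Taking expectations and invoking the hypothesis with the unit vectors $v = e_i$ (so that $\mathbb{E}_p[|X_i|^k] = \mathbb{E}_p[|e_i^\top X|^k] \leq \sigma^k$) yields
\[
\mathbb{E}_p[\|X\|_2^k] \leq d^{k/2-1}\sum_{i=1}^d \mathbb{E}_p[|X_i|^k] \leq d^{k/2-1}\cdot d\cdot \sigma^k = \sigma^k d^{k/2}.
\]
Substituting into the Markov bound completes the proof.

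The argument is short and entirely routine; there is no real obstacle. The only place to be slightly careful is the direction of Jensen — one wants an upper bound on $\|X\|_2^k$ in terms of coordinate $k$-th moments, and this works precisely because $k/2 \geq 1$ makes $u^{k/2}$ convex (for $k<2$ the inequality would reverse and the simple coordinate reduction would fail). Using a general orthonormal basis in place of the standard basis would give the same bound, so the statement is coordinate-free as expected.
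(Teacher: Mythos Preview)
Your proof is correct. Both you and the paper reduce via Markov's inequality to showing $\mathbb{E}_p[\|X\|_2^k]\leq \sigma^k d^{k/2}$, but you reach this bound by a different (and more elementary) route. The paper introduces Rademacher signs $\xi\in\{\pm1\}^d$ and invokes Khinchine's inequality to get $\|X\|_2^k\leq \mathbb{E}_\xi[|\xi^\top X|^k]$ pointwise in $X$; since $\|\xi\|_2=\sqrt{d}$ deterministically, the directional moment bound then gives $\mathbb{E}_X[|\xi^\top X|^k]\leq \sigma^k d^{k/2}$. You instead apply Jensen to the convex map $u\mapsto u^{k/2}$ on the uniform average of $X_i^2$, which only uses the moment hypothesis along the coordinate directions $e_i$. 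Your argument is shorter and avoids the external lemma; the Khinchine route has the minor conceptual advantage that it makes the bound manifestly basis-free from the start (it averages over \emph{all} sign vectors rather than singling out coordinate axes), though as you note your bound is also rotation-invariant after the fact.
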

\begin{proof}
Since the $k$-th moment is bounded, by Chebyshev's inequality and Khinchine's inequality~\cite{haagerup1981best},
\begin{align}
   \bP_p \left(\left \| X \right \|_2\geq t\right)
    & \leq \frac{\bE_{p}  \left \| X \right \|_2^k}{ t^k} \nonumber \\
    & \leq  \frac{ \bE_{X\sim p, \xi\sim \{\pm 1\}^d} \left|  \xi^{\top} X \right|^k}{t^k}.
\end{align}
From the fact that $X$ has bounded $k$-th moment, we have
\begin{align}
     \bE_{X\sim p, \xi\sim \{\pm 1\}^d} |   \xi^{\top}X|^k  & \leq \sigma^k\bE_{ \xi\sim \{\pm 1\}^d} \|\xi \|_2^k  \nonumber \\
    & = \sigma^k d^{k/2}.
\end{align}

Thus overall, 
\begin{align}
    \bP_p \left(\left \| X  \right \|_2\geq t\right) & \leq \left(\frac{\sigma\sqrt{d}}{t}\right)^k.
\end{align}
\end{proof}

\begin{remark}
If we know the distribution $p$ is sub-Gaussian with paramter $\sigma$,
then we know the $k$-th moment of $p$ is bounded by $(C\sigma \sqrt{k})^k$ for some constant $C$,  i.e.
\begin{align}
\sup_{v\in\bR^d, \|v\|_2=1}\bE_p[|v^\top X|^k] \leq (C\sigma\sqrt{k})^k.    
\end{align}
Then we have a better bound from the Hanson-Wright inequality~\citep[
Exercise 6.3.5]{vershynin2018high}, 
\begin{align}
    \bP(\|X\|_2 \geq C_1\sigma \sqrt{d} + t) \leq \exp(-\frac{C_2t^2}{\sigma^2}).
\end{align}
One can see that by taking $t = C_3\sigma \sqrt{d}$, the above bound gives tail of $\exp(-Cd)$ while~\eqref{eqn.kth_moment_norm_tail_bound} only gives constant tail bound. It is an open problem whether we can do better given bounded $k$-th moment condition.
\end{remark}

\subsubsection{Some negative results on empirical distribution from bounded $\psi$-norm distribution}\label{sec.negativeresultresilient}

\begin{lemma}\label{lem.psinormboundednegative}
Suppose zero mean distribution $p$ on $\mathbf{R}^d$ satisfies
\begin{align}
    \bP_p( \| X\| \geq C \sqrt{d} )\geq \frac{1}{2},
\end{align}
where $C$ is some universal constant. Denote the empirical distribution of $n$ \iid\ samples from $p$ as $\hat{p}_n$. Then, for any Orlicz function $\psi$, the Orlicz norm of random vector $Y\sim \hat p_n$, defined as 
\begin{align}
    \| Y\|_\psi \triangleq  \sup_{v\in \mathbf{R}^d, \|v\|_2 = 1} \| v^\top Y\|_\psi,
\end{align}
satisfies 
\begin{align}
    \bP_p \left ( \|Y\|_\psi \geq  \frac{C \sqrt{d}}{ \psi^{-1}(n)}\right ) \geq \frac{1}{2}. 
\end{align}
\end{lemma}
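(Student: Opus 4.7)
The plan is to exhibit, with probability at least $\tfrac12$, a single sample whose direction witnesses a large one-dimensional Orlicz norm of the empirical distribution. The key observation is that the Orlicz norm is a supremum over unit directions $v$, and the empirical mean $\bE_{\hat p_n}[\psi(|v^\top Y|/t)]$ is a sum of $n$ nonnegative terms, so \emph{a single large term} forces the norm to be large.

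Concretely, first I would use the tail assumption to produce a heavy sample. Let $E$ denote the event $\|X_1\|_2 \geq C\sqrt d$; by hypothesis $\bP_p(E) \geq \tfrac12$. (Alternatively one could use $E = \{\exists i: \|X_i\|_2 \geq C\sqrt d\}$, which has probability $\geq 1 - 2^{-n}$, but $\tfrac12$ already suffices.) On $E$, set $v = X_1/\|X_1\|_2$, so $\|v\|_2 = 1$ and $v^\top X_1 = \|X_1\|_2 \geq C\sqrt d$.

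Next I would compute the one-dimensional Orlicz norm of $v^\top Y$ where $Y \sim \hat p_n$. By definition,
\begin{equation*}
\|v^\top Y\|_\psi = \inf\Bigl\{ t > 0 : \tfrac{1}{n}\sum_{j=1}^n \psi\bigl(|v^\top X_j|/t\bigr) \leq 1 \Bigr\}.
\end{equation*}
Dropping all terms except $j=1$ and using that each term is nonnegative, any admissible $t$ must satisfy $\tfrac{1}{n}\psi(|v^\top X_1|/t) \leq 1$, i.e.\ $\psi(C\sqrt d/t) \leq n$. By the definition of the generalized inverse $\psi^{-1}(y) = \inf\{x : \psi(x) > y\}$ together with monotonicity of $\psi$, this forces $C\sqrt d/t \leq \psi^{-1}(n)$, i.e.\ $t \geq C\sqrt d/\psi^{-1}(n)$. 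Hence on $E$,
\begin{equation*}
\|Y\|_\psi \;\geq\; \|v^\top Y\|_\psi \;\geq\; \frac{C\sqrt d}{\psi^{-1}(n)},
\end{equation*}
and since $\bP_p(E) \geq \tfrac12$, the conclusion follows.

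I do not expect a real obstacle here; the only point requiring minor care is the handling of the generalized inverse at boundary values of $\psi$ (e.g.\ where $\psi$ is flat), which is resolved by monotonicity: $x > \psi^{-1}(n)$ implies $\psi(x) > n$, so the implication from $\psi(C\sqrt d/t) \leq n$ to $C\sqrt d/t \leq \psi^{-1}(n)$ is valid.
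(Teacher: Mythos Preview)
Your proposal is correct and is essentially identical to the paper's proof: both pick $v = X_1/\|X_1\|_2$ on the event $\{\|X_1\|_2 \geq C\sqrt d\}$, drop all but the $j=1$ term in the empirical expectation, and invert $\psi$ to obtain the bound. Your explicit handling of the generalized inverse is slightly more careful than the paper's, but there is no substantive difference in approach.
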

In particular, if we take $\psi(x) = x^{k}$, then Lemma~\ref{lem.psinormboundednegative} shows that it requires at least $\Omega(d^{k/2})$ samples to ensure the empirical distribution $\hat p_n$ has constant $k$-th moment with probability $1/2$. Similarly, $\psi(x) = \exp(x^2)-1$ corresponds to sub-Gaussian, which implies we would need at least $\exp(\Omega(d))$ number of samples to guarantee the empirical distribution has constant sub-Gaussian norm with probability $1/2$.  
\begin{proof}
Denote the samples in $\hat p_n$ as $X_1,X_2,\ldots, X_n$. We have
\begin{align}
    \sup_{v\in \mathbf{R}^d, \|v\|_2 \leq 1} \bE_{\hat p_n}[\psi(|v^\top X|/\sigma)]   & \geq \bE_{\hat p_n}[\psi(|  (\frac{X_1 }{\| X_1 \|})^\top X|/\sigma)]  \\
    &\geq  \frac{1}{n} \psi( \|X_1\| /\sigma),
\end{align}
where in the first inequality we have taken $v = \frac{X_1}{\|X_1\|}$. Hence, with probability at least $1/2$, we have
\begin{align}
    \sup_{v\in \mathbf{R}^d, \|v\|_2 \leq 1} \bE_{\hat p_n}[\psi(|v^\top X|/\sigma)] & \geq \frac{1}{n} \psi(C \sqrt{d}/\sigma). 
\end{align}
If 
\begin{align}
    \sigma = \frac{C \sqrt{d}}{ \psi^{-1}(n)},
\end{align}
then $ \frac{1}{n} \psi(C \sqrt{d}/\sigma) \geq 1$. 
\end{proof}

The next lemma shows that the lower bound in Lemma~\ref{lem.psinormboundednegative} in the case of $\psi(x) = x^2$ (bounded second moment) is not tight: even if the distribution has bounded support ($\|x\|\leq \sqrt{d}$ almost surely), the sample size needed to ensure either resilience with the right parameter or bounded second moment is superlinear in $d$. 
\begin{lemma}\label{lem.couponlowerbound}
There exist a distribution $p$ on $\mathbf{R}^d$ with the following properties:
\begin{enumerate}
    \item \emph{bounded support:} for $X\sim p$, $\|X\|\leq \sqrt{d}$ almost surely;
    \item \emph{identity population second moment:} \begin{align}
        \bE_p[X X^\top] = I_d        
    \end{align}
    \item \emph{growing empirical second moment:} let $\hat p_n$ denote the empirical distribution of $n$ \iid\ samples $X_1,X_2,\ldots,X_n$ from $p$. Then, if $n = d$, 
    \begin{align}
        \bE_p[ \| \frac{1}{n}\sum_{i = 1}^n X_iX_i^\top \|] \asymp \frac{\ln d}{\ln \ln d};
    \end{align}
    \item \emph{empirical distribution not resilient:} if $n=d$, then there exists an absolute constant $C_1>0$ such that the following does not hold for any absolute constant $C_2>0$: $\hat p_n \in \GG_{\mathsf{mean}}(C_2 \sqrt{\eta},\eta)$ for $\eta = \frac{C_1 
    \ln n}{n \ln \ln n}$ with probability at least $1/2$. 
\end{enumerate}
\end{lemma}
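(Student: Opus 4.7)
The plan is to take $p$ to be the uniform distribution on the rescaled standard basis $\{\sqrt{d}\,e_1,\dots,\sqrt{d}\,e_d\}$ and reduce everything to a classical coupon--collector style computation. Properties (1) and (2) are immediate: $\|X\|=\sqrt{d}$ almost surely, and $\mathbb{E}_p[XX^\top]=\frac{1}{d}\sum_{j=1}^d d\, e_j e_j^\top = I_d$.

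For property (3), let $N_j=|\{i:X_i=\sqrt{d}\,e_j\}|$ denote the number of samples falling on the $j$-th atom. Then $\frac{1}{n}\sum_{i=1}^n X_iX_i^\top$ is the diagonal matrix with entries $\frac{d}{n}N_j$, so its operator norm equals $\frac{d}{n}\max_j N_j$. With $n=d$, this is exactly the maximum load $M\triangleq \max_j N_j$ when $n$ balls are thrown uniformly into $n$ bins. The classical bound of Gonnet/Raab--Steger gives $\mathbb{E}[M]\asymp \ln n/\ln\ln n$ (with tight upper and lower tail bounds), which I would just quote; this yields property (3).

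For property (4), I would exhibit an explicit bad deletion. Let $j^*\in\arg\max_j N_j$, and let $r$ be the conditional distribution of $\hat p_n$ given $X\neq \sqrt{d}\,e_{j^*}$. Then $r\leq \hat p_n/(1-\eta_0)$ for $\eta_0\triangleq N_{j^*}/n=M/n$, and projecting onto $e_{j^*}$ gives
\begin{equation*}
\|\mu_{\hat p_n}-\mu_r\|_2 \;\geq\; e_{j^*}^\top(\mu_{\hat p_n}-\mu_r) \;=\; \frac{N_{j^*}}{n}\sqrt{d} \;=\; \eta_0\sqrt{n}.
\end{equation*}
Comparing to the resilience scale $C_2\sqrt{\eta_0}$, the ratio is $\sqrt{\eta_0 n}=\sqrt{M}\asymp\sqrt{\ln n/\ln\ln n}$, which diverges. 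Hence, for a suitable absolute constant $C_1$, the choice $\eta=C_1\ln n/(n\ln\ln n)$ satisfies $\eta\geq\eta_0$ on the high-probability event $\{M\leq C_1\ln n/\ln\ln n\}$ and simultaneously on the event $\{M\geq c\ln n/\ln\ln n\}$ (which holds with probability $\geq 1/2$ for all large $n$) the above deletion witnesses non-resilience at scale $C_2\sqrt{\eta}$ for every fixed $C_2$.

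The main (very mild) obstacle is bookkeeping the coupon--collector tail bounds so that (a) the witnessing bin has load $\Theta(\ln n/\ln\ln n)$ and (b) this load stays $\leq \eta n$ for the prescribed $\eta$, both on a single event of probability at least $1/2$. This is handled by invoking the standard two-sided concentration of $M$ around $\ln n/\ln\ln n$ and choosing the constant $C_1$ larger than the upper-tail constant; then any fixed $C_2$ is eventually dominated by $\sqrt{M}/\sqrt{\eta n/\eta_0}\to\infty$, giving the desired negative conclusion.
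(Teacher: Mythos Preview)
Your proposal is correct and follows essentially the same approach as the paper: the paper also takes $p$ uniform on $\{\sqrt{d}\,e_1,\dots,\sqrt{d}\,e_d\}$, invokes the balls-in-bins maximum load concentration around $\ln n/\ln\ln n$, and for non-resilience deletes all samples in the most-loaded bin, obtaining the same $\frac{Z_1}{\sqrt{n}}\gg\sqrt{Z_1/n}$ comparison. Your treatment is slightly more explicit about property~(3) (computing the diagonal form directly rather than citing an exercise) and about the bookkeeping for the constant $C_1$, but the argument is otherwise identical.
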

\begin{proof}
Let $p$ be the distribution of $X$ that takes value $\sqrt{d} e_i$ with probability $1/d$, where $\{e_i\}_{i =1}^d$ is the standard basis in $\mathbf{R}^d$. Then, the third part follows from~\citep[Exercise 5.4.14]{vershynin2018high}. 

Regarding the last statement, we have
\begin{align}
    \frac{1}{n}\sum_{i = 1}^n X_i = \frac{\sqrt{d}}{n} \begin{bmatrix} Z_1 \\Z_2 \\ \ldots \\ Z_d \end{bmatrix},
\end{align}
where $Z_i = \sum_{k = 1}^n \mathbbm{1}(X_k = \sqrt{d} e_i)$, and $(Z_1,Z_2,\ldots,Z_d) \sim \mathsf{mult}(n, (1/d,1/d,\ldots,1/d))$. It follows from~\citep[Chapter 5]{mitzenmacher2017probability} that if $n = d$ then $\max_{k\in [d]} Z_k$ tightly concentrates on $\frac{\ln n}{\ln \ln n}$. 

Without loss of generality assume $Z_1 = \max_{k\in [d]} Z_k$. Consider the deletion operation that deletes all samples that contribute to the counts in $Z_1$. Hence the deletion fraction is $\frac{Z_1}{n}$. Since $Z_1$ tightly concentrates around $\frac{\ln n}{\ln \ln n} \gg 1$ as $n\to \infty$, there exist two absolute constants $c_1,c_2$ such that $\frac{c_2 \ln n}{\ln \ln n}\leq Z_1 \leq \frac{c_1 \ln n}{\ln \ln n}$ with probability at least $1/2$ for $n$ large enough. From now on we condition on this event. Denote the empirical distribution of remaining samples as $r$. Then, take $n = d$, 
\begin{align}
    \| \bE_{\hat p_n} [X] - \bE_r[X]\| & \geq |e_1^\top ( \bE_{\hat p_n} [X] - \bE_r[X])| = \frac{\sqrt{d}}{n}Z_1 
     = \frac{Z_1}{\sqrt{n}}  \asymp \frac{\ln n}{\sqrt{n}\ln \ln n} \gg \sqrt{\frac{Z_1}{n}} \asymp \sqrt{\frac{\ln n}{n \ln \ln n}}. 
\end{align}
\end{proof}

\subsubsection{Empirical distribution from bounded $\psi$-norm distributions is resilient for all $\eta$}

In Lemma~\ref{lem.couponlowerbound}, it is shown that given $n = d$ samples from a distribution with bounded second moment and bounded support, the empirical distribution is not resilient for small $\eta$ with the right rate. However, we show in the lemma below  that the empirical distribution is in fact resilient with the right rate for $\eta$ large enough. In particular when $\eta \gtrsim \frac{c d}{n}$ for some $c>0$ under bounded second moment and bounded support assumptions.

\begin{lemma}\label{lem.empirical_psi_resilient}
For a given Orlicz function $\psi$, we define
\begin{align}
    \GG & =  \{ p \mid \sup_{f\in\sF} \bE_p\left[\psi\left(\frac{| f(X)-\bE_p[f(X)]|}{\sigma}\right)\right]  \leq  1 \}. 
\end{align}
Assume $p\in\GG$, and the empirical distribution $\hat p_n$ of $n$ \iid samples from $p$ satisfies
\begin{align}
    \bE_{p}[W_\sF(p, \hat p_n)] & \leq \xi_n.
\end{align}
Assume the following equations have solutions, denoted as  $x_0, t$: 
\begin{align}
    \sigma x_0\psi^{-1}(1/x_0) & = \xi_n, \\
    4\psi'(\psi^{-1}(\frac{t}{x_0}))x_0\psi^{-1}(\frac{1}{x_0}) & = t.
\end{align}
We  assume that for any $\epsilon,t>0$, there exists some $C_t$ that only depends on $\psi$ and $t$ such that  
\begin{align}\label{eqn.proof_generalf_long_assumption}
    \psi^{-1}(t/\epsilon) \leq C_t \psi^{-1}(1/\epsilon).
\end{align}
Define
\begin{align}
   \rho_\delta(\eta) & =\frac{C_t+2}{1-\eta}\left(\sigma\eta {\psi}^{-1}(\frac{1}{\delta \eta})  + \frac{\xi_n}{\delta} \right). 
\end{align}
Then with probability at least $1-2\delta$, 
\begin{align}
    \hat p_n \in  \bigcap_{\eta\in[0, 1)}\GG_{W_\sF}(\rho_\delta(\eta), \eta ) = \{ p \mid \forall \eta \in [0, 1),  \sup_{f\in\sF, r\leq \frac{p}{1-\eta}} |\bE_p[f(X)] - \bE_r[f(X)]| \leq  \rho_\delta(\eta)\}.
\end{align}
\end{lemma}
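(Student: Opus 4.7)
The plan is to show that $\hat p_n$ satisfies the resilience inequality for every $\eta\in[0,1)$ by controlling, for each $f\in\sF$ and each deletion $r\le \hat p_n/(1-\eta)$, the quantity $|\bE_{\hat p_n}[f(X)]-\bE_r[f(X)]|$. The argument combines three ingredients: (a) a Markov-based high-probability bound $W_\sF(p,\hat p_n)\le \xi_n/\delta$; (b) truncation of each $f\in\sF$ at a carefully chosen level $t$; and (c) a linearization of $\psi$---the ``linearized moment'' device mentioned in the introduction---to transfer the tail estimate from $p$ to $\hat p_n$.

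First, Markov's inequality applied to $\bE_p[W_\sF(p,\hat p_n)]\le \xi_n$ yields $W_\sF(p,\hat p_n)\le \xi_n/\delta$ with probability at least $1-\delta$. Conditioning on this event, fix $\eta\in[0,1)$, assume $\bE_p[f(X)]=0$ by centering, and write $\hat p_n=(1-\eta_r)r+\eta_r s$ with $\eta_r\le \eta$. Clipping $f$ at level $t$ gives the decomposition $f=\phi_t(f)+(f-\phi_t(f))$ with $|\phi_t(f)|\le t$. Since the clipped piece is bounded by $t$ its contribution to the deletion error is at most $2\eta_r t\le 2\eta t$, and since $\bE_r[|f-\phi_t(f)|]\le \bE_{\hat p_n}[|f-\phi_t(f)|]/(1-\eta)$, I obtain
\begin{align*}
|\bE_{\hat p_n}[f]-\bE_r[f]|\le 2\eta t+\tfrac{2}{1-\eta}\bE_{\hat p_n}[(|f(X)|-t)_+].
\end{align*}
Taking $t=\sigma\psi^{-1}(1/(\delta\eta))$ makes the first term yield the $\sigma\eta\psi^{-1}(1/(\delta\eta))$ piece of $\rho_\delta(\eta)$.

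The remaining task is to show $\bE_{\hat p_n}[(|f(X)|-t)_+]=O(\xi_n/\delta)$ uniformly in $f\in\sF$. For this I linearize $\psi$ at the scale $\psi^{-1}(t/x_0)$, where $x_0$ is the solution of $\sigma x_0\psi^{-1}(1/x_0)=\xi_n$; the second equation in the lemma, $4\psi'(\psi^{-1}(t/x_0))x_0\psi^{-1}(1/x_0)=t$, is precisely the choice that balances the two resulting error contributions. The linearized function $\tilde\psi$ agrees with $\psi$ below the threshold and is globally Lipschitz with slope $\psi'(\psi^{-1}(t/x_0))$. Since $\tilde\psi\le \psi$ one has $\bE_p[\tilde\psi(|f(X)|/\sigma)]\le 1$, and the Lipschitz property of $\tilde\psi$ combined with the $W_\sF$ bound (and a second Markov event, which accounts for the $1-2\delta$ probability in the statement) transfers this to $\bE_{\hat p_n}[\tilde\psi(|f(X)|/\sigma)]=O(1)$. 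Using the pointwise inequality $(|y|-t)_+\le t\psi(|y|/\sigma)/\psi(t/\sigma)$, which follows from the monotonicity of $\psi(x)/x$ noted in Lemma~\ref{lem.psi_increasing}, then yields the desired $O(\xi_n/\delta)$ tail bound.

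Assumption~(\ref{eqn.proof_generalf_long_assumption}) is invoked at the end to replace $\psi^{-1}(1/(\delta\eta))$ by $C_{1/\delta}\psi^{-1}(1/\eta)$, producing the constant $C_t$ in the bound, and the $1/(1-\eta)$ factor comes from the tail comparison $\bE_r\le \bE_{\hat p_n}/(1-\eta)$. The main obstacle is the tail-transfer step: because $(|f|-t)_+$ and $\psi(|f|/\sigma)$ are nonlinear functionals of $f$ and generally not in $\sF$, the $W_\sF$ bound does not apply to them directly. The linearization device is what repairs this---once $\tilde\psi$ is Lipschitz, the relevant expectations can be compared through a Lipschitz-based majorization combined with a pointwise domination by a scaled copy of $f$, at the price of the balance conditions that simultaneously define $x_0$ and $t$.
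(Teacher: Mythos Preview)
Your high-level instinct---linearize $\psi$ so that its global Lipschitz constant can be exploited---is exactly the mechanism the paper uses. But the crucial ``tail-transfer'' step, where you pass from $\bE_p[\tilde\psi(|f|/\sigma)]\le 1$ to $\sup_{f\in\sF}\bE_{\hat p_n}[\tilde\psi(|f|/\sigma)]=O(1)$, is not achieved by ``Lipschitz property of $\tilde\psi$ combined with the $W_\sF$ bound.'' The function $\tilde\psi(|f|/\sigma)$ is not in $\sF$, so $W_\sF(p,\hat p_n)$ says nothing about it directly, and the pointwise Lipschitz inequality $\tilde\psi(x)\le Lx$ only gives $\bE_{\hat p_n}[\tilde\psi(|f|/\sigma)]\le (L/\sigma)\bE_{\hat p_n}[|f|]$, which is still uncontrolled and has the Lipschitz constant $L=\psi'(\psi^{-1}(t/x_0))$ on the outside with no cancelling factor. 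What the paper actually does is bound the quantity \emph{in expectation over the sample}, via symmetrization followed by Talagrand's contraction inequality: contraction peels off the Lipschitz function $\tilde\psi$ at the cost of its Lipschitz constant, reducing to the Rademacher complexity of $\sF$ itself, which after desymmetrization is exactly $\bE_p[W_\sF(p,\hat p_n)]\le\xi_n$. This yields $\bE_p\bigl[\sup_f \bE_{\hat p_n}[\tilde\psi(|f-\bE_p f|/\sigma)]\bigr]\le 4\psi'(\psi^{-1}(t/x_0))\,\xi_n/\sigma+1$, and Markov on this is the ``second event'' producing the $2\delta$. The balance equation $4\psi'(\psi^{-1}(t/x_0))x_0\psi^{-1}(1/x_0)=t$ is precisely what makes this right-hand side equal $t+1$, which then feeds cleanly into the rest of the argument.

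There is a second, structural divergence. The paper does not truncate $f$ at a level depending on $\eta$; instead it runs the Jensen-based resilience argument of Lemma~\ref{lem.cvx_mean_resilience} directly on $\hat p_n$ with $\tilde\psi$ in place of $\psi$: from the $O(1)$ bound on $\sup_f \bE_{\hat p_n}[\tilde\psi(\cdot)]$ one gets $\bP_{\hat p_n}(E^c)\,\tilde\psi\bigl(|\bE_{\hat p_n}[f-\bE_p f\mid E^c]|/\sigma\bigr)\le O(1/\delta)$ by Jensen, inverts, and uses the monotonicity of $x\mapsto x\tilde\psi^{-1}(c/x)$. Your truncation scheme conflates two different scales: the clipping threshold you set to $\sigma\psi^{-1}(1/(\delta\eta))$ (which varies with $\eta$), while the linearization point $\psi^{-1}(t/x_0)$ is the fixed solution of the second equation in the statement---these are not the same $t$. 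Relatedly, your pointwise inequality $(|y|-t)_+\le t\,\psi(|y|/\sigma)/\psi(t/\sigma)$ involves $\psi$, but on $\hat p_n$ you only control $\tilde\psi$, and $\tilde\psi\le\psi$ goes the wrong way for that substitution.
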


Here the first term $\sigma \eta\psi^{-1}(1/\sigma\eta)$ is close to the population limit in Lemma~\ref{lem.cvx_mean_resilience}, and the second term $\xi_n/\delta$ is similar to the finite sample error bound without corruption.

\begin{proof}
 We use the similar technique as Lemma~\ref{lem.cvx_mean_resilience} to show that $\hat p_n \in \GG'$ with high probability.
Note that $x_0>0$ is defined as the solution to the  following equation:
\begin{align}
    \sigma x\psi^{-1}(1/x) = \xi_n.
\end{align}
We then define a convex function $\tilde \psi$ for $t>0$ as
\begin{align}
    \tilde{\psi}(x) =  \left\{  
    \begin{array}{ll}  
    \psi(x), & 0\leq x \leq \psi^{-1}(\frac{t}{x_0}), \\  
     \psi'(\psi^{-1}(\frac{t}{x_0}))(x-\psi^{-1}(\frac{t}{x_0}))+\frac{t}{x_0}, & x > \psi^{-1}(\frac{t}{x_0}).
     \end{array}  \right.  
\end{align}
One can see that $\tilde{\psi}$ is convex, non-negative, non-decreasing and $\tilde{\psi}(|x|) \leq \psi(|x|)$.  Hence,
\begin{align}
    \tilde{\psi}^{-1}(\frac{1}{x}) =  \left\{ 
    \begin{array}{ll}  
   \frac{1}{x\psi'(\psi^{-1}(t/x_0))}-\frac{t}{x_0\psi'(\psi^{-1}(t/x_0))}+\psi^{-1}(\frac{t}{x_0})& 0 \leq x \leq \frac{x_0}{t}, \\ 
     \psi^{-1}(\frac{1}{x}), & x > \frac{x_0}{t}.
     \end{array}  \right.  
\end{align}
Note that from Lemma~\ref{lem.psi_increasing}, we know that $x\tilde{\psi}^{-1}_t(1/x)$ is non-decreasing, and 

\begin{align}\label{eqn.bound_tilde_psi}
    x\tilde{\psi}_t^{-1}(\frac{t}{x}) \leq \left\{ 
    \begin{array}{ll}  
   x_0\psi^{-1}(\frac{t}{x_0})& 0 \leq x \leq {x_0}, \\  
     x\psi^{-1}(\frac{t}{x}), & x > {x_0}.
     \end{array}  \right.  
\end{align}
Now we bound the term: 
\begin{align}
     \sup_{f\in\sF} \bE_{\hat p_n}\left[\tilde{\psi}\left(\frac{|f(X)-\bE_{p}[f(X)]|}{\sigma}\right)\right] =    \sup_{f\in\sF}\frac{1}{n}\sum_{i=1}^n \tilde{\psi}\left(\frac{|f(X_i)-\bE_{p}[f(X)]|}{\sigma}\right)
\end{align}
By $p \in \GG$,  $\sup_{f\in\sF} \bE_{p}\left[{\psi}\left(\frac{|f(X)-\bE_{p}[f(X)]|}{\sigma}\right)\right]  \leq 1$, we have
\begin{align}
     &\bE_{p} \left[ \sup_{f\in\sF} \frac{1}{n} \sum_{i=1}^n \tilde{\psi}\left(\frac{|f(X_i)-\bE_{p}[f(X)]|}{\sigma}\right) \right] \nonumber \\
     \leq &  \bE_{p} \left[ \sup_{f\in\sF} \frac{1}{n} \sum_{i=1}^n \tilde{\psi}\left(\frac{|f(X_i)-\bE_{p}[f(X)]|}{\sigma}\right) - \bE_{p} \Big[\tilde{\psi}\left(\frac{|f(X)-\bE_{p}[f(X)]|}{\sigma}\right)\Big]\right]  \\
     &   + \sup_{f\in\sF} \bE_{p} [\tilde{\psi}\left(\frac{|f(X)-\bE_{p}[f(X)]|}{\sigma}\right)]\ \label{eqn.bdd_kth_proof1} \\
     \leq & 2\bE_{p*, \epsilon\sim\{\pm 1\}^n} \left[\sup_{f\in\sF} \frac{1}{n} \sum_{i=1}^n \epsilon_i \tilde{\psi}\left(\frac{|f(X_i)-\bE_{p}[f(X)]|}{\sigma}\right)   \right] + 1 \label{eqn.bdd_kth_proof2} \\
     \leq &   2\psi'(\psi^{-1}(\frac{t}{x_0})) \bE_{p*, \epsilon \sim \{\pm 1\}^n}\left[ \sup_{f\in\sF} \frac{1}{n} \sum_{i=1}^n \epsilon_i  \left(\frac{f(X_i)-\bE_{p}[f(X)]}{\sigma}\right)  \right] + 1 \label{eqn.bdd_kth_proof3}\\
      \leq & 4\psi'(\psi^{-1}(\frac{t}{x_0}))   \bE_{p*}\left[ \sup_{f\in\sF} \frac{1}{n} \sum_{i=1}^n  \left(\frac{f(X_i)-\bE_{p}[f(X)]}{\sigma}\right) \right] + 1 \label{eqn.bdd_kth_proof4}\\
    = & 4\psi'(\psi^{-1}(\frac{t}{x_0}))\frac{\bE_{p}\left[W_\sF(\hat p_n, p) \right]}{\sigma} + 1 \label{eqn.bdd_kth_proof5} \\
     \leq & 4\psi'(\psi^{-1}(\frac{t}{x_0}))\frac{\xi_n}{\sigma}+1
\end{align}
Here Equation (\ref{eqn.bdd_kth_proof1}) is from triangle inequality of $\sup$. Equation (\ref{eqn.bdd_kth_proof2}) and Equation (\ref{eqn.bdd_kth_proof4}) are from symmetrization inequality~\citep[Proposition 4.11]{wainwright2019high}. Equation (\ref{eqn.bdd_kth_proof3}) is from Talagrand contraction inequality~\citep[Exercise 6.7.7]{vershynin2018high}.

Now we apply a similar argument in Lemma~\ref{lem.cvx_mean_resilience} to show that $\hat p_n$ is in the resilient set induced by $\tilde{\psi}$.

For any event $E$, denote its compliment as $E^c$,  by the definition of conditional expectation and symmetry of $\sF$, 
\begin{align}
     \sup_{f\in\sF} \bP_{\hat p_n}(E)(\bE_{\hat p_n}[f(X) | E] - \bE_{\hat p_n}[f(X) ]) & =  \sup_{f\in\sF} \bP_{\hat p_n}(E^c)( \bE_{\hat p_n}[f(X)-\bE_{\hat p_n}[f(X) | E^c]).
\end{align}
Thus we have
\begin{align}
 \sup_{f\in\sF} \bE_{\hat p_n}[f(X) | E] - \bE_{\hat p_n}[f(X) ]  & =  \sup_{f\in\sF} \left\{ \frac{\bP_{\hat p_n}(E^c)}{1-\bP_{\hat p_n}( E^c)}\bE_{\hat p_n}\Big[f(X)  - \bE_{\hat p_n}[f(X)]\mid E^c\Big] \right\} \nonumber \\
 & \leq \sup_{f\in\sF} \left\{ \frac{\bP_{\hat p_n}(E^c)}{1-\bP_{\hat p_n}( E^c)}\bE_{\hat p_n}\Big[f(X)  - \bE_{ p}[f(X)]\mid E^c\Big] \right\} \nonumber \\
 & \quad + \sup_{f\in\sF} \left\{  \frac{\bP_{\hat p_n}(E^c)}{1-\bP_{\hat p_n}( E^c)} | \bE_{\hat p_n}[f(X)] - \bE_{p}[f(X)] |\right\} \nonumber \\
 & = \sup_{f\in\sF} \left\{ \frac{\bP_{\hat p_n}(E^c)}{1-\bP_{\hat p_n}( E^c)}\bE_{\hat p_n}\Big[f(X)  - \bE_{ p}[f(X)]\mid E^c\Big]\right\} \nonumber \\
 & \quad +  \frac{\bP_{\hat p_n}(E^c)}{1-\bP_{\hat p_n}( E^c)} W_\sF(p, \hat p_n).
\end{align}
We then control the first term in RHS. From Equation (\ref{eqn.bdd_kth_proof4}), by Markov's inequality, we know that for any $\eta\in[0, 1)$, with probability at least $1-\delta$
\begin{align}
    \frac{4\psi'(\psi^{-1}(\frac{t}{x_0}))\frac{\xi_n}{\sigma}+1}{\delta} & \geq \sup_{ f\in\sF} \bE_{\hat p_n} \Big[\tilde{\psi}\left(\frac{|f(X)-\bE_{p}[f(X)]|}{\sigma}\right)\Big] \nonumber \\
    & \geq  \sup_{\bP_{\hat p_n}(E) \geq 1- \eta, f\in\sF} \bP_{\hat p_n}(E^c) \bE_{\hat p_n} \Big[\tilde{\psi}\left(\frac{|f(X)-\bE_{p}[f(X)]|}{\sigma}\right) \mid E^c \Big] \nonumber \\
    & \geq  \sup_{\bP_{\hat p_n}(E) \geq 1- \eta, f\in\sF} \bP_{\hat p_n}(E^c)\tilde{\psi} \Bigg(\Big|\bE_{\hat p_n}\Big[\frac{|f(X)-\bE_{p}[f(X)]|}{\sigma} \mid E^c\Big] \Big|\Bigg) \label{eqn.proof_empirical_psi_inverse}.
\end{align}
This gives us with probability at least $1-\delta$, 
\begin{align}
    &  \sup_{\bP_{\hat p_n}(E) \geq 1- \eta, f\in\sF} \bE_{\hat p_n}[f(X) | E] - \bE_{\hat p_n}[f(X) ] \nonumber \\
     \leq  & \sup_{\bP_{\hat p_n}(E) \geq 1- \eta} \frac{\sigma \bP_{\hat p_n}(E^c)}{1-\bP_{\hat p_n}( E^c)} \tilde{\psi}^{-1}(\frac{4\psi'(\psi^{-1}(\frac{t}{x_0}))\frac{\xi_n}{\sigma}+1}{  \delta \bP_{\hat p_n(E^c)}})+ \frac{\eta}{1-\eta}W_\sF(p, \hat p_n)\nonumber \\
     \leq & \frac{\sigma\eta}{1-\eta} \tilde{\psi}^{-1}(\frac{4\psi'(\psi^{-1}(\frac{t}{x_0}))\frac{\xi_n}{\sigma}+1}{ \delta \eta}) + \frac{\eta}{1-\eta}W_\sF(p, \hat p_n) \label{eqn.proof_general_psi1} \\
     = & \frac{\sigma\eta}{1-\eta} \tilde{\psi}^{-1}(\frac{4\psi'(\psi^{-1}(\frac{t}{x_0}))x_0\psi^{-1}(\frac{1}{x_0})}{  \delta  \eta}) + \frac{\sigma\eta}{1-\eta} \tilde{\psi}^{-1}(\frac{1}{\delta \eta}) + \frac{\eta}{1-\eta}W_\sF(p, \hat p_n).\label{eqn.proof_general_psi2}
\end{align}

Equation (\ref{eqn.proof_general_psi1}) 
uses the fact that $x\psi^{-1}(b/x)$ is a non-decreasing function in $[0, 1)$ for any $b>0$ in Lemma~\ref{lem.psi_increasing}. Equation (\ref{eqn.proof_general_psi2}) is from the concave and non-negative property of $\tilde{\psi}^{-1}$. By Markov's inequality, we know that with probability at least $1-\delta$, we have
$W_\sF(p, \hat p_n) \leq \frac{\xi_n}{\delta}$. By union bound we have for any $\eta\in[0, 1)$, with probability at least $1-2\delta$, 
\begin{align}
      \sup_{\bP_{\hat p_n}(E) \geq 1- \eta, f\in\sF} \bE_{\hat p_n}[f(X) | E] - \bE_{\hat p_n}[f(X) ]& \leq \frac{\sigma\eta}{1-\eta} \tilde{\psi}^{-1}(\frac{4\psi'(\psi^{-1}(\frac{t}{x_0}))x_0\psi^{-1}(\frac{1}{x_0})}{  \delta  \eta}) \nonumber \\ 
    & \quad + \frac{\sigma\eta}{1-\eta} \tilde{\psi}^{-1}(\frac{1}{\delta \eta}) + \frac{\eta}{1-\eta} \frac{\xi_n}{\delta}.
\end{align}
Note that $t$ is the solution to 
\begin{align}
    t = 4\psi'(\psi^{-1}(\frac{t}{x_0}))x_0\psi^{-1}(\frac{1}{x_0}).
\end{align}
Denote $\tilde \rho_\delta(\eta)$ as
\begin{align}
    \tilde \rho_\delta(\eta) =\frac{1}{1-\eta}({\sigma\eta}  \tilde{\psi}^{-1}(\frac{t}{\delta\eta}) + {\sigma\eta} \tilde{\psi}^{-1}(\frac{1}{\delta\eta})+ \frac{\eta\xi_n}{\delta}).
\end{align}
So far we have shown  that $\hat p_n \in \bigcap_{\eta\in[0, 1)}\GG_{W_\sF}(\tilde \rho_\delta(\eta), \eta)$
Now we show  that  $ \bigcap_{\eta\in[0, 1)}\GG_{W_\sF}(\tilde \rho_\delta(\eta), \eta) \subset \bigcap_{\eta\in[0, 1)}\GG_{W_\sF}(\rho_\delta(\eta), \eta )$, where
\begin{align}
   \rho_\delta(\eta) & =\frac{C_t+2}{1-\eta}\left(\sigma\eta {\psi}^{-1}(\frac{1}{\delta \eta})  + \frac{\xi_n}{\delta} \right). 
\end{align}
From Equation (\ref{eqn.bound_tilde_psi}),
\begin{align}
    \frac{\eta}{1-\eta}\tilde{\psi}^{-1}(\frac{t}{\delta \eta}) \leq \left\{ 
    \begin{array}{ll}  
   \frac{x_0}{(1-\eta)\delta}\psi^{-1}(\frac{t}{x_0})& 0 \leq \eta \leq \frac{x_0}{\delta }, \\  
     \frac{\eta}{1-\eta}\psi^{-1}(\frac{t}{\delta \eta}), & \eta > \frac{x_0}{\delta }.
     \end{array}  \right. 
\end{align}
From  Equation (\ref{eqn.proof_generalf_long_assumption}), we know that for any $\eta$,
\begin{align}
    \psi^{-1}(\frac{t}{\delta \eta}) & \leq C_t\psi^{-1}(\frac{1}{\delta\eta}).
\end{align}
These two equations combined show that
\begin{align}
    \tilde \rho_\delta(\eta) & \leq  \frac{(C_t+1)}{1-\eta}({\sigma\eta}  \tilde{\psi}^{-1}(\frac{1}{\delta\eta}) + \frac{\sigma x_0 \psi^{-1}(\frac{1}{x_0})}{\delta})+ \frac{\eta\xi_n}{(1-\eta)\delta} \nonumber \\
    & = \frac{(C_t+1)}{1-\eta}({\sigma\eta}  \tilde{\psi}^{-1}(\frac{1}{\delta\eta}) + \frac{\xi_n}{\delta})+ \frac{\eta\xi_n}{(1-\eta)\delta} \nonumber \\
    & \leq \frac{(C_t+2)}{1-\eta}({\sigma\eta}  \tilde{\psi}^{-1}(\frac{1}{\delta\eta}) + \frac{\xi_n}{(1-\eta)\delta}) \nonumber \\
    & = \rho_\delta(\eta).
\end{align}
\end{proof}

As a corollary, we can easily show that the empirical distribution for bounded $k$-th moment distribution is inside resilience family:
\begin{corollary}\label{cor.bounded_kth_empirical_resilience}
Suppose for $k\geq 2$, 
\begin{align}
    \GG & =  \{ p \mid \sup_{v\in\bR^d, \| v\|_2 = 1} \bE_p\left[|\langle X-\bE_p[X], v\rangle|^k\right]  \leq  \sigma^k \}. 
\end{align}
Define
\begin{align}
   \rho_\delta(\eta) & =\frac{Ck\sigma}{1-\eta}\left(\frac{\eta^{1-1/k}}{\delta^{1/k}}  + \frac{1}{\delta} \sqrt{\frac{d}{n}}\right). 
\end{align}
Then with probability at least $1-2\delta$, the empirical distribution $\hat p_n$ of $n$ \iid samples from $p$ satisfies
\begin{align}
    \hat p_n \in \bigcap_{\eta\in[0, 1)}\GG_{\mathsf{mean}}(\rho_\delta(\eta), \eta ) = \{ p \mid \forall \eta \in [0, 1),  \sup_{r\leq \frac{p}{1-\eta}} \|\bE_p[X] - \bE_r[X]\|_2 \leq  \rho_\delta(\eta)\}.
\end{align}
\end{corollary}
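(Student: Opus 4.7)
The plan is to derive this as a direct specialization of Lemma~\ref{lem.empirical_psi_resilient} with the Orlicz function $\psi(x) = x^k$ and the symmetric family $\sF = \{x \mapsto v^\top x : \|v\|_2 = 1\}$. Under these choices, $W_\sF(p,q) = \|\mu_p - \mu_q\|_2$, so membership in $\GG_{W_\sF}(\rho,\eta)$ is exactly membership in $\GG_{\mathsf{mean}}(\rho,\eta)$, and the assumption $\sup_{\|v\|_2=1}\bE_p[|\langle X-\mu_p,v\rangle|^k] \leq \sigma^k$ is precisely the Orlicz bound required by the lemma.

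First, I would verify the statistical convergence hypothesis $\bE_p[W_\sF(p,\hat p_n)] \leq \xi_n$. Since bounded $k$-th moment with $k\geq 2$ implies bounded second moment via Jensen's inequality (with operator norm of the covariance at most $\sigma^2$), Lemma~\ref{lem.kth_convergence} yields $\bE_p[\|\mu_{\hat p_n}-\mu_p\|_2] \leq \sigma\sqrt{d/n}$, so I can take $\xi_n = \sigma\sqrt{d/n}$. Next, I would solve the two defining equations of Lemma~\ref{lem.empirical_psi_resilient}. With $\psi^{-1}(y) = y^{1/k}$, the equation $\sigma x_0 \psi^{-1}(1/x_0) = \xi_n$ becomes $\sigma x_0^{1-1/k} = \xi_n$, giving $x_0 = (\xi_n/\sigma)^{k/(k-1)}$. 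Using $\psi'(y) = ky^{k-1}$, the second equation $4\psi'(\psi^{-1}(t/x_0)) x_0 \psi^{-1}(1/x_0) = t$ simplifies cleanly: all $x_0$-dependence cancels and one obtains $t = (4k)^k$, a constant depending only on $k$.

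The homogeneity condition \eqref{eqn.proof_generalf_long_assumption} is trivial here, since $\psi^{-1}(t/\epsilon) = t^{1/k}\psi^{-1}(1/\epsilon)$, so we may take $C_t = t^{1/k} = 4k$. Plugging into the lemma's conclusion,
\begin{align*}
\rho_\delta(\eta) &= \frac{C_t+2}{1-\eta}\left(\sigma\eta\,\psi^{-1}\!\left(\tfrac{1}{\delta\eta}\right) + \tfrac{\xi_n}{\delta}\right) = \frac{4k+2}{1-\eta}\left(\frac{\sigma\eta^{1-1/k}}{\delta^{1/k}} + \frac{\sigma}{\delta}\sqrt{\frac{d}{n}}\right),
\end{align*}
which matches the target bound up to the absolute constant $C$ in the statement.

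The proof is essentially a bookkeeping exercise; the hard work was already done in establishing Lemma~\ref{lem.empirical_psi_resilient}. The only mildly subtle point is ensuring the two algebraic equations for $x_0$ and $t$ actually have solutions for $\psi(x)=x^k$, and checking that $t$ turns out to be independent of $n$, $\delta$, $\sigma$, and $d$ so that the constant $C_t = 4k$ gives the announced $Ck$ dependence rather than something that blows up with the sample size.
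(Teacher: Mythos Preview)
Your proposal is correct and follows essentially the same route as the paper: specialize Lemma~\ref{lem.empirical_psi_resilient} with $\psi(x)=x^k$ and $\sF=\{v^\top x:\|v\|_2=1\}$, invoke Lemma~\ref{lem.kth_convergence} to get $\xi_n=\sigma\sqrt{d/n}$, solve the two algebraic equations, and verify the homogeneity condition. Your arithmetic on $t=(4k)^k$ is in fact cleaner than the paper's (which writes $t=k^k$, dropping the factor of $4$), though the discrepancy is harmless since both are absorbed into the universal constant $C$.
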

\begin{proof}

 We first check the conditions required in Lemma~\ref{lem.empirical_psi_resilient}. Here $\psi(x) = x^k$ for $k\geq 2$. From Lemma~\ref{lem.kth_convergence} we have
\begin{align}
    \bE_{p^*}[\|\bE_{p^*}[X] -\bE_{\hat p_n^*}[X]\|_2] & \leq \sigma\sqrt{\frac{d}{n}}.
\end{align}
One can solve $x_0 = (\frac{d}{n})^{k/2(k-1)}, t = k^k$ from the following equations:
\begin{align}
    \sigma x_0\psi^{-1}(1/x_0) & = \xi_n, \\
    4\psi'(\psi^{-1}(\frac{t}{x_0}))x_0\psi^{-1}(\frac{1}{x_0}) & = t.
\end{align}
Then for any $\eta \in [0, 1)$, we have
\begin{align}
    \psi^{-1}(\frac{t}{\eta}) \leq k\psi^{-1}(\frac{1}{\eta})
\end{align}
for some universal constant $C$. 
Then when $\delta > x_0$, with probability at least $1-2\delta$,
\begin{align}
    \hat p_n^* \in \GG‘.
\end{align}
Then by Lemma~\ref{lem.empirical_psi_resilient}, we know that  with probability at least $1-2\delta$, for any $r\leq \frac{\hat p_n^*}{1-\eta}$, 
\begin{align}
    \| \mu_r - \mu_{\hat p_n^*}\|_2 
    & \leq \frac{Ck\sigma}{1-\eta}\left(\frac{\eta^{1-1/k}}{\delta^{1/k}} + {\frac{1}{\delta}\sqrt{\frac{d}{n}}}\right)\label{eqn.bounded_kth_empirical_resilience}.
\end{align}
\end{proof}

\subsubsection{Empirical distribution from distributions with moment generating function is resilient with better rate for some $\eta$}

The next lemma shows that the empirical distribution of \iid samples from distributions with moment generating functions is resilient with a \emph{fixed} $\eta$, whose dependence of the parameters on $\delta$ is better than that in Lemma~\ref{lem.empirical_psi_resilient}. It is a generalization of~\citep[Lemma  4.4]{diakonikolas2019robust}.

\begin{lemma}\label{lem.subgaussian_empirical_resilience}
Let $\psi$ be some convex and continuously differentiable function on $[0,b)$ with $0<b\leq \infty$, such that $\psi(0) = \psi'(0) = 0$. Assume that for $\lambda \in (0,b)$,
\begin{align}
    \sup_{v\in \mathbf{R}^d, \|v\|_2 = 1} \ln(\bE_p[\exp(\lambda (v^\top X - \bE_p[v^\top X]))]) \leq \psi(\lambda). 
\end{align}
Denote by $\psi^*(x)$ the Fenchel--Legendre dual of $\psi$:
\begin{align}
    \psi^*(x) = \sup_{\lambda \in (0,b)} (\lambda x - \psi(\lambda)).
\end{align}

Fix $\eta \in [0,1)$. Then, there exists an absolute constant $C>0$ such that with probability at least $1-\delta$, 
\begin{align}
    \hat p_n^* \in \{ p \mid \sup_{r\leq \frac{p}{1-\eta}} \|\bE_p[X] - \bE_r[X]\| \leq  \rho\},
\end{align}
where 
\begin{align}
    \rho = \max\left\{\frac{4\eta}{1-\eta} \psi^{*-1} \left( \frac{Cd + \ln(2/\delta) + n h(\eta)}{n\eta}  \right) , \frac{4\eta}{1-\eta} \psi^{*-1} \left(\frac{Cd + \ln(2/\delta)}{n} \right) \right\},
\end{align}
$h(p) = p\ln(1/p) + (1-p)\ln(1/(1-p))$ is the binary entropy function, and $\psi^{*-1}$ is the generalized inverse of $\psi^{*}$. In particular, if $\psi(\lambda) = \frac{\lambda^2 \sigma^2}{2}$, and $\eta\in [0,1/2]$, then one can take
\begin{align}
    \rho = C \sigma \cdot \left( \sqrt{\eta} \sqrt{\frac{d + \ln(1/\delta)}{n}} +  \eta \sqrt{\ln(1/\eta)} \right),
\end{align}
where $C$ is some universal constant. 
\end{lemma}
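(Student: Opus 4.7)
\smallskip

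\textbf{Proof proposal for Lemma~\ref{lem.subgaussian_empirical_resilience}.} The plan is to reduce the resilience condition on $\hat p_n$ to a uniform deviation inequality over subsets and directions, then apply a Chernoff bound direction-by-direction and take the two union bounds in sequence.

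First, I would rewrite the quantity to control. For an empirical distribution on $n$ points, any $r\le \hat p_n/(1-\eta)$ is (up to tie-breaking) obtained by keeping a subset $T\subset [n]$ with $|T|=(1-\eta)n$, so that with $S=[n]\setminus T$ and $Y_i=v^\top(X_i-\bE_p[X])$,
\begin{align*}
v^\top\bigl(\bE_{\hat p_n}[X]-\bE_r[X]\bigr)
= \frac{1}{(1-\eta)n}\sum_{i\in S} Y_i\;-\;\frac{\eta}{(1-\eta)n}\sum_{i=1}^n Y_i.
\end{align*}
Thus it suffices to bound $\sup_v\sup_{|S|=\eta n}\frac{1}{n}\sum_{i\in S}Y_i$ and $\sup_v\bigl|\frac{1}{n}\sum_iY_i\bigr|$ and then multiply by $1/(1-\eta)$.

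Next, for fixed unit $v$ the variables $Y_i$ are i.i.d., mean zero, with log-MGF bounded by $\psi(\lambda)$. A standard Chernoff argument gives, for any fixed $S$ of size $\eta n$,
\begin{align*}
\bP\!\Bigl(\tfrac{1}{\eta n}\textstyle\sum_{i\in S}Y_i\ge u\Bigr)\le \exp(-\eta n\,\psi^{*}(u)),
\qquad
\bP\!\Bigl(\bigl|\tfrac{1}{n}\textstyle\sum_{i=1}^nY_i\bigr|\ge u\Bigr)\le 2\exp(-n\,\psi^{*}(u)).
\end{align*}
I would then union bound over $S$ (at most $\binom{n}{\eta n}\le e^{nh(\eta)}$ choices) and over a $1/4$-net $\mathcal N$ of the unit sphere (with $|\mathcal N|\le 9^d$, converting $\sup_v$ to $\max_{v\in\mathcal N}$ at the cost of a factor $2$). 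Solving the two tail bounds for $u$ gives, with probability at least $1-\delta$,
\begin{align*}
\sup_v\sup_{|S|=\eta n}\tfrac{1}{\eta n}\textstyle\sum_{i\in S}Y_i
&\;\le\; 2\,\psi^{*-1}\!\Bigl(\tfrac{Cd+nh(\eta)+\ln(2/\delta)}{\eta n}\Bigr),\\
\sup_v\bigl|\tfrac{1}{n}\textstyle\sum_iY_i\bigr|
&\;\le\; 2\,\psi^{*-1}\!\Bigl(\tfrac{Cd+\ln(2/\delta)}{n}\Bigr),
\end{align*}
where the constant $C$ absorbs $\ln 9$ and the $1/4$-net rescaling. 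Multiplying by $\eta/(1-\eta)$ and taking the maximum of the two terms recovers the claimed form of $\rho$.

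Finally, for the special case $\psi(\lambda)=\lambda^2\sigma^2/2$ I would plug in $\psi^{*-1}(y)=\sigma\sqrt{2y}$, use $1/(1-\eta)\le 2$ for $\eta\le 1/2$, and simplify $\sqrt{(Cd+nh(\eta)+\ln(1/\delta))/(n\eta)}$ by splitting the numerator into the two pieces $Cd+\ln(1/\delta)$ and $nh(\eta)\lesssim n\eta\ln(1/\eta)$; this yields $\sqrt{\eta}\sqrt{(d+\ln(1/\delta))/n}$ and $\sqrt{\ln(1/\eta)}$ respectively, matching the stated bound (the second term $\eta\,\psi^{*-1}((Cd+\ln(1/\delta))/n)$ is dominated by the first since $\eta\le\sqrt{\eta}$).

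The main obstacle I anticipate is bookkeeping for the direction-net argument: the sup over the unit sphere cannot be exchanged for a max over $\mathcal N$ for free on the deletion quantity, because the worst $S$ may depend on $v$. The cleanest fix is to apply the net step to the \emph{bilinear} quantity $\sum_{i\in S}v^\top(X_i-\bE_p[X])$ jointly in $(v,S)$: for each fixed $v\in\mathcal N$ and fixed $S$ invoke Chernoff, union bound over $|\mathcal N|\cdot\binom{n}{\eta n}$ pairs, and only then pass to the supremum in $v$ at the cost of the net-approximation factor. Everything else is routine.
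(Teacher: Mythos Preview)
Your proposal is correct and follows essentially the same route as the paper: rewrite the deletion quantity as $\frac{\eta}{1-\eta}(\hat\mu_n-\frac{1}{\eta n}\sum_{i\in S}X_i)$, apply Chernoff in each direction, union bound over a sphere net and over the $\binom{n}{\eta n}\le e^{nh(\eta)}$ subsets, and specialize at the end. The one difference is that your anticipated obstacle is a non-issue in the paper's ordering: for each \emph{fixed} subset $S$ the quantity $\|\frac{1}{|S|}\sum_{i\in S}X_i-\bE_p[X]\|$ is the norm of a single vector, so the net bound $\sup_{v\in\mathcal N}v^\top x\ge\tfrac12\|x\|_2$ applies directly, and you can then union bound over $S$ afterward---no joint $(v,S)$ bookkeeping is required.
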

\begin{proof}
Throughout this proof the constant $C$ may be different from line by line, but is always an absolute constant. It follows from the Chernoff method~\citep[Page 24]{boucheron2013concentration} that for any $v\in \mathbf{R}^d, \|v\|_2 = 1$, we have for any $t\geq 0$, 
\begin{align}
    \bP\left( \frac{1}{n} \sum_{i = 1}^n v^\top X_i - v^\top \mathbb{E}_p[X] \geq t \right) \leq \exp(-n \psi^*(t)). 
\end{align}
It follows from~\citep[Corollary 4.2.13]{vershynin2018high} that we can take a net of vectors $\mathcal{N} \subset \{v\mid v\in \mathbf{R}^d, \|v\|_2 = 1\}$ such that $|\mathcal{N}|\leq C^d$ and that for any $x$, $\sup_{v\in \mathcal{N}} v^\top x \geq \frac{1}{2} \|x\|_2$. Then it follows from the union bound that 
\begin{align}
   &    \bP\left( \sup_{v\in \mathbf{R}^d, \|v\|_2 = 1} \frac{1}{n} \sum_{i = 1}^nv^\top X_i - v^\top \mathbb{E}_p[X] \geq t \right) \nonumber \\ 
    & \quad \leq \bP\left( \sup_{v\in \mathcal{N}} \frac{1}{n} \sum_{i = 1}^nv^\top X_i - v^\top \mathbb{E}_p[X] \geq \frac{t}{2} \right) \\
   & \quad \leq \sum_{v\in \mathcal{N}}  \bP\left(  \frac{1}{n} \sum_{i = 1}^nv^\top X_i - v^\top \mathbb{E}_p[X] \geq \frac{t}{2} \right) \\
   & \quad \leq |\mathcal{N}| \exp(-n \psi^*(t/2)) \\
   & \quad \leq \exp(C d - n \psi^*(t/2)). 
\end{align}

Denote $\hat{\mu}_n = \frac{1}{n}\sum_{i = 1}^n X_i$. From now on we assume $\eta n$ is an integer. Our goal is to find the parameter $\rho$ such that
\begin{align}
    \bP\left( \sup_{J\subset [n], |J|\geq (1-\eta)n} \left \| \frac{1}{|J|}\sum_{i\in J}X_i - \hat{\mu}_n \right \| \geq \rho \right ) \leq \delta. 
\end{align}
It follows from a replacement argument that it suffices to consider only those $J$ such that $|J| = (1-\eta) n$. Noting that for any $J\subset [n], |J| = (1-\eta)n$, 
\begin{align}
\frac{1}{|J|} \sum_{i\in J} X_i - \hat{\mu}_n = \frac{\eta}{1-\eta} (\hat{\mu}_n - \frac{1}{\eta n} \sum_{i\notin J}X_i). 
\end{align}
Hence, 
\begin{align*}
     & \bP\left( \sup_{J\subset [n], |J|\geq (1-\eta)n} \left \| \frac{1}{|J|}\sum_{i\in J}X_i - \hat{\mu}_n \right \| \geq \rho \right ) \nonumber \\
     & \quad = \bP\left( \sup_{J\subset [n], |J|= (1-\eta)n} \left \| \frac{1}{|J|}\sum_{i\in J}X_i - \hat{\mu}_n \right \| \geq \rho \right )  \\
     & \quad =\bP\left( \sup_{J\subset [n], |J|= (1-\eta)n} \left \|\frac{\eta}{1-\eta} (\hat{\mu}_n - \frac{1}{\eta n} \sum_{i\notin J}X_i)\right \| \geq \rho \right )  \\
     & \quad \leq \bP( \frac{\eta}{1-\eta}\| \hat{\mu}_n - \bE_p[X]\|\geq \rho/2) + \bP\left( \sup_{J\subset [n], |J|= (1-\eta)n} \frac{\eta}{1-\eta} \left \| \frac{1}{\eta n} \sum_{i\notin J}X_i - \mathbb{E}_p[X]\right \| \geq \rho/2 \right )  \\
     & \quad \leq  \bP\left ( \| \hat{\mu}_n - \bE_p[X]\|\geq \frac{\rho(1-\eta)}{2\eta} \right ) + 
     {n \choose (1-\eta)n} \bP\left(  \left \| \frac{1}{\eta n} \sum_{i\notin J}X_i - \mathbb{E}_p[X]\right \| \geq \frac{\rho(1-\eta)}{2\eta} \right )  \\
     & \quad \leq \exp(Cd - n \psi^{*}(\rho(1-\eta)/(4\eta))) + \exp(n h(\eta)+ Cd - \eta n \psi^*(\rho(1-\eta)/(4\eta))),
\end{align*}
where in the last step we used the inequality ${n \choose k} \leq \exp(n h(k/n))$, where $h(p) = p\ln(1/p) + (1-p)\ln(1/(1-p))$ is the binary entropy function. It now suffices to guarantee that
\begin{align}
    n \psi^{*}(\rho(1-\eta)/(4\eta)) - Cd & \geq \ln(2/\delta) \\
    \eta n \psi^*(\rho(1-\eta)/(4\eta)) - n h(\eta)- Cd & \geq \ln(2/\delta).  
\end{align}
It we choose $\rho$ such that
\begin{align}
    \rho \geq \frac{4\eta}{1-\eta} \psi^{*-1} \left( \frac{Cd + \ln(2/\delta) + n h(\eta)}{n\eta} \vee \frac{Cd + \ln(2/\delta)}{n} \right),
\end{align}
the two bounds above would be satisfied. 
\end{proof}

\subsubsection{Empirical distribution from distributions with bounded $k$-th moment and bounded support has bounded covariance and is resilient for all $\eta$}

The lemma below shows that if we delete the distribution with bounded $k$-th moment, we are able to guarantee good properties of the deleted distribution that is required in the modulus of continuity in Lemma~\ref{lem.empirical_identity_cov_modulus}, which holds for all $\eta\in[0, 1/k]$.
\begin{lemma}[Properties of deleted distribution of bounded $k$-th moment]\label{lem.kth_deletion_set}
Assume the distribution $p^*$ has identity covariance and has its $k$-th moment bounded by $\sigma$ for $k\geq 2$, i.e.
\begin{align}
    \sup_{v\in\bR^d, \|v\|_2=1}\bE_{p^*}[|v^\top (X-\bE_{p^*}[X])|^k]\leq \sigma^k, \bE_{p^*} [(X-\bE_{p^*}[X])(X-\bE_{p^*}[X])^\top] = I_d.
\end{align}
It implies $\sigma\geq 1$. For any fixed $\eta \in [0, 1/2]$, we define a new distribution $p'$ satisfying that for any event $A$, 
\begin{align}
    \bP_{p'}(X\in A) = \bP_p(X\in A \mid \|X - \mu_{p^*} \| \leq \sigma \sqrt{d}/ \eta^{1/k}). 
\end{align}
Denote the empirical distribution of $n$ \iid\ samples from $p'$ as $\hat p'$.
Denote $\Delta_1 = \sqrt{\frac{\sigma^2d(\log(d/\delta))}{n\eta^{2/k}}}$, $\Delta_2 = \sqrt{\frac{\sigma^2d\log(d)}{n\eta^{2/k}}}$, and 
\begin{align}  
     \mathcal{G}_1(\eta) & =  \Bigg\{p \mid \forall r \leq \frac{p}{1-\eta}, \| \mathbb{E}_r[X] - \bE_p[X]\|_2 \leq \frac{C_1k\sigma}{1-\eta}\left(\frac{\eta^{1-1/k}}{\delta^{1/k}} + {\frac{1}{\delta}\sqrt{\frac{d}{n}}}\right) \Bigg \}, \nonumber \\
       \mathcal{G}_2(\eta)  & = \Bigg\{p \mid \|\bE_p[(X-\mu_p)(X-\mu_p)^\top] \|_2 \leq \frac{1}{1-\eta}+ C_3\max(\Delta_1, \Delta_1^2)
     \Bigg \}, \\
      \mathcal{G}_3(\eta) & = \Bigg\{p \mid  \forall r \leq \frac{p}{1-\eta},   \| \bE_r[(X-\bE_{ p}[X])(X-\bE_{p}[X])^T] - I_d\|_2 
    \leq  \frac{C_2k}{1-\eta}\bigg(\frac{k\sigma^2\eta^{1-2/k}}{\delta^{2/k}}  \nonumber \\ 
    & \quad +\frac{\max(\Delta_2, \Delta_2^2)}{\delta} + \frac{k\sigma^2d}{n\delta^2}\bigg)\Bigg \}.\nonumber
\end{align}
where $C_1, C_2, C_3$ are some universal constants. Then the following holds:
\begin{enumerate}
\item $\TV(p^*, p')\leq \eta$,
    \item With probability at least $1-2\delta$, $\hat p' \in \GG_1(\eta) $,
    \item With probability at least $1-\delta$, $\hat p' \in \GG_2(\eta) $,
    \item If $k > 2$, with probability at least $1-6\delta$, $\hat p' \in \GG_3(\eta)$.
\end{enumerate}
\end{lemma}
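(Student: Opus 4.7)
The plan is to prove each of the four claims separately, leveraging the bounded support of $p'$ (with radius $R := \sigma\sqrt{d}/\eta^{1/k}$) to invoke standard concentration inequalities and the previously established resilience lemmas.

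\textbf{Claim 1 ($\TV(p^*, p') \le \eta$).} By construction, $p'$ is the conditional law of $p^*$ given the event $E = \{\|X-\mu_{p^*}\| \le R\}$, so $\TV(p^*,p') \le \bP_{p^*}(E^c)$. Applying Lemma~\ref{lem.kth_population_deletion_R} (centering first so that the $k$th moment bound is inherited up to constants) with $t = R = \sigma\sqrt{d}/\eta^{1/k}$ gives $\bP_{p^*}(E^c) \le (\sigma\sqrt{d}/R)^k = \eta$, as desired.

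\textbf{Claim 2 (resilience of the empirical mean, $\hat p' \in \GG_1(\eta)$).} Since $p'$ is a conditioning of $p^*$ on an event of probability $\ge 1-\eta \ge 1/2$, it has its $k$th central moment bounded by $2\sigma^k$ in every direction (up to constants). Therefore Corollary~\ref{cor.bounded_kth_empirical_resilience} applies directly to $p'$: the $n$-sample empirical $\hat p'$ lies in the resilience family $\bigcap_\eta \GG_{\mathsf{mean}}(\rho_\delta(\eta),\eta)$ with probability $\ge 1-2\delta$, where $\rho_\delta(\eta)$ matches the bound defining $\GG_1(\eta)$.

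\textbf{Claim 3 (bounded empirical covariance, $\hat p' \in \GG_2(\eta)$).} Here the bounded support of $p'$ (within radius $R$) is essential. First, conditioning raises the covariance by at most a factor $1/(1-\eta)$, so $\|\Sigma_{p'}\|_2 \le 1/(1-\eta)$. Then, applying the matrix Bernstein inequality of Lemma~\ref{lem.matrix_bernstein} to the centered samples (whose norms are bounded by $2R$) controls $\|\Sigma_{\hat p'} - \Sigma_{p'}\|_2$ by $C_3\max(\Delta_1,\Delta_1^2)$ with probability $\ge 1-\delta$, yielding Claim~3 after a triangle inequality. Some care is needed to pass from $\bE_{\hat p'}[X X^\top]$ to the centered version $\bE_{\hat p'}[(X-\mu_{\hat p'})(X-\mu_{\hat p'})^\top]$, but this is a routine perturbation using Claim~2.

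\textbf{Claim 4 (resilient empirical covariance, $\hat p' \in \GG_3(\eta)$), the main obstacle.} This is the most delicate step, and the reason $k>2$ is required. We want to control $\sup_{r\le \hat p'/(1-\eta)}\|\bE_r[(X-\mu_{\hat p'})(X-\mu_{\hat p'})^\top] - I_d\|_2$. The idea is to apply the machinery of Lemma~\ref{lem.empirical_psi_resilient} with the family $\sF = \{f_v(x) = (v^\top(x-\mu_{p^*}))^2 : \|v\|_2 = 1\}$ (so $W_\sF$ recovers the operator norm on centered second moments). Since $p^*$ has $k$th central moments bounded by $\sigma^k$, the induced distribution on $f_v(X) = (v^\top(X-\mu_{p^*}))^2$ has $(k/2)$th moment bounded by $\sigma^k$ in the Orlicz sense $\psi(x)=x^{k/2}$; this requires $k>2$ so that $\psi$ is convex. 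The statistical rate $\xi_n = \bE[W_\sF(p',\hat p')]$ is controlled by matrix Bernstein (Lemma~\ref{lem.matrix_bernstein}) again at scale $R^2 = \sigma^2 d/\eta^{2/k}$, producing the $\max(\Delta_2,\Delta_2^2)$ term. Plugging into Lemma~\ref{lem.empirical_psi_resilient} with the Orlicz function $\psi(x) = x^{k/2}$ yields a resilience bound for the empirical second moments of the form $\rho_\delta(\eta) \asymp \eta^{1-2/k}\sigma^2/\delta^{2/k} + \xi_n/\delta$. Finally, to pass from $\bE_r[(X-\mu_{p^*})(X-\mu_{p^*})^\top]$ to $\bE_r[(X-\mu_{\hat p'})(X-\mu_{\hat p'})^\top]$ requires a mean-shift adjustment using Claim~2 to control $\|\mu_{\hat p'} - \mu_{p^*}\|^2$, contributing the $k\sigma^2 d/(n\delta^2)$ term. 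A union bound over the four high-probability events (Claims~2, the matrix Bernstein event for $\xi_n$, and the two events in the proof of Lemma~\ref{lem.empirical_psi_resilient}) gives the $1-6\delta$ probability.
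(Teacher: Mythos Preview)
Your proposal follows essentially the same architecture as the paper's proof: Claim~1 via the tail bound (Lemma~\ref{lem.kth_population_deletion_R}), Claim~2 via the empirical resilience corollary (Corollary~\ref{cor.bounded_kth_empirical_resilience}) after noting that $p'$ inherits the $k$th moment bound, Claim~3 via matrix Bernstein (Lemma~\ref{lem.matrix_bernstein}) on the bounded-support samples, and Claim~4 via the linearized-moment machinery of Lemma~\ref{lem.empirical_psi_resilient} applied to the family $\{(v^\top(X-\mu_{p^*}))^2\}$ with $\psi(x)=x^{k/2}$, followed by a mean-shift correction. That is exactly what the paper does.

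Two small points deserve correction. First, your stated reason for the restriction $k>2$ --- ``so that $\psi$ is convex'' --- is not quite right: $\psi(x)=x^{k/2}$ is convex on $[0,\infty)$ for every $k\ge 2$, including $k=2$. The genuine obstruction is that Lemma~\ref{lem.empirical_psi_resilient} asks for a solution $x_0$ to $\sigma x_0\psi^{-1}(1/x_0)=\xi_n$; when $\psi$ is linear (the $k=2$ case here) the left-hand side is the constant $\sigma$ independent of $x_0$, so the linearization step degenerates. Equivalently, the resulting bound $\eta^{1-2/k}$ ceases to decay. Second, your $1-6\delta$ accounting is slightly off: the mean-shift correction in Claim~4 uses not only the resilience from Claim~2 but also the closeness $\|\mu_{p'}-\mu_{p^*}\|_2\lesssim\sigma\eta^{1-1/k}$ (from population resilience) and $\|\mu_{p'}-\mu_{\hat p'}\|_2$ (from Lemma~\ref{lem.bounded_support_sub_gaussian_tail}); the paper bundles these as a separate $1-3\delta$ event, and the remaining $3\delta$ comes from the matrix-Bernstein event for $\|\bE_{\hat p'}[(X-\mu_{p^*})(X-\mu_{p^*})^\top]-I_d\|_2$ and the two events inside Lemma~\ref{lem.empirical_psi_resilient}. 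Neither of these affects the substance of your argument.
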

\begin{proof}

We show the four conclusions separately. 
\begin{enumerate}
\item 
From Lemma~\ref{lem.kth_population_deletion_R}, we know that
\begin{align}
    \bP_{p^*}(\|X - \bE_{p^*}[X]\|_2 \geq t ) \leq \frac{\sigma^k d^{k/2}}{t^k}.
\end{align}
By taking $t = \sigma \sqrt{d}/ \eta^{1/k}$, we have
\begin{align}
    \bP_{p^*}(\|X - \bE_{p^*}[X]\|_2 \geq \sigma \sqrt{d}/ \eta^{1/k} ) \leq \eta.
\end{align}

Thus we know $\TV(p^*, p')\leq \eta$.

\item 
From Lemma~\ref{lem.Wf_orlicz_closed}, we know that Orlicz-norm bounded function is approximately closed under deletion. From now we condition on the event that $\TV(p', p^*)\leq \eta$. Since $p'$ is a deletion of $p^*$, by Lemma~\ref{lem.Wf_orlicz_closed} we have for any $\eta\in[0, 1/2)$
\begin{align}
    \sup_{v\in\bR^d, \|v\|_2 = 1} \bE_{p'}\left[ \left(\frac{|v^\top (X-\mu_{p^*})|}{5\sigma}\right)^{k}\right]\leq \frac{1}{1-\eta}.
\end{align}
From Lemma~\ref{lem.centering_psi} (Centering) and Corollary~\ref{cor.bounded_kth_empirical_resilience}, we know that with probability at least $1-2\delta$, for any $\eta\in[0, 1/2)$ and any $r\leq \frac{\hat p'}{1-\eta}$, 
\begin{align}\label{eqn.proof_kth_mean_closeness}
    \| \mu_r - \mu_{\hat p'}\|_2 
    & \leq \frac{C_1k\sigma}{1-\eta}\left(\frac{\eta^{1-1/k}}{\delta^{1/k}} + {\frac{1}{\delta}\sqrt{\frac{d}{n}}}\right),
\end{align}
where $ C_1$ is some universal constant. This shows that $\hat p' \in \GG_1(\eta)$ with probability at least $1-2\delta$.

\item 
Since $p^*$ is inside $\GG_{\mathsf{mean}}$ and $p'$ is a $\eta$ deletion, by definition of resiliense~\ref{eqn.oldresilience} and Lemma~\ref{lem.cvx_mean_resilience} we have
\begin{align}\label{eqn.proof_kth_mean_close_deletion}
    \|\mu_{p'} - \mu_{p^*}\|_2\leq 2 \sigma \eta^{1-1/k}.
\end{align}
Furthermore, by $p'$ having bounded $k$-th moment and bounded support, from Lemma~\ref{lem.bounded_support_sub_gaussian_tail}, we have with probability at least $1-\delta$,
\begin{align}\label{eqn.proof_kth_mean_empirical_close}
     \|\mu_{p'} - \mu_{\hat p'}\|_2\leq C\left(\sigma\sqrt{\frac{d}{n}} + \sigma \sqrt{\frac{\log(1/\delta)}{n}}+ \frac{\sigma\sqrt{d}\log(1/\delta)}{n\eta^{1/k}}\right).
\end{align}

Denote $\Delta_1 = \sqrt{\frac{\sigma^2d(\log(d)+\log(1/\delta))}{n\eta^{2/k}}}$.   
From Lemma~\ref{lem.matrix_bernstein}, we know that with probability at least $1 - \delta$,
\begin{align}
    \|\bE_{\hat p'}[(X-\mu_{p^*})(X - \mu_{p^*})^{\top}] - \bE_{p'}[(X-\mu_{p^*})(X - \mu_{p^*})^{\top}]\|_2 \leq  C\max( \Delta_1, \Delta_1^2).
    \end{align}
Thus we know that with probability at least $1-\delta$,
\begin{align}
     \|\bE_{\hat p'}[(X-\mu_{\hat p'})(X - \mu_{\hat p'})^{\top}] \|_2 & \leq   \|\bE_{\hat p'}[(X-\mu_{p^*})(X - \mu_{p^*})^{\top}] \|_2 \nonumber \\
     &\leq \| \bE_{p'}[(X-\mu_{p^*})(X - \mu_{p^*})^{\top}]\|_2 +C\max( \Delta_1, \Delta_1^2) \nonumber \\
     & \leq \frac{1}{1-\eta} + C\max( \Delta_1, \Delta_1^2).
\end{align}

This shows that with probability at least $1-\delta$, $\hat p' \in \GG_2(\eta)$.
\item 
When $k>2$, 
denote $\Delta_2 = \sqrt{\frac{\sigma^2d\log(d)}{n\eta^{2/k}}}$. From Lemma~\ref{lem.matrix_bernstein}, we also know that with probability at least $1 - \delta$,
    \begin{align}\label{eqn.proof_vershynin_expectation}
     \bE_{p'}[ \|\bE_{\hat p'}[(X-\mu_{p^*})(X - \mu_{p^*})^{\top}] - \bE_{p'}[(X-\mu_{p^*})(X - \mu_{p^*})^{\top}]\|_2] \leq C \max(\Delta_2, \Delta_2^2).
\end{align}
for some constant $C$. 
By Lemma~\ref{lem.Wf_orlicz_closed} we know that the distribution of $(v^\top(X-\bE_{p^*}[X]))^2$ has its $(1-\eta)x^{k/2}$-norm bounded by $5\sigma$ under $X\sim p'$. From centering lemma in Lemma~\ref{lem.centering_psi} we know that 
\begin{align}
    \sup_{v\in\bR^d, \|v\|_2 = 1} (1-\eta)\bE_{p'}\left|(v^\top(X-\bE_{p^*}[X]))^2 - \bE_{p'}{(v^\top(X-\bE_{p^*}[X]))^2}\right|^{k/2} \leq (5\sigma)^k.
\end{align}
This combined with~\eqref{eqn.proof_vershynin_expectation} and  Lemma~\ref{lem.empirical_psi_resilient} gives that when $k>2$, with probability at least $1-2\delta$,  for any $r\leq \frac{\hat p'}{1-\eta}$, 
\begin{align}
    & \| \bE_r[(X-\bE_{p^*}[X])(X-\bE_{ p^*}[X])^T] - \bE_{\hat p'}[(X-\bE_{p^*}[X])(X-\bE_{p^*}[X])^T]\|_2 \nonumber \\
    \leq & \frac{C_2k}{1-\eta}\left(\frac{\sigma^2\eta^{1-2/k}}{\delta^{2/k}} + \frac{\max(\Delta_2, \Delta_2^2)}{\delta} \right).
\end{align}

We also have with probability at least $1-\delta$, 
\begin{align}
      \|\bE_{\hat p'}[(X-\mu_{p^*})(X - \mu_{p^*})^{\top}] - I_d\|_2  \leq & \|\bE_{\hat p'}[(X-\mu_{p^*})(X - \mu_{p^*})^{\top}] - \bE_{p'}[(X-\mu_{p^*})(X - \mu_{p^*})^{\top}]\|_2
     \nonumber \\
      & + \| \bE_{p'}[(X-\mu_{p^*})(X - \mu_{p^*})^{\top}] - I_d\|_2 \nonumber \\
      \leq & C(\max( \Delta_1, \Delta_1^2) + \sigma^2 \eta^{1-2/k}),
\end{align}
and from Equation  \eqref{eqn.proof_kth_mean_close_deletion} and \eqref{eqn.proof_kth_mean_empirical_close} , we have with probability at least $1-3\delta$,
\begin{align}
    & \|\bE_r[(X-\bE_{p^*}[X])(X-\bE_{ p^*}[X])^T] - \bE_r[(X-\bE_{\hat p'}[X])(X-\bE_{ \hat p'}[X])^T]\|_2\nonumber \\
     \leq  & \|\bE_r[(X-\bE_{p^*}[X])(X-\bE_{ p^*}[X])^T] - \bE_r[(X-\bE_{r}[X])(X-\bE_{r}[X])^T]\|_2 \nonumber \\
     & + \|\bE_r[(X-\bE_{r}[X])(X-\bE_{r}[X])^T] - \bE_r[(X-\bE_{\hat p'}[X])(X-\bE_{ \hat p'}[X])^T]\|_2\nonumber \\
     \leq & \|\mu_{p^*} -\mu_r\|_2^2 +  \|\mu_{\hat p'} -\mu_r\|_2^2 \nonumber \\
     \leq & (\|\mu_{p^*} - \mu_{\hat p'}\|_2 + \|\mu_{\hat p'} -\mu_r\|_2)^2 + \|\mu_{\hat p'} -\mu_r\|_2^2 \nonumber \\
     \lesssim & k^2\sigma^2(\frac{\eta^{2-2/k}}{\delta^{2/k}} + \frac{d}{n\delta^2})+\frac{\sigma^2  d\log^2(1/\delta) }{n^2\eta^{2/k}}.
\end{align}
Combining above three inqualities,  we know that when $\eta < 1/k$, with probability at least $1-6\delta$, there exists some constant $C_2$ such that
\begin{align}
     \| \bE_r[(X-\bE_{\hat p'}[X])(X-\bE_{ \hat p'}[X])^T] - I_d\|_2 
    \leq  \frac{C_2k^2}{1-\eta}\left(\frac{\sigma^2\eta^{1-2/k}}{\delta^{2/k}} + \frac{\max(\Delta_2, \Delta_2^2)}{\delta} + \frac{k\sigma^2d}{n\delta^2}\right).
\end{align}
Then we know that with probability at least $1-6\delta$, $\hat p' \in \GG_3(\eta)$. We remark here in fact we have shown that $\hat p' \in \GG_1(\eta)\cap \GG_2(\eta)\cap \GG_3(\eta)$ with probability at least $1-6\delta$.
    \end{enumerate}
 \end{proof}

\subsection{Mean estimation with sub-Gaussian distributions}\label{proof.general_subgaussian_tv_projection}

Our first observation is that even if $\GG$ is small, we can take $\MM$ to be the family of resilient 
distributions while maintaining similarly small modulus. 
Thus we only need $\hat{p}_n^*$ to be {resilient},
which is easier to satisfy than e.g.~bounded moments or sub-Gaussianity.
For distributions with moment generating functions, a union bound leads to the following typical result~(Lemma~\ref{lem.subgaussian_empirical_resilience}): if $p^*$ is sub-Gaussian with parameter $\sigma$, then for any fixed $\eta$, the empirical distribution $\hat p_n^*$ 
is $(\rho, \eta)$-resilient with probability $1-\delta$, for $\rho = O(\sigma (\sqrt{\eta \frac{d+\log(1/\delta)}{n}} + \eta \sqrt{\log(1/\eta)}))$, which gives tighter bound for resilience paramter than~\citet[Lemma  4.4]{diakonikolas2019robust}. 
We thus obtain:
\begin{theorem}[Sub-Gaussian]\label{thm.general_subgaussian_tv_projection}
Denote $\tilde \epsilon = 2(\sqrt{\epsilon} + \sqrt{\frac{\log(1/\delta)}{2n}})^2$.
There exist some constants $C_1, C_2$ such that the following statement is true. Take $\GG$ as family of sub-Gaussian with parameter $\sigma$ and $\cM$ as resilient set, i.e.
\begin{align}
    \GG & =  \bigg\{ p \mid \sup_{v\in\bR^d, \|v\|_2=1} \bE_p\bigg[\exp\bigg(\left( \frac{|v^{\top}(X-\bE_p[X])|}{\sigma}\right)^2\bigg) \bigg] \leq  2 \bigg\},\\
  \cM & = \GG^{\TV}_{\mathsf{mean}}\bigg( C_1\sigma\cdot \bigg(\epsilon \sqrt{\log(1/\epsilon)} + \sqrt{ \frac{d+\log(1/\delta)}{n}}\bigg) , \tilde \epsilon \bigg),
\end{align}
where $\GG^{\TV}_{\mathsf{mean}}$ is defined in~\eqref{eqn.oldresilience}.  If $p^*\in\GG$ and $\tilde \epsilon\leq 1/2$,
then the projection $q = \Pi(\hat p_n; \TV / \tTV_\mathcal{H}, \cM)$ of $\hat{p}_n$ onto $\MM$ satisfies:
\begin{align}
 \|\bE_{p^*}[X]- \bE_q[X] \|_2 & \leq   C_2\sigma \cdot \bigg(\epsilon \sqrt{\log(1/\epsilon)} + \sqrt{\frac{d+\log(1/\delta)}{n}} \bigg)
\end{align}
 with probability at least $1-3\delta$. Moreover, 
this bound holds for any $q \in \MM$ within $\TV$ (or $\tTV$) distance $\tilde \eps/2$ of $\hat{p}_n$.

\end{theorem}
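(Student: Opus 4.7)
The plan is to instantiate the ``expand the set'' template (Proposition~\ref{prop.expand_sufficient} / Theorem~\ref{thm.admissible-2}) with $\GG' = \MM$ equal to the resilient set $\GG^{\TV}_{\mathsf{mean}}(\rho, \tilde \epsilon)$ at the stated resilience parameter $\rho = C_1\sigma(\epsilon\sqrt{\log(1/\epsilon)} + \sqrt{(d+\log(1/\delta))/n})$. Because $\GG' = \MM$ here, the generalized modulus reduces to the ordinary modulus, which by Lemma~\ref{lem.G_TV_mean_modulus} is bounded by $2\rho$. So the whole argument boils down to three ingredients: (i) $\hat p_n^* \in \MM$ with high probability, (ii) $\hat p_n$ is within $\tilde\epsilon/2$ of $\hat p_n^*$ in the relevant discrepancy, and (iii) a generalization bound to transfer an empirical-mean guarantee back to $p^*$.

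First I would verify (i). Apply Lemma~\ref{lem.subgaussian_empirical_resilience} to $p^*$ (which, being sub-Gaussian with parameter $\sigma$, has $\psi(\lambda) = \lambda^2\sigma^2/2$) with the single fixed deletion level $\eta = \tilde\epsilon$. The lemma gives, with probability at least $1-\delta$,
\begin{equation*}
\sup_{r \leq \hat p_n^*/(1-\tilde\epsilon)} \|\mathbb{E}_{\hat p_n^*}[X] - \mathbb{E}_r[X]\|_2 \;\lesssim\; \sigma\Bigl(\sqrt{\tilde\epsilon}\sqrt{\tfrac{d+\log(1/\delta)}{n}} + \tilde\epsilon\sqrt{\log(1/\tilde\epsilon)}\Bigr).
\end{equation*}
Substituting $\tilde\epsilon \asymp \epsilon + \log(1/\delta)/n$ and using AM-GM on the cross term, this is at most the stated $\rho$ for an appropriate universal constant $C_1$; hence $\hat p_n^* \in \MM$.

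For (ii), Lemma~\ref{lem.coupling_TV} gives $\TV(\hat p_n^*, \hat p_n) \leq \tilde\epsilon/2$ with probability $1-\delta$ under adaptive (hence also oblivious) corruption of level $\epsilon$, which trivially implies the same bound under $\tTV_\sH$ since $\tTV_\sH \leq \TV$. On the event that both (i) and (ii) hold, the projected $q$ lies in $\MM$ and satisfies $\tTV_\sH(q,\hat p_n)\leq \tTV_\sH(\hat p_n^*, \hat p_n)\leq \tilde\epsilon/2$; the triangle inequality for $\tTV_\sH$ then gives $\tTV_\sH(q, \hat p_n^*)\leq \tilde\epsilon$, and the same for $\TV$ in the $\TV$-projection case. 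Since $q,\hat p_n^* \in \MM = \GG^{\TV}_{\mathsf{mean}}(\rho,\tilde\epsilon)$, Lemma~\ref{lem.G_TV_mean_modulus} (for $\TV$) or the mean-cross bound in the proof of Theorem~\ref{thm.tTV_mean} (for $\tTV_\sH$ with $\sH=\{v^\top x\}$, whose VC cost is absorbed into $\tilde\epsilon$ by Lemma~\ref{lemma.vcinequality}) yields $\|\mu_{\hat p_n^*} - \mu_q\|_2 \leq 2\rho$.

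Finally, step (iii): a standard sub-Gaussian concentration bound (e.g.\ Lemma~\ref{lem.subgaussian_empirical_resilience} applied with $\eta = 0$, or a direct Hanson--Wright / covering argument) gives $\|\mu_{p^*}-\mu_{\hat p_n^*}\|_2 \lesssim \sigma\sqrt{(d+\log(1/\delta))/n}$ with probability $1-\delta$, which is already absorbed into the target $\rho$. Combining this with the preceding display via the triangle inequality proves the theorem with $C_2 = O(C_1)$; a union bound over the three $\delta$-events yields probability $1-3\delta$. The entire argument applies verbatim to any $q \in \MM$ with $\TV$ (resp.\ $\tTV_\sH$) distance $\tilde\epsilon/2$ from $\hat p_n$, since the triangle-inequality chain in (ii) only uses this weaker property.

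The main obstacle is calibrating $\eta$ in Lemma~\ref{lem.subgaussian_empirical_resilience} so that the empirical resilience parameter matches, up to an absolute constant, the chosen $\rho$ defining $\MM$---this is the reason we must fix $\eta = \tilde\epsilon$ rather than take a uniform-over-$\eta$ bound as in Lemma~\ref{lem.empirical_psi_resilient}, which would inflate the $\delta$-dependence by polynomial factors and spoil the sub-Gaussian rate $\epsilon\sqrt{\log(1/\epsilon)}$.
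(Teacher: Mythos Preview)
Your proposal is correct and follows essentially the same route as the paper's proof: both instantiate the ``expand the set'' framework (Theorem~\ref{thm.admissible-2}) with $\GG' = \MM$ equal to the resilient set, invoke Lemma~\ref{lem.subgaussian_empirical_resilience} at the fixed level $\eta = \tilde\epsilon$ to place $\hat p_n^*$ in $\MM$, use Lemma~\ref{lem.coupling_TV} for the corruption bound, bound the modulus via Lemma~\ref{lem.G_TV_mean_modulus} (resp.\ the mean-cross argument for $\tTV_{\sH}$), and finish with sub-Gaussian mean concentration plus the concavity simplification $\tilde\epsilon\sqrt{\log(1/\tilde\epsilon)} \lesssim \epsilon\sqrt{\log(1/\epsilon)} + \sqrt{\log(1/\delta)/n}$. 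The only cosmetic slip is the parenthetical about VC cost in step (ii): in this approach no VC term enters, since you compare $q$ and $\hat p_n^*$ directly via $\tTV_{\sH} \leq \TV$ and never need $\tTV_{\sH}(p, \hat p_n)$ for a population $p$.
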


\begin{proof}
We verify the five conditions in Theorem~\ref{thm.admissible-2}. 

\begin{enumerate}
    \item \textbf{Robust to perturbation:} True since $\TV$ satisfies triangle inequality.
    \item \textbf{Limited Corruption:} It follows from Lemma~\ref{lem.coupling_TV} that  with probability at least $1-\delta$, 
    \begin{align}
        \TV(\hat p_n, \hat p_n^*)\leq \left(\sqrt{\epsilon} + \sqrt{\frac{\log(1/\delta)}{2n}}\right)^2 = \frac{\tilde\epsilon}{2}.
    \end{align}
\item \textbf{Set for (perturbed) empirical distribution:} It can be seen from Lemma~\ref{lem.subgaussian_empirical_resilience}  that for some fixed $\eta\in[0, 1/2]$, with probability at least $1-\delta$,  there exists some constant $C$ such that
\begin{align}
  \hat p_n^* \in   \GG^{\TV}_{\mathsf{mean}}\left( C \sigma \cdot \left( \sqrt{\frac{d + \log(1/\delta)}{n}} +  \eta \sqrt{\log(1/\eta)} \right), \eta \right).
\end{align}

\item \textbf{Generalized Modulus of Continuity:}

 We construct
$\cM  = \GG'=  \GG^{\TV}_{\mathsf{mean}}\left(C\sigma \cdot \left(  \sqrt{\frac{d + \log(1/\delta)}{n}} +  \tilde{\epsilon} \sqrt{\log(1/\tilde{\epsilon})} \right), \tilde{\epsilon}\right)$. Thus it follows from the population limit of $\GG^{\TV}_{\mathsf{mean}}$ in Lemma~\ref{lem.G_TV_mean_modulus} that for some constant $C_1$, we have
\begin{align}
    \sup_{p_1^*\in \mathcal{M}, p_2^* \in \GG', \TV(p_1^*, p_2^*)\leq \tilde \epsilon} \|\bE_{p_1^*}[X]-\bE_{p_2^*}[X]\|_2 & \leq   C_1\sigma\left(\sqrt{\frac{d+\log(1/\delta)}{n}} + \tilde{\epsilon} \sqrt{\log(1/\tilde{\epsilon})}\right).
\end{align}
Since $f(x) = x\sqrt{\log(1/x)}$ is a concave function, and $f(0)=0$, we have 
\begin{align}
    f(a+b) = \frac{a}{a+b}f(a+b) + \frac{b}{a+b}f(a+b) \leq f(a)+f(b).
\end{align}
From the assumption in theorem statement we know that  $n \geq \log(1/\delta)$, we have
\begin{align}\label{eqn.concave_upper_bound}
    \tilde{\epsilon} \sqrt{\log(1/\tilde{\epsilon})} \lesssim {\epsilon} \sqrt{\log(\frac{1}{\epsilon})} + \frac{\log(1/\delta)}{n}\sqrt{\log(\frac{n}{\log(1/\delta)})} \lesssim {\epsilon} \sqrt{\log(\frac{1}{\epsilon})} + \sqrt{\frac{\log(1/\delta)}{n}}.
\end{align}
From Theorem~\ref{thm.tTV_mean}, we know that the generalized modulus for $\tTV_\mathcal{H}$ is the same as $\TV$ for resilient set.
\item \textbf{Generalization bound:} Since we take $\hat p' = \hat p_n^*$, we have
\begin{align}
    \|\bE_{p^*}[X]- \bE_q[X] \|_2 \leq  \|\bE_{p^*}[X]- \bE_{\hat p_n^*}[X] \|_2 +   \|\bE_{\hat p_n^*}[X] - \bE_{q}[X]\|_2 .
\end{align}
Thus by with probability at least $1-\delta$, 
\begin{align}
        \|\bE_{p^*}[X]- \bE_q[X] \|_2 \leq \|\bE_{\hat p_n^*}[X] - \bE_{q}[X]\|_2  + C\sigma\sqrt{\frac{d+\log(1/\delta)}{n}}.
\end{align}
The convergence of $\|\bE_{\hat p_n^*}[X] - \bE_{p^*}[X]\|_2  $ is from~\citep[Equation (5.5)]{lugosi2017lectures}.
\end{enumerate}

Combining the five conditions, from Theorem~\ref{thm.admissible-2}, for projection algorithm $q = \argmin\{\TV(q, \hat p_n) \mid q \in \cM\}$, we have with probability at least $1-3\delta$,
\begin{align}
 \|\bE_{p^*}[X]- \bE_q[X] \|_2 & \leq C\sigma \cdot \left(\sqrt{\frac{d+\log(1/\delta)}{n}} + \epsilon\sqrt{\log(1/\epsilon)}\right),
\end{align}
where $C$ is some universal constant.
\end{proof}

\subsection{Mean estimation with bounded $k$-th moment}\label{proof.general_kth_tv_projection}

Taking $\cM$ to be the set of resilient distributions as before, we obtain:
\begin{theorem}[Bounded $k$-th moment]\label{thm.general_kth_tv_projection}
Denote $\tilde \epsilon = 2(\sqrt{\epsilon} + \sqrt{\frac{\log(1/\delta)}{2n}})^2$.
There exist some constants $C_1, C_2$ such that the following statement is true.
Take $\GG^\TV$ as bounded $k$-th moment set for $k\geq 2$ and $\cM$ as resilient set, i.e.
\begin{align}
    \GG^\TV & =  \{ p \mid \sup_{v\in\bR^d, \|v\|_2=1} \bE_p[ |v^{\top}(X-\bE_p[X])|^k] \leq  \sigma^k \} \\
  \cM & =  \GG^\TV_{\mathsf{mean}}\bigg({C_1k\sigma}\bigg(\frac{\epsilon^{1-1/k}}{\delta^{1/k}}  + \frac{1}{\delta} \sqrt{\frac{d}{n}}\bigg),  \tilde \epsilon \bigg),
\end{align}
If $p^*\in\GG^\TV$ and $\tilde \epsilon \leq 1/2$, 
then the projection $q = \Pi(\hat p_n; \TV / \tTV_\mathcal{H}, \cM)$ of $\hat{p}_n$ onto $\MM$ satisfies: 
\begin{align}
    \|\bE_{p^*}[X] - \bE_q[X]\|_2 & \leq  C_2k\sigma\cdot\left(\frac{\epsilon^{1-1/k}}{\delta^{1/k}} +\frac{1}{\delta}\sqrt{\frac{d}{n}}\right)
\end{align}
with probability at least $1-4\delta$.  Moreover, 
this bound holds for any $q \in \MM$ within $\TV$ (or $\tTV$) distance $\tilde \eps/2$ of $\hat{p}_n$.
\end{theorem}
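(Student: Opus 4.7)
The plan is to apply the ``expand the set'' framework of Theorem~\ref{thm.admissible-2}, using $\hat p' = \hat p_n^*$, $\GG' = \cM$ (the resilient set in the statement), and $D = \tilde D = \TV$ (the $\tTV_\mathcal{H}$ case is essentially identical since $\tTV_{\mathcal{H}} \leq \TV$ and the modulus bound in Theorem~\ref{thm.tTV_mean} applies to both). Checking the five hypotheses of Theorem~\ref{thm.admissible-2} will give the result.

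First I would handle the three ``structural'' conditions. The robust-to-perturbation property is immediate because $\TV$ is a pseudometric, so $\overline{D} = \TV$ works. The limited-corruption condition follows from Lemma~\ref{lem.coupling_TV}: with probability at least $1-\delta$, $\TV(\hat p_n, \hat p_n^*) \leq (\sqrt{\epsilon} + \sqrt{\log(1/\delta)/(2n)})^2 = \tilde\epsilon/2$. The generalization bound uses Lemma~\ref{lem.kth_convergence} (bounded second moment, which is implied by bounded $k$-th moment with $k\geq 2$) to control $\|\bE_{\hat p_n^*}[X] - \bE_{p^*}[X]\|_2 \lesssim \sigma\sqrt{d/n}$ with probability at least $1-\delta$ via Markov's inequality, giving a bound that is absorbed into the final rate.

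The core step, which I expect to be the main obstacle, is verifying that $\hat p_n^* \in \GG'$ with high probability. Here I invoke Corollary~\ref{cor.bounded_kth_empirical_resilience}, the linearized-moment based resilience of the empirical distribution under bounded $k$-th moment assumptions. It gives, with probability at least $1-2\delta$, that $\hat p_n^*$ is simultaneously $\bigl(\tfrac{C k\sigma}{1-\eta}(\eta^{1-1/k}/\delta^{1/k} + \tfrac{1}{\delta}\sqrt{d/n}), \eta\bigr)$-resilient for \emph{every} $\eta \in [0,1)$. Evaluating at $\eta = \tilde\epsilon$ (and using $\tilde\epsilon \leq 1/2$ so $1/(1-\eta) \leq 2$) places $\hat p_n^*$ inside $\cM$ for a suitable constant $C_1$. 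This is the step that requires the linearized moment argument, because $\hat p_n^*$ need not have bounded $k$-th moment when $n \ll d^{k/2}$; the content of Corollary~\ref{cor.bounded_kth_empirical_resilience} is that resilience with the right parametric rate does nevertheless hold.

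For the modulus step, both $\GG'$ and $\cM$ coincide with the resilient set $\GG_{\mathsf{mean}}^{\TV}(\rho(\tilde\epsilon), \tilde\epsilon)$ with $\rho(\tilde\epsilon) = C_1 k\sigma(\tilde\epsilon^{1-1/k}/\delta^{1/k} + \tfrac{1}{\delta}\sqrt{d/n})$, so Lemma~\ref{lem.G_TV_mean_modulus} yields $\modu(\GG', \cM, \tilde\epsilon) \leq 2\rho(\tilde\epsilon)$. Applying Theorem~\ref{thm.admissible-2} and using subadditivity of $x \mapsto x^{1-1/k}$ to replace $\tilde\epsilon^{1-1/k}$ by $\epsilon^{1-1/k} + (\log(1/\delta)/n)^{1-1/k}$ (the second summand is absorbed into the $\sqrt{d/n}/\delta$ term up to constants, provided $n \geq \log(1/\delta)$), gives the claimed bound with failure probability $1 - 4\delta$ by union bound. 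The statement for any $q \in \MM$ within $\TV$ (or $\tTV$) distance $\tilde\epsilon/2$ of $\hat p_n$ follows from the analogue of Proposition~\ref{prop.any_q_suffices_oblivious} in the expand-the-set setting.
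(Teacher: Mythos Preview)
Your proposal is correct and follows essentially the same route as the paper: apply Theorem~\ref{thm.admissible-2} with $\hat p' = \hat p_n^*$, use Corollary~\ref{cor.bounded_kth_empirical_resilience} for the empirical-resilience step, bound the modulus via Lemma~\ref{lem.G_TV_mean_modulus} (and Theorem~\ref{thm.tTV_mean} for the $\tTV_\mathcal{H}$ case), and simplify $\tilde\epsilon^{1-1/k}$ via subadditivity/concavity. The only substantive difference is the generalization step: you use Lemma~\ref{lem.kth_convergence} plus Markov to obtain $\|\bE_{\hat p_n^*}[X]-\bE_{p^*}[X]\|_2 \le \tfrac{\sigma}{\delta}\sqrt{d/n}$, whereas the paper bounds the $k$-th moment of this norm via Khinchine and Marcinkiewicz--Zygmund and then applies Chebyshev to get the sharper $\tfrac{C\sigma\sqrt{k}}{\delta^{1/k}}\sqrt{d/n}$; since the final rate already contains a $\tfrac{1}{\delta}\sqrt{d/n}$ term from the modulus, both versions land inside the stated bound.
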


\citet{steinhardt2018resilience}
presented an analysis for the same projection algorithm that requires $d^{3/2}$ samples, which our result improves to $d$. 

\begin{proof}

Among the five conditions in Theorem~\ref{thm.admissible-2}, 
we only need to verify the set for (perturbed) empirical distribution,  generalized modulus of continuity and generalization bound. Other two conditions are identical to the proof in  Appendix~\ref{proof.general_subgaussian_tv_projection}.

\begin{enumerate}
    \item \textbf{Set for (perturbed) empirical distribution:}
     From Corollary~\ref{cor.bounded_kth_empirical_resilience}, we know that with probability at least $1-2\delta$, 
\begin{align}
\hat p_n^* \in \bigcap_{\eta\in[0, 1)}\GG_{\mathsf{mean}}\left(\frac{Ck\sigma}{1-\eta}\left(\frac{\eta^{1-1/k}}{\delta^{1/k}}  + \frac{1}{\delta} \sqrt{\frac{d}{n}}\right), \eta \right) = \GG'.
\end{align}
for some constant $C$. 
\item \textbf{Generalized modulus of continuity:}
Denote $\tilde{\epsilon} = \left(\sqrt{\epsilon} + \sqrt{\frac{\log(1/\delta)}{2n}}\right)^2$, from assumption  we know that $\tilde \epsilon <1/4$. 
Since $f(x) = x^{1-1/k}$ is a concave function, following a similar analysis as~\eqref{eqn.concave_upper_bound}, we know that 
\begin{align}
    \tilde \epsilon^{1-1/k} \leq 2\epsilon^{1-1/k} + 2\sqrt{\frac{\log(1/\delta)}{n}}.
\end{align}
Thus with appropriate choice of $C_1$, we can  make $$\GG' \subset \cM = \GG_{\mathsf{mean}}\left({C_1k\sigma}\left(\frac{\epsilon^{1-1/k}}{\delta^{1/k}}  + \frac{1}{\delta} \sqrt{\frac{d}{n}}\right), \left(\sqrt{\epsilon} + \sqrt{\frac{\log(1/\delta)}{2n}}\right)^2 \right) $$.  Therefore the generalized modulus of continuity for perturbation level $\tilde \epsilon$ is upper bounded by ${C_1k\sigma}\left(\frac{\epsilon^{1-1/k}}{\delta^{1/k}}  + \frac{1}{\delta} \sqrt{\frac{d}{n}}\right)$. From Theorem~\ref{thm.tTV_mean}, we know that the generalized modulus for $\tTV_\mathcal{H}$ is the same as $\TV$ for resilient set.

\item \textbf{Generalization bound:}
From Lemma~\ref{lem.kth_convergence}, we know that with probability at least $1-\delta$, 
\begin{align}
        \|\bE_{p^*}[X]- \bE_q[X] \|_2 \leq \|\bE_{\hat p_n^*}[X] - \bE_{q}[X]\|_2  + \|\bE_{p^*}[X] - \bE_{\hat p_n^*}[X] \|_2.
\end{align}
By Chebyshev's inequality, we have
\begin{align}
    \bP_{p^*}(\|\bE_{p^*}[X] - \bE_{\hat p_n^*}[X] \|_2 \geq t)\leq \frac{\bE_{p^*}[\|\bE_{p^*}[X] - \bE_{\hat p_n^*}[X] \|_2^k ]}{t^k}
\end{align}
Since  the $k$-th moment is bounded, by Khinchine's inequality~\cite{haagerup1981best}, there is
\begin{align}
   \bE_{p^*}  \left \| \frac{1}{n} \sum_{i = 1}^n  X_i - \mathbb{E}_{p^*}[X] \right \|_2^k \leq    \bE_{X_i\sim p^*, \xi\sim \{\pm 1\}^d} \left|  \frac{1}{n} \xi^{\top}( \sum_{i = 1}^n  X_i - \mathbb{E}_{p^*}[X] )\right|^k  .
\end{align}
By Marcinkiewicz-Zygmund inequality~\cite{ren2001best} there exists some $C_2$, such that for any $v\in\bR^d$,
\begin{align}
  \bE_{X_i\sim {p^*}}\left| \frac{1}{n} v^{\top}(  \sum_{i = 1}^n  X_i - \mathbb{E}_{p^*}[X] )\right|^k \leq  \frac{(C_2\sigma \sqrt{k})^k}{n^{k/2}}\| v\|_2^k.
\end{align}
Therefore by first conditioning on $\xi$, we have
\begin{align}
    \bE_{X_i\sim p, \xi\sim \{\pm 1\}^d} \left|  \frac{1}{n} \xi^{\top}( \sum_{i = 1}^n  X_i - \mathbb{E}_p[X] )\right|^k   & \leq (\frac{C_2\sigma\sqrt{k}}{n^{1/2}})^k  \bE_{ \xi\sim \{\pm 1\}^d} \|\xi \|_2^k   \nonumber \\
    & = \left(C_2\sigma\sqrt{k}\sqrt{\frac{d}{n}}\right)^k.
\end{align}

Thus overall, we know that with probability at least $1-\delta$, there exists some constant $C$, such that 
\begin{align}
   \left \| \frac{1}{n} \sum_{i = 1}^n  X_i - \mathbb{E}_{p}[X] \right \|_2 \leq \frac{C \sigma\sqrt{k}}{\delta^{1/k}} \sqrt{\frac{d}{n}}.
\end{align}
We have with probability at least $1-\delta$,
\begin{align}
    \|\bE_{p^*}[X]- \bE_q[X] \|_2 \leq \|\bE_{\hat p_n^*}[X] - \bE_{q}[X]\|_2  + \frac{C \sigma\sqrt{k}}{\delta^{1/k}} \sqrt{\frac{d}{n}}.
\end{align}
\end{enumerate}

Combining the five conditions, from Theorem~\ref{thm.admissible-2}, for projection algorithm $q = \argmin\{\TV(q, \hat p_n) \mid q \in \cM\}$, we have with probability at least $1-4\delta$,
\begin{align}
    \|\mu_{p^*}-\mu_q\|_2 & \leq   Ck\sigma\cdot\left(\frac{\epsilon^{1-1/k}}{\delta^{1/k}} +\frac{1}{\delta}\sqrt{\frac{d}{n}}\right).
\end{align}
\end{proof}

\subsection{Mean estimation via projecting to bounded covariance set}\label{proof.projection_bounded_covariance}

The below theorem is following a similar flow of proof as in~\citep{prasad2019unified}, which shows the performance guarantee for filtering algorithm. Here we use our framework to give a proof of the performance guarantee for general projection algorithm. 

\begin{theorem}[Bounded covariance, $\TV$ projection]\label{thm.projection_bounded_covariance}
Denote
\begin{align}
\epsilon_1 = \max\left(\frac{d\log(d/\delta)}{n}, \epsilon + \frac{\log(1/\delta)}{n}\right),
    \tilde \epsilon = 4\left(\sqrt{\epsilon_1} + \sqrt{\frac{\log(1/\delta)}{2n}}\right)^2.
\end{align}
We take both $\GG$ and $\cM$ to be the set of bounded covariance set as below:
\begin{align}
    \GG &= \{p \mid \|\Sigma_p \|_2 \leq \sigma^2 \}, \\
     \cM & = \left\{p \mid  \| \Sigma_p\|_2 \leq C_2\sigma^2\left(1 + \frac{d\log(d/\delta)}{n \epsilon_1}\right) \right\}.
\end{align}

If $p^*\in\GG$ and $\tilde \epsilon<1/2$, then the projection $q = \Pi(\hat p_n; \TV / \tTV_\mathcal{H}, \cM)$ of $\hat{p}_n$ onto $\MM$ satisfies:
\begin{align}
 \|\bE_{p^*}[X]- \bE_q[X] \|_2 & \leq C_3\sigma\cdot \left(\sqrt{\epsilon} + \sqrt{\frac{d\log(d/\delta)}{n}}\right)
\end{align}
with probability at least $1-3\delta$. Moreover,
this remains true for any $q \in \MM$ within $\TV$ (or $\tTV$) distance $\tilde \eps/2$ of $\hat{p}_n$.
\end{theorem}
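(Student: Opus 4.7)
The plan is to apply the framework of Theorem~\ref{thm.admissible-2} (the ``find $\hat p_n^*$'' strategy) with discrepancy $\tD = \TV$ (or $\tTV_\mathcal{H}$), destination set $\cM$ as in the statement, and an intermediate family $\GG' \subset \cM$ that captures the good properties of a mild perturbation of $\hat p_n^*$. The main obstacle is that $\hat p_n^*$ itself need not have operator-norm covariance bounded by any constant multiple of $\sigma^2$ at sample size $n \asymp d$, as noted in Appendix~\ref{sec.negativeresultresilient}; so we cannot take $\hat p' = \hat p_n^*$ directly. The remedy is the ``perturb $\hat p_n^*$ to $\hat p'$'' technique introduced in Section~\ref{sec.expand}: construct a nearby distribution $\hat p'$ whose covariance is tightly controlled.

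The construction follows Lemma~\ref{lem.kth_deletion_set} specialized to $k = 2$. Let $p'$ be the truncation of $p^*$ at radius $R = \sigma\sqrt{d/\epsilon_1}$, and couple samples so that $\hat p'$ agrees with $\hat p_n^*$ except on the event $\{\|X_i - \mu_{p^*}\| > R\}$. By Chebyshev (Lemma~\ref{lem.kth_population_deletion_R}) this event has $p^*$-probability at most $\epsilon_1$, and a Bernstein bound on the empirical fraction gives $\TV(\hat p_n^*, \hat p') \leq \epsilon_1$ with probability $1 - \delta$, verifying the ``limited corruption'' hypothesis of Theorem~\ref{thm.admissible-2}. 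The lemma then delivers two properties of $\hat p'$ simultaneously: (a) a matrix-Bernstein bound (Lemma~\ref{lem.matrix_bernstein}) giving $\|\Sigma_{\hat p'}\|_2 \leq C_2 \sigma^2 (1 + d\log(d/\delta)/(n\epsilon_1))$ for an appropriate constant $C_2$, so that $\hat p' \in \cM$; and (b) mean-resilience from bounded second moment, namely $\hat p' \in \GG_{\mathsf{mean}}(\rho(\eta), \eta)$ with $\rho(\eta) = O(\sigma\sqrt{\eta})$, by Lemma~\ref{lem.cvx_mean_resilience} applied to the truncated distribution. I take $\GG'$ to be the intersection of these two properties.

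To bound the generalized modulus of continuity between $\GG'$ and $\cM$ at scale $\tilde\epsilon \asymp \epsilon_1$, I use a midpoint argument: given $\TV(\hat p', q) \leq \tilde\epsilon$ with $\hat p' \in \GG'$ and $q \in \cM$, take $r = \min(\hat p', q)/(1 - \TV(\hat p', q))$, which is a simultaneous $\tilde\epsilon$-deletion of both. Since both $\hat p'$ and $q$ have operator-norm covariance bounded by $C_2 \sigma^2 (1 + d\log(d/\delta)/(n\epsilon_1))$, Lemma~\ref{lem.cvx_mean_resilience} yields $\|\mu_{\hat p'} - \mu_r\|_2$ and $\|\mu_q - \mu_r\|_2$ both of order $\sigma\sqrt{\tilde\epsilon\,(1 + d\log(d/\delta)/(n\epsilon_1))} \lesssim \sigma(\sqrt{\epsilon} + \sqrt{d\log(d/\delta)/n})$ after substituting the definition of $\epsilon_1$; sharper constants follow from Lemma~\ref{lem.empirical_identity_cov_modulus}. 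The $\tTV_\mathcal{H}$ variant is handled the same way, using Lemma~\ref{lemma.vcinequality} to bound $\tTV_\mathcal{H}(p, \hat p_n)$ at the parametric rate and Lemma~\ref{lem.tTV_identitycov_modulus} for the modulus; both produce the same final rate since the VC dimension of halfspaces is $d+1$.

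Finally, the generalization step controls the remaining bridge to $p^*$: $\|\mu_{p^*} - \mu_{\hat p'}\|_2 \leq \|\mu_{p^*} - \mu_{\hat p_n^*}\|_2 + \|\mu_{\hat p_n^*} - \mu_{\hat p'}\|_2$, where the first term is $O(\sigma\sqrt{d/n})$ by Lemma~\ref{lem.kth_convergence} and the second is $O(\sigma\sqrt{\epsilon_1})$ by Corollary~\ref{cor.bounded_kth_empirical_resilience} applied to $\hat p_n^*$ together with $\TV(\hat p_n^*, \hat p') \leq \epsilon_1$. Plugging these into Theorem~\ref{thm.admissible-2} produces the stated bound $\|\mu_{p^*} - \mu_q\|_2 \lesssim \sigma(\sqrt{\epsilon} + \sqrt{d\log(d/\delta)/n})$ with probability at least $1 - 3\delta$. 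The delicate quantitative point is the choice of $\epsilon_1$: the tail loss $\epsilon_1$ and the matrix-Bernstein parameter $\Delta_1 \asymp \sqrt{\sigma^2 d\log(d/\delta)/(n\epsilon_1)}$ must be simultaneously small, and the prescription $\epsilon_1 = \max(d\log(d/\delta)/n,\, \epsilon + \log(1/\delta)/n)$ is precisely what equalizes these two quantities.
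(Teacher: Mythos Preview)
Your overall strategy matches the paper's proof exactly: apply Theorem~\ref{thm.admissible-2}, truncate $p^*$ at radius $\sigma\sqrt{d/\epsilon_1}$ to form $p'$ and its empirical version $\hat p'$, use matrix Bernstein to place $\hat p'$ in $\cM$, and bound the modulus via the fact that bounded covariance implies resilience with parameter $O(\sigma\sqrt{\eta})$. The modulus paragraph is correct.

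There is, however, a real gap in your generalization step. You decompose $\|\mu_{p^*}-\mu_{\hat p'}\|_2$ through $\hat p_n^*$, invoking Lemma~\ref{lem.kth_convergence} for $\|\mu_{p^*}-\mu_{\hat p_n^*}\|_2$. But that lemma is an \emph{expectation} bound only; for a distribution with merely bounded covariance there is no sub-Gaussian concentration of the sample mean, and the best you can extract by Markov or Chebyshev is $\|\mu_{p^*}-\mu_{\hat p_n^*}\|_2 \leq \sigma\sqrt{d/(n\delta)}$ with probability $1-\delta$, which is polynomially worse in $\delta$ than the theorem's $\sqrt{d\log(d/\delta)/n}$. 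Similarly, Corollary~\ref{cor.bounded_kth_empirical_resilience} with $k=2$ gives $\rho_\delta(\eta)\asymp\sigma(\sqrt{\eta/\delta}+\delta^{-1}\sqrt{d/n})$, not $O(\sigma\sqrt{\epsilon_1})$ as you claim. So your route does not reach the stated bound once $\delta$ is small.

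The paper avoids this by routing the generalization through the \emph{population} truncated distribution $p'$ instead of $\hat p_n^*$:
\[
\|\mu_{p^*}-\mu_q\|_2 \le \|\mu_{p^*}-\mu_{p'}\|_2 + \|\mu_{p'}-\mu_{\hat p'}\|_2 + \|\mu_{\hat p'}-\mu_q\|_2.
\]
The first term is $\le \sigma\sqrt{\epsilon_1}$ deterministically by resilience of $p^*$. For the second, $p'$ has bounded support $\|X-\mu_{p^*}\|\le \sigma\sqrt{d/\epsilon_1}$ \emph{and} bounded second moment, so the vector Bernstein bound (Lemma~\ref{lem.bounded_support_sub_gaussian_tail}) applies and yields $\|\mu_{p'}-\mu_{\hat p'}\|_2 \lesssim \sigma\sqrt{d/n}+\sigma\sqrt{\log(1/\delta)/n}+\sigma\sqrt{d}\log(1/\delta)/(n\sqrt{\epsilon_1})$, which simplifies to $O(\sigma\sqrt{d\log(1/\delta)/n})$ once $\epsilon_1\ge \log(1/\delta)/n$. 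The bounded support is exactly what buys the logarithmic $\delta$-dependence, and is the reason the truncation parameter $\epsilon_1$ has to be carried through the generalization step rather than only the modulus step.
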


\begin{proof}

Among the five conditions in Theorem~\ref{thm.admissible-2}, the `robust to perturbation' and `limited corruption' conditions are identical to the proof in  Appendix~\ref{proof.general_subgaussian_tv_projection}. 
we only need to verify the other three conditions. 

\begin{enumerate}

\item \textbf{Set for (perturbed) empirical distribution:}

For any fixed $\epsilon$ as perturbation level, we show that there exists some distribution $\hat p'$ that has bounded covariance and $\TV(\hat p', \hat p_n)$ is small.

We truncate the distribution $p^*$ by removing all $X$  with $\|X - \mu_{p^*} \| \geq \sigma \sqrt{d}/ \sqrt{\epsilon_1}$ to get a new distribution $p'$, where $\epsilon_1$ is some parameter to be specified later.  Denote the empirical distribution of $p'$ with $n$ samples as $\hat p'$.
From Lemma~\ref{lem.kth_deletion_set}, we know that  $\TV(p^*, p')\leq \epsilon_1$.  It follows from Lemma~\ref{lem.coupling_TV} that with probability at least $1-\delta$, 
    \begin{align}
        \TV(\hat p', \hat p_n^*)& \leq \left(\sqrt{\epsilon_1} + \sqrt{\frac{\log(1/\delta)}{2n}}\right)^2.
    \end{align}
Denote  $\Delta_1 = \sqrt{\frac{\sigma^2d(\log(d/\delta))}{n\epsilon_1}}$ and 
\begin{align}  
     \mathcal{G}' & =  \{p \mid \|\bE_p[(X-\mu_p)(X-\mu_p)^\top] \|_2 \leq \sigma^2 + C\max(\Delta_1, \Delta_1^2) \},
\end{align}
where $C$ is some universal constant to be specified.
From Lemma~\ref{lem.matrix_bernstein}, we know that with probability at least $1 - \delta$,
\begin{align}
    \|\bE_{\hat p'}[(X-\mu_{p^*})(X - \mu_{p^*})^{\top}] - \bE_{p'}[(X-\mu_{p^*})(X - \mu_{p^*})^{\top}]\|_2 \leq  C\max( \sigma \Delta_1, \Delta_1^2).
    \end{align}
Thus we know that with probability at least $1-\delta$, 
\begin{align}
     \|\bE_{\hat p'}[(X-\mu_{\hat p'})(X - \mu_{\hat p'})^{\top}] \|_2 & \leq   \|\bE_{\hat p'}[(X-\mu_{p^*})(X - \mu_{p^*})^{\top}] \|_2 \nonumber \\
     &\leq \| \bE_{p'}[(X-\mu_{p^*})(X - \mu_{p^*})^{\top}]\|_2 +C\max(\sigma  \Delta_1, \Delta_1^2) \nonumber \\
     & \leq \frac{\sigma^2}{1-\epsilon} + C\max( \sigma \Delta_1, \Delta_1^2).
\end{align}
Here we use the fact that $p'$ is a deletion of $p^*$, thus $\bE_{p'}[(v^\top (X-\mu_{p^*}))^2] \leq \frac{1}{1-\epsilon} \bE_{p^*}[(v^\top (X-\mu_{p^*}))^2]$.
Thus we have  $\hat p' \in \GG'$ with probability at least $1-\delta$.
\item \textbf{Generalized Modulus of Continuity:}
Since $\epsilon + \log(1/\delta)/n \leq \epsilon_1$, the perturbation level for modulus of continuity is $2\left(\sqrt{\epsilon_1} + \sqrt{\frac{\log(1/\delta)}{2n}}\right)^2 + 2\left(\sqrt{\epsilon} + \sqrt{\frac{\log(1/\delta)}{2n}}\right)^2 \leq 4\left(\sqrt{\epsilon_1} + \sqrt{\frac{\log(1/\delta)}{2n}}\right)^2$. Denote the right hand side as $\tilde \epsilon$. 
From $\hat p' \in \GG'$, and that both $\GG'$ and $\cM$ guarantees the covariance to be upper bounded, we know from Lemma~\ref{lem.cvx_mean_resilience} that for any $\tilde \epsilon \in [0, 1/2]$, there exist constants $C_1, C_2$ such that
\begin{align}
      \sup_{p\in\GG', q\in\cM, \TV(p, q)\leq \tilde \epsilon} \|\bE_p[X] - \bE_q[X]\|_2   & \leq \sqrt{ \frac{\sigma^2}{1-\epsilon} +C_1\max(\sigma \Delta_1, \Delta_1^2)}\sqrt{ \tilde \epsilon} \nonumber \\
      & \leq C_2\sigma\cdot \left(1 +  \frac{d\log(d/\delta)}{n  \epsilon_1}\right)\epsilon_1.
\end{align}
From Theorem~\ref{thm.tTV_mean}, we know that the generalized modulus for $\tTV_\mathcal{H}$ is the same as $\TV$ for bounded covariance set.
\item \textbf{Generalization bound:} Note that $p'$ is a $\epsilon_1$-deletion of $p^*$. By triangle inequality and the resilient condition for $p^*$, we have
\begin{align}\label{eqn.generalization_bound_bounded_cov}
    \|\bE_{p^*}[X]- \bE_q[X] \|_2 & \leq \|\bE_{p^*}[X]- \bE_{p'}[X] \|_2+  \|\bE_{p'}[X]- \bE_{\hat p'}[X] \|_2 +   \|\bE_{\hat p'}[X] - \bE_{q}[X]\|_2  \nonumber \\
    & \leq \sigma \sqrt{\epsilon_1} + \|\bE_{p'}[X]- \bE_{\hat p'}[X] \|_2 +   \|\bE_{\hat p'}[X] - \bE_{q}[X]\|_2.
\end{align}
From Lemma~\ref{lem.bounded_support_sub_gaussian_tail} and the assumption that $\epsilon + \log(1/\delta)/n \leq \epsilon_1$, we know that with probability at least $1-\delta$, there exists some constant $C_3$ such that 
\begin{align}
    \|\bE_{p'}[X]- \bE_{\hat p'}[X] \|_2 & \leq C_3\left(\sigma\sqrt{\frac{d}{n}} + \sigma \sqrt{\frac{\log(1/\delta)}{n}}+ \frac{\sigma\sqrt{d}\log(1/\delta)}{n\sqrt{\epsilon + \log(1/\delta)/n}}\right)\nonumber \\
    & \leq C_3\left(\sigma\sqrt{\frac{d}{n}} + \sigma \sqrt{\frac{\log(1/\delta)}{n}}+ \frac{\sigma\sqrt{d}\log(1/\delta)}{n\sqrt{  \log(1/\delta)/n}}\right) \nonumber \\
    & =  C_3\left(\sigma\sqrt{\frac{d}{n}} + \sigma \sqrt{\frac{\log(1/\delta)}{n}}+ \sigma \sqrt{\frac{{d\log(1/\delta)}}{{n}}}\right)
\end{align}
Thus with probability at least $1-\delta$, there exists some constant $C_4$ such that
\begin{align}
        \|\bE_{p^*}[X]- \bE_q[X] \|_2 \leq \|\bE_{\hat p'}[X] - \bE_{q}[X]\|_2  + \sigma \sqrt{\epsilon_1}+ C_4 \sigma \sqrt{\frac{d\log(1/\delta)}{n}}.
\end{align}
\end{enumerate}

Combining the five conditions, from Theorem~\ref{thm.admissible-2}, for projection algorithm $q = \Pi(\hat p_n; \TV, \cM, \tilde \epsilon/2)$, there exists some constnat $C$ such that with probability at least $1-3\delta$,
\begin{align}
 \|\bE_{p^*}[X]- \bE_q[X] \|_2 & \leq  C\sigma\cdot ((1 + \frac{d\log(d/\delta)}{n\epsilon_1})\sqrt{ \epsilon_1} +\sqrt{\frac{d\log(1/\delta)}{n}}).
\end{align}

By taking $\epsilon_1  = \max(\frac{d\log(d/\delta)}{n}, \epsilon+\frac{\log(1/\delta)}{n})$, we can see that $ 1 + \frac{d\log(d/\delta)}{n\epsilon_1}\leq 2$. Thus we can get the following bound:
\begin{align}
 \|\bE_{p^*}[X]- \bE_q[X] \|_2 & \leq C \sigma\cdot \left (\sqrt{\epsilon} + \sqrt{\frac{d\log(d/\delta)}{n}}\right ).
\end{align}

\end{proof}

\subsection{Proof of Theorem~\ref{thm.projection_kth_moment_identity_covariance}}\label{proof.bounded_kth_identity}
\begin{proof}
Among the five conditions in Theorem~\ref{thm.admissible-2}, the robust to perturbation and limited corruption condition is identical to the proof in  Appendix~\ref{proof.general_subgaussian_tv_projection}. 
We only need to verify the other three conditions. 

\begin{enumerate}

    \item \textbf{Set for (perturbed) empirical distribution:}

For any fixed $\epsilon$ as perturbation level, we show that there exists some distribution $\hat p'$ that satisfies conditions required in Lemma~\ref{lem.tTV_identitycov_modulus} for modulus of continuity and $\TV(\hat p', \hat p_n)$ is small.

We truncate the distribution $p^*$ by removing all $X$  with $\|X - \mu_{p^*} \| \geq \sigma (kd)^{1/2}/ \epsilon_1^{1/k}$ to get a new distribution $p'$, where $\epsilon_1$ is some paramter to be specified later and we assume $\epsilon_1 \geq \epsilon+\log(1/\delta)/n$. Denote the empirical distribution of $p'$ with $n$ samples as $\hat p'$.
From Lemma~\ref{lem.kth_deletion_set}, we know that $\TV(p^*, p')\leq \epsilon_1$.  It follows from Lemma~\ref{lem.coupling_TV} that with probability at least $1-\delta$, 
    \begin{align}
        \TV(\hat p', \hat p_n^*)\leq \left(\sqrt{\epsilon_1} + \sqrt{\frac{\log(1/\delta)}{2n}}\right)^2.
    \end{align}
Then we apply the result in  Lemma~\ref{lem.kth_deletion_set}.    
Denote  $\Delta_1 = \sqrt{\frac{\sigma^2d\log(d/\delta)}{n \epsilon_1^{2/k}}}$, $\Delta_2 = \sqrt{\frac{\sigma^2d\log(d)}{n \epsilon_1^{2/k}}}$, and 
\begin{align}  
     \mathcal{G}' & =  \Bigg\{p:  \forall r \leq \frac{p}{1-\epsilon_1}, \| \mathbb{E}_r[X] - \bE_p[X]\|_2 \leq \frac{C_1k\sigma}{1-\epsilon_1}\left(\frac{\epsilon_1^{1-1/k}}{\delta^{1/k}} + {\frac{1}{\delta}\sqrt{\frac{d}{n}}}\right), \nonumber \\
     & \qquad \|\bE_p[(X-\mu_p)(X-\mu_p)^\top] \|_2 \leq \frac{1}{1-\epsilon_1}+ C_3\max(\Delta_1, \Delta_1^2), \nonumber \\
     & \qquad  \| \bE_r[(X-\bE_{ p}[X])(X-\bE_{p}[X])^T] - I_d\|_2  \leq  \frac{C_2k}{1-\epsilon_1}\left(\frac{k\sigma^2\epsilon_1^{1-2/k}}{\delta^{2/k}} + \frac{\max(\Delta_2, \Delta_2^2)}{\delta} + \frac{k\sigma^2d}{n\delta^2}\right)\Bigg \}
\end{align}
where $C_1, C_2, C_3$ are some universal constants. Then when $k>2$, from Lemma~\ref{lem.kth_deletion_set}, under appropriate choice of constants $C_1, C_2, C_3$, for any $ \epsilon_1 <1$, we have $\hat p' \in \GG'$ with probability at least $1-6\delta$.

\item \textbf{Generalized Modulus of Continuity:}

Under the assumption that $\epsilon + \log(1/\delta)/n \leq \epsilon_1$, 
the perturbation level for modulus of continuity is $2\left(\sqrt{\epsilon_1} + \sqrt{\frac{\log(1/\delta)}{2n}}\right)^2 + 2\left(\sqrt{\epsilon} + \sqrt{\frac{\log(1/\delta)}{2n}}\right)^2 \leq 4\left(\sqrt{\epsilon_1} + \sqrt{\frac{\log(1/\delta)}{2n}}\right)^2$. Denote the right hand side as $\tilde \epsilon$. Then we also have $\tilde \epsilon \lesssim \epsilon_1$. Now we take $\cM$ as 
\begin{align}
    \cM = \left\{ p \mid \bE_p[(X-\mu_p)(X-\mu_p)]\leq 1 + \frac{C_2k}{1-\epsilon_1}\left(\frac{k\sigma^2\epsilon_1^{1-2/k}}{\delta^{2/k}} + \frac{\max(\Delta_2, \Delta_2^2)}{\delta} + \frac{k\sigma^2d}{n\delta^2}\right) \right\}.
\end{align}

When $k>2$, in Lemma~\ref{lem.empirical_identity_cov_modulus}, take $\rho_1 = \frac{C_1k\sigma}{1-\epsilon_1}\left(\frac{\epsilon_1^{1-1/k}}{\delta^{1/k}} + {\frac{1}{\delta}\sqrt{\frac{d}{n}}}\right)$, $\rho_2 = \tau = \frac{C_2k}{1-\epsilon_1}\left(\frac{k\sigma^2\epsilon_1^{1-2/k}}{\delta^{2/k}} + \frac{\max(\Delta_2, \Delta_2^2)}{\delta} + \frac{k\sigma^2d}{n\delta^2}\right)$. We have for  any $ \tilde \epsilon \in [0, 1)$, 
\begin{align}
      \sup_{p\in\GG', q\in\cM, \TV(p, q)\leq  \tilde \epsilon} \|\bE_p[X] - \bE_q[X]\|_2   \leq \frac{C_6}{1- \epsilon_1}\left(\frac{k\sigma \epsilon_1^{1-1/k}}{\delta^{1/k}} + \sqrt{\frac{k\max(\Delta_2, \Delta_2^2) \epsilon_1}{\delta}} + \frac{k\sigma}{\delta}\sqrt{\frac{d}{n}}\right).
\end{align}
From Lemma~\ref{lem.tTV_identitycov_modulus}, we know that the generalized modulus for $\tTV_\mathcal{H}$ is the same as $\TV$ for $\GG'$ and $\cM$.
\item \textbf{Generalization bound:}

From the same argument as~\eqref{eqn.generalization_bound_bounded_cov}, we know that with probability at least $1-\delta$, 
\begin{align}
    \|\bE_{p^*}[X]- \bE_q[X] \|_2 \leq \|\bE_{\hat p'}[X] - \bE_{q}[X]\|_2  + C_7\left(\sigma\tilde \epsilon^{1-1/k} + \sigma\sqrt{\frac{d}{n}} + \sigma \sqrt{\frac{\log(1/\delta)}{n}}+ \frac{\sigma\sqrt{d}\log(1/\delta)}{n\tilde \epsilon^{1/k}}\right).
\end{align}
\end{enumerate}

By taking $\epsilon_1 = \epsilon + \frac{\log(1/\delta)}{n}$, we can see that $\tilde \epsilon \asymp \epsilon_1 \asymp \epsilon + \frac{\log(1/\delta)}{2n}$.
Combining the five conditions, from Theorem~\ref{thm.admissible-2}, for projection algorithm $q = \Pi(\hat p_n; \TV, \cM, \tilde \epsilon/2)$ where $\tilde \epsilon < 1/2$, we have with probability at least $1-8\delta$, 
\begin{align}
    \|\mu_{p^*}-\mu_q\|_2 & \lesssim \frac{k\sigma \tilde \epsilon^{1-1/k}}{\delta^{1/k}}  + \sqrt{\frac{k\max(\Delta_2, \Delta_2^2)\tilde \epsilon}{\delta}} + \frac{k\sigma}{\delta}\sqrt{\frac{d}{n}}+ \sigma\sqrt{\frac{\log(1/\delta)}{n}}+ \frac{\sigma\sqrt{d}\log(1/\delta)}{n\tilde \epsilon^{1/k}} \nonumber \\ 
    & \lesssim \frac{k\sigma \tilde \epsilon^{1-1/k}}{\delta^{1/k}}  + \sqrt{\frac{k\max(\Delta_2, \Delta_2^2)\tilde \epsilon}{\delta}} + \frac{k\sigma}{\delta}\sqrt{\frac{d}{n}}. \nonumber \\ 
     & \lesssim \frac{k\sigma \tilde \epsilon^{1-1/k}}{\delta^{1/k}}  + \sqrt{\frac{k\Delta_2\tilde \epsilon}{\delta}} + \sqrt{\frac{k\Delta_2^2\tilde \epsilon}{\delta}}+ \frac{k\sigma}{\delta}\sqrt{\frac{d}{n}} \nonumber \\
     & \lesssim \frac{k\sigma \tilde \epsilon^{1-1/k}}{\delta^{1/k}}  + \sqrt{\frac{k\tilde \epsilon^{1-1/k} }{\delta} \sqrt{\frac{\sigma^2d\log(d)}{n}}} + \sqrt{\frac{\sigma^2kd\log(d)\tilde \epsilon^{1-2/k}}{n\delta}} + \frac{k\sigma}{\delta}\sqrt{\frac{d}{n}} \nonumber \\
     & \lesssim \frac{k\sigma \tilde \epsilon^{1-1/k}}{\delta^{1/k}}  + \frac{1}{\delta}\sqrt{\frac{\sigma^2kd\log(d)}{n}} + \sqrt{k}\tilde \epsilon^{1-1/k} + \sqrt{\frac{\sigma^2kd\log(d)\tilde \epsilon^{1-2/k}}{n\delta}}+ \frac{k\sigma}{\delta}\sqrt{\frac{d}{n}}\nonumber \\
     & \lesssim \frac{k\sigma \tilde \epsilon^{1-1/k}}{\delta^{1/k}} + \frac{k\sigma}{\delta}\sqrt{\frac{d\log(d)}{n}} \nonumber \\ 
     & \lesssim \frac{k\sigma  \epsilon^{1-1/k}}{\delta^{1/k}} + \frac{k\sigma}{\delta}\sqrt{\frac{d\log(d)}{n}}.  \nonumber
\end{align} 

Thus we have with probability at least $1-\delta$,
\begin{align}
     \|\mu_{p^*}-\mu_q\|_2 & \leq   C_3k\sigma\cdot \left(\frac{ \epsilon^{1-1/k}}{\delta^{1/k}} + \frac{1}{\delta}\sqrt{\frac{d\log(d)}{n}}\right).
\end{align}
for some constant $C_3$. 

We remark here that 
by appropriate choice of constants, we can make the projection set $\cM$ smaller than the set $\cM$ in Theorem~\ref{thm.projection_bounded_covariance}, and we know that bounded $k$-th moment would imply bounded second moment. Thus the bound in Theorem~\ref{thm.projection_bounded_covariance} also applies to the projection algorithm here, the final bound shall be the minimum of the two terms. 
\end{proof}

\subsection{Improving the sample complexity by reducing high dimensional mean estimation to one dimension  }\label{appendix.highd_to_oned}

We show that by adopting a different algorithm and analysis technique, we can improve the sample complexity in bounded Orlicz norm distributions discussed in Section~\ref{sec.expand}. We first that this results in good sample complexity in one-dimensional mean estimation, and show that high dimensional mean estimation can be reduced to a one-dimension problem. 

\begin{theorem}[One dimensional mean estimation for bounded Orlicz norm distribution under $\TV$ projection]\label{thm.oned_projection}
Assume the corruption model is either oblivious corruption (Definition~\ref{def.obliviouscorruption}) or adaptive corruption model (Definition~\ref{def.adaptivecont}) of level $\epsilon$. For some Orlicz function $\psi$ that satisfies $\psi(x)\geq x$ when $x\geq 1$, 
we take both $\GG$ and $\cM$ to be the set of one-dimensional bounded Orlicz norm distributions as below:
\begin{align}
    \GG &= \left\{p\mid \mathbb{E}_p \left[  \psi\left(\frac{(X-\mu_p)^2}{\sigma^2}\right) \right]\leq 1 \right\}, \\
     \cM & = \left\{p \mid  \mathbb{E}_p \left[ \psi\left(\frac{(X-\mu_p)^2}{4\sigma^2}\right) \right] \leq 4 \right\}.
\end{align}
Denote
\begin{align}
    \tilde \epsilon = 4\left(\sqrt{\epsilon + \frac{\log(1/\delta)}{n}} + \sqrt{\frac{\log(1/\delta)}{2n}}\right)^2.
\end{align}
If $p^*\in\GG$ and $\tilde \epsilon<1/2$, then  the projection algorithm $q = \Pi\left(\hat p_n; \TV, \cM\right)$  or $q = \Pi\left(\hat p_n; \TV, \cM, \tilde \epsilon/2\right)$ satisfies the following with probability at least $1-3\delta$:
\begin{align}
 \|\bE_{p^*}[X]- \bE_q[X] \|_2 & \leq C\sigma\cdot \left(\epsilon\sqrt{\psi^{-1}(1/\epsilon)} + \sqrt{\frac{\log(1/\delta)}{n}}\right),
\end{align}
where $C$ is some universal constant.
\end{theorem}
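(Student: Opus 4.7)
The plan is to instantiate the ``expand the set'' framework from Proposition~\ref{prop.expand_sufficient} / Theorem~\ref{thm.admissible-2} in the one-dimensional Orlicz setting, verifying the five standard ingredients. The metric ingredient is immediate since $\TV$ is a pseudometric (take $\overline{D}=\TV$), and the limited-corruption ingredient follows directly from Lemma~\ref{lem.coupling_TV}, which gives $\TV(\hat p_n,\hat p_n^*)\leq (\sqrt\epsilon+\sqrt{\log(1/\delta)/(2n)})^2$ under the adaptive model.

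The key step is constructing a ``perturbed empirical'' witness $\hat p'$ lying in $\cM$ with $\TV(\hat p_n^*,\hat p')$ small. Here I would use the decomposition
\[
(X-\mu_{\hat p_n^*})^2 \leq 2(X-\mu_{p^*})^2 + 2(\mu_{p^*}-\mu_{\hat p_n^*})^2,
\]
followed by the convexity inequality $\psi(a+b)\leq \tfrac12(\psi(2a)+\psi(2b))$, to reduce verification of $\bE_{\hat p'}[\psi((X-\mu_{\hat p'})^2/(4\sigma^2))]\leq 4$ to (i) concentration of $\frac1n\sum_i\psi((X_i-\mu_{p^*})^2/\sigma^2)$ around its mean $\leq 1$, and (ii) smallness of $(\mu_{p^*}-\mu_{\hat p_n^*})^2/\sigma^2$. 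The assumption $\psi(x)\geq x$ for $x\geq 1$ implies $\mathrm{Var}_{p^*}(X)\leq 2\sigma^2$, which immediately handles (ii) with room to spare. For (i), if the naive Markov bound is not tight enough to fit inside the constant $4$, I would delete an $O(\log(1/\delta)/n)$ fraction of the points with largest $|X_i-\mu_{p^*}|$ to form $\hat p'$; this costs only $O(\log(1/\delta)/n)$ in $\TV$ and trims the tail of $\psi((X-\mu_{p^*})^2/\sigma^2)$ enough for the inequality to hold.

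Next I would bound the generalized modulus of continuity between $\cM$ and $\cM$ under $\TV$. Observing that the membership $p\in\cM$ is equivalent to $\|X-\mu_p\|_{\tilde\psi}\leq 2\sigma$ for the Orlicz function $\tilde\psi(u)=\psi(u^2)$, Lemma~\ref{lem.cvx_mean_resilience} yields $\cM\subset \GG^{\TV}_{\mathsf{mean}}(C\sigma\eta\,\tilde\psi^{-1}(1/\eta),\eta)=\GG^{\TV}_{\mathsf{mean}}(C\sigma\eta\sqrt{\psi^{-1}(1/\eta)},\eta)$ for every $\eta\in[0,1)$. Plugging this into the mid-point/triangle argument of Lemma~\ref{lem.G_TV_mean_modulus} gives
\[
\sup_{p,q\in\cM,\ \TV(p,q)\leq\tilde\epsilon}|\mu_p-\mu_q|\ \leq\ C\sigma\,\tilde\epsilon\,\sqrt{\psi^{-1}(1/\tilde\epsilon)},
\]
which, after substituting $\tilde\epsilon=O(\epsilon+\log(1/\delta)/n)$ and using concavity of $x\mapsto x\sqrt{\psi^{-1}(1/x)}$ (a consequence of Lemma~\ref{lem.psi_increasing}), splits into the two terms in the claim.

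Finally I would supply the generalization ingredient $|\mu_{p^*}-\mu_{\hat p'}|\lesssim\sigma\sqrt{\log(1/\delta)/n}$ with probability $\geq 1-\delta$: the centered version of the Orlicz bound (Lemma~\ref{lem.centering_psi}) plus $\psi(x)\geq x$ on $[1,\infty)$ gives enough higher-moment control on $X-\mu_{p^*}$ to run a Bernstein-type tail bound, replacing the weak $1/\sqrt{n\delta}$ from Chebyshev with the desired $\sqrt{\log(1/\delta)/n}$. A final triangle inequality $|\mu_{p^*}-\mu_q|\leq|\mu_{p^*}-\mu_{\hat p'}|+|\mu_{\hat p'}-\mu_q|$ combines this with the modulus bound to produce the stated rate, with a union bound over the three $\delta$-events absorbed into the factor $3\delta$. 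The main obstacle I anticipate is calibrating the constants in the empirical-Orlicz-norm step so that the fixed factor $4$ in the definition of $\cM$ really suffices — in particular, whether the trimming of $\log(1/\delta)/n$ points is necessary, or whether Markov alone plus the convexity decomposition already keeps $\hat p_n^*\in\cM$; this determines how much the generalization term in the final bound inherits from the trimming versus from the Bernstein step.
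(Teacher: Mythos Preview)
Your overall framework (Theorem~\ref{thm.admissible-2}, Lemma~\ref{lem.coupling_TV} for limited corruption, Lemma~\ref{lem.cvx_mean_resilience} applied to $\tilde\psi(u)=\psi(u^2)$ for the modulus) matches the paper. The difference is in how $\hat p'$ is built and how the $\sqrt{\log(1/\delta)/n}$ generalization rate is obtained, and this is where your proposal has a gap.

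The paper does \emph{not} work with $\hat p_n^*$ (or an empirically trimmed version of it). It truncates at the population level: set $\epsilon_1=\epsilon+\log(1/\delta)/n$, let $p'$ be $p^*$ conditioned on $|X-\mu_{p^*}|\le \sigma\sqrt{\psi^{-1}(1/\epsilon_1)}$ (so $\TV(p^*,p')\le\epsilon_1$ by Markov), and let $\hat p'$ be $n$ i.i.d.\ samples from $p'$, coupled to $\hat p_n^*$ via Lemma~\ref{lem.coupling_TV}. This buys deterministic bounded support: under $p'$ one has $\psi((X-\mu_{p^*})^2/\sigma^2)\le 1/\epsilon_1$ almost surely. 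Bernstein on these bounded summands then gives $\bE_{\hat p'}[\psi((X-\mu_{p^*})^2/\sigma^2)]\le 1+\sqrt{2\log(1/\delta)/(n\epsilon_1)}+2\log(1/\delta)/(3n\epsilon_1)<4$ (the centering from $\mu_{p^*}$ to $\mu_{\hat p'}$ is done afterward via Lemma~\ref{lem.centering_psi}, which is where the factor $4$ in the denominator of $\cM$ comes from). The same bounded support feeds Lemma~\ref{lem.bounded_support_sub_gaussian_tail} to get $|\mu_{p'}-\mu_{\hat p'}|\lesssim\sigma\sqrt{\log(1/\delta)/n}$, and one extra triangle-inequality leg $|\mu_{p^*}-\mu_{p'}|\le\sigma\epsilon_1\sqrt{\psi^{-1}(1/\epsilon_1)}$ is paid by resilience.

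Your two proposed routes around this device both stall as written. The ``Bernstein from Orlicz control'' step for $|\mu_{p^*}-\mu_{\hat p'}|$ fails for general $\psi$: take $\psi(x)=x$, where the hypothesis is merely bounded variance and no sub-exponential bound is available, so you cannot beat Chebyshev's $\sigma/\sqrt{n\delta}$ without truncation. The empirical trimming of the top $O(\log(1/\delta)/n)$ points does not by itself control $\frac1n\sum_i\psi((X_i-\mu_{p^*})^2/\sigma^2)$ unless you first bound the relevant order statistic by $\sigma\sqrt{\psi^{-1}(n/\log(1/\delta))}$ with probability $1-\delta$; doing that rigorously is essentially the population-truncation-plus-coupling step you are skipping. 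Finally, the splitting of $\tilde\epsilon\sqrt{\psi^{-1}(1/\tilde\epsilon)}$ is not via concavity (Lemma~\ref{lem.psi_increasing} gives only monotonicity); the paper handles the $\log(1/\delta)/n$ piece by the crude bound $\psi^{-1}(y)\le y$ for $y\ge 1$, which follows from $\psi(x)\ge x$ on $[1,\infty)$.
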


\begin{proof}

Among the five conditions in Theorem~\ref{thm.admissible-2}, the `robust to perturbation' and `limited corruption' conditions are identical to the proof in  Appendix~\ref{proof.general_subgaussian_tv_projection}. 
we only need to verify the other three conditions. Denote $\epsilon_1 = \epsilon + \frac{\log(1/\delta)}{n}$.

\begin{enumerate}

\item \textbf{Set for (perturbed) empirical distribution:}

For any fixed $\epsilon$ as perturbation level, we show that there exists some distribution $\hat p'$ that has bounded Orlicz norm (inside $\cM$)
and $\TV(\hat p', \hat p_n)$ is small.

We truncate the distribution $p^*$ by removing all $X$  with $|X - \mu_{p^*} | \geq \sigma \sqrt{\psi^{-1}(1/\epsilon_1)}$ to get a new distribution $p'$, where $\epsilon_1$ is some parameter to be specified later.  Denote the empirical distribution of $p'$ with $n$ samples as $\hat p'$. By Markov's inequality, we have for any $t\geq 0$,
\begin{align}
    \bP_{p^*}(|X-\mu_{p^*}|\geq t) = \bP_{p^*}\left(\psi(\frac{(X-\mu_{p^*})^2}{\sigma^2})\geq \psi(\frac{t^2}{\sigma^2})\right) \leq \frac{\bE_{p^*}[\psi((X-\mu_{p^*})^2/\sigma^2)]}{\bE_{p^*}[\psi(t^2/\sigma^2)]}  \leq \frac{1}{\bE_{p^*}[\psi(t^2/\sigma^2)]}.\nonumber
\end{align}

By taking $t = \sigma\sqrt{\psi^{-1}(1/\epsilon_1)}$, we know that  $\TV(p^*, p')\leq \epsilon_1$.  It follows from Lemma~\ref{lem.coupling_TV} that with probability at least $1-\delta$, 
    \begin{align}
        \TV(\hat p', \hat p_n^*)& \leq \left(\sqrt{\epsilon_1} + \sqrt{\frac{\log(1/\delta)}{2n}}\right)^2.
    \end{align}
Consider the random variable $\psi(\frac{(X-\mu_{p^*})^2}{\sigma^2})$, where $X\sim p'$, we know that 
\begin{align}
     \psi(\frac{(X-\mu_{p^*})^2}{\sigma^2}) & \leq \frac{1}{\epsilon_1} \quad \text{a.s.}, \\ 
    \bE_{p'}[\psi(\frac{(X-\mu_{p^*})^2}{\sigma^2})]& \leq 1, \\
    \mathsf{Var}(\psi(\frac{(X-\mu_{p^*})^2}{\sigma^2})) & = \bE_{p'}[\psi^2(\frac{(X-\mu_{p^*})^2}{\sigma^2})] - \bE_{p'}[\psi(\frac{(X-\mu_{p^*})^2}{\sigma^2})]^2 \leq \frac{1}{\epsilon_1}.
\end{align}
By Bernstein's inequality, we know that
\begin{align}
    \bP_{p'}[\sum_{i=1}^n (\psi(\frac{(X_i-\mu_{p^*})^2}{\sigma^2}) -\bE_{p'}[\psi(\frac{(X-\mu_{p^*})^2}{\sigma^2})]) \geq t ]\leq \exp\left(-\frac{t^2}{2n/\epsilon_1 + 2t/3\epsilon_1}\right).
\end{align}
Let RHS be $\delta$ and solving $t$, we know that with probability at least $1-\delta$,
\begin{align}
    \bE_{\hat p'}[\psi(\frac{(X-\mu_{p^*})^2}{\sigma^2})] & \leq \bE_{p'}[\psi(\frac{(X-\mu_{p^*})^2}{\sigma^2})] + \left(\sqrt{\frac{2\log(1/\delta)}{n\epsilon_1}} + \frac{2\log(1/\delta)}{3n\epsilon_1}\right) \nonumber \\ 
    & \leq 1 + \left(\sqrt{\frac{2\log(1/\delta)}{n\epsilon_1}} + \frac{2\log(1/\delta)}{3n\epsilon_1}\right) < 4.
\end{align}
From centering lemma in~\ref{lem.centering_psi} we know that with probability at least $1-\delta$,
\begin{align}
     \bE_{\hat p'}\left[\psi\left(\frac{(X-\mu_{\hat p'})^2}{4\sigma^2}\right)\right] \leq 4. 
\end{align}
By taking $\GG' = \cM$, we know that   $\hat p' \in \GG'$ with probability at least $1-\delta$.
\item \textbf{Generalized Modulus of Continuity:}
Since $\epsilon + \log(1/\delta)/n \leq \epsilon_1$, the perturbation level for modulus of continuity is $2\left(\sqrt{\epsilon_1} + \sqrt{\frac{\log(1/\delta)}{2n}}\right)^2 + 2\left(\sqrt{\epsilon} + \sqrt{\frac{\log(1/\delta)}{2n}}\right)^2 \leq 4\left(\sqrt{\epsilon_1} + \sqrt{\frac{\log(1/\delta)}{2n}}\right)^2$. Denote the right hand side as $\tilde \epsilon$. Then we also know $\tilde \epsilon \lesssim \epsilon_1$. 
We know from Lemma~\ref{lem.cvx_mean_resilience} that for any $\tilde \epsilon \in [0, 1/2]$, there is
\begin{align}\label{eqn.proof_kth_id_tv1}
      \sup_{p\in\GG', q\in\cM, \TV(p, q)\leq \tilde \epsilon} \|\bE_p[X] - \bE_q[X]\|_2  
      & \lesssim \sigma \epsilon_1\sqrt{\psi^{-1}(1/\epsilon_1)}. 
\end{align}

\item \textbf{Generalization bound:} 
We first show that $\bE_{p'}[\psi(X^2/\sigma^2)]\leq 1$ implies $\bE_{p'}[X^2]\leq 2\sigma^2$. Note that $\bE_{p'}[\psi(X^2/\sigma^2)]\leq 1$ is equivalent to
\begin{align}
    1 & \geq \bP_{p'}(|X|\leq \sigma)\bE_{p'}[\psi(X^2/\sigma^2) \mid  |X| \leq \sigma] + \bP_{p'}(|X|> \sigma)\bE_{p'}[\psi(X^2/\sigma^2) \mid  |X| > \sigma] \nonumber \\ 
    & \geq \bP_{p'}(|X|> \sigma)\bE_{p'}[X^2/\sigma^2 \mid  |X| > \sigma],\nonumber
\end{align}
since $\psi(x)\geq x$ for $x\geq 1$. Thus we have
\begin{align}
    \bE_{p'}[X^2/\sigma^2] & = \bP_{p'}(|X|\leq \sigma)\bE_{p'}[X^2/\sigma^2 \mid  |X| \leq \sigma] + \bP_{p'}(|X|> \sigma)\bE_{p'}[X^2/\sigma^2 \mid  |X| > \sigma]\nonumber \\ 
    & \leq 2. \nonumber
\end{align}

Thus we know that $p'$ has its variance bounded. 
Note that $p'$ is a $\epsilon_1$-deletion of $p^*$. By triangle inequality and the resilient condition for $p^*$, we have
\begin{align}
    |\bE_{p^*}[X]- \bE_q[X] | & \leq |\bE_{p^*}[X]- \bE_{p'}[X] |+  |\bE_{p'}[X]- \bE_{\hat p'}[X] | +   |\bE_{\hat p'}[X] - \bE_{q}[X]|  \nonumber \\
    & \leq \sigma \epsilon_1\sqrt{\psi^{-1}(1/\epsilon_1)} + |\bE_{p'}[X]- \bE_{\hat p'}[X] | +   |\bE_{\hat p'}[X] - \bE_{q}[X]|.
\end{align}
From Lemma~\ref{lem.bounded_support_sub_gaussian_tail} and the assumption that $\epsilon + \log(1/\delta)/n \leq \epsilon_1$, we know that with probability at least $1-\delta$, there exists some constant $C_3$ such that 
\begin{align}
    |\bE_{p'}[X]- \bE_{\hat p'}[X] | & \leq C_3\left(\sigma \sqrt{\frac{\log(1/\delta)}{n}}+ \frac{\sigma\log(1/\delta)}{n\sqrt{\epsilon + \log(1/\delta)/n}}\right)\nonumber \\
    & \leq C_3\left( \sigma \sqrt{\frac{\log(1/\delta)}{n}}+ \frac{\sigma\log(1/\delta)}{n\sqrt{  \log(1/\delta)/n}}\right) \nonumber \\
    & = 2 C_3 \sigma \sqrt{\frac{\log(1/\delta)}{n}}. \nonumber
\end{align}
Thus with probability at least $1-\delta$, there exists some constant $C_4$ such that
\begin{align}
        |\bE_{p^*}[X]- \bE_q[X] | \leq |\bE_{\hat p'}[X] - \bE_{q}[X]|  + \sigma \epsilon_1\sqrt{\psi^{-1}(1/\epsilon_1)}+ C_4 \sigma \sqrt{\frac{\log(1/\delta)}{n}}.
\end{align}
\end{enumerate}

Take $\epsilon_1 = \epsilon + \log(1/\delta)/n$. From the same argument as~\eqref{eqn.concave_upper_bound}, we have
\begin{align}
    \epsilon_1\sqrt{\psi^{-1}(1/\epsilon_1)}\leq  \epsilon\sqrt{\psi^{-1}(1/\epsilon)} + \frac{\log(1/\delta)}{n}\sqrt{\psi^{-1}\left (\frac{n}{\log(1/\delta)}\right )}.
\end{align}
From assumption $\tilde \epsilon < 1/2$ we know that $\frac{n}{\log(1/\delta)}>1$. From $\psi^{-1}(x)\leq x$ for $x\geq 1$, we know that $\frac{\log(1/\delta)}{n}\sqrt{\psi^{-1}(\frac{n}{\log(1/\delta)})} \leq \sqrt{\frac{\log(1/\delta)}{n}}$. Thus overall, we have
\begin{align}
        |\bE_{p^*}[X]- \bE_q[X] | \leq |\bE_{\hat p'}[X] - \bE_{q}[X]|  + \sigma \epsilon\sqrt{\psi^{-1}(1/\epsilon)}+ C_4 \sigma \sqrt{\frac{\log(1/\delta)}{n}}.
\end{align}

Combining the five conditions, from Theorem~\ref{thm.admissible-2} and taking , for projection algorithm $q = \Pi(\hat p_n; \TV, \cM)$ or $q = \Pi(\hat p_n; \TV, \cM, \tilde \epsilon/2)$, there exists some constant $C$ such that with probability at least $1-3\delta$,
\begin{align}
 |\bE_{p^*}[X]- \bE_q[X] | & \leq  C\sigma\cdot \left(\epsilon\sqrt{\psi^{-1}( 1/\epsilon)} +\sqrt{\frac{\log(1/\delta)}{n}}\right).
\end{align}

\end{proof}

Now we show that if we are able to get good mean estimator for one-dimensional random variable, we are guaranteed to get good mean estimator for high-dimensional random variable.  Similar idea also appears in~\cite{catoni2017dimension, joly2017estimation, prasad2019unified}.
\begin{lemma}\label{lem.linear_programming_mean}
Assume $X\sim p^*$ is a $d$-dimensional random variable, and  there exists an estimator $\hat \mu_v$ such that for any fixed $v\in\bR^d, \|v\|_2=1$, with probability at least $1-\delta$,
\begin{align}\label{eqn.onedimension_guarantee}
|\hat \mu_v -\bE_{p^*}[v^\top X]| \leq f(\delta). 
\end{align}
Denote the minimum $1/2$-covering of the unit sphere $\mathcal{S}^{d-1}=\{v\mid v\in\bR^d, \|v\|_2=1\}$ as $\mathcal{N}(\mathcal{S}^{d-1}, 1/2)$, i.e. $\mathcal{N}(\mathcal{S}^{d-1}, 1/2)$ is the set with minimum elements that satisfies $\forall v\in\mathcal{S}^{d-1}$, $\exists y \in \mathcal{N}(\mathcal{S}^{d-1}, 1/2)$ such that $\|v-y\|_2 \leq 1/2$. 
Then the linear programming
\begin{align}
    \hat \mu = \argmin_{\mu} \sup_{v\in\mathcal{N}(\mathcal{S}^{d-1}, 1/2)} |v^\top \mu - \hat \mu_v| 
\end{align}
satisfies 
\begin{align}
    \|\hat \mu -\bE_{p^*}[ X]\|_2 \leq 4f(\delta/5^d)
\end{align}
with probability at least $1-\delta$.
\end{lemma}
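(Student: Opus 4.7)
The plan is to use a standard covering argument combined with a union bound. First I would recall that the minimum $1/2$-covering of the unit sphere in $\bR^d$ satisfies $|\mathcal{N}(\mathcal{S}^{d-1},1/2)| \leq 5^d$ (see, e.g., Vershynin's book on high-dimensional probability). Applying the hypothesized one-dimensional guarantee in~\eqref{eqn.onedimension_guarantee} with confidence $\delta/5^d$ and taking a union bound over all directions in the cover, we obtain that with probability at least $1-\delta$,
\begin{equation}
|\hat\mu_v - \bE_{p^*}[v^\top X]| \leq f(\delta/5^d) \quad \text{for every } v \in \mathcal{N}(\mathcal{S}^{d-1},1/2). \nonumber
\end{equation}
Condition on this event for the remainder of the argument.

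Next I would use the standard fact that a $1/2$-cover gives a two-sided control of norms: for any $w \in \bR^d$,
\begin{equation}
\sup_{v\in \mathcal{N}(\mathcal{S}^{d-1},1/2)} v^\top w \geq \tfrac{1}{2}\|w\|_2. \nonumber
\end{equation}
This is the key geometric reduction: it converts the task of bounding $\|\hat\mu - \bE_{p^*}[X]\|_2$ into bounding the discrete supremum $\sup_{v\in \mathcal{N}} v^\top(\hat\mu - \bE_{p^*}[X])$, which is exactly the quantity the linear program is designed to minimize.

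The remaining step is a triangle inequality chain. By the optimality of $\hat \mu$ as the minimizer and the fact that $\bE_{p^*}[X]$ is itself a feasible point, we have
\begin{equation}
\sup_{v\in \mathcal{N}}|v^\top \hat\mu - \hat\mu_v| \leq \sup_{v\in \mathcal{N}}|v^\top \bE_{p^*}[X] - \hat\mu_v| \leq f(\delta/5^d). \nonumber
\end{equation}
Combining with the covering property and the union-bound guarantee yields
\begin{equation}
\tfrac{1}{2}\|\hat\mu - \bE_{p^*}[X]\|_2 \leq \sup_{v\in \mathcal{N}}|v^\top \hat\mu - v^\top \bE_{p^*}[X]| \leq \sup_{v\in \mathcal{N}}|v^\top \hat\mu - \hat\mu_v| + \sup_{v\in \mathcal{N}}|\hat\mu_v - v^\top \bE_{p^*}[X]| \leq 2f(\delta/5^d), \nonumber
\end{equation}
giving the claimed bound $\|\hat\mu - \bE_{p^*}[X]\|_2 \leq 4 f(\delta/5^d)$. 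There is no real obstacle here; the only subtle point is that the one-dimensional guarantee is stated for a \emph{fixed} $v$, so we must restrict to a finite deterministic cover before union-bounding rather than trying to handle all $v\in \mathcal{S}^{d-1}$ simultaneously, and then pay the factor $2$ in the covering inequality to return from the cover to the full sphere.
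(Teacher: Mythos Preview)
Your proposal is correct and follows essentially the same approach as the paper: both use the $1/2$-net bound $|\mathcal{N}(\mathcal{S}^{d-1},1/2)|\leq 5^d$, the covering inequality $\sup_{v\in\mathcal{N}}v^\top w\geq\tfrac12\|w\|_2$, the optimality of $\hat\mu$ to bound $\sup_{v\in\mathcal{N}}|v^\top\hat\mu-\hat\mu_v|$ by $\sup_{v\in\mathcal{N}}|v^\top\bE_{p^*}[X]-\hat\mu_v|$, and a union bound over the net. The only difference is cosmetic ordering---you apply the union bound first and then the geometry, whereas the paper does the geometric reduction first and the union bound last.
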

\begin{proof}
We have
\begin{align}
\|\hat \mu - \bE_{p^*}[X]\|_2 & = \sup_{v\in \mathcal{S}^{d-1}}  |\mathbb{E}_{p^*}[v^{\top}( X - \hat \mu)]| \\
& \leq  \sup_{v\in\mathcal{N}(\mathcal{S}^{d-1}, 1/2)}  \mathbb{E}_{p^*}[v^{\top}( X - \hat \mu)]  +  \frac{1}{2} \|\hat \mu - \bE_{p^*}[X]\|_2. 
\end{align}
Thus we know that 
\begin{align}
\|\hat \mu - \bE_{p^*}[X]\|_2 & \leq  2\sup_{v\in\mathcal{N}(\mathcal{S}^{d-1}, 1/2)}  \mathbb{E}_{p^*}[v^{\top}( X - \hat \mu)]  \\
& \leq  2(\sup_{v\in\mathcal{N}(\mathcal{S}^{d-1}, 1/2)}  | \mathbb{E}_{p^*}[v^{\top} X]  - \hat \mu_v| +  \sup_{v\in\mathcal{N}(\mathcal{S}^{d-1}, 1/2)} | v^\top \hat \mu - \hat \mu_v | )\\
& \leq 4 \sup_{v\in\mathcal{N}(\mathcal{S}^{d-1}, 1/2)}  | \mathbb{E}_{p^*}[v^{\top} X]  - \hat \mu_v|. 
\end{align}

Since $|\mathcal{N}(\mathcal{S}^{d-1}, 1/2)| \leq 5^d$~\citep{wainwright2019high},
by taking the union bound over all the vectors in $\mathcal{N}(\mathcal{S}^{d-1}, 1/2)$ in~\eqref{eqn.onedimension_guarantee},  we know that with probability at least $1-5^d\delta$,
\begin{align}
    \|\hat \mu - \bE_{p^*}[X]\|_2 \leq 4 f(\delta).
\end{align}
Substituting $\delta$ with $\tilde \delta = 5^d\delta$ gives the final result. 

\end{proof}

\begin{remark}
The above result also applies to  general case of estimating under $L = W_\sF(p,q) = \sup_{f\in \sF} |\bE_p f(X) - \bE_q f(X)|$ by designing $W_\sH$ where $\sH \subset \sF$ and $W_\sH(p,q)\geq \frac{1}{2}W_\sF(p,q)$. Furthermore, it is \emph{necessary} to estimate each $f(X)$ for $f\in\sF$ very well. Recall that the modulus of continuity for $L=W_\sF$ is a nearly tight upper bound for the population limit~(Lemma~\ref{lemma.population_limit_opt}). The modulus can be written as
\begin{align}
    \sup_{p_1, p_2\in\GG, \TV(p_1, p_2)\leq \epsilon} W_\sF(p_1, p_2) =  \sup_{f\in\sF} \sup_{p_1, p_2\in\GG, \TV(p_1, p_2)\leq \epsilon} |\bE_{p_1}[f(X)] -  \bE_{p_2}[f(X)]|,
\end{align}
while $\sup_{p_1, p_2\in\GG, \TV(p_1, p_2)\leq \epsilon} |\bE_{p_1}[f(X)] -  \bE_{p_2}[f(X)]|$ is the modulus for estimating $\bE_p[f(X)]$. Hence, robust estimation under $W_\sF$ is equivalent to robust estimation of each $f\in \sF$. 
\end{remark}

Combining Theorem~\ref{thm.oned_projection} and Lemma~\ref{lem.linear_programming_mean}, we have the following corollary.
\begin{corollary}\label{cor.kthmomentreduceapproach}
Assume the corruption model is either oblivious corruption (Definition~\ref{def.obliviouscorruption}) or adaptive corruption model (Definition~\ref{def.adaptivecont}) of level $\epsilon$. For some Orlicz function $\psi$ that satisfies $\psi(x)\geq x$ when $x\geq 1$, 
we take $\GG$ to be the set of $d$-dimensional bounded Orlicz norm distributions, and $\cM$ to be the set of $1$-dimensional bounded Orlicz norm distributions:
\begin{align}
    \GG &= \left\{p\mid \sup_{v\in\bR^d, \|v\|_2=1} \mathbb{E}_p \left[  \psi\left(\frac{(v^\top(X-\mu_p))^2}{\sigma^2}\right) \right]\leq 1 \right\}, \\
     \cM & = \left\{p \mid  \mathbb{E}_p \left[ \psi\left(\frac{(X-\mu_p)^2}{4\sigma^2}\right) \right] \leq 4 \right\}.
\end{align}
Denote
\begin{align}
    \tilde \epsilon = 4\left(\sqrt{\epsilon + \frac{\log(1/\delta)}{n}} + \sqrt{\frac{\log(1/\delta)}{2n}}\right)^2.
\end{align}
For some fixed $v\in\mathcal{S}^{d-1}$, denote the projection of distribution $\hat p_n$ along direction $v$ as $\hat p_n^v$.   
Denote $q_v = \Pi(\hat p_n^v; \TV, \cM, \tilde \epsilon / 2)$, and the algorithm outputs
\begin{align}
    \hat \mu = \argmin_{\mu} \sup_{v\in\mathcal{N}(\mathcal{S}^{d-1}, 1/2)} |v^\top \mu - \mu_{q_v}|
\end{align}
If $p^*\in\GG$ and $\tilde \epsilon<1/2$, then $\hat \mu$ satisfies the following with probability at least $1-3\delta$:
\begin{align}
 \|\mu_{p^*}- \hat \mu \|_2 & \leq C\sigma\cdot \left(\epsilon\sqrt{\psi^{-1}(1/\epsilon)} + \sqrt{\frac{d+\log(1/\delta)}{n}}\right),
\end{align}
where $C$ is some universal constant.
\end{corollary}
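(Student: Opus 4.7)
The plan is to reduce the $d$-dimensional problem to a collection of one-dimensional problems via Lemma~\ref{lem.linear_programming_mean}, applying Theorem~\ref{thm.oned_projection} to each direction in a $1/2$-cover of the unit sphere, and then union-bounding. The only real content is to verify that (i) along each fixed direction $v$ the projected distributions satisfy the hypotheses of Theorem~\ref{thm.oned_projection}, and (ii) the corruption model survives projection.

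First I would fix any $v\in\mathcal{N}(\mathcal{S}^{d-1},1/2)$ and consider the one-dimensional random variable $v^\top X$. The assumption $p^*\in\GG$ implies $\mathbb{E}_{p^*}[\psi((v^\top(X-\mu_{p^*}))^2/\sigma^2)]\leq 1$, so the pushforward $p^{*,v}$ lies in the one-dimensional set from Theorem~\ref{thm.oned_projection} (with the same $\sigma$). Since the map $x\mapsto v^\top x$ is deterministic and $1$-Lipschitz (for $\|v\|_2=1$), a coupling achieving $\TV(\hat p_n,\hat p_n^*)\leq \epsilon_{\mathrm{coup}}$ (in the oblivious case) or the stochastic dominance bound from Definition~\ref{def.adaptivecont}/Lemma~\ref{lem.coupling_TV} (in the adaptive case) pushes forward to give exactly the same guarantee for $(\hat p_n^v,\hat p_n^{*,v})$; hence the one-dimensional problem along $v$ inherits the same corruption level $\epsilon$ (with the same $\tilde\epsilon$).

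Next I would invoke Theorem~\ref{thm.oned_projection} applied to $\hat p_n^v$ with target set $\cM$ and failure probability $\delta/(3\cdot 5^d)$. Writing $f(\delta)=C\sigma\bigl(\epsilon\sqrt{\psi^{-1}(1/\epsilon)}+\sqrt{\log(1/\delta)/n}\bigr)$, this yields, for the fixed $v$,
\begin{equation*}
|\mu_{q_v}-\mathbb{E}_{p^*}[v^\top X]|\leq f\!\left(\tfrac{\delta}{3\cdot 5^d}\right)
\end{equation*}
with probability at least $1-\delta/5^d$. A union bound over the $5^d$ elements of $\mathcal{N}(\mathcal{S}^{d-1},1/2)$ (whose cardinality bound $|\mathcal{N}(\mathcal{S}^{d-1},1/2)|\leq 5^d$ is standard, see e.g.\ Wainwright) shows that the bound holds simultaneously for all $v\in\mathcal{N}(\mathcal{S}^{d-1},1/2)$ with probability at least $1-\delta$. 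Technically Lemma~\ref{lem.linear_programming_mean} only needs an estimator satisfying the per-direction guarantee with probability $1-\delta$, so this is exactly what its proof uses.

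Finally, applying Lemma~\ref{lem.linear_programming_mean} directly with this $f$ gives
\begin{equation*}
\|\hat\mu-\mu_{p^*}\|_2\leq 4f\!\left(\tfrac{\delta}{3\cdot 5^d}\right)\leq 4C\sigma\Bigl(\epsilon\sqrt{\psi^{-1}(1/\epsilon)}+\sqrt{\tfrac{d\log 5+\log(3/\delta)}{n}}\Bigr),
\end{equation*}
which is of the desired form $\lesssim \sigma\bigl(\epsilon\sqrt{\psi^{-1}(1/\epsilon)}+\sqrt{(d+\log(1/\delta))/n}\bigr)$. Since Theorem~\ref{thm.oned_projection} itself produced a $3\delta$ failure event, rescaling $\delta\mapsto\delta/3$ everywhere gives the stated $1-3\delta$ success probability. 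There is no serious obstacle here; the only thing worth being careful with is step (ii)—confirming that both the oblivious and adaptive corruption guarantees push forward cleanly under the projection $x\mapsto v^\top x$, so that Theorem~\ref{thm.oned_projection}'s hypothesis on $\tilde\epsilon$ applies with the same expression.
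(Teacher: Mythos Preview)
Your proposal is correct and takes essentially the same approach as the paper, which simply states that the corollary follows from ``Combining Theorem~\ref{thm.oned_projection} and Lemma~\ref{lem.linear_programming_mean}.'' You have correctly filled in the details: verifying that the one-dimensional pushforward along each $v$ inherits both the Orlicz-norm assumption and the corruption level, applying Theorem~\ref{thm.oned_projection} per direction, and then invoking Lemma~\ref{lem.linear_programming_mean} (whose proof already performs the union bound over the $5^d$-net and yields the $d+\log(1/\delta)$ term).
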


By taking $\psi(x) = x$, we are assuming the true distribution has bounded covariance, and guarantee estimation error $O(\sqrt{\epsilon} +\sqrt{\frac{d+\log(1/\delta)}{n}})$.

\subsection{Interpretation and Comparison for Mean Estimation}\label{sec.mean_comparison_discussion}

To contrast the approaches in Sections~\ref{sec.finite_sample_algorithm_weaken} and \ref{sec.expand}, here we compare their implied 
bounds for robust mean estimation (Table~\ref{tab.projection_summary}) and also discuss related literature.

If $p^*$ is sub-Gaussian, analysis in Theorem~\ref{thm.tTV_mean} 
implies an error $O(\epsilon\sqrt{\log(1/\epsilon)})$
when $n \gtrsim \frac{d + \log(1/\delta)}{\epsilon^2}$, while  analysis 
in Theorem~\ref{thm.general_subgaussian_tv_projection} shows that the same algorithm actually only  requires $n \gtrsim \frac{d + \log(1/\delta)}{\epsilon^2\log(1/\epsilon)}$, which is optimal. Here 
Theorem~\ref{thm.general_subgaussian_tv_projection} is not explicitly stated in the literature but should be well known among experts. We are also able to generalize it to all distributions with moment generating functions by Lemma~\ref{lem.subgaussian_empirical_resilience}, which provides better bound than~\citet[Lemma  4.4]{diakonikolas2019robust} for sub-Gaussian case.

If $p^*$ has bounded covariance, then Theorem~\ref{thm.tTV_mean} implies an error of order $O(\sqrt{\epsilon})$ when $n \gtrsim \frac{d + \log(1/\delta)}{\epsilon^2}$.
This dependence of $n$ on $\epsilon$ is sub-optimal; inspired by~\citet{prasad2019unified}, our Theorem~\ref{thm.projection_bounded_covariance} shows that the same $\tTV_\mathcal{H}$ projection algorithm reaches $O(\sqrt{\epsilon})$ error when $n \gtrsim \frac{d\log(d/\delta)}{\epsilon}$, 
    which has better dependence of $n$ on $\epsilon$ but worse dependence on $d$.
    
    When $\epsilon = 0$, the current analysis in Theorem~\ref{thm.tTV_mean} for projecting under resilient set
    yields an error of $O((\frac{d+\log(1/\delta)}{n})^{1/4})$ error, and our Theorem~\ref{thm.projection_bounded_covariance} for projecting under bounded covariance set
    gives $O((\frac{d\log(d/\delta)}{n})^{1/2})$,  while median-of-means tournament method~\cite{lugosi2019sub} can achieve $O((\frac{d+\log(1/\delta)}{n})^{1/2})$. In the follow-up work~\cite{diakonikolas2020outlier}, it is shown that the exact sub-Gaussian rate can be achieved by combining the bucket means in median-of-means and $\TV$ projection algorithm, while without the combination one might lose a logarithmic factor. %

\section{Related discussions and remaining proofs in Section~\ref{sec.population_w1}}
\subsection{A general design for $W_{c, k}$ perturbations}
We can extend the definition of friendly perturbation and generalized resilience set from $W_1$ perturbation to any Wasserstein perturbation $W_{c, k}$. It's mostly similar to that of $W_1$ perturbation, under some topological assumptions between $c$ and $f$.

\begin{definition}[Friendly perturbation]\label{def.friendly_perturbation_general}
For a distribution $p$ over $\mathcal{X}$, fix a function $f : \mathcal{X}\to \bR$. A distribution $r$ 
is an {$\eta$-friendly perturbation} of $p$ for $f$ under $W_{c,k}$, denoted as $r \in \mathbb{F}(p, \eta, W_{c, k}, f) $, if there is a coupling $\pi_{X,Y}$ between $X\sim p$ 
and $Y \sim r$ such that:
\begin{itemize}
\item The cost $(\bE_{\pi}[c^k(X,Y)])^{1/k}$ is at most $\eta$.
\item All points move towards the mean of $r$: $f(Y)$ is between $f(X)$ and $\bE_{r}[f(Y)]$ almost surely.
\end{itemize}
\end{definition}

We make the following 
topological assumptions regarding $c$ and $f$.
\begin{assumption}[Intermediate value property]
\label{ass:topo}
Given $W_{c, k}$, we assume that for all $x$ and $y$ and all 
$u$ with $f(x) < u < f(y)$, 
there is some $z$ satisfying 
$f(z) = u$ and $\max(c(x,z), c(z,y)) \leq c(x,y)$.
\end{assumption}

This holds for all of our examples and for many other $f$ and $c$, e.g.~when $c$ is a path metric~\citep[Definition 1.7]{gromov2007metric} and $f$ is continuous under the topology induced by the metric. Under this assumption, we generalize the midpoint lemma in Lemma~\ref{lem.wc_friend_pert} to arbitrary $W_{c, k}$.

\begin{lemma}[Midpoint lemma for $W_{c, k}$ perturbation]\label{lem.wc_friend_pert_general}
Suppose Assumption~\ref{ass:topo} holds. Then for
 any $p_1$ and $p_2$ such that $W_{c,k}(p_1,p_2)<\eta$ and any $f$, there exists a distribution $r$ such that
\begin{align}
 r \in \mathbb{F}(p_1, \eta, W_{c,k},f) \cap   \mathbb{F}(p_2, \eta, W_{c,k},f).
\end{align}
In other words, $r$ is an $\eta$-friendly perturbation of both $p_1$ and $p_2$ for $f$ under $W_{c,k}$.  
\end{lemma}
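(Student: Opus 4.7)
The plan is to mimic the proof of Lemma~\ref{lem.wc_friend_pert} for the $W_1$ case, replacing the Euclidean line-segment geometry with the intermediate value property in Assumption~\ref{ass:topo}. First, pick a coupling $\pi$ between $X \sim p_1$ and $Y \sim p_2$ with $\bE_\pi[c(X,Y)^k]^{1/k} < \eta$ (an optimal coupling exists under the usual regularity on $c$; otherwise take a near-optimal coupling and pass to a limit). The goal is to construct a random variable $Z$ on the same probability space so that its law $r$ is simultaneously an $\eta$-friendly perturbation of $p_1$ and $p_2$ for $f$ under $W_{c,k}$.

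The core construction is a one-parameter family $Z_\mu(x,y)$ indexed by a scalar $\mu \in \bR$ that will eventually be matched to $\bE_r[f(Z)]$. For each realization $(x,y)$, I pick $Z_\mu(x,y) = z$ by cases: if $\mu$ lies between $f(x)$ and $f(y)$, Assumption~\ref{ass:topo} supplies a $z$ with $f(z) = \mu$ and $\max(c(x,z), c(z,y)) \leq c(x,y)$; if $\mu > \max(f(x), f(y))$, take $z$ to be whichever of $x, y$ carries the larger $f$-value; if $\mu < \min(f(x), f(y))$, take $z$ to be whichever has the smaller $f$-value. A measurable selection theorem (Kuratowski--Ryll-Nardzewski, applied to the set-valued map in the first branch) makes $(x,y) \mapsto z$ jointly measurable. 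Inspection of all three branches shows $c(x,z) \leq c(x,y)$ and $c(z,y) \leq c(x,y)$ automatically, so integrating under $\pi$ yields $\bE_\pi[c(X, Z_\mu)^k]^{1/k} \leq \eta$ and $\bE_\pi[c(Z_\mu, Y)^k]^{1/k} \leq \eta$ with no extra work. Moreover, again by case analysis $f(Z_\mu)$ always lies in the closed interval between $f(X)$ and $\mu$, and in the one between $f(Y)$ and $\mu$, which is exactly the second condition in Definition~\ref{def.friendly_perturbation_general} once we succeed in setting $\mu = \bE_r[f(Z)]$.

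The main obstacle is that self-consistency condition $\mu = \bE_\pi[f(Z_\mu)]$: the marginal $r$ depends on $\mu$, but $\mu$ itself must equal the mean of $f$ under $r$. I will close the loop with a one-dimensional fixed-point argument. In every realization one has $f(Z_\mu) = \mathrm{median}(f(X), \mu, f(Y))$, i.e.\ $\mu$ clipped to the interval between $f(X)$ and $f(Y)$, so $g(\mu) := \bE_\pi[f(Z_\mu)]$ is monotone non-decreasing and continuous in $\mu$ by dominated convergence, using $|f(Z_\mu)| \leq |f(X)| + |f(Y)|$ as an integrable envelope (assuming the standard integrability $\bE_{p_1}|f|, \bE_{p_2}|f| < \infty$; otherwise truncate $f$ and pass to a limit). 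Since $g(\mu) \to \bE_\pi[\min(f(X), f(Y))]$ as $\mu \to -\infty$ and $g(\mu) \to \bE_\pi[\max(f(X), f(Y))]$ as $\mu \to +\infty$, the map $\mu \mapsto g(\mu) - \mu$ changes sign, and the intermediate value theorem supplies $\mu^*$ with $g(\mu^*) = \mu^*$. Taking $r$ to be the law of $Z_{\mu^*}$ under $\pi$ yields a common $\eta$-friendly perturbation of $p_1$ and $p_2$, completing the proof.
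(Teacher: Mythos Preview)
Your proposal is correct and follows essentially the same route as the paper's own proof: both construct the slider $f(Z_\mu)=\mathrm{median}(f(X),\mu,f(Y))$ using Assumption~\ref{ass:topo} for the interior case, then close the loop via a one-dimensional fixed-point argument for $\mu\mapsto\bE_\pi[f(Z_\mu)]$. You are somewhat more explicit than the paper about the measurable-selection and integrability technicalities, but the underlying idea is identical.
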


With the midpoint and friendly perturbation, we can define the generalized resilience set under $W_{c, k}$ perturbation:

\begin{definition}[$\GG^{W_{c, k}}$]\label{def.G^Wc_general}
We define \begin{align}\label{eqn.def_wc_G}
& \mathcal{G}^{W_{c, k}}(\rho_1, \rho_2, \eta) = \mathcal{G}^{W_{c, k}}_{\downarrow}(\rho_1, \eta) \cap \mathcal{G}^{W_{c, k}}_{\uparrow}(\rho_1, \rho_2, \eta),
\end{align}
where
\begin{align}
     \GG_{\downarrow}^{W_{c, k}}(\rho_1, \eta) = \{p \mid  & \sup_{f\in\sF_{\theta^*(p)},  r\in \mathbb{F}(p, \eta, W_{c, k}, f)} \bE_r[f(X)] - B^*(f, \theta^*(p)) \leq \rho_1  \}, \\
     \GG_{\uparrow}^{W_{c, k}}(\rho_1, \rho_2, \eta) =  \bigg\{p \mid  & \text{for all } \theta\in\Theta, \bigg( \Big(  \sup_{f\in \sF_\theta} \inf_{r\in \mathbb{F}(p, \eta, W_{c, k}, f)} \bE_r[f(X)] - B^*(f, \theta) \leq \rho_1 \Big) \Rightarrow  L(p, \theta) \leq \rho_2  \bigg) \bigg\}.
\end{align}
\end{definition}

And the modulus of the set is upper bounded:
\begin{theorem}\label{thm.G_Wc_fundamental_limit_general}

The modulus of continuity $\modu$ in (\ref{eqn.modulus}) for $\GG^{W_{c, k}}(\rho_1, \rho_2, \eta)$ is bounded above by 
$\modu(\GG^{W_{c, k}}(\rho_1, \rho_2, \eta), 2\epsilon) \leq \rho_2$ for any $ 2\epsilon \leq \eta$.
\end{theorem}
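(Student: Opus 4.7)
The plan is to mirror the proof of Theorem~\ref{thm.G_fundamental_limit} (the $\TV$ case) and Theorem~\ref{thm.G_Wc_fundamental_limit} (the $W_1$ case), substituting Lemma~\ref{lem.wc_friend_pert_general} as the source of a midpoint distribution. Concretely, I would fix arbitrary $p_1, p_2 \in \GG^{W_{c,k}}(\rho_1, \rho_2, \eta)$ satisfying $W_{c,k}(p_1, p_2) \leq 2\epsilon \leq \eta$, and show that $L(p_2, \theta^*(p_1)) \leq \rho_2$; since $p_1, p_2$ are arbitrary this controls the modulus.

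The key step is to verify the $\GG_{\uparrow}^{W_{c,k}}$ hypothesis at $\theta = \theta^*(p_1)$, namely that
\begin{align*}
\sup_{f \in \sF_{\theta^*(p_1)}} \inf_{r \in \mathbb{F}(p_2, \eta, W_{c,k}, f)} \bE_r[f(X)] - B^*(f, \theta^*(p_1)) \leq \rho_1.
\end{align*}
For each fixed $f \in \sF_{\theta^*(p_1)}$, Lemma~\ref{lem.wc_friend_pert_general} (whose intermediate value hypothesis is part of the assumed setup) produces a single distribution $r_f$ that is simultaneously an $\eta$-friendly perturbation of $p_1$ and of $p_2$ with respect to $f$ under $W_{c,k}$. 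Since $r_f \in \mathbb{F}(p_1, \eta, W_{c,k}, f)$ and $p_1 \in \GG_{\downarrow}^{W_{c,k}}(\rho_1, \eta)$, we obtain $\bE_{r_f}[f(X)] - B^*(f, \theta^*(p_1)) \leq \rho_1$. But $r_f$ is also in $\mathbb{F}(p_2, \eta, W_{c,k}, f)$, so the inner infimum over that set is bounded by $\rho_1$ as well. Taking the supremum over $f \in \sF_{\theta^*(p_1)}$ yields the hypothesis of $\GG_{\uparrow}^{W_{c,k}}$, and applying $p_2 \in \GG_{\uparrow}^{W_{c,k}}(\rho_1, \rho_2, \eta)$ with $\theta = \theta^*(p_1) \in \Theta$ gives $L(p_2, \theta^*(p_1)) \leq \rho_2$.

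The only subtle point is a quantifier swap: the $\GG_{\uparrow}$ definition requires a uniform bound on $\sup_f \inf_r$, whereas the midpoint lemma furnishes a separate $r_f$ for each $f$. This is exactly what the $\inf_r$ in $\GG_{\uparrow}^{W_{c,k}}$ is designed to accommodate, so no minimax exchange is needed; this is the main place where the asymmetric definition of $\GG_{\uparrow}$ (containing $\sup_f \inf_r$ rather than $\inf_r \sup_f$) pays off, exactly as in the $W_1$ argument. I anticipate no other obstacle, since Lemma~\ref{lem.wc_friend_pert_general} is stated precisely to generalize the midpoint property from $W_1$ to $W_{c,k}$, and the rest of the argument is a direct transcription of the $W_1$ proof.
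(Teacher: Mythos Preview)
Your proposal is correct and follows essentially the same argument as the paper: fix $p_1,p_2$, invoke the midpoint lemma (Lemma~\ref{lem.wc_friend_pert_general}) for each $f\in\sF_{\theta^*(p_1)}$ to get $r_f\in\mathbb{F}(p_1,\eta,W_{c,k},f)\cap\mathbb{F}(p_2,\eta,W_{c,k},f)$, use $p_1\in\GG_{\downarrow}^{W_{c,k}}$ to bound $\bE_{r_f}[f(X)]-B^*(f,\theta^*(p_1))\leq\rho_1$, and then apply $p_2\in\GG_{\uparrow}^{W_{c,k}}$ at $\theta=\theta^*(p_1)$ to conclude $L(p_2,\theta^*(p_1))\leq\rho_2$. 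Your observation that the $\sup_f\inf_r$ ordering in $\GG_{\uparrow}^{W_{c,k}}$ is precisely what lets the per-$f$ midpoint suffice (no minimax swap needed) is exactly the point.
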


\subsection{Key Lemmas}

The following lemma produces an upper bound on $|\bE[g(X) - g(Y)]|$. 
\begin{lemma}\label{lem.coupling_g_bounded}
Assume we are given two distributions $p, q$, some function $g: \bR \mapsto \bR$, some non-negative cost function $c(x, y)$ and some Orlicz function $\psi$. For any coupling $\pi_{p, q}$ between $p, q$ such that $X\sim p, Y\sim q$, and any $\sigma > 0$, we have
\begin{align}
    |\bE[g(X) - g(Y)]| \leq \sigma \bE_{\pi_{p, q}}[c(X, Y)] \psi^{-1}\left(\frac{\bE_{\pi_{p, q}}\left[c(X, Y) \psi\left(\frac{|g(X) - g(Y)|}{\sigma c(X, Y)}\right)\right]}{\bE_{\pi_{p, q}}[c(X, Y)]}\right),
\end{align}
where $\psi^{-1}$ is the (generalized) inverse function of $\psi$.
\end{lemma}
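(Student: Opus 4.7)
The plan is to recognize this inequality as a weighted Jensen's inequality dressed up in Orlicz notation, and to reduce it in three clean steps. First I would bound the left-hand side by its absolute value pushed inside the expectation: by the triangle inequality, $|\bE_{\pi_{p,q}}[g(X)-g(Y)]| \leq \bE_{\pi_{p,q}}[|g(X)-g(Y)|]$. Next I would write this as a $c$-weighted expectation by the simple algebraic identity
\begin{equation*}
\bE_{\pi_{p,q}}[|g(X)-g(Y)|] = \bE_{\pi_{p,q}}[c(X,Y)] \cdot \bE_{\tilde \pi}\!\left[\tfrac{|g(X)-g(Y)|}{c(X,Y)}\right],
\end{equation*}
where $\tilde \pi$ is the tilted probability measure $d\tilde \pi = \tfrac{c(X,Y)}{\bE_{\pi_{p,q}}[c(X,Y)]} d\pi_{p,q}$.

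Then I would invoke Jensen's inequality under $\tilde \pi$ with the convex Orlicz function $\psi$. Writing $H = |g(X)-g(Y)|/(\sigma c(X,Y))$, convexity gives $\psi(\bE_{\tilde \pi}[H]) \leq \bE_{\tilde \pi}[\psi(H)]$, and so $\bE_{\tilde \pi}[H] \leq \psi^{-1}(\bE_{\tilde \pi}[\psi(H)])$ by the monotonicity of $\psi$ (the generalized inverse handles the case of flat segments). Substituting back gives exactly
\begin{equation*}
|\bE[g(X)-g(Y)]| \leq \sigma \bE_{\pi_{p,q}}[c(X,Y)] \cdot \psi^{-1}\!\left(\frac{\bE_{\pi_{p,q}}[c(X,Y)\psi(|g(X)-g(Y)|/(\sigma c(X,Y)))]}{\bE_{\pi_{p,q}}[c(X,Y)]}\right),
\end{equation*}
which is the claimed bound.

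The one technical wrinkle is the treatment of the set $\{c(X,Y)=0\}$, where the ratio $|g(X)-g(Y)|/c(X,Y)$ is formally undefined. I would adopt the standard convention $0/0 = 0$ and separately note that on the event $\{c(X,Y)=0\}$ we must have $|g(X)-g(Y)|=0$ for the right-hand side to be meaningful (otherwise the RHS is vacuously $+\infty$ since $\psi$ diverges, making the bound trivial). With that convention the tilting step is well defined, and the proof goes through. I do not expect any step to be a serious obstacle — the lemma is essentially a convexity rearrangement, and the main point is simply packaging it in the right weighted form so that the Orlicz function $\psi$ can be pulled out via its generalized inverse.
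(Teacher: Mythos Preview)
Your proposal is correct and takes essentially the same approach as the paper: tilt $\pi_{p,q}$ by $c(X,Y)/\bE[c(X,Y)]$, then apply Jensen's inequality for the convex nondecreasing $\psi$ and invert. The only cosmetic difference is that the paper applies $\psi$ first and then uses $|\bE[\cdot]|\le \bE[|\cdot|]$ inside, whereas you pull the absolute value out first; the content is identical, and your extra remark about the $\{c(X,Y)=0\}$ set is a welcome technical clarification that the paper leaves implicit.
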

We remark here that the left-hand side of the conclusion does not depend on the coupling $\pi_{p, q}$, so one can take the infimum over all couplings on the right-hand side.
\begin{proof}
We omit the $\pi$ in the subscript in $\bE$. Applying Jensen's inequality to the new measure $\frac{c(X,Y)}{\mathbb{E}[c(X,Y)]} d\pi_{p,q}$,  
\begin{align}
    \psi\left(\left|\frac{\bE[g(X)- g(Y)]}{\sigma \bE[c(X, Y) ]}\right|\right) & = \psi\left(\left|\bE\left[\frac{c(X, Y)}{\bE[c(X, Y) ]} \cdot \frac{g(X) - g(Y)}{\sigma c(X, Y)}  \right] \right|\right) \nonumber \\
    & \leq \bE\left[\frac{c(X, Y)}{\bE[c(X, Y) ]} \psi\left(\frac{|g(X)-g(Y)|}{\sigma c(X, Y)}\right) \right] \nonumber \\
    &  = \frac{\bE\left[{c(X, Y)} \psi\left(\frac{|g(X)-g(Y)|}{\sigma c(X, Y)}\right) \right]}{\bE[c(X, Y)]}.
\end{align}
It implies that
\begin{align}
     |\bE[g(X)-g(Y) ]| & \leq \sigma\bE[c(X, Y)] \psi^{-1}\left(\frac{\bE\Big[c(X, Y) \psi\left(\frac{|g(X) - g(Y)|}{\sigma c(X, Y)}\right)\Big]}{\bE[c(X, Y)]}\right). 
\end{align}

\end{proof}

With this lemma, we show that bounded Orlicz norm implies resilience for $k$-th moment estimation under $W_1$ perturbation.

\subsection{Proof of Lemma~\ref{lem.wc_friend_pert}}\label{proof.coupling_unimodal}

Given any two points $x$ and $y$, without loss of generality we assume $f(x)\leq f(y)$, define
\begin{align}
    s_{xy}(u) = \begin{cases} 
    \min(f(x), f(y)), & u \leq \min(f(x), f(y))\\
    u, & u\in [f(x), f(y)]\\
    \max(f(x), f(y)),& u\geq \max(f(x), f(y)).
    \end{cases}
\end{align}

If we imagine $u$ increasing from $-\infty$ to $+\infty$, 
we can think of $s_{xy}$ as a ``slider'' that tries to be 
as close to $u$ as possible while remaining between $f(x)$ and $f(y)$.

By Assumption~\ref{ass:topo}, there must exist some point $z$ such that 
$\max(c^k(x,z), c^k(z,y)) \leq c^k(x,y)$ and $f(z) = s_{xy}(u)$. Call this point 
$z_{xy}(u)$.

Given a coupling $\pi(x,y)$ from $p_1$ to $p_2$, 
if we map $y$ to $z_{xy}(u)$, we obtain a coupling 
$\pi_1(x,z)$ to some distribution $r(u)$, which by construction 
satisfies the monotonicity property, except that it is relative to 
$u$ rather than the mean $\mu(u) = \bE_{X\sim r(u)}[f(X)]$. 
However, note that $u - \mu(u)$ is a continuous, monotonically non-decreasing function (since 
$u - s_{xy}(u)$ is non-decreasing) that ranges from $-\infty$ to $+\infty$. It follows that 
there is a point with $\mu(u) = u$, in which case $r(u)$ satisfies 
the monotonicity property with respect to $\mu(u)$. 

Moreover, $\bE_{(X, Z) \sim \pi_1}[c^k(X, Z)] \leq \bE_{(X,Y) \sim \pi}[c^k(X, Y)] = W_{c, k}^k(p_1,p_2)$. 
The coupling $\pi_1$ therefore also has small enough cost, and so satisfies all 
of the properties required in Lemma~\ref{lem.wc_friend_pert}.

To finish, we need to construct $\pi_2$; but this can be done  
by taking the reverse coupling from $y$ to $z_{xy}(u)$, which satisfies the required 
properties by an identical argument as above.

\subsection{Proof of Theorem~\ref{thm.G_Wc_fundamental_limit} }\label{proof.thm_G_WcG_Wc_fundamental_limit}

This is a proof of more general result (Theorem~\ref{thm.G_Wc_fundamental_limit_general}) than Theorem~\ref{thm.G_Wc_fundamental_limit}.
\begin{proof}
It suffices to upper bound the modulus 
\begin{align}
\sup_{\begin{subarray}{c} (p_1, p_2):  W_{c,k}(p_1, p_2) \leq 2 \epsilon,\\  p_1 \in  \GG^{W_{c,k}}(\rho_1, \rho_2), p_2 \in  \GG^{W_{c,k}}(\rho_1, \rho_2)\end{subarray}} L(p_2, \theta^*(p_1)).
\end{align}

Note that $2 \epsilon \leq \eta$. It follows from the condition that $p_1\in\GG^{W_{c,k}}(\rho_1, \rho_2) \subset \mathcal{G}^{W_{c,k}}_{\downarrow}(\rho_1)$ that
\begin{align}\label{eqn.proof_design_Wc_G_limit}
\sup_{f\in\sF_{\theta^*(p_1)},  r\in \mathbb{F}(p_1, 2\epsilon, W_{c, k}, f)} \bE_r[f(X)] - B^*(f, \theta^*(p_1)) \leq \rho_1.
\end{align}
By Lemma~\ref{lem.wc_friend_pert}, for any $f\in\sF_{\theta^*(p_1)}$, we are able to pick $r$ such that $W_{c,k}(p_1, r)\leq 2\epsilon$,  $W_{c,k}(p_2, r)\leq 2\epsilon$, and $r \in  \mathbb{F}(p_1, 2\epsilon, W_{c,k}, f) \bigcap \mathbb{F}(p_2, 2\epsilon, W_{c,k}, f)$ is a friendly perturbation for both  $p_1$ and $p_2$, which implies that
\begin{align}
 \sup_{f\in \sF_{\theta^*(p_1)}} \inf_{r\in \mathbb{F}(p_2, 2\epsilon, W_{c, k}, f)} \bE_r[f(X)] - B^*(f, \theta^*(p_1))  \leq \rho_1. 
\end{align}
It then follows from $p_2\in\GG^{W_{c,k}}(\rho_1, \rho_2) \subset \mathcal{G}^{W_{c,k}}_{\uparrow}(\rho_1, \rho_2)$ that
\begin{align}
    L(p_2, \theta^*(p_1)) \leq \rho_2. 
\end{align}
\end{proof}

\subsection{$W_{c,k}$-Resilient Set Design for $L = W_{\sF}$}\label{sec.G_Wc_WF}

Recall that in Section~\ref{sec.robust_TV}, we design $\GG^{\TV}_{\mathsf{mean}}$ as that the means of any friendly perturbation distribution and the original distribution are close, and extend the idea to arbitrary $W_\sF$ pseudonorm in~\eqref{eqn.def_G_TV_WF}. Since we have already defined friendly perturbation for $W_{c, k}$, we similarly define $\GG^{W_{c, k}}_{W_\sF}$ as that the $W_\sF$ pseudonorm between any friendly perturbation distribution and the original distribution is close. Concretely, 
given $L = W_\sF$ defined in~\eqref{eqn.def_WF}
, the set $\GG^{W_{c, k}}_{W_\sF}$ can be defined as 
\begin{definition}[$\GG_{W_\sF}^{W_{c, k}}(\rho, \eta)$]
Assume Assumption~\ref{ass:topo} holds. We define 
\begin{align}\label{eqn.def_Wc_meanG}
\GG_{W_\sF}^{W_{c, k}}(\rho, \eta) = \{p \mid  \sup_{f\in\sF, r\in \mathbb{F}(p, \eta, W_{c, k}, f)} \bE_r[f(X)] -  \bE_p[f(X)] \leq \rho  \}. 
\end{align}
\end{definition}

If $c(x,y) = \mathbbm{1}(x\neq y)$ and $k=1$, then $W_{c, k}$ reduces to $\TV$, and one can show that the resilient set $\GG_{W_\sF}^{W_{c, k}}(\rho, \eta)$ defined in~(\ref{eqn.def_Wc_meanG}) reduces to that in~(\ref{eqn.def_G_TV_WF}) in the $\TV$ case. The reason is that these two definitions of resilient sets share the same \emph{worst case} perturbed $\bE_r[f(X)]$: it is always to either delete the largest (or smallest) $\epsilon$ probability mass of $f(X)$, or to move the largest (or smallest) $\epsilon$ probability mass to the $\bE_r[f(X)]$. Similar to $\TV$ case, 
we can show that the design of $\GG_{W_\sF}^{W_{c, k}}$ is not too big such that its population limit can be controlled, and not too small such that some usual non-parametric assumptions such as bounded $k$-th moment implies being in the set. 

\subsubsection{Not too big}
Similar to $\TV$ perturbation case, we show that $\mathcal{G}^{W_{c, k}}_{W_\sF}$ has controllable population limit by upper bounding its modulus of continuity.
\begin{theorem}\label{thm.G_WF_Wc_fundamental_limit}
Assume Assumption~\ref{ass:topo} holds.
The modulus of continuity $\modu$ in (\ref{eqn.modulus}) for $\GG_{W_\sF}^{W_{c, k}}(\rho, \eta)$ is bounded above as 
$\modu(\GG_{W_\sF}^{W_{c, k}}(\rho, \eta), 2\epsilon) \leq 2\rho$ for any $ 2\epsilon \leq \eta$.
\end{theorem}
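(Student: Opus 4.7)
The plan is to mirror the argument used for Theorem~\ref{thm.G_Wc_fundamental_limit} (and its general version Theorem~\ref{thm.G_Wc_fundamental_limit_general}), specializing the bridge-and-loss machinery to the pseudonorm loss $W_{\sF}$. Concretely, I would fix arbitrary $p_1, p_2 \in \GG_{W_\sF}^{W_{c,k}}(\rho, \eta)$ with $W_{c,k}(p_1,p_2) \leq 2\epsilon \leq \eta$, and fix any test function $f \in \sF$. The objective is to establish the one-sided bound $\bE_{p_1}[f(X)] - \bE_{p_2}[f(X)] \leq 2\rho$; taking a supremum over $f \in \sF$ and recalling that $p_1, p_2$ are arbitrary then yields $\modu(\GG_{W_\sF}^{W_{c,k}}(\rho,\eta),2\epsilon) \leq 2\rho$.

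The crucial step is to invoke the midpoint lemma (Lemma~\ref{lem.wc_friend_pert_general}, which is exactly what Assumption~\ref{ass:topo} is invoked for) for the fixed function $f$ at cost-budget $2\epsilon$. This produces a single distribution $r$ that is simultaneously a $2\epsilon$-friendly perturbation of both $p_1$ and $p_2$ for $f$. Since $2\epsilon \leq \eta$ and the friendly-perturbation class only grows as the cost budget grows, $r$ also belongs to $\mathbb{F}(p_i, \eta, W_{c,k}, f)$ for $i = 1,2$. Moreover, the ``sandwich'' condition in Definition~\ref{def.friendly_perturbation_general} is manifestly symmetric under replacing $f$ by $-f$, so $\mathbb{F}(p, \eta, W_{c,k}, f) = \mathbb{F}(p, \eta, W_{c,k}, -f)$; this lets me reuse the same midpoint $r$ for both one-sided estimates.

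Once $r$ is in hand, I would apply the defining inequality of $\GG_{W_\sF}^{W_{c,k}}$ twice. Using it for $p_2$ with test function $f$ gives $\bE_r[f(X)] - \bE_{p_2}[f(X)] \leq \rho$. Using it for $p_1$ with test function $-f$ (available because $\sF$ is symmetric, as stipulated at~\eqref{eqn.def_WF}) gives $\bE_r[-f(X)] - \bE_{p_1}[-f(X)] \leq \rho$, equivalently $\bE_{p_1}[f(X)] - \bE_r[f(X)] \leq \rho$. Summing these two inequalities telescopes $\bE_r[f(X)]$ and delivers the desired $\bE_{p_1}[f(X)] - \bE_{p_2}[f(X)] \leq 2\rho$.

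There is no significant obstacle once the midpoint lemma is available: the whole argument reduces to one triangle-type decomposition through $r$, exactly paralleling Theorem~\ref{thm.G_fundamental_limit} in the $\TV$ case and the second-moment proof that bounds $\modu(\GG_{\mathsf{sec}}^{W_1}(\rho,\eta),2\epsilon) \leq 2\rho$. The only subtle point to flag in the write-up is the symmetric-family trick that converts the one-sided resilience definition into a two-sided control on $W_\sF$; everything else is routine. No new estimates or computations are required.
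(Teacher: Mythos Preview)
Your proposal is correct and follows essentially the same route as the paper's proof: invoke the midpoint lemma (Lemma~\ref{lem.wc_friend_pert_general}) to produce a common friendly perturbation $r$, then use the resilience definition on each side (exploiting symmetry of $\sF$) to obtain two $\rho$-bounds that sum to $2\rho$. The only cosmetic difference is that the paper first selects the maximizing $f^*$ and then builds $r$ for that single function, whereas you fix an arbitrary $f$, bound the difference, and then take the supremum; these are logically equivalent, and your explicit observation that $\mathbb{F}(p,\eta,W_{c,k},f)=\mathbb{F}(p,\eta,W_{c,k},-f)$ is exactly what underlies the paper's appeal to ``the symmetricity of $\sF$.''
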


\begin{proof}
The modulus is defined as 
\begin{align}
\sup_{\begin{subarray}{c} (p_1, p_2):  W_{c,k}(p_1, p_2) \leq 2 \epsilon,\\  p_1 \in  \GG^{W_{c,k}}_{W_\sF}(\rho, \eta), p_2 \in  \GG^{W_{c,k}}_{W_\sF}(\rho, \eta)\end{subarray}} W_\sF(p_1, p_2).
\end{align}

By Lemma~\ref{lem.wc_friend_pert}, for any $f\in\sF$, we are able to pick $r$ such that $W_{c,k}(p_1, r)\leq 2\epsilon$,  $W_{c,k}(p_2, r)\leq 2\epsilon$, and $r \in  \mathbb{F}(p_1, 2\epsilon, W_{c,k}, f) \bigcap \mathbb{F}(p_2, 2\epsilon, W_{c,k}, f)$ is a friendly perturbation for both  $p_1$ and $p_2$. Take\footnote{If the argmax is not achievable, we can take a sequence of $f^*_i$ such that $\bE_{p_1}[f^*_i(X)] - \bE_{p_2}[f^*_i(X)]$  goes to the maximum value as $i\rightarrow +\infty$.} 
\begin{align}
    f^* \in \argmax_{f\in\sF} \bE_{p_1}[f(X)] - \bE_{p_2}[f(X)]. 
\end{align}
From $p_1, p_2\in \GG^{W_{c,k}}_{W_\sF}(\rho, \eta)$ and the symmetricity of $\sF$, we know that for any $2 \epsilon \leq \eta$, 
\begin{align}
\bE_{p_1}[f^*(X)] - \bE_{r}[f^*(X)] \leq \rho,\\
\bE_{r}[f^*(X)] - \bE_{p_2}[f^*(X)] \leq \rho.
\end{align}
Combining the two equations together gives us 
\begin{align}
    \bE_{p_1}[f^*(X)] - \bE_{p_2}[f^*(X)]\leq 2\rho.
\end{align}
This shows that $W_\sF(p_1, p_2)\leq 2\rho$.
\end{proof}

\subsection{Reduction from $\GG^{W_{c, k}}$ to $\GG^{W_{c, k}}_{W_\sF}$}\label{appendix.Wck_reduction}
 We prove here that 
by taking $B = L = W_\sF$, $\rho_1 = \rho, \rho_2 = 2\rho$ in Equation (\ref{eqn.def_wc_G}), we can recover the definition of $\GG_{W_\sF}^{W_{c, k}}$.
 
\begin{proof}
Here we identify $\theta$ as $q$, and and take $\mathcal{F}_\theta = \mathcal{F}$ for any $\theta$. Under the choices of $B, \rho_1$,
we have $\GG_{W_\sF}^{W_{c, k}} = \GG^{W_{c, k}}_{\downarrow}$. 
We only need to show that under the choices of $\rho_1, \rho_2, B, L$, we have $\GG^{W_{c, k}}_{\downarrow}(\rho, \eta) \subset  \GG_{\uparrow}^{W_{c, k}}(\rho, 2\rho, \eta)$. For any $p\in \GG_{\downarrow}^{W_{c, k}}$, we have
\begin{align}
    \sup_{f\in\sF, r\in \mathbb{F}(p, \eta, W_{c, k}, f)} \bE_r[f(X)] -  \bE_p[f(X)] \leq \rho. 
\end{align}

Note that the condition $p \in \GG_{\uparrow}^{W_{c, k}}$ is equivalent to that for any $q$, we have
\begin{align}\label{eqn.proof_reduction_Gup}
    \Big(  \sup_{f\in \sF} \inf_{r\in \mathbb{F}(p, \eta, W_{c, k}, f)} \bE_r[f(X)] - \bE_q[f(X)] \leq \rho \Big) \Rightarrow \sup_{f\in\sF} \bE_p[f(X)] - \bE_q[f(X)] \leq 2\rho.
\end{align}
Now it suffices to show that for any $p\in \GG_{\downarrow}^{W_{c, k}}(\rho, \eta)$, we have $p\in \GG_{\uparrow}^{W_{c, k}}(\rho,2\rho,\eta)$. 

Assume $p\in \GG_{\downarrow}^{W_{c, k}}(\rho, \eta)$.
For any $q$  that satisfies the LHS condition in Equation~\eqref{eqn.proof_reduction_Gup}. Then we know for any $f\in \mathcal{F}$, there exists some $r \in \mathbb{F}(p, \epsilon, W_{c,k}, f)$ with $\bE_r[f(X)] - \bE_q[f(X)] \leq \rho$, denote by $f^*$ the function that satisfies \footnote{For any $a>0$, there always exists some $f^*$ such that $\bE_p[f^*(X)] - \bE_q[f^*(X)]\geq  \sup_{f\in\sF} \bE_p[f(X)] - \bE_q[f(X)] -a$. The following steps can go through using this limiting argument if the supremum is not attained. }
\begin{align}
    \sup_{f\in\sF} \bE_p[f(X)] - \bE_q[f(X)] & = \bE_p[f^*(X)] - \bE_q[f^*(X)].
\end{align}
Denote by $r^*$ the friendly perturbation in $\mathbb{F}(p, \epsilon, W_{c,k}, f^*)$. We have $\bE_r[f^*(X)] - \bE_q[f^*(X)] \leq \rho$. Then,
\begin{align}
    \sup_{f\in\sF} \bE_p[f(X)] - \bE_q[f(X)] & = \bE_p[f^*(X)] - \bE_q[f^*(X)] \\
    & \leq \bE_p[f^*(X)] - \bE_{r^*}[f^*(X)] + \bE_{r^*}[f^*(X)] - \bE_q[f^*(X)] \\
    & \leq 2\rho. 
\end{align}
Hence, we have shown that $\GG^{W_{c, k}}_{\downarrow}(\rho, \eta) \subset  \GG_{\uparrow}^{W_{c, k}}(\rho, 2\rho, \eta)$. Proof is completed. 

\end{proof}

\section{Related discussions and remaining proofs in Section~\ref{sec.finite_sample_w1}}

\subsection{Proof of Lemma~\ref{lem:tw1_cross_mean}}\label{proof.tw1_cross_mean}
\begin{proof}
We first show the result when $g(x) = x$. From~\citep{rothschild1978increasing}\citep[Proposition B.19.c, remark 2]{marshall1979inequalities}, we know that
it suffices to check the following two conditions to guarantee convex order of $X$:
\begin{enumerate}
    \item $\bE_{r_p}[X] = \bE_{r_q}[X]$,
    \item $\forall z \in \bR, \int_{-\infty}^{z}\bP_{r_p}[X\leq t ]dt \geq \int_{-\infty}^{z}\bP_{r_q}[X\leq t ]dt.$
\end{enumerate}
Denote function $(x)_+ = \max(x, 0)$, $(x)_- = -\min(-x, 0)$. Note that we have
\begin{align}
    \bE_p[(X-a)_+] = \int_{a}^{+\infty} \bP_p(X\geq t) dt,
\end{align}
which is also equivalent to the $W_1$ cost that moves all the mass left to $a$ to the point $a$. Similarly, the $W_1$ cost that moves all the mass right to $a$ to the point $a$ can be represented as
\begin{align}
    \bE_p[(X-a)_-] = \int_{-\infty}^{a} \bP_p(X\leq t) dt.
\end{align}

Given $p, q$, we construct $r_p$ and $r_q$ as follows. 
We consider different cases of $\mu_p, \mu_q$:

\begin{enumerate}
    \item Assume $\mu_p = \mu_q = \mu$ for some $\mu$. We construct some coupling for $p, r_p$ pair and 
    $q, r_q$ pair such that under the coupling, all the mass move towards $\mu$ and the mean is unchanged. 
    Since $\bE_p[(X-\mu)_+] - \bE_p[(X-\mu)_-] = \bE_p[X-\mu] = 0$, we know that $\bE_p[(X- \mu)_+] = \bE_p[(X-\mu)_-] = 0$. Similarly we have $\bE_q[(X-\mu)_+] = \bE_q[(X-\mu)_-]$.

    If $\bE_p[(X-\mu)_+] = \bE_p[(X-\mu)_-] \leq 2\epsilon$, from $\tW_1(p, q)\leq \epsilon$, we know that
    \begin{align}
        \bE_q[(X-\mu)_+] = \bE_q[(X-\mu)_-] \leq 3\epsilon.
    \end{align}
    Then we move all the mass of $p, q$ to a single point $\mu$ to get the new distribution $r_p, r_q$, from the above condition we know that $W_1(p, r_p)\leq 4\epsilon$, $W_1(q, r_q)\leq 6\epsilon$, and $r_p, r_q$ satisfies the two conditions required since they are identically distributed.
    
    Otherwise, we have 
    \begin{align}
        \bE_p[(X-\mu)_+] = \bE_p[(X-\mu)_-] > 2\epsilon,
    \end{align}
    and consequently, 
    \begin{align}
        \bE_q[(X-\mu)_+] = \bE_q[(X-\mu)_-] > \epsilon.
    \end{align}
    Then we are able to find some $\tau_2 < \mu < \tau_1$ such that
    \begin{align}
         \int_{-\infty}^{\tau_2} \bP_{q}(v^{\top}X \leq t) dt = \epsilon, \\
     \int_{\tau_1}^{+\infty} \bP_{q}(v^{\top}X \geq t) dt = \epsilon. 
    \end{align}
    We move all the mass of $q$ that is left to $\tau_2$ to $\tau_2$, and all the mass of $q$ that is right to $\tau_1$ to $\tau_1$ to get $r_q$, and keep $r_p = p$. Then we have $W_1(r_q, q) =2\epsilon$, $\mu_{r_q} = \mu_q = \mu_p = \mu_{r_p}$, and
    \begin{align}
    \forall z <\tau_2,  \int_{-\infty}^{z}\bP_{r_p}[X\leq t ]dt \geq 0 & = \int_{-\infty}^{z}\bP_{r_q}[X\leq t ]dt,\\
        \forall z \in [\tau_2, \tau_1] \int_{-\infty}^{z}\bP_{r_p}[X\leq t ]dt
        &  = \int_{-\infty}^{z}\bP_{p}[X\leq t ]dt  \nonumber \\
        & = \int_{-\infty}^{z}\bP_{p}[X\leq t ]dt - \int_{-\infty}^{z}\bP_{q}[X\leq t ]dt + \int_{-\infty}^{z}\bP_{q}[X\leq t ]dt \nonumber \\
        &\quad  - \int_{-\infty}^{z}\bP_{r_q}[X\leq t ]dt + \int_{-\infty}^{z}\bP_{r_q}[X\leq t ]dt \nonumber \\
        & \geq -\epsilon + \epsilon + \int_{-\infty}^{z}\bP_{r_q}[X\leq t ]dt  \nonumber \\
        & = \int_{-\infty}^{z}\bP_{r_q}[X\leq t ]dt, \\
        \forall z > \tau_2, \int_{-\infty}^{z}\bP_{r_p}[X\leq t ]dt
        &  = \int_{-\infty}^{z}\bP_{p}[X\leq t ]dt \nonumber \\
        & = \bE_p[(X-z)_-] \nonumber \\
        & = \bE_p[(X-z)_+] - \bE_p[X-z]  \nonumber \\
        & \geq  \bE_{r_q}[(X-z)_+] - \bE_{r_q}[X-z]  \nonumber \\
        & = \bE_{r_q}[(X-z)_-] \nonumber \\
        & = \int_{-\infty}^{z}\bP_{r_q}[X\leq t ]dt.
    \end{align}
    Thus the two conditions to guarantee convex order are satisfied.
    \item Assume $\mu_p > \mu_q$. From $W_1(p, q)\leq \epsilon$ we know that $|\mu_p-\mu_q |\leq \epsilon$. Take $\mu = \mu_p$.
    
    If $\bE_q[(X-\mu)_-] \leq \epsilon$, then from $\bE_q[(X-\mu)_+] - \bE_q[(X-\mu)_-] = \bE_q[X-\mu] < 0$, we know that $\bE_q[(X-\mu)_+] \leq \epsilon$. Thus from $W_1(p, q)\leq \epsilon$, we know that $\bE_p[(X-\mu)_-] \leq 2\epsilon$, $\bE_p[(X-\mu)_+] \leq 2\epsilon $. Thus we can move all the mass of $p, q$ to a single point $\mu$ to get the new distribution $r_p, r_q$, from the above condition we know that $W_1(p, r_p)\leq 4\epsilon$, $W_1(q, r_q)\leq 2\epsilon$, and $r_p, r_q$ satisfies the two conditions required since they are identically distributed.
    
    Otherwise, we know $\bE_q[(X-\mu)_-] > \epsilon$. Then we first move the left most part of $X$ under $q$ to make $\bE_{r_q}[X] = \mu$. Thus we have $\bE_{r_q}[(X-z)_-] \leq \bE_q[(X-z)_-] $ for any $z\in\bR$.  Starting from $p, r_q$, we know that their means are equal. Thus we repeat the first step to construct $r_p, r_q$ that satisfies the two conditions. Overall we have $W_1(q, r_q)\leq 7\epsilon$.
    
    \item Assume $\mu_p < \mu_q$. Denote $\mu =\mu_p$. Then if $\bE_q[(X-\mu)_-] \leq \epsilon$, we follow the same procedure as the case of $\mu_p>\mu_q$. Otherwise we first move the right most part of $X$ under $q$ to make $\bE_{r_q}[X] = \mu_p$.  Then we repeat the first step to construct $r_p, r_q$ that satisfies the two conditions.  Overall we have $W_1(q, r_q)\leq 7\epsilon$.
    
\end{enumerate}

When $g(x) = |x|$, note that the movement in above construction from $p, q$ to $r_p, r_q$ only includes deleting the left most or right most of $X$. By replacing $X$ with $|X|$, all above arguments go through without increasing the cost $W_1(p, r_q)$ and $W_1(q, r_q)$. This is because for any movement from $a$ to $b$ for $|X|$, where  both $a$ and $b$ are non negative, one can map it back to movement in $x$ space from $a$ to $b$ or from $-a$ to $-b$ without increasing the cost.  
Thus the result also holds for $g(x) = |x|$.
\end{proof}

\subsection{Proof of Lemma~\ref{lem.tW_1_empirical_to_population}}\label{proof.tW_1_empirical_to_population}
\begin{proof}

To show the result, we first prove the following Lemma.

\begin{lemma}

Consider any distribution $p$ and denote its empirical distribution of $n$ \iid samples as $\hat p_n$. For any $M>0$, define 
\begin{align}
    \xi_1 & = \bE_{p}  \left \| \frac{1}{n} \sum_{i = 1}^n  X_i - \mathbb{E}_p[X] \right \|_2,  \\
      \xi_2(M) & = \sup_{v \in \bR^d, \| v\|_2 = 1} \bE_p[ \max\left(0,  v^{\top}(X-\bE_p[X])-M\right)].
\end{align}
Then for any $M >0$, 
\begin{align}
    \bE_p[\tW_1(p, \hat p_n)] \leq 8\xi_1 +  \xi_2(M)+  2\frac{M}{\sqrt{n}},
\end{align}
where $C$ is some universal constant. 
\end{lemma}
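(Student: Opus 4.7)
The plan is to decompose $\tW_1$ along the two sub-families of $\mathcal{U}'$, bound the linear piece immediately by $\xi_1$, and reduce the rectified-linear piece to a bounded empirical process (giving the $M/\sqrt n$ term) plus a tail (giving the $\xi_2(M)$ term), via truncation at output level $M$.

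Concretely, I will first write $\tW_1(p,\hat p_n) \le \|\bE_p X - \bE_{\hat p_n}X\|_2 + R_n$ where $R_n := \sup_{\|v\|\le 1,\, a\in\bR}|\bE_{\hat p_n}[(v^\top X - a)_+] - \bE_p[(v^\top X - a)_+]|$. Taking expectations, the linear piece contributes exactly $\xi_1$. After the centering change of variables $Y = X-\mu_p$, $b = a-v^\top\mu_p$ (which leaves $R_n$ unchanged because $(v^\top X - a)_+ = (v^\top Y - b)_+$ and $(v,b)\mapsto(v,a)$ is a bijection for each fixed $v$), the ReLU piece becomes an empirical process on $\{Y\mapsto (v^\top Y - b)_+ : \|v\|\le 1,\, b\in\bR\}$ with $\bE_p Y = 0$ and iid samples $Y_i = X_i - \mu_p$ from the distribution of $Y$.

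Next I will use the identity
\begin{align*}
    (v^\top Y - b)_+ \;=\; \bigl[(v^\top Y - b)_+ \wedge M\bigr] \;+\; (v^\top Y - b - M)_+
\end{align*}
to split each test function into a bounded surrogate $\phi_{v,b,M}\in[0,M]$ and a tail. For the tail, when $b\ge 0$ one has $(v^\top Y - b - M)_+ \le (v^\top Y - M)_+$, whose $p$-mean is at most $\xi_2(M)$ uniformly in $v$; when $b<0$, applying the identity $(z)_+ = z + (-z)_+$ extracts an extra linear deviation of $v^\top Y$ (absorbed into $\xi_1$ after taking the sup over $v$) together with a flipped ReLU whose tail is again controlled by $\xi_2(M)$ via the symmetry $v \leftrightarrow -v$ inherent in the definition of $\xi_2(M)$. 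For the bounded surrogate I will apply symmetrization and Talagrand contraction on the $1$-Lipschitz map $z\mapsto (z)_+\wedge M$ (which vanishes at zero) to reduce the Rademacher average to one over the linear-affine class $\{(v,b)\mapsto v^\top Y - b\}$. Restricting to $|b|\le M$ (legitimate because the $|b|>M$ regime is handled in the tail step above), the resulting Rademacher average splits cleanly into a $v$-part $\bE_\sigma\|\tfrac1n\sum_i \sigma_i Y_i\|_2 \le 2\xi_1$ (by the standard desymmetrization inequality for sums of iid centered vectors) and a $b$-part $M\,\bE_\sigma|\tfrac1n\sum_i \sigma_i| \le M/\sqrt{n}$. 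Collecting the three contributions and tracking the constants (each symmetrization doubles, each Talagrand contraction doubles, and desymmetrization introduces another factor of $2$) yields the claimed bound $8\xi_1 + \xi_2(M) + 2M/\sqrt{n}$.

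The principal obstacle is the lack of shift-invariance in a form compatible with truncation: any naive attempt to clip the ReLU output at level $M$ leaves behind a residual ReLU of the exact same form as the original, so one cannot simply iterate a bounded-Rademacher bound. What saves the argument is that, after the output clipping, the $b$-dependent constant $(-b)_+$ cancels in any difference $\bE_{\hat p_n} - \bE_p$, so the clipped process can be rewritten in terms of the linear-affine class $\{v^\top Y - b\}$, where $v$ and $b$ decouple after symmetrization; $v$ then feeds into $\xi_1$ through the linear Rademacher complexity and $b$ feeds into $M/\sqrt n$ because only $|b| \le M$ contributes after the boundedness and tail reductions. The other subtle point will be justifying the restriction $|b|\le M$; I expect to handle this by showing directly that in the $|b|>M$ regime the empirical deviation of $\phi_{v,b,M}$ either reduces to a centered linear fluctuation of $v^\top Y$ (bounded by $\xi_1$ after the sup over $v$) or to a term of pure tail type (bounded by $\xi_2(M)$ via the $v\leftrightarrow -v$ symmetry).
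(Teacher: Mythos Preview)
Your plan has the right ingredients (centering, symmetrization, Talagrand contraction, a split at level $M$), but the output-truncation step is both unnecessary and the source of a real gap. The map $z\mapsto (z)_+$ is already $1$-Lipschitz with $(0)_+=0$, so Talagrand contraction applies \emph{directly} to the class $\{(v^\top Y-a)_+\}$, reducing the Rademacher complexity to that of $\{v^\top Y-a\}$ without first clipping at $M$. The only reason to restrict $a$ is that the linear--affine Rademacher average blows up for unbounded $a$; this is why the paper splits on the \emph{input} threshold $|a|\le M$ versus $|a|>M$ from the outset, and never introduces your surrogate $\phi_{v,b,M}$.

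Your decomposition $(v^\top Y-b)_+=\phi_{v,b,M}+(v^\top Y-b-M)_+$ leaves a tail that is itself a ReLU with threshold $b+M$; after taking the supremum over $b$ this is exactly the original process, so the argument is circular unless you restrict $b$ first. When you do restrict, the tail for $-M\le b<0$ has threshold $b+M\in[0,M)$, and neither of your proposed moves controls it: the pointwise bound $(v^\top Y-M)_+$ requires threshold $\ge M$, and the identity $(z)_+=z+(-z)_+$ produces a flipped ReLU with threshold $-(b+M)\le 0$, which is again not dominated by $\xi_2(M)$. The paper avoids this entirely by not truncating the output for $|a|\le M$ and by applying the identity only in the regime $a<-M$, where the flipped threshold $-a>M$ is genuinely large.

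There is a second gap. For the large-threshold regime you only bound the population mean by $\xi_2(M)$; this handles the direction $\bE_p-\bE_{\hat p_n}$ (drop the nonnegative empirical part) but not $\bE_{\hat p_n}-\bE_p$. For that direction one must control $\bE_p\bigl[\sup_{\|v\|=1}\tfrac1n\sum_i(v^\top Y_i-M)_+\bigr]$, which has the supremum \emph{inside} the expectation. The paper handles this with an additional round of symmetrization and contraction (its $\mathcal U_3'$ step), yielding an extra $4\xi_1+\xi_2(M)$ and accounting for the $8\xi_1$ in the final bound. Your proposal does not include this step, so the constant bookkeeping you sketch cannot close.
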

Denote the contaminated population distribution as $p$ satisfying $W_1(p,p^*)\leq \epsilon$, and the empirical distribution of observed data as $\hat p_n$. Under the oblivious corruption model $\hat p_n$ represents $n$ \iid samples from $p$. Define
\begin{align}
 \mathcal{U}_1' & = \{v^{\top}x : v \in \bR^d, \|v\|_2\leq 1\} \\
    \mathcal{U}_2' & = \{\max(0, v^{\top}(x-a)): a, v\in \bR^d, \|v\|_2\leq 1 \} \\
      \mathcal{U}_3' & = \{-\max(0, v^{\top}(x-a)): a, v\in \bR^d, \|v\|_2\leq 1 \}.
\end{align}
Note that $\mathcal{U}_1' \bigcup \mathcal{U}_2'\bigcup \mathcal{U}_3' $ is symmetric. We have
\begin{align}
    \tW_1(p, \hat p_n) & = \sup_{u \in \mathcal{U}'} \left| \bE_{\hat p_n}[u(X)] - \bE_p[u(X)] \right| \nonumber \\
    & = \sup_{u \in \mathcal{U}_1' \bigcup \mathcal{U}_2'\bigcup \mathcal{U}_3' } \bE_p[u(X)]  - \frac{1}{n}\sum_{i=1}^n u(X_i)  \nonumber \\
    & = \max\left\{ \sup_{u \in \mathcal{U}_1'} \bE_p[u(X)]  - \frac{1}{n}\sum_{i=1}^n u(X_i)  , \sup_{u \in \mathcal{U}_2'}  \bE_p[u(X)]  - \frac{1}{n}\sum_{i=1}^n u(X_i)   , \sup_{u \in \mathcal{U}_3'}\bE_p[u(X)]  - \frac{1}{n}\sum_{i=1}^n u(X_i)  \right\}.
\end{align}

We have
\begin{align}
    \sup_{u \in \mathcal{U}_1'} \bE_p[u(X)]  - \frac{1}{n}\sum_{i=1}^n u(X_i) & = \sup_{v\in \bR^d, \|v\|_2 \leq 1} v^{\top}\mathbb{E}_p[X]   - \frac{1}{n} \sum_{i = 1}^n v^{\top} X_i  \\
    & = \left \| \frac{1}{n} \sum_{i = 1}^n  X_i - \mathbb{E}_p[X] \right \|_2 
\end{align}

Now we bound the uniform law of large number for $\mathcal{U}_2'$. Here we first shift $X$ to $\widetilde X = X - \bE_p[X]$. Since $a$ is taken in $\bR$, $\tW_1(p, \hat p_n)$ wouldn't change.  With a bit abuse of notation, we still use $X$ to represent the mean-0 shifted distribution. We have
\begin{align}
    \bE \left[ \sup_{u \in \mathcal{U}_2'} \bE_p[u(X)]  - \frac{1}{n}\sum_{i=1}^n u(X_i)  \right] & =  \bE \left[ \sup_{v \in \bR^d, \| v\|_2 = 1, a\in\bR} \bE_p\left[\max(0, v^{\top}X-a) \right] - \frac{1}{n}\sum_{i=1}^n \max(0, v^{\top}X_i-a)   \right]   
\end{align}

We bound three cases separately, i.e.  $-M \leq a \leq M$, $a> M$ and $a < -M$. 

For  $-M \leq a \leq M$, from  symmetrization inequality~\citep[Proposition 4.11]{wainwright2019high}, we have
\begin{align}
    & \bE_p \left[ \sup_{v \in \bR^d, \| v\|_2 = 1, a\in[-M, M]} \bE_p\left[\max(0, v^{\top}X-a) \right] - \frac{1}{n}\sum_{i=1}^n \max(0, v^{\top}X_i-a)   \right]   \\
     \leq & 2\bE_{p, \epsilon \sim \{\pm 1\}^d} \left[ \sup_{v \in \bR^d, \| v\|_2 = 1, a\in[-M, M]} \left[ \frac{1}{n}\sum_{i=1}^n  \epsilon_i \max(0, v^{\top}X_i-a) \right]  \right] 
\end{align}
From Talagrand contraction inequality~\citep[Exercise 6.7.7]{vershynin2018high} as below, we have  
\begin{align}
    & 2\bE_{p, \epsilon \sim \{\pm 1\}^d} \left[ \sup_{v \in \bR^d, \| v\|_2 = 1, a\in[-M, M]} \left[ \frac{1}{n}\sum_{i=1}^n  \epsilon_i \max(0, v^{\top}X_i-a) \right]  \right] \label{eqn.tW1_contraction1}\\ 
     \leq & 2\bE_{p, \epsilon \sim \{\pm 1\}^d} \left[ \sup_{v \in \bR^d, \| v\|_2 = 1, a\in[-M, M]} \left[ \frac{1}{n}\sum_{i=1}^n  \epsilon_i  (v^{\top}X_i-a) \right]  \right]\label{eqn.tW1_contraction2} \\
     \leq & 2 \bE_{p, \epsilon \sim \{\pm 1\}^d} \left[ \sup_{v \in \bR^d, \| v\|_2 = 1} \left[ \frac{1}{n}\sum_{i=1}^n  \epsilon_i v^{\top}X_i \right]  \right] + 2 \bE_{\epsilon \sim \{\pm 1\}^d} \left[ \sup_{a\in[-M, M]} \left[ \frac{1}{n}\sum_{i=1}^n  \epsilon_i a \right]  \right] \label{eqn.tW1_contraction3}\\
     \leq & 2 \bE_{p}  \left\| \frac{1}{n}\sum_{i=1}^n   ( X_i -\bE_p[X])\right\|_2  + 2M \bE_{\epsilon \sim \{\pm 1\}^d}   \left| \frac{1}{n}\sum_{i=1}^n  \epsilon_i \right|  \label{eqn.tW1_contraction4}\\
   \leq    &  4  \bE_{p}  \left\| \frac{1}{n}\sum_{i=1}^n  ( X_i -\bE_p[X]) \right\|_2  + 2M \sqrt{\bE_{\epsilon \sim \{\pm 1\}^d}   (\frac{1}{n}\sum_{i=1}^n  \epsilon_i)^2 } \label{eqn.tW1_contraction5}\\
   \leq & 4\bE_{p}  \left \| \frac{1}{n} \sum_{i = 1}^n  X_i - \mathbb{E}_p[X] \right \|_2  + 2\frac{M}{\sqrt{n}}.
\end{align}

Here Equation (\ref{eqn.tW1_contraction5}) comes from  symmetrization inequality~\citep[Proposition 4.11]{wainwright2019high}.

For $a > M$, we have
\begin{align}
     & \bE_p \left[ \sup_{v \in \bR^d, \| v\|_2 = 1, a>M} \bE_p\left[\max(0, v^{\top}X-a) \right] - \frac{1}{n}\sum_{i=1}^n \max(0, v^{\top}X_i-a)   \right]   \\
     \leq & \bE_p \left[ \sup_{v \in \bR^d, \| v\|_2 = 1, a>M} \bE_p\left[\max(0, v^{\top}X-a) \right]  \right]  \\
     = &  \sup_{v \in \bR^d, \| v\|_2 = 1, a>M} \bE_p\left[\max(0, v^{\top}X-a) \right]  \\
     < &   \sup_{v \in \bR^d, \| v\|_2 = 1 } \bE_p\left[\max(0, v^{\top}X-M) \right].
\end{align}

For $a < -M$, we have
\begin{align}
      & \bE_p \left[ \sup_{v \in \bR^d, \| v\|_2 = 1, a<-M} \bE_p\left[\max(0, v^{\top}X-a) \right] - \frac{1}{n}\sum_{i=1}^n \max(0, v^{\top}X_i-a)   \right]   \\
     = & \bE_p \Bigg[ \sup_{v \in \bR^d, \| v\|_2 = 1, a<-M} \bE_p\left[  v^{\top}X-a + \max\left(0, -v^{\top}X+a\right)\right] \nonumber \\ 
     & -  \frac{1}{n}\sum_{i=1}^n \left(  v^{\top}X_i-a + \max\left(0, -v^{\top}X+a\right) \right) \Bigg]  \\
     = & \bE_p \Bigg[ \sup_{v \in \bR^d, \| v\|_2 = 1, a<-M} \bE_p\left[  v^{\top}X-a \right] - \frac{1}{n}\sum_{i=1}^n \left(  v^{\top}X_i-a\right)  \nonumber \\
      & + \left(  \bE_p \left[ \max\left(0, -v^{\top}X+a\right) \right] - \frac{1}{n}\sum_{i=1}^n \max\left(0, -v^{\top}X_i+a\right)   \right) \Bigg]  \\
     \leq & \bE_{p} \left \| \frac{1}{n} \sum_{i = 1}^n  X_i - \mathbb{E}_p[X] \right \|_2  + \sup_{v \in \bR^d, \| v\|_2 = 1, a<-M} \bE_p \left[ \max\left(0, -v^{\top}X+a\right) \right] \\
     \leq & \bE_{p} \left \| \frac{1}{n} \sum_{i = 1}^n  X_i - \mathbb{E}_p[X] \right \|_2 +  \sup_{v \in \bR^d, \| v\|_2 = 1} \bE_p \left[ \max\left(0, -v^{\top}X-M\right) \right] \\
     = & \bE_{p} \left \| \frac{1}{n} \sum_{i = 1}^n  X_i - \mathbb{E}_p[X] \right \|_2 +  \sup_{v \in \bR^d, \| v\|_2 = 1} \bE_p \left[ \max\left(0,  v^{\top}X-M\right) \right]
\end{align}
Thus  we have
\begin{align}
     \sup_{u \in \mathcal{U}_2'} \bE_p[u(X)]  - \frac{1}{n}\sum_{i=1}^n u(X_i) & \leq 4\bE_{p}  \left \| \frac{1}{n} \sum_{i = 1}^n  X_i - \mathbb{E}_p[X] \right \|_2  + 2\frac{M}{\sqrt{n}} \nonumber \\ 
     & \quad +  \sup_{v \in \bR^d, \| v\|_2 = 1} \bE_p \left[ \max\left(0,  v^{\top}X-M\right) \right] .
\end{align}
Following a similar argument, we have
\begin{align}
     \sup_{u \in \mathcal{U}_3'} \bE_p[u(X)]  - \frac{1}{n}\sum_{i=1}^n u(X_i)  &\leq4\bE_{p}  \left \| \frac{1}{n} \sum_{i = 1}^n  X_i - \mathbb{E}_p[X] \right \|_2  + 2\frac{M}{\sqrt{n}} \nonumber \\ 
     & \quad +  \bE_p \left[ \sup_{v \in \bR^d, \| v\|_2 = 1} \frac{1}{n}\sum_{i=1}^n \max\left(0,  v^{\top}X_i-M\right) \right].
\end{align}

To see the final results, we first  show that
\begin{align}
    \sup_{v \in \bR^d, \| v\|_2 = 1} \bE_p \left[ \max\left(0,  v^{\top}X-M\right) \right]  \leq \bE_p \left[ \sup_{v \in \bR^d, \| v\|_2 = 1} \frac{1}{n}\sum_{i=1}^n \max\left(0,  v^{\top}X_i-M\right) \right].
\end{align}
This can be seen from that for any $v\in\bR^d, \| v\|_2 = 1$,
\begin{align}
    \frac{1}{n}\sum_{i=1}^n\max\left(0,  v^{\top}X_i-M\right) \leq \sup_{v \in \bR^d, \| v\|_2 = 1} \frac{1}{n}\sum_{i=1}^n \max\left(0,  v^{\top}X_i-M\right).
\end{align}
Taking expectation on both sides, we can see that for any $v\in\bR^d, \| v\|_2=1$,
\begin{align}
    \bE_p[\max\left(0,  v^{\top}X_i-M\right)] \leq \bE_p[\sup_{v \in \bR^d, \| v\|_2 = 1} \frac{1}{n}\sum_{i=1}^n \max\left(0,  v^{\top}X_i-M\right)].
\end{align}
Thus we only need to bound the RHS of the above equation.
Following the same approach of symmetrization and contraction inequality, we have
\begin{align}
    & \bE_p\left[\sup_{v \in \bR^d, \| v\|_2 = 1} \frac{1}{n}\sum_{i=1}^n \max\left(0,  v^{\top}X_i-M\right)\right] \nonumber \\ 
    = & \bE_p\left[\sup_{v \in \bR^d, \| v\|_2 = 1} \frac{1}{n}\sum_{i=1}^n \max\left(0,  v^{\top}X_i-M\right)\right] - \sup_{v \in \bR^d, \| v\|_2 = 1} \bE_p[ \max\left(0,  v^{\top}X-M\right)] \nonumber \\
    & + \sup_{v \in \bR^d, \| v\|_2 = 1} \bE_p[ \max\left(0,  v^{\top}X-M\right)]\nonumber \\
    \leq&  2\bE_{p, \epsilon \sim \{\pm 1\}^d}\left[\sup_{v \in \bR^d, \| v\|_2 = 1} \frac{1}{n}\sum_{i=1}^n \epsilon_i\max\left(0,  v^{\top}X_i-M\right)\right] + \sup_{v \in \bR^d, \| v\|_2 = 1} \bE_p[ \max\left(0,  v^{\top}X-M\right)]\nonumber \\
    \leq & 2\bE_{p, \epsilon \sim \{\pm 1\}^d}\left[\sup_{v \in \bR^d, \| v\|_2 = 1} \frac{1}{n}\sum_{i=1}^n \epsilon_i  v^{\top}X_i\right] + \sup_{v \in \bR^d, \| v\|_2 = 1} \bE_p[ \max\left(0,  v^{\top}X-M\right)]\nonumber \\
    \leq & 4\bE_{p}\left[\sup_{v \in \bR^d, \| v\|_2 = 1} \frac{1}{n}\sum_{i=1}^n   v^{\top}(X_i-\bE_p[X])\right]+ \sup_{v \in \bR^d, \| v\|_2 = 1} \bE_p[ \max\left(0,  v^{\top}X-M\right)]\nonumber \\
    = & 4 \bE_p\left[\|\frac{1}{n}\sum_{i=1}^n   v^{\top}X_i- \bE_p[X]\|_2\right] + \sup_{v \in \bR^d, \| v\|_2 = 1} \bE_p[ \max\left(0,  v^{\top}X-M\right)].
\end{align}

Overall, we have
\begin{align}
    \bE_p[ \tW_1(p, \hat p_n)] \leq & 8 \bE_{p}  \left \| \frac{1}{n} \sum_{i = 1}^n  X_i - \mathbb{E}_p[X] \right \|_2 +  \sup_{v \in \bR^d, \| v\|_2 = 1} \bE_p[ \max(0,  v^{\top}X-M)]  +  2\frac{M}{\sqrt{n}}.
\end{align}

Combining all the results give the conclusion. 

Now we are ready for the proof of main Lemma. Note that $\tW_1$  is a pseudometric. 
By triangle inequality, we have
\begin{align}
     \tW_1(p, \hat p_n) \leq \tW_1(p, p^*) + \tW_1(p^*, \hat p_n^*) + \tW_1(\hat p_n^*, \hat p_n).
\end{align}
Taking the expectation over the optimal coupling $\pi$ between $p, p^*$, by Lemma~\ref{lem.coupling}, we know that 
\begin{align}
    \bE_p[\tW_1(p, \hat p_n)] & \leq \tW_1(p, p^*) + \bE_{p^*}[\tW_1(p^*, \hat p_n^*)] + \bE_{\pi}[\tW_1(\hat p_n^*, \hat p_n)] \nonumber \\
    & \leq \epsilon + \bE_{p^*}[\tW_1(p^*, \hat p_n^*)] + \bE_{\pi}[W_1(\hat p_n^*, \hat p_n)] \nonumber \\
    & \leq 2\epsilon + \bE_{p^*}[\tW_1(p^*, \hat p_n^*)]. 
\end{align}
Thus it suffices to bound the term $\bE_{p^*}[\tW_1(p^*, \hat p_n^*)]  $.
By Lemma~\ref{lem.tW_1_empirical_to_population}, we have  for any $M >0$, 
\begin{align}
    \bE_{p^*}[\tW_1(p^*, \hat p_n^*)] \leq 8\xi_1 +  \xi_2(M)+  2\frac{M}{\sqrt{n}}.
\end{align}
where $  \xi_1  = \bE_{p^*}  \left \| \frac{1}{n} \sum_{i = 1}^n  X_i - \mathbb{E}_{p^*}[X] \right \|_2$, $\xi_2(M) = \sup_{v \in \bR^d, \| v\|_2 = 1 } \bE_{p^*}\left[ \max(0, v^{\top}(X-\bE_{p^*}[X])-M) \right]. $ Now we bound the two terms $\xi_1, \xi_2$ separately. 
From Lemma~\ref{lem.kth_convergence}, we know that 
\begin{align}
   \xi_1=  \bE_{p^*}  \left \| \frac{1}{n} \sum_{i = 1}^n  X_i - \mathbb{E}_{p^*}[X] \right \|_2 \leq \sigma \sqrt{\frac{d}{n}}.
\end{align}

Now we bound the term $\xi_2(M)$. From Lemma~\ref{lem.cvx_mean_resilience},
we know for some fixed $v\in\bR^d, \|v\|_2=1$,
\begin{align}
     \|\bE_{p^*}[ X \mid v^\top (X-\bE_{p^*}[X]) \geq M] - \bE_{p^*}[X]\|_2 & \leq \kappa\psi^{-1}(1/(1-\bP_{p^*}(v^\top (X-\bE_{p^*}[X]) \leq M))) \nonumber \\
    & = \kappa\psi^{-1}(1/\bP_{p^*}(v^\top (X-\bE_{p^*}[X]) \geq M)).
\end{align}
By Markov's inequality, we have
\begin{align}
    \bP_{p^*}(v^{\top}(X-\bE_{p^*}[X])\geq M) & \leq \bP_{p^*}(\psi(|v^{\top}(X-\bE_{p^*}[X])|/\kappa)\geq \psi(M/\kappa)) \nonumber \\
    & \leq \frac{\bE_{p^*}[\psi(|v^{\top}(X-\bE_{p^*}[X])/\kappa|)]}{\psi(M/\kappa)} \nonumber \\
    & \leq \frac{1}{\psi(M/\kappa)}.
\end{align}
Thus we have
\begin{align}
    \xi_2(M) & = \sup_{v \in \bR^d, \| v\|_2 = 1 } \bE_{p^*}\left[ \max(0, v^{\top}(X-\bE_{p^*}[X])-M) \right] \nonumber \\
    & = \sup_{v \in \bR^d, \| v\|_2 = 1 } \bP_{p^*}(v^{\top}(X-\bE_{p^*}[X])\geq M) \|\bE_{p^*}[X-\bE_{p^*}[X] \mid v^{\top}(X-\bE_{p^*}[X])\geq M] \|_2 \nonumber \\
    & \leq \sup_{v \in \bR^d, \| v\|_2 = 1 } \kappa \bP_{p^*}(v^{\top}(X-\bE_{p^*}[X])\geq M) \psi^{-1}(1/\bP_{p^*}(v^\top (X-\bE_{p^*}[X]) \geq M)) \nonumber \\
    & \leq \frac{M}{\psi(M/\kappa)}.
\end{align}
The last inequality uses the fact that $\epsilon \psi^{-1}(1/\epsilon)$ is nondecreasing from Lemma~\ref{lem.psi_increasing}.
Now we balance the term $\xi_2(M) + \frac{M}{\sqrt{n}}$. By taking $M = \kappa \psi^{-1}(\sqrt{n})$, we have
\begin{align}
    \xi_2(M) + 2\frac{M}{\sqrt{n}} \leq \frac{3\kappa \psi^{-1}(\sqrt{n})}{\sqrt{n}}.
\end{align}

\end{proof}

\subsection{Proof of Theorem~\ref{thm.sec_tw_1_proj}}\label{proof.psi_w1_resilience}
The theorem can be decomposed into the following two lemmas, on the population and finite sample results separately.

\begin{lemma}
 
Let $\psi$ be an Orlicz function  that further satisfies $\psi(x)\geq x$ for all $x \geq 1$, 
and define $\tilde \psi(x) = x\psi(2x)$. Suppose that 
\begin{align}
   \sup_{v\in\bR^d, \|v\|_2 =1} \bE_p\left[\tilde \psi\left(\frac{|v^{\top}X|}{\sigma}\right)\right] \leq 1.
\end{align}

 Then,
$p \in \GG^{W_1}_{\mathsf{sec}}(\rho(\eta), \eta)$ for 
$\rho(\eta) = \max(4\eta^2 + 2\sigma\eta, \sigma\eta\psi^{-1}(\frac{2\sigma}{\eta}))$, where $C$ is some universal constant.  %
The population limit when the perturbation level is $\epsilon$ is  $\Theta(\rho(2\epsilon))$.
\end{lemma}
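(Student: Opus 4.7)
The plan is to bound $|\bE_r[(v^\top X)^2] - \bE_p[(v^\top X)^2]|$ by two complementary arguments; each handles one of the two quantities appearing in $\rho(\eta)$. Fix a unit vector $v$, let $u = |v^\top X|$, $w = |v^\top Y|$, $m = \bE_p u$, $H = (\bE_r w^2)^{1/2}$, and let $\pi$ witness $r \in \mathbb{F}(p, \eta, W_1, |v^\top X|^2)$; friendliness then gives $w \in [\min(u,H), \max(u,H)]$ almost surely and $\bE_\pi\|X-Y\|_2 \le \eta$. The Orlicz hypothesis together with $\psi(x) \ge x$ for $x \ge 1$ yields (after an elementary truncation, using $\tilde\psi(x) = x\psi(2x) \ge 2x^2$ for $x \ge 1/2$) the estimates $\bE_p u^2 \le \sigma^2$ and $m \le \sigma$.

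The first argument handles the expansion direction $H^2 \ge \bE_p u^2$ and produces $4\eta^2 + 2\sigma\eta$. Since $|w - H| \le |u - H|$ pointwise, squaring and averaging yields
\begin{equation*}
2H(H - \bE_r w) \;=\; \bE_\pi(w-H)^2 \;\le\; \bE_\pi(u-H)^2 \;=\; \bE_p u^2 - 2Hm + H^2,
\end{equation*}
and combining with $|\bE_r w - m| \le \bE_\pi|w - u| \le \eta$ rearranges to $H^2 \le \bE_p u^2 + 2H\eta$. Plugging in $\bE_p u^2 \le \sigma^2$ gives the a priori bound $H \le \sigma + 2\eta$, whence $H^2 - \bE_p u^2 \le 2H\eta \le 2\sigma\eta + 4\eta^2$.

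The second argument handles the collapse direction $H^2 \le \bE_p u^2$ (which the adversary can exercise by concentrating mass around the mean) and produces $\sigma\eta\psi^{-1}(2\sigma/\eta)$. It applies Lemma~\ref{lem.coupling_g_bounded} with $g(x) = (v^\top x)^2$ and cost $c(x,y) = |v^\top(x-y)|$, so that $\bE_\pi c \le \eta$ and pointwise $|g(X) - g(Y)|/c(X,Y) \le u + w$; taking the Orlicz scale $\sigma_0 = 2\sigma$ gives
\begin{equation*}
|\bE_\pi[g(X) - g(Y)]| \;\le\; 2\sigma\,\bE_\pi c \cdot \psi^{-1}\!\bigg(\frac{\bE_\pi[c\,\psi((u+w)/(2\sigma))]}{\bE_\pi c}\bigg).
\end{equation*}
Using $c \le u + w$ pointwise together with the identity $(u+w)\psi((u+w)/(2\sigma)) = 2\sigma\,\tilde\psi((u+w)/(4\sigma))$, convexity of $\tilde\psi$, and the a priori bound $w \le u + H \le u + \sigma + 2\eta$ from the first argument, the expectation inside $\psi^{-1}$ is controlled by an absolute constant times $\sigma$ via the hypothesis $\bE_p\tilde\psi(u/\sigma) \le 1$. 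The output is a bound of order $\sigma\eta\psi^{-1}(\sigma/\eta) \le \sigma\eta\psi^{-1}(2\sigma/\eta)$.

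The hardest step is the constant-tracking in the second argument: the Jensen and convexity manipulations must produce an argument to $\psi^{-1}$ that is $O(\sigma/\eta)$ rather than larger, which relies crucially on the a priori bound $H \le \sigma + 2\eta$ obtained in the first step. For the matching population lower bound $\Theta(\rho(2\epsilon))$ asserted in the lemma, I would use a two-point construction mirroring Lemma~\ref{lem.Wf_orlicz_lower_bound_limit}: place mass $2\epsilon$ at the saturating value $t \asymp \sigma\psi^{-1}(\sigma/(2\epsilon))$ and the rest at the origin, then consider the $W_1$-neighbour obtained by pushing that mass to $0$ at cost exactly $2\epsilon\cdot t$; the induced change in second moment is $\asymp \epsilon t^2 \asymp \sigma\epsilon\psi^{-1}(2\sigma/(2\epsilon))$, supplying the matching lower bound.
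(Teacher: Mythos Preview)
Your first argument for the expansion direction is correct and in fact cleaner than the paper's: the paper factors $|v^\top Y|^2-|v^\top X|^2=(|v^\top Y|-|v^\top X|)(|v^\top Y|+|v^\top X|)$ and uses the friendliness bound $|v^\top Y|\le H$ on the moving mass to get $H^2-\bE_p u^2\le 2\eta H$, while you reach the same quadratic via the contraction inequality $|w-H|\le|u-H|$. Both routes give exactly $4\eta^2+2\sigma\eta$.

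The second argument has a genuine gap. After bounding $c\le u+w$ and the ratio by $(u+w)/(2\sigma)$, you need to control $\bE_\pi[(u+w)\psi((u+w)/(2\sigma))]=4\sigma\,\bE_\pi[\tilde\psi((u+w)/(4\sigma))]$, and your plan is to dominate $w$ by $u+H$ with $H\le\sigma+2\eta$. But this leaves a term $\tilde\psi(H/(2\sigma))$ inside the expectation, so the numerator is at best $\sigma\cdot(2+\psi(1))$; the constant $\psi(1)$ depends on $\psi$ and is \emph{not} universal. For general Orlicz $\psi$, a $\psi$-dependent factor inside $\psi^{-1}$ does not reduce to a constant multiple of $\psi^{-1}(2\sigma/\eta)$, so you do not recover the stated $\sigma\eta\psi^{-1}(2\sigma/\eta)$.

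The missing idea is the one-sided reduction the paper makes: once you are bounding $\bE_p u^2-\bE_r w^2$, you may restrict to the event $\{w\le u\}$ (the complementary event only helps). On that event both $c\le 2u$ and $(u^2-w^2)/c\le 2u$ hold pointwise, so $c\,\psi\bigl((u^2-w^2)/(\sigma c)\bigr)\le 2u\,\psi(2u/\sigma)=2\sigma\,\tilde\psi(u/\sigma)$; integrating against the hypothesis gives the numerator $\le 2\sigma$ exactly, with no $\psi$-dependent residue, and Lemma~\ref{lem.coupling_g_bounded} then yields $\sigma\eta\psi^{-1}(2\sigma/\eta)$ on the nose. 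Your a~priori bound on $H$ is not needed for this step.

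Your lower-bound sketch also conflates $\TV$ and $W_1$ scales: placing mass $2\epsilon$ at height $t$ and pushing it to $0$ costs $2\epsilon t$ in $W_1$, not $2\epsilon$. The paper instead takes two Dirac masses at $\tfrac{1}{2}\sigma\psi^{-1}(\sigma/\epsilon)\pm\epsilon/2$, which are $\epsilon/2$-close in $W_1$ and differ in second moment by $\Theta(\sigma\epsilon\psi^{-1}(\sigma/\epsilon))$.
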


In the above lemma, 
taking $\psi(x) = x^m$ for $m>1$, we know that when the $(m+1)$-th moment of $X$ is bounded by $2(\frac{\sigma}{2})^{m+1}$, 
the population limit is $\Theta(\min(\sigma^{1+1/m}\epsilon^{1-1/m}, \sigma^2))$.
\begin{lemma}

Assume $p^*$ has bounded $k$-th moment for $k>2$, i.e. 
    $\sup_{v\in\bR^d, \|v\|_2=1}\bE_{p^*}[|v^\top X|^k] \leq \sigma^k$
for some $\sigma>0$.
Denote $ \tilde \epsilon = \frac{C_1}{\delta} \left(\epsilon +  \sigma \sqrt{\frac{d}{n}}+\frac{\sigma }{\sqrt{n^{1-1/k}}}\right)$, where $C_1$ is some universal constant. 
Then the projection algorithm $q = \Pi(\hat p_n; \tW_1, \GG(k))$ or $q = \Pi(\hat p_n; \tW_1, \GG(k), \tilde \epsilon/2)$    satisfies
\begin{align}
    \|\bE_{q}[XX^{\top}] - \bE_{p^*}[XX^{\top}]\|_2 \leq  C_2\min(\sigma^2, \sigma^{1+1/(k-1)}\tilde \epsilon^{1-1/(k-1)})
\end{align}
with probability at least $1-\delta$, 
where $C_2$ is some universal constant.
\end{lemma}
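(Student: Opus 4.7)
The plan is to mirror the analysis framework used for $\TV$ perturbations (Proposition~\ref{prop.tTV_general}) but in the Wasserstein setting: reduce the finite-sample error to the sum of (i) the statistical fluctuation $\tW_1(p,\hat p_n)$ and (ii) the modulus of continuity of the target class under $\tW_1$, and then control each ingredient separately. Because the projection set $\GG(k)$ only controls the $k$-th moment and not the second moment directly, we will first invoke the population-level lemma (the first half of the theorem, via Lemma~\ref{lem.cvx_mean_resilience}-type reasoning) to lift the $k$-th moment bound on $p^*$ into membership in the generalized resilience family $\GG^{W_1}_{\mathsf{sec}}(\rho(\eta),\eta)$ with $\rho(\eta) \asymp \sigma^{1+1/(k-1)}\eta^{1-1/(k-1)}$, by choosing $\psi(x)=x^{k-1}$ so that $\tilde\psi(x)=x\psi(2x)\asymp x^k$ matches the hypothesis.

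For step (i), I would apply Lemma~\ref{lem.tW_1_empirical_to_population} with $\psi(x)=x^k$ to obtain $\bE_p[\tW_1(p,\hat p_n)] \lesssim \epsilon + \sigma\sqrt{d/n} + \sigma/\sqrt{n^{1-1/k}}$ (the $\psi^{-1}(\sqrt n)/\sqrt n$ term becomes $n^{-1/2+1/(2k)}$ under $\psi(x)=x^k$, giving exactly the $\sigma/\sqrt{n^{1-1/k}}$ rate). A single application of Markov's inequality then gives, with probability at least $1-\delta$, the bound $\tW_1(p,\hat p_n)\leq \tilde\epsilon/2$, which is why the $1/\delta$ factor appears in $\tilde\epsilon$. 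Combined with $W_1(p^*,p)\leq\epsilon$ and the triangle inequality for $\tW_1$ (since $\tW_1\leq W_1$), the projected distribution $q$ satisfies $\tW_1(q,p^*)\leq\tilde\epsilon$.

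For step (ii), I would bound the modulus $\sup\{\|\bE_{p_1}[XX^\top]-\bE_{p_2}[XX^\top]\|_2 : p_1,p_2\in\GG^{W_1}_{\mathsf{sec}}(\rho(\tilde\epsilon),\tilde\epsilon),\ \tW_1(p_1,p_2)\leq\tilde\epsilon\}$ in the same spirit as the $\tTV$ arguments: pick the unit vector $v^*$ that realizes the operator norm, apply the mean-cross lemma (Lemma~\ref{lem:tw1_cross_mean}) to the one-dimensional random variable $|v^{*\top}X|$ and the convex function $f(x)=x^2$ to obtain $7\tilde\epsilon$-friendly perturbations $r_{p_1}\in\mathbb{F}(p_1,7\tilde\epsilon,W_1,|v^{*\top}x|)$ and $r_{p_2}\in\mathbb{F}(p_2,7\tilde\epsilon,W_1,|v^{*\top}x|)$ with $\bE_{r_{p_1}}[(v^{*\top}X)^2]\leq \bE_{r_{p_2}}[(v^{*\top}X)^2]$, then use $p_1,p_2\in\GG^{W_1}_{\mathsf{sec}}$ to bridge back to $\bE_{p_1}[(v^{*\top}X)^2]-\bE_{p_2}[(v^{*\top}X)^2]\leq 2\rho(7\tilde\epsilon)\lesssim \sigma^{1+1/(k-1)}\tilde\epsilon^{1-1/(k-1)}$. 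Finally, the trivial bound $\|\bE_q[XX^\top]-\bE_{p^*}[XX^\top]\|_2\lesssim\sigma^2$ (following from $p^*,q\in\GG(k)$ and $k>2$) gives the $\min$ with $\sigma^2$.

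The main obstacle I expect is verifying that the mean-cross lemma can be \emph{applied on the convex function} $x\mapsto x^2$ rather than on $g(x)=x$ or $g(x)=|x|$ directly; in Lemma~\ref{lem:tw1_cross_mean} the convex ordering is stated for $g(X)$ with $g\in\{x,|x|\}$, so I must take $g(x)=|v^{*\top}x|$ and then use the fact that $f(y)=y^2$ is convex to conclude $\bE_{r_{p_1}}[f(g(X))]\leq \bE_{r_{p_2}}[f(g(X))]$, which is exactly the second-moment inequality needed. This is also the reason the theorem formulates resilience against $|v^\top X|$-friendly perturbations rather than $(v^\top X)^2$-friendly perturbations (as remarked after the definition of $\GG^{W_1}_{\mathsf{sec}}$), so this matches cleanly. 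A secondary minor point is that we need to replace the ``$q$ is the exact minimizer'' argument with the slightly weaker ``any $q$ with $\tW_1(q,\hat p_n)\leq\tilde\epsilon/2$'' version, which is handled exactly as in the $\TV$ case by the triangle inequality for the pseudometric $\tW_1$.
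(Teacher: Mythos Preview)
Your proposal is correct and follows essentially the same route as the paper's proof: Lemma~\ref{lem.tW_1_empirical_to_population} plus Markov gives $\tW_1(p^*,\hat p_n)\le \tilde\epsilon/2$, the triangle inequality for the pseudometric $\tW_1$ gives $\tW_1(q,p^*)\le\tilde\epsilon$, and then the modulus over $\GG(k)$ is bounded exactly as you describe---pick $v^*$ realizing the operator norm, apply Lemma~\ref{lem:tw1_cross_mean} with $g(x)=|v^{*\top}x|$ and convex $f(y)=y^2$ to obtain $7\tilde\epsilon$-friendly perturbations with crossed second moments, then bridge back using the resilience bound $\rho(7\tilde\epsilon)\lesssim \sigma^{1+1/(k-1)}\tilde\epsilon^{1-1/(k-1)}$ inherited from the first (population) lemma via the inclusion $\GG(k)\subset\GG^{W_1}_{\mathsf{sec}}$. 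The $\min$ with $\sigma^2$ is obtained exactly as you say, from the second-moment bound implied by $q,p^*\in\GG(k)$.
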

We start with the proof of the first lemma.
\begin{proof}
We verify that the sufficient condition implies $p^* \in \GG_{\mathsf{sec}}^{W_1}$. 
From the fact that $r \in \mathbb{F}(p, \eta, W_1, |v^{\top}X|^2)$, we know that for any coupling $\pi_{p, r}$ that makes $r$ friendly perturbation, we have 
\begin{align}
  \sup_{v\in\bR^d, \|v\|_2=1} \bE_{\pi_{p, r}} |v^\top(X - Y)|\leq  \bE_{\pi_{p, r}} \sup_{v\in\bR^d, \|v\|_2=1} |v^\top(X - Y)| \leq  \eta.
\end{align}
For any fixed $v\in\bR^d, \|v\|_2 = 1$, we claim that the worst perturbation only happens when for any $(x, y) \in \mathsf{supp}(\pi_{p, r})$, $|v^\top x|^2 \geq |v^\top y|^2$ or for any $(x, y) \in \mathsf{supp}(\pi_{p, r})$, $|v^\top x|^2 \leq |v^\top y|^2$. If it is not one of the two cases, we can always remove the movement from $x$ to $y$ that decreases or increases $g$ to make $|\bE_\pi[|v^\top X|^2 - |v^\top Y|^2]|$ larger without increasing $\bE_\pi\|X-Y\|$.

Thus we can assume for any $(x, y) \in \mathsf{supp}(\pi_{p, r})$, $|v^\top x|^2 \geq |v^\top y|^2$ or for any $(x, y) \in \mathsf{supp}(\pi_{p, r})$, $|v^\top x|^2 \leq |v^\top y|^2$. For the first case,  by Lemma~\ref{lem.coupling_g_bounded}, we bound the worst case perturbation as follows. For any $v\in\bR^d, \|v\|_2=1$,
\begin{align}
    |\bE_{(X, Y) \sim \pi_{p, r}}[|v^\top X|^2 - |v^\top Y|^2]| & \leq \sigma \bE_{\pi_{p, r}}[|v^\top (X-Y)|] \psi^{-1}\left(\frac{\bE_{\pi_{p, r}}\Big[|v^\top (X-Y)| \psi\left(\left|\frac{|v^\top X|^2 - |v^\top Y|^2}{\sigma v^\top (X-Y)}\right|\right)\Big]}{ \bE_{\pi_{p, r}}[|v^\top (X-Y)|]}\right) \nonumber \\
    & \leq \sigma \eta \psi^{-1}\left(\frac{\bE_{\pi_{p, r}}\Big[|v^\top (X-Y)| \psi\left(\left|\frac{|v^\top X|^2 - |v^\top Y|^2}{\sigma v^\top (X-Y)}\right|\right)\Big]}{\eta}\right)\label{eqn.proof_w1_sec_orlicz_1}  \\
    & \leq \sigma \eta \psi^{-1}\left(\frac{\bE_{\pi_{p, r}}\Big[|v^\top (X-Y)| \psi\left(\left|\frac{2|v^\top X|}{\sigma}\right|\right)\Big]}{\eta}\right) \label{eqn.proof_w1_sec_orlicz_2}  \\
    & \leq \sigma \eta \psi^{-1}\left(\frac{\bE_{\pi_{p, r}}\Big[2|v^\top X| \psi\left(\left|\frac{2|v^\top X|}{\sigma}\right|\right)\Big]}{\eta}\right)  \\
    & = \sigma \eta \psi^{-1}\left(\frac{\bE_p\Big[2|v^\top X| \psi\left(\left|\frac{2|v^\top X|}{\sigma}\right|\right)\Big]}{\eta}\right)  \\
    & \leq \sigma \eta \psi^{-1}(\frac{2\sigma}{\eta})\label{eqn.proof_w1_sec_orlicz_3} . 
\end{align}
Here Equation (\ref{eqn.proof_w1_sec_orlicz_1}) comes from the fact that $x\psi^{-1}(C / x) $ is a non-decreasing function of $x$ for the region $[0,+\infty)$ for any $\sigma >0$ (Lemma~\ref{lem.psi_increasing}). Equation (\ref{eqn.proof_w1_sec_orlicz_2}) uses the fact that  for any $(x, y) \in \mathsf{supp}(\pi_{p, r})$, $|v^\top x| \geq |v^\top y|$. Equation (\ref{eqn.proof_w1_sec_orlicz_3}) is from the assumption given.

On the other hand, for any $(x, y) \in \mathsf{supp}(\pi_{p, r})$, $|v^\top x|^2 \leq |v^\top y|^2$, we bound the worst case perturbation as follows:
\begin{align}
     \bE_{(X, Y) \sim \pi_{p, r}}[|v^\top Y|^2 - |v^\top X|^2] & =  |\bE_{(X, Y) \sim \pi_{p, r}}[(|v^\top X| - |v^\top Y|)(|v^\top X| + |v^\top Y|)]| \\
     & \leq 2|\bE_{(X, Y) \sim \pi_{p, r}}[(|v^\top X| - |v^\top Y|)]\sqrt{\bE[|v^\top Y|^{2}]}|. 
     \\
     & \leq 2\eta \sqrt{\bE[|v^\top Y|^{2}]}. \label{eqn.proof_W1_sec_resilience1}
\end{align}
Here we use the fact that 
For any $(x, y) \in \mathsf{supp}(\pi_{p, r}), x \neq y$,  we have $|v^\top x|^2\leq |v^\top y|^2 \leq \bE[|v^\top Y|^2]$ from the definition of friendly perturbation.

Solving the inequality, we can get that 
\begin{align}
    \bE[|v^\top Y|^{2}] \leq (\eta + \sqrt{\eta^2 + \bE[|v^\top X|^{2}]})^2.
\end{align}
Thus
\begin{align}
        \bE[|v^\top Y|^{2}] - \bE[|v^\top X|^{2}] & \leq  (\eta + \sqrt{\eta^2 + \bE[|v^\top X|^{2}]})^2 - \bE[|v^\top X|^{2}] \nonumber \\ 
        & =2\eta^2 + 2\eta\sqrt{\eta^2 + \bE[|v^\top X|^{2}]} \nonumber \\ 
        & \leq 4\eta^2 + 2\eta\sqrt{\bE[|v^\top X|^{2}}.
\end{align}

Now we  show that  for any 1-d random variable $X$, $\bE_{p^*}[|\frac{X}{\sigma}| \psi(|\frac{2X}{\sigma}|)]\leq 1$ implies $\bE_{p^*}[X^2]\leq 2\sigma^2$. Note that $\bE_{p^*}[|\frac{X}{\sigma}| \psi(|\frac{2X}{\sigma}|)]\leq 1$ is equivalent to
\begin{align}
    1 & \geq \bP_{p^*}(|X|\leq \sigma)\bE_{p^*}[|\frac{X}{\sigma}| \psi(|\frac{X}{\sigma}|) \mid  |X| \leq \sigma] + \bP_{p^*}(|X|> \sigma)\bE_{p^*}[|\frac{X}{\sigma}| \psi(|\frac{X}{\sigma}|) \mid  |X| > \sigma] \nonumber \\ 
    & \geq \bP_{p^*}(|X|> \sigma)\bE_{p^*}[X^2/\sigma^2 \mid  |X| > \sigma],
\end{align}
since $\psi(x)\geq x$ for $x\geq 1$. Thus we have
\begin{align}
    \bE_{p^*}[X^2/\sigma^2] & = \bP_{p^*}(|X|\leq \sigma)\bE_{p^*}[X^2/\sigma^2 \mid  |X| \leq \sigma] + \bP_{p^*}(|X|> \sigma)\bE_{p^*}[X^2/\sigma^2 \mid  |X| > \sigma]\nonumber \\ 
    & \leq 2. 
\end{align}

Thus we can conclude that 
\begin{align}\label{eqn.moving_up_bound}
      \bE[|v^\top Y|^{2}] - \bE[|v^\top X|^{2}] & \leq 4\eta^2 + 2\sigma\eta.
\end{align}

Combining the two cases, we know that the movement is upper bounded by $\max(4\eta^2 + 2\sigma\eta, \sigma\eta\psi^{-1}(\frac{2\sigma}{\eta}))$. 
Thus 
we have  $p \in \GG_{\mathsf{sec}}^{W_1}(\max(4\eta^2 + 2\sigma\eta, \sigma\eta\psi^{-1}(\frac{2\sigma}{\eta})), \eta)$.

We remark here that the above proof also applies to the case when we requires $r\in\mathbb{F}(p, \eta, W_1, |v^\top X|)$ instead of $r\in\mathbb{F}(p, \eta, W_1, |v^\top X|^2)$. The only difference to note is in above~\eqref{eqn.proof_W1_sec_resilience1} where we need to apply Jensen's inequality to derive $|v^\top y|^2 \leq (\bE[|v^\top Y|])^2 \leq \bE[|v^\top Y|^2]$.
This proves to be crucial in finite sample algorithm design in Section~\ref{sec.finite_sample_w1}.

\end{proof}

Now we prove the second lemma on the finite-sample results.
\begin{proof}
First, we show that the projected distribution $q$ is close to $p^*$ in $\tW_1$.  We know from Lemma~\ref{lem.tW_1_empirical_to_population} that under appropriate choice of $C_1$, with probability at least $1-\delta$, we have
\begin{align}
    \tW_1(p^*, \hat p_n) \leq \frac{\tilde \epsilon} {2}=  \frac{C_1}{\delta} \left(\epsilon +  \sigma \sqrt{\frac{d}{n}}+\frac{\sigma }{\sqrt{n^{1-1/k}}}\right).
\end{align}

Note that $\tW_1$ satisfies triangle inequality and $\mathcal{U}' \subset \mathcal{U}$. Therefore 
\begin{align}
    \tW_1(q, p^*) & \leq \tW_1(q, \hat p_n) + \tW_1(\hat p_n, p^*) \leq \tilde \epsilon.
\end{align}

From Lemma~\ref{lemma.population_limit} we know that the final result can be upper bounded by the modulus of continuity, thus it suffices to bound the term $\sup_{p_1, p_2\in \GG, \tW_1(p_1, p_2)\leq \tilde \epsilon} \|M_{p_1} - M_{p_2}\|_2 $.
We apply Lemma~\ref{lem:tw1_cross_mean} to show that the modulus of continuity can be bounded.
By symmetry, without loss of generality we can take some $v^*\in\bR^d, \|v^*\|_2=1$, such that
\begin{align}
    v^{*\top}(M_{p_1} -M_{p_2} )v^{*} = \|M_{p_1} -M_{p_2} \|_2.
\end{align}

From $p_1, p_2\in\GG^{W_1}(\rho_1(7\tilde \epsilon), \rho_2(7\tilde \epsilon), 7\tilde \epsilon)$ and $\tW_1(p_1, p_2)\leq  \tilde \epsilon$ and Lemma~\ref{lem:tw1_cross_mean}, we know that  there exist an $r_{p_1}\in \mathbb{F}(p_1, 7\tilde \epsilon, W_1, |v^{*\top} X|)$ and an $r_{p_2} \in \mathbb{F}(p_2,  7\tilde \epsilon, W_1, |v^{*\top} X|)$ such that
    \begin{align}
        \bE_{r_{p_1}}[(v^{*\top} X)^2 ] \leq \bE_{r_{p_2}} [(v^{*\top} X)^2].
    \end{align}
    
 From $p_1, p_2 \in \GG^{W_1}_\downarrow$, we know that 
    \begin{align}
        \bE_{{p_1}}[(v^{*\top} X)^2 ] - \bE_{r_{p_1}}[(v^{*\top} X)^2 ] \leq \max(4(7\tilde \epsilon)^2 + 14\sigma \tilde \epsilon, 14\sigma^{1+1/(k-1)}\tilde \epsilon^{1-1/(k-1)}), \\
        \bE_{r_{p_2}}[(v^{*\top} X)^2 ] - \bE_{{p_2}}[(v^{*\top} X)^2 ] \leq \max(4(7\tilde \epsilon)^2 + 14\sigma \tilde \epsilon, 14\sigma^{1+1/(k-1)}\tilde \epsilon^{1-1/(k-1)}).
    \end{align}
for all friendly perturbations $r_{p_1}$, $r_{p_2}$ of $p_1$, $p_2$.
Thus we know that
\begin{align}
      \|M_{p_1} -M_{p_2} \|_2 = &  v^{*\top}(M_{p_1} -M_{p_2} )v^{*}  \nonumber \\
       = &\bE_{p_1}[(v^{*\top}X)^2] -  \bE_{p_2}[(v^{*\top}X)^2] \nonumber \\
       =& \bE_{p_1}[(v^{*\top}X)^2] -  \bE_{r_{p_1}}[(v^{*\top}X)^2] + \bE_{r_{p_1}}[(v^{*\top}X)^2] -  \bE_{r_{p_2}}[(v^{*\top}X)^2] \nonumber \\
      & + \bE_{r_{p_2}}[(v^{*\top}X)^2] -  \bE_{{p_2}}[(v^{*\top}X)^2] \nonumber \\
      \leq & 2\max(4(7\tilde \epsilon)^2 + 14\sigma \tilde \epsilon, 14\sigma^{1+1/(k-1)}\tilde \epsilon^{1-1/(k-1)}).
\end{align}
Furthermore, from the projection set we know that both $q$ and $p^*$ have their second moment bounded by $C\sigma^2$. Thus the final finite sample rate is $C\min(\sigma^2, \sigma^{1+1/(k-1)}\tilde \epsilon^{1-1/(k-1)}))$.
\end{proof}

\textbf{Lower bound on population limit.}

So far we have already shown that the population limit is upper bounded by $C\cdot\min(\epsilon^2, \sigma \epsilon \psi^{-1}(\sigma/\epsilon))$ when the perturbation level is $\epsilon$. Now we show that it is tight for the set of one-dimensional distribution with bounded Orlicz norm:
\begin{align}
    \GG = \left\{p \mid \bE_{p}\left[\frac{|X|}{\sigma}\psi\left(\frac{|X|}{\sigma}\right)\right] \leq 1\right\}.
\end{align}
We show the lower bound separately: when $\epsilon \geq \sigma$, the limit is lower bounded by $\epsilon^2$, when $\epsilon <\sigma$, the limit is lower bounded by $\sigma\epsilon\psi^{-1}(\sigma/\epsilon)$.

To see the first half, consider we observe the distribution $p$ with $X \equiv \sigma$. Since $\epsilon \geq \sigma$, the true distribution can perturb it to either $X \equiv 0$ or $X \equiv \sigma + \epsilon$. By standard Le Cam's two point argument, we can see that it is lower bounded by $\sigma^2$.

To see the second half, assume we observe the distribution $p$ with $X \equiv \frac{\sigma \psi^{-1}(\sigma/\epsilon)}{2}$. 
Since $\epsilon<\sigma$, we have $\sigma\psi^{-1}(\sigma/\epsilon)>\epsilon$. 
Now we construct two specific distributions  $p_1: X \equiv \frac{\sigma \psi^{-1}(\sigma/\epsilon)+\epsilon}{2}$ and $p_2: X \equiv \frac{\sigma \psi^{-1}(\sigma/\epsilon)-\epsilon}{2}$. 
One can see that $W_1(p_1, p)\leq \epsilon/2$, $W_1(p_2, p)\leq \epsilon/2$, $ p_1, p_2\in \GG$.   By standard Le Cam's two point argument, the error is lower bounded by $\frac{\sigma\epsilon\psi^{-1}(\sigma/\epsilon)}{2}$. From Lemma~\ref{lem.minimax_random} we know that this lower bound also holds for random decision rule with probability at least $1/2$.

\subsection{Proof of Theorem~\ref{thm.linreg_tw_1_proj}}\label{proof.linreg_tw_1_proj}
Similar to the second moment estimation, we decompose the theorem into two lemmas for population and finite-sample:
\begin{lemma}\label{lem.proof_linreg_w1_population}
Denote by $X' = [X, Z]$ the $d+1$ dimensional vector that concatenates $X$ with the noise $Z = Y-X^{\top}\theta^*(p^*)$. Given an Orlicz function
     $\psi$
that further satisfies $\psi(x)\geq x$ for all $x \geq 1$, denote $\tilde \psi(x) = x\psi(2x)$. 
Assume $p^*\in\GG(k)$, i.e. it satisfies:
\begin{align}\label{eqn.bounded_linreg_w1_condition}
   \sup_{v\in\bR^{d+1}, \|v\|_2=1}  \bE_{p^*}\bigg[\tilde \psi\bigg(\frac{|v^{\top}X'|}{\sigma_1}\bigg)\bigg] \leq 1,
   \bE_{p^*}[Z^2] \leq \sigma_2^2, \|\theta^*(p^*)\|_2\leq R.
\end{align} 
Denote $\bar R = \max(R, 1)$. 
Then we have
$p^* \in\GG^{W_1}(\sigma_2^2 + C_1 \Delta(\bar R, \eta), \sigma_2^2 + C_2\bar R^2\Delta(\bar R, \eta), \eta)$ for any $\eta>0$, where $\Delta(\bar R, \eta) = \sigma_1 \bar R \eta  \psi^{-1}(2\sigma_1/\bar R \eta) + \bar R^2\eta^2$  and $C_1, C_2$ are universal constants. Here we choose $B(p, \theta) = L(p, \theta) = \bE_p[(Y-X^\top \theta)^2]$.
The population limit for this set when the perturbation level is $\epsilon$  is upper bounded by $\sigma_2^2 + C_2 \bar R^2\Delta(\bar R, 2\epsilon) $.
\end{lemma}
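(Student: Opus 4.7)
The plan is to verify both parts of Definition~\ref{def.G^Wc} with $B = L = (p,\theta) \mapsto \bE_p[(Y-X^\top\theta)^2]$. Since $B$ is linear in $p$, its Fenchel--Moreau dual is trivial: $\sF_\theta = \{f_\theta\}$ with $f_\theta(x,y) = (y-x^\top\theta)^2$ and $B^*(f_\theta,\theta) = 0$, so both halves of the definition reduce to controlling $\bE_r[(Y-X^\top\theta)^2]$ over $\eta$-friendly perturbations of $p^*$. For every $\theta$, this bridge function is the square of the linear functional $v^\top X'$, where $X' = [X,Z]$ and $v = (-(\theta-\theta^*(p^*))^\top,1)$ with $\|v\|_2 \leq \sqrt{1+(2R)^2} \leq \sqrt{5}\bar R$. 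The whole proof therefore reduces to applying the one-dimensional second moment analysis already carried out in the proof of Theorem~\ref{thm.sec_tw_1_proj} to the 1D variable $v^\top X'$, with careful tracking of how the scale parameters depend on $\bar R$.

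For $\GG^{W_1}_\downarrow$ I specialize to $\theta = \theta^*(p^*)$, so $v^\top X' = Z$ and $\|v\|_2 = 1$; the hypotheses yield $\bE_{p^*}[\tilde\psi(|Z|/\sigma_1)] \leq 1$ and $\bE_{p^*}[Z^2] \leq \sigma_2^2$. Any coupling on $(X,Y)$ with $W_1$ cost $\eta$ induces a 1D coupling on $Z$ with cost at most $\sqrt{1+R^2}\,\eta \leq \sqrt{2}\,\bar R\eta$, since $|Z-Z'| \leq |Y-Y'| + R\|X-X'\|$, and friendliness with respect to $z \mapsto z^2$ is preserved under this projection. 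Splitting into the two friendly-perturbation cases as in the proof of Theorem~\ref{thm.sec_tw_1_proj}---one case is immediate from the squeezing property, the other invokes Lemma~\ref{lem.coupling_g_bounded} with the $\tilde\psi$ Orlicz bound---yields $\bE_r[Z^2] \leq \sigma_2^2 + C_1\Delta(\bar R,\eta)$, establishing the $\downarrow$ condition. For $\GG^{W_1}_\uparrow$ I consider general $\theta \in \Theta$; the Orlicz hypothesis applied to the unit vector $v/\|v\|$ gives $\bE_{p^*}[\tilde\psi(|v^\top X'|/(\sqrt{5}\bar R\sigma_1))] \leq 1$, while the 1D $W_1$ cost on $v^\top X'$ is again $\leq \sqrt{2}\,\bar R\eta$. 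Reapplying the same two-case analysis with scale $\sigma = \sqrt{5}\bar R\sigma_1$ and 1D cost $\sqrt{2}\bar R\eta$ bounds $|\bE_{p^*}[(Y-X^\top\theta)^2] - \bE_{r_\theta}[(Y-X^\top\theta)^2]|$ by $C\bar R^2\Delta(\bar R,\eta)$; combined with the hypothesis $\bE_{r_\theta}[(Y-X^\top\theta)^2] \leq \rho_1$, this gives $\bE_{p^*}[(Y-X^\top\theta)^2] \leq \sigma_2^2 + C_2\bar R^2\Delta(\bar R,\eta) = \rho_2$.

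The population limit upper bound then follows immediately from Theorem~\ref{thm.G_Wc_fundamental_limit} applied to $\GG^{W_1}(\rho_1,\rho_2,\eta)$. The main obstacle is the bookkeeping needed to route everything through the one-dimensional friendliness argument of Theorem~\ref{thm.sec_tw_1_proj}: I must check that multidimensional friendliness for $f_\theta$ on $(X,Y)$ projects cleanly to 1D friendliness of $v^\top X'$ for the squared function, and track the two separate sources of $\bar R$ dependence (from $\|v\|$, and from the $\sqrt{1+R^2}$ factor when bounding the induced 1D $W_1$ cost). The monotonicity of $x\mapsto x\psi^{-1}(c/x)$ from Lemma~\ref{lem.psi_increasing} is used to absorb loose constants so that the final bound fits the form $\bar R^2\Delta(\bar R,\eta)$ with $\Delta$ as defined in the statement.
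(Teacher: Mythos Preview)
Your proposal is correct and follows essentially the same route as the paper's proof: both reduce the problem to the one-dimensional second-moment resilience analysis from the proof of Theorem~\ref{thm.sec_tw_1_proj} by writing $Y-X^\top\theta$ as a scalar multiple of a unit projection of $X'$, then track two sources of $\bar R$-dependence (the Lipschitz factor when passing from $(X,Y)$-space to the scalar $v^\top X'$, and the norm $\|v\|$). The only cosmetic difference is that the paper first maps to $X'$-space with Lipschitz constant $\sqrt{R^2+2}$ and then invokes the second-moment bound there, whereas you project directly to the one-dimensional variable; the resulting constants and the use of Lemma~\ref{lem.psi_increasing} to massage them into the form $\Delta(\bar R,\eta)$ are the same.
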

\begin{lemma}
 Assume  $p^*$ satisfies:
\begin{align}
   \sup_{v\in\bR^{d+1}, \|v\|_2=1}  \bE_{p^*}\bigg[{|v^{\top}X'|^{k}}\bigg] \leq \sigma_1^{k}, 
   \bE_{p^*}[Z^2] \leq \sigma_2^2, \|\theta^*(p^*)\|_2\leq R
\end{align} 
Here $k> 2$. 
Denote   $ \tilde \epsilon = \frac{C_1}{\delta} \left(\epsilon + \sigma_1  \bar R\sqrt{ d/n}+ \sigma_1 \bar R/\sqrt{n^{1-1/k}}\right)$,  where $C_1$ is some universal constant. For $\GG^{W_1}$  with appropriate parameters, the projection algorithm $q = \Pi(\hat p_n; \tW_1, \GG^{W_1})$ or $q = \Pi(\hat p_n; \tW_1, \GG^{W_1},  \tilde \epsilon/2)$ satisfies
\begin{align}
    \bE_{p^*}[(Y-X^\top \theta^*(q))^2 ] \leq \sigma_2^2 + C_2 \bar R^2 (\sigma_1^{1+1/(k-1)} (\bar R \tilde \epsilon)^{1-1/(k-1)} + (\bar R \tilde \epsilon)^2)
\end{align} 
with probability at least $1-\delta$, where $C_2$ is some universal constant.
\end{lemma}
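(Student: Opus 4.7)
The plan is to combine the population-level characterization from Lemma~\ref{lem.proof_linreg_w1_population} with finite-sample control of $\tW_1(p^*,\hat p_n)$ and a modulus-of-continuity argument under $\tW_1$, mirroring the proof of Theorem~\ref{thm.sec_tw_1_proj} for second-moment estimation.

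First I would verify $p^*\in\GG^{W_1}(\rho_1,\rho_2,\tilde\epsilon)$ by applying Lemma~\ref{lem.proof_linreg_w1_population} with $\psi(x)=x^{k-1}$, so that $\tilde\psi(x)=x\psi(2x)=2^{k-1}x^k$ matches the $k$-th moment condition on $X'$ up to constants. Plugging $\psi^{-1}(x)=x^{1/(k-1)}$ into $\Delta(\bar R,\eta)=\sigma_1\bar R\eta\,\psi^{-1}(2\sigma_1/(\bar R\eta))+\bar R^2\eta^2$ gives $\Delta\asymp\sigma_1^{1+1/(k-1)}(\bar R\eta)^{1-1/(k-1)}+\bar R^2\eta^2$, which matches the quoted rate. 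Next, applying Lemma~\ref{lem.tW_1_empirical_to_population} to the $(d+1)$-dimensional vector $X'$ with $\psi(x)=x^k$ (so $\psi^{-1}(\sqrt n)=n^{1/(2k)}$), and using that the bounded $k$-th moment implies covariance operator norm at most $\sigma_1^2$, yields $\bE_{p^*}[\tW_1(p^*,\hat p_n)]\lesssim\epsilon+\sigma_1\bar R\sqrt{d/n}+\sigma_1\bar R\,n^{-(k-1)/(2k)}$. Markov's inequality then gives $\tW_1(p^*,\hat p_n)\leq\tilde\epsilon/2$ with probability $1-\delta$, and the projection property together with the triangle inequality for $\tW_1$ gives $\tW_1(q,p^*)\leq\tilde\epsilon$.

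It remains to control the modulus $\sup\{L(p_2,\theta^*(p_1)):p_1,p_2\in\GG^{W_1},\ \tW_1(p_1,p_2)\leq\tilde\epsilon\}$ under $\tW_1$. Following the structure of Theorem~\ref{thm.sec_tw_1_proj}, I would project $(X,Y)$ onto the unit direction $v=(-\theta^*(p_1),1)/\sqrt{\|\theta^*(p_1)\|_2^2+1}$: since $\tW_1$ dominates the difference of expectations of any unit-norm linear functional, the one-dimensional laws of $\langle v,(X,Y)\rangle$ under $p_1,p_2$ satisfy $\tW_1(\cdot,\cdot)\leq\tilde\epsilon$. Applying Lemma~\ref{lem:tw1_cross_mean} with $g(x)=x$ produces $7\tilde\epsilon$-friendly perturbations $r_{p_1},r_{p_2}$ of the projected laws with $\bE_{r_{p_1}}[\langle v,(X,Y)\rangle^2]\leq\bE_{r_{p_2}}[\langle v,(X,Y)\rangle^2]$ via the convex function $f(t)=t^2$. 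Lifting these couplings back to $\bR^{d+1}$ (the 1D coupling extends via the conditional law of $(X,Y)$ given the projection) turns them into friendly perturbations of the joint laws for the function $(y-x^\top\theta^*(p_1))^2$, after which the $\GG^{W_1}_\downarrow/\GG^{W_1}_\uparrow$ conditions yield
\[L(p_2,\theta^*(p_1))=(\|\theta^*(p_1)\|_2^2+1)\,\bE_{p_2}[\langle v,(X,Y)\rangle^2]\leq\sigma_2^2+C\bar R^2\bigl(\sigma_1^{1+1/(k-1)}(\bar R\tilde\epsilon)^{1-1/(k-1)}+(\bar R\tilde\epsilon)^2\bigr),\]
which is the claimed bound.

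The main obstacle is bookkeeping the $\bar R$-factors that appear when passing between the one-dimensional projection (where the mean-cross lemma operates) and the joint distribution (where the generalized resilience set lives). Two sources of $\bar R$ surface: first, the multiplicative factor $\|\theta^*(p_1)\|_2^2+1\leq 2\bar R^2$ that converts second moments of the projected residual back into the regression loss; second, an extra $\bar R$ inside $\Delta(\bar R,\cdot)$ that enters because the mean-cross lemma, when applied to the normalized residual, effectively produces friendly perturbations whose $W_1$-cost on the unnormalized residual $Y-X^\top\theta^*(p_1)$ is $\sqrt{\|\theta^*(p_1)\|_2^2+1}\cdot 7\tilde\epsilon\lesssim\bar R\tilde\epsilon$—precisely the scale at which Lemma~\ref{lem.proof_linreg_w1_population} is formulated. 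Verifying that the lifted coupling still has cost $\lesssim\bar R\tilde\epsilon$ under the Euclidean metric on $\bR^{d+1}$, and that the lifted $r_{p_i}$ remain in $\mathbb{F}(p_i,O(\bar R\tilde\epsilon),W_1,(y-x^\top\theta^*(p_1))^2)$, is the delicate bookkeeping that converts the abstract population modulus $\rho_2$ into the explicit rate claimed in the theorem.
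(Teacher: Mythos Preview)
Your proposal follows the paper's approach: finite-sample control of $\tW_1(p^*,\hat p_n)$ via Lemma~\ref{lem.tW_1_empirical_to_population}, population resilience via Lemma~\ref{lem.proof_linreg_w1_population}, and the $\tW_1$-modulus via the mean-cross Lemma~\ref{lem:tw1_cross_mean} applied to the one-dimensional residual direction. Your stated obstacle dissolves once you lift the 1D coupling by moving each sample along the unit gradient direction $v$ (rather than via conditional laws): the $\sqrt{\|\theta\|_2^2+1}$ blow-up incurred when passing from the $(X,Y)$-space $\tW_1$ to the 1D $\tW_1$ on the unnormalized residual exactly cancels the $1/\sqrt{\|\theta\|_2^2+1}$ contraction when translating 1D transport cost back to Euclidean distance in $\bR^{d+1}$, so the lifted friendly perturbations lie in $\mathbb{F}(p_i,7\tilde\epsilon,W_1,|Y-X^\top\theta|)$ with cost exactly $7\tilde\epsilon$, and all $\bar R$-factors in the final bound arise solely from the formula $\rho_2=\sigma_2^2+C\bar R^2\Delta(\bar R,\eta)$ in Lemma~\ref{lem.proof_linreg_w1_population} evaluated at $\eta=7\tilde\epsilon$.
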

We begin with the proof of the first lemma.
\begin{proof}
We know that $\GG^{W_1} = \GG_{\downarrow}^{W_1}(\rho_1, \eta) \bigcap \GG_{\uparrow}^{W_1}(\rho_1, \rho_2, \eta)  $, where 
\begin{align}
    \GG_{\downarrow}^{W_1}(\rho_1, \eta) =  \{p \mid & \sup_{r \in \mathbb{F}(p, \eta, W_{c, k}, |X^{\top}\theta^*(p) - Y|^2) } \bE_r[(X^{\top}\theta^*(p) - Y)^2 ] \leq \rho_1 \}, \\
    \GG_{\uparrow}^{W_1}(\rho_2) =   \Bigg\{p \mid  & \forall \tau\geq 0, \forall \theta \in \Theta,  \forall r\in \mathbb{F}(p, \eta, W_{c, k}, |X^{\top}\theta - Y| ),\nonumber \\
    & \Big( \bE_r[(X^{\top}\theta - Y)^2  ]  \leq \rho_1  \Rightarrow  \bE_p[(X^{\top}\theta - Y)^2 ] \leq \rho_2  \Big) \Bigg\}. 
\end{align}

We first show that $\bE_{p^*}[|\frac{X}{\sigma}| \psi(|\frac{2X}{\sigma}|)]\leq 1$ implies $\bE_{p^*}[X^2]\leq 2\sigma^2$. Note that $\bE_{p^*}[|\frac{X}{\sigma}| \psi(|\frac{2X}{\sigma}|)]\leq 1$ is equivalent to
\begin{align}
    1 & \geq \bP_{p^*}(|X|\leq \sigma)\bE_{p^*}[|\frac{X}{\sigma}| \psi(|\frac{X}{\sigma}|) \mid  |X| \leq \sigma] + \bP_{p^*}(|X|> \sigma)\bE_{p^*}[|\frac{X}{\sigma}| \psi(|\frac{X}{\sigma}|) \mid  |X| > \sigma] \nonumber \\ 
    & \geq \bP_{p^*}(|X|> \sigma)\bE_{p^*}[X^2/\sigma^2 \mid  |X| > \sigma],
\end{align}
since $\psi(x)\geq x$ for $x\geq 1$. Thus we have
\begin{align}
    \bE_{p^*}[X^2/\sigma^2] & = \bP_{p^*}(|X|\leq \sigma)\bE_{p^*}[X^2/\sigma^2 \mid  |X| \leq \sigma] + \bP_{p^*}(|X|> \sigma)\bE_{p^*}[X^2/\sigma^2 \mid  |X| > \sigma]\nonumber \\ 
    & \leq 2. 
\end{align}

Denote $Z = Y - X^{\top}\theta^*({p^*})$. Then $X' = [X, Z]$, and $\bE_{p^*}[Z^2] \leq 2\sigma^2$. Furthermore, since $X' = [X, Y - X^\top \theta^*(p)]$, for any two distributions $p_1$, $p_2$ defined on $(X, Y)$ space with $W_1(p_1, p_2)\leq \eta $, converting them to $X'$ space to derive $\tilde p_1, \tilde p_2$ would give $W_1(\tilde p_1, \tilde p_2)\leq \eta\sqrt{R^2+2}$.  

From the same proof as in second moment estimation (Theorem~\ref{thm.sec_tw_1_proj})
we know that the condition  in~\eqref{eqn.bounded_linreg_w1_condition} implies that 
\begin{align}
& \sup_{v\in\bR^{d+1}, \|v \|_2 =1, r \in  \mathbb{F}({p^*}, \epsilon, W_{c, k}, |v^{\top}X'|^2 )}
    |\bE_{p^*}[(v^{\top}X')^2] - \bE_r[(v^{\top}X')^2]| \nonumber \\ 
    \leq &  \max(4(R^2+2)\eta^2 + 2\sigma\sqrt{R^2+2}\eta, \sigma \eta \sqrt{2R^2+4} \psi^{-1}(2\sigma/(\eta\sqrt{R^2/2+1})))
\end{align}
By setting the last element of $v$ as 1 and all others as 0, we have $p^* \in \GG_{\downarrow}^{W_{c,k}}(\rho_1, \eta)$ where $\rho_1 = \sigma^2 +  \max(4(R^2+2)\eta^2 + 2\sigma\sqrt{R^2+2}\eta, \sigma \eta \sqrt{2R^2+4} \psi^{-1}(2\sigma/(\eta\sqrt{R^2/2+1})))$. %

Now we show that  $p^* \in \GG_{\downarrow}^{W_{c,k}}$. Note that for any $\theta$, we know that 
\begin{align}
    X^{\top}\theta - Y = X^{\top}(\theta - \theta^*({p^*})) + (X^{\top}\theta^*({p^*}) - Y) = X^{\top}(\theta - \theta^*({p^*})) + Z & = |v^{\top}X'| \cdot \sqrt{\|\theta - \theta^*({p^*}) \|_2^2+1} \nonumber \\ 
    & \leq |v^{\top}X'|\cdot \sqrt{4R^2+1}.
\end{align}
where $v$ is the unit vector in the direction of  $(\theta-\theta^*({p^*}), 1)$. Since friendly perturbation is invariant to scaling,  this gives us 
\begin{align}
& \sup_{\theta \in \Theta,   r \in  \mathbb{F}({p^*}, \eta, W_{c, k}, |X^{\top}\theta - Y|^2 )}
    |\bE_{p^*}[(X^{\top}\theta-Y)^2] - \bE_r[(X^{\top}\theta-Y)^2]| \nonumber \\ 
    \leq &  (4R^2+1) \max(4(R^2+2)\eta^2 + 2\sigma\sqrt{R^2+2}\eta, \sigma \eta \sqrt{2R^2+4} \psi^{-1}(2\sigma/(\eta\sqrt{R^2/2+1}))).
\end{align}
Thus we know that if $\bE_r[(X^{\top}\theta - Y)^2  ]  \leq \rho_1  $, we have
\begin{align}
    \bE_{p^*}[(X^{\top}\theta - Y)^2] 
    & \leq \max(4(R^2+2)\eta^2 + 2\sigma\sqrt{R^2+2}\eta, \sigma \eta \sqrt{2R^2+4} \psi^{-1}(2\sigma/(\eta\sqrt{R^2/2+1}))).
\end{align}
Thus we have $p^* \in\GG^{W_1}(\sigma^2 + \Delta, \sigma^2 + (4R^2+2) \Delta, \eta)$, where 
\begin{align}
\Delta = \max(4(R^2+2)\eta^2 + 2\sigma\sqrt{R^2+2}\eta, \sigma \eta \sqrt{2R^2+4} \psi^{-1}(2\sigma/(\eta\sqrt{R^2/2+1}))).
\end{align}

\end{proof}

Now we prove the second lemma on the finite-sample results.
\begin{proof}
First, we show that the projected distribution $q$ is close to $p^*$ in $\tW_1$.  We know from Lemma~\ref{lem.tW_1_empirical_to_population} that under appropriate choice of $C_1$, with probability at least $1-\delta$, we have
\begin{align}
    \tW_1(p^*, \hat p_n) \leq \frac{\tilde \epsilon} {2}=  \frac{C_1}{\delta} \left(\epsilon +  \sigma_1\bar R \sqrt{\frac{d}{n}}+\frac{\sigma_1\bar R }{\sqrt{n^{1-1/k}}}\right).
\end{align}

Note that $\tW_1$ satisfies triangle inequality and $\mathcal{U}' \subset \mathcal{U}$. Therefore 
\begin{align}
    \tW_1(q, p^*) & \leq \tW_1(q, \hat p_n) + \tW_1(\hat p_n, p^*) \leq \tilde \epsilon.
\end{align}

From Lemma~\ref{lem.proof_linreg_w1_population}, we know that $p^*\in \GG^{W_1}(\rho_1(\eta), \rho_2(\eta), \eta)$ with parameters $\rho_1=\sigma_2^2 + C_1 \Delta(\bar R, \eta), \rho_2 = \sigma_2^2 + C_2\bar R^2\Delta(\bar R, \eta)$ for any $\eta>0$, where $\Delta(\bar R, \eta) = \sigma_1 \bar R \eta  \psi^{-1}(2\sigma_1/\bar R \eta) + \bar R^2\eta^2$.  
From Lemma~\ref{lemma.population_limit} we know that the final result can be upper bounded by the modulus of continuity, thus it suffices to bound the term $\sup_{p_1, p_2\in \GG, \tW_1(p_1, p_2)\leq \tilde \epsilon} L(p_1, \theta^*(p_2))$.
We apply Lemma~\ref{lem:tw1_cross_mean} to show that the modulus of continuity can be bounded.
From $p_1, p_2\in\GG^{W_1}$ and $\tW_1(p_1, p_2)\leq  \tilde \epsilon$ and Lemma~\ref{lem:tw1_cross_mean}, we know that  there exist an $r_{p_1}\in \mathbb{F}(p_1, 7\tilde \epsilon, W_1, |Y-X^\top \theta^*(p_2)|)$ and an $r_{p_2} \in \mathbb{F}(p_2,  7\tilde \epsilon, W_1, |Y-X^\top \theta^*(p_2)|)$ such that
    \begin{align}
        \bE_{r_{p_1}}[(Y-X^\top \theta^*(p_2))^2 ] \leq \bE_{r_{p_2}} [(Y-X^\top \theta^*(p_2))^2].
    \end{align}
    From $ p_2 \in \GG^{W_1}_\downarrow$, we know that 
    \begin{align}
        \bE_{r_{p_2}}[(Y-X^\top \theta^*(p_2))^2] \leq \rho_1(7\tilde \epsilon).
    \end{align} 
Thus from $p_1\in\GG^{W_1}_\uparrow$ we know that
\begin{align}\bE_{p_1}[(Y-X^\top \theta^*(p_2))^2] \leq \rho_2(7\tilde \epsilon).
\end{align}
\end{proof}

\subsubsection{Necessity of bounded $\theta$ assumption in $W_1$ linear regression} \label{appendix.necessity_hyper_w1}

To show  the necessity of the upper bound on $\|\theta\|_2$, we provide a lower bound for $W_1$ linear regression question showing that it is not sufficient to have bounded noise $Z$ and Gaussian $X$. The statement is illustrated in Figure~\ref{fig.w1-hypercontractive}.

\begin{theorem}

Taking  $B(p, \theta) = L(p, \theta)  = \bE_p[(Y - X \theta)^2] $  in (\ref{eqn.def_wc_G}).  
Denote $\GG$ as a set of two-dimensional distributions:
\begin{align}
    \GG = \{p \mid (X, Y) \sim p, Y = \theta X, X\sim\mathcal{N}(\mu, \sigma), \mu, \sigma, \theta \in \bR \}
\end{align}
Then the population information theoretic limit is infinity:
\begin{align}
    \inf_{\theta(p)} \sup_{(p^*, p): p^*\in\GG, W_1(p^*, p)\leq \epsilon} \bE_{p^*}[(Y- \theta(p) X)^2] = +\infty.
\end{align}

\end{theorem}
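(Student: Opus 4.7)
The approach is a two-point Le Cam argument: for every large $K > 0$, I would exhibit a pair $p_1^\star, p_2^\star \in \GG$ with $W_1(p_1^\star, p_2^\star) \leq 2\epsilon$ whose induced losses force any estimator to incur error at least $\Omega(K^2 \epsilon^2)$, so that letting $K \to \infty$ forces the population limit to diverge. The construction I have in mind is $p_1^\star : X \sim \mathcal{N}(1, 1/K^2)$ with slope $\theta_1 = K$, and $p_2^\star : X \sim \mathcal{N}(1+\delta,\, 1/K^2)$ with slope $\theta_2 = K/(1+\delta)$, where $\delta \asymp \epsilon$ is a small constant chosen independently of $K$. The crucial design feature is that $\theta_1\mu_1 = \theta_2\mu_2 = K$ and $|\theta_i|\sigma_i \approx 1$ for both $i$, so the two joint laws concentrate near the common point $(1, K)$ with comparable spread, even though the regression lines $Y = \theta_i X$ have very different slopes once $K$ is large.

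To control $W_1(p_1^\star, p_2^\star)$ I would use the shared-noise coupling $X_i = \mu_i + Z/K$ for a single $Z \sim \mathcal{N}(0,1)$; direct computation gives the pointwise displacement $(X_1 - X_2,\, Y_1 - Y_2) = (-\delta,\, -Z\delta/(1+\delta))$, whose expected Euclidean norm is at most $|\delta|\sqrt{1 + 1/(1+\delta)^2}$, and this is $\leq 2\epsilon$ once $\delta$ is a suitable constant multiple of $\epsilon$. For the loss lower bound, $\bE_{p_i^\star}[X^2] \geq 1$, so $L(p_i^\star, \hat\theta) = (\theta_i - \hat\theta)^2\, \bE_{p_i^\star}[X^2] \geq (\theta_i - \hat\theta)^2$, and the elementary inequality $(\theta_1 - \hat\theta)^2 + (\theta_2 - \hat\theta)^2 \geq (\theta_1 - \theta_2)^2/2$ then yields
\begin{equation*}
\max_{i\in\{1,2\}} L(p_i^\star, \hat\theta)\ \geq\ \tfrac{1}{4}(\theta_1 - \theta_2)^2 \ =\ \tfrac{1}{4}\left(\tfrac{K\delta}{1+\delta}\right)^2 \xrightarrow[K\to\infty]{}\ +\infty.
\end{equation*}
Applying Lemma~\ref{lem.minimax_random} (or equivalently, taking $p$ to be a $W_1$-midpoint of $p_1^\star, p_2^\star$ via McCann interpolation, with $W_1(p_i^\star, p) \leq \epsilon$) transfers this pairwise lower bound to the population limit for both deterministic and randomized estimators.

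The main obstacle is the parameter balancing that keeps the $W_1$-cost bounded while the loss grows. A naive construction that only varies $\theta$ at a fixed $X$-law causes the $W_1$-coupling cost to scale like $|\theta_1 - \theta_2|\,\bE|X|$, which cancels against the gain $(\theta_1 - \theta_2)^2\bE[X^2]$ in the loss and produces only a bounded $O(\epsilon^2)$ lower bound on the minimax risk; I verified this blocks several of the obvious parametrizations. Resolving this requires the simultaneous choice $\sigma = 1/K$ together with the tilt $\theta_2 = K/(1+\delta)$, which makes the coupling cost $K$-independent while keeping the slope gap of order $K\delta$. This decoupling is the core insight that lets the loss diverge without violating the $W_1$-perturbation budget, and it mirrors the failure mode one expects when no hypercontractivity (anti-concentration) assumption is placed on the regressor.
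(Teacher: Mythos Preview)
Your proof is correct and follows the same two-point Le Cam strategy as the paper, but with a different construction of the hard pair. The paper fixes $X \sim \mathcal{N}(1,1)$ under $p_1$ and couples to $p_2$ via the horizontal scaling $X \mapsto aX$ (leaving $Y$ fixed), which converts the slope $\theta$ to $\theta/a$ while incurring $W_1$-cost $(a-1)\bE|X|$, a quantity independent of $\theta$; letting $\theta \to \infty$ then suffices directly. You instead shrink the $X$-variance to $1/K^2$ and shift the mean by $\delta$, achieving the same decoupling of coupling cost from slope via a shared-noise coupling. Both routes work; the paper's is somewhat simpler since it varies only one free parameter and avoids the variance-shrinking device, so your remark that resolving the obstacle ``requires the simultaneous choice $\sigma = 1/K$'' is too strong---moving only the $X$-coordinate is another way out that you may have overlooked. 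The paper's construction also buys a bit more generality: since it only uses that $\bE|X|$ is bounded, the conclusion extends immediately to any $\GG$ with unbounded $\theta$ and bounded first absolute moment of $X$, not just Gaussian $X$.
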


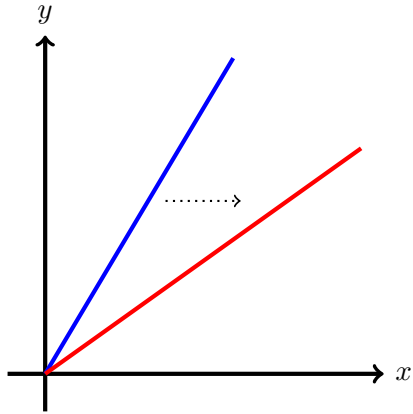
\begin{figure}[!htbp]
\centering
\begin{tikzpicture}[
  implies/.style={double,double equal sign distance,-implies},
]
\draw[->,ultra thick] (-2.5,-2)--(2.5,-2) node[right]{$x$};
\draw[->,ultra thick] (-2,-2.5)--(-2,2.5) node[above]{$y$};
\draw (-2, -2) edge[ultra thick, blue] (0.5,2.2);
\draw[->,dotted, thick] (-0.4,0.3)--(0.6, 0.3);
\draw (-2,-2) edge[ultra thick, red]  (2.2, 1.0);
\end{tikzpicture}

\caption{Illustration of necessity of boundedness assumption on $\|\theta\|_2$. In the  figure we have $Z \equiv 0$. The true distribution $Y = X^\top \theta^*$ lies on the blue line and the adversary can perturb it to the red line by moving $X$ to the right.  When the slope grows steeper, the optimal $\theta$ for the perturbed data would be significantly far away from the true $\theta^*$ when we only slightly perturb all $X$ to the right. }
\label{fig.w1-hypercontractive}
\end{figure}
\begin{proof}
Consider distribution $p_1$ that satisfies $Y = X \theta$, where $X \sim \mathcal{N}(1, 1), \theta > 0$. Now we construct a coupling $\pi$ by moving all points of $p_1$ along $X$ (without moving $Y$) such that under the coupling $\pi_{p_1, p_2}(X, X')$, we have $X' = aX$ for some $a>1$. Here $a$ is selected such that $\bE_\pi [|X-X'|]=\epsilon$. Thus we know that the marginal distribution for $p_2$ is also a Gaussian distribution, and under $p_2$, we have $Y = \frac{\theta X}{a}$. Thus we have $p_2 \in \GG$, and $W_1(p_1, p_2)\leq \epsilon$. 

Denote $\theta'  = \frac{\theta}{a}$. Thus we have $\bE[|\frac{Y}{\theta} - \frac{Y}{\theta'}|] = \epsilon $. Since $\theta>\theta' > 0 $, we have
\begin{align}
    \epsilon = \bE\left[|X|( \frac{\theta}{\theta'}-1)\right] \leq 2(\frac{\theta}{\theta'}-1).
\end{align}
Thus we have 
\begin{align}
    \theta' \leq \frac{\theta}{1+{\epsilon}/{2}}.
\end{align}
Then we have
\begin{align}
   \inf_{\theta(p)} \sup_{(p^*, p): p^*\in\GG, W_1(p^*, p)\leq \epsilon} \bE_{p^*}[(Y-X \theta(p))^2]  & \geq \inf_{\theta(p_1)} \sup_{p^*\in\GG, W_1(p^*, p_1)\leq \epsilon} \bE_{p^*}[(Y-X \theta(p_1))^2] \nonumber \\ 
    & \geq \inf_{\theta(p_1)} \frac{1}{2}(\bE_{p_1}[(Y-X \theta(p_1))^2] + \bE_{p_2}[(Y-X\theta(p_1))^2]) \nonumber \\
    & = \inf_{\theta(p_1)} ((\theta - \theta(p_1))^2 + (\theta' - \theta(p_1))^2) \nonumber \\
    & \geq \frac{1}{2}(\theta - \theta')^2 \nonumber \\ 
    & \geq \frac{\theta^2\epsilon^2}{18}.
\end{align}

As $\theta\rightarrow +\infty$, the limit goes to $+\infty$.
\end{proof}

\begin{remark}
Note that in the proof we only used the fact that $\bE[|X|]$ is upper bounded. Thus the proof applies to any $\GG$ with unbounded $\theta$ and upper bounded $\bE[|X|]$. 
\end{remark}

The proof also shows that the term $\bar R^2 \eta^2$ is necessary in the upper bound in Theorem~\ref{thm.linreg_tw_1_proj}.

\subsection{Hypercontractive-type sufficient conditions for linear regression under $W_1$ perturbation}\label{appendix.proof_W1_linreg_sufficient}

Here we show another set of sufficient conditions for $W_1$ linear regression to have finite population limit. The condition mimics the requirement of hyper-contractivity in $\TV$ linear regression case (Theorem~\ref{thm.linearregressiontvtildeproof}).

\begin{example}[Linear regression under $W_1$ perturbation]\label{example.linreg_w1}
Take $B(p, \theta) = L(p, \theta)  = \bE_p[(Y - X^\top \theta)^2] $.
Denote by $X' = [X, Y]$ the $d+1$ dimensional vector that concatenates $X$ and $Y$. Denote $Z = Y-X^{\top}\theta^*(p^*)$, where $\theta^*(p^*) \triangleq \argmin_{\theta} B(p^*,\theta)$. Given an Orlicz function
     $\psi$
that further satisfies $\psi(x)\geq x$ for all $x \geq 1$.
Assume $p^*$ satisfies:
\begin{align}\label{eqn.w1_linreg_condition1}
   \sup_{v\in\bR^{d+1}, \|v\|_2=1}  \bE_{p^*}\bigg[\frac{|v^{\top}X'|}{\kappa} \psi\bigg(\frac{\kappa|v^{\top}X'|}{\mathbb{E}[(v^\top X')^2]}\bigg)\bigg] \leq 1, \\
   \bE_{p^*}[Z^2] \leq \sigma^2.
\end{align} 
Assuming $\frac{\eta\psi^{-1}(2\kappa/\eta)}{\kappa}<1$, we have
$p^* \in\GG^{W_1}(\sigma^2 +4\eta^2 + 2\sigma\eta,  (\sigma^2 +4\eta^2 + 2\sigma\eta)/(1- \eta\psi^{-1}(2\kappa/\eta)/{\kappa}), \eta)$.
The population limit for this set when the perturbation level is $\epsilon$  is upper bounded by $(\sigma^2 +16\epsilon^2 + 4\sigma\epsilon)/(1- 2\epsilon\psi^{-1}(\kappa/\epsilon)/{\kappa})$.
\end{example}

\begin{proof}
We know that $\GG^{W_1} = \GG_{\downarrow}^{W_1}(\rho_1, \eta) \bigcap \GG_{\uparrow}^{W_1}(\rho_1, \rho_2, \eta)  $, where 
\begin{align}
    \GG_{\downarrow}^{W_1}(\rho_1, \eta) =  \{p \mid & \sup_{r \in \mathbb{F}(p, \eta, W_{c, k}, |X^{\top}\theta^*(p) - Y|^2) } \bE_r[(X^{\top}\theta^*(p) - Y)^2 ] \leq \rho_1 \}, \\
    \GG_{\uparrow}^{W_1}(\rho_2) =   \Bigg\{p \mid  & \forall \tau\geq 0, \forall \theta \in \Theta,  \forall r\in \mathbb{F}(p, \eta, W_{c, k}, |X^{\top}\theta - Y| ),\nonumber \\
    & \Big( \bE_r[(X^{\top}\theta - Y)^2  ]  \leq \rho_1  \Rightarrow  \bE_p[(X^{\top}\theta - Y)^2 ] \leq \rho_2  \Big) \Bigg\}. 
\end{align}

Since we already know that $\bE_{p^*}[(X^\top\theta - Y)^2] \leq \sigma_2^2$, it suffices to show that its friendly perturbation $r$ cannot drive it much larger to show that $p^* \in \GG_\downarrow$. 

Now we verify the condition in $\GG_{\downarrow}$. Firstly, it suffices to consider all coupling that is making $\bE[(X^\top \theta^*(p)- Y)^2]$ larger to guarantee $p^*\in\GG_{\downarrow}$. From the argument in second moment estimation (Equation~\eqref{eqn.moving_up_bound}, Appendix~\ref{proof.psi_w1_kth}), we know that for $r$ that only moves everything larger, we have
\begin{align}
    \bE_{r}[(X^{\top}\theta^*(p) - Y)^2] \leq 4\eta^2 + 2\sigma\eta + \sigma^2. 
\end{align}
Thus we have $p^* \in \GG_\downarrow(\rho_1, \eta)$ with $\rho_1 = 4\eta^2 + 2\sigma\eta + \sigma^2$. 

Now we verify that $p^* \in \GG_{\uparrow}$. It suffices to consider all $r$ that moves $p^*$ downwards. 
From Equation~\eqref{eqn.proof_w1_orlicz_3} and the assumption, we  also know that 
\begin{align}
\sup_{v\in\bR^{d+1}, \|v \|_2 =1, r \in  \mathbb{F}({p^*}, \epsilon, W_{c, k}, |v^{\top}X'|^2 )}
    \bE_{p^*}[(v^{\top}X')^2] - \bE_r[(v^{\top}X')^2] \leq \frac{\eta\psi^{-1}(2\kappa/\eta)}{\kappa} \bE_{p^*}[(v^{\top}X')^2].
\end{align}
Assume $\frac{\eta\psi^{-1}(2\kappa/\eta)}{\kappa}<1$, then we have
\begin{align}
    \bE_{p^*}[(v^{\top}X')^2] \leq \frac{ \bE_r[(v^{\top}X')^2]}{1- \frac{\eta\psi^{-1}(2\kappa/\eta)}{\kappa}}.
\end{align}

Thus we have $p^* \in\GG^{W_1}(\sigma^2 +4\eta^2 + 2\sigma\eta,  (\sigma^2 +4\eta^2 + 2\sigma\eta)/(1- \frac{\eta\psi^{-1}(2\kappa/\eta)}{\kappa}), \eta)$.

\end{proof}

\begin{remark}
Taking $\psi(x) = x^2$, we see that if the vector $X'=[X, Y]$ satisfies
\begin{align}
   \kappa \bE_{p^*}[|v^\top X'|^3] \leq     \bE_{p^*}[|v^\top X'|^2]^2
\end{align}
for any $\|v\|_2=1$, 
the population limit is upper bounded by $(\sigma^2 +4\eta^2 + 2\sigma\eta)/(1- \sqrt{2\eta/\kappa})$ assuming $\eta<\kappa/2$. 
Thus if we know the original distribution's optimal prediction error is bounded by $\sigma^2$, then our estimator's risk  approaches $\sigma^2$ as $\epsilon$ goes to $0$. 
\end{remark}

\textbf{Discussion on the first condition.}

Taking $\psi(x) = x^k$, the first condition becomes  $
{\kappa^{k-1} \bE_{p^*}[|v^\top X'|^{k+1}]} \leq {\bE_{p^*}[(v^\top X')^2]^k}$. We show that it is satisfied when $X, Z$ are independent Gaussians with variance lower bounded. 
Assume $X \sim \mathcal{N}(\mu, \Sigma)$ and $Z \sim \mathcal{N}(0, \sigma^2)$. Then we know $Y \sim \mathcal{N}(\mu^\top \theta, \theta^\top \Sigma\theta + \sigma^2)$. Thus $X'$ is also a Gaussian distribution with mean $[\mu, \mu^\top \theta]$ and covariance $\Sigma' = \left[\begin{matrix}
\Sigma & \Sigma \theta \\
(\Sigma \theta)^\top & \theta^\top \Sigma\theta + \sigma^2
\end{matrix}\right]$. From~\citep[Corollary 5.21]{boucheron2013concentration} we know that $\bE_{p^*}[|v^\top X'|^{k+1}] \leq C\bE_{p^*}[|v^\top X'|^{2}]^{(k+1)/2}$ for some constant $C>1$ that may depend on $k$. Thus it suffices to take $\kappa \leq \lambda_{n+1}(\Sigma')$ as the smallest eigenvalue of $\Sigma'$. Note here by assuming $\lambda_{n+1}(\Sigma')$ is bounded away from $0$, we implicitly impose some assumption between $\sigma$ and $\theta$ which gives upper bound on $\theta$.

The first condition is an analogy to the hyper-contractivity condition in $\TV$ case, which prevents the deletion of dimension, i.e. it guarantees for some $f(\eta, \kappa)>0$, when $\frac{\eta\psi^{-1}(2\kappa/\eta)}{\kappa}<1$, the following holds:
\begin{align}
   \forall v\in\bR^d,     \bE_{q}[(v^\top X')^2] \geq f(\eta, \kappa)  \bE_{p^*}[(v^\top X')^2],
\end{align}
where $q$ is any distribution such that $W_1(p^*, q)\leq \eta$. It is also worth noting that for linear regression under $\TV$ perturbation, we only require hypercontractivity for $X$; while for $W_1$ perturbation we need similar condition for $X' = [X, Y]$, and when $Z \equiv 0$ the condition fails to hold. In fact, under $W_1$ perturbation, appropriate noise is \emph{necessary} to guarantee controlled population limit:  we show that the population limit is infinity even when $X$ is a Gaussian (thus is hyper-contractive) and the noise $Z$ is $0$ in Appendix~\ref{appendix.necessity_hyper_w1}. This also  shows the necessity of having the first condition on $X'$ instead of $X$.  All the above discussion holds when we switch from  predictive loss to excess predictive loss.

\subsection{Proof of Theorem~\ref{thm.efficient_sec_w1}}\label{proof.efficient_sec_w1}

\begin{proof}
From Lemma~\ref{lem.kth_population_deletion_R}, we know that
 for $X\sim p^*$, with probability at least $1-\delta$, 
$\|X\|_2 \leq \frac{\sigma \sqrt{d}}{\delta^{1/k}}$.
Combining this with Lemma~\ref{lem.matrix_bernstein}, we know that with probability at least $1-2\delta$,
\begin{align}
    \|\bE_{\hat p_n^*}[XX^\top] - \bE_{p^*}[XX^\top] \|_2\leq C_1 \sigma^2 \max(\sqrt{\frac{ d\log(d/\delta)}{n\delta^{2/k}}},  \frac{ d\log(d/\delta)}{n\delta^{2/k}}).
\end{align}
From the projection algorithm, we know that either $W_1(\hat p_n^*, q)\leq 2\epsilon$ or $\tilde W_1(\hat p_n^*, q)\leq 2\epsilon$.  When $n\gtrsim (d\log(d/\delta))^{k/2}$, we have $\hat p_n^*\in\GG$ with probability at least $1-\delta$~\citep[Lemma 5.5]{kothari2017outlier}. Combining it with the bounded modulus of the set $\GG$ under either $W_1$ or $\tilde W_1$, we have
\begin{align}
     \|\bE_{\hat p_n^*}[XX^\top] - \bE_{q}[XX^\top] \|_2 \leq \min(\sigma^2, C_2 \sigma^{1+1/(k-1)}  \epsilon^{1-1/(k-1)}).
\end{align}
Then triangle inequality gives the final result. 
\end{proof}

\subsection{General proposition of $\tW_1$ projection algorithm}\label{proof.tw1_projection}

We first slightly generalize 
$\GG^{W_{c, k}}$ in Definition~\ref{def.G^Wc} to  $ \mathcal{G}^{W_{c, k}}(\rho_1, \rho_2, \eta) = \mathcal{G}^{W_{c, k}}_{\downarrow}(\rho_1, \eta) \cap \mathcal{G}^{W_{c, k}}_{\uparrow}(\rho_1, \rho_2, \eta)$, where
\begin{align}
    & \GG_{\downarrow}^{W_{c, k}}(\rho_1, \eta) = \{p \mid   \sup_{f\in\sF_{\theta^*(p)}, g\in \mathcal{P}(f), r\in \mathbb{F}(p, \eta, W_{c, k}, g)} \bE_r[f(X)] - B^*(f, \theta^*(p)) \leq \rho_1  \}, \\
    & \GG_{\uparrow}^{W_{c, k}}(\rho_1, \rho_2, \eta) =  \Bigg\{p \mid   \forall \theta,  \Bigg( \Big(  \sup_{f\in \sF_\theta, g\in \mathcal{P}(f)} \inf_{r\in \mathbb{F}(p, \eta, W_{c, k}, g)} \bE_r[f(X)] - B^*(f, \theta) \leq \rho_1 \Big) \Rightarrow  L(p, \theta) \leq \rho_2  \Bigg) \Bigg\}. 
\end{align}

Here the only difference between Definition~\ref{def.G^Wc} is that we allow the friendly perturbation has different projection function $g$ rather than $f$, which is needed in Theorem~\ref{thm.sec_tw_1_proj}. In second moment cases, we have $f(x)=(v^\top x)^2$ while $g(x)=|v^\top x|$. It follows the same proof as  Theorem~\ref{thm.G_Wc_fundamental_limit} that the modulus of continuity for the above set is bounded. 

In this section, we make the assumption that $\sF_\theta$ contains only the function of the form $f(|v^\top x|)$, where all the $f$ are convex functions.
We show the robustness guarantee for general projection algorithm 
$q = \Pi(\hat p_n; \tW_1, \GG^{W_1})$.  
The following proposition is a corollary of Theorem~\ref{thm.admissible-1}. 
\begin{proposition}\label{prop.tW1_general}
Under the oblivious corruption model of level $\epsilon$ with $\tW_1$ perturbation, where $\tW_1$ is defined in~(\ref{eqn.tw_def}).
Assume $p^*$ satisfies that 
\begin{align}
    \sup_{v\in\bR^d, \|v\|_2=1} \bE_{p^*}[\psi(|v^\top X|/\kappa)] & \leq 1, \\
    \|\Sigma_{p^*} \|_2& \leq \sigma^2.
\end{align}
Denote the empirical distribution of observed data as $\hat p_n$, and
\begin{align}
    \tilde \epsilon = 2(1+1/\delta)\epsilon +  \frac{8\sigma}{\delta} \sqrt{\frac{d}{n}}+\frac{3\kappa \psi^{-1}(\sqrt{n})}{\delta\sqrt{n}}.
\end{align}
Suppose $g(x) = $ either $v^\top x$ or $|v^\top x|$. 
When we take the projection set $\GG = \GG^{W_1}(\rho_1(7\tilde \epsilon), \rho_2(7\tilde \epsilon), 7\tilde \epsilon)$ in Definition~\ref{def.G^Wc}, where all functions in $\sF_\theta$ are of the form of $f(g(x))$ for some convex function $f$, and $\mathcal{P}(f(g(x))) = \{g(x)\} $,
we have with probability at least $1-\delta$, the projection algorithm $\Pi(\hat{p}_n; \tW_1, \GG^{W_1})$ or $\Pi(\hat{p}_n; \tW_1, \GG^{W_1}, \tilde{\epsilon}/2)$ satisfies
\begin{align}
     L(p^*, \theta^*(q)) \leq \rho_2(7\tilde \epsilon).
\end{align}
\end{proposition}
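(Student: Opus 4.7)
The plan is to recognize the proposition as a direct corollary of Theorem~\ref{thm.admissible-1} applied with the projection pseudometric $\tD = \overline{D} = \tW_1$, the corruption distance $D = W_1$, and destination set $\mathcal{M} := \GG^{W_1}(\rho_1(7\tilde\epsilon), \rho_2(7\tilde\epsilon), 7\tilde\epsilon)$. I will verify the three hypotheses of Theorem~\ref{thm.admissible-1} in turn and then read off its conclusion.

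The first two hypotheses are routine. For robustness of the projection distance, I will note that $\tW_1$ is a pseudometric (symmetry and the triangle inequality are immediate from the supremum representation in~\eqref{eqn.tw_def}) and that $\mathcal{U}' \subset \mathcal{U}$ forces $\tW_1 \leq W_1$, so $\sup_{p_1 \in \mathcal{M}}|\tW_1(p_1,p_2) - \tW_1(p_1,p_3)| \leq \tW_1(p_2,p_3) \leq W_1(p_2,p_3)$. For the statistical fluctuation, I will apply Lemma~\ref{lem.tW_1_empirical_to_population} to obtain $\bE[\tW_1(p, \hat p_n)] \leq 2\epsilon + 8\sigma\sqrt{d/n} + 3\kappa\psi^{-1}(\sqrt{n})/\sqrt{n}$, then use Markov's inequality to conclude, with probability at least $1-\delta$, that $2\epsilon + 2\tW_1(p, \hat p_n) \leq \tilde\epsilon$ with the constants in the statement (the $1/\delta$ factor is the Markov factor, and the $2(1+1/\delta)\epsilon$ term bundles the deterministic drift $2\epsilon$ from Theorem~\ref{thm.admissible-1} with the $2\epsilon$ piece inside $\bE[\tW_1]$).

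The main obstacle is the generalized modulus of continuity of $\GG^{W_1}$ under $\tW_1$, which is where the factor of $7$ in the definition of $\mathcal{M}$ originates. Concretely I must show that whenever $p_1^*, p_2^* \in \mathcal{M}$ satisfy $\tW_1(p_1^*, p_2^*) \leq \tilde\epsilon$, one has $L(p_2^*, \theta^*(p_1^*)) \leq \rho_2(7\tilde\epsilon)$. My plan is to fix any such pair together with any $f \in \sF_{\theta^*(p_1^*)}$, write $f(x) = \phi(g(x))$ with $\phi$ convex and $g(x) \in \{v^{\top}x, |v^{\top}x|\}$, and apply the $\tW_1$ mean cross lemma (Lemma~\ref{lem:tw1_cross_mean}) to the one-dimensional projections of $p_1^*$ and $p_2^*$ along $v$, in the direction $(p_2^*, p_1^*)$. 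This produces $7\tilde\epsilon$-friendly perturbations $r'_{p_2^*} \in \mathbb{F}(p_2^*, 7\tilde\epsilon, W_1, g)$ and $r'_{p_1^*} \in \mathbb{F}(p_1^*, 7\tilde\epsilon, W_1, g)$ that are comparable in the convex order of $g(X)$, so convexity of $\phi$ yields $\bE_{r'_{p_2^*}}[f(X)] \leq \bE_{r'_{p_1^*}}[f(X)]$. Applying the $\GG_{\downarrow}^{W_1}(\rho_1(7\tilde\epsilon), 7\tilde\epsilon)$ condition on $p_1^*$ bounds the right-hand side by $\rho_1(7\tilde\epsilon) + B^*(f, \theta^*(p_1^*))$; taking the infimum over $r'_{p_2^*}$ and the supremum over $f \in \sF_{\theta^*(p_1^*)}$ shows that $p_2^*$ meets the premise of the $\GG_{\uparrow}^{W_1}(\rho_1(7\tilde\epsilon), \rho_2(7\tilde\epsilon), 7\tilde\epsilon)$ implication at $\theta = \theta^*(p_1^*)$, and the implication itself delivers $L(p_2^*, \theta^*(p_1^*)) \leq \rho_2(7\tilde\epsilon)$. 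Theorem~\ref{thm.admissible-1} then assembles the three pieces into the final bound.
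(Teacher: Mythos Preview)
Your proposal is correct and follows essentially the same route as the paper's proof: verify the robustness-to-perturbation condition via the triangle inequality for $\tW_1$ and $\tW_1\le W_1$, bound the modulus of continuity using the $\tW_1$ mean-cross lemma (Lemma~\ref{lem:tw1_cross_mean}) together with the $\GG_\downarrow^{W_1}$/$\GG_\uparrow^{W_1}$ conditions, and control the fluctuation term via Lemma~\ref{lem.tW_1_empirical_to_population} plus Markov. The only cosmetic difference is that you swap the roles of $p_1^*$ and $p_2^*$ relative to the paper (you apply $\GG_\downarrow$ at $p_1^*$ and $\GG_\uparrow$ at $p_2^*$, whereas the paper does the reverse), which is immaterial since both distributions lie in the set.
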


\begin{proof}
The two conclusions are all  corollaries of Theorem~\ref{thm.admissible-1}. 
We only need to verify the two conditions in Theorem~\ref{thm.admissible-1}. 
\begin{enumerate}
    \item \textbf{Robust to perturbation}: Note that $\tW_1$ satisfies triangle inequality and $\mathcal{U}' \subset \mathcal{U}$. For any $p_1, p_2, p_3$, we have
    \begin{align}
            |\tW_1(p_1, p_2) - \tW(p_1, p_3)| & = \sup_{u\in\mathcal{U}'}  |\bE_{p_1}[u(X)] -  \bE_{p_2}[u(X)]| -  \sup_{u\in\mathcal{U}'} |\bE_{p_1}[u(X)] -  \bE_{p_3}[u(X)]|   \nonumber \\
            & \leq \sup_{u\in\mathcal{U}'} | \bE_{p_2}[u(X)] - \bE_{p_3}[u(X)]| \nonumber \\
            & = \tW_1(p_2, p_3) \nonumber \\
            & \leq W_1(p_2, p_3).
    \end{align}
    \item \textbf{Generalized Modulus of Continuity}: Assume $p_1, p_2\in\GG^{W_1}(\rho_1(7\tilde \epsilon), \rho_2(7\tilde \epsilon), 7\tilde \epsilon)$ and $\tW_1(p_1, p_2)\leq  \tilde \epsilon$. For any  fixed $v$, from Lemma~\ref{lem:tw1_cross_mean}, we know that when $g(x)$ takes either $v^\top x$ or $|v^\top x|$,  there exists an $r_{p_1}\in \mathbb{F}(p_1, 7\tilde \epsilon, W_1, g(x))$ and an $r_{p_2} \in \mathbb{F}(p_2,  7\tilde \epsilon, W_1, g(x))$ such that for convex $f(x)$, we have
    \begin{align}
        \bE_{r_{p_1}}[f(g(X)) ] \leq \bE_{r_{p_2}} [f(g(X))].
    \end{align}
    
 From $p_2 \in \GG^{W_1}_\downarrow$, we know that
    \begin{align}
        \sup_{f\in\sF_{\theta^*(p_2)}, g\in \mathcal{P}(f), r\in \mathbb{F}(p_2, 7\tilde \epsilon, W_{c, k}, g(x))} \bE_{r}[f(X)] - B^*(f, \theta^*(p_2)) \leq \rho_1(7\tilde \epsilon).
    \end{align}
Thus we know for any given $f, g$, and any friendly perturbation $r_{p_2}$, the above inequality holds. Thus we also have
\begin{align}
    \bE_{r_{p_1}}[f(X)] - B^*(f, \theta^*(p_2)) \leq \bE_{r_{p_2}}[f(X)] - B^*(f, \theta^*(p_2)) \leq \rho_1(7\tilde \epsilon).
\end{align}
Rewriting the above statement, we know that
 \begin{align}
 \sup_{f\in\sF_{\theta^*(p_2)}, g\in \mathcal{P}(f)}\inf_{ r\in \mathbb{F}(p, 7\tilde \epsilon, W_{c, k}, g(x))} \bE_{r}[f(X)] - B^*(f, \theta^*(p_2)) 
\leq \leq \rho_1(7\tilde \epsilon).
    \end{align}
Since we also know that $p_1 \in \GG^{W_c}_\uparrow$, we have
\begin{align}
    L(p_1, \theta^*(p_2))\leq \rho_2(7\tilde \epsilon)
\end{align}
\end{enumerate}
Thus from Theorem~\ref{thm.admissible-1}, we have with probability at least $1-\delta$,
\begin{align}
      L(p^*, \theta^*(q)) \leq \rho_2(7\tilde \epsilon).
\end{align}

We provide ways to bound the statistical error term $\tW_1(\hat p_n, p)$ in Lemma~\ref{lem.tW_1_empirical_to_population}. Combining these two lemmas gives the results.
\end{proof}

\subsection{Estimating $k$-th moment projection}\label{proof.psi_w1_kth}

We extend the result above to estimating 1d projection of $k$-th moment.

\begin{example}[Bounded Orlicz norm implies resilience for $k$-th moment estimation under $W_1$ perturbation]\label{example.psi_w1_kth_resilience}
For $k>1$,
taking $\mathcal{F} = \{f\mid X \mapsto \xi |v^{\top}X|^k, v\in\bR^d, \|v\| =1, \xi \in\{\pm 1\} \}$, $W_{c, k} = W_1$ in Equation (\ref{eqn.def_Wc_meanG}), one can check that Assumption~\ref{ass:topo} holds. The  set $\GG^{W_1}_{W_\sF}$ becomes 
\begin{align}
    \GG^{W_1}_{\mathsf{kth}}(\rho, \eta) = \{p \mid \sup_{ v\in\bR^d, \|v\| = 1, r\in \mathbb{F}(p, \eta, W_1, |v^{\top}X|^k)} |\bE_{p}[|v^{\top}X|^k] - \bE_{r}[|v^{\top}X|^k]| \leq \rho  \}.
\end{align}

Assume that there exists some Orlicz function $\psi$ which satisfies $\psi(x)\geq x, \forall x \geq 1$. Denote $\tilde \psi(x) = x\psi(kx^{k-1})$. We assume that 
\begin{align}\label{eqn.proof_assumption_orlicz_w1}
   \sup_{v\in\bR^d, \|v\| =1} \bE_p\left[\tilde \psi\left(\frac{|v^{\top}X|}{\sigma}\right)\right] \leq 1.
\end{align}
 Then,
\begin{align}
        p \in \GG_{\mathsf{kth}}^{W_1}(\sigma^{k-1} \eta \psi^{-1}(\frac{2\sigma}{\eta}), \eta), \forall \eta<\min({\sigma}/{2k}, 2\sigma / \psi(\max(k, 8))),
\end{align}
where $\psi^{-1}$ is the (generalized) inverse of $\psi$.
\end{example}
\begin{proof}
From the fact that $r \in \mathbb{F}(p, \eta, W_1, |v^{\top}X|^k)$, we know that for any coupling $\pi_{p, r}$ that makes $r$ friendly perturbation, we have 
\begin{align}
  \sup_{v\in\bR^d, \|v\|_2=1} \bE_{\pi_{p, r}} |v^\top(X - Y)|\leq  \bE_{\pi_{p, r}} \sup_{v\in\bR^d, \|v\|_2=1} |v^\top(X - Y)| \leq  \eta.
\end{align}
For any fixed $v\in\bR^d, \|v\|_2 = 1$, we claim that the worst perturbation only happens when for any $(x, y) \in \mathsf{supp}(\pi_{p, r})$, $|v^\top x|^k \geq |v^\top y|^k$ or for any $(x, y) \in \mathsf{supp}(\pi_{p, r})$, $|v^\top x|^k \leq |v^\top y|^k$. If it is not one of the two cases, we can always remove the movement from $x$ to $y$ that decreases or increases $g$ to make $|\bE_\pi[|v^\top X|^k - |v^\top Y|^k]|$ larger without increasing $\bE_\pi\|X-Y\|$.

Thus we can assume for any $(x, y) \in \mathsf{supp}(\pi_{p, r})$, $|v^\top x|^k \geq |v^\top y|^k$ or for any $(x, y) \in \mathsf{supp}(\pi_{p, r})$, $|v^\top x|^k \leq |v^\top y|^k$. For the first case,  by Lemma~\ref{lem.coupling_g_bounded}, we bound the worst case perturbation as follows. For any $v\in\bR^d, \|v\|_2=1$,
\begin{align}
    \quad &|\bE_{(X, Y) \sim \pi_{p, r}}[|v^\top X|^k - |v^\top Y|^k]| \nonumber \\
    &\leq \sigma^{k-1} \bE_{\pi_{p, r}}[|v^\top (X-Y)|] \psi^{-1}\left(\frac{\bE_{\pi_{p, r}}\Big[|v^\top (X-Y)| \psi\left(\left|\frac{|v^\top X|^k - |v^\top Y|^k}{\sigma^{k-1} v^\top (X-Y)}\right|\right)\Big]}{ \bE_{\pi_{p, r}}[|v^\top (X-Y)|]}\right) \nonumber \\
    & \leq \sigma^{k-1} \eta \psi^{-1}\left(\frac{\bE_{\pi_{p, r}}\Big[|v^\top (X-Y)| \psi\left(\left|\frac{|v^\top X|^k - |v^\top Y|^k}{\sigma^{k-1} v^\top (X-Y)}\right|\right)\Big]}{\eta}\right)\label{eqn.proof_w1_orlicz_1}  \\
    & \leq \sigma^{k-1} \eta \psi^{-1}\left(\frac{\bE_{\pi_{p, r}}\Big[|v^\top (X-Y)| \psi\left(\left|\frac{k|v^\top X|^{k-1}}{\sigma^{k-1}}\right|\right)\Big]}{\eta}\right) \label{eqn.proof_w1_orlicz_2}  \\
    & \leq \sigma^{k-1} \eta \psi^{-1}\left(\frac{\bE_{\pi_{p, r}}\Big[2|v^\top X| \psi\left(\left|\frac{k|v^\top X|^{k-1}}{\sigma^{k-1}}\right|\right)\Big]}{\eta}\right)  \\
    & = \sigma^{k-1} \eta \psi^{-1}\left(\frac{\bE_p\Big[2|v^\top X| \psi\left(\left|\frac{k|v^\top X|^{k-1}}{\sigma^{k-1}}\right|\right)\Big]}{\eta}\right)  \\
    & \leq \sigma^{k-1} \eta \psi^{-1}(\frac{2\sigma}{\eta})\label{eqn.proof_w1_orlicz_3} . 
\end{align}
Here Equation (\ref{eqn.proof_w1_orlicz_1}) comes from the fact that $x\psi^{-1}(C / x) $ is a non-decreasing function of $x$ for the region $[0,+\infty)$ for any $\sigma >0$ (Lemma~\ref{lem.psi_increasing}). Equation (\ref{eqn.proof_w1_orlicz_2}) uses the fact that  for any $(x, y) \in \mathsf{supp}(\pi_{p, r})$, $|v^\top x| \geq |v^\top y|$. Equation (\ref{eqn.proof_w1_orlicz_3}) is from the assumption in (\ref{eqn.proof_assumption_orlicz_w1}).

On the other hand, for any $(x, y) \in \mathsf{supp}(\pi_{p, r})$, $|v^\top x|^k \leq |v^\top y|^k$, we bound the worst case perturbation as follows:
\begin{align}
     |\bE_{(X, Y) \sim \pi_{p, r}}[|v^\top X|^k - |v^\top Y|^k]| & =  |\bE_{(X, Y) \sim \pi_{p, r}}[(|v^\top X| - |v^\top Y|)(\sum_{i=0}^{k-1}|v^\top X|^i |v^\top Y|^{k-1-i})]| \\
     & \leq k|\bE_{(X, Y) \sim \pi_{p, r}}[(|v^\top X| - |v^\top Y|)]\bE[|v^\top Y|^{k}]^{1-1/k}|. 
     \\
     & \leq k\eta \bE[|v^\top Y|^{k}]^{1-1/k}. \label{eqn.proof_W1_resilience1}
\end{align}
Here we use the fact that 
For any $(x, y) \in \mathsf{supp}(\pi_{p, r}), x \neq y$,  we have $|v^\top x|^k\leq |v^\top y|^k \leq \bE[|v^\top Y|^k]$ from the definition of friendly perturbation.

If $\bE[|v^\top Y|^k] \leq \sigma^k$, we have $ \bE_{(X, Y) \sim \pi_{p, r}}[|v^\top Y|^k - |v^\top X|^k] \leq k \sigma^{k-1} \eta$. Otherwise we further upper bound it to get
\begin{align}
     \bE_{(X, Y) \sim \pi_{p, r}}[|v^\top Y|^k - |v^\top X|^k] \leq  k\sigma^{k-1}\eta \bE\left[\left|\frac{v^\top Y}{\sigma}\right|^k\right].
\end{align}
Solving the inequality, we know that when $\eta < \sigma/k$,
\begin{align}
    \bE[|v^\top Y|^k] \leq \frac{\bE[|v^\top X|^k] }{1-k \eta/\sigma}.\label{eqn.proof_W1_resilience2}
\end{align}

Now we bound the term $\bE[|v^\top X|^k]$. 
When $k|v^\top x|^{k-1} / \sigma^{k-1}\leq 1$, we have $  |v^\top x|^k \leq \sigma^k / k^{k/(k-1)}$. When $k|v^\top x|^{k-1} / \sigma^{k-1} > 1 $, by assumption $\psi(x)\geq x, \forall x \geq 1$, we have $ |v^\top x|^k \leq \frac{\sigma^{k-1}|v^\top x|}{k} \psi(\frac{k|v^\top X|^{k-1}}{\sigma^{k-1}}) $.
\begin{align}
    \bE[|v^\top X|^k] & \leq \frac{\sigma^{k}}{k^{k/(k-1)}} + \frac{\sigma^{k-1} \bE[|v^\top X|\psi(\frac{k|v^\top X|^{k-1}}{\sigma^{k-1}})]}{k} \nonumber \\
    & \leq \frac{\sigma^{k}}{k^{k/(k-1)}} + \frac{\sigma^{k}}{k} \nonumber \\
    & \leq \frac{ 2\sigma^{k}}{k}.
\end{align}
Thus combining Equation~\eqref{eqn.proof_W1_resilience1} and~\eqref{eqn.proof_W1_resilience2}, we have if $\eta < \sigma/2k$,
\begin{align*}
    \bE[|v^\top Y|^k] - \bE[|v^\top X|^k]\leq & {k\eta \bE[|v^\top Y|^k]^{1-1/k}} \leq  k\eta \left(\frac{\bE[|v^\top X|^k] }{1-k \eta/\sigma}\right)^{1-1/k} \nonumber \\ 
    \leq & k\eta \left(\frac{2\sigma^k }{k(1-k \eta/\sigma)}\right)^{1-1/k} \leq 8\sigma^{k-1}\eta.
\end{align*}

Combining the two cases, we know that when $\eta < \sigma/2k$, the movement is upper bounded by $\max(\sigma^{k-1}\eta\psi^{-1}(\frac{2\sigma}{\eta}), \max(k, 8)\sigma^{k-1}\eta )$ 
Thus 
for $\eta<\min({\sigma}/{2k}, 2\sigma / \psi(\max(k, 8)))$, we have  $p \in \GG_{\mathsf{kth}}^{W_1}(\sigma^{k-1} \eta \psi^{-1}(\frac{2\sigma}{\eta}), \eta)$.

We remark here that the above proof also applies to the case when we requires $r\in\mathbb{F}(p, \eta, W_1, |v^\top X|)$ instead of $r\in\mathbb{F}(p, \eta, W_1, |v^\top X|^k)$. The only difference to note is in above~\eqref{eqn.proof_W1_resilience1} where we need to apply Jensen's inequality to derive $|v^\top y|^k \leq (\bE[|v^\top Y|])^k \leq \bE[|v^\top Y|^k]$.
This proves to be crucial in finite sample algorithm design in Section~\ref{sec.finite_sample_w1}.
\end{proof}

\end{appendix}

\end{document}